\newcommand\cornX{Y}
\newcommand\cornM{W}
\newcommand\man{Q}
\newcommand\Vfeb{\Vf_{\ebo}}
\newcommand\Vfes{\Vf_{\eso}}
\newcommand\cte{\digamma}
\newcommand{\rcal}{{\mathcal{R}}}
\newcommand\cF{\mathcal{F}}
\newcommand\cM{\mathcal{M}}
\newcommand{\GBB}{\textsf{GBB}\xspace}
\newcommand{\GBBs}{\textsf{GBB}s\xspace} 
\newcommand{\EGBB}{\textsf{EGBB}\xspace}
\newcommand{\EGBBs}{\textsf{EGBB}s\xspace}
\newcommand{\ssM}{\mathsf{M}}
\newcommand{\parameter}{\digamma}
\newcommand{\hilbert}{\mathfrak{X}}
\newcommand{\hilbertx}{\mathfrak{X}}
\newcommand{\hilbertp}{\mathfrak{Y}}
\newcommand{\hilbertz}{\mathfrak{Z}}
\newcommand{\hilbertpp}{\widetilde{\mathfrak{Y}}}
\newcommand{\hilberth}{\mathfrak{H}}
\newcommand{\htil}{\tilde h}
\newcommand{\Htil}{\tilde H}
\newcommand{\gammat}{\tilde\gamma}
\newcommand\xib{\xi^{\bo}}
\newcommand\etab{\eta^{\bo}}
\newcommand\taub{\tau^{\bo}}
\newcommand{\bSigma}{{}^{\bo}\Sigma_0}
\newcommand{\sSigma}{{}^{\so}\Sigma_0}
\newcommand{\ebSigma}{{}^{\ebo}\Sigma}
\newcommand{\esSigma}{{}^{\eso}\Sigma}
\newcommand{\bSigmapm}{{}^{\bo}\Sigma_{0,\pm}}
\newcommand{\sSigmapm}{{}^{\so}\Sigma_{0,\pm}}
\newcommand\xies{\underline{\xi}}
\newcommand\etaes{\underline{\eta}}
\newcommand\zetaes{\underline{\zeta}}
\newcommand\taues{\underline{\tau}}
\newcommand\sigmaes{\underline{\sigma}}
\newcommand\xiesh{\hat{\underline{\xi}}}
\newcommand\tauesh{\hat{\underline{\tau}}}
\newcommand\etaesh{\hat{\underline{\eta}}}
\newcommand\zetaesh{\hat{\underline{\zeta}}}
\newcommand\xieb{\xi}
\newcommand\etaeb{\eta}
\newcommand\zetaeb{\zeta}
\newcommand\taueb{\tau}
\newcommand\sigmaeb{\sigma}
\newcommand\xiebh{\hat\xi}
\newcommand\etaebh{\hat\eta}
\newcommand\zetaebh{\hat\zeta}
\newcommand\etah{{\hat\eta}}
\newcommand\tauh{{\hat\tau}}
\newcommand\taubh{\hat\tau^{\bo}}
\newcommand\xibh{\hat\xi^{\bo}}
\newcommand\etabh{\hat\eta^{\bo}}
\newcommand\zetabh{\hat\zeta^{\bo}}
\newcommand\xis{\xi^{\so}}
\newcommand\etas{\eta^{\so}}
\newcommand\zetas{\zeta^{\so}}
\newcommand\taus{\tau^{\so}}
\newcommand\taush{{\hat\tau^{\so}}}
\newcommand\xish{{\hat\xi^{\so}}}
\newcommand\etash{{\hat\eta^{\so}}}
\newcommand{\tLambda}{\tilde{\Lambda}}
\newcommand{\Psieb}[1]{\Psi_{\ebo}^{#1}}
\newcommand{\Teb}{{}^{\ebo}T}
\newcommand{\Tes}{{}^{\eso}T}
\newcommand{\Testar}{{}^{\eo}T^*}
\newcommand{\Tebstar}{{}^{\ebo}T^*}
\newcommand{\Tesstar}{{}^{\eso}T^*}
\newcommand{\Sesstar}{{}^{\eso}S^*}
\newcommand{\Sebstar}[1][\mbox{}]{{}^{\ebo}S^*_{#1}}
\newcommand{\Sestar}[1][\mbox{}]{{}^{\eo}S^*_{#1}}
\newcommand{\Tbstar}{{}^{\bo}T^*}
\newcommand{\Sbstar}{{}^{\bo}S^*}
\newcommand{\abs}[1]{{\left\lvert{#1}\right\rvert}}
\newcommand{\norm}[1]{{\left\lVert{#1}\right\rVert}}
\renewcommand{\Box}{{\square}}
\newcommand{\bH}[1]{{H^{#1}_b}}
\newcommand{\eH}[1]{{H^{#1}_e}}
\DeclareMathOperator{\liptic}{\text{ell}}
\DeclareMathOperator{\Op}{\text{Op}}
\DeclareMathOperator{\eWF}{{{WF}_{\eo}}}
\newcommand{\hcal}{{\mathcal{H}}}
\newcommand{\ecal}{{\mathcal{E}}}
\newcommand{\kcal}{{\mathcal{K}}}
\newcommand{\fcal}{\mathcal{F}^{\ebo}}
\newcommand{\fcalW}{\mathcal{F}^{\bo}}
\newcommand{\ang}[1]{{\left\langle{#1}\right\rangle}}
\newcommand{\ep}{{\epsilon}}
\newcommand{\gcal}{{\mathcal{G}}}
\newcommand{\ePs}[1]{{\Psi^{#1}_{\eo}}}
\newcommand\ff{\operatorname{ff}}
\newcommand\ef{\tilde W}
\newcommand\efspace{\tilde Y}  
\def\noqed{\renewcommand{\qed}{}}
\numberwithin{equation}{section}
\newtheorem{theorem}{Theorem}[section]
\newtheorem{lemma}[theorem]{Lemma}
\newtheorem{proposition}[theorem]{Proposition}
\newtheorem{corollary}[theorem]{Corollary}
\newtheorem{non-theorem}[theorem]{Non-Theorem}
\theoremstyle{remark}
\newtheorem{definition}[theorem]{Definition}
\newtheorem{remark}[theorem]{Remark}
\newcommand\bo{\operatorname{b}}
\newcommand\eo{\operatorname{e}}
\newcommand\so{\operatorname{s}}
\newcommand\ebo{\operatorname{eb}}
\newcommand\eso{\operatorname{es}}
\newcommand{\eb}{\ebo}
\newcommand\Vb{\mathcal{V}_{\bo}}
\newcommand\bV{\mathcal{V}_{\bo}}
\newcommand\Heb[1]{H_{\ebo}^{#1}}
\newcommand\Hes[1]{H_{\eso}^{#1}}
\newcommand\dHes[1]{\dot H_{\eso}^{#1}}
\newcommand\Hesz[1]{H_{\eso,0}^{#1}}
\newcommand\Vf{\mathcal{V}}
\newcommand\bT{{}^{\bo}T}
\newcommand\cFTs{{}^{\Phi}\overline{T}\kern-1pt{}^*}
\newcommand\inside{\operatorname{int}}
\newcommand\WF{\operatorname{WF}}
\newcommand\WFeb{\WF_{\ebo}}
\newcommand\WFebX{\WF_{\ebo,\hilbert}}
\newcommand\WFebXp{\WF_{\ebo,\hilbert'}}
\newcommand\WFebY{\WF_{\ebo,\hilbertp}}
\newcommand\bWF{\WF_{\bo}}
\newcommand\WFb{\WF_{\bo}}
\newcommand\WFbHp{\WF_{\bo,\mathfrak{H}^*}}
\newcommand\WFbX{\WF_{\bo,\hilbert}}
\newcommand\WFbXp{\WF_{\bo,\hilbert^*}}
\newcommand\ebWF{\WF_{\ebo}}
\newcommand\cA{\mathcal{A}}
\newcommand\cB{\mathcal{B}}
\newcommand\tcB{\widetilde{\mathcal{B}}}
\newcommand\cC{\mathcal{C}}
\newcommand\cD{\mathcal{D}}
\newcommand\cH{\mathcal{H}}
\newcommand\cO{\mathcal{O}}
\newcommand\cR{\mathcal{R}}
\newcommand\cQ{\mathcal{Q}}
\newcommand\cG{\mathcal{G}}
\newcommand\cE{\mathcal{E}}
\newcommand{\ssW}{\mathsf{W}}
\newcommand\CC{\mathbb C}
\newcommand\NN{\mathbb N}
\newcommand\bbN{\mathbb N}
\newcommand\RR{\mathbb R}
\newcommand\bbR{\mathbb R}
\newcommand\CIc{{\mathcal{C}}^{\infty}_c}
\newcommand\CI{{\mathcal{C}}^{\infty}}
\newcommand\CmI{{\mathcal{C}}^{-\infty}}
\newcommand\Diag{\operatorname{Diag}}
\newcommand\Diff[1]{\operatorname{Diff}^{#1}}
\newcommand\Diffes[1]{\operatorname{Diff}^{#1}_{\eso}}
\newcommand\Diffesd[1]{\operatorname{Diff}^{#1}_{\eso,\dagger}}
\newcommand\Diffess[1]{\operatorname{Diff}^{#1}_{\eso,\sharp}}
\newcommand\Diffeb[1]{\operatorname{Diff}^{#1}_{\ebo}}
\newcommand\Psib{\Psi_{\bo}}
\newcommand\cFNs{{}^{\Phi}\overline N\kern-1pt{}^*}
\newcommand\reg{\operatorname{reg}}
\newcommand\sing{\operatorname{sing}}
\newcommand\Id{\operatorname{Id}}
\newcommand{\lag}{\mathcal{L}}
\newcommand\Lap{\varDelta}
\newcommand\Span{\operatorname{sp}}
\newcommand\Ran{\operatorname{Ran}}
\newcommand\clos{\operatorname{cl}}
\newcommand\dCI{\dot{\mathcal{C}}^{\infty}}
\newcommand\loc{{\text{loc}}}
\newcommand\comp{{\text{c}}}
\newcommand\pa{\partial}
\newcommand\sgn{\operatorname{sgn}}
\newcommand\supp{\operatorname{supp}}
\newcommand\Mand{\text{ and }}
\newcommand\Mat{\text{ at }}
\newcommand\Mon{\text{ on }}
\newcommand{\F}{{\mathcal{F}}}
\newcommand{\coiso}{{\mathcal{A}}}
\newcommand{\module}{{\mathcal{M}}}
\newcommand{\pieseb}{\pi_{\eso\to\ebo}}
\newcommand{\piesebh}{\widehat{\pi_{\eso\to\ebo}}}
\newcommand{\pismooth}{\pi_{\so\to \bo}}
\newcommand{\dbetaeb}{\varpi_{\ebo}} 
\newcommand{\dbetaes}{\varpi_{\eso}}
\newcommand{\cU}{\mathcal{U}}
\newcommand{\sH}{\mathsf{H}}
\author{Richard Melrose}
\author{Andr\'as Vasy}
\author{Jared Wunsch}
\title[Diffraction on manifolds with corners]%
{Diffraction of singularities for the wave equation on manifolds with
  corners}
\date{\today}
\begin{document}

\begin{abstract}
  We consider the fundamental solution to the wave equation on a
  manifold with corners of arbitrary codimension.  If the initial pole
  of the solution is appropriately situated, we show that the
  singularities which are diffracted by the corners (i.e., loosely
  speaking, are not propagated along limits of transversely reflected
  rays) are smoother than the main singularities of the solution.
  More generally, we show that subject to a hypothesis of nonfocusing,
  diffracted wavefronts of any solution to the wave equation are
  smoother than the incident singularities.  These results extend our
  previous work on edge manifolds to a situation where the fibers of
  the boundary fibration, obtained here by blowup of the corner in question,
are themselves manifolds with corners.
\end{abstract}

\maketitle

\newpage

\tableofcontents

\newpage

\section{Introduction}
\subsection{The problem and its history}

Let $X_0$ be a manifold with corners, of dimension $n,$ i.e., a
manifold locally modeled on $(\RR^+)^{f+1} \times \RR^{n-f-1},$
endowed with an incomplete metric, smooth and non-degenerate up to the
boundary. We consider the wave equation 
\begin{equation}
\Box u \equiv D_t^2u-\Lap u=0\Mon M_0=\RR\times X_0,
\label{30.8.2006.37}\end{equation}
where $D_t = \imath^{-1} (\pa/\pa t)$ and $\Lap$ is
the nonnegative Laplace-Beltrami operator; we will impose either
Dirichlet or Neumann conditions at $\pa X_0.$ As is well known
by the classic result of Duistermaat-H\"ormander\footnote{This result,
viewed in the context of hyperbolic equations, built on a
considerable body of work prior to the introduction of the wavefront
set; see especially \cite{Lax}, \cite{Ludwig}.} (see \cite{fio2}),
the wavefront set of a solution $u$ propagates along
null-bicharac\-teristics in the interior. However, the behavior of
singularities striking the boundary and corners of $M_0$ is considerably subtler.

Indeed the propagation of singularities for the wave equation on manifolds
with boundary is already a rather subtle problem owing the the difficulties
posed by ``glancing'' bicharacteristics, those which are tangent to the
boundary. Chazarain \cite{Chazarain} showed that singularities striking the boundary
transversely simply reflect according to the usual law of geometric optics
(conservation of energy and tangential momentum, hence ``angle of incidence
equals angle of reflection'') for the reflection of bicharacteristics. This
result was extended in \cite{Melrose-Sjostrand1} and
\cite{Melrose-Sjostrand2} by showing that, at glancing points,
singularities may only propagate along certain generalized
bicharacteristics. The continuation of these curves may fail to be unique
at (non-analytic) points of infinite-order tangency as shown by Taylor
\cite{Taylor1}. Whether all of these branches of bicharacteristics can carry
singularities is still not known.

As was shown initially in several special examples (namely those amenable
to separation of variables) the interaction of wavefront set with a corner
gives rise to new, \emph{diffractive} phenomena, in which a single
bicharacteristic carrying a singularity into a corner produces 
singularities departing from the corner along a whole family of
bicharacteristics. For instance, a ray carrying a singularity transversely
into a codimension-two corner will in general produce singularities on the
entire cone of rays reflected in such a way as to conserve both energy and
momentum tangent to the corner (see Figure~\ref{figure:diffracted})
\begin{figure}[ht] \includegraphics{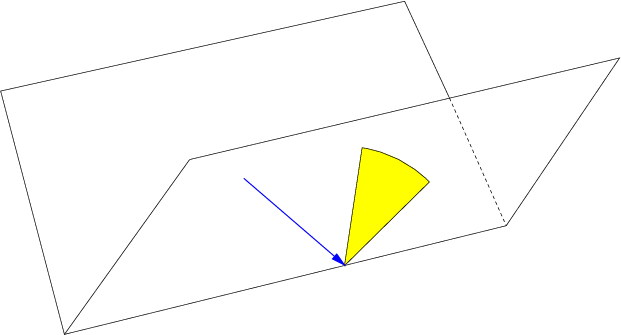} \caption{A ray carrying a
    singularity may strike a corner of codimension two and give rise to a
    whole family of diffracted singularities, conserving both energy and
    momentum along the corner.}  \label{figure:diffracted} \end{figure} The
first diffraction problem to be rigorously treated was that of the exterior
of a wedge,\footnote{This is not in fact a manifold with corners, but is
  quite closely related.} which was analyzed by Sommerfeld
\cite{Sommerfeld1}; subsequently, many related examples were analyzed by
Friedlander \cite{MR20:3703}, and more generally the case of exact cones
was worked out explicitly by Cheeger-Taylor \cite{Cheeger-Taylor2},
\cite{Cheeger-Taylor1} in terms of the functional calculus for the Laplace
operator on the cross section of the cone.  All of these explicit examples
reveal that generically a diffracted wave arises from the interaction of
wavefront set of the solution with singular strata of the boundary of the
manifold; this has long been understood at a heuristic level, with the
geometric theory of diffraction of Keller \cite{Keller1} describing the
classes of trajectories that ought to contribute to the asymptotics of the
solution in various regimes.

Subsequent work has been focused primarily on characterizing the
bi\-char\-ac\-ter\-is\-tics on which singularities can propagate, and on
describing the strength and form of the singularities that arise.  The
propagation of singularities on manifolds with boundary was first
understood in the analytic case by Sj\"ostrand
\cite{Sjostrand3,Sjostrand2,Sjostrand4}, and subsequently generalized
to a very wide class of manifolds, including manifolds with corners,
by Lebeau \cite{Lebeau4,Lebeau5}.  In the $\mathcal{C}^\infty$ setting
employed here, the special case of manifolds with conic singularities
was studied by Melrose-Wunsch \cite{Melrose-Wunsch1} and \emph{edge
  manifolds} (i.e., cone bundles) were considered by
Melrose-Vasy-Wunsch \cite{mvw1}.  Vasy \cite{Vasy5} obtained results
analogous to Lebeau's in the case of manifolds with corners, and it is
the results of this work that directly bear on the situation studied
here.

While the foregoing results characterize which bicharacteristics may
carry singularities for solutions to the wave equation, they ignore
the question of the regularity of the diffracted front.  In general, a
singularity in $\WF^s$ (which is to say, measured with respect to
$H^s$) must propagate \emph{strongly} in the sense that some
bicharacteristics through the point in question must also lie in
$\WF^s.$ The general expectation is that these are certain
``geometric'' bicharacteristics; in simple cases, these are known to
be those which are locally approximable by bicharacteristics that miss
the corners and reflect transversely off boundary hypersurfaces.  More
generally, we can define geometric bicharacteristics as follows: To
begin, we \emph{blow up} the corner, i.e.\ introduce polar coordinates
around it; this serves to replace the corner with its inward-pointing
normal bundle, which fibers over the corner with fiber given by one
orthant of a sphere, $S^f \cap (\RR^+)^{f+1}.$ We will define
\emph{geometric} broken bicharacteristics passing through the corner
as those that lift to the blown-up space to enter and leave the lift
of the corner at points connected by \emph{generalized broken
  geodesics} of length $\pi$ with respect to the naturally defined
metric on $S^f \cap (\RR^+)^{f+1},$ undergoing specular reflection at
its boundaries and corners.\footnote{The actual definition is
  considerably complicated by the existence of glancing rays, and is
  discussed in detail in \S\ref{subsection:bichars}.}
Bicharacteristics that enter and leave the corner at points in a fiber
that are not at distance-$\pi$ in this sense are referred to as
``diffractive.''  See
Figure~\ref{fig:cornerimp}. \begin{figure}[tc] \begin{center}
    \mbox{\epsfig{file=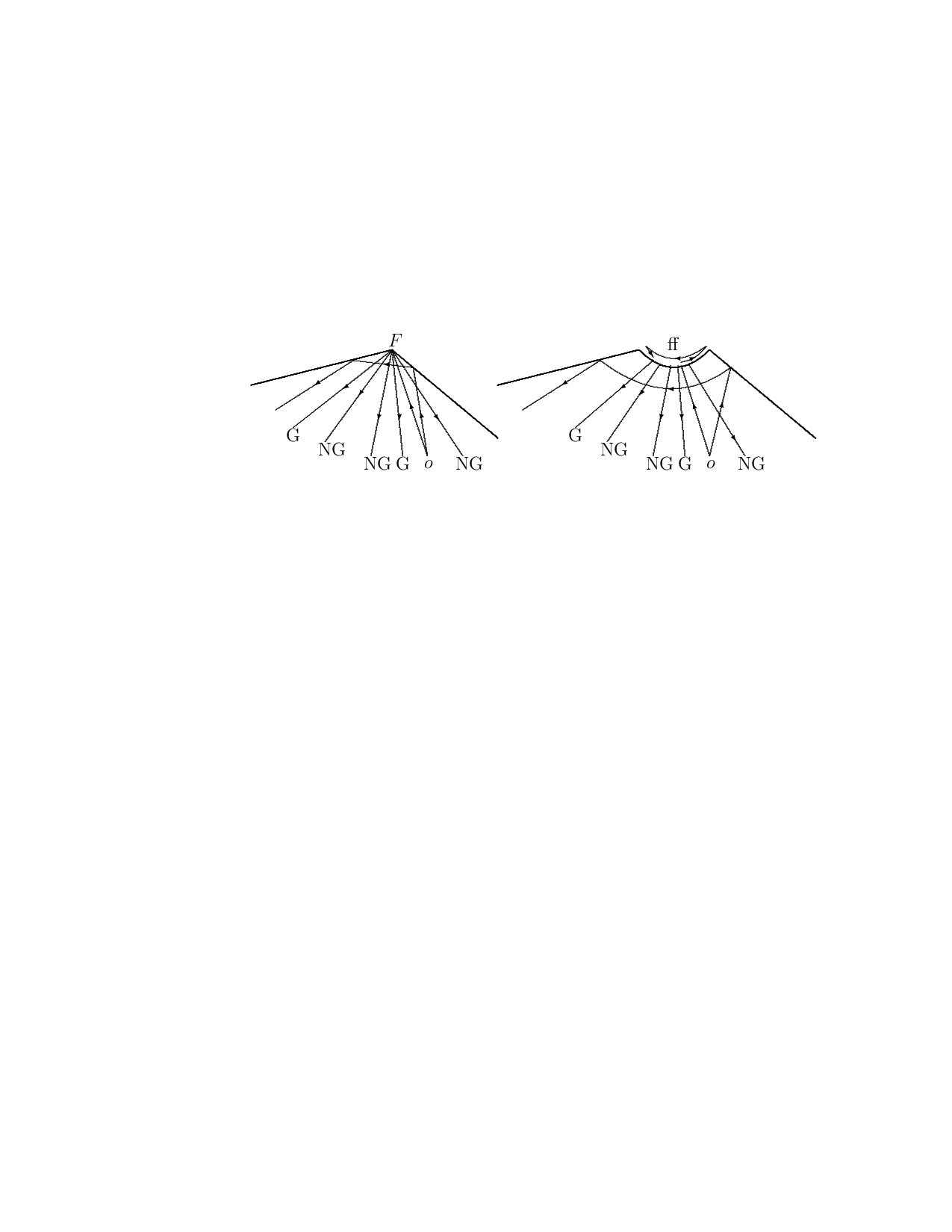, scale=0.9}} \end{center}
  \caption{Geometric optic rays hitting a corner $F$, emanating from a
    point $o$. The rays labelled $G$ are geometric at $F$, while those
    labelled $NG$ are non-geometric at $F$. The leftmost geometric ray is a
    limit of rays like the unlabelled one shown on the figure that just
    miss $F$.  The blown up version of the picture, i.e.\ where
    $(r,\theta)$ are introduced as coordinates at the origin, is shown on
    the right.  The front face (i.e., the lift of the corner) is denoted
    $\ff.$ The reflecting line indicates the broken geodesic of length
    $\pi$ induced on $\ff$ given by $r=0$, $\theta\in[0,\theta_0]$.  The
    total length of the three segments shown on $\ff$ is $\pi$; this can be
    thought of as the sum of three angles on the picture on the left:
    namely the angles between the incident ray and the right boundary
    (corresponding to the first segment), the right and left boundaries,
    finally the left boundary and the emanating reflected ray.}
  \label{fig:cornerimp} \end{figure}

It turns out that subject to certain hypotheses of \emph{nonfocusing,}
the singularities propagating along diffractive bicharacteristics
emanating from the corner will be weaker than those on geometric
bicharacteristics.  In
particular, the fundamental solution satisfies the nonfocusing
condition, hence one consequence of our main theorem is as
follows:

\begin{theorem}\label{intro-theorem} Consider the fundamental solution $u_o$ to the wave
  equation, with pole $o$ sufficiently close to a corner, $Y,$ of
  codimension $k.$
Assume that $o$ is sufficiently far from the
  boundary that every short geodesic from $o$ to $Y$ is transverse to
  all boundary hypersurfaces intersecting at $Y.$

  While $u_o$ lies
  locally in $H^{-n/2+1-0},$ it is less singular
  by  $(k-1)/2$ derivatives along diffractive bicharacteristics emanating
  from $Y,$ that is, it lies microlocally in $H^{(-n+k+1-0)/2}$ there. \footnote{Here and henceforth we emply the notation $s-0$
    to mean $s-\ep$ for all $\ep>0$.}
\end{theorem}

\noindent A more precise version of this result (with ``sufficiently
close\dots'' elucidated) appears in Corollary~\ref{corollary:fundsoln}.

A more general theorem on regularity of the diffracted wave subject
to the nonfocusing condition is the central result of this paper.  See
\S\ref{subsec:hypothesis} for a rough statement of the nonfocusing condition
and \S\ref{section:coisodef} for technical details; the main theorem
is stated and proved in \S\ref{section:geometric}.

There are a few related results known in special cases. G\'erard-Lebeau
\cite{MR93f:35130} explicitly analyzed the problem of an analytic conormal
wave incident on an analytic corner in $\RR^2,$ obtaining a
$1/2$-derivative improvement of the diffracted wavefront over the incident
one. The first and third authors \cite{Melrose-Wunsch1} obtained corresponding results for
manifolds with conic singularities, which the authors subsequently
generalized to the case of edge manifolds \cite{mvw1}.

We remark that our definition of geometric broken
bicharacteristics includes those that interact with the boundaries and
corners of the front face of the blow-up, $S^f \cap (\RR^+)^{f+1},$
according to the simplest laws of reflection as described in
\cite{Vasy5}: we do not distinguish between ``diffractive'' and
``geometric'' interactions within $S^f \cap (\RR^+)^{f+1}.$ We
conjecture that a stronger theorem than ours should hold in which,
instead of simply blowing up the highest-codimension corner, we might
iteratively blow up the corners of lower codimension as well.  This
would enable us to (iteratively) distinguish bicharacteristics that
undergo diffractive or geometric interaction inside the faces of the
blown-up space.  For instance, in the case of a codimension-$3$
corner, such a method would distinguish among rays that are limits
of families of geodesics undergoing simple specular
reflection with boundary hypersurfaces (which we might continue to call
\emph{geometric rays}); limits of families which undergo a single
diffractive interaction with a codimension-$2$ corner (\emph{partially
  diffractive rays}) and all other generalized broken
bicharacteristics entering and leaving the codimension-$3$ corner (the
\emph{completely diffractive rays}).  Our Theorem~\ref{intro-theorem}
only deals with the regularity along the completely diffractive rays,
telling us that the fundamental solution is $(3-1)/2$ derivatives
smoother along them than overall; the conjectural finer result would
also tell us about the partially diffractive rays, yielding an
improvement of $(2-1)/2$ derivatives there.  More generally, such a
result ought to yield a stratification of the rays interacting with a
corner of arbitrary codimension into pieces carrying different levels
of differentiability according to the degree of diffractive
interaction.

\subsection{The hypothesis}\label{subsec:hypothesis}

We now describe the nonfocusing hypothesis in more detail,
in the context of the simplest geometric situation to which our
results apply.

It is easily seen from the explicit form of the fundamental solution that
it is not in general true that diffracted rays are more regular than
incident singularities.  For example, take $\Lap$ to be the Dirichlet Laplacian
in a sectorial domain $\{r \in [0,\infty), \theta \in
[0,\theta_0]\}$ in $\RR^2,$ and consider the solution
\begin{equation}\label{coisoexample}
\frac{\sin t\sqrt \Lap}{\sqrt \Lap} \phi(\theta) \delta(r-r_0),
\end{equation}
where $\phi \in \mathcal{C}_c^\infty((0,\theta_0))$ is supported close to
some value $\theta'.$ This solution is manifestly locally in $H^{1/2-0}$ by
energy conservation.  On the other hand one may see from the explicit form
of the propagator in \cite{Cheeger-Taylor2}, \cite{Cheeger-Taylor1} after
convolution with $\phi(\theta)$ that a spherical wave of singularities
emanates from the corner at time $t=r_0,$ with regularity $H^{1/2-0},$
hence the same as the overall Sobolev regularity of the solution.  The
bicharacteristics along which singularities propagate are, for short time,
just the lifts of the straight lines $r=r_0 \pm t,$ $\theta \in \supp
\phi,$ hence travelling straight into or out of the vertex.  Perturbing
these slightly to make them miss the vertex, we see that in fact there are
two ``geometric'' continuations\footnote{What a geometric continuation of a
  bicharacteristic is in general will be elucidated in
  \S\ref{subsection:bichars}.} for each bicharacteristic, depending on
whether we approximate it by geodesics passing to the left or to the right
of the vertex (see Figure~\ref{fig:cornerimp}).  Thus, the geometric
continuations of the rays on which singularities strike the vertex are
close to the two possible continuations of the single ray $\theta=\theta',$
hence do not include all points on the outgoing spherical wave.  So we have an
example in which there are ``non-geometric'' singularities of full
strength.

The nonfocusing condition serves exactly to rule out such situations.
The above example has the property that applying negative powers of
$(\Id +D_\theta^2)$ does not regularize the short-time solution (or
the initial data) as it is already $\CI$ in the $\theta$ direction.
In this simple setting, the nonfocusing condition says precisely that
the solution is regularized by negative powers of $(\Id +D_\theta^2),$
or, equivalently, that it can be written
$$
(\Id +D_\theta^2)^Nv,\ v\in H^s
$$
for some $s$ exceeding the overall Sobolev regularity.  For instance,
the fundamental solution
$$
u=\frac{\sin t\sqrt \Lap}{\sqrt \Lap} \delta(r-r')\delta(\theta-\theta')
$$
looks, after application of a sufficiently negative power of $(\Id
+D_\theta^2),$ like a distribution of the form
$$
\frac{\sin t\sqrt \Lap}{\sqrt \Lap} \delta(r-r')f(\theta)
$$
with $f \in \mathcal{C}^M,$ $M\gg 0,$ hence we can write 
$$
u \in (\Id +D_\theta^2)^N H^{1/2-0},
$$
for some $N\gg 0,$ at least locally, away from the boundary.  We also
observe that the example \eqref{coisoexample} enjoys a property which
is essential \emph{dual} to the nonfocusing condition, to wit, fixed
regularity under repeated application of $D_\theta.$  We refer to this
property as ``coisotropic regularity'' (the terminology will be
explained in \S\ref{section:coisodef}) and it plays an essential role
in our proof.

The nonfocusing condition and coisotropic regularity in a more general
setting are subtler owing to their irreducibly microlocal nature: the
operator $D_\theta$ has to be replaced by a family of operators
characteristic along the flow-out of the corner under consideration.

\subsection{Structure of the proof}

We now describe the logical structure of the proof, as it is somewhat
involved.  The heart of the argument is a series of results on the
propagation of singularities, obtained by positive
commutator methods; these are sketched in detail in
\S\ref{subsec:propagationsketch} below.  In order to be able to
distinguish between ``geometric'' and ``diffractive'' bicharacteristics at
a corner of $M_0,$ we begin by performing a blow-up of the corner,
i.e.\ introducing polar coordinates centered at it, to obtain a new
manifold with corners $M.$ The commutants that we employ in our commutator
argument \emph{almost} lie in a calculus of pseudodifferential operators,
the edge-b calculus, that behaves like Mazzeo's edge calculus
\cite{MR93d:58152} near the new boundary face introduced by the blow-up
(henceforth, the \emph{edge}) and like Melrose's b calculus
\cite{MR83f:58073, Melrose:Atiyah, Melrose-Piazza:Analytic} at the
remaining boundary faces.  The complication, as in the previous work of
Vasy \cite{Vasy5} on propagation of singularities, is that in order to
control certain error terms we in fact must employ a hybrid
differential-pseudodifferential calculus, in which we keep track of certain
terms involving differential operators normal to the boundary faces other
than $\ef.$

Even this propagation result alone is insufficient to obtain our result, as
it does not allow regularity of greater than a certain degree to propagate
out of the edge, with the limitation in fact not exceeding the a priori
regularity of the solution.  What it does allow for, however, is the
propagation of \emph{coisotropic} regularity of arbitrarily high order,
suitably microlocalized in the edge-b sense.  This allows us to conclude
that given a ray $\gamma$ leaving the edge, if the solution enjoys
coisotropic regularity along all rays incident upon the edge that are
geometrically related to $\gamma$, then we may conclude coisotropic
regularity along $\gamma$ as well.  (If some of these incident rays are
glancing, i.e.\ tangent to the boundary, we require as our hypothesis
actual \emph{differentiability} globally at all incoming glancing rays
rather than coisotropic regularity, which no longer makes sense; the
version of the commutator argument that deals with these rays is the most
technically difficult aspect of the argument.)  In particular, then,
\emph{global} coisotropic regularity together with $\CI$ regularity at
glancing rays implies global coisotropic regularity leaving the edge away
from glancing.  We are then able to dualize this result to show that the
nonfocusing condition propagates as well.

Consequently, we show that subject to the nonfocusing condition, in
the model case of the sector considered above, if $\gamma$ is an outgoing ray
such that the solution is $\CI$ along all incoming rays
geometrically related to it,
$$
u \in (\Id + D_\theta)^N H^s \text{ along } \gamma \text{ for some } N
\in \NN,
$$
where in general $s=(-n+k+1)/2-0$ for the fundamental solution near a
codimension-$k$ corner on an $n$-manifold, hence $s=1/2-0$
for the sector.
On the other hand the microlocal propagation of coisotropic
regularity shows that 
$$
D_\theta^k u \in H^{s_0} \text{ for all } k \text{ along } \gamma
$$
where $s_0$ is the overall regularity of the solution ($-n/2+1-0$ for
the fundamental solution).  An interpolation argument then yields
$$
u \in H^{s-0} \text{ along } \gamma,
$$
proving the theorem.

\subsection{Sketch of the propagation results}\label{subsec:propagationsketch}
We now discuss the propagation results in greater detail, focusing on
the taxonomy of the various spaces of operators that we employ. The
basic  propagation of singularities result
on manifolds with corners $M_0=\RR_t \times X_0$, as on manifolds with
smooth boundaries, is in the setting of b-, or \emph{totally
  characteristic}, operators. Let us choose local coordinates
$(x_1,\ldots,x_{f+1},y_1,\ldots,y_{n-f-1})$ on $X_0$ with $\{x_1\geq
0,\ldots,x_{f+1}\geq 0\};$ thus, $Y=\{x_1=\ldots=x_{f+1}=0\}$
represents a corner of codimension $f+1.$ The b-vector fields
$\Vb(X_0)$ are the linear combinations of $x_j\pa_{x_j}$ and
$\pa_{y_j}$ with $\CI$ coefficients---these are exactly the vector
fields tangent to all boundary hypersurfaces.  We can define an
associated notion of \emph{b-regularity} by iterated regularity under
repeated application of such vector fields.  In particular, for a
distribution $u$, b-regularity relative to a space, such as
$H^1(X_0)$, means that $(x\pa_x)^\alpha \pa_y^\beta u\in H^1(X_0)$ for
all multiindices $\alpha,\beta$ (with
$(x\pa_x)^\alpha=(x_1\pa_{x_1})^{\alpha_1}\ldots(x_{f+1}\pa_{x_{f+1}})^{\alpha_{f+1}}$).
Thus $u$ is b-regular if and only if $u$ is a \emph{conormal}
distribution. Replacing $X_0$ by $M_0$ simply adds $\pa_t$ to the
collection of b-vector fields, i.e.\ $t$ behaves as one of the $y$
variables.  The notion of b-regularity is microlocalized via the
b-pseudodifferential operators, which are roughly speaking operators
of the form $a(x,y,t,xD_x,D_y,D_t)$ where $a$ is a symbol in the last
three sets of variables.  The calculus of these operators gives rise
to a notion of \emph{b-wavefront set,} which is therefore a microlocal
measure of conormality.

The wave operator itself is {\em not} a b-differential operator,
rather a standard differential operator: it is constructed out of
the vector fields $\pa_{x_j}$
rather than $x_j \pa_{x_j}$. Thus, its principal symbol,
hence its bicharacteristics, are curves in the cotangent bundle
$T^*M_0$, which is equipped with canonical coordinates
$(x,y,t,\xis,\etas,\taus)$, corresponding to differential operators
$(x,y,t,D_x,D_y,D_t)$. One cannot work with pseudodifferential
operators based on these standard differential operators, for they
would usually not act on smooth functions in $x\geq 0$, and would not
usually
preserve the boundary conditions. Thus, one works with b-operators,
based on
$(x,y,t,xD_x,D_y,D_t)$, which corresponds
to localizing in conic neighborhoods in the corresponding canonical coordinates
$(x,y,t,\xib,\etab,\taub)$ in the cotangent bundle.  These coordinates
are related to the original ones via
$$
(x,y,t,\xib,\etab,\taub)
 =(x,y,t,x\xis,\etas,\taus).$$
In particular, at $x=0$, passing to the b-coordinates identifies points with
different values of the normal momentum $\xis.$  Continuous propagation in
the b-variables thus allows $\xis$ to jump at the boundary, as occurs
in specular reflection.  It is the phenomenon of propagation along
appropriate generalized bicharacteristics in the b-variables that was
studied in \cite{Vasy5}.

In order to have a more precise result, we need to be able to localize
within the fibers of the blow-up of the corner $Y$, and we also need
to be able to undo the compression of the dynamics implied by working
in the b-picture.  It is only through these refinements that we are
able to distinguish microlocal behavior along different
bicharacteristics hitting the corner $Y$ at the same point and with
the same tangential momenta $\etab,$ e.g.\ between different geodesics
in the conical spray shown leaving the corner in
Figure~\ref{figure:diffracted}.  Therefore we lift the Laplacian on
$X_0$ to the blow-up $X$ of $X_0$ at $Y$, denoted
$X=[X_0;Y]$. For simplicity of notation, assume that $Y$ is a
codimension $2$ corner (cf.\ Figure~\ref{fig:cornerimp} as well as
Figure~\ref{blowupfigure} below). Using polar coordinates $(r,\theta)$
in the $(x_1,x_2)$ we see that under this blow-up smooth vector fields
on $X$ lift to vector fields of the form $r^{-1}V$, where $V$ is
tangent to the fibers of the blow-down map, i.e.\ is a linear
combination of $r\pa_r,\pa_\theta,r\pa_y$ with $\CI$
coefficients.\footnote{In Figure~\ref{blowupfigure} as well as in the
  main exposition, $r$ and $\theta$ are denoted $r,z;$ we preserve the
  more usual radial coordinate notation here for purposes of
  exposition.}  The $\CI$ span of $r\pa_r,\pa_\theta,r\pa_y$ are the
so-called edge-smooth vector fields defined below in
Section~\ref{section:edgebgeom}.  Away from the boundaries,
$\theta=0,\pi/2$ in $[0,\pi/2]_\theta$, these are exactly the edge
vector fields introduced by Mazzeo \cite{MR93d:58152} on manifolds
with smooth boundaries. Here the fibers have boundaries (in our
example, the fibers are just the interval $[0,\pi/2]_\theta$), and
smoothness is required up to these boundaries. A key observation is
that the wave operator lifts to an edge-smooth differential operator
on $M=\RR_t \times X$.

Propagation phenomena in the edge setting (when the fibers have no
boundaries) have been treated in \cite{mvw1}, following
\cite{Melrose-Wunsch1}.  We now recall these results, as they apply in
the setting discussed here, provided we stay away from the fiber
boundaries. We emphasize that in the edge picture both the operator
one studies (the wave operator) and the microlocalizers are edge
pseudodifferential operators, i.e.\ there is no need to use two
different algebras as in the manifolds with corners setting discussed
above. In order to avoid complicating the notation, we simply replace
$[0,\pi/2]_\theta$ by the circle; edge operators are then formally of
the form $a(r,\theta,y,t,rD_r,D_\theta,rD_y,rD_t)$. Writing covectors
as $\xies\,\frac{dr}{r}+\zetaes\,d\theta
+\etaes\,\frac{dy}{r}+\taues\,\frac{dt}{r}$, their symbols are thus
smooth functions of $(r,\theta,y,t,\xies,\zetaes,\etaes,\taues)$; in
the setting of classical operators they are homogeneous in the last
three sets of variables. In particular, the principal symbol of the
wave operator is such a symbol, and its Hamilton vector field is a
smooth homogeneous vector field in these coordinates. Its dynamical system in
the characteristic set governs the analysis of solutions; by
homogeneity, it is convenient to study these dynamics in the
corresponding cosphere bundle. Then there are two (incoming, resp.\
outgoing) sets of critical points over $r=0$, corresponding to
radial points of the Hamilton vector field. These are both saddle
manifolds, with either the stable or the unstable manifold for each of
these contained in the boundary face $r=0$, and the other transversal to it. The Hamilton
flow within $r=0$ connects the incoming and outgoing radial sets, and
fixes the ``slow variables'' $(y,t,\etaes/\taues)$ (with the last
variable projectivized to work on the cosphere bundle); the projection
of its integral curves to the base variables gives the distance $\pi$
propagation of the geometric theorem of \cite{mvw1}. One should thus
picture singularities entering the boundary $r=0$ along (say) the
stable manifold of one of these critical manifolds, propagating through the
critical manifold and out through its unstable manifold; propagating across
the boundary to the stable manifold of the other critical manifold;
and then through it and back out of the boundary along the corresponding
unstable manifold.   As this whole process leaves the slow variables
unaffected, we see that they are preserved under the interaction with
the boundary, leading to the law of specular reflection.

To make sense of the propagation described above, one thus needs to
have a description of propagation at incoming and outgoing radial
points, as well as elsewhere within $r=0$; this was accomplished in
\cite{mvw1}.  It is the radial points that cause the most significant subtleties in the
propagation of singularities: at these points the
relation generated by the flow becomes
multi-valued, as in general a singularity arriving at a critical point
along its stable manifold may produce singularities leaving along the
whole unstable manifold.
An important part of the analysis is to note that at the
radial points, coisotropy corresponding to the stable/unstable
submanifold transversal to $r=0$ implies regularity (absence of edge
wave front set) in the unstable/stable manifold within $r=0$, and
conversely. In particular, an incident wave coisotropic for the
flow-in becomes edge-regular within $r=0$ (away from the radial
points) and then emerges to be coisotropic for the flow-out. A slight
complication is that coisotropy is relative to a function space; there
are losses in the background regularity space due to the radial points.

The added difficulty in our setting relative to that of \cite{mvw1} is that the fibers
have boundaries, and indeed typically corners.   We deal with this by
treating the propagation into and out of these corners inside $r=0$
analogously to the propagation of b-regularity analyzed in \cite{Vasy5}.
We thus compress the edge-smooth cotangent
bundle, essentially by replacing the ``smooth'' vector field
$\pa_\theta$ by one tangent to the boundaries of the fibers, i.e.,
using $f(\theta)\pa_\theta$ instead, where
$f(\theta)=\sin\theta\cos\theta$ has simple zeros at $\theta=0$,
$\theta=\pi/2$ and is non-zero elsewhere in $[0,\pi/2]$.
Note that $r\pa_r$,
$r\pa_y$ and $r\pa_t$ are already tangent to the boundary faces $\theta=0,\pi/2$,
so they do not require any adjustments. The resulting vector fields
are thus those tangent to the fibers of the front face of the blow-up,
as well as to all other boundary faces, and we call these \emph{edge-b}
vector fields. We use a pseudodifferential algebra $\Psieb*(M)$
microlocalizing these vector fields to prove our main results.
In addition to the already discussed results away from the boundaries
of the fibers, we thus need to analyze propagation at incoming and
outgoing radial points at the boundaries of the fibers, as well as the
analogue of hyperbolic and glancing points in the setting of
$M_0$. This is accomplished in Section~\ref{section:edge-propagation}.

Note that conormal regularity in $X_0$ near a point is equivalent,
after blow-up, to conormal regularity near the corresponding fiber of
the front face. Explicitly, in our example of a codimension $2$
corner, regularity with respect to $x_1\pa_{x_1},x_2\pa_{x_2}$ and
$\pa_y$ is equivalent to regularity with respect to $r\pa_r$,
$f(\theta)\pa_\theta$ and $\pa_y$, where
$f(\theta)=\sin\theta\cos\theta$. Thus, away from $\theta=0,\pi/2$,
i.e., in the interior of the front face, one has regularity with
respect to $r\pa_r$, $\pa_\theta$ and $\pa_y$---where that this notion
of regularity ignores the fibration. Edge regularity in the same
region is with respect to $r\pa_r$, $\pa_\theta$ and $r\pa_y$, i.e.,
it is a weaker notion than conormality. However, the ability to microlocalize within
the fibers depends on its use.

\subsection{Organization of the paper}
We start in Section~\ref{section:geometry} by describing the blown-up
space on which our analysis takes place. Then, in
Section~\ref{section:edgebgeom}, we describe in detail the connection
between both the smooth and b-structures on $M_0$, and between the
edge-smooth and edge-b structures on $M$. In the same section, we
study the bicharacteristics in the edge-b setting, i.e.\ that of $M$;
this is in many respects analogous to Lebeau's work \cite{Lebeau5} in
the blown-down setting (e.g.\ on $M_0$), though radial points are an
important new feature.

In Section~\ref{section:edge-b-calc}, with the operator algebra
construction provided by Appendix~\ref{BEPseuda}, we describe edge-b
pseudodifferential operators, and then in
Section~\ref{section:Diff-Psi} the algebra of operators that are both
edge-smooth-differential and edge-b-pseudodifferential; these provide
the link between the wave operator (which is edge-smooth) and the
microlocalizers (which are edge-b). The use of this mixed
differential-pseudodifferential calculus is analogous to the use of
(smooth-)differential, tangential-pseudodifferential operators by
Melrose-Sj\"ostrand 
\cite{Melrose-Sjostrand1,Melrose-Sjostrand2} in the smooth
boundary setting, and (smooth-)differential, b-pseudodifferential
operators in \cite{Vasy5} in the proof of the standard propagation
result on manifolds with corners.  This calculus provides the framework for
the positive commutator estimates proving the edge-b propagation
results.  In Section~\ref{section:coisodef} we discuss coisotropic
distributional, and their dual, non-focusing,
spaces. Section~\ref{section:edge-propagation} proves the edge-b
propagation of singularities. In Section~\ref{section:coisotropic} we
show how coisotropy propagates through the edge. Finally, in
Section~\ref{section:geometric} we prove the main theorem,
Theorem~\ref{theorem:geometric}, and its corollaries, which in
particular imply Theorem~\ref{intro-theorem}.

\emph{To ease the notational burden on the reader, an index of notation
is provided at the end of the paper.}

\subsection{Acknowledgements}
The authors gladly acknowledge the support of the National Science
  Foundation under grants DMS-0408993 (RM), DMS-0733485 and
  DMS-0801226 (AV) and DMS-0700318 (JW).  The second author was also
  supported by a Chambers Fellowship from Stanford University.  All
  three authors are grateful to MSRI for both support and hospitality
  in the fall of 2008, during which the bulk of this manuscript was
  written.  Two anonymous referees contributed many helpful
  suggestions and corrections to the exposition.

\section{Geometry: metric and Laplacian}\label{section:geometry} Let $X_0$
be a connected $n$-dimensional manifold with corners. \nomenclature[X]{$X_0$}{Spatial manifold, not blown up}
 We work locally,
near a given point in the interior of a corner $\cornX$ of codimension $f+1.$\nomenclature[Y]{$Y$}{Corner of spatial manifold}
Thus, we have local coordinates $x_1,\ldots,x_{f+1},y_1,\ldots,y_{n-f-1}$
in which $\cornX$ is given by $x_1=\ldots=x_{f+1}=0$.  Suppose that $g_0$ is a
smooth Riemannian metric on $X_0,$ non-degenerate up to all boundary
faces. We may always choose local coordinates in which it takes the
form
\begin{equation}\label{eq:X_0-metric-form}
g_0=\sum a_{ij}\,dx_i\,dx_j+\sum b_{ij}\,dy_i\,dy_j+2\sum c_{ij}\,dx_i\,dy_j
\end{equation}
with $c_{ij}|_{Y}=0.$ This can be arranged by changing the $y$ variables to
$$
y_j=y'_j+\sum x_k Y_{jk}(y')
$$
while keeping the $x_j$ unchanged. The cross-terms then become
$$
2\sum c_{ij}\,dx_i\,dy'_j+2\sum b_{ij}\,Y_{jk}\,dy'_i\,dx_k,
$$
which can be made to vanish by making the appropriate choice of the $Y_{jk},$
using the invertibility of $\{b_{ij}\}.$

Let $X=[X_0; \cornX],$ be the \emph{real blow-up} of $\cornX$ in $X_0$
\nomenclature[X]{$X$}{Spatial manifold, blown up at corner}
(see
\cite{Melrose:Atiyah, MR95k:58168}) and let $\efspace$ denote the front
face of the blow-up, which we also refer to as the {\em edge face}.
\nomenclature[Y]{$\tilde{Y}$}{Front face of blown-up spatial corner}
 Recall
that the blow-up arises by identifying a neigborhood of $Y$ in $X_0$ with
the inward-pointing normal bundle $N^+\cornX$ of $\cornX$ in $X_0$ and blowing up
the origin in the fibers of the normal bundle (i.e.\ introducing polar
coordinates in the fibers). Since the normal bundle is trivialized by the
defining functions of the boundary faces, a neighborhood of $\efspace$ in
$X$ is globally diffeomorphic to
$$
[0,\infty)\times
\cornX\times Z,\ \text{where}\ Z=S^f \cap[0,\infty)^{f+1}.
$$
We use coordinates
$z_1,\dots, z_f$ in $Z;$ near a corner of $Z$ of codimension $k,$ these are
divided into $z'_1,\dots, z'_k \in [0,\infty)$ and $z''_{k+1},\dots, z''_f \in
\RR.$ There is significant freedom in choosing the identification
of a neighborhood of $\cornX$ and the coordinates on the fibers
of the normal bundle but the naturality of the smooth structure on the
blown up manifold, $[X_0;\cornX],$ corresponds to the fact that these are smoothly
related.

\begin{figure}[ht]
\includegraphics{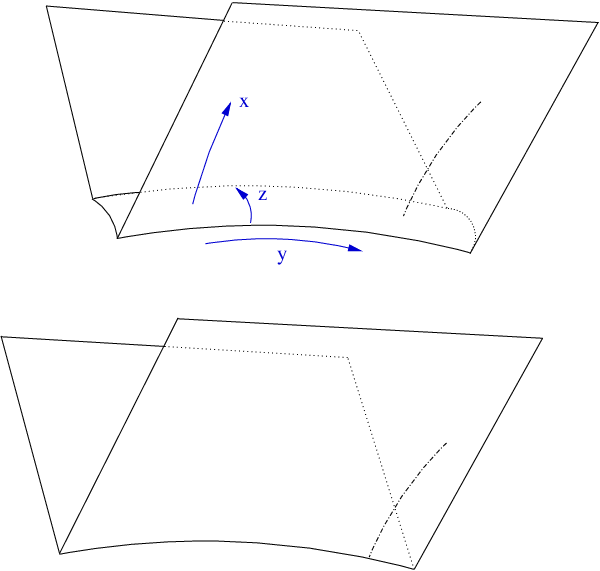}
\caption{A manifold $X_0$ with corners of codimension two (below) and
  its blow-up $X$ (top).  A geodesic hitting the codimension-two corner
is shown, together with its lift to the blown-up space $X,$ which then
strikes the front ``edge'' face of the blow-up.}
\label{blowupfigure}
\end{figure}

The metric $g_0$ identifies $NY$ as a subbundle of $T_{\cornX}X_0.$ This
corresponds to coordinates $(x_i,y_j)$ as above with the $dy_j$ orthogonal
to $dx_i$ at $\cornX.$ In the blow-up polar coordinates are introduced in the
$x_i$ but the $y_j$ are left unchanged. It is convenient to think of these
as polar coordinates induced by $\sum_{ij} a_{ij}\,dx_i\,dx_j.$ In
particular, we choose
$$
x=\Big(\sum_{ij} a_{ij}(0,y)x_i x_j\Big)^{1/2}
$$
as the `polar variable' which is the defining function of the front face.
With this choice, the metric takes the form
\begin{equation}\label{metric}
g = dx^2 + h(y,dy)+ x^2 k(x,y,z,dz)+x k'(x,y,z,dx,dy,x\,dz).
\end{equation}
More generally, one can simply consider the wider class of manifolds with
corners with metrics of the form \eqref{metric}, we refer to these as `edge
metrics' for brevity. Note, however, that there are no results currently
available in this wider setting that limit propagation of singularities to
generalized broken bicharacteristics. Despite this, the results in
\S\ref{section:edge-propagation} remain valid in this more general context.

Now, set
$$
M=\RR_t\times X,\ M_0=\RR\times X_0,\ \ef=\RR\times
\efspace,\ \cornM=\RR\times\cornX,
$$
\nomenclature[M]{$M$}{Spacetime manifold, blown up at
      corner}
\nomenclature[M]{$M_0$}{Spacetime manifold, not blown up}
where $\cornM$, resp.\ $\ef$ now represent the unresolved, resp.\
blown-up,
version of the space-time ``edge.''
\nomenclature[W]{$W$}{Space-time edge}
\nomenclature[W]{$\tilde{W}$}{Front face of blow-up of space-time edge}

Let $\Diffes{*}(X)$ denote the filtered algebra of operators on
$\CI(X)$ 
\nomenclature[D]{$\Diffes{*}(X)$}{Edge-smooth differential operators}
generated by those vector fields that are tangent to the fibers of the
front face $\efspace$ produced by the blow up; $xD_x,$ $xD_{y_j},$ $D_{z_j}$
form a local coordinate basis of these vector fields. See
\S\ref{section:edgebgeom} and \S\ref{section:Diff-Psi} for further explanation
of this algebra and of our terminology. The same definition leads to the
algebra of operators $\Diffes{*}(M)$ on $\CI(M)$ with local generating
basis  $xD_t,$ $xD_x,$ $xD_{y_j},$ $D_{z_j}.$

\begin{lemma}
The Laplace operator $\Lap\in x^{-2}\Diffes{2}(X)$ on $X$ is of the form
$$
\Lap \in D_x^2+\frac{f}{\imath x} D_x + \frac{1}{x^2} \Lap_Z+ \Lap_Y + x^{-1}
\Diffes{2}(X)
$$
where $\Lap_Z$ is the Laplace operator in $Z$ with respect to the metric
$h$ (and hence depends parametrically on $x$ and $y)$ and $\Lap_{\cornX}$ is the
Laplacian on $\cornX$ with respect to the metric $k.$
\end{lemma}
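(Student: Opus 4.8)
The plan is to compute $\Lap$ directly from the Riemannian expression $\Lap=|g|^{-1/2}\sum_{i,j}D_i\bigl(|g|^{1/2}g^{ij}D_j\bigr)$ in the coordinates $(x,y_1,\dots,y_{n-f-1},z_1,\dots,z_f)$ underlying \eqref{metric}, and then to isolate the leading part by pairing the available powers of $x$ against the local generators $xD_x$, $xD_{y_j}$, $D_{z_l}$ of $\Diffes{1}(X)$. Everything happens near the front face, so $x$ is small; and ``leading'' here means modulo $x^{-1}\Diffes{2}(X)$, which -- once $x^2\Lap\in\Diffes{2}(X)$ is known -- is the same as computing $x^2\Lap$ modulo $x\Diffes{2}(X)$, i.e.\ restricting smooth coefficients to $\{x=0\}$.

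First I would record the block structure of $g$ and $g^{-1}$ in the blocks $x\mid y\mid z$. By \eqref{metric}, the summand $dx^2+h(y,dy)$ contributes $\operatorname{diag}(1,h(y))$ to the $x$- and $y$-blocks, the summand $x^2k(x,y,z,dz)$ contributes a positive-definite $x^2\hat k(x,y,z)$ to the $z$-block with $\hat k(0,y,z)$ the matrix of $k(0,y,\cdot)$, and the remainder $x\,k'(x,y,z,dx,dy,x\,dz)$ -- a smooth quadratic form in $dx,dy,x\,dz$ -- contributes $O(x)$ to the $xx$-, $xy$-, $yy$-entries, $O(x^2)$ to the $xz$- and $yz$-entries, and $O(x^3)$ to the $zz$-entries (this last absorbed into $x^2\hat k$). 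Hence $g$ has diagonal blocks $A=\operatorname{diag}(1,h(y))+O(x)$ (invertible near the front face) and $C=x^2\hat k$, and off-diagonal block $B=O(x^2)$. Taking Schur complements gives $|g|^{1/2}=x^fJ$ with $J$ smooth, positive and $J(0,y,z)=(\det h(y))^{1/2}(\det\hat k(0,y,z))^{1/2}$, together with $g^{xx}=1+O(x)$, $g^{y_jy_k}=h^{jk}(y)+O(x)$, $g^{xy_j}=O(x)$, $g^{z_lz_m}=x^{-2}\hat k^{lm}+O(1)$, and $g^{xz_l}=g^{y_jz_l}=O(1)$. Using $x^2D_x^2=(xD_x)^2-\tfrac1i(xD_x)$, $x^2D_{y_j}D_{y_k}=(xD_{y_j})(xD_{y_k})$, $x^2D_xD_{y_j}=(xD_x)(xD_{y_j})-\tfrac1i(xD_{y_j})$, $x^2D_xD_{z_l}=(xD_x)(xD_{z_l})-\tfrac1i(xD_{z_l})$ and $x^2D_{y_j}D_{z_l}=(xD_{y_j})(xD_{z_l})$, and feeding in the coefficient orders just found, one sees $x^2g^{ij}D_iD_j\in\Diffes{2}(X)$ for every $i,j$, and likewise the first-order terms $x^2|g|^{-1/2}D_i(|g|^{1/2}g^{ij})D_j\in\Diffes{1}(X)$; thus $\Lap\in x^{-2}\Diffes{2}(X)$.

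Next I would evaluate $x^2\Lap$ modulo $x\Diffes{2}(X)$ block by block (all congruences below being modulo $x\Diffes{2}(X)$). In the $(x,x)$ block, $x^2|g|^{-1/2}D_x(|g|^{1/2}g^{xx}D_x)\equiv x^2g^{xx}D_x^2+x^2|g|^{-1/2}(D_x|g|^{1/2})g^{xx}D_x$; since $D_x$ applied to the factor $x^f$ of $|g|^{1/2}=x^fJ$ produces exactly $\tfrac fi x^{f-1}$ while $g^{xx}=1+O(x)$, this adds up to $(xD_x)^2+\tfrac{f-1}i(xD_x)=x^2\bigl(D_x^2+\tfrac{f}{ix}D_x\bigr)$ -- precisely where $f=\dim Z$ enters the coefficient of $\tfrac1x D_x$. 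In the $(z,z)$ block, pulling the ($z$-independent) factors $x^f$ and $x^{-2}$ through $D_{z_l}$ turns $x^2|g|^{-1/2}D_{z_l}(|g|^{1/2}g^{z_lz_m}D_{z_m})$ into $\tfrac1J D_{z_l}(J\hat k^{lm}D_{z_m})$ (the $O(1)$ part of $g^{z_lz_m}$ feeding only the error); the crucial point is that the remaining factor $(\det h(y))^{1/2}$ of $J$ is independent of $z$, so it too passes through $D_{z_l}$ and cancels, leaving exactly $\Lap_Z$. In the $(y,y)$ block, moving the $y$-independent factor $x^2$ inside the $D_{y_j}$'s leaves only the principal part $h^{jk}(y)(xD_{y_j})(xD_{y_k})=x^2\Lap_{\cornX}$. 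Finally every off-diagonal contribution and every commutator correction lands in $x\Diffes{2}(X)$: the $(x,y)$-type terms because the extra power is furnished by $g^{xy_j}=O(x)$, and the $(x,z)$- and $(y,z)$-type terms because, although $g^{xz_l}$ and $g^{y_jz_l}$ are only $O(1)$, the power of $x$ surviving $x^2\cdot x^{-1}$ places $O(1)\cdot x\cdot(xD_x)D_{z_l}$ and $O(1)\cdot x\cdot(xD_{y_j})D_{z_l}$ in $x\Diffes{2}(X)$. Summing, $x^2\Lap\equiv x^2\bigl(D_x^2+\tfrac{f}{ix}D_x\bigr)+\Lap_Z+x^2\Lap_{\cornX}$, which is the assertion.

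I expect the only genuine difficulty to be this last power-counting step, since the mixed $z$-derivative terms sit exactly on the boundary of the claimed error $x^{-1}\Diffes{2}(X)$: one must check that every product of generators absorbs exactly the powers of $x$ available to it, an off-by-one either enlarging the error term or mislocating the $\tfrac{f}{ix}D_x$ coefficient. The one structural observation that makes the answer come out clean -- that the $(z,z)$ block reproduces the honest fiber Laplacian $\Lap_Z$ rather than some conformally modified operator -- is exactly the $z$-independence of the base volume factor $(\det h(y))^{1/2}$, which is what lets it cancel upon conjugation through $D_{z_l}$.
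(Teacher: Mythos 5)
The paper states this lemma without proof, so there is nothing internal to compare against; your direct computation of $\Lap$ in divergence form --- block analysis of $g$ and $g^{-1}$ near the front face, isolation of the $f/(\imath x)$ coefficient of $D_x$ from $D_x$ applied to the factor $x^f$ in $|g|^{1/2}$, and cancellation of the $z$-independent part of the volume factor $J$ in the $(z,z)$ block --- is the standard and correct argument. Two small remarks: the lemma as printed has the metrics transposed in its final sentence ($\Lap_Z$ is taken with respect to $k$ and $\Lap_{\cornX}$ with respect to $h$), and you have used the intended assignment; and in the $(y,y)$ block your equation $h^{jk}(y)(xD_{y_j})(xD_{y_k})=x^2\Lap_{\cornX}$ should be read as a congruence modulo $x\Diffes{2}(X)$, since $\Lap_{\cornX}$ carries first-order $D_y$ terms, but those lie in $x^{-1}\Diffes{1}(X)\subset x^{-1}\Diffes{2}(X)$ and so are absorbed in the stated error term.
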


In particular, $\Box=D_t^2-\Delta_X\in x^{-2}\Diffes{2}(M)$.

\section{Bundles and bicharacteristics}\label{section:edgebgeom}

In this section, we discuss several different geometric settings in which
the propagation problem for $\Box$ on $M_0$ may be viewed.  Somewhat
loosely, each of these corresponds to a choice of a Lie algebra of vector
fields with different boundary behavior; these then lead to distinct
bundles of covectors, with the corresponding descriptors used as section
headings here. The first, the ``b''-bundle, can be considered either on
$M_0$ or $M.$ Indeed, the bundle of b-covectors on $M_0$ is the setting for
the propagation results of \cite{Vasy5}: these results are, as will be seen
below, necessarily global in the corner, and do not distinguish between
general diffractive rays and the subset of geometric rays (defined below). In order to
discuss the improvement in regularity which can occur for propagation along
the geometric rays, two more bundles of covectors, lying over the blown-up
space $M,$ are introduced. These, the ``edge-b'' and ``edge-smooth''
bundles, keep track of local information in the fibers of the blow-up
$\ef$ of
$\cornM$ in $M_0,$ and allow us to distinguish the diffractive rays from
geometric ones. The distinction between the edge-b and edge-smooth bundles
comes only at the boundary of $\ef,$ and the relationship between the two
bundles gives rise to reflection of singularities off
boundary faces, uniformly up to the edge $\ef.$

In order to alleviate some of the notational burden on the reader, a table
is included in \S\ref{subsection:summary} in which the various bundles, their
coordinates, their sections, and some of the maps among them are
reviewed. The standard objects for a manifold with corners, $\man,$
correspond to uniform smoothness up to all boundary faces, so $\Vf(\man)$
denotes the Lie algebra of smooth vector fields, $T\man,$ the tangent
bundle, of which $\Vf(\man)$ forms all the smooth sections, $T^*\man$ is
its dual, etc.

\subsection{b-Cotangent bundle} Let $\Vb(\man)\subset\Vf(\man)$
\nomenclature[V]{$\Vb$}{b-vector fields}
 denote the
Lie subalgebra of those smooth vector fields on the general manifold with
corners $\man,$ which are tangent to each boundary face. If we choose
coordinates as in \S\ref{section:edgebgeom},
the local vector fields
$$
x_1\pa_{x_1}, \dots, x_{f+1} \pa_{x_{f+1}}, \pa_{y_1}, \dots, \pa_{y_{n-f-1}};
$$
form a basis over smooth functions. Hence $\Vb(\man)$ is the space of
$\CI$-sections of a vector bundle, denoted
$$
\bT \man.
$$
\nomenclature[T]{$\bT$}{b tangent bundle}
The dual bundle $\Tbstar \man$\nomenclature[T]{$\Tbstar$}{b cotangent bundle}
 therefore has sections spanned by
$$
\frac {dx_1}{x_1}, \dots, \frac {dx_{f+1}}{x_{f+1}}, d{y_1}, \dots, d{y_{n-f-1}}.
$$

The natural map
\begin{equation}\label{eq:pismooth-def}
\pismooth:T^*\man\longrightarrow \Tbstar\man
\end{equation}
\nomenclature[P]{$\pismooth$}{Projection from cotangent bundle to
  b cotangent bundle}
is the adjoint of the bundle map $\iota:\bT\man\to T\man$ corresponding to
the inclusion of $\Vb(\man)$ in $\Vf(\man).$

Canonical local coordinates on $T^*M_0$ correspond to decomposing a
covector in terms of the basis as
$$
\taus\,dt+\sum_j\etas_j\,dy_j+\sum_j \xis_j\,dx_j,
$$
and elements of $\Tbstar M_0$ may be written
$$
\taub\,dt+\sum_j\etab_j\,dy_j+\sum_j \xib_j\,\frac{dx_j}{x_j},
$$
so defining canonical coordinates. The map \eqref{eq:pismooth-def} then
takes the form
$$
\pismooth(x,y,t,\xis,\etas,\taus)=(x,y,t,\xib,\etab,\taub)=
(x,y,t,x\xis,\etas,\taus),
$$
with $x\xis=(x_1\xis_1,\ldots,x_{f+1}\xis_{f+1}).$

The setting for the basic theorem on the propagation of singularities in
\cite{Vasy5} is $\Tbstar M_0.$ In particular, generalized broken
bicharacteristics, or \GBBs, are curves in $\Tbstar M_0.$ In order to
analyze the geometric improvement, spaces that will keep track of finer
singularities are needed. Before introducing these, we first recall the
setup for \GBBs. Note that at $\cornM,$ $\pismooth|_{\cornM}$ maps $N^*\cornM$
onto the zero section over $\cornM,$ and is injective on complementary
subspaces of $T^*_{\cornM}M_0$,
so we may make the identification
$$
\pismooth|_{\cornM}(T^*_{\cornM}M_0) \cong T^*\cornM.
$$

We also recall that it is convenient to work on cosphere bundles.
Since it is linear, $\pismooth$ intertwines the $\RR^+$-actions, but it
does not induce a map on the corresponding cosphere bundles since it maps
part of $T^*M_0\setminus o$ into the zero section of $\Tbstar
M_0.$ However, on the characteristic set of $\Box$ this map is
better behaved. Let
$$
p_0=\sigma_2(\Box)\in\CI(T^*M_0\setminus o)
$$
be the standard principal symbol of $\Box\in\Diff{2}(M_0);$ it is of the form
$$
p_0=(\taus)^2-\Big(\sum A_{jk}\xis_j\xis_k+\sum B_{jk}\etas_j\etas_k+2\sum
C_{jk}\xis_j\etas_k\Big)
$$
with $A_{jk},B_{jk},C_{jk}\in\CI(M_0)$, $A_{jk}=A_{kj}$ and $B_{jk}=B_{kj},$
$C_{jk}|_{x=0}=0.$ Let 
\begin{equation}
\sSigma=p_0^{-1}(\{0\})/\bbR^+\subset S^*M_0
\label{30.8.2006.38}\end{equation}
be the spherical image of the characteristic set of $\Box.$ This has two
connected components, $\sSigmapm,$ corresponding to $\taus\gtrless 0$ since
$\{\taus=0\}\cap\sSigma=\emptyset.$ Now, $N^*\cornM\subset\{\taus=0\},$ so
$N^*\cornM\cap p^{-1}(0)=\emptyset,$ meaning $\cornM$ is non-characteristic for
$\Box.$ Since $N^*\cornM$ is the null space of $\pismooth,$ there is an
induced map on the sphere bundles
$\widehat\pismooth:\sSigma\longrightarrow\Sbstar M_0;$ the range is denoted
\begin{equation}
\bSigma=\widehat\pismooth(p_0^{-1}(0))/\bbR^+\subset\Sbstar M_0.
\label{30.8.2006.39}\end{equation}

Again, $\bSigma$ has two connected components corresponding
to the sign of $\taus$ in $\sSigma$ and hence the sign of $\taub.$ These
will be denoted $\bSigmapm.$

We use $\taus$, resp.\ $\taub$, to obtain functions homogeneous of degree
zero on $T^*M_0\setminus o$ inducing coordinates on $S^*M_0$ near $\sSigma:$
$$
x,y,t,\ \xish=\xis/|\taus|,\ \etash=\etas/|\taus|.
$$
Note also that these coordinates are {\em global in the fibers of $S^*M_0\cap
\sSigmapm\to M_0$} for each choice of sign $\pm.$
$$
\taush=\sgn\taus
$$
lifts to a constant function $\pm1$ on $\sSigmapm.$ There are similar
coordinates on $\Sbstar M_0$ near $\bSigma.$

In these coordinates,
\begin{equation}\label{eq:bSigma-coords}
\bSigma\cap\Sbstar_W M_0=\{(x,y,t,\xibh,\etabh):\ x=0,\ \xibh=0,
\ \sum B_{jk}\etabh_j\etabh_k\leq 1\}\subset S^*W.
\end{equation}
We also remark that with $\sH_{p_0}$ denoting the Hamilton vector
field of $p_0$,
$$
\sH_{\so}=|\taus|^{-1}\sH_{p_0}
$$
is a homogeneous degree zero vector field near $p_0^{-1}(\{0\})$, thus can
be regarded as a vector field on
$S^*M_0$.\nomenclature[H]{$\mathsf{H}_{\so}$}{Hamilton vector
  field on smooth cosphere bundle}

Now we define the b-hyperbolic and b-glancing sets by
\begin{equation}\label{gcalwbdef}
\gcal_{W,\bo} = \{q \in \Sbstar_{W} M_0: \lvert (\pismooth)^{-1}(q)\cap \sSigma\rvert = 1\}
\end{equation}
and
\begin{equation}\label{hcalwbdef}
\hcal_{W,\bo} = \{q \in \Sbstar_{W} M_0: \lvert (\pismooth)^{-1}(q)\cap \sSigma\rvert \geq 2\},
\end{equation}
\nomenclature[G]{$\gcal_{W,\bo},\hcal_{W,\bo}$}{b glancing and
hyperbolic sets}
These are thus also subsets of $S^*W$. In local
coordinates\footnote{The discrete variable $\taubh$ is not, of course,
  part of the coordinate system, but serves to identify which of two
  components of the characteristic set we are in.}
they are given by
\begin{equation}\begin{split}
\gcal_{W,\bo} &=\{(x,y,t,\taubh, \xibh,\etabh):\ x=0,\ \taubh \in
\{\pm 1\},\ \xibh=0,
\ \sum B_{jk}\etabh_j\etabh_k=1\}\\
\hcal_{W,\bo} &=\{(x,y,t,\taubh,\xibh,\etabh):\ x=0,\ \taubh \in
\{\pm 1\},\ \xibh=0,
\ \sum B_{jk}\etabh_j\etabh_k<1\}.
\end{split}\end{equation}
Note that for $q\in\Sbstar_W M_0,$ at the unique point $q_0$ in
$(\pismooth)^{-1}(q)\cap \sSigma$, we have $\xish=0$, and correspondingly
$\sH_{\so}(q_0)$ is tangent to $W$, explaining the ``glancing'' terminology.

Now we discuss bicharacteristics.
\begin{definition}
A \emph{generalized broken bicharacteristic}, or \GBB, is a continuous
map $\gamma:I \to \bSigma$ such that for all $f \in \CI(\Sbstar
M_0),$ real-valued,
\begin{align}\label{eq:liminf}
\liminf_{s \to s_0} &\frac{ (f\circ \gamma)(s)- (f\circ
  \gamma)(s_0)}{s-s_0}\\
& \geq \inf\{ \sH_{\so} (\pismooth^* f)(q): q \in
\pismooth^{-1}(\gamma(s_0))\cap \sSigma \}.
\end{align}
\end{definition}
\nomenclature[G]{\GBB}{Generalized broken bicharacteristic}

\begin{remark}
Replacing $f$ by $-f$, we deduce that the inequality
\begin{align}\label{eq:limsup}
\limsup_{s \to s_0} &\frac{ (f\circ \gamma)(s)- (f\circ
  \gamma)(s_0)}{s-s_0}\\
& \leq \sup\{ \sH_{\so} (\pismooth^* f)(q): q \in
\pismooth^{-1}(\gamma(s_0))\cap \sSigma \}.
\end{align}
also holds.
\end{remark}

We recall an alternative description of \GBBs, which was in fact Lebeau's
definition \cite{Lebeau5}. (One could use this lemma as the defining
property of \GBB; the equivalence of these two possible definitions is
proved in \cite[Lemma~7]{Vasy:Geometric-optics}.)

\begin{lemma}\label{lemma:bich-Lebeau}(See
\cite[Lemma~7]{Vasy:Geometric-optics}.)
Suppose $\gamma$ is a \GBB. Then

\begin{enumerate}
\item
If $\gamma(s_0)\in\gcal_{W,\bo}$, let $q_0$ be the unique
point in the preimage of $\gamma(s_0)$ under
$\widehat{\pismooth}=\pismooth|_{\sSigma}$. Then for
all $f\in\CI(\Sbstar M_0)$ real valued, $f\circ\gamma$ is differentiable at $s_0$,
and
\begin{equation*}
\frac{d(f\circ\gamma)}{ds}|_{s=s_0}=\sH_{\so}\pismooth^*f(q_0).
\end{equation*}
\item
If $\gamma(s_0)\in\hcal_{W,\bo}$,
lying over a corner given in local coordinates
by $x_j=0$, $j=1,\ldots,f+1$, there exists $\ep>0$ such that
$x_j(\gamma(s))=0$ for $s\in(s_0-\ep,s_0+\ep)$ if and only if $s=s_0$.
That is, $\gamma$ does not meet the corner $\{x_1=\ldots=x_{f+1}=0\}$
in a punctured
neighborhood of $s_0$.
\end{enumerate}
\end{lemma}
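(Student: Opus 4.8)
The statement is a standard structural fact about generalized broken bicharacteristics, and the natural approach is to exploit the characterization \eqref{eq:liminf}–\eqref{eq:limsup} of \GBBs together with the two defining properties from Lebeau's alternative description (Lemma~\ref{lemma:bich-Lebeau}), which we are free to use since the equivalence is quoted from \cite{Vasy:Geometric-optics}. The overall idea: at a glancing point over the corner, the incoming and outgoing behavior of $\gamma$ is governed, via \eqref{eq:liminf}, by the Hamilton vector field $\sH_{\so}$ pulled back from the preimages in $\sSigma$; since at any such preimage $q_0$ one has $\xish(q_0)=0$ (so $\sH_{\so}$ is tangent to $W$ there, as noted right before the Definition), the curve $\gamma$ is ``pushed away'' from the corner in at least one transverse direction at $s_0$, and a continuity/ODE-comparison argument then rules out the corner being hit again nearby.

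\medskip\noindent
More concretely, here are the steps I would carry out. \emph{Step 1.} Fix local coordinates $x_1,\dots,x_{f+1},y,t$ near the corner with $W=\{x_1=\dots=x_{f+1}=0\}$, and the induced fiber-coordinates $\xibh,\etabh,\taubh$ on $\Sbstar M_0$, and recall from \eqref{eq:bSigma-coords} and the formulas for $\gcal_{W,\bo},\hcal_{W,\bo}$ that at $\gamma(s_0)\in\hcal_{W,\bo}$ we have $x=0$, $\xibh=0$, $\sum B_{jk}\etabh_j\etabh_k<1$. \emph{Step 2.} For each $j$ consider the function $f_j = x_j$, viewed (after homogenization, or simply as a function pulled back from the base, on which the $\RR^+$-action is trivial) as a real-valued element of $\CI(\Sbstar M_0)$ near $\gamma(s_0)$; compute $\sH_{\so}(\pismooth^* x_j)$ at a preimage point $q_0\in\sSigma$. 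Using $p_0=(\taus)^2-(\sum A_{jk}\xis_j\xis_k+\dots)$ and that at $q_0$ the smooth-fiber coordinate satisfies $\xish(q_0)=0$ (equivalently $\xis=0$ there in the rescaled sense) because $\pismooth$ collapses $N^*W$, one gets $\sH_{\so} x_j(q_0) = -2\sum_k A_{jk}\xis_k/|\taus| + \text{(terms vanishing on }N^*W) = 0$ at leading order — i.e. the \emph{first} derivative of $x_j\circ\gamma$ vanishes at $s_0$. \emph{Step 3.} Therefore first-order information is insufficient, and one must look at a quadratic quantity. The right object is something like $\rho = \sum_{ij} A_{ij}(0,y) x_i x_j$ (a defining-function-square for the front face, positive off $W$), or more robustly the function whose Hamilton derivative picks up $|\xis|^2$; one computes $\sH_{\so}^2 \rho$ at $q_0$ and finds it is $\geq c |\xish(q_0)|^2$ up to lower order — but $\xish(q_0)=0$, so this does not immediately work either, which is exactly why one instead argues via a first-derivative jump in a single $x_j$ rather than via convexity of $\rho$.

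\medskip\noindent
\emph{Step 4 (the real argument).} The clean route is: suppose for contradiction that there is a sequence $s_m \to s_0$, $s_m\neq s_0$, with $x_j(\gamma(s_m))=0$ for some fixed $j$ and all $m$ (pass to a subsequence so that $j$ is fixed and, say, $s_m>s_0$). On each interval between consecutive zeros, or on an interval $(s_0,s_m)$, apply \eqref{eq:liminf}/\eqref{eq:limsup} to $f=x_j$ and to $f=-x_j$: since $x_j\geq 0$ and $x_j(\gamma(s_0))=x_j(\gamma(s_m))=0$, the difference quotient from the right at $s_0$ has nonnegative $\liminf$ and the quotient from the left at $s_m$ has nonpositive $\limsup$; combined with a Rolle-type interior point $\sigma_m\in(s_0,s_m)$ where $x_j\circ\gamma$ has an interior maximum, one extracts at $\sigma_m$ both $\limsup \leq 0$ and $\liminf \geq 0$ for the difference quotient, forcing $\sH_{\so}(\pismooth^* x_j)$ to vanish on $\pismooth^{-1}(\gamma(\sigma_m))\cap\sSigma$. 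Now feed this back in: vanishing of $\sH_{\so} x_j$ along $\sSigma$ over a whole sequence $\gamma(\sigma_m)\to\gamma(s_0)$, together with the a priori structure of $\sSigma$ and the fact that the only way $\sH_{\so}x_j$ can vanish is $\xis=0$ there, forces $\gamma(\sigma_m)$ to lie in the glancing set $\hcal_{W,\bo}$ with $x=0$; but then $\sigma_m$ is another corner-hitting time in every neighborhood of $s_0$, and one iterates/uses compactness of $\sSigma\cap\{x=0\}$ over $W$ plus uniqueness of the bicharacteristic away from glancing to derive a contradiction. Alternatively — and this is probably how the authors do it — one invokes part (1) of Lemma~\ref{lemma:bich-Lebeau} on the complement of the corner together with the fact, from \cite{Vasy5}, that \GBBs through a non-corner glancing point are locally unique and given by the bicharacteristic of the boundary-reduced problem, which is transverse to the next-lower corner; then a single application suffices.

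\medskip\noindent
\emph{Main obstacle.} The delicate point is Step~3/Step~4: showing that the \emph{second-order} behavior genuinely pushes $\gamma$ off the corner. The issue is that at a glancing point $\xis=0$, so naive first- and even second-derivative computations of $x_j\circ\gamma$ at $s_0$ can all vanish, and the curve is only Lipschitz (one has one-sided Hamilton-type inequalities, not an ODE), so classical convexity arguments do not apply verbatim. The resolution must lean on the precise one-sided estimates \eqref{eq:liminf}–\eqref{eq:limsup} applied at nearby (non-glancing or interior-extremum) parameter values where the full derivative \emph{does} exist by part (1) of Lemma~\ref{lemma:bich-Lebeau}, converting ``$\gamma$ returns to the corner'' into ``$\sH_{\so}x_j$ vanishes on $\sSigma$ at a nearby point,'' and then using the explicit symbolic form of $p_0$ (with $C_{jk}|_{x=0}=0$) to see that this can only happen at a glancing point with $x=0$ — giving an infinite descent / compactness contradiction with the fact, established in \cite{Vasy5}, that \GBBs cannot oscillate into and out of a corner infinitely often near a single parameter. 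I expect the write-up to cite \cite{Vasy5} (or \cite{Vasy:Geometric-optics}) for the last ingredient rather than reprove it.
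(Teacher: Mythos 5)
Your proposal goes wrong at a crucial point, and the error propagates through the rest of the argument.

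First, note that the paper does not actually prove this lemma: it quotes it from \cite[Lemma~7]{Vasy:Geometric-optics}, and the Remark immediately following the statement sketches the argument. That sketch is quite short and clean, and it is \emph{not} the route you take.

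The key mistake is in your Step~2. You assert that at a preimage point $q_0\in\widehat{\pismooth}^{-1}(\gamma(s_0))\cap\sSigma$ one has $\xish(q_0)=0$ ``because $\pismooth$ collapses $N^*W$,'' and you conclude that $\sH_{\so}x_j$ vanishes there, hence the first derivative of $x_j\circ\gamma$ vanishes at $s_0$. This is false for $\gamma(s_0)\in\hcal_{W,\bo}$, which is the case at issue in part~(2). The identity $\xish=0$ on the preimage holds precisely when $\gamma(s_0)\in\gcal_{W,\bo}$ (the glancing case of part~(1)); at a hyperbolic point the preimage in $\sSigma$ is the set where $\sum A_{jk}\xish_j\xish_k = 1-\sum B_{jk}\etabh_j\etabh_k>0$, so $\xish\neq 0$ everywhere on the preimage, and in fact $\sH_{\so}x_j$ takes both signs as $\xish$ ranges over that ellipsoid. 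Thus the first derivative of $x_j\circ\gamma$ does not vanish a priori, and worse, the set \eqref{pullbackset} for $f=x_j$ is not a single value, so one cannot even conclude differentiability of $x_j\circ\gamma$ at $s_0$ by this route. Everything in your Steps~3 and~4 --- the attempt to squeeze second-order information out of a defining-function-square, the Rolle-type argument, the infinite-descent/compactness appeal to \cite{Vasy5} --- is a detour built on this initial misidentification, and is not needed. (You also at one point invoke ``part~(1) of Lemma~\ref{lemma:bich-Lebeau}'' inside the proof of the same lemma; even if the lemma is being quoted rather than proved, stating an argument this way is circular.)

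The actual argument in the paper's Remark is as follows. Part~(1): at $\gcal_{W,\bo}$ the preimage is a single point, so the set \eqref{pullbackset} consists of one value for every $f$; combining \eqref{eq:liminf} with \eqref{eq:limsup} gives differentiability of $f\circ\gamma$ at $s_0$ with that value as the derivative. Part~(2): choose $f=\sum_j\xibh_j$. Pulled back to $\sSigma$ and restricted to $x=0$, one has $\sH_{\so}(\pismooth^*f)=-(1-\sum B_{ij}\etabh_i\etabh_j)$ (this computation is carried out in the proof of Lemma~\ref{lemma:OG-IC-uniform-est-1}); the point is that this value is \emph{constant} over the entire preimage fiber, hence \eqref{pullbackset} is again a singleton, $f\circ\gamma$ is differentiable at $s_0$, and at a hyperbolic point the value is strictly negative. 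Since $f(\gamma(s_0))=0$ (as $\xibh$ vanishes on $\Sbstar_W M_0$), $f\circ\gamma$ is strictly decreasing through $0$ at $s_0$, hence nonzero for nearby $s\neq s_0$, so $\gamma(s)\notin\Sbstar_W M_0$ there. That is part~(2). The moral: you want a test function whose $\sH_{\so}$-derivative is constant on the fiber over a corner point, and $\sum\xibh_j$ is such a function; $x_j$ is not.
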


\begin{remark}
It also follows directly from the definition of \GBB (by combining
\eqref{eq:liminf} and \eqref{eq:limsup}) that, more
generally, if the set
\begin{equation}\label{pullbackset}
\{ \sH_{\so} (\pismooth^* f)(q): q \in
\pismooth^{-1}(\gamma(s_0))\cap \sSigma \}
\end{equation}
consists of a single value (for instance, if 
$\pismooth^{-1}(\gamma(s_0))\cap \sSigma$ is a single point), then
$f\circ \gamma$ must be 
differentiable at $s_0,$ with derivative given by this value. This is
indeed how Lemma~\ref{lemma:bich-Lebeau} is proved. The first part of the
lemma
follows because $\pismooth^{-1}(\gamma(s_0))\cap \sSigma$ is a single point,
giving differentiability.
On the other hand,
the second half follows using $f=\sum \xibh_j$, for which
the single value in \eqref{pullbackset} is $-(1-\sum B_{ij}\etabh_i\etabh_j)<0$, for
$\gamma(s_0)\in\hcal_{W,\bo}$. Thus, $f$ is locally strictly decreasing.
Since $f(q')=0$ if $q'\in\Sbstar_W M_0\cap\bSigma$, in particular at
$\gamma(s_0)$, it is non-zero at $\gamma(s)$ for nearby but distinct
values of $s$---so in particular for such $s$,
$\gamma(s)\notin\Sbstar_W M_0\cap\bSigma$, showing that $\gamma$ leaves $W$
instantaneously. In fact, this argument also demonstrates the following
useful lemma.
\end{remark}

\begin{lemma}\label{lemma:OG-IC-uniform-est-1}
Let $U$ be a coordinate neighborhood around some $p\in W$, $K$ a compact
subset of $U$. Let $\ep_0>0$.
Then there exists an $\delta>0$ with the following property.
Suppose that $\gamma$ is a \GBB and $\gamma(s_0)\in \Sbstar_K M_0$.
If $\sum_{j=1}^{f+1}\xibh_j(\gamma(s_0))>0$ and
$1-h(y(\gamma(s_0)),\etabh(\gamma(s_0)))>\ep_0$
then
$\gamma|_{[s_0,s_0+\delta]}\cap\Sbstar_W M_0=\emptyset$, while if
$\sum_{j=1}^{f+1}\xibh_j(\gamma(s_0))<0$ and
$1-h(y(\gamma(s_0)),\etabh(\gamma(s_0)))>\ep_0$ then
$\gamma|_{[s_0-\delta,s_0]}\cap\Sbstar_W M_0=\emptyset$.
\end{lemma}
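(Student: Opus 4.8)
The plan is to make quantitative the argument already indicated in the preceding remark: along a generalized broken bicharacteristic $\gamma$ the function $f=\sum_{j=1}^{f+1}\xibh_j$ is monotone near $W$, and $f$ vanishes on $\bSigma\cap\Sbstar_W M_0$ by \eqref{eq:bSigma-coords}, so a sign condition on $f(\gamma(s_0))$ keeps $\gamma$ off $\Sbstar_W M_0$ on one side of $s_0$. The issue is to obtain a \emph{uniform} lower bound on the length of that one-sided interval; this cannot come from Lipschitz continuity of $\gamma$ alone, since $f(\gamma(s_0))$ may itself be arbitrarily small when $\gamma(s_0)$ is close to $W$. What rescues the estimate is that, extending the computation of the remark, one has on $\sSigma$
\begin{equation*}
\sH_{\so}(\pismooth^* f)=-\bigl(1-h(y,\etash)\bigr)+O(x),
\end{equation*}
the error being $O(x)$ because $C_{jk}|_{x=0}=0$ and because of $x$-derivatives of the coefficients of $p_0$; hence near $W$ and away from glancing this is bounded away from $0$, so $f\circ\gamma$ is monotone there at a definite rate.

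In detail, I would fix a compact $K'\subset U$ containing $K$ in its interior. Since $\sH_{\so}$ is a smooth vector field tangent to $\sSigma$, $\sup_{\sSigma\cap S^*_{K'}M_0}\abs{\sH_{\so}(\pismooth^* g)}<\infty$ for every coordinate function $g$, and by \eqref{eq:liminf}--\eqref{eq:limsup} every \GBB is Lipschitz, with a common constant $L$, for as long as it stays over $K'$. From the displayed formula, together with the boundedness of $\xish$ and $\etash$ on $\sSigma$ over $K'$, one gets $x_1>0$ and $c>0$ (depending only on $K'$ and $\ep_0$) so that $\sH_{\so}(\pismooth^* f)(q)\le -c$ at every $q\in\sSigma\cap S^*_{K'}M_0$ with $x(q)<2x_1$ and $h(y(q),\etash(q))<1-\tfrac12\ep_0$; crucially this holds \emph{simultaneously at all points of $\pismooth^{-1}(q')\cap\sSigma$} for such a $q'$, since $x$, $y$, $\etash$ are common to the whole preimage while $\xish$ enters only through the $O(x)$ term. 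Now choose $\delta>0$ small enough that, whenever $\gamma(s_0)\in\Sbstar_K M_0$, the arc $\gamma|_{[s_0-\delta,s_0+\delta]}$ stays over $K'$ (a standard first-exit argument using the $L$-Lipschitz bound), and small enough that $L\delta<x_1$ and that, by the same Lipschitz bound, $h\circ\gamma$ cannot leave $\{h<1-\tfrac12\ep_0\}$ on $[s_0-\delta,s_0+\delta]$ once $h(y(\gamma(s_0)),\etabh(\gamma(s_0)))<1-\ep_0$. Suppose $\gamma(s_0)\in\Sbstar_K M_0$, $\sum_j\xibh_j(\gamma(s_0))>0$, and $1-h(y(\gamma(s_0)),\etabh(\gamma(s_0)))>\ep_0$. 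If $x(\gamma(s_0))\ge x_1$ then $x(\gamma(s))\ge x_1-L\delta>0$ on $[s_0-\delta,s_0+\delta]$ and $\gamma$ simply does not meet $\Sbstar_W M_0$ there. If $x(\gamma(s_0))<x_1$, then by the choice of $\delta$ the arc $\gamma|_{[s_0-\delta,s_0+\delta]}$ lies in $\{x<2x_1\}\cap\{h<1-\tfrac12\ep_0\}$, so the symbol bound applies at every point of each relevant preimage; combining it with \eqref{eq:limsup}, the function $s\mapsto f(\gamma(s))+cs$ is non-increasing on this interval, whence $f(\gamma(s))$ keeps the sign of $f(\gamma(s_0))$ --- in particular remains nonzero --- for all $s$ on the appropriate side of $s_0$. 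Since $\gamma$ maps into $\bSigma$ and $f\equiv 0$ on $\bSigma\cap\Sbstar_W M_0$, it follows that $\gamma(s)\notin\Sbstar_W M_0$ for those $s$, which is the one-sided $\delta$-interval of the statement; the case $\sum_j\xibh_j(\gamma(s_0))<0$ is identical, with the opposite side of $s_0$.

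The symbol computation is routine given the explicit form of $p_0$, and the Lipschitz property of generalized broken bicharacteristics is standard, so the real content --- and the main thing requiring care --- is the bookkeeping of the middle paragraph: the (short) first-exit argument guaranteeing that $\gamma$ does not escape the region $\{x<2x_1\}\cap\{h<1-\tfrac12\ep_0\}$ before time $\delta$ elapses. This is exactly where the non-glancing hypothesis $1-h>\ep_0$ is indispensable, since at glancing points $\sH_{\so}(\pismooth^* f)$ degenerates and no uniform rate is available. The one further point to watch is that over boundary faces of codimension less than $f+1$ the set $\pismooth^{-1}(\gamma(s))\cap\sSigma$ need not be a single point, so the symbol bound is needed uniformly over it --- which the formula for $\sH_{\so}(\pismooth^* f)$ supplies.
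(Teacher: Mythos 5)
Your proof follows the paper's own strategy in all essentials: compute $\sH_{\so}(\pismooth^* f)$ on $\sSigma$ near $W$ and observe it is bounded above by a negative constant away from glancing, with a bound that holds simultaneously on the whole fibre $\pismooth^{-1}(q')\cap\sSigma$ because it depends only on $(x,y,\etash)$; use the uniform Lipschitz property of generalized broken bicharacteristics to confine $\gamma|_{[s_0-\delta,s_0+\delta]}$ to the region where this symbol estimate and the condition $1-h>\ep_0/2$ persist; and split cases according to whether $\gamma(s_0)$ is close to $W$. The only cosmetic departure from the paper's argument is that you track the polar defining function $x$ whereas the paper tracks the individual $x_j$; either works.

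One point to settle before you rely on this, though. Since $\sH_{\so}(\pismooth^*f)<0$, $f\circ\gamma$ is \emph{decreasing}, so $f(\gamma(s_0))>0$ forces $f\circ\gamma>0$, hence $\gamma\notin\Sbstar_W M_0$, on $[s_0-\delta,s_0]$ --- the \emph{backward} side --- and symmetrically $f(\gamma(s_0))<0$ controls $[s_0,s_0+\delta]$. Your final paragraph quietly asserts this is "the one-sided $\delta$-interval of the statement", but the lemma as printed pairs $f>0$ with $[s_0,s_0+\delta]$, which is the opposite. The paper's own closing sentence reaches exactly the backward conclusion you do, so the discrepancy lies between the printed statement and its proof (a transposition of intervals or signs), not in your argument; nevertheless you should resolve which orientation is meant, since the downstream uses (e.g.\ Remark~\ref{remark:unif-nbhd} and the proof of Lemma~\ref{lemma:GBB-eb-lift}) depend on it.
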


\begin{proof}
Let $U_1\subset U$ be open such that $K\subset U_1$, $\overline{U_1}\subset U$.
\GBBs are uniformly Lipschitz, i.e.\ with Lipschitz constant independent of
the \GBB, in compact sets (thus are equicontinuous in compact sets),
so it follows that there is
an $\delta_1>0$ such that $\gamma(s_0)\in \Sbstar_K M_0$
implies that $\gamma(s)\in \Sbstar_{U_1} M_0$
for $s\in[s_0-\delta_1,s_0+\delta_1]$. Now the uniform Lipschitz
nature of the function $1-h(y(\gamma(s)),\etabh(\gamma(s)))$ shows that
there exists $\delta_2\in (0,\delta_1]$ such that for $|s-s_0|\leq \delta_2$,
$1-h(y(\gamma(s)),\etabh(\gamma(s)))>\ep_0/2$.
Now let $f=\sum \xibh_j$. Then
$$
\sH_{\so} (\pismooth^* f)|_{\sSigma}=-\sum A_{ij}\xibh_i\xibh_j+\sum x_jF_{1j}
=-(1-\sum B_{ij}\zetabh_i\zetabh_j)+\sum x_j F_{2j},
$$
with $F_{1j},F_{2j}\in\CI(S^*M_0)$, so there exist $\delta_3>0$ and $c>0$
such that if $x_j<\delta_3$ for $j=1,\ldots,f+1$, then
$\sH_{\so} (\pismooth^* f)|_{\sSigma}\leq -c$. Now if $x_j(\gamma(s_0))>
\delta_3/2$ for some $j$, the uniform Lipschitz character of $x_j\circ\gamma$
shows the existence of $\delta'>0$ (independent of $\gamma$)
such that $x_j(\gamma(s_0))\neq 0$ for $|s-s_0|<\delta'$. On the other hand,
if $x_j(\gamma(s_0)\leq\delta_3/2$ for all $j$, then
the uniform Lipschitz character of $x_j\circ\gamma$
shows the existence of $\delta''\in(0,\delta_2]$
such that $x_j(\gamma(s_0))<\delta_3$ for $|s-s_0|<\delta''$, so
$f(\gamma(s))$ is strictly decreasing on $[s_0-\delta'',s_0+\delta'']$.
In particular, if $f(\gamma(s_0))>0$, then $f(\gamma(s))>0$ for
$s\in[s_0-\delta'',s_0]$, so $\gamma(s_0)\notin\Sbstar_W M_0$, and if
$f(\gamma(s_0))<0$, then $f(\gamma(s))<0$ for
$s\in[s_0,s_0+\delta'']$, so $\gamma(s_0)\notin\Sbstar_W M_0$ again.
This completes the proof of the lemma.
\end{proof}

We now recall the following statement, due to Lebeau.

\begin{lemma}\label{lemma:Lebeau-IC-OG}(Lebeau, \cite[Proposition~1]{Lebeau5})
If $\gamma$ is a generalized broken bicharacteristic, $s_0\in I$,
$q_0=\gamma(s_0)$, then there exist unique
$\tilde q_+, \tilde q_-\in\sSigma$ satisfying
$\pismooth(\tilde q_\pm)=q_0$ and having
the property that if $f\in\CI(\Sbstar M_0)$
then $f\circ\gamma$ is differentiable both from the left and
from the right at $s_0$ and
\begin{equation*}
\left(\frac{d}{ds}\right)(f\circ\gamma)|_{s_0\pm}=\sH_{\so} \pismooth^*f(\tilde q_{\pm}).
\end{equation*}
\end{lemma}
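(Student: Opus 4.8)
The plan is as follows. If $\pismooth^{-1}(q_0)\cap\sSigma$ consists of a single point $\tilde q$ --- which covers all interior points and all glancing points, in particular every point of $\gcal_{W,\bo}$ --- then the Remark following Lemma~\ref{lemma:bich-Lebeau} already shows that $f\circ\gamma$ is \emph{two-sided} differentiable at $s_0$ for every $f\in\CI(\Sbstar M_0)$, with derivative $\sH_{\so}(\pismooth^*f)(\tilde q)$, so one may take $\tilde q_+=\tilde q_-=\tilde q$. It remains to treat the case in which $q_0$ lies over a corner $W$ at which $\gamma$ is non-glancing, i.e.\ $q_0\in\hcal_{W,\bo}$, so that $\pismooth^{-1}(q_0)\cap\sSigma$ is the ellipsoid $\{\sum A_{jk}\xish_j\xish_k=1-h(y(q_0),\etabh(q_0))\}$ (with the $A_{jk}$ evaluated at the base point of $q_0$ and with $\etash,\taush$ fixed by $q_0$), which contains at least two points. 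We treat the right derivative; the left is symmetric. The idea is to produce, on a one-sided interval $(s_0,s_0+\epsilon)$, a lift $\tilde\gamma$ of $\gamma$ into $\sSigma$ (so that $\gamma=\pismooth\circ\tilde\gamma$ there) that is, away from a discrete subset, an integral curve of $\sH_{\so}$ on $S^*M_0$ (including its glancing continuations along boundary faces), and that extends, $C^1$ from the right, to $s=s_0$. Granting this, set $\tilde q_+:=\tilde\gamma(s_0+)\in\pismooth^{-1}(q_0)\cap\sSigma$; by the chain rule applied to $\gamma=\pismooth\circ\tilde\gamma$, then $\frac{d}{ds}(f\circ\gamma)|_{s_0+}=\sH_{\so}(\pismooth^*f)(\tilde q_+)$ for every $f$, as required. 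Uniqueness of $\tilde q_+$, and similarly of $\tilde q_-$, follows on applying this identity to the base coordinate functions $x_j$: since $\pismooth^*x_j=x_j$, the right derivatives $\frac{d}{ds}(x_j\circ\gamma)|_{s_0+}=\sH_{\so}x_j(\tilde q_+)$ determine, through the positive definiteness of $(A_{jk})$ at the base point of $q_0$, the full $\xish$-coordinate of $\tilde q_+$ (its other coordinates being those of $q_0$), hence $\tilde q_+$ itself.

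The lift is constructed by induction on the codimension of the corner over which $q_0$ lies, the interior case (single preimage, handled above) being the base. By Lemma~\ref{lemma:bich-Lebeau}(2) applied to $W$, and the computation in the Remark following it with $f=\sum_j\xibh_j$ --- for which the single value in \eqref{pullbackset} over $W$ is $-(1-h(y(q_0),\etabh(q_0)))<0$ --- there is $\epsilon>0$ with $\gamma(s)$ not over $W$ for $s\in(s_0,s_0+\epsilon)$, and $\sum_j\xibh_j\circ\gamma$ differentiable at $s_0$ with strictly negative derivative. Hence, working locally where the boundary faces are the $\{x_j=0\}$ and their intersections, on $(s_0,s_0+\epsilon)$ the curve $\gamma$ lies over faces of strictly smaller codimension than $W$; at each point of this interval the inductive hypothesis supplies one-sided lifts into $\sSigma$ from each side. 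The set of $s\in(s_0,s_0+\epsilon)$ at which $\gamma(s)$ lies over a corner of codimension $\geq 2$ splits into a subset discrete in $(s_0,s_0+\epsilon)$ --- the non-glancing (``$\hcal$-type'') visits, discrete by Lemma~\ref{lemma:bich-Lebeau}(2) --- together with, at most, intervals along which $\gamma$ glides with a single preimage (controlled by the Remark), while away from all of these $\gamma$ is the $\pismooth$-image of a bicharacteristic in the interior. Splicing these pieces produces $\tilde\gamma\colon(s_0,s_0+\epsilon)\to\sSigma$ with $\gamma=\pismooth\circ\tilde\gamma$, continuous off the discrete set, and between its jumps locally the restriction of a solution of the $\sH_{\so}$-flow on $S^*M_0$ up to its boundary faces. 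Since $p_0$, hence $\sH_{\so}$, extends smoothly across all boundary faces of $S^*M_0$, such a solution that remains in a compact set extends to the endpoint of its interval of definition; so, provided the jump points of $\tilde\gamma$ do not accumulate at $s_0$, the limit $\tilde q_+=\tilde\gamma(s_0+)$ exists, $\tilde\gamma$ is $C^1$ from the right at $s_0$ with $\tilde\gamma'(s_0+)=\sH_{\so}(\tilde q_+)$, and the reductions of the first paragraph conclude the argument.

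The main obstacle is exactly this non-accumulation at $s_0$ of the jump points of $\tilde\gamma$ --- equivalently, ruling out that $\gamma$ strikes corners of codimension $\geq 2$, or reflects off boundary hypersurfaces, near $q_0$ along a sequence $s_n\downarrow s_0$. This is where the non-glancing hypothesis $q_0\in\hcal_{W,\bo}$, i.e.\ $h(y(q_0),\etabh(q_0))<1$, is indispensable: it keeps $\sum A_{jk}\xish_j\xish_k$ bounded away from $0$ along the lifts of $\gamma$ in a neighborhood of $q_0$, so that the bicharacteristic segments of $\tilde\gamma$ that leave a fixed neighborhood of a corner have a uniform minimal length before they can return, which precludes accumulation. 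Making this quantitative calls for uniform escape/avoidance estimates of the type established in Lemma~\ref{lemma:OG-IC-uniform-est-1}, adapted to the finitely many sub-corners met near $q_0$, together with a careful treatment of the residual glancing (single-preimage) interactions with lower-codimensional faces, for which the one-sided differentiability being proved here must itself be carried through the induction. This case analysis, rather than any single computation, is the technical heart of the proof.
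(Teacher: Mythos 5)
The paper does not prove this lemma; it is quoted directly from Lebeau's \cite[Proposition~1]{Lebeau5} with no proof supplied, so there is no paper proof to compare against, and I evaluate your proposal on its own terms. The high-level plan (reduce to $q_0\in\hcal_{W,\bo}$, build a one-sided lift of $\gamma$ into $\sSigma$ that is piecewise a Hamilton/gliding flow line, and let $\tilde q_\pm$ be the one-sided limits of that lift) is plausible, but the crucial step --- non-accumulation at $s_0$ of the corner and hypersurface interactions of the lift --- is asserted rather than proved, and the justification you offer for it is not correct. Hyperbolicity with respect to $W$ bounds $\sum A_{jk}\xish_j\xish_k$ away from zero, but it does not bound any individual $\xish_j$ away from zero; the outgoing direction $\tilde q_+$ can perfectly well be tangent to one or more of the boundary hypersurfaces meeting at $W$, and near such a tangential direction the GBB can reflect off (or glance along) that hypersurface on a set of parameter times accumulating at $s_0$. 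So the claimed ``uniform minimal length before returning'' does not follow from $q_0\in\hcal_{W,\bo}$, and with it collapses the cornerstone of the construction. (A secondary issue: on gliding intervals the lift solves the gliding vector field, not $\sH_{\so}$ itself, so the ``piecewise $\sH_{\so}$-flow'' description must be amended in any case.)

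Lebeau's actual argument --- partially reproduced in the paper's proof of Lemma~\ref{lemma:GBB-eb-lift} --- avoids constructing a lift altogether and instead works directly on coordinate functions along $\gamma$: the uniform Lipschitz bound on GBBs, the nonvanishing two-sided derivative of $\sum_j\xibh_j\circ\gamma$ at $s_0$ (from hyperbolicity), and the characteristic-set relation $\sum K_{ij}\zetaesh_i\zetaesh_j = 1-h-\xiesh^2+O(x)$ together with a monotonicity/differential-inequality argument are used to show that the rescaled normal momentum has a one-sided limit at $s_0$; the limiting angular directions $z_j=x_j/x$ are then obtained by L'H\^opital. That is a genuinely different strategy from yours, and in fact it is precisely the content you would need in order to close the gap: ruling out accumulation requires control of the momentum limits, at which point $\tilde q_\pm$ already exist and the detour through a lift is no longer needed.
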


\begin{definition}
A generalized broken bicharacteristic segment $\gamma,$ defined on
$[0, s_0)$ or $(-s_0, 0],$ $\gamma(0)=q \in \hcal_{Y,b}$ is said to
approach $W$ \emph{normally} as $s \to 0$ if for all $j$,
$$
\lim_{s \to 0 \pm} \frac{x_j(\gamma(s))}{s} \neq 0; 
$$
this limit always exists by \cite[Proposition 1]{Lebeau5}.
\end{definition}

\begin{remark}
If $\gamma$ approaches $W$ normally then there is $s_1>0$ such that
$\gamma(s)\in S^*M_0^\circ$ for
$s\in(0,s_1)$ or $s\in(-s_1,0)$ since $x_j(\gamma(0))=0$, and the
one-sided derivative of $x_j\circ\gamma$ is non-zero.

While the actual derivatives depend on the choice of the defining
functions $x_j$ for the boundary hypersurfaces, the
condition of normal incidence is independent of these choices.
\end{remark}

\subsection{Edge-smooth cotangent bundle}

We now discuss another bundle, ultimately in order to discuss the refinement
of \GBBs that allows us to obtain a diffractive improvement.  Let
$\beta:M\to M_0$ be the blow-down map.

Let $\Vfes(M)$ denote the set
of vector fields that are tangent to the fibers of
$\beta|_{\ef}:\ef\to W$ (hence to $\ef$).
\nomenclature[V]{$\Vfes$}{edge-smooth vector fields}
This is a $\CI(M)$-module,
with sections locally spanned by $$x\pa_x,\ x\pa_t,\  x\pa_{y_j},\
\pa_{z'_j},\ \pa_{z''_j}.$$  (In fact, one can always use local coordinate
charts without the $z''$ variables in this setting.)  Under the
blow-down map $\beta:M\to M_0$, elements of $\Vf(M_0)$ lift to certain
vector
fields of the form $x^{-1}V$, $V\in\Vfes(M)$, where $x$ is a defining
function of the front face, $\ef$. Conversely, $x^{-1}\Vfes(M)$ is
spanned by the lift of elements of $\Vf(M_0)$ over $\CI(M)$, i.e.
\begin{equation}\label{eq:Vf-lift-to-M}
x^{-1}\Vfes(M)=\CI(M)\otimes_{\CI(M_0)}\beta^*\Vf(M).
\end{equation}
Let $\Tes M$ denote the ``edge-smooth'' tangent bundle of $M,$ defined as
the bundle whose smooth sections are elements of $\Vfes(M)$; such a bundle
exists by the above description of a local spanning set of sections.
\nomenclature[T]{$\Tes$}{edge-smooth tangent bundle}
Let $\Tesstar M$
  denote the dual bundle.
\nomenclature[T]{$\Tesstar$}{edge-smooth cotangent bundle}
Thus in the coordinates of
  \S\ref{section:geometry}, sections of $\Tesstar M$ are spanned by
\begin{equation}\label{eq:Tesstar-M-coords}
\taues \frac{dt}x+\xies \frac{dx}x + \etaes\cdot\frac{dy}x+\zetaes'\cdot
       {dz'}+\zetaes''\cdot dz''.
\end{equation}

By \eqref{eq:Vf-lift-to-M}, taking into account that $dt^2-g_0$ is a Lorentz
metric on $M_0$, we deduce that its pull-back $g$ to $M$ is a Lorentzian
metric on $x^{-1}\Tesstar M$, i.e.\ that $x^{-2}g$
is a symmetric non-degenerate
bilinear form on $\Tes M$ with signature $(+,-,\ldots,-)$. Correspondingly,
the dual metric $G$ has the property that $x^2 G$ is a Lorentzian metric
on $\Tesstar M$. Note that $G$ is the pull-back of $G_0=\sigma_2(\Box)\in
\CI(T^*M_0\setminus o)$. We thus conclude that
$\sigma_2(\Box)\in\CI(T^*M_0\setminus o)$ lifts to an element of
$x^{-2}\CI(\Tesstar M\setminus o)$;
let
$$
p=\sigma_{\eso,2}(x^2 \Box)\in\CI(\Tesstar M\setminus o)
$$
be such that $x^{-2}p$ is this lift, so
$$
p|_{x=0}=\taues^2-\big(\xies^2+h(y,\etaes)+k(y,z,\zetaes)\big).
$$
Let $\esSigma \subset \Sesstar M$ denote the characteristic set of
$x^2\Box,$ i.e., the set
$$
\esSigma=
p^{-1}(\{0\})/\RR^+=\{\sigma_{\eso,2}(x^2\Box)=0\}/\RR^+.
$$
Thus, using the coordinates
\begin{equation}\label{eq:Tesstar-M-comp-coords}
x,y,t,z,\xiesh=\xies/|\taues|,\ \etaesh=\etaes/|\taues|,
\ \zetaesh=\zetaes/|\taues|,
\ \sigmaes=|\taues|^{-1},
\end{equation}
on $\Tesstar M$, valid where $\taues\neq 0$, hence (outside the
zero section) near where $p=0$,
and dropping $\sigmaes$ to obtain coordinates on $\Sesstar M$,
\begin{equation}\label{eq:esSigma-coords}
\esSigma\cap\Sesstar_{\ef} M
=\{(x=0,t,y,z,\xiesh,\etaesh,\zetaesh):\ \xiesh^2+h(y,\etaesh)+k(y,z,\zetaesh)=1\}.
\end{equation}
The rescaled Hamilton vector field
$$
\sH_{\eso}=|\taues|^{-1}\sH_p
$$
\nomenclature[H]{$\mathsf{H}_{\eso}$}{Hamilton vector field on
  edge-smooth cosphere bundle}
is homogeneous of degree $0$, and thus can be regarded as a vector field on
$\Sesstar M$ which is tangent to $\esSigma$. (Note that while $\sH_{\eso}$
depends on the choice of $x$, and the particular homogeneous degree
$-1$ function, $|\taues|^{-1}$, used to re-homogenize $\sH_p$, these
choices only change $\sH_{\eso}$ by a positive factor, so its direction
is independent of the choices---though our choices are
in any case canonical.)

With the notation of \cite[Section~7]{mvw1} (where it is explained
slightly differently, as the underlying manifold is not a blow-up of another
space),
corresponding to the
edge fibration $$\beta:\ef\to W=Y\times\RR_t,$$
there is a natural map
$$
\dbetaes:\Tesstar_{\ef} M\to T^*W.
$$
\nomenclature[P]{$\dbetaes$}{Map from es cotangent bundle to cotangent
  bundle of blown-down edge}
In fact, in view of \eqref{eq:Vf-lift-to-M}, the bundle $x^{-1}\Tes M$
(whose sections are $x^{-1}$ times smooth sections of $\Tes M$)
can be identified with $\beta^* TM_0$, so one has a natural map
$x^{-1}\Tes M\to TM_0$. Dually, $x\Tesstar M$ can be identified with
$\beta^*T^* M_0$, so one has a natural map $x\Tesstar M\to T^*M_0$.
Multiplication by $x$ maps $\Tesstar M$ to $x\Tesstar M$, and
$\pismooth:T^*M_0\to \Tbstar M_0$ restricts to the quotient map
$T^*_WM_0\to T^*W=T^*M_0/N^*W$ over $W$, so $\dbetaes$ is given by
the composite map
\begin{equation*}\begin{split}
\Tesstar_{\ef} M\ni \alpha\mapsto x\alpha\in x\Tesstar_{\ef} M
&\mapsto \beta_*(x\alpha)\in T^*_WM_0\\
&\qquad \mapsto[x\alpha]\in
T^*W\subset \Tbstar M_0,
\end{split}\end{equation*}
which in local
coordinates \eqref{eq:Tesstar-M-coords} is given by
$$
\dbetaes(x=0,y,t,z,\xies,\etaes,\taues,\zetaes)
=(y,t,\etaes,\taues).
$$
The fibers
can be identified with $\RR_{\xi}\times T^* Z$. In view of
the $\RR^+$-action on $\esSigma$, this
gives rise to a map $\dbetaes:\esSigma\to S^*W$, which is a fibration
over $\hcal_{W,\bo}$ (where $1-h(y,\etabh)>0$)
with fiber
\begin{equation*}\begin{split}
\dbetaes^{-1}(y,t,\etabh)=\{(x=0,y,t,z,\xiesh,\etaesh,\zetaesh):
&\ \etaesh=\etabh,\\
&\ |\xiesh|^2+k(y,z,\zetaesh)=1-h(y,\etabh)\};
\end{split}\end{equation*}
the fibers degenerate at $\gcal_{W,\bo}$.
Then $\sH_{\eso}$ is tangent to the fibers of
$\dbetaes$.
In fact, as computed in \cite[Proof of Lemma~2.3]{mvw1}
(which is directly valid in our setting),
using coordinates \eqref{eq:Tesstar-M-comp-coords}
on $\Tesstar M$,
\begin{equation}\begin{split}\label{eq:sH-at-ef-mod-bvf}
&-\frac12\sH_{\eso}=-\frac12\sigmaes \sH_p\\
&=\xiesh\big(x\pa_x-\sigmaes\pa_{\sigmaes}-\zetaesh\cdot \pa_{\zetaesh}\big)
+K^{ij}\zetaesh_i\pa_{z_j}+K^{ij}\zetaesh_i\zetaesh_j\pa_{\xiesh}
-\frac{1}{2}\,\frac{\pa K^{ij}}{\pa z_k}\zetaesh_i\zetaesh_j\pa_{\zetaesh_k}
+xH',
\end{split}\end{equation}
with $H'$ tangent to the boundary, hence as a vector field on $\Sesstar M$,
restricted to $\Sesstar_{\ef}M$,
$\sH_{\eso}$ is given by
\begin{equation}\label{eq:sH-at-ef}
-\frac12\sH_{\eso}=-\xiesh\zetaesh\pa_{\zetaesh}
+K^{ij}\zetaesh_i\pa_{z_j}+K^{ij}\zetaesh_i\zetaesh_j\pa_{\xiesh}
-\frac{1}{2}\,\frac{\pa K^{ij}}{\pa z_k}\zetaesh_i\zetaesh_j\pa_{\zetaesh_k}.
\end{equation}
It is thus tangent to the fibers given by the
constancy of $y,t,\etaesh$. Notice also that $\sH_{\eso}$ is indeed tangent
to the characteristic set, given by \eqref{eq:esSigma-coords},
and in $\Sesstar_{\ef} M$, it vanishes exactly at $\zetaesh=0$.
We let
$$
\rcal_{\eso}=\{q\in\esSigma\cap\Sesstar_{\ef} M:\ \sH_{\eso}(q)=0\}=\{(t,y,z,
\xiesh,\etaesh,\zetaesh)\in\esSigma:\ \zetaesh=0\}
$$
be the $\eso$-radial set.
\nomenclature[R]{$\rcal_{\eso}$}{edge-smooth radial set}

\subsection{Edge-b cotangent bundle}\label{subsec:edgebbundle}

Finally, we construct a bundle $\Tebstar M$ over $M$ that behaves like
$\Tbstar M$ away from $\ef$, and behaves like $\Testar M$ near the
interior of $\ef$. Before doing so, we remark that the pullback
of $\Tbstar M_0$ to $M$ is $\Tbstar M$, so $\beta:M\to M_0$ induces a map
$$
\beta_\sharp:\Tbstar M\to \Tbstar M_0,
$$
such that
$$
\beta_\sharp|_{\Tbstar_w M}\to\Tbstar_{\beta(w)}M_0,\ w\in M,
$$
is an isomorphism.
It commutes with the $\RR^+$-action, hence induces a map
$$
\hat\beta_\sharp:\Sbstar M\to \Sbstar M_0,
$$
such that
$$
\hat\beta_\sharp|_{\Sbstar_w M}\to\Sbstar_{\beta(w)}M_0,\ w\in M,
$$
is an isomorphism.

More precisely, $\Tebstar M$ arises from the lift of vector fields on $M_0$
which are tangent to all faces of $M_0$ and vanish at $W$.  (The set
$\tilde\Vf$ of such vector fields is a $\CI(M_0)$-module, but is {\em
  not} all sections of a vector bundle over $M_0$---unlike its
analogue, $\Vf(M_0),$ in the construction of $\Vfes(M)$; locally
$x_j\pa_{x_j},$ $j=1,\ldots,f+1$,  $x_i \pa_t,$ and $x_i\pa_{y_j},$
$i=1,\ldots,f+1$, $j=1,\ldots,n-f-1,$ give a spanning list.)

\begin{definition}\label{def:Vfeb}
Let $\Vfeb(M)$ consist of vector fields 
tangent to all of $\pa M$ and to the fibers of
$\ef.$
\nomenclature[V]{$\Vfeb$}{edge-b vector fields}
\end{definition}

This is again a $\CI(M)$-module, and
locally $x\pa_x$, $x\pa_t,$ $x\pa_{y_j},$ $z'_j\pa_{z'_j},$ and $\pa_{z''_j}$ give
a spanning set; in fact
$$
\Vfeb(M)=\CI(M)\otimes_{\CI(M_0)}\beta^*\tilde\Vf.
$$
Thus, there is a vector bundle, called
the ``edge-b'' tangent bundle of $M,$ denoted
$\Teb M$, whose sections are exactly elements of $\Vfeb(M)$.
\nomenclature[T]{$\Teb$}{Edge-b tangent bundle}
Let $\Tebstar M$
  denote the dual bundle.
\nomenclature[T]{$\Tebstar$}{edge-b cotangent bundle}
Thus in the coordinates of
  \S\ref{section:geometry}, sections of $\Tebstar M$ are spanned by
$$
\tau \frac{dt}x+\xi \frac{dx}x + \eta\cdot\frac{dy}x+\sum\zeta_i'
       \frac{dz_i'}{z'_i}+ \zeta''\cdot dz''
$$

In particular, we point out that
the lift of $\sum x_j D_{x_j}$ from $M_0$ to $M$ by $\beta$ is $xD_x$,
up to $x\Vfeb(M)$, hence considering their principal symbols gives
$$
\sum_j\beta^*\xib_j=\xieb\ \text{at}\ x=0.
$$
Dividing by $\beta^*|\taub|=x^{-1}|\taueb|$ yields
\begin{equation}\label{eq:xib-xiebh}
\sum_j\beta^*\xibh_j=x \xiebh+O(x^2),\ \xiebh=\xieb/|\taueb|.
\end{equation}

There exists a natural bundle map
$$
\pieseb: \Tesstar M \to \Tebstar M,
$$
analogous to the bundle map $\pismooth:T^*M_0\to\Tbstar M_0$ of
\eqref{eq:pismooth-def}.
\nomenclature[P]{$\pieseb$}{Projection from es cotangent bundle to
  eb cotangent bundle}
In canonical coordinates, this maps
$$
(\taues,\xies,\etaes,\zetaes',\zetaes'')\mapsto (\tau=\taues,\ \xi=\xies,\
\eta=\etaes,\ \zeta_i'=\zetaes_i'z_i',\ \zeta''=\zetaes'').
$$
This map commutes with the $\RR^+$-action of dilations in the fibers,
and maps $p^{-1}(\{0\})\subset\Tesstar M\setminus o$ into the complement
of the zero section of $\Tebstar M$, so it gives rise to a map
$$
\piesebh:\esSigma\to \Sebstar M.
$$
Let $$\ebSigma=\piesebh (\esSigma) \subset \Sebstar M.$$
In coordinates
$$
x,y,t,z,\xiebh=\xieb/|\taueb|,\ \etaebh=\etaeb/|\taueb|,
\ \zetaebh=\zetaeb/|\taueb|,
$$
on $\Sebstar M$, and analogously defined coordinates on $\Sesstar M$,
$$
\piesebh(x,y,t,z,\xiesh,\etaesh,\zetaesh',\zetaesh'')= (x,y,t,z,
\xiebh=\xiesh,\
\etaebh=\etaesh,\ \zetaebh_i'=\zetaesh_i'z_i',\ \zetaebh''=\zetaesh''),
$$
so
for $w\in \ef$, $z_1'(w)=\ldots=z'_p(w)=0,\ z'_{p+1}(w)\neq 0,\ldots,\ z'_k(w)\neq0,$ with $p\geq 1$,
\begin{align*}
\ebSigma\cap \Sebstar[w] M=\{(\xiebh,&\etaebh,\zetaebh)\in\Sebstar[w] M:
\ \zetaebh'_1=\ldots=\zetaebh'_p=0,\\
& 1\geq \xiebh^2+h(y,\etaebh)
+k(y,z,(0,\ldots,0,\frac{\zetaebh'_{p+1}}{z'_{p+1}},\dots,\frac{\zetaebh'_k}{z'_k}),\zetaebh'')\}.
\end{align*}
We again also obtain a map
$\dbetaeb:\ebSigma\cap\Sebstar[\ef]M
\to S^*W$\nomenclature[P]{$\dbetaeb$}{Map from eb characteristic set to
  cotangent bundle of blown-down edge}
 analogously to $\dbetaes$ which is a fibration over
$\hcal_{W,\bo}$; in local coordinates
(on $S^*W$ near the projection of
$\ebSigma$, $(y,t,\etah)$ are local coordinates, $\etah=\eta/|\tau|$)
\begin{equation}\label{eq:dbetaeb-loc-coords}
\dbetaeb(0,y,t,z,\xiebh,\etaebh,\zetaebh)= (y,t,\etaebh=\etaesh).
\end{equation}

More invariantly we can see this as follows.
As discussed in \cite[Section~7]{mvw1}
in the setting where the fibers on $\ef$ have no boundaries,
one considers the map
$$
x\cdot:\Tebstar M\to\Tbstar M
$$
given by multiplication of
the covectors by $x$ away from $\ef$, which extends to a $\CI$ map
as indicated, namely
\begin{equation*}\begin{split}
&x\cdot:\tau \frac{dt}x+\xi \frac{dx}x + \eta\cdot\frac{dy}x+\sum\zeta_i'
       \frac{dz_i'}{z'_i}+ \zeta''\cdot dz''\\
&\qquad\mapsto \tau \,dt+x\xi \frac{dx}x + \eta\,dy+\sum x\zeta_i'
       \frac{dz_i'}{z'_i}+ x\zeta''\cdot dz''.
\end{split}\end{equation*}
Note that at $x=0$, this gives
\begin{equation}\begin{split}\label{eq:x-cdot-x=0}
&x\cdot(\alpha)
=\tau \,dt+ \eta\,dy,\\
&\alpha=\tau \frac{dt}x+\xi \frac{dx}x + \eta\cdot\frac{dy}x+\sum\zeta_i'
       \frac{dz_i'}{z'_i}+ \zeta''\cdot dz''\in\Tebstar_w M,\ w\in\ef.
\end{split}\end{equation}
In particular, as the image under $(x\cdot)\circ\pieseb$ of
$p^{-1}(\{0\})\subset\Tesstar M\setminus o$ is disjoint from the zero
section, and
since multiplication by $x$ commutes with the $\RR^+$-action in the
fibers, $\hat\beta_\sharp\circ(x\cdot)$ descends to a map
$$
\dbetaeb:\ebSigma\to\bSigma,
$$
and away from $\ef$ it
is given by the restriction of the
natural identificantion of $\Sebstar[M\setminus\ef] M$
with $\Sbstar_{M_0\setminus W} M_0$, while at $\ef$, as \eqref{eq:x-cdot-x=0}
shows, is given by \eqref{eq:dbetaeb-loc-coords}, where we consider
$S^*W\subset\Sbstar M_0$, cf.\ \eqref{eq:bSigma-coords}.

We now introduce sets of covectors that are respectively elliptic,
glancing, and hyperbolic with respect to the boundary faces of $M_0$
meeting at the corner $W;$ these sets are thus of covectors over the
boundary of $\ef:$
\begin{align*}
\ecal &= \Sebstar[\pa\ef] M\setminus
\ebSigma=\{q \in \Sebstar[\pa\ef] M: (\piesebh)^{-1}(q) =
\emptyset\},\\
\gcal &= \{q \in \Sebstar[\pa\ef] M: \lvert \piesebh^{-1}(q)\rvert =
1\},\\
\hcal &= \{q \in \Sebstar[\pa\ef]: \lvert \piesebh^{-1}(q)\rvert \geq 2\},
\end{align*}
so $\ebSigma\cap \Sebstar[\pa\ef] M=\gcal\cup\hcal$.
\nomenclature[E]{$\ecal,\gcal,\hcal$}{edge-b elliptic, glancing and
hyperbolic sets}

In coordinates, note that, for instance, for
$$
w\in \ef,\ z_1'(w)=\ldots=z'_p(w)=0,\ z'_{p+1}(w)\neq 0,\ldots,\ z'_k(w)\neq0,
$$
with $p\geq 1$,
\begin{equation}\label{egh-coordinates}\begin{aligned}
\ecal\cap \Sebstar[w] &M \\
=&\{\exists j,\ 1\leq j\leq p,\ \zetaebh'_j\neq 0\}\\
&\cup
\Bigg\{1<\xiebh^2+h(y,\etaebh)
+k\Bigg(y,z,\Big(0,\ldots,0,\frac{\zetaebh'_{p+1}}{z'_{p+1}},\dots,\frac{\zetaebh'_k}{z'_k}\Big),\zetaebh''\Bigg)\Bigg\},\\
\gcal\cap \Sebstar[w] M &= \Bigg\{\zetaebh'_1=\ldots=\zetaebh'_p=0,\\
&\qquad\  1 =
\xiebh^2+h(y,\etaebh)
+k\Bigg(y,z,\Big(0,\ldots,0,\frac{\zetaebh'_{p+1}}{z'_{p+1}},\dots,\frac{\zetaebh'_k}{z'_k}\Big),\zetaebh''\Bigg)\Bigg\},\\
\hcal\cap \Sebstar[w] M &= \Bigg\{\zetaebh'_1=\ldots=\zetaebh'_p=0,\\
&\qquad\  1 >
\xiebh^2+h(y,\etaebh) +k\Bigg(y,z,\Big(0,\ldots,0,\frac{\zetaebh'_{p+1}}{z'_{p+1}},\dots,\frac{\zetaebh'_k}{z'_k}\Big),\zetaebh''\Bigg)\Bigg\}.
\end{aligned}\end{equation}

\begin{remark}
  The set $\gcal_{W,\bo}$ defined in \eqref{gcalwbdef} represents rays
  that are glancing \emph{with respect to the corner $W$,} i.e., are
  tangent to \emph{all} boundary faces meeting at $W,$ while $\gcal$
  defined above describes the rays that are glancing with respect to
  \emph{one or more} of the boundary faces meeting at $W$ (see Figure~\ref{glancingfigure}). The sets
  $\gcal_{W,\bo}$ and $\hcal_{W,\bo}$ live in $S^*W\subset \Sbstar_W
  M_0$. This can be lifted to $\Sbstar M$ by $\beta$ (since
  $\Tbstar M=\beta^*\Tbstar M_0$), but in this picture $\gcal_{W,\bo}$
  and $\hcal_{W,\bo}$ are {\em global} in the fibers of $\beta$, i.e.,
  live over all of $\ef$, not merely over its boundary.
\end{remark}

\begin{figure}[ht]
\includegraphics{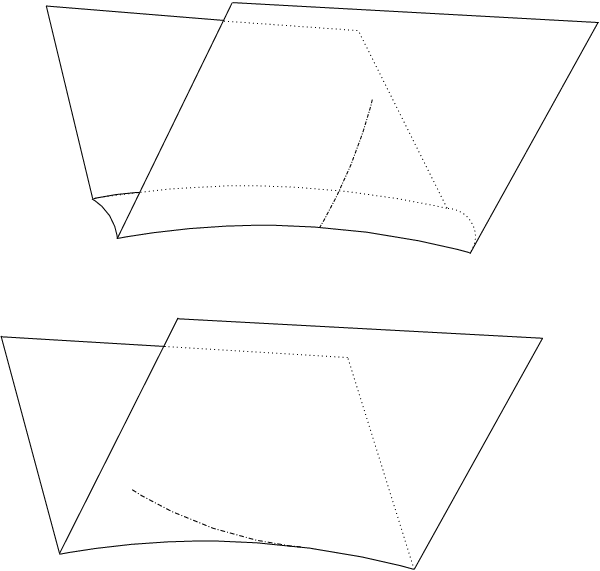}
\caption{Glancing rays.  The ray depicted at top, in $M$ (projected
  down to $X$), terminates at
  a point in $\gcal.$  The ray depicted at bottom, in $M_0$ (projected
  down to $X_0$), terminates at a point in $\gcal_{W,\bo}.$}
\label{glancingfigure}
\end{figure}

\subsection{Bicharacteristics}\label{subsection:bichars}
We now turn to bicharacteristics in $\ebSigma$, which will be the
dynamical locus of the geometric improvement for the propagation result.
Taking into account that $\sH_{\eso}$ is tangent to the
fibers of $\dbetaes$, one expects that over $\ef$,
these bicharacteristics will lie
in a single fiber of the related map $\dbetaeb$, i.e.\ $y,t,\etaebh$ will
be constant along these. The fibers of $\dbetaes$ and $\dbetaeb$
have a rather different character depending on whether they are
over a point in $\gcal_{W,\bo}$ or in $\hcal_{W,\bo}$. Namely,
over $\gcal_{W,\bo}$ the fibers of $\dbetaes$ resp.\ $\dbetaeb$ are
$\xibh=0,\zetabh=0$ resp.\ $\xieb=0, \zetaeb=0$ i.e.\ they are the 
zero section.  By contrast over $\alpha=(t,y,\etah)\in\hcal_{W,\bo}$, the
fiber of $\dbetaes$ is
$$
\hcal_{\eso\to\alpha,\bo}=\Big\{(t,y,z,\xiesh,\etaesh=\etah,\zetaesh)\in\Sesstar M:
\xiesh^2+k(y,z,\zetaesh)= 1-h(y,\etaesh)\Big\}
$$
while that of
$\dbetaeb$ is
\begin{align*}
\hcal_{\ebo\to\alpha,\bo}=\Bigg\{(&t,y,z,\xiebh,\etah,\zetaebh)\in\Sebstar M:
\ \zetaebh'_1=\ldots=\zetaebh'_p=0,\\
&\ 
\xiebh^2
+k\Bigg(y,z,\Big(0,\ldots,0,\frac{\zetaebh'_{p+1}}{z'_{p+1}},\dots,\frac{\zetaebh'_k}{z'_k}\Big),\zetaebh''\Bigg)
\leq
1-h(y,\etaebh)\Bigg\}.
\end{align*}

The geometric improvement will take place over $\hcal_{W,\bo}$, so from
now on we concentrate on this set.
Now, for $\alpha=(t,y,\etah)\in\hcal_{W,\bo}$
$$
\rcal_{\eso}\cap \hcal_{\eso\to\alpha,\bo}=
\big\{(t,y,z,\xiesh,\etaesh=\etah,\zetaesh=0)\in\Sesstar M:
\xiesh^2=1-h(y,\etaesh)\big\},
$$
hence has two connected components which we denote by
$$
\rcal_{\eso,\alpha,I/O}=
\Big\{(t,y,z,\xiesh,\etaesh=\etah,\zetaesh=0)\in\Sesstar M:
\xiesh=\pm\sgn(\taues)\sqrt{1-h(y,\etaesh)}\Big\},
$$
with $\sgn(\taues)$ being the constant function $\pm 1$ on the two
connected components of $\esSigma$.

Here the labels ``I/O'',
stand for ``incoming/outgoing.'' This is explained by
$$
-\frac{1}{2}\sH_{\eso}t=-\tauesh ,\ -\frac{1}{2}\sH_{\eso} x=\xiesh x,
$$
so in a neighborhood of $\rcal_{\eso,\alpha,I}$, $\sH_{\eso} t$ and
$\sH_{\eso}x$ have the opposite signs, i.e.\ if $t$ is increasing, $x$
is decreasing along $\sH_{\eso}$, just as one would expect an `incoming
ray' to do; at outgoing points the reverse is the case.

We also let
\begin{align*}
\rcal_{\ebo,\alpha,I/O}&=\piesebh(\rcal_{\eso,\alpha,I/O})\\
&=
\Big\{(t,y,z,\xiebh,\etah,\zetaebh=0)\in\Sebstar M:
\xiebh=\pm\sgn(\taueb)\sqrt{1-h(y,\etaebh)}\Big\},
\end{align*}
and
\begin{equation*}
\rcal_{\ebo,S,I/O}=\cup_{\alpha\in S}\rcal_{\ebo,\alpha,I/O}
\end{equation*}
for $S\subset\hcal_{W,\bo}$.
\nomenclature[R]{$\rcal_{\ebo}$}{edge-b radial set}

\begin{definition}
An \emph{edge generalized broken bicharacteristic}, or \EGBB, is a continuous
map $\gamma:I \to \ebSigma$ such that for all $f \in \CI(\Sebstar
M),$ real-valued,
\begin{equation}\label{egbb}
\begin{aligned}
&\liminf_{s \to s_0} \frac{ (f\circ \gamma)(s)- (f\circ
  \gamma)(s_0)}{s-s_0}\\
&\qquad \geq \inf\Big\{ \sH_{\eso} (\piesebh^* f)(q): q \in
\piesebh^{-1}(\gamma(s_0))\cap \esSigma \Big\}.
\end{aligned}
\end{equation}
\end{definition}
\nomenclature[E]{\EGBB}{Edge generalized broken bicharacteristic}

\begin{lemma}\label{lemma:EGBB-GBB}
\begin{enumerate}
\item
An \EGBB outside $\Sebstar[\ef]M$ is a reparameterized \GBB (under the
natural identification of $\Sbstar_{M_0\setminus W} M_0$ with
$\Sebstar[M\setminus\ef]M$), and conversely.
\item
If a point $q$ on an \EGBB lies in $\Sebstar[\ef]M$, then the whole
\EGBB lies in $\Sebstar[\ef]M$, in $\dbetaeb^{-1}(\dbetaeb(q))$, i.e.\ in
the fiber of $\dbetaeb$ through $q$.
\item
The only \EGBB through a point in $\rcal_{\eb,\alpha,I/O}$ is the constant
curve.
\item
For $\alpha\in\hcal_{W,\bo}$, an \EGBB in $\hcal_{\ebo\to\alpha,\bo}
\setminus\rcal_{\eb,\alpha,I/O}$
projects to a reparameterized \GBB in $\Tbstar Z$, hence to 
geodesic of length $\pi$ in $Z.$
\end{enumerate}
\end{lemma}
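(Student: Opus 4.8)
The plan is to prove the four assertions in order, each time reducing to a local computation with $\sH_{\eso}$ together with a soft one-sided–derivative (Gronwall-type) argument for the continuous maps defining \GBBs and \EGBBs; throughout I use, exactly as in the Remark following the definition of \GBB, the \emph{limsup} form of \eqref{egbb} and its consequence that when the set on the right of \eqref{egbb} consists of a single value, $f\circ\gamma$ is differentiable there with that value as its derivative. For (1): away from $\ef$ the edge-b structure is canonically (via $\beta$) the b structure — $\Vfeb(M)=\Vb(M)$ over $M\setminus\ef$ — and under the resulting identification $\Sebstar[M\setminus\ef]M\cong\Sbstar_{M_0\setminus W}M_0$ the map $\pieseb$ becomes $\pismooth$, while on the characteristic set $\sH_{\eso}$ becomes a positive multiple (the function $x$, in the normalisations in force) of $\sH_{\so}$. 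Since $x>0$ is constant on the fibres of $\pismooth$, multiplying \eqref{egbb} through by $x(\gamma(s_0))$ turns it into the defining inequality of a \GBB, and conversely a \GBB reparametrised so that $ds/d\sigma=x$ satisfies \eqref{egbb}; the chain-rule manipulation inside the $\liminf$ is legitimate because \GBBs and \EGBBs are equicontinuous on compacta, so difference quotients are bounded and $\liminf(ab)=(\lim b)\,\liminf a$ applies with $b=ds/d\sigma\to x>0$. (This also makes the phrase ``outside $\Sebstar[\ef]M$'' in the statement unambiguous, once (2) is known.)

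For (2), first test \eqref{egbb} and its \emph{limsup} form with $f=x$: by \eqref{eq:sH-at-ef-mod-bvf} one has $\sH_{\eso}x=x\,a$ with $a\in\CI$, and since $x$ is constant on each $\piesebh$-fibre over a base point, on a compact set the numbers $\sH_{\eso}(\piesebh^*x)(q)$ all lie in $[-C\,(x\circ\gamma),\ C\,(x\circ\gamma)]$, so the upper and lower Dini derivatives of $x\circ\gamma$ are bounded by $\pm C\,(x\circ\gamma)$. Then $e^{-Cs}(x\circ\gamma)$ has non-positive upper Dini derivative, hence is non-increasing, and $e^{Cs}(x\circ\gamma)$ has non-negative lower Dini derivative, hence is non-decreasing; as $x\circ\gamma\ge 0$ and vanishes at $s_0$, a standard comparison over compact subintervals forces $x\circ\gamma\equiv 0$ on $I$, so the whole \EGBB lies in $\Sebstar[\ef]M$. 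Over $\ef$, \eqref{eq:sH-at-ef} shows $\sH_{\eso}$ annihilates $t$, the $y_j$ and the $\etaesh_j$ (as the paper notes, it is tangent to the fibres where $y,t,\etaesh$ are constant), while $\piesebh^*$ sends $t,y_j,\etaebh_j$ to $t,y_j,\etaesh_j$; hence for each of these the right side of \eqref{egbb} is the single value $0$, so $t,y_j,\etaebh_j$ are constant along $\gamma$, i.e.\ (by \eqref{eq:dbetaeb-loc-coords}) $\dbetaeb\circ\gamma$ is constant, which is (2).

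For (3) — the relevant case being $\alpha\in\hcal_{W,\bo}$, where $c^2:=1-h(y,\etabh)$ at $\alpha$ is positive (otherwise $\rcal_{\eb,\alpha,I/O}$ is the zero section and the claim is vacuous) — suppose $\gamma(s_0)=q_0\in\rcal_{\eb,\alpha,I/O}$. By (2), $\gamma$ lies in $\hcal_{\ebo\to\alpha,\bo}$, where $\xiebh^2\le c^2$, so $\phi:=c^2-\xiebh^2\ge 0$ along $\gamma$ and vanishes at $s_0$. The point is that although the fibre $\piesebh^{-1}(\gamma(s))\cap\esSigma$ can be larger than a single point over $\pa\ef$ (the $\zetaesh_j'$ over a vanishing $z_j'$ being unconstrained by $\piesebh$), on that fibre one still has $\xiesh=\xiebh(\gamma(s))$ and, by the characteristic relation, $k(y,z,\zetaesh)=c^2-\xiesh^2=\phi(\gamma(s))$; so by \eqref{eq:sH-at-ef}, $\sH_{\eso}(\piesebh^*\phi)=-2\xiesh\,\sH_{\eso}\xiesh=4\xiesh\,k(y,z,\zetaesh)$ takes the \emph{single} value $4\,\xiebh(\gamma(s))\,\phi(\gamma(s))$ there. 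Hence $(\phi\circ\gamma)'(s)=4\,\xiebh(\gamma(s))(\phi\circ\gamma)(s)$ near $s_0$, and vanishing at $s_0$ forces $\phi\circ\gamma\equiv 0$ near $s_0$ (ODE uniqueness, or the Gronwall argument of (2)); then $\xiebh=\pm c$ and so $\zetaebh=0$ along $\gamma$ near $s_0$, whence $\piesebh^{-1}(\gamma(s))\cap\esSigma$ is the single \emph{radial} point, at which $\sH_{\eso}z_j=0$, and the single-value argument applied to $f=z_j$ gives $z_j\circ\gamma$ constant, i.e.\ $\gamma\equiv q_0$, near $s_0$. Thus $\{s:\gamma(s)=q_0\}$ is open, closed and nonempty in the interval $I$, so $\gamma$ is constant.

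For (4), by (2) the \EGBB lies in $\hcal_{\ebo\to\alpha,\bo}$ with $t,y,\etaebh$ frozen. Forgetting $\xiebh$ and reading $(z,\zetaebh)$ as a b-covector defines a projection $\hcal_{\ebo\to\alpha,\bo}\setminus\rcal_{\eb,\alpha,I/O}\to\Tbstar Z\setminus o$, with a parallel projection on the $\eso$-side, and the square relating these to $\piesebh$ and to the natural map $T^*Z\to\Tbstar Z$ commutes (this is precisely the $\zeta_i'=z_i'\zetaesh_i'$ bookkeeping of $\piesebh$). By \eqref{eq:sH-at-ef} the $(z,\zetaesh)$-part of $-\frac12\sH_{\eso}$ is the geodesic Hamilton field of $(Z,k)$ plus the fibrewise-radial term $-\xiesh\,\zetaesh\cdot\pa_{\zetaesh}$, which merely rescales $|\zetaesh|$; so after normalising, the $\eso$-projection carries $\sH_{\eso}$ to a positive multiple — proportional to $\sqrt{c^2-\xiesh^2}$ — of the unit-speed generalized geodesic flow of $Z$. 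This intertwining is the computation of \cite[proof of Lemma~2.3]{mvw1} and \cite{Melrose-Wunsch1}, valid here verbatim. Testing \eqref{egbb} for $\gamma$ against extensions of pullbacks from $\Tbstar Z$ and using the commuting square, the right side becomes a positive factor — proportional to $\sqrt{c^2-\xiebh(\gamma(s_0))^2}$, single-valued on each $\piesebh$-fibre and nonzero off the radial set — times the corresponding infimum over the $Z$-fibre; absorbing this positive continuous factor by a reparametrisation shows the projection of $\gamma$ satisfies the defining inequality of a \GBB on $Z$, i.e.\ is a generalized geodesic. Finally, a maximal such geodesic limits in parameter onto $\rcal_{\eb,\alpha,I}$ and $\rcal_{\eb,\alpha,O}$, where the reparametrisation factor $\sqrt{c^2-\xiebh^2}$ degenerates, so the infinite \EGBB parameter range becomes a finite $Z$-arclength, which equals $\pi$ by the classical identification of geodesics through the vertex of the metric cone over $(Z,k)$ (reflecting at $\pa Z$) with length-$\pi$ generalized geodesics on $Z$; see \cite{Melrose-Wunsch1,mvw1}. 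I expect this last normalisation step of (4), along with the Gronwall/ODE bookkeeping in (2)–(3), to be the only non-routine points — the remainder being the identifications of the b, edge-smooth and edge-b structures away from and along $\ef$.
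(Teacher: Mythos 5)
Your proof is correct and follows essentially the same route as the paper's: part (1) via the factor of $x$ relating $\sH_{\so}$ and $\sH_{\eso}$, part (2) via tangency of $\sH_{\eso}$ to the fibers of $\dbetaes$ and testing \eqref{egbb} against the slow variables, part (3) via the $C^1$-ness of $\xiebh\circ\gamma$ together with ODE uniqueness at radial points, and part (4) by reference to the reparameterization argument of \cite{mvw1}. Where you go beyond the paper's terse statement is in filling in details left implicit — notably the Gronwall/Dini-derivative confinement argument with $f=x$ in (2), which shows the curve cannot leave $\Sebstar[\ef]M$ before the fiber-constancy argument can apply, and the explicit observation in (3) that $\sH_{\eso}\xiesh$ is single-valued on each $\piesebh$-fiber (which is exactly what makes $\xiebh\circ\gamma$ genuinely $C^1$ rather than merely Lipschitz) — and these are worthwhile additions, not deviations in strategy.
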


\begin{proof}\mbox{}
\begin{enumerate}
\item As $\sH_{\so}$ and $\sH_{\eso}$ differ by an overall factor
under the natural identification $\iota:S^*_{M_0\setminus W}M_0\to
\Sesstar_{M\setminus \ef}M$, namely
$$
\iota_*\sH_{\so}=|\taus|^{-1}\,|\taues|\,x^{-2}\sH_{\eso}=x^{-1}\sH_{\eso},
$$
we obtain this immediately.
\item 
The tangency of
  $\sH_{\eso}$ to the fibers of $\dbetaes$ means that
if we set $f$ equal to any of $\pm y_j,\pm t,\pm\etabh_j,$
$\sH_{\eso}f=0.$ By \eqref{egbb}, then
 $(f\circ\gamma)'(s_0)=0$ for all $s_0,$ and for each of these
 choices.  This ensures that $\gamma$ remains in the fiber.
\item $\sH_{\eso}$ vanishes at the unique
$q \in
\piesebh^{-1}(\gamma(s_0))\cap \esSigma$ if
$\gamma(s_0)\in\rcal_{\ebo,\alpha,I/O}.$  Moreover, the function
$\xiebh\circ\gamma$ is in $C^1,$ as
$$
\sH_{\eso}\xiesh=2K(y,t,z,\zetaesh)=2\big(1-h(y,\etaesh)-\xiesh^2\big)
$$
on $\esSigma.$  Thus,
\eqref{egbb} entails that if
$\xiesh=\pm\sqrt{1-h(y,\etaesh)}$ at some point on an $\EGBB$, then it
is constant.
\item This
follows from a reparameterization argument, as in \cite{mvw1}, taking
into account that $\sH_{\eso}$ is tangent to the fibers of
$\Tesstar_{\ef}M$, hence can be considered as a vector field
on $\RR_{\xiesh}\times T^*Z$. (In fact, a completely analogous argument
takes place in \cite[Section~6]{Vasy:Propagation-Many} in the setting
of $N$-body scattering.)\qed
\end{enumerate}
\noqed
\end{proof}

Suppose now that $\gamma:[0,\delta_0]\to\bSigma$ is a \GBB with
$\gamma(0)=\alpha\in\hcal_{W,\bo}$. Thus, assuming $\delta_0>0$ is
sufficiently small,
by Lemma~\ref{lemma:OG-IC-uniform-est-1},
$\gamma|_{(0,\delta_0]}\cap\Sbstar_W M_0=\emptyset$.
Since $\Sbstar_{M_0\setminus W} M_0$ is naturally diffeomorphic to
$\Sebstar[M\setminus\ef] M$, we can lift $\gamma|_{(0,\delta_0]}$
to a curve $\gammat:(0,\delta_0]\to\Sebstar M$ in a unique fashion.
It is natural to ask whether this lifted curve extends continuously
to $0$, which is a question we now address.

The following is easily deduced
from Lebeau, \cite[Proposition~1]{Lebeau5} (stated here in
Lemma~\ref{lemma:Lebeau-IC-OG}) and its proof:

\begin{lemma}\label{lemma:GBB-eb-lift}
Suppose that $\alpha\in\hcal_{W,\bo}$. There exists $\delta_0>0$ with the
following property.

Suppose $\gamma:[0,\delta_0]\to\Sbstar M_0$
is a \GBB with $\gamma(0)=\alpha$. Let $\tilde\gamma:(0,\delta_0]\to\Sebstar M$
be the unique lift of $\gamma|_{(0,\delta_0]}$ to $\Sebstar M$. Then
$\tilde\gamma$ (uniquely) extends to a continuous map
$\tilde\gamma:[0,\delta_0]\to\Sebstar M$, with $\tilde\gamma(0)
\in\rcal_{\ebo,\alpha,O}$.

In addition, $\gamma$ approaches $W$ normally if and only if
$$
\tilde\gamma(0)\notin\Sebstar[\pa\ef]M\cap\rcal_{\ebo,\alpha,O}
=\cG\cap\rcal_{\ebo,\alpha,O}.
$$

The analogous results hold if $[0,\delta_0]$ is replaced by $[-\delta_0,0]$
and $\rcal_{\ebo,\alpha,O}$ is replaced by $\rcal_{\ebo,\alpha,I}$.
\end{lemma}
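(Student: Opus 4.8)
The whole statement can be read off from Lebeau's Proposition~1 (Lemma~\ref{lemma:Lebeau-IC-OG}) and its proof. Applied at $s_0=0$, it furnishes a unique $\tilde q_+\in\sSigma$ with $\pismooth(\tilde q_+)=\alpha$ such that $\tfrac{d}{ds}(f\circ\gamma)|_{0+}=\sH_{\so}(\pismooth^*f)(\tilde q_+)$ for every real-valued $f\in\CI(\Sbstar M_0)$; in particular the limits $a_j:=\lim_{s\to0+}x_j(\gamma(s))/s=\sH_{\so}x_j(\tilde q_+)$ exist (as used in the definition of normal approach, via \cite[Proposition~1]{Lebeau5}) and are $\ge0$ since each $x_j\ge0$ vanishes at $s=0$. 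First I would fix $\delta_0$: since $\alpha\in\hcal_{W,\bo}$ we have $1-h(y(\alpha),\etabh(\alpha))>0$, so the computation in the Remark following Lemma~\ref{lemma:bich-Lebeau} shows $(\sum_j\xibh_j)\circ\gamma$ is strictly decreasing near $0$, and as it vanishes at $s=0$ (since $\xibh_j=0$ on $\bSigma\cap\Sbstar_W M_0$) we get $\gamma(s)\notin\Sbstar_W M_0$ for $0<s\le\delta_0$; together with Lemma~\ref{lemma:OG-IC-uniform-est-1} (and the uniform local structure in \cite[Proposition~1]{Lebeau5}) this fixes $\delta_0$ depending only on $\alpha$, and $\gamma|_{(0,\delta_0]}$ then lifts uniquely to $\tilde\gamma:(0,\delta_0]\to\Sebstar[M\setminus\ef]M$.

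Next I would show each coordinate $x,y,t,z,\xiebh,\etaebh,\zetaebh$ of $\Sebstar M$ near $\ef$ converges along $\tilde\gamma(s)$ as $s\to0+$. Using $p_0(\tilde q_+)=0$ and $\etash(\tilde q_+)=\etabh(\alpha)$, one finds $a^2:=\lim(x(\gamma(s))/s)^2=\sum_{ij}a_{ij}(0,y(\alpha))a_ia_j$ is a positive multiple of $1-h(y(\alpha),\etabh(\alpha))>0$, hence $a>0$; therefore $x(\tilde\gamma(s))\to0$ while $z_j(\tilde\gamma(s))=x_j/x\to a_j/a$, converging to the point $z_0\in Z$ with $\omega_j(z_0)=a_j/a$ (writing $x_j=x\,\omega_j(z)$, $\sum_{ij}a_{ij}\omega_i\omega_j\equiv1$), and $y,t$ converge trivially. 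Being b-type covector data, $\etaebh(\tilde\gamma(s))=\etabh(\gamma(s))\to\etabh(\alpha)$; by \eqref{eq:xib-xiebh}, $\xiebh(\tilde\gamma(s))=x(\gamma(s))^{-1}(\sum_j\xibh_j)(\gamma(s))+O(x)$, whose limit exists because $(\sum_j\xibh_j)\circ\gamma$ is differentiable at $0+$ with value $0$ at $s=0$ and $x(\gamma(s))/s\to a>0$; and the angular coordinate tends to $0$, since $a_j=\sH_{\so}x_j(\tilde q_+)$ says exactly that the limiting covector direction $\xish(\tilde q_+)$ is proportional to $\bigl(\sum_l a_{jl}\omega_l(z_0)\bigr)_j$, so differentiating $\sum_{ij}a_{ij}\omega_i\omega_j\equiv1$ in $z$ forces $\zetaesh_k=\sum_j\xish_j\,\pa_{z_k}\omega_j\to0$ along $\tilde\gamma$, hence $\zetaebh\to0$. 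Thus $\tilde\gamma$ extends continuously (hence uniquely) to $[0,\delta_0]$, the limit $\tilde\gamma(0)$ being a radial point ($\zetaebh=0$); reading off the incoming/outgoing label from the dichotomy following \eqref{eq:sH-at-ef-mod-bvf} — $\gamma$ is parametrized so that it leaves $W$, i.e.\ $x$ increases, for $s>0$ — identifies $\tilde\gamma(0)\in\rcal_{\ebo,\alpha,O}$.

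It then remains to treat normal incidence and the reversed interval. By definition $\gamma$ approaches $W$ normally iff $a_j\ne0$ for all $j$, i.e.\ iff $z_0$ lies in the interior of $Z$ (as $\omega_j(z_0)=a_j/a$), i.e.\ iff $\tilde\gamma(0)\notin\Sebstar[\pa\ef]M$. Since $\tilde\gamma(0)\in\rcal_{\ebo,\alpha,O}\subset\ebSigma$ always, and on a radial point the inequality cutting out $\hcal$ in \eqref{egh-coordinates} is saturated ($\xiebh^2+h(y,\etaebh)=1$ there, the $k$-term vanishing), we get $\rcal_{\ebo,\alpha,O}\cap\Sebstar[\pa\ef]M=\cG\cap\rcal_{\ebo,\alpha,O}$, which is the asserted equivalence. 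The case of a segment on $[-\delta_0,0]$ with $\gamma(0)=\alpha$ is identical, using the left one-sided-derivative point $\tilde q_-$ of Lemma~\ref{lemma:Lebeau-IC-OG} in place of $\tilde q_+$ and landing in $\rcal_{\ebo,\alpha,I}$.

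The main obstacle is the second paragraph: Lebeau's Proposition~1 as stated gives only one-sided differentiability of individual functions $f\circ\gamma$, and upgrading this to genuine convergence of \emph{all} the $\Sebstar M$-coordinates of the lift — most delicately $\zetaebh\to0$ in the case where some $a_j=0$, so that $z_0\in\pa Z$ — requires its proof, namely that near the hyperbolic corner point $\gamma$ agrees to first order with the bicharacteristic through $\tilde q_+$. Granting that, the remaining steps (the value of $\xiebh(\tilde\gamma(0))$, the sign distinguishing $\rcal_{\ebo,\alpha,O}$ from $\rcal_{\ebo,\alpha,I}$, and $\zetaebh\to0$) reduce to the symbol and metric bookkeeping of \eqref{metric}, \eqref{eq:sH-at-ef-mod-bvf}--\eqref{eq:sH-at-ef} and \eqref{eq:xib-xiebh}, carried out essentially as in \cite[\S7 and the proof of Lemma~2.3]{mvw1}.
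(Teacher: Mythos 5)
Your proof follows essentially the same route as the paper's: use Lemma~\ref{lemma:OG-IC-uniform-est-1} to fix $\delta_0$ and obtain a unique lift on $(0,\delta_0]$, appeal to Lebeau's Proposition~1 and its proof for the key one-sided convergence facts, track each edge-b coordinate of $\tilde\gamma$, and read off the normal-incidence dichotomy from the limit of $z_j=x_j/x$. The one place you diverge is the argument that $\zetaebh\to 0$. The paper's bookkeeping is cleaner and more robust: once $\xiebh\circ\tilde\gamma\to\sqrt{1-h(y_0,\etabh_0)}$ is extracted from Lebeau's proof, the characteristic identity $\sum K_{ij}\zetaesh_i\zetaesh_j=1-h(y,\etaesh)-\xiesh^2+xG$ on $\esSigma$, together with positivity of $K$, squeezes $\zetaesh\to0$ (hence $\zetaebh\to0$) immediately, without first having to know where $z$ or the covector direction converge. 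Your route instead rests on the kinematic identity $\zetaesh_k=\sum_j\xish_j\,\pa_{z_k}\omega_j$ and the orthogonality of $\xish(\tilde q_+)$ with $\pa_{z_k}\omega$ at $z_0$; to conclude from this you need $\xish(\gamma(s))\to\xish(\tilde q_+)$ and $z(\tilde\gamma(s))\to z_0$ \emph{along the curve} --- precisely the strengthening beyond one-sided differentiability of individual $f\circ\gamma$ that you flag as the main obstacle, and which is most delicate exactly when some $a_j=0$. So you correctly identify what must be unwound from Lebeau, but the paper's characteristic-set argument sidesteps most of that work. One smaller point worth tightening: to place $\tilde\gamma(0)$ in $\rcal_{\ebo,\alpha,O}$ you need $\xiebh(\tilde\gamma(0))=\sgn(\taueb)\sqrt{1-h(y_0,\etabh_0)}$, not merely that the limit exists and $\zetaebh\to0$; over $\pa\ef$ the conditions $\zetaebh=0$ and membership in $\ebSigma$ do not by themselves pin down $\xiebh$, since a preimage under $\piesebh$ may have $\zetaesh'\neq0$. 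Your expression $\xiebh\to a^{-1}\,\tfrac{d}{ds}\bigl(\sum_j\xibh_j\circ\gamma\bigr)\big|_{0+}$ does yield the correct value $\sqrt{1-h}$ once you substitute $\sum_j a_j\,\xish_j(\tilde q_+)$ and $a^2=\sum a_{ij}a_ia_j=4(1-h)$, but this step should be made explicit rather than left at ``whose limit exists.''
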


\begin{remark}
The proof in fact shows that $\delta_0$ can be chosen independent of
$\alpha$ as long as we fix some $K\subset\hcal_{W,\bo}\subset\Sbstar_W M_0$
compact and require $\alpha\in K$.
\end{remark}

\begin{remark}\label{remark:normal-GBB-lift}
  The special case of a \emph{normal} \GBB segment $\gamma$, which
  lifts to a curve $\gammat:[0,\delta_0]\to\Sebstar M$ starting at
  $\ef^\circ,$ follows directly by the description of geodesic in edge
  metrics from \cite{mvw1}, since normality implies that for
  sufficiently small $\delta_0>0$, $\gamma|_{(0,\delta_0]}$ has image
  disjoint from $x_j=0$ for all $j$, i.e.\ the boundaries can be
  ignored, and one is simply in the setting of \cite{mvw1}. This
  argument also shows that given $\alpha\in\hcal_{W,\bo}$ and
  $p\in\rcal_{\ebo,\alpha,O} \setminus\cG$, for sufficiently small
  $\delta_0>0$, there is a unique \GBB $\gamma:[0,\delta_0]\to\Sbstar
  M_0$ with $\gamma(0)=\alpha$ such that the lift $\gammat$ of
  $\gamma$ satisfies $\gamma(0)=p$.
\end{remark}

\begin{proof}
Let $\alpha=(y_0,t_0,\etabh_0)$.
First, by Lemma~\ref{lemma:OG-IC-uniform-est-1},
$\gamma|_{(0,\delta_0]}\cap\Sbstar_W M_0=\emptyset$ for $\delta_0>0$
sufficiently small, hence the lift $\gammat|_{(0,\delta_0]}$
exists and is unique.
Lebeau proves in \cite[Proof of Proposition~1]{Lebeau5} (with our
notation)
that
$$
\lim_{s\to 0}\xiebh(\gammat(s))=\sqrt{1-h(y_0,\etabh_0)}
\ \Mand
\ \frac{dx(\gamma(s))}{ds}|_{s=0}=2\sqrt{1-h(y_0,\etabh_0)}>0.
$$
This
implies that
$$
\sup\{|\zetaesh(q)|:\ q\in(\widehat{\pismooth})^{-1}(\gamma(s))\}\to 0
\ \text{as}\ s\to 0+,
$$
since
$$
\sum K_{ij}(y,z)\zetaesh_i\zetaesh_j=1-h(y,\etaesh)-\xiesh^2+xG
\leq 1-h(y,\etaesh)-\xiesh^2+Cx
$$
on $\esSigma$, and
$1-h(y(\gammat(s)),\etaesh(\gammat(s)))-\xiesh(\gammat(s))^2
+Cx(\gammat(s))\to 0$.
It remains to show that the coordinates $z_j$ have a limit as $s\to 0$.
But by Lemma~\ref{lemma:Lebeau-IC-OG}, $(dx_j\circ\gamma/ds)|_{s=0}
=2\xish_j(0)$ exists, and
$\sum A_{ij}\xish_i(0)\xish_j(0)=1-h(y_0,\etabh_0)>0$. Thus, considering
$z_j(\gamma(s))=x_j(\gamma(s))/x(\gamma(s))$, L'H\^opital's rule shows
that $\lim_{s\to 0+} z_j(\gamma(s))=\xish_j(0)/\sqrt{1-h(y_0,\etabh_0)}$
exists, finishing the proof of the first claim. The second claim follows
at once from the last observation regarding $\lim_{s\to 0+} z_j(\gamma(s))$.
\end{proof}

We also need the following result, which is a refinement of
Lemma~\ref{lemma:GBB-eb-lift}, insofar as Lebeau's result
only deals with a single \GBB emanating from the corner $W$ of $M_0:$
the following lemma extends Lemma~\ref{lemma:GBB-eb-lift} uniformly to \GBBs
starting close to but not at the corner.
For simplicity of notation, we only state the results for the outgoing
direction.

\begin{lemma}\label{lemma:equicontinuty-lifted-GBB}
Suppose that $\alpha\in\hcal_{W,\bo}$, $p\in\rcal_{\ebo,O,
\alpha}$,
$p_n\in\Sebstar[M\setminus\ef] M$, and $p_n\to p$ in $\Sebstar M$.
Suppose $\delta_0>0$ is sufficiently small (see following remark).
Let $\gamma_n:[0,\delta_0]\to\Sbstar M_0$ be \GBB such that $\gamma_n(0)=p_n$.
For $n$ sufficiently large,
let $\tilde\gamma_n:[0,\delta_0]\to\Sebstar M$ be the unique lift of
$\gamma_n$ to a map $[0,\delta_0]\to\Sebstar M$.
Then for $N$ sufficiently large,
$\{\tilde\gamma_n\}_{n\geq N}$ is equicontinuous.
\end{lemma}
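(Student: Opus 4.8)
The plan is to reduce the equicontinuity of the family $\{\tilde\gamma_n\}$ on $[0,\delta_0]$ to two ingredients: (i) uniform equicontinuity of the underlying \GBBs $\gamma_n$ on $\Sbstar M_0$, which is already available since \GBBs are uniformly Lipschitz on compact sets (this was used in the proof of Lemma~\ref{lemma:OG-IC-uniform-est-1}), together with the uniform ``instantaneous escape'' from $\Sbstar_W M_0$ furnished by Lemma~\ref{lemma:OG-IC-uniform-est-1}; and (ii) control of the blowup coordinates $z_j = x_j/x$ and $\xiebh$ on the lifted curves near $s=0$, uniformly in $n$. Away from $\ef$ the lift $\tilde\gamma_n$ is just $\gamma_n$ read in the other coordinate system via the diffeomorphism $\Sbstar_{M_0\setminus W}M_0\cong \Sebstar[M\setminus\ef]M$, so the only possible failure of equicontinuity is near $s=0$, where $\gamma_n(0)=p_n$ approaches $\ef$.

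\textbf{Step 1: Uniform separation from $W$.} First I would invoke the remark after Lemma~\ref{lemma:GBB-eb-lift}: since $p\in\rcal_{\ebo,O,\alpha}$ with $\alpha\in\hcal_{W,\bo}$, a fixed compact neighborhood $K\subset\hcal_{W,\bo}$ of $\alpha$ can be chosen, and for $n$ large the ``b-projection'' $\beta_\sharp$-image of $p_n$ lies near $\alpha$, hence $1-h(y,\etabh)$ is bounded below by some $\ep_0>0$ along all $\gamma_n$ for short time. By Lemma~\ref{lemma:OG-IC-uniform-est-1} applied with this $\ep_0$, there is a $\delta>0$, independent of $n$, so that $\gamma_n|_{(0,\delta]}$ never returns to $\Sbstar_W M_0$ once we know $\gamma_n(0)=p_n$ is either on $\ef$ in the limit or already off it — more precisely, one shifts slightly and uses that $\sum\xibh_j>0$ near $p$ (an outgoing point) to force $\gamma_n|_{[\sigma_n,\delta_0]}$ to stay in $\Sbstar_{M_0\setminus W}M_0$ where $\sigma_n\to 0$. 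This guarantees the lift $\tilde\gamma_n$ is defined and single-valued on $(0,\delta_0]$.

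\textbf{Step 2: Uniform convergence of blowup coordinates at $s=0$.} The heart of the argument is to upgrade the single-curve computation in the proof of Lemma~\ref{lemma:GBB-eb-lift} to a uniform statement. I would track the quantity $\xiebh(\tilde\gamma_n(s))$ and show $\sup_n|\xiebh(\tilde\gamma_n(s)) - \sqrt{1-h(y_0,\etabh_0)}|\to 0$ as $s\to 0+$, uniformly in $n\geq N$; this uses that $\frac{dx(\gamma_n(s))}{ds}|_{s=\sigma_n}$ is bounded below uniformly (via Lemma~\ref{lemma:Lebeau-IC-OG}, $(dx_j\circ\gamma_n/ds) = 2\xish_j$ with $\sum A_{ij}\xish_i\xish_j = 1-h>\ep_0$) together with the uniform Lipschitz bounds on $x\circ\gamma_n$, $y\circ\gamma_n$, $\etabh\circ\gamma_n$. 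Then the inequality
\begin{equation*}
\sum K_{ij}\zetaesh_i\zetaesh_j = 1-h(y,\etaesh)-\xiesh^2 + xG \leq 1-h-\xiesh^2+Cx
\end{equation*}
on $\esSigma$ forces $\sup\{|\zetaesh(q)|: q\in(\widehat{\pismooth})^{-1}(\gamma_n(s))\}\to 0$ uniformly as $s\to0+$; correspondingly $\zetaebh(\tilde\gamma_n(s))\to 0$ uniformly. Finally, for the $z_j$-coordinates one applies a uniform version of the L'H\^ospital argument: $z_j(\gamma_n(s)) = x_j(\gamma_n(s))/x(\gamma_n(s))$ and both numerator and denominator are Lipschitz with uniformly controlled one-sided derivatives at (the shifted) origin, so $z_j(\tilde\gamma_n(s))$ is uniformly close to its limiting value for small $s$. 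Packaging these estimates: for each $\eta>0$ there is $s_1>0$, independent of $n\geq N$, such that $\tilde\gamma_n([0,s_1])$ has diameter $<\eta$ in $\Sebstar M$.

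\textbf{Step 3: Gluing.} On $[s_1,\delta_0]$ the curves $\tilde\gamma_n$ are just $\gamma_n$ in the $\Sebstar[M\setminus\ef]M$ coordinates, and since $\gamma_n$ are uniformly Lipschitz in the compact set they traverse (again the standard \GBB estimate), $\tilde\gamma_n|_{[s_1,\delta_0]}$ are uniformly equicontinuous — the coordinate change is a fixed diffeomorphism on a compact set, so it preserves equicontinuity. Combining the small-diameter estimate on $[0,s_1]$ from Step 2 with uniform equicontinuity on $[s_1,\delta_0]$, and noting $s_1$ can be taken arbitrarily small, yields equicontinuity of $\{\tilde\gamma_n\}_{n\geq N}$ on all of $[0,\delta_0]$.

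\textbf{Main obstacle.} The delicate point is Step 2: Lebeau's computation in \cite[Proof of Proposition~1]{Lebeau5} is stated for a single \GBB emanating exactly from the corner, and one must check that every estimate there — the limit of $\xiebh$, the lower bound on $dx/ds$, the decay of $\zetaesh$, and the L'H\^ospital limit for $z_j$ — depends only on the compact set $K\ni\alpha$, the uniform Lipschitz constants of \GBBs, and $\ep_0$, and not on the individual curve. The subtlety is that $\gamma_n$ starts \emph{near} but not \emph{at} the corner, so one is estimating on curves that spend a short (but $n$-dependent) initial interval $[0,\sigma_n]$ possibly still interacting with $W$; I would handle this by using the remark that $\delta_0$ in Lemma~\ref{lemma:GBB-eb-lift} is uniform over compact subsets of $\hcal_{W,\bo}$, applying that lemma at the parameter value where $\gamma_n$ has just left $W$ (which it does in uniformly bounded time by Step 1), and then noting that $p_n\to p$ forces these ``just left $W$'' points to converge to $p$ as well, so the uniform estimates of Lemma~\ref{lemma:GBB-eb-lift} kick in with constants controlled by $K$.
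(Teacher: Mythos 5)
Your overall plan — split into a near-$s=0$ estimate and an away-from-$s=0$ estimate, use Lebeau's uniform equicontinuity for the underlying \GBBs away from zero, and control the blowup coordinates $z_j$, $\xiebh$, $\zetaebh$ near zero — matches the structure of the paper's proof. The gap is in Step~2, where you propose to ``uniformize L'H\^opital.'' That cannot work as stated.

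The L'H\^opital argument in Lemma~\ref{lemma:GBB-eb-lift} establishes the \emph{existence of a limit} of $z_j$ along a single curve that starts exactly at $W$; here, for each fixed $n$ the curve $\gamma_n$ starts at $p_n$ with $x(p_n)>0$, so there is no indeterminate form at $s=0$ and nothing to apply L'H\^opital to. (Relatedly, your $\sigma_n$ should not appear: by Remark~\ref{remark:unif-nbhd} the curves $\gamma_n$ never touch $\Sbstar_W M_0$ on $[0,\delta_0]$ for $n$ large, so there is no ``moment where they leave $W$.'') What one actually needs is a uniform bound on the modulus of continuity of $z_j\circ\tilde\gamma_n$ near $s=0$, and the crude Lipschitz bounds you invoke do not give it. Writing $z_j\circ\gamma_n(s)-z_j(p_n)=\dfrac{[x_j(\gamma_n(s))-x_j(p_n)]x(p_n)-x_j(p_n)[x(\gamma_n(s))-x(p_n)]}{x(\gamma_n(s))\,x(p_n)}$ and using only that $x_j\circ\gamma_n$ and $x\circ\gamma_n$ are uniformly Lipschitz (with $x'\gtrsim c>0$) gives at best $|z_j\circ\gamma_n(s)-z_j(p_n)|\lesssim s/(x(p_n)+cs)$, which does \emph{not} go to zero uniformly in $n$ as $s\to 0$ when $x(p_n)\ll s$. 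The cancellation needed — that the increment of $x_j$ is proportional to $z_j(p_n)$ times the increment of $x$, up to an error that is small — is precisely the information that $\zetaesh$ (equivalently $\Theta=1-h-\xiebh^2$) is small, \emph{quantitatively} along the curve, not merely at the two endpoints.

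The paper obtains exactly that: it derives a Gronwall-type inequality $\frac{d}{ds}\Theta_n+\frac{4(\mu-\ep_1)}{x(p_n)+(\mu+\ep_1)s}\Theta_n\leq C$ from the identity $\sH_{\so}\Theta=-4x^{-1}\xiebh\Theta+F$ on $\esSigma$, integrates it to obtain the decay estimate \eqref{eq:stronger-Theta-estimate}, and then plugs this into $|dZ_n/ds|\lesssim x^{-1}|\zetaesh|$ and integrates once more. The resulting bound has a $\Theta_n(0)^{1/2}$ factor, which tends to zero with $n$ because $p_n\to p\in\rcal_{\ebo,\alpha,O}$ and $\Theta(p)=0$; this is what delivers uniform smallness. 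So the key idea you are missing is the ODE estimate for $\Theta_n$ along the flow and its use as a weight when integrating $dZ_n/ds$ across the region where $x$ is small. Similarly, your claim that $\xiebh\circ\tilde\gamma_n(s)\to\sqrt{1-h(y_0,\etabh_0)}$ uniformly as $s\to 0+$ is not quite what one gets directly; the paper first confines $\xiebh$ to a band $[\mu-\ep_1,\mu+\ep_1]$ and then derives its equicontinuity \emph{from} that of $\Theta_n$.
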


\begin{remark}\label{remark:unif-nbhd}
  As $p_n\to p$, $\xiebh(p_n)\to\xiebh(p)>0$, so there exists $N>0$
  such that
$$
\sum_{j=1}^{f+1}\xibh_j(p_n)=x(p_n)\xiebh(p_n)+O\big(x(p_n)^2\big)>0
$$
for $n\geq N$;
cf.\ \eqref{eq:xib-xiebh}.
  Thus, by Lemma~\ref{lemma:OG-IC-uniform-est-1}, there exists
  $\delta_0>0$ such that $\gamma_n|_{[0,\delta_0]}\cap\Sbstar_W
  M_0=\emptyset$ for $n\geq N$---this is the $\delta_0$ in the
  statement of the lemma.  Hence, for $n$ sufficiently large,
  $\gamma_n$ has a unique lift $\tilde\gamma_n$ to $\Sebstar M$, since
  $\Sebstar M$ and $\Sbstar M_0$ are naturally diffeomorphic away from
  $\ef$, resp.\ $W$ as previously noted.
\end{remark}

\begin{proof}
Note first that $\{\gamma_n\}_{n\in\NN}$ is equicontinuous by
Lebeau's result \cite[Corollaire~2]{Lebeau5} (see also the proof of
\cite[Proposition~6]{Lebeau5})---indeed, this follows directly from
our definition of \GBB. This implies that
$\{\tilde\gamma_n\}_{n\in\NN}$ is equicontinuous at all $s_0\in(0,\delta_0]$,
for given such a $s_0$, there exists $K_0\subset M_0$ compact disjoint from
$W$ such that $\gamma_n|_{[s_0,\delta_0]}$ has image in $\Sbstar_{K_0}M_0$,
which is canonically diffeomorphic to $\Sebstar[\beta^{-1}(K_0)]M$.
Thus, it remains to consider equicontinuity at $0$.

For sufficiently large $n$, all $\gamma_n$ have image in $\Sbstar_K M_0$
where $K$ is compact and $K\subset O$ for a coordinate chart $O$ on $M_0$.
Thus, by the equicontinuity of $\gamma_n$, the coordinate functions
$$
x_j\circ\gamma_n,\ t_j\circ\gamma_n,\ y_j\circ\gamma_n,\ \xibh_j\circ\gamma_n,\ \etabh_j\circ\gamma_n
$$
are equicontinuous. We need to show that for the lifted curves,
$\tilde\gamma_n$, the coordinate functions
$$
x\circ\tilde\gamma_n,\ t_j\circ\tilde\gamma_n,\ y_j\circ\tilde\gamma_n,\ z_j\circ\tilde\gamma_n,
\ \xiebh\circ\tilde\gamma_n,\ \etaebh\circ\tilde\gamma_n,\ \zetaebh_j\circ\tilde\gamma_n
$$
are equicontinuous at $0$.
By the above description, and $y_j\circ\gammat_n=y_j\circ\gamma_n$,
$t_j\circ\gammat_n=t_j\circ\gamma_n$ and
$\etaebh_j\circ\gammat_n=\etabh_j\circ\gamma_n$ are equicontinuous,
as is $x\circ\gammat_n$ in view of $x=(\sum a_{ij} x_i x_j)^{1/2}$.
Thus, it remains to consider $\xiebh\circ\tilde\gamma_n$,
$z_j\circ\tilde\gamma_n$ and $\zetaebh_j\circ\tilde\gamma_n$.

Let $p=(0,y_0,z_0,\xiebh_0,\etaebh_0,0)$, and write $\mu=\xiebh_0>0$.
Thus,
$$
\mu=\sqrt{1-h(y_0,\etaebh_0)}.
$$
Let $\ep_1>0$. One can show easily, as
in the proof of Lebeau's \cite[Proposition~1]{Lebeau5}, that for all $n$
sufficiently large (so that $p_n$ sufficiently close to $p$) and
$s_0>0$ sufficiently small,
\begin{equation}\label{eq:xiebh-outgoing}
s\in [0,s_0]\Longrightarrow\xiebh\circ\gammat_n(s)\in [\mu-\ep_1,\mu+\ep_1].
\end{equation}
Indeed,
$\sH_{\so}\xiebh=2x^{-1}\sum K_{ij}\zetaesh_i\zetaesh_j+F$ with $F$ smooth,
so $\sH_{\so}\xiebh\geq -C_0$ over the compact set $K$, hence
\begin{equation}\label{eq:xiebh-lower-bd}
\xiebh\circ\gammat_n(s)\geq \xiebh(p_n)-C_0s.
\end{equation}
On the other hand,
on $\esSigma$,
$$
\xiesh^2=1-h(y,\etaesh)-\sum K_{ij}(y,z)\zetaesh_i\zetaesh_j+xG
\leq 1-h(y,\etaesh)+C_1 x,
$$
hence on $\ebSigma$,
$$
\xiebh^2\leq 1-h(y,\etaebh)+C_1 x.
$$
Let
$$
\Phi(x,y,\etabh)=\sqrt{1-h(y,\etabh)+C_1 x};
$$
this is thus a Lipschitz function on a neighborhood of $\alpha$ in
$\Sbstar M_0$, hence there is $s_0'>0$ such that
$\Phi\circ\gamma_n|_{[0,s_0']}$ is uniformly Lipschitz
for $n$ sufficiently large.
Thus,
\begin{equation*}\begin{split}
|\xiebh(\gammat_n(s))|&\leq\Phi(\gamma_n(s))\leq\Phi(\alpha)
+|\Phi(p_n)-\Phi(\alpha)|+|\Phi(\gamma_n(s))-\Phi(p_n)|\\
&\leq \sqrt{1-h(y_0,\etaebh_0)}+|\Phi(p_n)-\Phi(\alpha)|+C's.
\end{split}\end{equation*}
Thus,
for sufficiently large $n$ (so that $p_n$ is close to $p$),
\begin{equation}\label{eq:xiebh-upper-bd}
|\xiebh(\gammat_n(s))|\leq \sqrt{1-h(y_0,\etaebh_0)}+\ep_1/2+C's.
\end{equation}
Combining \eqref{eq:xiebh-lower-bd} and \eqref{eq:xiebh-upper-bd}
gives \eqref{eq:xiebh-outgoing}.

Now consider the function
$$
\Theta=1-h(y,\etaebh)-\xiebh^2,
$$
so $\pi_{\so\to\bo}^*\Theta|_{\sSigma\cap S^*M_0}=
\sum K_{ij}\zetaesh_i\zetaesh_j$.
This satisfies
$$
\sH_{\so}\Theta=-2\xiebh\sH_{\so}\xiebh+F_1
=-4x^{-1}\xiebh\sum K_{ij}\zetaesh_i\zetaesh_j+F_2=-4x^{-1}\xiebh\Theta+F_3
$$
with $F_j$ smooth.
Now,
$$
x(p_n)+(\mu-\ep_1)s\leq x\circ\gamma_n(s)\leq x(p_n)+(\mu+\ep_1)s,
$$
so
$$
\frac{d}{ds}\Theta\circ\gamma_n+4x^{-1}\xiebh\Theta\leq C
$$
implies that
$$
\frac{d}{ds}\Theta_n+\frac{4(\mu-\ep_1)}{x(p_n)+(\mu+\ep_1)s}
\Theta_n\leq C,
$$
where we write $\Theta_n=\Theta\circ\gamma_n$.
Multiplying through by
$$
\big(x(p_n)+(\mu+\ep_1)s\big)^{4(\mu-\ep_1)/(\mu+\ep_1)}
$$
gives
\begin{equation}\begin{split}
&\frac{d}{ds}
\Big(\big(x(p_n)+(\mu+\ep_1)s\big)^{4(\mu-\ep_1)/(\mu+\ep_1)}\Theta_n\Big)\\
&\qquad\qquad\leq C\big(x(p_n)+(\mu+\ep_1)s\big)^{4(\mu-\ep_1)/(\mu+\ep_1)}.
\end{split}\end{equation}
Integration gives
\begin{equation}\begin{split}
&\big(x(p_n)+(\mu+\ep_1)s\big)^{4(\mu-\ep_1)/(\mu+\ep_1)}\Theta_n(s)
-x(p_n)^{4(\mu-\ep_1)/(\mu+\ep_1)}\Theta_n(0)\\
&\qquad\leq C'\Big(\big(x(p_n)+(\mu+\ep_1)s\big)^{1+4(\mu-\ep_1)/(\mu+\ep_1)}
- x(p_n)^{1+4(\mu-\ep_1)/(\mu+\ep_1)}\Big).
\end{split}\end{equation}
Thus,
\begin{multline}\label{eq:stronger-Theta-estimate}
\Theta_n(s) \leq \big(1+(\mu+\ep_1)s/x(p_n)\big)^{-4(\mu-\ep_1)/(\mu+\ep_1)}
\Theta_n(0)\\
+C'\Big(\big(x(p_n)+(\mu+\ep_1)s\big)
-x(p_n)(1+(\mu+\ep_1)s/x(p_n)\big)^{-4(\mu-\ep_1)/(\mu+\ep_1)}\Big).
\end{multline}
Since
$$
\big(1+(\mu+\ep_1)s/x(p_n)\big)^{-4(\mu-\ep_1)/(\mu+\ep_1)}<1,
$$
this yields
\begin{equation}
\Theta_n(s)\leq
\Theta_n(0)+C'\big(x(p_n)+(\mu+\ep_1)s\big)
\end{equation}
On the other hand, as on $\sSigma$,
$\Theta=\sum K_{ij}\zetaesh_i\zetaesh_j+xF$,
with $F$ smooth, so $\Theta\geq -Cx$, we deduce that
$$
\Theta_n(s)\geq -C\big(x(p_n)+(\mu+\ep_1)s\big).
$$
Thus,
$$
-C\big(x(p_n)+(\mu+\ep_1)s\big)\leq\Theta_n(s)\leq \Theta_n(0)+C'\big(x(p_n)+(\mu+\ep_1)s\big).
$$
Suppose now that $\ep>0$ is given.
As $p_n\to p$, there is an $N$ such that for $n\geq N$,
$Cx(p_n)+\Theta_n(0),C'x(p_n)\leq \ep/2$. Moreover, let $s_0>0$ such that
$C(\mu+\ep_1)s_0,C'(\mu+\ep_1)s_0<\ep/2$. Then for $n\geq N$, $s\in[0,s_0]$,
$-\ep\leq\Theta_n(s)-\Theta_n(0)\leq \ep$, giving the equicontinuity
of $\Theta_n$ at $0$ for $n\geq N$. In view of the definition of $\Theta_n$
and the already known equicontinuity of $y\circ\gammat_n$ and
$\etaebh\circ\gammat_n$, it follows that $(\xi\circ\gammat_n)^2$, hence
$\xi\circ\gammat_n$ are
equicontinuous. As on $\esSigma$, $|\zetaesh|^2\leq C|\Theta|+C'x$, we also
have $|\zetaebh|^2\leq C|\Theta|+C'x$ there,
so
$$
|\zetaebh\circ\gammat_n(s)-\zetaebh(p_n)|\leq
|\zetaebh(p_n)|+|\zetaebh\circ\gammat_n(s)|
\leq |\zetaebh(p_n)|+C|\Theta_n(s)|+C'x(\gammat_n(s)).
$$
Given $\ep>0$, by the equicontinuity of $\Theta_n$ and $x\circ\gammat_n$,
there is $s_0$ such that for $s\in[0,s_0]$,
$C|\Theta_n(s)|+C'x(\gammat_n(s))<\ep/2$. As $\zetaebh(p_n)\to 0$ due to
$p_n\to p$, for $n$ sufficiently large, $|\zetaebh(p_n)|<\ep/2$, so
for $n$ sufficiently large and $s\in[0,s_0]$,
$|\zetaebh\circ\gammat_n(s)-\zetaebh(p_n)|\leq\ep$, giving the
equicontinuity of $\zetaebh\circ\gammat_n$ at $0$.

It remains to check the equicontinuity of $Z_n=z\circ\gammat_n$.
But
$$
\abs{\frac{dZ_n}{ds}}\leq C 
\sup\Big\{x(q)^{-1}|\zetaesh(q)|:\ q\in\sSigma,\ \pismooth(q)=\gamma_n(s)\Big\},
$$
and for such $q$, by \eqref{eq:stronger-Theta-estimate},
\begin{equation*}\begin{split}
x^{-2}|\zetaesh|^2&\leq Cx^{-2}(|\Theta|+x)\\
&\leq C\big(x(p_n)+(\mu-\ep_1)s\big)^{-2}
\big(1+(\mu+\ep_1)s/x(p_n)\big)^{-4(\mu-\ep_1)/(\mu+\ep_1)}
\Theta_n(0)\\
&\qquad+C\big(x(p_n)+(\mu+\ep_1)s\big),
\end{split}\end{equation*}
so
\begin{equation*}\begin{split}
&x^{-1}|\zetaesh|\\
&\leq C\big(x(p_n)+(\mu-\ep_1)s\big)^{-1}
\big(1+(\mu+\ep_1)s/x(p_n)\big)^{-2(\mu-\ep_1)/(\mu+\ep_1)}
\Theta_n(0)^{1/2}\\
&\qquad+C\sqrt{x(p_n)+(\mu+\ep_1)s}\\
&\leq C x(p_n)^{-1}
\big(1+(\mu-\ep_1)s/x(p_n)\big)^{-1-2(\mu-\ep_1)/(\mu+\ep_1)}
\Theta_n(0)^{1/2}\\
&\qquad+C\sqrt{x(p_n)+(\mu+\ep_1)s}.
\end{split}\end{equation*}
Thus, integrating the right hand side shows that
\begin{equation*}\begin{split}
|Z_n(s)-Z_n(0)|&\leq C'\Theta_n(0)^{1/2}
\Big(\big(1+(\mu-\ep_1)s/x(p_n)\big)^{-2(\mu-\ep_1)/(\mu+\ep_1)}-1\Big)\\
&\qquad+C's\sqrt{x(p_n)+(\mu+\ep_1)s}\\
&\leq C'\Theta_n(0)^{1/2}
+C's\sqrt{x(p_n)+(\mu+\ep_1)s}.
\end{split}\end{equation*}
An argument as above gives the desired equicontinuity for $n$
sufficiently large, completing the proof of the lemma.
\end{proof}

\begin{corollary}\label{cor:GBB-lifted-conv}
Suppose that $\alpha\in\hcal_{W,\bo}$, $p\in\rcal_{\ebo,O,\alpha}$,
$p_n\in\Sebstar[M\setminus\ef] M$, and $p_n\to p$ in $\Sebstar M$.
Let $\gamma_n:[0,\delta_0]\to\Sbstar M_0$ be \GBB such that $\gamma_n(0)=p_n$.
Then there is a \GBB $\gamma:[0,\delta_0]\to\Sbstar M_0$
and $\gamma_n$ has a subsequence, $\{\gamma_{n_k}\}$, such that
$\gamma_{n_k}\to\gamma$ uniformly, the lift $\tilde\gamma:[0,\delta_0]
\to\Sebstar M$ of $\gamma$ satisfies $\tilde\gamma(0)=p$, and
the lift $\tilde\gamma_{n_k}$ of
$\gamma_{n_k}$ converges to $\tilde\gamma$ uniformly.
\end{corollary}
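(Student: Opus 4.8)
The plan is to apply the Arzel\`a--Ascoli theorem to the family $\{\tilde\gamma_n\}$ --- whose equicontinuity is the content of Lemma~\ref{lemma:equicontinuty-lifted-GBB} --- and then to reconstruct the limiting \GBB and its lift by means of the projection $\dbetaeb$.

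First I would fix the setup. Shrinking $\delta_0>0$ if necessary, I may assume it is admissible for Lemma~\ref{lemma:equicontinuty-lifted-GBB} and for Lemma~\ref{lemma:GBB-eb-lift} applied to the fixed point $\alpha$; this is harmless, since all the hypotheses involved only relax as $\delta_0$ decreases. By Remark~\ref{remark:unif-nbhd} (and the uniform Lipschitz property of \GBBs together with $p_n\to p$), for $n$ large $\gamma_n$ has image in a fixed compact set $\Sbstar_K M_0$, $K\subset M_0$ compact, and is disjoint from $\Sbstar_W M_0$ on $[0,\delta_0]$; hence it lifts uniquely to $\tilde\gamma_n\colon[0,\delta_0]\to\ebSigma\cap\Sebstar[M\setminus\ef] M$, with image in the compact set $\Sebstar[\beta^{-1}(K)] M$. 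By Lemma~\ref{lemma:equicontinuty-lifted-GBB} the family $\{\tilde\gamma_n\}_{n\ge N}$ is equicontinuous, so the Arzel\`a--Ascoli theorem yields a subsequence $\tilde\gamma_{n_k}$ converging uniformly to a continuous map $\tilde\gamma\colon[0,\delta_0]\to\ebSigma$ (the target remains in $\ebSigma$ because $\ebSigma$ is closed in $\Sebstar M$).

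Next I would set $\gamma=\dbetaeb\circ\tilde\gamma$. Away from $\ef$ the map $\dbetaeb$ is precisely the natural diffeomorphism $\Sebstar[M\setminus\ef] M\cong\Sbstar_{M_0\setminus W} M_0$ used to define the lifts, so $\gamma_{n_k}=\dbetaeb\circ\tilde\gamma_{n_k}$; since $\dbetaeb$ is continuous on $\ebSigma$, this gives $\gamma_{n_k}\to\gamma$ uniformly on $[0,\delta_0]$. A uniform limit of \GBBs with image in a fixed compact set is again a \GBB --- this follows from the defining inequality~\eqref{eq:liminf} together with the uniform Lipschitz bounds, exactly as in the proof of \cite[Proposition~6]{Lebeau5} --- so $\gamma$ is a \GBB. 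Moreover $\tilde\gamma(0)=\lim_k\tilde\gamma_{n_k}(0)=\lim_k p_{n_k}=p$, and therefore $\gamma(0)=\dbetaeb(p)=\alpha$.

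It remains to identify $\tilde\gamma$ with the lift of $\gamma$ in the sense of Lemma~\ref{lemma:GBB-eb-lift}. Since $\gamma(0)=\alpha\in\hcal_{W,\bo}$, Lemma~\ref{lemma:bich-Lebeau}(2) (equivalently the argument in Lemma~\ref{lemma:OG-IC-uniform-est-1}) forces $\gamma|_{(0,\delta_0]}$ to be disjoint from $\Sbstar_W M_0$, so $\gamma$ has a well-defined lift $\tilde\gamma'$, which by Lemma~\ref{lemma:GBB-eb-lift} extends continuously to $[0,\delta_0]$ with $\tilde\gamma'(0)\in\rcal_{\ebo,O,\alpha}$. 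Writing $\iota$ for the natural diffeomorphism $\Sbstar_{M_0\setminus W} M_0\to\Sebstar[M\setminus\ef] M$, we have $\tilde\gamma'=\iota\circ\gamma$ on $(0,\delta_0]$. For any fixed $s\in(0,\delta_0]$ the arc $\gamma|_{[s,\delta_0]}$ stays at a positive distance from $W$, so by uniform convergence $\gamma_{n_k}|_{[s,\delta_0]}$ eventually lies in a fixed compact subset of $M_0\setminus W$ on which $\iota$ is a homeomorphism, and there $\tilde\gamma_{n_k}=\iota\circ\gamma_{n_k}$; letting $k\to\infty$ gives $\tilde\gamma=\iota\circ\gamma=\tilde\gamma'$ on $[s,\delta_0]$, and then $s\downarrow 0$ with continuity gives $\tilde\gamma=\tilde\gamma'$ on $[0,\delta_0]$. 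Thus $\tilde\gamma$ is the lift of $\gamma$ and $\tilde\gamma(0)=p$, as claimed. I expect this last identification to be the only genuinely delicate point: one needs a priori that $\gamma$ leaves $\Sbstar_W M_0$ instantaneously, so that its lift is even defined past $s=0$, and this is exactly where the hypothesis $\alpha\in\hcal_{W,\bo}$ enters; the quantitative estimates have all been carried out already in Lemma~\ref{lemma:equicontinuty-lifted-GBB}.
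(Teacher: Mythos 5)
Your proof is correct, and it reverses the order of the paper's argument in a way that is mildly streamlined. The paper first invokes Lebeau's compactness \cite[Proposition~6]{Lebeau5} \emph{downstairs} to extract a uniformly convergent subsequence $\gamma_{n_k}\to\gamma$ of the \GBBs in $\Sbstar M_0$, then lifts the limit via Lemma~\ref{lemma:GBB-eb-lift}, and finally uses the equicontinuity of Lemma~\ref{lemma:equicontinuty-lifted-GBB} in a hands-on uniform Cauchy estimate (with the $\ep/3$ split near $s=0$) to show that $\tilde\gamma_{n_k}$ converges and that the limit equals $\tilde\gamma$. You instead apply Arzel\`a--Ascoli directly \emph{upstairs} to $\{\tilde\gamma_n\}$ (using the same Lemma~\ref{lemma:equicontinuty-lifted-GBB} for equicontinuity and precompactness of $\Sebstar[\beta^{-1}(K)]M$), push the limit down through $\dbetaeb$ to recover $\gamma$, and check that $\gamma$ is a \GBB by the closedness of the class of \GBBs under uniform limits --- which is the same Lebeau ingredient, just used as a closedness statement rather than a compactness statement. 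Both proofs conclude with the same comparison on $[s,\delta_0]$ away from the front face, where $\iota$ is a diffeomorphism. What your version buys is the elimination of the explicit uniform Cauchy argument near $s=0$, since $\tilde\gamma(0)=p$ comes for free from the subsequential limit; the price is the small observations that $\ebSigma$ is closed in $\Sebstar M$ and that $\dbetaeb$ is continuous on $\ebSigma$ (both clear from the coordinate descriptions in \S\ref{subsec:edgebbundle}). You correctly flag the one genuinely delicate point --- that $\gamma|_{(0,\delta_0]}$ avoids $\Sbstar_W M_0$, so that the lift $\tilde\gamma'$ is defined past $s=0$ --- and dispose of it via $\gamma(0)=\alpha\in\hcal_{W,\bo}$ and Lemma~\ref{lemma:bich-Lebeau}(2), as in the paper.
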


\begin{proof}
As $p_n\to p$, it follows that there is a compact set $K_0\subset M_0$
such that $\gamma_n(s)\in\Sbstar_{K_0} M_0$ for all $n$ and all
$s\in[0,\delta_0]$. Then by the compactness of the set of \GBBs with image
in $\Sbstar_{K}M_0$ in the topology of uniform convergence,
\cite[Proposition~6]{Lebeau5}, $\gamma_n$ has a subsequence,
$\gamma_{n_k}$, uniformly converging to a
\GBB $\gamma:[0,\delta_0]\to\Sbstar M_0$. In particular,
$\gamma(0)=\lim_k\gamma_{n_k}(0)=\lim_k\dbetaeb(p_{n_k})
=\dbetaeb(p)=\alpha$. By Lemma~\ref{lemma:GBB-eb-lift}, $\gamma$
lifts to a curve $\tilde\gamma:[0,\delta_0]\to \Sebstar M$. We claim
that $\tilde\gamma(0)=p$---once we show this, the corollary is proved.

Let $\tilde\gamma_n:[0,\delta_0]\to\Sebstar M$ be the lift of $\gamma_n$.
By Lemma~\ref{lemma:equicontinuty-lifted-GBB},
$\{\tilde\gamma_{n_k}\}_{k\in\NN}$ is equicontinuous. Since for $\delta>0$
$\gamma_{n_k}|_{[\delta,\delta_0]}\to\gamma$ uniformly, and these curves
all have images in $\Sbstar_{K_1}M_0$ for some $K_1$ compact, disjoint
from $W$, where $\Sbstar_{K_1}M_0$ and $\Sebstar[\beta^{-1}(K_1)] M$
are canonically diffeomorphic, we deduce that
$\tilde\gamma_{n_k}|_{[\delta,\delta_0]}\to\tilde\gamma|_{[\delta,\delta_0]}$
uniformly; in particular $\{\tilde\gamma_{n_k}|_{[\delta,\delta_0]}\}$
is a Cauchy sequence in the uniform
topology.

Let $d$ be a metric on $\Sebstar M$ giving rise to its topology.
Given $\ep>0$ let $\delta>0$ be such that
for $0\leq s\leq \delta$ and for all $n$,
one has $d(\tilde\gamma_n(s),\tilde\gamma_n(0))=d(\tilde\gamma_n(s),p_n)
<\ep/3$---this $\delta$
exists by equicontinuity. Next, let $N$ be such that for $k,m\geq N$,
$d(p_{n_k},p_{n_m})<\ep/3$ and for $k,m\geq N$, $\delta\leq s\leq\delta_0$,
$d(\tilde\gamma_{n_k}(s),\tilde\gamma_{n_m}(s))\leq\ep/3;$ such a
choice of $N$ exists 
by the uniform Cauchy statement above, and the convergence
of $\{p_n\}$. Thus, for $k,m\geq N$ and $0\leq s
\leq\delta$,
$$
d(\tilde\gamma_{n_k}(s),\tilde\gamma_{n_m}(s))\leq
d(\tilde\gamma_{n_k}(s),p_{n_k})
+d(p_{n_k},p_{n_m})+d(p_{n_m},\tilde\gamma_{n_m}(s))\leq \ep.
$$
Since we already know the analogous claim for $\delta\leq s\leq\delta_0$,
it follows that $\{\tilde\gamma_{n_k}\}$ is uniformly Cauchy, hence
converges uniformly to a continuous map $\hat\gamma:[0,\delta_0]
\to\Sebstar M$. In particular, $\hat\gamma(0)=\lim_k\tilde\gamma_{n_k}(0)
=\lim_k p_{n_k}=p$. But $\tilde\gamma_{n_k}|_{[\delta,\delta_0]}
\to\tilde\gamma|_{[\delta,\delta_0]}$ uniformly for $\delta>0$, so
$\tilde\gamma|_{[\delta,\delta_0]}=\hat\gamma|_{[\delta,\delta_0]}$.
The continuity of both $\tilde\gamma$ and $\hat\gamma$ now shows that
$\tilde\gamma=\hat\gamma$, and in particular $\tilde\gamma(0)=p$ as
claimed.
\end{proof}

Now we are ready to introduce the bicharacteristics that turn out
in general to carry full-strength, rather than weaker, diffracted,
singularities.
\begin{definition}
A \emph{geometric} \GBB is a \GBB $\gamma:(-s_0,s_0) \to \bSigma$ with
$q =\gamma(0) \in \hcal_{W,\bo}$ such that there is an \EGBB
$\rho:\RR \to \Tebstar_{\ef} M$ with
\begin{align*}
\lim_{s\to -\infty} \rho(s)&=\lim_{t\to 0^-} \gammat_-(t),\\
\lim_{s\to +\infty} \rho(s)&=\lim_{t\to 0^+} \gammat_+(t),
\end{align*}
with $\gammat_+$, resp.\ $\gammat_-$,
denoting the lifts $\gamma|_{[0,\delta_0]}$, resp.\ $\gamma|_{[-\delta_0,0]}$,
$\delta_0>0$ sufficiently small, to $\Sebstar M$.

We say that two points $w,w' \in \bSigma$ are \emph{geometrically
  related} if they lie along a single geometric \GBB.
\end{definition}

Let $T$ be a large parameter, fixed for the duration of this
paper.
\begin{definition}

  For $p \in \hcal_{W,\bo}$ the flow-out of $p,$
  denoted $\fcalW_{O,p},$ is the union of images $\gamma((0,T])$
  of \GBBs $\gamma: [0, T] \to \bSigma$ with
  $\gamma(0)=p.$
\nomenclature[F]{$\fcalW_{I/O}$}{b flow-in/flow-out}

  For $p \in \hcal_{W,\bo},$ the \emph{regular
    part} of the flow-out of $p,$ denoted $\fcalW_{O,p,\reg},$ is the
  union of images $\gamma((0,s_0))$ of normally approaching (or
  \emph{regular}) \GBBs
  $\gamma: [0, s_0) \to \bSigma$ with $\gamma(0)=p$ and $\gamma(s) \in
  T^*M^\circ$ for $s \in (0, s_0).$

The regular part of the flow-out of a subset of $\hcal_{W,\bo}$ is the
union of the regular parts of the flow-outs from the points in the set.

We let
$$
\fcalW_{O,p,\sing}
$$
denote the union of images $\gamma(0,T]$ of
\emph{non-}normally-approaching \GBBs $\gamma,$ i.e.\ those
\GBBs $\gamma$ with $\gamma(0) \in \gcal \cap \rcal_{\ebo}.$

The flow-in and its regular part are defined correspondingly and
denoted
$$
\fcalW_{I,p}, \fcalW_{I,p,\reg}.
$$
We let $\fcalW_{I/O}$ denote the union of the flow-ins/flow-outs of all $p \in \hcal_{W,\bo}.$
\end{definition}

We also need to define the flow-in/flow-out of a single hyperbolic
point $q \in \rcal_{\ebo,\alpha,I/O}\backslash \Sebstar[\pa \ef] M$
(i.e.\ for $p\in\hcal_{W,\bo}$ as above, we
will consider the flow in/out to a single point in a fiber
$q\in\rcal_{\ebo,p,I/O}$). By Remark~\ref{remark:normal-GBB-lift},
given such a $q,$
there is a unique
\GBB $\gamma(s)$, defined on $[0,T]$ (or $[-T,0]$,
in case of $I$), with lift $\tilde\gamma$ satisfying
$\lim_{s\to 0} \tilde\gamma(s) =q.$

\begin{definition}
For $q \in \rcal_{\ebo,I/O}\backslash \Sebstar[\pa \ef] M,$ let
$\fcal_{I/O,q}$ denote the image $\tilde\gamma((0,T])$ (or  $\tilde\gamma([-T,0))$
in case of $I$)
where $\gamma$ is
the unique \GBB with lift $\tilde\gamma$ satisfying
$\lim_{s\to 0} \tilde\gamma(s) =q.$
\nomenclature[F]{$\fcal_{I/O}$}{edge-b flow-in/flow-out}
Let
$\fcal_{I/O,q,\reg}$ be defined as the union of $\tilde\gamma((0,s_0))$ with
$\tilde\gamma(s) \in T^*M^\circ$ for all $s \in (0,s_0)$.  Additionally, let $\fcal_{I/O}$
denote the union of all flow-ins/flow-outs of $q \in \rcal_{\ebo,I/O}\backslash
\Sebstar[\pa \ef] M,$ and let $\fcal=\fcal_I \cup \fcal_O.$
\end{definition}

{\em For brevity, we often use the word `flow-out' to refer to both
  the flow-in and the flow-out.}

One needs some control over the intervals on which normally approaching
\GBB do not hit the boundary of $M$:

\begin{lemma}\label{lemma:uniform-normal}
Suppose $K\subset\ef^\circ$ is compact, $\kcal\subset\hcal_{W,\bo}$
is compact, $\ep_0>0$.
Then there is $\delta_0>0$
such that if
$\gamma:[0,\ep_0]\to\Sbstar M_0$ a
\GBB with lift $\tilde\gamma$,
$\tilde\gamma(0)\in\rcal_{\ebo,\alpha,O}\cap\Sebstar[K]M$ for
some $\alpha\in\kcal$, then
$\tilde\gamma((0,\delta_0))\cap\Sebstar[\pa M]M=\emptyset$.
\end{lemma}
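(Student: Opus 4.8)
The plan is to argue by contradiction, converting the assertion into a single application of Lemma~\ref{lemma:OG-IC-uniform-est-1}; no passage to a limiting \GBB is needed, because the curves in question start exactly on the outgoing radial set over the interior of $\ef$, which forces the relevant estimates to be uniform. So suppose the conclusion fails: there are $\alpha_n\in\kcal$, \GBBs $\gamma_n\colon[0,\ep_0]\to\Sbstar M_0$ with edge-b lifts $\tilde\gamma_n$ and $\tilde\gamma_n(0)\in\rcal_{\ebo,\alpha_n,O}\cap\Sebstar[K]M$, and $s_n\in(0,1/n)$ with $\tilde\gamma_n(s_n)\in\Sebstar[\pa M]M$, so $s_n\to0^+$. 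Since $\kcal\subset\hcal_{W,\bo}=\{1-h>0\}$ and $K\subset\ef^\circ$ are compact, the following hold uniformly in $n$: $2c_0^2\le 1-h(y(\alpha_n),\etabh(\alpha_n))\le 1$ for a fixed $c_0>0$; and, writing $p_n=\tilde\gamma_n(0)$, also $x(p_n)=0$, and $z(p_n)$ stays at distance $\ge c_1>0$ from $\pa\ef$ for a fixed $c_1>0$. Moreover, since $p_n\notin\Sebstar[\pa\ef]M\supset\cG$, Lemma~\ref{lemma:GBB-eb-lift} shows each $\gamma_n$ approaches $W$ normally, so by the remark following the definition of normal incidence there is $s_1^{(n)}>0$ with $\gamma_n(s)\in S^*M_0^\circ$, hence $x(\gamma_n(s))>0$, for $s\in(0,s_1^{(n)})$.

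First I would record uniform equicontinuity near $s=0$. By Lebeau's equicontinuity of \GBBs \cite[Corollaire~2]{Lebeau5}, since $\gamma_n(0)=\alpha_n$ lies in a fixed compact set, the $\gamma_n$ remain for $s\le s^\sharp$ (with $s^\sharp>0$ uniform) in a fixed compact subset $K'$ of finitely many coordinate charts, there $1-h(y(\gamma_n(s)),\etabh(\gamma_n(s)))\ge c_0^2$ and $x(\gamma_n(s))$ is as small as desired. For the lifts, the estimates in the proof of Lemma~\ref{lemma:equicontinuty-lifted-GBB} — there run for \GBBs starting near but off $\ef$ — remain valid, with moduli uniform in $n$, $\alpha_n$, $p_n$, because in our situation $x(p_n)=0$ and the quantity $\Theta=1-h(y,\etaebh)-\xiebh^2$ of that proof vanishes at $p_n$ (an outgoing radial point, where $\zetaebh=0$ and $\xiebh^2=1-h$); concretely $\xiebh(\tilde\gamma_n(s))\ge\xiebh(p_n)-C_0s\ge c_0$ and $|z(\tilde\gamma_n(s))-z(p_n)|\le Cs^{3/2}$ for $s$ small. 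Shrinking $s^\sharp$, all this holds on $[0,s^\sharp]$ for large $n$. The boundary hypersurfaces of $M$ meeting the relevant part of $\ef$ are $\ef$ itself ($x=0$) and the lifts of the faces $x_j=0$ of $M_0$ (meeting $\ef$ along $\pa\ef$), and the latter are avoided by $\tilde\gamma_n$ on $(0,s^\sharp]$ since $z(\tilde\gamma_n(s))$ stays near $z(p_n)$, hence uniformly off $\pa\ef$. Thus, for $s\in(0,s^\sharp]$, $\tilde\gamma_n(s)\in\Sebstar[\pa M]M$ iff $x(\gamma_n(s))=0$, i.e.\ $\gamma_n(s)\in\Sbstar_W M_0$; in particular $\gamma_n(s_n)\in\Sbstar_W M_0$ for large $n$.

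Now for the contradiction. Set $\sigma_n=\inf\{s\in(0,s_n]:x(\gamma_n(s))=0\}$. By normality $x(\gamma_n(\cdot))>0$ on $(0,s_1^{(n)})$, so $\sigma_n\ge s_1^{(n)}>0$; the defining set is closed and contains $s_n$, so $x(\gamma_n(\sigma_n))=0$, hence $\gamma_n(\sigma_n)\in\Sbstar_W M_0$, while $x(\gamma_n(s))>0$ for $s\in(0,\sigma_n)$ and $\sigma_n\le s_n\to0$. Apply Lemma~\ref{lemma:OG-IC-uniform-est-1} at $s_0=\sigma_n/2$ (valid for $n$ large, so $\sigma_n/2<s^\sharp$): there $\gamma_n(\sigma_n/2)\in\Sbstar_{K'}M_0$, $1-h>c_0^2>0$ at $\gamma_n(\sigma_n/2)$, and by \eqref{eq:xib-xiebh}
$$
\sum_{j=1}^{f+1}\xibh_j(\gamma_n(\sigma_n/2))=x(\gamma_n(\sigma_n/2))\bigl(\xiebh(\tilde\gamma_n(\sigma_n/2))+O(x(\gamma_n(\sigma_n/2)))\bigr)>0,
$$
using $x(\gamma_n(\sigma_n/2))>0$, $\xiebh(\tilde\gamma_n(\sigma_n/2))\ge c_0$, and $x(\gamma_n(\sigma_n/2))$ small. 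Hence Lemma~\ref{lemma:OG-IC-uniform-est-1} gives a $\delta>0$, independent of $n$, with $\gamma_n|_{[\sigma_n/2,\sigma_n/2+\delta]}\cap\Sbstar_W M_0=\emptyset$. But $\sigma_n\le2\delta$ for large $n$, so $\sigma_n\in[\sigma_n/2,\sigma_n/2+\delta]$, contradicting $\gamma_n(\sigma_n)\in\Sbstar_W M_0$.

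The step I expect to be the main obstacle is the uniform equicontinuity of the $\tilde\gamma_n$ at $s=0$: one must check that the delicate estimates in the proof of Lemma~\ref{lemma:equicontinuty-lifted-GBB} carry over from ``starting near but off $\ef$'' to ``starting on $\rcal_{\ebo,\cdot,O}$ over $\ef^\circ$'' — they in fact simplify, since the small quantities $x(p_n)$ and $\Theta(p_n)$ vanish identically — and that the resulting moduli are independent of $\alpha_n\in\kcal$ and of the starting point in $\Sebstar[K]M$. The rest — normality of the $\gamma_n$, the reduction to the single face $\{x=0\}$, and the choice of $\sigma_n$ — is routine bookkeeping.
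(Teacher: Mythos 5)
Your argument follows a genuinely different route from the paper's. The paper proceeds by contradiction, first applies Lemma~\ref{lemma:OG-IC-uniform-est-1} at the hyperbolic starting points $\gamma_j(0)\in\kcal$ to get a uniform time during which the curves stay off the corner $W$, and then invokes the soft compactness result Corollary~\ref{cor:GBB-lifted-conv} (which packages Lemma~\ref{lemma:equicontinuty-lifted-GBB} as a black box) to extract a subsequence whose lifts converge uniformly; since the offending points $\tilde\gamma_{n_j}(\delta_{n_j})$ lie in the closed set $\Sebstar[\pa M\setminus\ef^\circ]M$, so does their limit $p$, contradicting $p\in\Sebstar[K]M$. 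You instead try to derive a \emph{quantitative} uniform modulus of continuity for the lifts near $s=0$ directly, uniformly over the compact family of initial data $\rcal_{\ebo,\kcal,O}\cap\Sebstar[K]M$ --- showing $z\circ\tilde\gamma_n$ stays away from $\pa\ef$, so only the $\ef$-face, i.e.\ a return to $W$, is at stake --- and then rule out that return by a single application of Lemma~\ref{lemma:OG-IC-uniform-est-1} at the midpoint $\sigma_n/2$. This is a legitimate alternative and is essentially the route the paper's remark after the lemma alludes to; the contradiction via $\sigma_n=\inf\{s>0: x(\gamma_n(s))=0\}$ is clean and correct once the uniform continuity is in hand.

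The place where your proposal is incomplete, as you yourself flag, is the assertion that the estimates in the proof of Lemma~\ref{lemma:equicontinuty-lifted-GBB} ``remain valid'' here. That lemma is stated for sequences $p_n\to p$ lying \emph{off} $\ef$, whereas your $p_n$ lie \emph{on} $\ef$ (so $x(p_n)=0$, $\Theta_n(0)=0$, $\zetaebh(p_n)=0$), with $\xiebh(p_n)=\sqrt{1-h}$ bounded below by $c_0>0$ uniformly over $\kcal$, and they range over a compact set rather than converge to a single point. The claimed simplifications --- $\Theta_n(s)\lesssim s$ and $|z(\tilde\gamma_n(s))-z(p_n)|\lesssim s^{3/2}$ --- are plausible, since the initial data at $s=0$ all vanish; but the differential inequality $\frac{d}{ds}\Theta_n+\frac{4(\mu_n-\ep_1)}{x(p_n)+(\mu_n+\ep_1)s}\Theta_n\leq C$ has a singular coefficient at $s=0$ when $x(p_n)=0$, so one must integrate from $\delta>0$ and justify the limit $\delta\to0^+$, and one must verify that every constant is uniform over $\alpha\in\kcal$ and $p_n\in\Sebstar[K]M$ rather than merely over a small neighborhood of a single $p$. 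These checks are not trivial and until they are carried out the argument has a gap. The paper's route avoids re-opening those estimates at the cost of a non-quantitative (compactness) argument, which is why it is shorter and safer given that the supporting lemmas are already in place.
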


\begin{proof}
First, by Lemma~\ref{lemma:OG-IC-uniform-est-1}
there is a $\delta'_0>0$ such that any \GBB $\gamma$ with
$\gamma(0)\in\kcal$ satisfies $\gamma|_{(0,\delta'_0]}$ disjoint from
$\Sbstar_{W}M_0$.

Suppose now that there is no $\delta_0>0$ as claimed. Then
there exist \GBBs $\gamma_j:[0,\ep_0]\to\Sbstar M_0$,
$p'_j\in \Sebstar[K]M\cap\rcal_{\ebo,\alpha_j,O}$, $\alpha_j\in\kcal$,
and $\delta_j>0$, $\delta_j\to 0$,
such that $\gamma_j(\delta_j)\in\Sbstar_{\pa M_0}M_0$, and the
lift $\tilde\gamma_j$ of $\gamma_j$ satisfies
$\tilde\gamma_j(0)=p'_j$. We may assume that $\delta_j<\ep_0/2$
and $\delta_j<\delta'_0$ for all $j$,
hence $\gamma_j(\delta_j)\notin\Sbstar_{W}M_0$. By passing to a subsequence,
using the compactness of $\kcal$ and of $K$, hence of
$\Sebstar[K]M\cap\rcal_{\ebo,\kcal,O}$,
we may assume that $\{\alpha_j\}$ converges to some $\alpha\in\kcal$, and
$\{p'_j\}$ converges
to some $p\in \Sebstar[K]M\cap\rcal_{\ebo,\alpha,O}$.
Using the continuity of $\tilde\gamma_j$ for each $j$,
we may then choose some $0<\ep_j<\delta_j$ such that
$p_j=\tilde\gamma_j(\ep_j)\to p$ as well; note that $p_j\notin\Sebstar[\ef] M$.
(We introduce $\ep_j$ to shift the argument of $\gamma_j$
by $\ep_j$, namely to ensure that $\gamma_j(.+\ep_j)$ at $s=0$ is
outside $\Sbstar_WM$, so Corollary~\ref{cor:GBB-lifted-conv} is applicable.)
Thus, we can apply
Corollary~\ref{cor:GBB-lifted-conv} to conclude that $\gamma_j(.+\ep_j):
[0,\ep_0/2]\to\Sbstar M_0$ has a subsequence $\gamma_{n_j}$ such
that $\gamma_{n_j}(.+\ep_{n_j})$ converges uniformly to a \GBB $\gamma$, the
lifts $\tilde\gamma_{n_j}(.+\ep_{n_j})$
also converge uniformly to the lift $\tilde\gamma$,
and $\tilde\gamma(0)=p$.
Thus, $\tilde\gamma_{n_j}((\delta_{n_j}-\ep_{n_j})+\ep_{n_j})
\to\tilde\gamma(0)=p$ since $\delta_{n_j}-\ep_{n_j}\to 0$.
As $\tilde\gamma_{n_j}((\delta_{n_j}-\ep_{n_j})+\ep_{n_j})
\in\Sebstar[\pa M\setminus\ef] M$ and $\Sebstar[\pa M\setminus\ef^\circ]M$
is closed, it follows that $p\in\Sebstar[\pa M\setminus\ef^\circ]M$,
contradicting $p\in\Sebstar[K]M$. This proves the lemma.
\end{proof}

\begin{remark}
Another proof could be given that uses the description of the edge
bicharacteristics in \cite{mvw1}, since the \GBB covered are normally
incident.
\end{remark}

\begin{corollary}\label{cor:coisotropic}
Suppose $U\subset\ef^\circ$ is open
with $\bar U\subset\ef^\circ$ compact, $\cU\subset\hcal_{W,\bo}$ is open
with $\bar \cU\subset\hcal_{W,\bo}$ compact.
Then there is $\delta_0>0$ such that
the set $O$ of points $p\in\Sebstar M$ for which there is a \GBB
$\gamma$ with lift $\tilde\gamma$ such that $\tilde\gamma(0)\in
\Sebstar[U]M\cap\rcal_{\ebo,\cU,O}$ and $\gamma(s)=p$ for some
$s\in[0,\delta_0)$ is a $\CI$ coisotropic submanifold
of $\Sebstar M$ transversal to $\Sebstar[\ef]M.$
\end{corollary}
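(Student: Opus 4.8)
The plan is to use Lemma~\ref{lemma:uniform-normal} to confine the relevant bicharacteristics to $\Sebstar M$ over $M^\circ$ for $s>0$, so that the only contact of the flowout with $\pa M$ is the slice $s=0$, which lies in $\Sebstar[\ef^\circ]M$; near that slice the geometry is exactly that of an edge manifold in the sense of \cite{mvw1}, and the assertion reduces to the structure of the edge-radial flowout established there.

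First I would apply Lemma~\ref{lemma:uniform-normal} with $K=\overline{U}$ and $\kcal=\overline{\cU}$ (compact, with $K\subset\ef^\circ$ and $\kcal\subset\hcal_{W,\bo}$ by hypothesis), obtaining $\delta_0>0$ such that any \GBB $\gamma$ whose lift $\tilde\gamma$ satisfies $\tilde\gamma(0)\in\rcal_{\ebo,\alpha,O}\cap\Sebstar[K]M$ for some $\alpha\in\kcal$ has $\tilde\gamma((0,\delta_0))\cap\Sebstar[\pa M]M=\emptyset$; in particular such a lifted curve meets $\Sebstar[\ef]M\subset\Sebstar[\pa M]M$ only at $s=0$, and does so transversally, since $x\circ\gamma$ has nonzero $s$-derivative at $0$ by the proof of Lemma~\ref{lemma:GBB-eb-lift}. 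Because $U\subset\ef^\circ$ is disjoint from $\pa\ef$, each $q\in\Sebstar[U]M\cap\rcal_{\ebo,\cU,O}$ lies in $\rcal_{\ebo,\alpha,O}\setminus\cG$ for the corresponding $\alpha\in\cU$, so by Remark~\ref{remark:normal-GBB-lift} there is a unique lifted \GBB $\tilde\gamma_q$ with $\tilde\gamma_q(0)=q$, and
\[
O=\{\tilde\gamma_q(s):\ q\in\Sebstar[U]M\cap\rcal_{\ebo,\cU,O},\ s\in[0,\delta_0)\}
\]
is the image of the flowout map $\Psi\colon(q,s)\mapsto\tilde\gamma_q(s)$; for $s>0$ it lies in $S^*M^\circ$, the interior of $\Sebstar M$, where $\tilde\gamma_q$ is an ordinary unbroken null bicharacteristic of $\Box$ and $\Psi$ is smooth. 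Moreover $\Sebstar[U]M\cap\rcal_{\ebo,\cU,O}$ is itself a $\CI$ submanifold of $\Sebstar[\ef]M$, cut out there by $\zetaebh=0$ and $\xiebh^2=1-h(y,\etaebh)$ (with a fixed sign of $\xiebh$) together with the open conditions $\alpha\in\cU$ and base point in $U$, with natural parameters $\alpha$ and $z$.

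It remains to show that $\Psi$ extends to a smooth embedding up to and across $s=0$, with coisotropic image transversal to $\Sebstar[\ef]M$. Here the point is that, because Lemma~\ref{lemma:uniform-normal} has kept $\Psi$ away from $\pa\ef$ and from every boundary face of $M$ other than $\ef$, in a neighborhood of any point of $\Sebstar[U]M\cap\rcal_{\ebo,\cU,O}$ the manifold $M$ with the metric \eqref{metric} is precisely an edge manifold of the type treated in \cite{mvw1}: one boundary hypersurface $\ef$, boundaryless fiber locally $\RR^f$, and an edge metric. Under this identification $\Sebstar[U]M\cap\rcal_{\ebo,\cU,O}$ is an open piece of the outgoing $\ebo$-radial set and the $\tilde\gamma_q$ are the normally incident edge geodesics emanating from $\ef$ analyzed there, so the edge-geodesic analysis of \cite{mvw1} applies and shows that, over a compact region of the radial set and for sufficiently small parameter, the flowout of the outgoing radial set is a $\CI$ coisotropic submanifold of $\Sebstar M$ meeting $\Sebstar[\ef]M$ transversally (conceptually: over a fixed incident covector $\alpha$ the flowout is isotropic, and these leaves assemble, as $\alpha$ varies, into a coisotropic submanifold). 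Our $O$ is the open subset of this flowout obtained by restricting the incident covector to $\cU$, the base point to $U$, and the parameter to $[0,\delta_0)$; an open subset of a $\CI$ coisotropic submanifold transversal to $\Sebstar[\ef]M$ inherits both properties, and embeddedness holds for $\delta_0$ small since $x$ is a valid parameter near $\ef$ and bicharacteristics from distinct radial points stay distinct on $(0,\delta_0)$.

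The main obstacle, and the reason for reducing to \cite{mvw1} rather than arguing from scratch, is exactly this smoothness-and-coisotropy statement up to and across $\ef$. The rescaled field $\sH_{\eso}$ vanishes on $\rcal_{\eso}$, so in the $\eso$-parametrization $\Psi$ is not even an immersion at $s=0$; it becomes one only in the \GBB parametrization, generated by $\sH_{\so}=x^{-1}\sH_{\eso}$, which is nonzero and transversal to $\{x=0\}$ along the radial set, exhibiting the radial point as a hyperbolic fixed point of the $\eso$-flow in the $x$-direction. Smoothness of the resulting unstable manifold, and the fact that the flowout so produced is coisotropic, is the content of the edge-geodesic computation of \cite{mvw1}; granting it, the remaining steps above are routine.
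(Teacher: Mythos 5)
Your proof is correct and follows essentially the same route as the paper: apply Lemma~\ref{lemma:uniform-normal} with $K=\overline{U}$, $\kcal=\overline{\cU}$ to confine the flowout to the interior for $s\in(0,\delta_0)$, then reduce to the edge setting of \cite{mvw1} (since near $\ef^\circ$ the other boundary faces play no role), and invoke the flowout structure theorem (Theorem~4.1) from that paper. Your additional discussion of why the $\eso$-parametrization fails at the radial set and the reduction must pass through the $\bo$-scale flow is a useful clarification but does not change the argument.
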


\begin{proof}
By Lemma~\ref{lemma:uniform-normal}, with $K=\bar U$, $\kcal=\bar\cU$,
there is a $\delta_0>0$ as in the lemma, hence the set $O$ consists
of points $p$ for which the \GBB $\gamma$ only meet $\pa M$ at $s=0$,
so (taking into account part (2) of Lemma~\ref{lemma:EGBB-GBB} as well)
$O$ is a subset of the edge flow-out studied in \cite{mvw1} (e.g.\ by
extending the edge metric $g$ smoothly across the boundary hypersurfaces
other than $\ef$). In particular, the
properties of the flow-out of such an open subset being $\CI$,
coisotropic\footnote{\label{footnote:coisotropic}In \cite{mvw1}, being coisotropic is considered as
a property of submanifolds of a symplectic manifold, $\Testar M\setminus o$,
$M$ being
an edge manifold. Conic submanifolds of $\Testar M\setminus o$
can be identified with submanifolds of $\Sestar M$, and conversely,
thus one can talk about submanifolds of $\Sestar M$ being coisotropic.
Alternatively, this notion could be defined using the {\em contact structure}
of $\Sestar M$, but for the sake of simplicity, and due to the role of
symplectic structures in classical microlocal analysis, we did not
follow this route in \cite{mvw1}, necessitating making the connection
via homogeneity here.}
and transversal to $\Sebstar[\ef]M$ follow from
Theorem~4.1 of \cite{mvw1}.
\end{proof}

We now turn to properties of the singular flow-out.

\begin{lemma}\label{lemma:singularflow-out}
The singular flow-out, $\fcal_{\sing},$ is closed in
$\Sebstar[M\backslash \ef] M.$
\end{lemma}
\begin{proof}
Suppose $p_n\in\fcal_{\sing}$, and let $\gamma_n$ be such that the
lift $\tilde\gamma_n$ of $\gamma_n$ satisfies
$\tilde\gamma_n(0)\in\gcal\cap\rcal_{\ebo}$, and $\gamma_n(s_n)=p_n$,
$s_n\in(0,T]$. Suppose that $p_n\to p\in \Sebstar[M\backslash \ef]
M.$ Then there exists a compact subset $K$ of $M$ such that
$\gamma_n(s)\in\Sbstar_K M$ for all $n$ and all $s\in[0,T]$.
By passing to a subsequence we may assume that $s_n\to s$; as
$p\notin\Sebstar[\ef]M$, $s\neq 0$. By passing to yet another
subsequence we may also assume that $\gamma_n(0)\to
q\in\gcal\cap\rcal_{\ebo}$. Let $\ep_n>0$, $\ep_n\to 0$,
so $\gamma_n(\ep_n)\notin\Sebstar[\ef]M$ and $\gamma_n(\ep_n)\to q$. By
Corollary~\ref{cor:GBB-lifted-conv} we conclude that $\gamma_n(.+\ep_n):
[0,T]\to\Sbstar M_0$ has a subsequence $\gamma_{n_j}$ such
that $\gamma_{n_j}(.+\ep_{n_j})$ converges uniformly to a \GBB $\gamma$, the
lifts $\tilde\gamma_{n_j}(.+\ep_{n_j})$
also converge uniformly to the lift $\tilde\gamma$,
and $\tilde\gamma(0)=q$. In particular, as
$\gamma_{n_j}((s_{n_j}-\ep_{n_j})+\ep_{n_j})=\gamma_{n_j}(s_{n_j})=p_{n_j}\to
p$, and $s_{n_j}-\ep_{n_j}\to s$, $\gamma(s)=p$, so
$p\in\fcal_{\sing}$ as claimed.
\end{proof}

\begin{lemma}\label{lemma:uniform-glancing}
Suppose $K\subset\ef^\circ$ is compact, $\kcal\subset\hcal_{W,\bo}$
is compact. Then $K$ has a neighborhood $U$ in $M$ and there is $\ep_0>0$
such that if $\gamma:[0,\ep_0]\to\Sbstar M_0$ is a \GBB with lift
$\tilde\gamma$, $\tilde\gamma(0)\in\rcal_{\ebo,O}\cap\gcal$, $\gamma(0)
\in\kcal$ then
$\tilde\gamma(s)\notin\Sebstar[U]M$ for $s\in(0,\ep_0]$.
\end{lemma}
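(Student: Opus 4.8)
The plan is to argue by contradiction and extract a limiting \GBB via Corollary~\ref{cor:GBB-lifted-conv}, exactly as in the proofs of Lemma~\ref{lemma:uniform-normal} and Lemma~\ref{lemma:singularflowout}. The point will be that the limit of the initial points $\tilde\gamma(0)$ necessarily lies over $\pa\ef$ — since $\gcal\subset\Sebstar[\pa\ef]M$ and this set is closed — whereas the limit of the offending points $\tilde\gamma(s)$ lies over a compact subset of $\ef^\circ$; choosing $U$ with closure disjoint from $\pa\ef$ makes this impossible. So first I would fix, using that $K$ is a compact subset of $\ef^\circ$, a neighbourhood $U$ of $K$ with $\overline U$ compact and $\overline U\cap\pa\ef=\emptyset$, so that $\Sebstar[\overline U]M\cap\Sebstar[\pa\ef]M=\emptyset$, and then claim that some $\ep_0>0$ works for this $U$.

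Suppose not. Then for each $m$ there is a \GBB $\gamma_m\colon[0,1/m]\to\Sbstar M_0$ with lift $\tilde\gamma_m$ satisfying $\tilde\gamma_m(0)\in\rcal_{\ebo,O}\cap\gcal$, $\gamma_m(0)=\alpha_m\in\kcal$, and some $s_m\in(0,1/m]$ with $\tilde\gamma_m(s_m)\in\Sebstar[U]M$; in particular $s_m\to0$, and (extending each $\gamma_m$ to a \GBB on the fixed interval $[0,T]$) we may assume all $\gamma_m$ are defined there. Now $\tilde\gamma_m(0)\in\rcal_{\ebo,\alpha_m,O}$ is determined by $\alpha_m\in\kcal$, a point of the compact set $Z$, and a choice of sign, while $\tilde\gamma_m(s_m)\in\Sebstar[\overline U]M$ with $\overline U$ compact; so, passing to a subsequence, $\alpha_m\to\alpha\in\kcal\subset\hcal_{W,\bo}$, $\tilde\gamma_m(0)\to q_0\in\rcal_{\ebo,\alpha,O}$, and $\tilde\gamma_m(s_m)\to q^*\in\Sebstar[\overline U]M$. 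Since each $\tilde\gamma_m(0)$ lies in the closed set $\Sebstar[\pa\ef]M$, so does $q_0$; and since $\alpha\in\hcal_{W,\bo}$ we have $\xiebh(q_0)=\sqrt{1-h}>0$.

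To invoke Corollary~\ref{cor:GBB-lifted-conv} I must shift the parameter off of $\ef$. By Lemma~\ref{lemma:bich-Lebeau}(2), since $\gamma_m(0)=\alpha_m\in\hcal_{W,\bo}$, the curve $\gamma_m$ leaves $W$ instantly, so there are $s>0$ arbitrarily close to $0$ with $\gamma_m(s)\notin\Sbstar_W M_0$; combining this with the continuity of the lift $\tilde\gamma_m$ at $0$ and a diagonal choice, I get $\ep'_m\in(0,s_m)$ with $\ep'_m\to0$ and $p_m:=\tilde\gamma_m(\ep'_m)\in\Sebstar[M\setminus\ef]M$, $p_m\to q_0$. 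Remark~\ref{remark:unif-nbhd} (using $\xiebh(q_0)>0$) gives a fixed $\delta_0>0$ on which the reparametrized curves $\gamma_m(\,\cdot+\ep'_m)$ miss $\Sbstar_W M_0$, so Corollary~\ref{cor:GBB-lifted-conv} applies and yields a subsequence along which the lifts $\tilde\gamma_{m_j}(\,\cdot+\ep'_{m_j})$ converge uniformly to a \GBB-lift $\tilde\gamma$ with $\tilde\gamma(0)=q_0$. Since $s_{m_j}-\ep'_{m_j}\to0$, uniform convergence together with continuity of $\tilde\gamma$ gives $q^*=\lim_j\tilde\gamma_{m_j}(s_{m_j})=\lim_j\tilde\gamma_{m_j}\big((s_{m_j}-\ep'_{m_j})+\ep'_{m_j}\big)=\tilde\gamma(0)=q_0$. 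Hence $q_0=q^*\in\Sebstar[\overline U]M$, contradicting $q_0\in\Sebstar[\pa\ef]M$ and $\Sebstar[\overline U]M\cap\Sebstar[\pa\ef]M=\emptyset$.

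The main obstacle is the shift to the interior, i.e.\ replacing $\gamma_m$ by $\gamma_m(\,\cdot+\ep'_m)$ with $\ep'_m\to0$ chosen so that the start point lands in $\Sebstar[M\setminus\ef]M$ and still converges to $q_0$; this is forced on us because the curves of interest emanate from radial points lying on $\ef$, which are outside the scope of Corollary~\ref{cor:GBB-lifted-conv} as stated. The remaining ingredients — the compactness extraction and the observation that $K\subset\ef^\circ$ lets one separate $\overline U$ from $\pa\ef$ — are routine and parallel Lemma~\ref{lemma:uniform-normal} and Lemma~\ref{lemma:singularflowout}.
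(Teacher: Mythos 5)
Your proof is correct, and it uses the same essential machinery as the paper: argue by contradiction, extract a limiting lifted \GBB via Corollary~\ref{cor:GBB-lifted-conv} after shifting the parameter slightly into the interior, and derive a contradiction from the fact that the limit simultaneously lies over $\pa\ef$ and over a compact set in $\ef^\circ$. The one genuine difference is a reorganization of the quantifiers: you fix the neighbourhood $U$ of $K$ at the outset (with $\overline U$ compact and disjoint from $\pa\ef$) and let the time $\ep_0$ shrink in the contradiction hypothesis, whereas the paper first fixes $\ep_0$ using Lemma~\ref{lemma:OG-IC-uniform-est-1} (so that \GBBs starting in $\kcal$ avoid $\Sbstar_W M_0$ on $(0,\ep_0]$) and then lets $U$ shrink to $K$. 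Your ordering forces $s_m\to 0$, so the limit is necessarily $\tilde\gamma(0)$, and you only need the ``$s_0=0$'' branch of the paper's endgame; the paper, by contrast, must also handle the possibility $s_0>0$, which is ruled out precisely by its careful choice of $\ep_0$. Both routes are valid; yours trades the explicit up-front choice of $\ep_0$ for the need to extend each $\gamma_m$ to a common interval, and slightly streamlines the final contradiction.
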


\begin{proof}
Let $\ep_0>0$ be such that
any \GBB $\gamma$ with
$\gamma(0)\in\kcal$ satisfies $\gamma|_{(0,\ep_0]}$ disjoint from
$\Sbstar_{W}M_0$; such $\ep_0$ exists by
Lemma~\ref{lemma:OG-IC-uniform-est-1}.

Now suppose that no $U$ exists as stated. Then there exist \GBB
$\gamma_n$ and $s_n\in(0,\ep_0]$ such that the lifts $\tilde\gamma_n$
of $\gamma_n$ satisfies
$\tilde\gamma_n(0)\in\rcal_{\ebo,O}\cap\gcal$, $\gamma_n(0)\in\kcal$
and $\pi(\tilde\gamma_n(s_n))\to q$, $q\in K$, where $\pi:\Sebstar M\to M$
is the bundle projection.

By the compactness of $\kcal$ and the compactness
of $\cup_{\alpha\in\kcal}\rcal_{\ebo,O,\alpha}\cap\gcal$
we may pass to a subsequence (which we do not indicate in notation) such that
$\gamma_n(0)$ converges to some $\alpha\in\kcal$ and $\tilde\gamma_n(0)$
converges to some $p\in\rcal_{\ebo,O}\cap\gcal$. We may further
pass to a subsequence such that $s_n\to s_0\in[0,\ep_0]$, and still
further (taking into account the compactness of the fibers of
$\Sebstar M\to M$) that $\tilde\gamma_n(s_n)\to \tilde p\in\Sebstar[K]M$.
Choose\footnote{Again,
we do this so that Corollary~\ref{cor:GBB-lifted-conv} is applicable;
cf.\ the proof of Lemma~\ref{lemma:uniform-normal}.} $\ep_n\in (0,s_n)$
sufficiently small such that $\ep_n\to 0$ and $\gamma_n(\ep_n)\to p$.
By Corollary~\ref{cor:GBB-lifted-conv} $\gamma_n(.+\ep_n)$ has a convergent
subsequence $\gamma_{n_k}$ such that $\gamma_{n_k}(.+\ep_{n_k})$ converge
uniformly to a \GBB $\gamma$ and the lifts
$\tilde\gamma_{n_k}(.+\ep_{n_k})$ converge uniformly to the lift $\tilde\gamma$
and $\tilde\gamma(0)=p$. Thus, $\tilde\gamma_{n_k}(s_{n_k}+\ep_{n_k})
\to\tilde\gamma(s_0)$, so $\tilde\gamma(s_0)=\tilde p\in\Sebstar[K]M$.
But by the definition of $\ep_0$, $\gamma(s_0)\notin \Sbstar_{W}M_0$
if $s_0>0$, while $s_0=0$ is impossible as
$\tilde\gamma(0)=p\in\Sebstar[\pa\ef] M$, while $K\subset \ef^\circ$.
This contradiction shows that the claimed $U$ exists, proving the
lemma.
\end{proof}

\begin{corollary}\label{corollary:o-no-glancing}
Suppose $K\subset\ef^\circ$ is compact, $\kcal\subset\hcal_{W,\bo}$
is compact. Then $K$ has a neighborhood $U$ in $M$ and there is $\ep_0>0$
such that if $o\in U\setminus\ef$ and $\gamma$ is a \GBB with
$\gamma(0)\in\Sbstar_o M_0$ then for $s\in[-\ep_0,0]$, $\gamma(s)\in\kcal$
implies $\gamma$ is normally incident.

In particular, if $q\in\ef^\circ$, $\alpha\in\hcal_{W,\bo}$
and $\gamma_0$ is a \GBB with $\gamma_0(0)=\alpha$ and lift
$\tilde\gamma_0(0)\in\Sebstar[q] M$ then there is $\delta_0>0$ such that
$s\in(0,\delta_0]$,
$\gamma_0(s)\in\Sbstar_o M_0$ implies that
every \GBB $\gamma$ with $\gamma(0)\in\Sbstar_o M_0$,
$\gamma(s)=\alpha$, $s\in[-\ep_0,0]$, is normally
incident.
\end{corollary}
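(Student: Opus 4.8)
The plan is to deduce this corollary directly from Lemma~\ref{lemma:uniform-glancing}, after reversing time and reparametrizing, so that essentially no new analysis is needed --- only bookkeeping with the identification $\Sbstar_{M_0\setminus W}M_0\cong\Sebstar[M\setminus\ef]M$ and with Lemma~\ref{lemma:GBB-eb-lift}.

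First I would treat the main statement. Given $K$ and $\kcal$, let $U$ and $\ep_0>0$ be the neighborhood and constant produced by Lemma~\ref{lemma:uniform-glancing}, and, shrinking $\ep_0$ if necessary, also demand that $\ep_0$ be smaller than the uniform $\delta_0$ of Lemma~\ref{lemma:GBB-eb-lift} attached to $\kcal$. Suppose $o\in U\setminus\ef$ and let $\gamma$ be a \GBB with $\gamma(0)\in\Sbstar_o M_0$ and $\gamma(s)\in\kcal$ for some $s\in[-\ep_0,0]$. Since $\kcal\subset\Sbstar_W M_0$ while $\gamma(0)\notin\Sbstar_W M_0$, necessarily $s<0$; put $\alpha=\gamma(s)\in\kcal$. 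Reparametrize by setting $\sigma(r)=\gamma(s+r)$ for $r\in[0,|s|]$, and extend $\sigma$ (a \GBB can always be continued as a \GBB, and on a short interval the continuation remains in a fixed compact set depending only on $K$, by the uniform Lipschitz bound for \GBBs) to a \GBB $\sigma\colon[0,\ep_0]\to\Sbstar M_0$, so that $\sigma(0)=\alpha$ and $\sigma(|s|)=\gamma(0)$.

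Next I would apply the two lemmas. By Lemma~\ref{lemma:GBB-eb-lift}, the lift $\tilde\sigma$ of $\sigma$ extends continuously to $r=0$ with $\tilde\sigma(0)\in\rcal_{\ebo,\alpha,O}$, and --- the crucial point --- $\gamma$ is normally incident at $\alpha$ iff $\sigma$ approaches $W$ normally, i.e.\ iff $\tilde\sigma(0)\notin\gcal\cap\rcal_{\ebo,\alpha,O}$. Argue by contradiction: if $\gamma$ is \emph{not} normally incident, then $\tilde\sigma(0)\in\gcal\cap\rcal_{\ebo,O}$, and since also $\sigma(0)=\alpha\in\kcal$, Lemma~\ref{lemma:uniform-glancing} applies to $\sigma$ and gives $\tilde\sigma(r)\notin\Sebstar[U]M$ for all $r\in(0,\ep_0]$. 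In particular, with $r=|s|\in(0,\ep_0]$, this says $\gamma(0)=\tilde\sigma(|s|)\notin\Sebstar[U]M$; but $\gamma(0)\in\Sbstar_o M_0$ with $o\in U\setminus\ef$, so under $\Sbstar_{M_0\setminus W}M_0\cong\Sebstar[M\setminus\ef]M$ we have $\gamma(0)\in\Sebstar[U]M$ --- a contradiction. Hence $\gamma$ is normally incident, which is the first assertion. For the ``in particular'' clause, apply what has just been proved with $K=\{q\}$ and $\kcal=\{\alpha\}$, yielding $U\ni q$ and $\ep_0>0$; since $\tilde\gamma_0(0)\in\Sebstar[q]M$ with $q\in\ef^\circ$, Lemma~\ref{lemma:GBB-eb-lift} (cf.\ Remark~\ref{remark:normal-GBB-lift}) shows $\tilde\gamma_0$ is continuous up to $0$, so the base point of $\tilde\gamma_0(s)$ tends to $q$, while $\gamma_0(s)\notin\Sbstar_W M_0$ for small $s>0$ by Lemma~\ref{lemma:OG-IC-uniform-est-1}; hence there is $\delta_0>0$ such that for $s\in(0,\delta_0]$ the point $o$ with $\gamma_0(s)\in\Sbstar_o M_0$ lies in $U\setminus\ef$, and the first part applies verbatim.

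The main obstacle --- really the only substantive step --- is the middle one: using Lemma~\ref{lemma:GBB-eb-lift} to convert ``$\gamma$ is not normally incident at $\alpha$'' into ``the lifted curve starts in $\gcal\cap\rcal_{\ebo,O}$,'' since membership in $\gcal\cap\rcal_{\ebo,O}$ is exactly the hypothesis under which Lemma~\ref{lemma:uniform-glancing} bites. The surrounding points --- extending $\sigma$ to a full interval while keeping it in a fixed compact set, and matching the $\bo$-covector and $\ebo$-covector pictures away from $\ef$ --- are routine but must be stated explicitly.
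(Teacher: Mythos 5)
Your proof is correct and follows essentially the same route as the paper's: use Lemma~\ref{lemma:uniform-glancing} for the chosen $U,\ep_0$, shift the parameter so the \GBB{} starts at $\kcal$, apply Lemma~\ref{lemma:GBB-eb-lift} to translate ``not normally incident'' into $\tilde\gamma(0)\in\gcal\cap\rcal_{\ebo,O}$, and derive a contradiction with $\gamma(0)\in\Sbstar_o M_0$; the ``in particular'' clause follows by taking $K=\{q\}$, $\kcal=\{\alpha\}$. You spell out a few steps (extending the shifted \GBB, continuity of the lift via Remark~\ref{remark:normal-GBB-lift}) that the paper leaves implicit, but the argument is the same.
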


\begin{proof}
Let $U$ and $\ep_0$ be as in Lemma~\ref{lemma:uniform-glancing}.
If $o\in U$, $\gamma$ is a \GBB with
$\gamma(0)\in\Sbstar_o M_0$, $s_0\in[-\ep_0,0]$, $\gamma(s_0)\in\kcal$
and $\gamma$ is not normally incident, then the lift $\tilde\gamma$
of $\gamma$ satisfies $\tilde\gamma(s_0)\in\rcal_{\ebo,O}\cap\gcal$ by
Lemma~\ref{lemma:GBB-eb-lift}. Thus, with $\gamma_0(s)=\gamma(s-s_0)$,
so $\tilde\gamma_0(0)\in \rcal_{\ebo,O}\cap\gcal$, $\gamma_0(0)\in\kcal$,
Lemma~\ref{lemma:uniform-glancing} shows that
$\tilde\gamma_0(s)\notin\Sebstar[U]M$ for $s\in(0,\ep_0]$, contradicting
$\tilde\gamma_0(-s_0)\in\Sbstar_o M_0$.

The second half follows by taking $\kcal=\{\alpha\}$, $K=\{q\}$.
\end{proof}

\subsection{A summary}\label{subsection:summary}

The following table summarizes a number of the most useful facts about
the bundles that we have introduced above.

\

\begin{tabular}{|l||c|c|c|c|}\hline
Manifold & $M_0$ & $M_0$ & $M$ & $M$ \\ \hline\hline
Bundle & b & s & eb & es\\ \hline
Vector fields & $x_i \pa_{ x_i}, \pa_{ y_k}$ & $\pa_{ x_j}, \pa_{ y_k}$ & $x \pa_x, x
\pa_y, z_i'\pa_{z_i'}, \pa_{z''_j}$ & $x \pa_x, x
\pa_y, \pa_{z_i}$
\\ \hline Dual coords & $\xis_j,\etas_j$ & $\xib_j,\etab_j$
 & $\xi, \eta, \zeta', \zeta''$ &
$\xies, \etaes, \zetaes$
 \\
\hline
Char.~ set & $\bSigma$ & $\sSigma$ & $\ebSigma$ & $\esSigma$ \\ \hline
\end{tabular}

\

(We have omitted time coordinates and their duals, as they behave just
like $y$ variables, and the notation follows suit.)

We also employ a number of maps among these structures, the most
common being:
\begin{align*}
\pismooth: T^* M_0 &\to \Tbstar M_0,\\
\pieseb: \Tesstar M &\to \Tebstar M,\\
\dbetaes: \Tesstar_{\ef} M &\to T^*W,\\
\dbetaeb: \Tebstar_{\ef} M &\to T^*W.
\end{align*}
Recall that hats over maps indicate their restrictions to the relevant
characteristic set.

\section{Edge-b calculus}\label{section:edge-b-calc}

Recall from Definition~\ref{def:Vfeb} that
$\Vfeb(M)$ is the space of smooth vector fields that are tangent to all
of $\pa M$ and
tangent to the fibration of $\ef \subset \pa M$ given
by blowdown. Thus, in local coordinates, $\Vfeb(M)$ is spanned over
$\CI(M)$ by the vector fields
\begin{equation}\label{eq:eb-Vfs}
x\pa_x,\ x\pa_t,\ x\pa_y,\ z'_i \pa_{z'_i}, \pa_{z''}
\end{equation}

\begin{definition}
The space $\Diffeb{*}(M)$ is the filtered algebra of operators over $\CI(M)$
generated by $\Vfeb(M).$
\end{definition}
\nomenclature[D]{$\Diffeb{*}$}{Edge-b differential operators}

Recall also that $\Vfeb(M)=\CI(M;\Teb M)$, and $\Tebstar M$ is the
dual bundle of $\Teb M.$ In Appendix~\ref{BEPseuda} the corresponding
pseudodifferential operators are constructed.

\begin{theorem}\label{BEPseud}
There exists a pseudodifferential calculus $\Psieb{*} (M) $ microlocalizing
$\Diffeb{*}(M).$
\end{theorem}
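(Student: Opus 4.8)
The plan is to follow the by-now standard template for constructing a pseudodifferential calculus adapted to a Lie algebra of vector fields on a manifold with corners (Melrose's b-calculus \cite{MR83f:58073}, Mazzeo's edge calculus \cite{MR93d:58152}); the details are carried out in Appendix~\ref{BEPseuda}. The one genuinely new feature is that $\Vfeb(M)$ is a \emph{hybrid} Lie algebra: it is of edge type at $\ef$, where $x\pa_x$ and $x\pa_{y}$ degenerate, and of b type at each remaining boundary hypersurface $G_j=\{z'_j=0\}$ of $M$, where only $z'_j\pa_{z'_j}$ is retained. So first I would construct the \emph{edge-b double space} $M^2_{\ebo}$ as an iterated real blow-up of $M\times M$: blow up the b-type submanifolds $G_j\times G_j$ (one for each boundary hypersurface other than $\ef$), and then blow up the edge front face, i.e.\ the fibre diagonal of $\ef\times\ef$ over the base $W=Y\times\RR_t$ of the fibration $\beta|_{\ef}:\ef\to W$. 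I would check that these centres lift to a clean, interior-commuting family (so the order is immaterial up to diffeomorphism), that the diagonal lifts to a p-submanifold $\Diag_{\ebo}$ meeting $\pa M^2_{\ebo}$ only in the interiors of the new front faces and transversally there, and that $N^*\Diag_{\ebo}$ is naturally identified with the pullback of $\Tebstar M$ --- this last point being the geometric origin of the symbol calculus.

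Next I would define $\Psieb{m}(M)$ to consist of operators whose Schwartz kernels --- as distributions on $M^2_{\ebo}$, valued in the appropriate half-density bundle --- are conormal of order $m$ to $\Diag_{\ebo}$, are smooth (or polyhomogeneous with the relevant index sets) up to the new front faces, and vanish to infinite order at every other boundary hypersurface of $M^2_{\ebo}$; $\Psieb{-\infty}(M)$ consists of those whose kernels also vanish rapidly at $\Diag_{\ebo}$. The inclusion $\Diffeb{*}(M)\subset\Psieb{*}(M)$ is then immediate, since the kernel of a vector field in $\Vfeb(M)$ is a distribution supported on the diagonal whose lift, by the very choice of blow-ups, has the stated form, with principal symbol the corresponding fibre-linear function on $\Tebstar M$. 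From this one reads off the symbol exact sequence $0\to\Psieb{m-1}(M)\to\Psieb{m}(M)\to S^{m}(\Tebstar M)/S^{m-1}\to 0$ and constructs a quantization map $\Opeb:S^{m}(\Tebstar M)\to\Psieb{m}(M)$ splitting it, by the usual device of cutting off an oscillatory-integral kernel near $\Diag_{\ebo}$.

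The substantive step is the composition theorem $\Psieb{m}(M)\circ\Psieb{m'}(M)\subset\Psieb{m+m'}(M)$, with the expected multiplicativity of symbols. For this I would build the edge-b triple space $M^3_{\ebo}$ as an iterated blow-up of $M^3$, together with the three stretched projections to $M^2_{\ebo}$, and verify that each of them is a b-fibration carrying the relevant lifted diagonals and front faces correctly; the composition formula then follows from the pull-back and push-forward theorems for conormal and polyhomogeneous distributions under b-fibrations (Melrose's push-forward theorem), exactly as in \cite{MR83f:58073, MR93d:58152}. I expect this to be the main obstacle: the triple space must be constructed so that all three projections are b-fibrations \emph{and} the index sets are tracked so that infinite-order vanishing at the non-front faces is preserved under push-forward --- and this is complicated by the simultaneous presence of b- and edge-type behaviour along the corners $\ef\cap G_j$, so the commutation and ordering analysis for the blow-ups has to be done with care.

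Finally I would record the corollaries needed in the body of the paper: $\eL(M)$-boundedness of $\Psieb{0}(M)$ (via H\"ormander's square-root argument within the calculus), hence boundedness $\Psieb{m}(M):\Heb{s}(M)\to\Heb{s-m}(M)$ on edge-b Sobolev spaces; composability with the differential operators $xD_x,xD_t,xD_{y},\dots$ and with weights; asymptotic summation of symbols; the existence of microlocal elliptic parametrices; and microlocality, which together underpin the definition of $\WFeb$ and of the elliptic and characteristic sets used later. These are all formal once the double- and triple-space machinery is set up.
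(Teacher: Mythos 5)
Your outline follows essentially the same route as the paper's Appendix~\ref{BEPseuda}: construct the edge-b double space $M^2_{\ebo}$ as an iterated blow-up of $M^2$ (b-corners $B\times B$ for $B\in\cM_1(M)\setminus\{W\}$ followed by the fibre diagonal $\Diag_\phi\subset W^2$), define the kernels as distributions conormal to the lifted diagonal that are smooth up to the front faces and vanish rapidly at the remaining boundary hypersurfaces, read off the symbol sequence from $N^*\Diag_{\ebo}\cong\Tebstar M$, and establish composition via a triple space whose stretched projections are b-fibrations. The order you chose ($G_j^2$ first, then the fibre diagonal) is exactly the paper's $[M^2;\cB^2,\Diag_\phi]$, so the construction goes through.

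One claim in your proposal is, however, too strong and deserves attention. You assert that the blow-up centres form ``a clean, interior-commuting family (so the order is immaterial up to diffeomorphism).'' This is not correct in the generality the appendix treats, and the paper goes out of its way to make the point. If the base $Y$ of the fibration $\phi:W\to Y$ has nontrivial boundary (which is the case the appendix keeps open), then $\Diag_\phi$ is \emph{not} a p-submanifold of $M^2$: near a boundary point of $Y$ it is cut out by equations of the form $x'_j=x''_j$, which do not split as a product with the corner. It becomes a p-submanifold only after the hypersurfaces in $\cB(Y)^2$ (those whose intersection with $W$ comes from boundary hypersurfaces of $Y$) have been blown up --- that is the content of the paper's Lemma~\ref{16.2.2009.7a}. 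Consequently, one cannot reverse the order and blow up $\Diag_\phi$ first; the blow-up is not even defined until $\cB(Y)^2$ has been resolved. What is true --- and what the paper's equality \eqref{1.5.2008.1a} records --- is that once $\cB(Y)^2$ has been blown up, $\Diag_\phi$ is transversal to the remaining centres ($\cB(Z)^2$ and $(\cB')^2$), so \emph{those} can be commuted past it. So the freedom in the order is partial, not total. In the purely local picture underlying the body of the paper, where one works near an interior point of the blown-up corner and hence the base of the fibration has no boundary, your statement is fine; but if you intend the construction in the generality of a fibred boundary face over a base with corners (as the appendix does, since it feeds the normal-operator structure and the b-fibration argument for the triple space), you need the classification of the boundary hypersurfaces into $\cB'$, $\cB(Y)$ and $\cB(Z)$ and the associated ordering constraint.

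The rest of your plan --- the definition \eqref{30.8.2006.48} of the kernels, the symbol map, the quantization, the triple space, and the standard corollaries (Sobolev boundedness, parametrices, operator wavefront set) --- matches the paper and is correct.
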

\nomenclature[P]{$\Psieb{*}$}{edge-b pseudodifferential calculus}

The double space $M^2_{\eb}$ on which the kernels are defined is such that
the quotient $x/x'$ of the same boundary defining function on the left or
right factor, lifts to be smooth except near the `old' boundaries at which
the kernels are required to vanish to infinite order. It follows that
$x/x'$ is a multiplier (and divider) on the space of kernels. This
corresponds to the action by conjugation of these defining functions, so it is
possible to define a weighted version of the calculus. Set
$$
\Psieb{m,l} (M) =  x^{-l} \Psieb{m}(M).
$$
\begin{proposition}
$\Psieb{*,*}(M)$ is a bi-filtered calculus.
\end{proposition}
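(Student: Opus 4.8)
The plan is to reduce every clause of ``bi-filtered calculus'' to Theorem~\ref{BEPseud} by means of a single observation, already recorded in the paragraph preceding the statement: on the double space $M^2_{\eb}$ the function $x/x'$ lifts to be smooth and strictly positive away from the `old' boundary faces, while the kernels in $\Psieb{*}(M)$ vanish to infinite order at exactly those faces. The consequence I will use repeatedly is that \emph{conjugation by a power of the front-face defining function preserves the unweighted calculus}: for any exponent $t$, the map $A\mapsto x^{t}Ax^{-t}$ carries $\Psieb{m}(M)$ into itself, since the Schwartz kernel of $x^{t}Ax^{-t}$ is $(x/x')^{t}$ times that of $A$, and $(x/x')^{t}$ together with its reciprocal is a multiplier on the kernel space (the only locus where $(x/x')^{\pm t}$ fails to be smooth is at the old faces, where the kernel of $A$ already vanishes to infinite order, so the product is again an admissible kernel). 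Equivalently $x^{t}\Psieb{m}(M)x^{-t}=\Psieb{m}(M)$, and conjugation does not change principal symbols.

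Granting this, the bi-filtration structure is bookkeeping. For $A\in\Psieb{m}(M)$ and $B\in\Psieb{m'}(M)$,
\[
(x^{-l}A)(x^{-l'}B)=x^{-(l+l')}\bigl(x^{l'}Ax^{-l'}\bigr)B\in x^{-(l+l')}\,\Psieb{m}(M)\,\Psieb{m'}(M)\subseteq x^{-(l+l')}\,\Psieb{m+m'}(M),
\]
using the conjugation statement on the first factor and the composition law of Theorem~\ref{BEPseud} on the remaining product; hence $\Psieb{m,l}(M)\circ\Psieb{m',l'}(M)\subseteq\Psieb{m+m',l+l'}(M)$. For $m\le m'$, $l\le l'$ one writes $x^{-l}=x^{l'-l}x^{-l'}$ with $x^{l'-l}$ a nonnegative power of $x$, hence a multiplier on the kernel space; combined with the $m$-filtration of $\Psieb{*}(M)$ this gives $\Psieb{m,l}(M)\subseteq\Psieb{m',l'}(M)$, so $\{\Psieb{m,l}(M)\}_{m,l}$ is a genuine two-parameter filtration. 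Closure under adjoints is the identity $(x^{-l}A)^{*}=A^{*}x^{-l}=x^{-l}(x^{l}A^{*}x^{-l})\in x^{-l}\Psieb{m}(M)$, using closure of $\Psieb{m}(M)$ under adjoints (Theorem~\ref{BEPseud}) and the conjugation statement; here adjoints are taken with respect to the fixed density used on $\Psieb{*}(M)$, and rescaling that density by a power of $x$ merely shifts $l$ by a constant. Finally the symbolic structure transfers: $\sigma_{m,l}(x^{-l}A):=x^{-l}\sigma_{m}(A)$ defines a principal symbol map on $\Psieb{m,l}(M)$ with values in $x^{-l}$ times the usual degree-$m$ symbol quotient on $\Tebstar M$, the short exact sequences for $\Psieb{*,*}(M)$ are obtained from those for $\Psieb{*}(M)$ by multiplying through by $x^{-l}$, and multiplicativity $\sigma_{m+m',l+l'}(AB)=\sigma_{m,l}(A)\,\sigma_{m',l'}(B)$ is read off the displayed composition formula since conjugating $A$ by $x^{l'}$ leaves $\sigma_{m}(A)$ unchanged.

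The only step that is more than bookkeeping is the conjugation statement of the first paragraph --- that multiplication of kernels by $(x/x')^{\pm t}$ maps the edge-b kernel space to itself --- and verifying it is precisely where the structure of $M^2_{\eb}$ recorded above (smoothness and positivity of $x/x'$ off the old faces, infinite-order vanishing of kernels there) is used. I expect this to be the main, and essentially the sole, obstacle; with it in hand everything else is the formal machinery of a weighted pseudodifferential calculus.
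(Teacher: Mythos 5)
Your proposal is correct and follows essentially the same route as the paper: the paper records exactly your key observation in the sentences immediately preceding the statement — that $x/x'$ lifts to be smooth on $M^2_{\eb}$ away from the old boundary faces where kernels vanish to infinite order, hence is a multiplier and divider on the kernel space, which is the conjugation statement — and then leaves the bi-filtration bookkeeping (composition, nesting, adjoints, and the symbol map landing in $x^{-l}S^m_{\hom}(\Tebstar M)$ because $x/x'\equiv 1$ on the lifted diagonal) as an immediate consequence, which is precisely what you carry out.
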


Now, $\Psieb{*}(M)$ has all the properties (I--VII) of \cite[Section~3]{mvw1},
where $\Vf$ in \cite[Section~3]{mvw1} is replaced by $\ebo.$ Since the
multiplier $x/x'$ is identically equal to one on the lifted diagonal, the
symbol is unaffected by this conjugation and hence the principal symbol map
extends to
$$
\sigma_{\ebo,m,l}:\Psieb{m,l}(M)\to x^{-l} S^m_{\hom}(\Tebstar M),
$$
with the standard short exact sequence---see properties (III--IV). There
are edge-b-Sobolev spaces, $\Heb{s}(M),$ defined via the elliptic elements
of $\Psieb{s}(M),$ and on which the elements define bounded maps
$$
A\in\Psieb{m}(M)\Longrightarrow A:\Heb{s}(M)\to\Heb{s-m}(M)
$$
(see property (VII)).

The symbol of the commutator of $A \in \Psieb{m,l}(M)$ and $B
\in\Psieb{m',l'}(M)$ is given by
$$
\sigma_{\ebo, m+m'-1,l+l'}(\imath[A,B] )=
\sH_{\ebo,\sigma_{\ebo,m,l}(A)} (\sigma_{\ebo,m',l'}(B)).
$$
In local coordinates the edge-b Hamilton vector field becomes
\begin{multline}\label{eq:sH-ebo}
\sH_{\ebo,f} = \frac{\pa f}{\pa \xi} x \pa_x - \Big (x \frac{\pa f}{\pa
  x}+\eta \cdot \frac{\pa f}{\pa \eta}\Big) \pa_\xi + x \frac{\pa f}
{\pa \eta} \pa_y + \Big( \frac{\pa f}{\pa \xi} \eta - x \frac{\pa
  f}{\pa y}\Big)\cdot \pa_\eta \\
+ \sum\Big(\frac{\pa
  f}{\pa \zeta_j'} z_j' \pa_{z_j'} - z_j'\frac{\pa f}{\pa z_j'}
\pa_{\zeta_j'} \Big)
+\sum\Big(\frac{\pa
  f}{\pa \zeta_j''} \pa_{z_j''} - \frac{\pa f}{\pa z_j''}
\pa_{\zeta_j''} \Big) .
\end{multline}
In particular,
\begin{equation}\label{eq:weighted-formula}
x^{-k}\sH_{\ebo,x^k a}=k a\pa_\xi+\sH_{\ebo,a}.
\end{equation}
In the space-time setting, where one of the $y$ variables, $t$, is
distinguished (and we still write $y$ for the rest of the base variables),
it is useful to rewrite this using the re-homogenized dual
variables $\etaebh=\eta/|\tau|$, $\xiebh=\xi/|\tau|$, $\zetaebh=\zeta/|\tau|$,
$\sigma=|\tau|^{-1}$, valid near $\ebSigma$, this becomes
\begin{multline}\label{eq:sH-ebo-hom}
\sigma^{-1}\sH_{\ebo,f} = \frac{\pa f}{\pa \xiebh} x \pa_x
- \Big (x \frac{\pa f}{\pa
  x}-\sigma\frac{\pa f}{\pa\sigma}
-\zetaebh \cdot \frac{\pa f}{\pa \zetaebh}\Big) \pa_{\xiebh}
+ x \frac{\pa f}{\pa \etaebh} \pa_y\\
-x\Big(\sigma\frac{\pa f}{\pa\sigma}
+\etaebh\cdot\frac{\pa f}{\pa\etaebh}+\xiebh\frac{\pa f}{\pa\xiebh}
+\zetaebh\cdot\frac{\pa f}{\pa\zetaebh}\Big)\pa_t
-\frac{\pa f}{\pa \xiebh}
\big(\sigma\pa_\sigma+\zetaebh\cdot\pa_{\zetaebh}\big)\\
+x\frac{\pa f}{\pa t}\cdot\big(\sigma\pa_\sigma+\etaebh\cdot\pa_{\etaebh}
+\xiebh\pa_{\xiebh}+\zetaebh\cdot\pa_{\zetaebh}\big)
-x\frac{\pa f}{\pa y}\cdot\pa_{\etaebh}\\
+ \sum\Big(\frac{\pa
  f}{\pa \zetaebh_j'} z_j' \pa_{z_j'} - z_j'\frac{\pa f}{\pa z_j'}
\pa_{\zetaebh_j'} \Big)
+\sum\Big(\frac{\pa
  f}{\pa \zetaebh_j''} \pa_{z_j''} - \frac{\pa f}{\pa z_j''}
\pa_{\zetaebh_j''} \Big) .
\end{multline}
This is tangent to the fibers of
$\dbetaeb:\ebSigma\cap\Sebstar[\ef]M\to S^*W$, in fact to its natural
extension to a neighborhood of $\ebSigma\cap\Sebstar[\ef]M$ in
$\Sebstar[\ef]M$, so if $b\in\CI(\Sebstar[\ef]M)$ with $b|_{\Sebstar[\ef]M}$
constant along the fibers of this extension, then
$\sigma^{\mu-1}\sH_{\ebo,f}b\in x\CI(\Sebstar M)$ for $f$ homogeneous
degree $\mu$.

The fact that the operators are defined by kernels which are conormal means
that there is an operator wave front set $\ebWF'$ for the $\ebo$-calculus,
i.e.\ for $A\in\Psieb{*}(M)$, $\ebWF'(A)\subset\Sebstar M,$ with the
properties (A)--(F) of \cite[Section~3]{mvw1}, so in particular algebraic
operations are microlocal, see properties (A)--(B), and there are microlocal
parametrices at points at which the principal symbol is elliptic (see
property (E)). These parametrices have error terms with which are smooth on
the double space, but they are not compact.  We will abuse notation by
writing 
$$
\WF' = \ebWF'
$$
when there is no possibility of confusion (i.e., usually).

As is the case for the b-calculus, for each boundary face $\{z'_j=0\}$ we may define a
\emph{normal operator} $N_j;$ in the special case of a differential
operator in $\Diffeb{*}(M),$ written in the form
$$
P=\sum P_k (z'_j D_{z'_j})^k
$$
where $P_k \in \Diffeb{*}(M)$ have no factors of $(z'_j D_{z'_j})^k$
in terms of the local basis \eqref{eq:eb-Vfs},
$N_j(P)$ is the family of operators on the face $z'=0$ given by
$$
N_j(P)(\zeta'_j) = \sum (P_k)|_{z'_j=0} (\zeta'_j)^k.
$$
This map extends to a homomorphism on $\Psieb{*}(M),$ and its
vanishing is the obstruction to an operator lying in $z'_j
\Psieb{*}(M),$ i.e., enjoying extra vanishing at the boundary face in question.
 (See \cite[Section~3]{mvw1} for a brief discussion of
    normal operators and \cite{MR93d:58152} for further
    details.)

    As a consequence of the normal operator homomorphisms,
    $\Psieb{*}(M)$ has the additional property that the {\em radial
      vector fields} $V_j$ for all boundary hypersurfaces
    $\{z'_j=0\},$ i.e., all boundary hypersurfaces other than $\ef$, $[A,V_j]\in z'_j\Psieb{m}(M)$
    if $A\in\Psieb{m}(M)$, i.e., there is a gain of $z'_j$ over the
    \emph{a priori} order.  In local coordinates a radial vector field
    for $z'_j=0$ is given by $z'_j\,\pa_{z'_j};$ $V_j$ being a radial
    vector field for $z'_j=0$ means that $V_j-z'_j\,\pa_{z'_j} \in
    z'_j\Vfeb(M)$. This latter requirement can easily be seen to be
    defined independently of choices of coordinate systems.  The fact
    that the normal operator of $z'_j \pa_{z'_j}$ is a scalar then
    proves the assertion.

\section{Differential-pseudodifferential operators}\label{section:Diff-Psi}

\subsection{The calculus}

We start by defining an algebra of operators which includes $\Box.$ First,
recall that $\Vfes(M)$ is the Lie algebra of vector fields that are tangent
to the front face and to the fibers of the blow down map restricted to the
front face, $\beta|_{\ef}:\ef\to W$ (but are not required to be tangent to
other boundary faces). Thus, elements $V$ of $\Vfes(M)$ define operators
$V:\dCI(M)\to\dCI(M)$ and also $V:\CI(M)\to\CI(M).$

\begin{definition} Let $\Diffes{}(M)$ be the filtered algebra of operators
  (acting either on $\dCI(M)$ or $\CI(M)$) over $\CI(M)$ generated by
  $\Vfes(M)$.

We also let $\Diffes{k,l}(M)=x^{-l}\Diffes{k}(M)$; this is an algebra
of operators acting on $\dCI(M)$, and also on the space of functions
classical conormal to $\ef$, $\cup_{s\in\RR} x^{-s}\CI(M)$.
\end{definition}

\begin{remark}\label{remark:Diffes-def}
Note that the possibility of the appearance of boundary terms requires care
to be exercised with adjoints, as opposed to formal adjoints. See for instance
Lemma~\ref{lemma:Diffes-adjoints}.

We also remark that $\Diffes{k}(M)$, hence $x^{-l}\Diffes{k}(M)$, is closed
under conjugation by $x^{-r}$ where $x$ is a defining function for $W.$
This follows from the fact that $\Diffes{1}(M)$ is so closed; the key
property is that
$$
x^r (x\pa_x)x^{-r}=(x\pa_x)-r\in\Diffes{1}(M).
$$
\end{remark}

We will require, for commutator arguments that involve interaction of
singularities with $\pa M \backslash \ef,$ a calculus of mixed
differential-pseudodifferential operators, mixing edge-b-pseudodifferential
operators with these (more singular) edge-smooth differential operators.

\begin{definition}
Let
$$\Diffes{k} \Psieb{m} (M) = \Big\{\sum A_j B_j: A_j \in
\Diffes{k}(M),\ B_j \in \Psieb{m}(M)\Big\}$$
\end{definition}
\nomenclature[D]{$\Diffes{k} \Psieb{m}$}{edge-b pseudodifferential,
  edge-smooth differential calculus}

\begin{proposition}\label{prop:DiffPsi-alg}
$\bigcup_{k,m}\Diffes{k} \Psieb{m} (M)$ is a filtered $\CI(M)$-module,
and an algebra under composition; it is commutative to top $\ebo$-order, i.e.\ for
$P\in\Diffes{k} \Psieb{m} (M),$ $Q\in \Diffes{k'} \Psieb{m'} (M),$
$$
[P,Q]\in\Diffes{k+k'}\Psieb{m+m'-1}(M).
$$
\end{proposition}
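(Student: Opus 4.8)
The plan is to verify the three assertions---module structure, closure under composition, and commutativity to top order---in that order, reducing everything to the known properties of the two constituent calculi $\Diffes{*}(M)$ and $\Psieb{*}(M)$, together with the single nontrivial commutation relation between a vector field in $\Vfes(M)$ and an edge-b pseudodifferential operator. First I would dispose of the $\CI(M)$-module statement: an element of $\Diffes{k}\Psieb{m}(M)$ is by definition a finite sum $\sum A_\alpha B_\beta$ with $A_\alpha\in\Diffes{k}(M)$ and $B_\beta\in\Psieb{m}(M)$, and since $\Diffes{k}(M)$ is a $\CI(M)$-module and $f B_\beta \in \Psieb{m}(M)$ for $f\in\CI(M)$, it is immediate that multiplication by a smooth function preserves the space; the filtration $\Diffes{k}\Psieb{m}\subset\Diffes{k+k'}\Psieb{m+m'}$ for $k'\geq0$, $m'\geq0$ follows from $\Diffes{k}(M)\subset\Diffes{k+k'}(M)$ and $\Psieb{m}(M)\subset\Psieb{m+m'}(M)$.

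Next, for closure under composition, given $P=\sum A_\alpha B_\beta\in\Diffes{k}\Psieb{m}(M)$ and $Q=\sum C_\gamma D_\delta\in\Diffes{k'}\Psieb{m'}(M)$, the product $PQ$ is a sum of terms $A_\alpha B_\beta C_\gamma D_\delta$, and the crux is to move $C_\gamma$ to the left past $B_\beta$. It suffices to treat $C_\gamma=V$ a single vector field in $\Vfes(M)$ and $B_\beta=B\in\Psieb{m}(M)$: I claim $VB = BV + [V,B]$ with $[V,B]\in\Psieb{m}(M)$. For $V$ one of the generators $x\pa_x$, $x\pa_t$, $x\pa_{y_j}$, these lie in $\Vfeb(M)=\CI(M;\Teb M)$, so $V\in\Diffeb{1}(M)\subset\Psieb{1}(M)$ and $[V,B]\in\Psieb{m}(M)$ by the algebra property of $\Psieb{*}(M)$ (property of \cite[Section~3]{mvw1}); in fact one even gains an order, but I only need $[V,B]\in\Psieb{m}(M)$. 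For $V=\pa_{z_j}$ (the one generator of $\Vfes$ not in $\Vfeb$), one writes $\pa_{z'_j}=(z'_j)^{-1}(z'_j\pa_{z'_j})$ near a corner face $z'_j=0$, with $z'_j\pa_{z'_j}\in\Vfeb(M)$ a radial vector field $V_j$; then $[V_j,B]\in z'_j\Psieb{m}(M)$ by the radial-vector-field property of the calculus recorded at the end of the edge-b-calculus section, so $[\pa_{z'_j},B]=(z'_j)^{-1}[V_j,B]-(z'_j)^{-1}[z'_j{}^{-1}\text{-correction}]$; more carefully, $\pa_{z'_j}B = (z'_j)^{-1}(z'_j\pa_{z'_j})B = (z'_j)^{-1}(B z'_j\pa_{z'_j} + [z'_j\pa_{z'_j},B]) = B\pa_{z'_j} + (z'_j)^{-1}[V_j,B]$, and the gain of $z'_j$ makes $(z'_j)^{-1}[V_j,B]\in\Psieb{m}(M)$. (For $V=\pa_{z''_j}$ one argues similarly, noting $\pa_{z''_j}\in\Vfeb(M)$ already, so this case is covered by the first.) Iterating this relation to commute each $C_\gamma$ (a product of at most $k'$ such vector fields and smooth functions) past $B_\beta$, and also commuting smooth functions freely, rewrites $PQ$ as $\sum (\text{element of }\Diffes{k+k'}(M))\cdot(\text{element of }\Psieb{m+m'}(M))$, using the algebra property of each factor calculus separately; hence $PQ\in\Diffes{k+k'}\Psieb{m+m'}(M)$.

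Finally, for the commutator estimate, $[P,Q]=PQ-QP$ is computed term by term. The leading contribution comes from terms in which no commutation of a $\Psieb{}$ factor past a $\Diffes{}$ factor is needed---these cancel because both the $\Diffes{*}(M)$ part and the $\Psieb{*}(M)$ part are separately algebras and one would be left with $[\prod A,\,\prod B]$-type expressions---wait, more precisely: writing $P = \sum A_\alpha B_\alpha$ schematically, one organizes $PQ-QP$ so that the ``diagonal'' part has the $A$'s and $C$'s on the left and $B$'s and $D$'s on the right in a fixed order, and the price of every rearrangement is either a commutator within $\Diffes{*}$, which lies in $\Diffes{k+k'-1}(M)\subset\Diffes{k+k'-1}\Psieb{m+m'}$ (hence in the claimed space since $m+m'\geq m+m'-1$ only gives the right $\ebo$-order once we note $\Diffes{k+k'-1}\Psieb{m+m'}\subset\Diffes{k+k'}\Psieb{m+m'}$; but we need $m+m'-1$ in the $\Psieb{}$ slot, so this term must actually be absorbed differently---), or a commutator within $\Psieb{*}$, which lies in $\Psieb{m+m'-1}(M)$ and hence in $\Diffes{k+k'}\Psieb{m+m'-1}(M)$, or a commutator of a $\Vfes$-generator with a $\Psieb{}$-operator, which by the relation above lies in $\Psieb{m+m'-1}(M)$ only after we exploit the \emph{order gain}: indeed $[x\pa_x,B]\in x\Psieb{m}(M)$ has a gain in weight but not in $\ebo$-order, so this is genuinely the delicate point. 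The resolution, and the step I expect to be the main obstacle, is that the principal-symbol computation \eqref{eq:sH-ebo} must be used: the top-$\ebo$-order symbol of $[P,Q]$ is $\sH_{\ebo}$-bracket of the symbols, and $\Diffes{*}(M)$ contributes only to lower-order-in-$\ebo$ pieces except through its own leading symbol, which is \emph{real} (a polynomial in the real cotangent variables with real coefficients, being built from real vector fields), so the Poisson-type bracket of two such commuting symbols with the $\Psieb{}$ symbols produces a cancellation in the top order. Concretely I would argue: the full symbol of $P$ in $\Diffes{k}\Psieb{m}(M)$ has a well-defined top $\ebo$-order part $\sigma_{\ebo}(P)$ which is the sum over $\alpha$ of (symbol of $A_\alpha$)(symbol of $B_\alpha$)---a product of the homogeneous-degree-$k$ edge-smooth symbol with the homogeneous-degree-$m$ edge-b symbol---and since the bracket operation $\sH_{\ebo}$ is a derivation and the edge-smooth symbols Poisson-commute with each other to the relevant order, one gets $\sigma_{\ebo,k+k',m+m'}([P,Q]) = \sH_{\ebo,\sigma(P)}\sigma(Q) - \sH_{\ebo,\sigma(Q)}\sigma(P)$, which vanishes identically by antisymmetry of the bracket—no: that vanishing is precisely the assertion that $[P,Q]$ drops an $\ebo$-order, since the bracket of symbols is one order lower than the product. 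So the content is: (i) the product filtration has a principal symbol map to $\Diffes{k}$-times-$\Psieb{m}$ symbols that is multiplicative, and (ii) the symbol of a commutator in this bi-filtered setting drops exactly one $\ebo$-order because the $\ebo$-Hamilton vector field \eqref{eq:sH-ebo} is homogeneous of degree $-1$ in the fibers. I would assemble this from the properties (I--VII) and (A--F) of \cite[Section~3]{mvw1} transported to the $\ebo$ setting (as noted in the excerpt) plus the radial-vector-field gain, checking carefully that the error terms generated by commuting $\pa_{z'_j}$ past a $\Psieb{}$-operator are genuinely in $\Psieb{m+m'-1}$ and not merely $\Psieb{m+m'}$---this bookkeeping with the weight-versus-order trade-off is the part most likely to require care.
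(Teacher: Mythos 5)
Your overall organization is right, and the reduction (via $\CI(M)$-module structure and the derivation property of the commutator) to a single lemma about moving one vector field in $\Vfes(M)$ past one operator in $\Psieb{m}(M)$ matches the paper's strategy, which is packaged as Lemma~\ref{lemma:Psieb-Vfes-comm}. But two things need attention.

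First, a computational slip in your commutation identity: from $(z'_j)^{-1}(z'_j\pa_{z'_j})B$, the term $(z'_j)^{-1}Bz'_j\,\pa_{z'_j}$ is not equal to $B\pa_{z'_j}$, since $B$ and $(z'_j)^{-1}$ do not commute. The correct identity, which is \eqref{eq:Dzp-edge-b-commutator} in the paper, has an additional term $\bigl([(z'_j)^{-1},B]z'_j\bigr)\pa_{z'_j}$ with $[(z'_j)^{-1},B]z'_j\in\Psieb{m-1}(M)$. This extra term is still of the needed form (a lower-order $\Psieb{}$ factor times a $\Vfes$-vector field), so the conclusion of the composition step survives, but your derivation as written loses it. More importantly, that extra term is precisely why the paper's Lemma~\ref{lemma:Psieb-Vfes-comm} is stated with the expansion $\sum Q_jA_j+B$ (and its two-sided variant), rather than the clean $[V,B]\in\Psieb{m}(M)$ you wrote down.

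Second, and this is the genuine gap: your argument for the top-$\ebo$-order commutativity does not close. You correctly identify that a commutator of two $\Vfes(M)$ vector fields lands a priori only in $\Diffes{k+k'-1}\Psieb{m+m'}(M)$, not in $\Diffes{k+k'}\Psieb{m+m'-1}(M)$, and you note that this term ``must actually be absorbed differently.'' But you never say how, and the symbolic argument you then sketch (treating $\sigma_{\ebo}$ of a $\Diffes{k}\Psieb{m}$ operator as a product of edge-smooth and edge-b symbols and invoking homogeneity of $\sH_{\ebo}$) does not resolve it, because a $\Diffes{k}$ operator has no intrinsic $\ebo$-principal symbol to feed into the Hamilton vector field. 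The paper's resolution is a different and quite concrete device: choose an elliptic $\Lambda=\sum V_j^2\in\Psieb{2}(M)$ with each $V_j\in\Vfeb(M)\subset\Vfes(M)$, take a parametrix $\Upsilon\in\Psieb{-2}(M)$, and write $\Id=\sum V_j(V_j\Upsilon)+E$ with $E\in\Psieb{-\infty}(M)$; then for $R\in\Vfes(M)$, $R=\sum(RV_j)(V_j\Upsilon)+RE$ exhibits $R\in\Diffes{2}\Psieb{-1}(M)$, trading one $\eso$-order up for one $\ebo$-order down. This is the key that makes the bookkeeping you worried about come out right, and it is also what underlies the paper's subsequent observation $\Diffes{m}\Psieb{l}(M)\subset\Diffes{m+k}\Psieb{l-k}(M)$. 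Without it (or something equivalent) the claimed order gain in the commutator is not established.
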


The key is the following lemma.

\begin{lemma}\label{lemma:Psieb-Vfes-comm}
If $A\in\Psieb{m}(M)$ and $Q\in\Vfes(M)$, then 
\begin{equation}
[A,Q]=\sum Q_j A_j+B,\ [A,Q]=\sum A'_j Q'_j +B'
\label{30.8.2006.59}\end{equation}
where $B,$ $B'\in\Psieb{m}(M),$ $A_j,$ $A_j'\in\Psieb{m-1}(M)$ and $Q_j,$
$Q'_j\in\Vfes(M).$ 
\end{lemma}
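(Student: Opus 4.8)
The plan is to reduce to the single generator of $\Vfes(M)$ that is \emph{not} a section of $\Teb M$, namely $\pa_{z'_j}$, and then to produce it out of the radial vector field $z'_j\pa_{z'_j}\in\Diffeb{1}(M)$ by the one division $\pa_{z'_j}=(z'_j)^{-1}\!\circ(z'_j\pa_{z'_j})$. For the other generators $x\pa_x$, $x\pa_{y_k}$, $\pa_{z''_l}$ lie in $\Vfeb(M)=\CI(M;\Teb M)\subset\Psieb{1}(M)$, so there $[A,Q]\in\Psieb{m}(M)$ at once because the commutator lowers the $\ebo$-order by one, giving the statement with $B=B'=[A,Q]$ and no extra terms. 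For the reduction I would write a general $Q\in\Vfes(M)$, in the coordinates of \S\ref{section:geometry}, as $Q=Q_0+\sum_jc_j\pa_{z'_j}$ with $Q_0\in\Vfeb(M)$ and $c_j\in\CI(M)$, and use $[A,c_j\pa_{z'_j}]=c_j[A,\pa_{z'_j}]+[A,c_j]\circ\pa_{z'_j}$ for one form and $[A,c_j\pa_{z'_j}]=[A,\pa_{z'_j}]\circ c_j+\pa_{z'_j}\circ[A,c_j]-[A,(\pa_{z'_j}c_j)]$ for the other, noting that $[A,c_j],\,[A,(\pa_{z'_j}c_j)]\in\Psieb{m-1}(M)$ are commutators of $A$ with multiplication operators; both asserted decompositions for general $Q$ then follow from those for $Q=\pa_{z'_j}$, absorbing each $c_j$ into a vector-field factor via $\CI(M)\cdot\Vfes(M)=\Vfes(M)$.

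For $Q=\pa_{z'_j}$ the two ingredients I would need are: (i) the radial-vector-field property already recorded for $\Psieb{*}(M)$, which together with $z'_j\pa_{z'_j}-V_j\in z'_j\Vfeb(M)$ and (ii) below yields $[A,z'_j\pa_{z'_j}]\in z'_j\Psieb{m}(M)$; and (ii) a refined commutator gain $[A,z'_j]\in z'_j\Psieb{m-1}(M)$. For (ii) I would work on $M^2_{\eb}$: the kernel of $[A,z'_j]$ is $\bigl((z'_j)'-z'_j\bigr)K_A$, where $z'_j,(z'_j)'$ are the lifts from the two factors; since $\Psieb{*}(M)$ is b-like at the face $\{z'_j=0\}$, the ratio $(z'_j)'/z'_j$ is a smooth multiplier on kernels (exactly as $x/x'$ is) and $\equiv1$ on the lifted diagonal, so $(z'_j)'-z'_j=z'_j\cdot\bigl((z'_j)'/z'_j-1\bigr)$ with the second factor smooth and vanishing on the diagonal; multiplying $K_A$ by it drops the $\ebo$-order by one, preserves the required vanishing at the old faces, and displays a left factor $z'_j$. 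The symmetric computation $(z'_j)'-z'_j=(z'_j)'\bigl(1-z'_j/(z'_j)'\bigr)$ gives a right factor, and conjugation-invariance of $\Psieb{m}(M)$ by $z'_j$ then yields the right-hand versions of (i) and (ii).

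Given (i)--(ii) the conclusion is formal. Writing $z'_j\pa_{z'_j}=z'_j\circ\pa_{z'_j}$ and applying the Leibniz rule for commutators, $z'_j\circ[A,\pa_{z'_j}]=[A,z'_j\pa_{z'_j}]-[A,z'_j]\circ\pa_{z'_j}=z'_j\circ\bigl(B-A'\circ\pa_{z'_j}\bigr)$, where $[A,z'_j\pa_{z'_j}]=z'_jB$ with $B\in\Psieb{m}(M)$ and $[A,z'_j]=z'_jA'$ with $A'\in\Psieb{m-1}(M)$; since left multiplication by $z'_j$ is injective on the kernels of the calculus, $[A,\pa_{z'_j}]=B-A'\circ\pa_{z'_j}$, the form $\sum A'_jQ'_j+B'$. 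Dually, from $\pa_{z'_j}\circ z'_j=z'_j\pa_{z'_j}+\Id$ and Leibniz, $[A,\pa_{z'_j}]\circ z'_j=[A,z'_j\pa_{z'_j}]-\pa_{z'_j}\circ[A,z'_j]=\bigl(\widetilde B-\pa_{z'_j}\circ\widetilde A'\bigr)\circ z'_j$ with $[A,z'_j\pa_{z'_j}]=\widetilde Bz'_j$, $[A,z'_j]=\widetilde A'z'_j$, $\widetilde B\in\Psieb{m}(M)$, $\widetilde A'\in\Psieb{m-1}(M)$; cancelling $z'_j$ on the right gives $[A,\pa_{z'_j}]=\widetilde B-\pa_{z'_j}\circ\widetilde A'$, the form $\sum Q_jA_j+B$. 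With the first paragraph this proves the lemma.

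The hard part, I expect, is (ii) together with the right-hand version of (i): one must verify that the ratio of the two lifts of $z'_j$ really is a smooth multiplier on the kernels of $\Psieb{*}(M)$ — that is, that $M^2_{\eb}$ is b-like at $\{z'_j=0\}$ in the same way $X^2_{\bo}$ is b-like at the boundary, so that the gain of a power of $z'_j$ in $[A,z'_j]$ (and, via the radial-vector-field property, in $[A,z'_j\pa_{z'_j}]$) is legitimate. This is where the specific geometry of the edge-b double space is used; once it is in hand, the rest is the Leibniz rule and the cancellation of $z'_j$, and no induction on $m$ is required.
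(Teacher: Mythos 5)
Your proof is correct and follows essentially the same route as the paper's: reduce by a partition of unity to the single non-$\ebo$ generator $Q=\partial_{z'_j}$, exploit the normal-operator/radial-vector-field gain $[A,z'_j\partial_{z'_j}]\in z'_j\Psieb{m}(M)$, and close by the Leibniz rule and the conjugation-invariance of the calculus under powers of $z'_j$ (your~(ii), which the paper leaves implicit when asserting $[(z'_j)^{-1},A]z'_j\in\Psieb{m-1}(M)$). One small shortcut: the paper takes $z'_j\partial_{z'_j}$ \emph{itself} as the radial vector field, so~(i) follows directly from the stated gain and your detour through~(ii) to establish~(i) is unnecessary, though~(ii) is still needed for the algebraic step.
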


\begin{proof} As both $\Vfes(M)$ and $\Psieb{m}(M)$ are $\CI(M)$-modules,
  we can use a partition of unity, and it suffices to work locally and with
  a spanning set of vector fields. Since $xD_x,$ $xD_{y_j},$
  $D_{z''_j}\in\Vfeb(M),$ the conclusion is automatic for $Q$ chosen from
  among these vector fields since then $B=[A,Q]\in\Psieb{m}(M).$ Thus it
  only remains to consider the $Q=D_{z'_j}$ where $z'_j$ is a defining
  function for one of the other boundary faces. Then for $\tilde Q=z'_jQ=z'_j
  D_{z'_j}\in\Diffeb{1}(M),$ $[A,\tilde Q]\in\Psieb{m}(M)$). The normal
  operator at $z'_j=0$ satisfies $N_j([A,\tilde Q])=[N_j(A),N_j(\tilde
    Q)],$ and $N_j(\tilde Q)$ is scalar, and hence commutes with
    $N_j(A).$
  Thus $N_j([A,\tilde Q])=0,$ so $[A,\tilde Q]\in z'_j\Psieb{m}(M).$
  Consequently,
\begin{equation}\label{eq:Dzp-edge-b-commutator}
-[A,Q]=[Q,A]=(z'_j)^{-1}[\tilde Q,A]+([(z'_j)^{-1},A]z'_j)
(z'_j)^{-1}\tilde Q,
\end{equation}
with the first term on the right hand side in $\Psieb{m}(M)$,
the second of the form $\tilde A Q$, $\tilde A\in\Psieb{m-1}(M)$. This
proves the first half of the lemma. The other part is similar.
\end{proof}

\begin{proof}[Proof of Proposition~\ref{prop:DiffPsi-alg}]
The algebra properties follow immediately from the lemma. It only remains 
to verify the leading order commutativity.

As the bracket is a derivation in each argument, it suffices to
consider $P,$ $Q$ lying in either $\Vfes(M)$ or $\Psieb{*}(M).$  If both
operators are in $\Psieb{*}(M),$ the result follows from the symbol
calculus. If $P,$ $Q \in \Vfes(M),$ we have $[P,Q] =R\in \Vfes (M).$  We
need to write $R$ as a sum of elements of $\Diffes{2} (M)$ times elements of
$\Psieb{-1}(M).$  To this end, let $\Lambda$ be an elliptic element of
$\Psieb{2}(M)$ given by a sum of square of vector fields in
$\Vfeb(M),$ e.g.\ in local coordinates
$$
\Lambda = (x\pa_x)^2+(x\pa_t)^2+\sum (x\pa_y)^2+\sum(z'_i \pa_{z'_i})^2+\sum \pa_{z''_j}^2.
$$
We write $\Lambda=\sum V_j^2$ for brevity.
Let $\Upsilon \in \Psieb{-2}(M)$ be an elliptic parametrix for
$\Lambda.$  Then we may write
$$
\Id = \sum V_j (V_j\Upsilon)+E,
$$
with $E \in \Psieb{-\infty}(M).$
Now since $\Vfeb(M)\subset\Vfes(M),$ we certainly have $V_j \in \Vfes
(M)$ for each $j,$ hence $R V_j \in \Diffes{2}(M).$  Moreover $V_j
\Upsilon \in \Psieb{-1}(M).$  Thus, 
$$
R=\sum (R V_j) (V_j\Upsilon)+ RE,
$$
and we have shown that $R \in \Diffes{2}\Psieb{-1}(M).$

Finally, if $P \in \Vfes(M)$ and $Q \in \Psieb{m}(M)$ (or vice-versa) then
using the lemma (and its notation) we may write
$$
[P,Q] =\sum Q_j A_j +B.
$$
Using the same method as above to write $B =\sum R V_j (V_j \Upsilon )
+ BE$ we find that $[P,Q] \in \Diffes{1} \Psieb{m-1}(M).$
\end{proof}

The above proof also yields the following useful consequence.
\begin{lemma}
For all $m,$ $l \in \RR,$ and $k \in \NN,$
$$
\Diffes{m} \Psieb{l}(M) \subset \Diffes{m+k}\Psieb{l-k}(M).
$$
\end{lemma}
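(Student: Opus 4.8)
The plan is to prove the inclusion by induction on $k$, with the inductive step being essentially the parametrix argument already used in the proof of Proposition~\ref{prop:DiffPsi-alg}. The base case $k=0$ is trivial, so it suffices to establish the case $k=1$, namely $\Diffes{m}\Psieb{l}(M)\subset\Diffes{m+1}\Psieb{l-1}(M)$ for all $m,l\in\RR$; the general statement then follows by applying this repeatedly with $(m,l)$ replaced by $(m+j,l-j)$ for $j=0,\dots,k-1$.

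For the case $k=1$ I would first reduce to a single product: since an element of $\Diffes{m}\Psieb{l}(M)$ is a finite sum of terms $AB$ with $A\in\Diffes{m}(M)$ and $B\in\Psieb{l}(M)$, it is enough to show each such $AB$ lies in $\Diffes{m+1}\Psieb{l-1}(M)$. Recall from the proof of Proposition~\ref{prop:DiffPsi-alg} the elliptic operator $\Lambda=\sum_j V_j^2\in\Psieb{2}(M)$, built from a spanning set $V_j\in\Vfeb(M)$ of edge-b vector fields, together with an elliptic parametrix $\Upsilon\in\Psieb{-2}(M)$, so that
$$
\Id=\sum_j V_j(V_j\Upsilon)+E,\qquad E\in\Psieb{-\infty}(M);
$$
by the composition property of the calculus (Theorem~\ref{BEPseud}), $V_j\Upsilon\in\Psieb{-1}(M)$.

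I would then insert this decomposition of the identity between $A$ and $B$, obtaining
$$
AB=A\,\Id\,B=\sum_j (AV_j)(V_j\Upsilon\,B)+AEB.
$$
Since $V_j\in\Vfeb(M)\subset\Vfes(M)\subset\Diffes{1}(M)$ and $\Diffes{*}(M)$ is a filtered algebra, $AV_j\in\Diffes{m+1}(M)$; since $V_j\Upsilon\in\Psieb{-1}(M)$ and $B\in\Psieb{l}(M)$, composition in $\Psieb{*}(M)$ gives $V_j\Upsilon B\in\Psieb{l-1}(M)$; and $AEB\in\Diffes{m}(M)\,\Psieb{-\infty}(M)\subset\Diffes{m+1}\Psieb{l-1}(M)$ trivially. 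Hence $AB\in\Diffes{m+1}\Psieb{l-1}(M)$, which completes the inductive step and the proof.

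The argument is short, and the only point that needs a little attention is the placement of the inserted identity: $A$ is a pure edge-smooth differential operator and in general does not belong to $\Psieb{*}(M)$ (it involves factors $D_{z'_j}$, which are not $\ebo$-pseudodifferential), so the extra negative-order factors $V_j\Upsilon$ must be absorbed into the pseudodifferential part $B$ on the right rather than combined with $A$ on the left. Beyond getting this bookkeeping right there is no real obstacle; everything else is an immediate consequence of the algebra and composition properties already in hand.
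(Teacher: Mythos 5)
Your argument is correct and is essentially the paper's intended one: the paper simply remarks that ``the above proof also yields'' the lemma, referring to the same decomposition $\Id=\sum_j V_j(V_j\Upsilon)+E$ used in the proof of Proposition~\ref{prop:DiffPsi-alg}, and your insertion of this identity between the $\Diffes{m}$ factor and the $\Psieb{l}$ factor is exactly the right move (pushing $V_j$ left into the differential part and $V_j\Upsilon$ right into the pseudodifferential part). Your closing remark about why the identity must be inserted there, rather than composed on either side with the purely pseudodifferential factor, correctly identifies the only point requiring care.
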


We note the following consequence of \eqref{eq:Dzp-edge-b-commutator}:

\begin{lemma}\label{lemma:dzcomm}
Let $A \in \Psieb{m}(M),$ $a=\sigma_{\ebo,m}(A)$.  Then
$$
\imath[x^{-1}D_{z'_j}, A] = A_1 x^{-1} D_{z'_j} + x^{-1} A_0
$$
where $A_0\in\Psieb{m}(M)$, $A_1\in\Psieb{m-1}(M)$,
$$
\sigma_{\ebo,m}(A_0)=\frac{\pa a}{\pa z'_j},
\quad \sigma_{\ebo,m-1}(A_1)=\frac{\pa a}{\pa
  \zeta'_j} + \frac{\pa a}{\pa \xi}.
$$
\end{lemma}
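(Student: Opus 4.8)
The plan is to build on the identity \eqref{eq:Dzp-edge-b-commutator}, established in the proof of Lemma~\ref{lemma:Psieb-Vfes-comm}, which already expresses $[D_{z'_j},A]$ in terms of commutators that the calculus handles well, and then to graft on the extra weight $x^{-1}$ by a Leibniz expansion, exploiting that conjugation by powers of $x$ preserves $\Psieb{*}(M)$ and leaves the principal symbol on the diagonal unchanged (the multiplier $x/x'$ is $1$ there). Throughout, $a=\sigma_{\ebo,m}(A)$.

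First I would treat the unweighted commutator $\imath[D_{z'_j},A]$. Write $\tilde Q=z'_jD_{z'_j}$, which up to the factor $\imath$ is the radial vector field for $\{z'_j=0\}$ and hence lies in $\Vfeb(M)$. By the radial-vector-field property of $\Psieb{*}(M)$ already invoked in the proof of Lemma~\ref{lemma:Psieb-Vfes-comm} (the normal operator of $\tilde Q$ at $\{z'_j=0\}$ is scalar), one has $[\tilde Q,A]\in z'_j\Psieb{m}(M)$; similarly, since the normal operator of multiplication by $z'_j$ at $\{z'_j=0\}$ vanishes, $[A,z'_j]\in z'_j\Psieb{m-1}(M)$. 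Thus in \eqref{eq:Dzp-edge-b-commutator}, using $[(z'_j)^{-1},A]z'_j=(z'_j)^{-1}[A,z'_j]$, the quantities $B_0:=(z'_j)^{-1}\imath[\tilde Q,A]\in\Psieb{m}(M)$ and $C_2:=(z'_j)^{-1}\imath[A,z'_j]\in\Psieb{m-1}(M)$ are genuine elements of the calculus, and
$$
\imath[D_{z'_j},A]=B_0+C_2D_{z'_j}.
$$
The symbol calculus together with \eqref{eq:sH-ebo} (namely $\sH_{\ebo,\zeta'_j}=z'_j\pa_{z'_j}$ and $\sH_{\ebo,a}(z'_j)=z'_j\,\pa a/\pa\zeta'_j$) gives $\sigma_{\ebo,m}(B_0)=\pa a/\pa z'_j$ and $\sigma_{\ebo,m-1}(C_2)=\pa a/\pa\zeta'_j$.

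Next I would insert the weight. Since $x$ is independent of the $z'$ variables, it commutes with $D_{z'_j}$, and Leibniz gives
$$
\imath[x^{-1}D_{z'_j},A]=x^{-1}\,\imath[D_{z'_j},A]+\imath[x^{-1},A]\,D_{z'_j}
=x^{-1}B_0+x^{-1}C_2D_{z'_j}+\imath[x^{-1},A]\,D_{z'_j}.
$$
Viewing $x^{-1}$ as an element of $\Psieb{0,1}(M)$, its commutator with $A$ lies in $\Psieb{m-1,1}(M)=x^{-1}\Psieb{m-1}(M)$, so write $\imath[x^{-1},A]=x^{-1}C$ with $C\in\Psieb{m-1}(M)$; the weighted symbol calculus together with \eqref{eq:sH-ebo} (equivalently \eqref{eq:weighted-formula}), which yields $\sH_{\ebo,x^{-1}}=x^{-1}\pa_\xi$, gives $\sigma_{\ebo,m-1}(C)=\pa a/\pa\xi$. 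Finally, for any $B\in\Psieb{m-1}(M)$ the identity $x^{-1}BD_{z'_j}=(x^{-1}Bx)(x^{-1}D_{z'_j})$ holds (again by $[x,D_{z'_j}]=0$), and $x^{-1}Bx\in\Psieb{m-1}(M)$ has the same principal symbol as $B$. Applying this to $B=C_2$ and $B=C$ and collecting terms yields
$$
\imath[x^{-1}D_{z'_j},A]=A_1\,x^{-1}D_{z'_j}+x^{-1}A_0,\qquad
A_0=B_0,\ \ A_1=x^{-1}C_2x+x^{-1}Cx,
$$
with $A_0\in\Psieb{m}(M)$, $A_1\in\Psieb{m-1}(M)$, $\sigma_{\ebo,m}(A_0)=\pa a/\pa z'_j$ and $\sigma_{\ebo,m-1}(A_1)=\pa a/\pa\zeta'_j+\pa a/\pa\xi$, as claimed.

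I do not expect a genuine obstacle here: the argument is entirely bookkeeping with calculus properties already available. The one point that calls for care is the origin of the $\pa a/\pa\xi$ term in $\sigma_{\ebo,m-1}(A_1)$: it is not produced by any $\zeta'_j$-differentiation but arises solely from the weight $x^{-1}$, through the $x^{-1}\pa_\xi$ piece of $\sH_{\ebo,x^{-1}}$ (this is the content of \eqref{eq:weighted-formula}), so one must be sure to carry along the $\imath[x^{-1},A]D_{z'_j}$ term and reorganize it as $A_1$-type rather than discarding it. One should also check that the divisions by $z'_j$ are legitimate at the operator level, but this is exactly the normal-operator mechanism used in Lemma~\ref{lemma:Psieb-Vfes-comm}.
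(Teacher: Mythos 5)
Your proposal is correct and follows essentially the same route as the paper: the Leibniz split $[x^{-1}D_{z'_j},A]=[x^{-1},A]D_{z'_j}+x^{-1}[D_{z'_j},A]$, the use of \eqref{eq:Dzp-edge-b-commutator} (with the normal-operator argument justifying division by $z'_j$), and the symbol computations from \eqref{eq:sH-ebo} for the three contributions $\pa_{z'_j}a$, $\pa_{\zeta'_j}a$, and $\pa_\xi a$. The paper's proof is merely terser, invoking $[x^l\Psieb{k},x^{l'}\Psieb{k'}]\subset x^{l+l'}\Psieb{k+k'-1}$ where you spell out the conjugation by $x$ that moves the weight to the stated position without altering principal symbols.
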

\noindent
Note that this is exactly what one would expect from computation
at the level of edge-b symbols: the Hamilton vector field of $\zeta'_i/(x
z'_i)$ is
$$
(\zeta'_i/(x z'_i)) (\pa_{\zeta'_i}+\pa_\xi) + x^{-1} \pa_{z'_i}.
$$

\begin{proof}
This follows immediately from writing
$$
[x^{-1}D_{z'_j},A]=[x^{-1},A]D_{z'_j}+x^{-1}[D_{z'_j},A].
$$
We then use
\eqref{eq:Dzp-edge-b-commutator} together with the
following principal symbol calculations in $\Psieb{*}(M)$, see
\eqref{eq:sH-ebo}:
\begin{align*}
&\imath\sigma_{\ebo,m}([Q',A])=z'_j\pa_{z'_j} a,\\ &\imath\sigma_{\ebo,m-1}([(z'_j)^{-1},A]z'_j)=\pa_{\zeta'_j} a,\\
&\imath\sigma_{\ebo,m-1}([x^{-1},A])=x^{-1}\pa_{\xi} a,
\end{align*}
as well as $[x^l\Psieb{k}(M),x^{l'}\Psieb{k'}(M)]\subset x^{l+l'}
\Psieb{k+k'-1}(M)$, which allows one to exchange factors after the previous
steps without affecting the computed principal symbols.
\end{proof}

We now define the edge-smooth Sobolev spaces.   It is with respect to
these base spaces that we will measure regularity in proving propagation of
edge-b wavefront set.

\begin{definition}
  For $s\geq 0$ integer,
$$
\Hes{s,l-(f+1)/2}(M)=\{u\in x^l L^2_g(M):\ A\in\Diffes{s}(M)
\Rightarrow Au\in x^l L^2_g(M)\}.
$$
\nomenclature[H]{$\Hes{s,l}$}{edge-smooth Sobolev space}
The norm in $\Hes{s,l-(f+1)/2}(M)$, up to
  equivalence, is defined using any finite number of generators $A_j$
  for the finitely generated $\CI(M)$-module $\Diffes{s}(M)$ by
$$
\|u\|_{\Hes{s,l-(f+1)/2}(M)}
=\left(\sum_j \|x^{-l}A_j u\|_{L^2_g(M)}^2\right)^{1/2}.
$$

The space $\Hesz{s,l-(f+1)/2}(M)$ is the closure of $\dCI(M)$ in
$\Hes{s,l-(f+1)/2}(M)$.
\nomenclature[H]{$\Hesz{s,l}$}{For $s\geq 0$,
closure of $\dCI(M)$ in edge-smooth Sobolev space}
\end{definition}

\begin{remark}\label{remark:Hes}
The orders above are chosen so that setting $s=0$, $l=0$, we obtain $L^2_g(M)=
\Hes{0,-(f+1)/2}(M).$ Thus $x^{(f+1)/2}L^2_g(M)=L^2(M,x^{-(f+1)}\,dg)$
is the $L^2$-space corresponding to densities that are smooth up
to all boundary hypersurfaces of $M$ except $\ef$, and that are
b-densities at the interior of $\ef$, meaning that $x(x^{-(f+1)}\,dg)$
is actually a smooth non-degenerate density on $M$. This convention
keeps the weights consistent with \cite{mvw1}.

Note also that the subspace $\cC$ of $\CI(M)$ given by
\begin{equation}\label{cC}
\cC = x^\infty \CI(M)
\end{equation}
\nomenclature[C]{$\cC$}{Smooth functions on $M$ vanishing to infinite order at
  blown-up edge}
is dense in $\Hes{s,l-(f+1)/2}(M)$ for
all $s$ and $l$; one could even require supports disjoint from
$\ef$. Thus, the difference between $\Hesz{s,l-(f+1)/2}(M)$
and $\Hes{s,l-(f+1)/2}(M)$ corresponds to the behavior at the boundary
hypersurfaces
of $M$ other than $\ef$, i.e.\ those arising from the boundary hypersurfaces
of $M_0$, where the boundary conditions are imposed. Thus, this difference
is similar to the difference between $H^s(\Omega)$ and $H^s_0(\Omega)$ for
domains $\Omega$ with smooth boundary in a manifold.
\end{remark}

The boundedness of $\Psieb{0}(M)$ on $\Hes{1,1-(f+1)/2}(M)$ is an
immediate consequence of the commutation property in
Lemma~\ref{lemma:Psieb-Vfes-comm}.

\begin{theorem}\label{thm:Psieb-0-bd}
$\Psieb{0}(M)$ is bounded on both $\Hes{1,1-(f+1)/2}(M)$ and
on the closed subspace $\Hesz{1,1-(f+1)/2}(M)$.
\end{theorem}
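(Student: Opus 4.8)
The plan is to deduce everything from Lemma~\ref{lemma:Psieb-Vfes-comm} together with two structural facts about $\Psieb0(M)$: first, that $\Psieb0(M)$ is bounded on $L^2_g(M)=\Hes{0,-(f+1)/2}(M)$ (the $L^2$-mapping property, property (VII) of \cite[Section~3]{mvw1}, up to conjugation by a fixed power of $x$), and second, that $\Psieb0(M)$ is invariant under conjugation by powers of $x$, i.e.\ $x^r\Psieb0(M)x^{-r}=\Psieb0(M)$ for every $r$, which is immediate from the fact that $x/x'$ is both a multiplier and a divider on the space of kernels on $M^2_{\eb}$. Combining these, $x^{-1}Bx\in\Psieb0(M)$ is bounded on $L^2_g(M)$ whenever $B\in\Psieb0(M)$. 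I fix once and for all a finite generating set $A_0=\Id,A_1,\dots,A_r$ of the $\CI(M)$-module $\Diffes1(M)$ with $A_1,\dots,A_r\in\Vfes(M)$ spanning $\Vfes(M)$ over $\CI(M)$, so that $\|u\|_{\Hes{1,1-(f+1)/2}}^2\simeq\sum_{j=0}^r\|x^{-1}A_ju\|_{L^2_g}^2$.

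Now let $B\in\Psieb0(M)$ and $u\in\Hes{1,1-(f+1)/2}(M)$; I estimate $\|x^{-1}A_jBu\|_{L^2_g}$ for each $j$. For $j=0$, $x^{-1}Bu=(x^{-1}Bx)(x^{-1}u)$, which is bounded in $L^2_g$ by $\|x^{-1}u\|_{L^2_g}\le\|u\|_{\Hes{1,1-(f+1)/2}}$. For $A_j=V\in\Vfes(M)$, write $VB=BV-[B,V]$ and apply Lemma~\ref{lemma:Psieb-Vfes-comm} in the form $[B,V]=\sum_kB_k'Q_k'+B'$ with $B_k'\in\Psieb{-1}(M)\subset\Psieb0(M)$, $Q_k'\in\Vfes(M)$, $B'\in\Psieb0(M)$; then
\begin{equation*}
x^{-1}VBu=(x^{-1}Bx)(x^{-1}Vu)-\sum_k(x^{-1}B_k'x)(x^{-1}Q_k'u)-(x^{-1}B'x)(x^{-1}u).
\end{equation*}
Every operator of the form $x^{-1}(\cdot)x$ appearing here lies in $\Psieb0(M)$, hence is bounded on $L^2_g(M)$; expanding each $Q_k'$ in the fixed generators $A_1,\dots,A_r$ with bounded $\CI(M)$ coefficients, every term on the right is controlled by $\|u\|_{\Hes{1,1-(f+1)/2}}$. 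Summing the squares over $j=0,\dots,r$ gives the boundedness of $B$ on $\Hes{1,1-(f+1)/2}(M)$. As the remark preceding the theorem indicates, there is no genuine obstacle here: all the work is in Lemma~\ref{lemma:Psieb-Vfes-comm}, and what remains is bookkeeping with the global generators and the conjugation identity.

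For the subspace $\Hesz{1,1-(f+1)/2}(M)=\overline{\dCI(M)}$: since $B$ is already bounded on the ambient space $\Hes{1,1-(f+1)/2}(M)$ and $\dCI(M)$ is dense in $\Hesz{1,1-(f+1)/2}(M)$, it suffices to show $B(\dCI(M))\subseteq\Hesz{1,1-(f+1)/2}(M)$. Fix $u\in\dCI(M)$. Because the Schwartz kernel of $B\in\Psieb0(M)$ vanishes to infinite order at every boundary hypersurface of $M$ other than $\ef$, the function $Bu$ vanishes to infinite order at those faces; by the pseudolocal character of the calculus and the same infinite-order vanishing, $Bu$ is smooth on $\{x>0\}$; and $Bu\in\Hes{1,1-(f+1)/2}(M)$ by the first part. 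Choosing cutoffs $\chi_\epsilon=\chi(x/\epsilon)\in\CI(M)$ with $\chi_\epsilon\equiv0$ near $\ef$ and $\chi_\epsilon\equiv1$ on $\{x\ge\epsilon\}$, the product $\chi_\epsilon Bu$ is then smooth on all of $M$ and vanishes to infinite order at every boundary hypersurface, hence lies in $\dCI(M)$; moreover $\chi_\epsilon Bu\to Bu$ in $\Hes{1,1-(f+1)/2}(M)$, because for $V\in\Vfes(M)$ the commutator $[V,\chi_\epsilon]$ is a uniformly bounded multiplier supported in $\{x\simeq\epsilon\}$, so the error terms $x^{-1}[V,\chi_\epsilon]Bu$ and $(\chi_\epsilon-1)x^{-1}VBu$ tend to $0$ in $L^2_g(M)$ (the first since $x^{-1}Bu\in L^2_g$ and the supports shrink, the second by dominated convergence). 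Hence $Bu\in\Hesz{1,1-(f+1)/2}(M)$. The one point deserving care is precisely the verification that $\chi_\epsilon Bu$ lands in $\dCI(M)$, where the infinite-order vanishing of the edge-b kernels at the old faces — a structural feature of the double space $M^2_{\eb}$ — is exactly what is used; this is the only input here beyond the first part of the theorem.
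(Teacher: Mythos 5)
Your argument for the first statement is essentially the paper's: both expand a factor of $x^{-1}QA$ (you write $VB$, the paper writes $QA$, but it is the same commutator calculation) using Lemma~\ref{lemma:Psieb-Vfes-comm} and the fact that conjugation of $\Psieb{0}(M)$ by $x^{-1}$ stays in $\Psieb{0}(M)$, then estimate term by term via $L^2$-boundedness. One small discrepancy: you frame the commutator step with $V\in\Vfes(M)$ only, but what the norm actually requires is an estimate for all of $\Diffes{1}(M)$ — your $j=0$ term $A_0=\Id$ covers the zeroth-order generator, which reproduces what the paper does.

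For the second statement, you take a genuinely different route. The paper observes, as a structural fact about the edge-b calculus, that $\Psieb{0}(M):\dCI(M)\to\dCI(M)$ (infinite-order vanishing at \emph{all} faces, including $\ef$, is preserved), so that the first statement plus the definition of $\Hesz{1,1-(f+1)/2}(M)$ as the closure of $\dCI(M)$ immediately gives the second. You instead sidestep the claim that $Bu$ vanishes to infinite order at $\ef$: you only use vanishing at the ``old'' faces (where the kernel's rapid decay does the work), and you recover $Bu\in\Hesz{1,1-(f+1)/2}(M)$ by an explicit cutoff approximation $\chi_\epsilon Bu\to Bu$, with the commutator error $(V\chi_\epsilon)x^{-1}Bu$ controlled by shrinking supports and the remaining error by dominated convergence. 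This is correct and has the virtue of not requiring the full mapping property of $\Psieb{0}(M)$ at the front face $\ef$; the price is length. The paper's appeal to $\Psieb{0}(M):\dCI(M)\to\dCI(M)$ is the cleaner reduction if one is willing to quote that fact.
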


\begin{remark}
The more general case of $\Hes{1,l-(f+1)/2}(M)$ with
arbitrary $l$ follows from the case of $l=1$ using
$x^{-l}A x^l\in \Psieb{0}(M)$ for $A\in\Psieb{0}(M)$.

In fact, reduction to $l=0$ would make the proof below even more
transparent.

The case of $\Hes{s,l-(f+1)/2}(M)$ can be proved similarly, but we do not
need this here.
\end{remark}

\begin{proof}
As $\Psieb{0}(M):\dCI(M)\to\dCI(M)$, the second statement follows
from the first and the definition of
$\Hesz{1,1-(f+1)/2}(M)$.

As above, let $\cC$ be the subspace of $\CI(M)$
consisting of functions vanishing
to infinite order at $\ef,$ which is thus dense in $\Hes{1,1-(f+1)/2}(M)$.
Let $A\in\Psieb{0}(M)$.
As $\Psieb{0}(M):\cC\to\cC$, and $A$ is bounded on $L^2_g(M)$,
one merely needs to check
that for $Q\in\Diffes{1}(M)$ there exists $C>0$ such that for $u\in\cC$,
\begin{equation*}
\|x^{-1}Q Au\|_{L^2_g}\leq C\|u\|_{\Hes{1,1-(f+1)/2}(M)}.
\end{equation*}
But
$$
x^{-1}QA u=([x^{-1},A]x) (x^{-1}Qu)+x^{-1}[Q,A]u+A(x^{-1}Qu).
$$
By Lemma~\ref{lemma:Psieb-Vfes-comm}, $[Q,A]=\sum A_jQ_j+B$,
$B\in\Psieb{0}(M)$, $A_j\in\Psieb{-1}(M)$, $Q_j\in \Diffes{1}(M)$,
hence $x^{-1}[Q,A]=\sum (x^{-1}A_j x) (x^{-1} Q_j) +(x^{-1}Bx) x^{-1}$,
\begin{align*}
x^{-1}QA u=([x^{-1},A]x) (x^{-1}Qu)&+\sum (x^{-1}A_j x) (x^{-1} Q_ju)\\
&+(x^{-1}Bx) (x^{-1}u)+A(x^{-1}Qu),
\end{align*}
so the desired conclusion follows from
$$
\|x^{-1}Qu\|_{L^2_g(M)},\|x^{-1}Q_ju\|_{L^2_g(M)},\|x^{-1}u\|_{L^2_g(M)}
\leq C\|u\|_{\Hes{1,1-(f+1)/2}(M)},\ u\in\cC,
$$
and additionally 
$[x^{-1},A]x, x^{-1}A_j x\in\Psieb{-1}(M)\subset\Psieb{0}(M)$
(which are thus
bounded on $L^2_g(M)$, just as $A,x^{-1}Bx\in\Psieb{0}(M)$ are).
\end{proof}

We can now define the $\ebo$-wave front set relative to a given
Hilbert (or even Banach) space, which in practice will be either the
Dirichlet form domain or a weighted edge-smooth Sobolev space serving
as a stand-in for the Neumann form domain.  We also define the
relevant Sobolev spaces with respect to which these wavefronts sets
measure regularity.  For future reference, we also include the
analogous definitions with respect to the b-calculus.

\begin{definition}\label{def:rel-eb-Sob}
 Let $\hilbert\subset\CmI(M)$ denote a Hilbert space on which, for
 each $K\subset M$ compact,
operators in $\Psieb{0}(M)$ with Schwartz kernel supported in $K\times
K$ are bounded, with the operator norm of
$\Op (a)$ depending on $K$ and a fixed seminorm of $a.$ Let $\hilbert_\loc$
consist of distributions $u$ such that $\phi u\in\hilbert$ for all $\phi\in\CI_c(M)$.

For $m\geq 0,$ $r \leq 0,$ let
$$
H_{\ebo,\hilbert,\loc}^{m,r}(M)=\big\{u \in \hilbert_\loc: Au \in \hilbert_\loc \text{ for
  all } A \in \Psieb{m,r}(M)\big\}.
$$ 
\nomenclature[H]{$H_{\ebo,\hilbert}$}{edge-b Sobolev space relative to
a Hilbert space $\hilbert$}

Let $q\in\Sebstar M$, $u \in \hilbert_\loc .$
For $m\geq 0,r\leq 0$, we say that $q\notin\WFebX^{m,r}(u)$ if there exists
$A\in\Psieb{m,r}(M)$ elliptic at $q$ such that $Au\in\hilbert_\loc .$  We define
$q \notin \WFebX^{\infty,r}(u)$ if there exists
$A\in\Psieb{0,r}(M)$ elliptic at $q$ such that $Au \in
H_{\ebo,\hilbert,\loc}^{\infty,0}(M).$
\nomenclature[WF]{$\WFebX^{m,r}(u)$}{edge-b wave front set relative to a Hilbert
space $\hilbert$}

\end{definition}
There is an inclusion 
$$
\WFebX^{m,r}u \subset \WFebX^{m',r'} u
$$
if
$$
m\leq m',\ r\leq r'.
$$

\begin{remark}
We could alter this definition to allow $u$ a priori to lie in the
larger space
$$
\sum A_j (\hilbert)
$$
with $A_j \in \Psieb{\infty, 0}(M);$ this would allow us to give a
non-trivial definition of $\WFebX^{m,r} u$ even for $m<0.$

The restriction to $r \leq 0$ is more serious: operators in
$\Psieb{*,0}(M)$ would in general fail to be microlocal with respect to a
putative $\WFebX^{m,r}(M)$ with $r>0,$ simply because such operators
would fail to be bounded on $\hilbert.$

Note also that if $\hilbert'$ is a closed subspace of $\hilbert$,
with the induced norm, and if elements of $\Psieb{0}(M)$ restrict to
(necessarily bounded) maps $\hilbert'\to\hilbert'$, then for $u\in\hilbert'$,
\begin{equation}\label{eq:subspace-WF}
\WFebX^{m,r}(u)=\WFebXp^{m,r}(u).
\end{equation}
In particular, this holds with $\hilbert=\Hes{k,l}(M)$ and $\hilbert'
=\Hesz{k,l}(M)$.
\end{remark}

The $\ebo$-wave front set captures $\ebo$-regularity:

\begin{lemma}\label{lemma:WF-to-reg}
If $u\in\hilbert$, $r\leq 0$, $m\geq 0$ and
$\WFebX^{m,r}(u)=\emptyset,$ then
$u\in H_{\ebo,\hilbert,\loc}^{m,r}(M)$, i.e.\ for all $A\in\Psieb{m,r}(M)$
with compactly supported kernel,
$Au\in\hilbert$.
\end{lemma}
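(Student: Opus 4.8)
The plan is to run the standard microlocal elliptic‐regularity patching argument, adapted to the $\ebo$-calculus, with care taken of the weight $x^{-r}$. First I would fix $A\in\Psieb{m,r}(M)$ with compactly supported kernel; then $\ebWF'(A)$ is a compact subset of $\Sebstar M$, being closed and contained in $\Sebstar_K M$ for $K$ the (compact) projection to $M$ of the kernel support. For each $q\in\ebWF'(A)$ the hypothesis $\WFebX^{m,r}(u)=\emptyset$ supplies some $B_q\in\Psieb{m,r}(M)$, elliptic at $q$, with $B_qu\in\hilbert$; multiplying $B_q$ by a function in $\CIc(M)$ equal to $1$ near the base point of $q$ --- which preserves ellipticity at $q$ and membership of $B_qu$ in $\hilbert$, since $\CIc(M)\subset\Psieb{0}(M)$ acts on $\hilbert$ --- we may assume $B_q$ has compactly supported kernel. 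Let $\cU_q\subset\Sebstar M$ be an open conic neighborhood of $q$ on which $B_q$ is elliptic.

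By compactness I would choose $q_1,\dots,q_N$ with $\ebWF'(A)\subset\bigcup_j\cU_{q_j}$, and set $B_j=B_{q_j}$. Using the symbol-quantization and microlocality properties of $\Psieb{*}(M)$ (properties (III)--(IV) and (A)--(F) of \cite[Section~3]{mvw1}), construct $G_j\in\Psieb{0}(M)$ with compactly supported kernels, $\ebWF'(G_j)\subset\cU_{q_j}$, and $\ebWF'(\Id-\sum_j G_j)\cap\ebWF'(A)=\emptyset$ --- a microlocal partition of unity subordinate to the cover $\{\cU_{q_j}\}$ of the compact set $\ebWF'(A)$. Then
$$
A=\sum_j A G_j+A\Big(\Id-\sum_j G_j\Big),
$$
and the last term has empty operator wave front set and weight $r$, hence lies in $x^{-r}\Psieb{-\infty}(M)$; as $r\leq0$ this acts boundedly on $\hilbert$ by the mapping properties of the calculus (cf.\ property (VII) of \cite[Section~3]{mvw1} and the discussion of the range $r\leq0$), so $A(\Id-\sum_j G_j)u\in\hilbert$.

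For each $j$, since $\ebWF'(G_j)\subset\cU_{q_j}$ and $B_j$ is elliptic on $\cU_{q_j}$, let $E_j\in\Psieb{-m,-r}(M)$ be a microlocal left parametrix of $B_j$ over $\cU_{q_j}$ (property (E) of \cite[Section~3]{mvw1}), so $E_jB_j=\Id+S_j$ with $\ebWF'(S_j)\cap\cU_{q_j}=\emptyset$. Then $G_j=(G_jE_j)B_j-G_jS_j$, where $G_jS_j$ has empty operator wave front set, hence lies in $\Psieb{-\infty}(M)$, and $G_jE_j\in\Psieb{-m,-r}(M)$. Therefore
$$
A G_j=(A G_jE_j)B_j-(A G_j)S_j,
$$
with $A G_jE_j\in\Psieb{0}(M)$ bounded on $\hilbert$, while $(A G_j)S_j\in x^{-r}\Psieb{-\infty}(M)$ acts on $\hilbert$ as above; since $B_ju\in\hilbert$ we get $A G_ju\in\hilbert$. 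Summing over $j$ yields $Au\in\hilbert$, which proves the claim for finite $m$. For $m=\infty$ the same argument applied to $PA$ with $P\in\Diffeb{*}(M)$ --- using that $\WFebX^{\infty,r}(u)=\emptyset$ supplies $B_q$ with $PB_qu\in\hilbert$ for all such $P$ --- gives the conclusion.

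The one point needing genuine care, and thus the main obstacle, is the bookkeeping of the weight $x^{-r}$: since $\hilbert$ is assumed to carry only a bounded action of $\Psieb{0}(M)$, one must never apply a positive-order operator, nor a weight-$r$ operator with $r<0$, directly to $u$. The device above arranges that whatever actually hits $u$ is always either one of the $B_j$ (for which $B_ju\in\hilbert$ by hypothesis) or an operator in $\Psieb{0}(M)$ or in the residual class $x^{-r}\Psieb{-\infty}(M)$ with $r\leq0$; checking that this last class preserves $\hilbert$ uses the mapping properties of $\Psieb{*}(M)$ together with the fact that in the cases of interest $\hilbert$ is a form domain or a weighted edge-smooth Sobolev space. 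The remainder is the classical patching argument and uses only the formal properties of $\Psieb{*}(M)$ recorded above.
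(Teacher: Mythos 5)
Your argument is correct, but it is a genuinely different instance of the standard microlocal patching scheme than the one in the paper. You cover $\ebWF'(A)$ with ellipticity sets $\cU_{q_j}$, build a microlocal partition of unity $\{G_j\}$ subordinate to that cover with $\ebWF'(\Id-\sum G_j)\cap\ebWF'(A)=\emptyset$, and then for each $j$ invoke a local microlocal parametrix $E_j$ for $B_j$ to rewrite $AG_j$ as $(AG_jE_j)B_j$ modulo a residual term in $\Psieb{-\infty,r}(M)$. The paper instead combines the $B_{q_j}$ into a single globally elliptic $B=\sum Q B_{q_j}^* B_{q_j}\in\Psieb{m,r}(M)$ (with $Q\in\Psieb{-m,-r}(M)$ elliptic), observes that $Bu=\sum (QB_{q_j}^*)(B_{q_j}u)\in\hilbert$ because each $QB_{q_j}^*\in\Psieb{0,0}(M)$ is bounded on $\hilbert$, and then uses a \emph{single} global parametrix $G\in\Psieb{-m,-r}(M)$ with $GB-\Id\in\Psieb{-\infty,0}(M)$ to write $Au=AG(Bu)+A(\Id-GB)u$, with $AG\in\Psieb{0,0}(M)$ and $A(\Id-GB)\in\Psieb{-\infty,r}(M)\subset\Psieb{0,0}(M)$. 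Your route requires constructing the $G_j$'s and handling $N$ local parametrices and their remainders, whereas the $\sum QB^*B$ device packages the covering into one elliptic operator and needs only the global parametrix construction; on the other hand your version covers only the compact set $\ebWF'(A)$ rather than all of $\Sebstar M$, which makes the compact-support bookkeeping marginally more transparent. Both hinge on the same weight arithmetic, namely that operators of $\ebo$-order $0$ and of weight $r\leq 0$ are bounded on $\hilbert$, and both are valid.
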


\begin{proof}
  This is a standard argument (see e.g.\ \cite[Lemma~3.10]{Vasy5}):
  For each $q\in \Sebstar M$ there is $B_q\in\Psieb{m,r}(M)$ elliptic
  at $q$ such that $B_q u\in\hilbert$.  By compactness, $\Sebstar M$
  can be covered by $\bigcup_j \liptic (B_{q_j})$ for finitely many
  points $q_j.$ Now choose $Q\in\Psieb{-m,-r}(M)$ elliptic,
  and set $B=\sum QB^*_{q_j}B_{q_j}.$  Then $B$ is elliptic and
 $Bu\in\hilbert$. As
  $B$ has a parametrix $G\in\Psieb{-m,-r}(M)$ with
  $GB-\Id\in\Psieb{-\infty,0}(M)$,
$$
Au=AG(Bu)+(A(\Id-GB))u,\text{ and } A(\Id-GB)\in\Psieb{-\infty,r}(M)\subset\Psieb{0,0}(M),
$$
shows the claim.
\end{proof}

Pseudodifferential operators are microlocal, as follows by a standard
argument:

\begin{lemma}\label{lemma:microlocal}(Microlocality)
If $B\in\Psieb{s,l}(M)$ then for $r,r-l\leq 0$,
$u\in\hilbert$,
$$
\WFebX^{m-s,r-l}(Bu)\subset\WF'(B)\cap\WFebX^{m,r}(u).
$$
In particular, if $\WF'(B)\cap\WFebX^{m,r}(u)=\emptyset$ then
$Bu\in H_{\ebo,\hilbert,\loc}^{m-s,r-l}(M)$.
\end{lemma}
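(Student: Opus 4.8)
The plan is to argue point by point in the contrapositive: it suffices to show that for $q\in\Sebstar M$, if $q\notin\WF'(B)$ \emph{or} $q\notin\WFebX^{m,r}(u)$, then $q\notin\WFebX^{m-s,r-l}(Bu)$; this is exactly the asserted inclusion, and the ``in particular'' clause then follows by applying Lemma~\ref{lemma:WF-to-reg} to $Bu$ (using $r-l\le 0$). Throughout I would lean on three facts already available: composition of $\ebo$-operators is microlocal in the sense of properties (A)--(B) of \cite[Section~3]{mvw1}; the $\ebo$-order and the $x$-weight compose additively and independently (the weight is untouched by order composition because conjugation by powers of $x$ preserves $\ebo$-order); and, crucially, $\Psieb{-\infty,l}(M)\subset\Psieb{0}(M)$ whenever $l\le 0$, so such operators are bounded on $\hilbert$ by the defining hypothesis on $\hilbert$.

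First I would dispatch the case $q\notin\WF'(B)$. Since $\WF'(B)$ is closed I can pick $A\in\Psieb{m-s,r-l}(M)$ elliptic at $q$ with $\WF'(A)\cap\WF'(B)=\emptyset$; then $AB\in\Psieb{-\infty,(r-l)+l}(M)=\Psieb{-\infty,r}(M)\subset\Psieb{0}(M)$ (this is where $r\le 0$ is used), hence $AB$ is bounded on $\hilbert$ and $ABu\in\hilbert$, so $q\notin\WFebX^{m-s,r-l}(Bu)$.

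Next, the case $q\notin\WFebX^{m,r}(u)$. By definition there is $G\in\Psieb{m,r}(M)$ elliptic at $q$, hence on a neighbourhood of $q$, with $Gu\in\hilbert$; by property (E) I would take a microlocal parametrix $\tilde G\in\Psieb{-m,-r}(M)$, so that $\WF'(\Id-\tilde G G)$ misses a neighbourhood $V$ of $q$. Choosing $A\in\Psieb{m-s,r-l}(M)$ elliptic at $q$ with $\WF'(A)\subset V$, I would decompose
$$
AB=(AB\tilde G)\,G+AB(\Id-\tilde G G).
$$
For the first summand, $AB\tilde G\in\Psieb{0,0}(M)$ (the $\ebo$-orders add to $(m-s)+s-m=0$ and the weights to $(r-l)+l-r=0$), which is bounded on $\hilbert$, so $(AB\tilde G)(Gu)\in\hilbert$. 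For the second, microlocality gives $\WF'\big(AB(\Id-\tilde G G)\big)\subset\WF'(A)\cap\WF'(\Id-\tilde G G)=\emptyset$, so this operator lies in $\Psieb{-\infty,r}(M)\subset\Psieb{0}(M)$ and hence $AB(\Id-\tilde G G)u\in\hilbert$. Summing, $ABu\in\hilbert$, giving $q\notin\WFebX^{m-s,r-l}(Bu)$. This establishes the inclusion; then if $\WF'(B)\cap\WFebX^{m,r}(u)=\emptyset$ we get $\WFebX^{m-s,r-l}(Bu)=\emptyset$, and Lemma~\ref{lemma:WF-to-reg} yields $Bu\in H^{m-s,r-l}_{\ebo,\hilbert,\loc}(M)$.

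The argument is a routine transcription of the standard microlocality proof (cf.\ \cite[Lemma~3.10]{Vasy5} and the surrounding material), so the only place demanding care is the bookkeeping of orders and weights: one must check that both the parametrix factor $AB\tilde G$ and the residual error $AB(\Id-\tilde G G)$ land in $\Psieb{0,0}(M)$ so the hypothesis on $\hilbert$ applies, and one must invoke $r\le 0$ (respectively $r-l\le 0$) precisely at the steps where a residual operator carrying a fixed nonpositive $x$-weight has to be recognized as bounded on $\hilbert$. One also tacitly wants $m-s\ge 0$ so that $\WFebX^{m-s,r-l}$ and Lemma~\ref{lemma:WF-to-reg} are literally meaningful, or else the extended convention from the Remark following the definition of $\WFebX$ must be used.
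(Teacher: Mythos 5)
Your proof is correct and follows essentially the same route as the paper's: the same two-case dichotomy, the same choice of a microlocalizer $A$ disjoint from $\WF'(B)$ (respectively from the parametrix error), and the same decomposition $ABu = (AB\tilde G)(Gu) + AB(\Id-\tilde G G)u$ with both pieces in $\Psieb{0,0}(M)$ and hence bounded on $\hilbert$. The only difference is notational (the paper writes $C$, $G$ where you write $G$, $\tilde G$), and both proofs close in the same way by citing the $\WFebX$-to-regularity lemma.
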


\begin{proof}
We assume $m\geq s$ and $m\geq 0$ in accordance with the definition above;
but the general case is treated easily by the preceeding remarks.

If $q\in\Sebstar M$, $q\notin\WF'(B)$, let $A\in\Psieb{m-s,r-l}(M)$
be elliptic at $q$ such that $\WF'(A)\cap\WF'(B)=\emptyset.$ Thus
$AB\in\Psieb{-\infty,r}(M)\subset\Psieb{0,0}(M)$, hence $AB u\in\hilbert$,
so $q\notin\WFebX^{m-s,r-l}(Bu)$.  (Note that we used $r\leq 0$
here.)

On the other hand, if $q\in\Sebstar M$, $q\notin\WFebX^{m,r}(u)$, then
there is $C\in\Psieb{m,r}(M)$ elliptic at $q$ such that $Cu\in\hilbert$.
Let $G$ be a microlocal parametrix for $C$, so $G\in\Psieb{-m,-r}(M)$,
and $q\notin\WF'(GC-\Id)$. Let $A\in\Psieb{m-s,r-l}$ be elliptic at $q$
and such that $\WF'(A)\cap\WF'(GC-\Id)=\emptyset$. Then
$$
ABu=ABGC u+AB(\Id-GC)u,
$$
and
$AB(\Id-GC)\in\Psieb{-\infty,r}(M)\subset\Psieb{0,0}(M)$
since $\WF'(A)\cap\WF'(\Id-GC)=\emptyset$,
so the second term on the right hand side is in $\hilbert$.
On the other hand, $Cu\in\hilbert$ and $ABG\in\Psieb{0,0}(M)$, so
$ABG(Cu)\in\hilbert$ as well, proving the wave front set containment.

The final claim follows immediately from this and Lemma~\ref{lemma:WF-to-reg}.
\end{proof}

There is a quantitative version of the lemma as well. Since the proof
is similar, cf.\ \cite[Lemma~3.13]{Vasy5}, we omit it.

\begin{lemma}\label{lemma:uniformly-microlocal}
Suppose that $K\subset\Sebstar M$ is compact, $U$ is a neighborhood of $K$,
$\tilde K\subset M$ compact.

Let $Q\in\Psieb{s,l}(M)$ elliptic on $K$ with $\WF'(Q)\subset U$ and
the Schwartz kernel of $Q$ supported in $\tilde K\times\tilde K$.

If $\cB$ is a bounded family in $\Psieb{s,l}(M)$ with
Schwartz kernel supported in $\tilde K\times\tilde K$ and
with $\WF'(\cB)\subset K$
then for $r,r-l\leq 0$,
there is $C>0$ such that for all
$u\in\hilbert$ with $\WFebX^{m,r}(u)\cap U=\emptyset$,
$$
\|Bu\|_{\hilbert}\leq C(\|u\|_{\hilbert}+\|Qu\|_{\hilbert}) \text{ for
  all } B \in \cB.
$$
\end{lemma}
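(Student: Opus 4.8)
The plan is to run the proof of Lemma~\ref{lemma:microlocal} in quantitative form, tracking uniformity over the bounded family $\cB$. The only structural fact about $\hilbert$ beyond boundedness of $\Psieb{0}(M)$ that we need is the one built into its definition: the operator norm on $\hilbert$ of an element of $\Psieb{0}(M)$ whose kernel is supported in a fixed compact set is dominated by a fixed seminorm of its symbol. Consequently a \emph{bounded} subset of $\Psieb{0,0}(M)$ all of whose Schwartz kernels are supported in one fixed compact set is a \emph{uniformly} bounded family of operators on $\hilbert$; it is this observation that upgrades microlocality to its uniform form. We may assume $\|u\|_{\hilbert}+\|Qu\|_{\hilbert}<\infty$, as otherwise there is nothing to prove; the decomposition below will show \emph{a posteriori} that $Bu\in\hilbert$ for $B\in\cB$, so it remains only to produce a constant $C$ independent of $B\in\cB$ and of $u$.

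First I would construct a microlocal parametrix for $Q$ adapted to $K$. Since $Q\in\Psieb{s,l}(M)$ is elliptic on $K$ with $\WF'(Q)\subset U$, the usual elliptic symbol iteration in the $\ebo$-calculus---using the short exact symbol sequence, asymptotic summation, and the properties of the operator wavefront set from Theorem~\ref{BEPseud}---produces $G\in\Psieb{-s,-l}(M)$, with Schwartz kernel supported in a fixed compact set, such that
\[
GQ=\Id+R,\qquad R\in\Psieb{0,0}(M),\qquad \WF'(R)\cap K=\emptyset .
\]
Crucially $G$ and $R$ depend only on $Q$, hence are fixed once $Q$ is. Since composition in the $\ebo$-calculus is continuous in the natural topologies and preserves the fixed compact support of the kernels, composition on the right by $G$ sends the bounded family $\cB\subset\Psieb{s,l}(M)$ to a bounded family $\{BG:B\in\cB\}\subset\Psieb{0,0}(M)$, and composition on the right by $R$ sends $\cB$ to a bounded family $\{BR:B\in\cB\}\subset\Psieb{s,l}(M)$, each still with kernels supported in one fixed compact set.

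Now for $B\in\cB$ I would write $B=(BG)\,Q-BR$ and estimate the two terms. The family $\{BG\}$ is a bounded subset of $\Psieb{0,0}(M)$ with kernels in a fixed compact set, hence uniformly bounded on $\hilbert$ by the opening observation, so $\|BGQu\|_{\hilbert}\le C_1\|Qu\|_{\hilbert}$ with $C_1$ independent of $B$ and $u$. For the second term, $\WF'(B)\subset K$ while $\WF'(R)\cap K=\emptyset$, so $\WF'(BR)=\emptyset$; thus $\{BR\}$ is a bounded family in $\Psieb{-\infty,l}(M)$, which is contained in $\Psieb{0,0}(M)$ once the weight is accounted for---exactly as in the proof of Lemma~\ref{lemma:WF-to-reg}, where the hypothesis $r\le 0$ enters, and where, if the weight $l$ is positive, the microlocal edge-smooth regularity of $u$ on $U$ (which holds since $\WFebX^{m,r}(u)\cap U=\emptyset$) is used to absorb it. Hence $\{BR\}$ is uniformly bounded on $\hilbert$ and $\|BRu\|_{\hilbert}\le C_2\|u\|_{\hilbert}$. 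Adding the two estimates gives $Bu\in\hilbert$ and $\|Bu\|_{\hilbert}\le C(\|u\|_{\hilbert}+\|Qu\|_{\hilbert})$ with $C=C_1+C_2$, uniform as required.

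The main obstacle, and the only point where this goes beyond the qualitative Lemma~\ref{lemma:microlocal}, is the verification that $\{BG\}$ and $\{BR\}$ are genuinely \emph{bounded} families in $\Psieb{0,0}(M)$ uniformly over $B\in\cB$: this demands that the composition maps of the $\ebo$-calculus be continuous in the relevant symbol-seminorm / Fr\'echet topologies, and that the fixed compact supports of the kernels of $\cB$, $G$, and $R$ control the supports of the kernels of all the compositions that appear. Given the structure of the $\ebo$-calculus (conormal kernels on $M^2_{\eb}$ with the support behaviour described after Theorem~\ref{BEPseud}), this is routine but must be invoked; the remaining bookkeeping with the weight $l$ in the error term is handled just as in Lemma~\ref{lemma:WF-to-reg}.
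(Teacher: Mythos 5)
Your route is the right one and is the natural quantitative refinement of Lemma~\ref{lemma:microlocal}: fix a microlocal parametrix $G\in\Psieb{-s,-l}(M)$ for $Q$ near $K$, write $B=(BG)Q-BR$ with $R=GQ-\Id$, and use that a bounded subset of $\Psieb{0,0}(M)$ with kernels in a fixed compact set is a uniformly bounded family of operators on $\hilbert$---the observation you make at the outset is exactly the right mechanism and is the content of the hypothesis on $\hilbert$ built into the definition of $\WFebX$. The paper gives no proof of this lemma, only the pointer to the b-calculus analogue \cite[Lemma~3.13]{Vasy5}, so there is nothing to compare line by line; but this is plainly the intended argument, and for $l\le 0$ your write-up is correct and complete.

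There is, however, a gap in the treatment of the residual term when the weight $l$ is positive, a case the hypotheses as stated do not exclude (the conditions are $r\le 0$ and $r\le l$, which do not force $l\le 0$). You correctly compute $\WF'(BR)=\emptyset$, so $\{BR\}$ is a bounded family in $\Psieb{-\infty,l}(M)=x^{-l}\Psieb{-\infty}(M)$; but this is contained in $\Psieb{0,0}(M)$ only when $l\le 0$. For $l>0$ these operators are not bounded on the $L^2$-based space $\hilbert$, and your proposed remedy---that ``the microlocal edge-smooth regularity of $u$ on $U$ \dots\ is used to absorb'' the positive weight---cannot apply here, precisely because $\WF'(BR)=\emptyset$: there is no residual microlocal support in $BR$ with which to invoke $\WFebX^{m,r}(u)\cap U=\emptyset$, and the estimate $\|BRu\|_\hilbert\le C_2\|u\|_\hilbert$ you then assert is a \emph{global} boundedness claim that fails for $l>0$. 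As written, the step covering $l>0$ does not hold up. In practice this is harmless: every application of the lemma in the paper (e.g.\ in the proof of Lemma~\ref{lemma:Dirichlet-form}) first conjugates the family by the weight so that $l=0$, and there your proof is complete. But as a proof of the lemma in the generality in which it is stated, the argument needs either a different handling of the residual term for $l>0$ or an explicit restriction to $l\le 0$.
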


\subsection{Dual spaces and adjoints}
We now discuss the dual spaces. {\em For simplicity of notation
we suppress the $\loc$ and $\comp$ subscripts for
the local spaces and compact supports.} In principle this should only
be done if $M$ is compact, but, as
this aspect of the material is standard, we feel that this would only
distract from the new aspects. See for instance
\cite[Section~3]{Vasy5}
for a treatment where all the compact supports and local spaces are spelled
out in full detail.

Recall now from Appendix~\ref{appendix:functional}
that if $\hilbert$ is a dense subspace of $L^2_g$, equipped
with an inner product $\langle.,.\rangle_{\hilbert}$
in which it is a Hilbert space and the inclusion map $\iota$ into $L^2_g$ is
continuous, then there is a linear injective inclusion map
$L^2_g\to\hilbert^*$ with dense range,
namely
$$
\iota^*=\iota^\dagger\circ j\circ c:L^2_g\to\hilbert^*
$$
where $\iota^\dagger:(L^2_g(M))^*\to\hilbert^*$ is the standard adjoint map,
$j:L^2_g(M)\to L^2_g(M)^*$ the standard conjugate-linear identification of
a Hilbert space with its dual, and $c$ is pointwise
complex conjugation of functions. In particular, one has the chain
of inclusions $\hilbert\subset L^2_g(M)\subset\hilbert^*$, and one considers
$\hilbert^*$, together with these inclusions, as the dual space of $\hilbert$
with respect to $L^2_g(M)$.

\begin{definition}
For $s\geq 0$,
the dual space of $\Hes{s,l}(M)$ with respect to the $L^2_g(M)$ inner product
is denoted $\dHes{-s,-l-(f+1)}(M)$.

For $s\geq 0$,
the dual space of the closed subspace $$\Hesz{s,l}(M)\equiv\dHes{s,l}(M)$$
is denoted $\Hes{-s,-l-(f+1)}(M);$ this is a quotient space
of $\dHes{-s,-l-(f+1)}(M)$. We denote the quotient map by
$$
\rho:\dHes{-s,-l-(f+1)}(M)\to\Hes{-s,-l-(f+1)}(M).
$$
\end{definition}
\nomenclature[H]{$\dHes{s,l}$}{Edge-smooth Sobolev space of supported distributions}

The standard characterization of these distribution spaces, by doubling
across all boundary faces of $M$ except $\ef$, is still valid---see
\cite[Appendix~B.2]{Hormander3} and \cite[\S3]{Vasy5}.
Note that for all $s,l$, elements of $\dHes{-s,-l-(f+1)}(M)$ are
in particular
continuous linear functionals on $\cC$, which in turn is a dense subspace
of $\Hes{s,l}(M)$. In particular, they can be identified as elements
of the dual $\cC'$ of $\cC$. Thus, were it not for the infinite order vanishing
imposed at $\ef$ for elements of $\cC$, these would be ``supported
distributions''---hence the notation with the dot.
On the other hand, elements of $\Hes{-s,-l-(f+1)}(M)$ are only
continuous linear functionals on $\dCI(M)$ (rather than on $\cC$),
though by the Hahn-Banach
theorem can be extended to continuous linear functionals on $\cC$ in
a non-unique fashion.

If $P\in x^{-r}\Diffes{k}(M)$, then it defines a continuous linear
map
$$
P:\Hes{k,l}(M)\to \Hes{0,l-r}(M).
$$
Thus, its Banach space adjoint
(with respect to the {\em sesquilinear} dual pairing) is
a map
\begin{equation}\begin{split}\label{eq:Banach-adjoint}
&P^*:(\Hes{0,l-r}(M))^*=\dHes{0,r-l-(f+1)}(M)\to (\Hes{k,l}(M))^*
=\dHes{-k,-l-(f+1)}(M),\\
&\qquad\qquad
\langle P^* u,v\rangle=\langle u,Pv\rangle,\ u\in\dHes{0,r-l-(f+1)}(M),
\ v\in \Hes{k,l}(M).
\end{split}\end{equation}
In principle, $P^*$ depends on $l$ and $r$. However, the density of $\cC$
in these spaces shows that in fact it does not.

There is an important distinction here between considering $P^*$ as stated,
or as composed with the quotient map, $\rho\circ P^*$.

\begin{lemma}\label{lemma:Diffes-adjoints}
Suppose that $P\in x^{-r}\Diffes{k}(M)$. Then there exists a unique
$Q\in x^{-r}\Diffes{k}(M)$ such that $\rho\circ P^*=Q$.
However, in general, acting on $\cC$, $P^*\neq Q$.

If, on the other hand, $P\in x^{-r}\Diffeb{k}(M)$, then there exists a unique
$Q\in x^{-r}\Diffeb{k}(M)$ such that $P^*=Q$.
\end{lemma}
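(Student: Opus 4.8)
The plan is to reduce the whole statement to Green's formula and then to keep track of which boundary contributions survive. First I would write $P=x^{-r}P_0$ with $P_0\in\Diffes{k}(M)$ (respectively $P_0\in\Diffeb{k}(M)$); since multiplication by the real function $x^{-r}$ is self-adjoint, $P^*=P_0^*\,x^{-r}$, and since $x^r\Diffes{k}(M)x^{-r}=\Diffes{k}(M)$ by Remark~\ref{remark:Diffes-def} — and likewise $x^r\Diffeb{k}(M)x^{-r}=\Diffeb{k}(M)$, because $x^r(x\pa_x)x^{-r}=x\pa_x-r$ while the remaining generators $x\pa_y,\ z'_i\pa_{z'_i},\ \pa_{z''_j}$ of $\Vfeb(M)$ and all of $\CI(M)$ commute with $x^{\pm r}$ — it is enough to treat the case $r=0$ and then reintroduce the factor $x^{-r}$. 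Next, reading off the Riemannian density from \eqref{metric} as $x^{f}a(x,y,z)\,|dx\,dy\,dz|$ with $a$ smooth and positive up to every boundary face of $M$, a one-line computation gives $\Div_g V\in\CI(M)$ for every generator $V$ of $\Vfes(M)$ and of $\Vfeb(M)$ (for instance $\Div_g(x\pa_x)=(f+1)+x\,a^{-1}\pa_x a$, $\Div_g(\pa_{z'_j})=a^{-1}\pa_{z'_j}a$, $\Div_g(z'_j\pa_{z'_j})=1+z'_j\,a^{-1}\pa_{z'_j}a$), so that the formal adjoint $V^{t}=-V-\Div_g V$ lies in the same algebra; hence the formal adjoint $P_0^{t}$ of $P_0$ (against $L^2_g$) lies in $\Diffes{k}(M)$ (respectively $\Diffeb{k}(M)$). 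The lemma is thereby reduced to comparing the Banach-space adjoint $P_0^*$ with $P_0^{t}$.

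For the second, easier statement ($P_0\in\Diffeb{k}(M)$) I would expand $P_0$ as a finite sum of products of the generators $x\pa_x,\ x\pa_t,\ x\pa_y,\ z'_i\pa_{z'_i},\ \pa_{z''_j}$ and integrate by parts. Every boundary integral produced lies over a face $\{x=0\}=\ef$ or $\{z'_i=0\}$ and carries the corresponding normal factor $x$ or $z'_i$, which vanishes on that face, while $\pa_{z''_j}$ is tangent to all faces; so there are no boundary terms at all. Thus $\langle P_0^*u,v\rangle=\langle P_0^{t}u,v\rangle$ on dense subspaces of the relevant dual pair, i.e.\ $P^*=Q$ with $Q=P_0^{t}x^{-r}\in x^{-r}\Diffeb{k}(M)$. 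Uniqueness: if $Q_1,Q_2\in x^{-r}\Diffeb{k}(M)$ both represent $P^*$, then $Q_1-Q_2$ annihilates $\CIc(M^\circ)$, which is impossible for a nonzero differential operator, so $Q_1=Q_2$.

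For the first statement ($P_0\in\Diffes{k}(M)$) the only generator failing to be tangent to a boundary face is $\pa_{z'_j}$, which is transverse to $\{z'_j=0\}$. So integration by parts now produces boundary integrals, but only over the faces $\{z'_j=0\}$, $1\le j\le k$, all of which lie in $\pa M\setminus\ef$, and each such integral involves the restriction of the test function together with finitely many of its derivatives to that face. All of these vanish when we pair against $v\in\dCI(M)$, i.e.\ when we compose with $\rho$. Hence $\rho\circ P_0^*=\rho\circ P_0^{t}$, so $\rho\circ P^*=Q$ with $Q=P_0^{t}x^{-r}\in x^{-r}\Diffes{k}(M)$; uniqueness goes as in the previous paragraph, now using additionally that $\rho$ is injective on distributions supported in $M^\circ$ (its kernel consisting of functionals ``supported'' on $\pa M\setminus\ef$). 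That $P^*\neq Q$ on $\cC$ in general is exhibited already by $P=D_{z'_1}$ (so $k=1$, $r=0$): for $u,v\in\cC$ with nonzero restriction to $\{z'_1=0\}$, Green's formula makes $\langle P^*u,v\rangle-\langle Qu,v\rangle$ a nonzero boundary integral over $\{z'_1=0\}$, so $P^*u-Qu$ is a nonzero distribution supported on $\{z'_1=0\}\subset\pa M\setminus\ef$ — which $\rho$ of course kills, consistently with $\rho\circ P^*=Q$.

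The hard part will be making the ``no boundary terms'' versus ``boundary terms die under $\rho$'' dichotomy rigorous on the weighted dual spaces $\dHes{-s,l}(M)$ and their quotients $\Hes{-s,l}(M)$ rather than merely on smooth functions: one must check that the Green's-formula manipulations above are legitimate there, invoking density of $\cC$ (respectively $\dCI(M)$) and continuity of the operators in play on these spaces, and that the representing differential operators act between the correct pairs of weighted spaces. This bookkeeping is routine but is where the real work lies.
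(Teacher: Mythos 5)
Your proposal is correct and follows essentially the same route as the paper: integrate by parts, observe that the $\Diffeb{}$ generators are tangent to every boundary face and hence produce no boundary contributions, while the generator $\pa_{z'_j}$ of $\Vfes(M)$ is transverse to $\{z'_j=0\}$ and produces boundary integrals there, which are exactly what $\rho$ kills. Both arguments identify $Q$ as the formal $L^2_g$-adjoint. The only substantive step you flag as remaining — justifying the Green's-formula manipulations on the weighted dual spaces — is precisely what the paper dispatches in one sentence by invoking density of $\dCI(M)$ in $\Hes{0,r-l-(f+1)}(M)$ and in $\Hesz{k,l}(M)$ and continuity of the operators involved; so the gap you acknowledge is a one-line fix, not a missing idea. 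The cosmetic differences (reducing to $r=0$ by conjugation rather than the paper's induction with weight factors, writing the adjoint via explicit divergences of the generators rather than conjugating $V^\dagger$ by $Jx^f$, and exhibiting the failure of $P^*=Q$ on $\cC$ by the example $P=D_{z'_1}$ rather than by displaying the general boundary term $\sum_j\langle -\imath\,\overline{\sigma_{\eso,1}(V)(dz'_j)}\,u,v\rangle_{H_j}$) do not alter the structure of the argument.
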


\begin{proof}
For the first part we integrate by parts in $\langle u,Pv\rangle$
using $u,v\in\dCI(M)$ (noting that
$\dCI(M)$ is dense in $\Hesz{k,l}(M)$).
Thus, one can localize. In local coordinates the density
is $dg=J x^f\,dx\,dy\,dz$, with $J\in\CI(M)$, so for a vector field
$V\in\Vfes(M)$, noting the lack of boundary terms due to the infinite
order vanishing of $u$ and $v$, one has (with the first equality being
the definition of $V^*$)
\begin{equation*}\begin{split}
\langle V^*u,v\rangle&=\langle u,Vv\rangle=\int u \,\overline{Vv} \,J x^f
\,dx\,dy\,dz\\
&=\int \big(J^{-1} x^{-f}V^\dagger (Jx^f u)\big) \,\overline{v}
\,J x^f
\,dx\,dy\,dz,
\end{split}\end{equation*}
where for $V=a(xD_x)+\sum b_j (xD_{y_j})+\sum c_j D_{z_j}$, with
$a,b_j,c_j\in\CI(M)$,
$$
V^\dagger=D_x x \overline{a}+\sum x D_{y_j} \overline{b_j}
+\sum D_{z_j} \overline{c_j}\in\Diffes{1}(M).
$$
Conjugation of $V^\dagger$ by $Jx^f$ still yields an operator
in $\Diffes{1}(M)$.
This shows the existence (and uniqueness!) of the
desired $Q$, namely
$$
Q=J^{-1} x^{-f}V^\dagger Jx^f.
$$
The density of
$\dCI(M)$ in $\Hes{0,r-l-(f+1)}(M)$ now finishes the proof of the first claim
when $P=V\in\Vfes(M)$,
since this means that $\langle P^* u,v\rangle=\langle Qu,v\rangle$ for
all $u\in\Hes{0,r-l-(f+1)}(M)$, $v\in\Hesz{k,l}(M)$. The general case follows
by induction and adding weight factors
(recalling Remark~\ref{remark:Diffes-def}).

The same calculation works even if $u,v\in\cC$ provided that $V\in\Vfeb(M)$:
in this case $D_{z_j}$ is replaced by vector fields tangent to all boundary
faces, i.e.\ $D_{z''_j}$ and $z'_j D_{z'_j}$, for which there are no
boundary terms---in the second case due to the vanishing
factor $z'_j$. This proves the claim if $P\in x^{-r}\Diffeb{k}(M)$.

Note, however,
that this calculation breaks down if $u,v\in\cC$ and $V\in\Vfes(M)$:
the $D_{z'_j}$ terms
gives rise to non-vanishing boundary terms in general, namely
$$
\sum_j\int_{H_j}(-\imath)\overline{c_j}\,u\,\overline{v}\,Jx^f\,dx\,dy\,d\hat z_j
=\sum_j\langle -\imath \overline{\sigma_{\eso,1}(V)(dx_j)}u,v\rangle_{H_j},
$$
where $H_j$ is the boundary hypersurface $z'_j=0$, $d\hat z_j$ shows that
$dz'_j$ is dropped from the density, and on $H_j$ one uses the density
induced by the Riemannian density and $dz'_j$.
This completes the proof of the lemma.
\end{proof}

We now define an extension of $\Diffes{}(M)$ as follows.

\begin{definition}
Let $x^{-r}\Diffesd{k}(M)$ denote the set of Banach space adjoints of elements
of $x^{-r}\Diffes{k}(M)$ in the sense of \eqref{eq:Banach-adjoint}.
\nomenclature[D]{$\Diffesd{k}(M)$}{Adjoints of edge-smooth differential operators}

Also let $x^{-2r}\Diffess{2k}(M)$ denote
operators of the form
$$
\sum_{j=1}^N Q_j P_j,\ P_j\in x^{-r}\Diffes{k}(M),\ Q_j\in x^{-r}\Diffesd{k}(M).
$$
\nomenclature[D]{$\Diffesd{*}$}{Adjoints of edge-smooth
  differential operators}
\nomenclature[D]{$\Diffess{*}$}{Compositions of edge-smooth differential
  operators with adjoints}
For $M$ non-compact, the sum is taken to be locally finite.
\end{definition}

Thus, if $P\in x^{-2r}\Diffess{2k}(M)$, $P_j$, $Q_j$ as above, and $Q_j
=R_j^*$, $R_j\in x^{-r}\Diffes{k}(M)$, then
$$
\langle Pu,v\rangle=\sum_{j=1}^N\langle P_j u,R_j v\rangle.
$$

We are now ready to discuss Dirichlet and Neumann boundary conditions
for $P\in x^{-2r}\Diffess{2k}(M)$.

\begin{definition}\label{def:Dirichlet-Neumann}
Suppose $P\in x^{-2r}\Diffess{2k}(M)$. By the {\em Dirichlet operator}
associated
to $P$ we mean the map
$$
\rho\circ P:\Hesz{k,l}(M)\to\Hes{-k,l-2r}(M),
$$
where $\rho:\dHes{-k,l-2r}(M)\to\Hes{-k,l-2r}(M)$ is the quotient map.
For $f\in\Hes{-k,l-2r}(M)$ we say that $u\in\Hesz{k,l}(M)$ solves
{\em the Dirichlet problem} for $Pu=f$  if $\rho\circ Pu=f$.
We also say in this
case that $Pu=f$ with {\em Dirichlet boundary conditions}.

Similarly, for $f\in\dHes{-k,l-2r}(M)$ we say that $u\in\Hes{k,l}(M)$
solves the {\em Neumann problem} for $Pu=f$  if $Pu=f$. We also say in this
case that $Pu=f$ with {\em Neumann boundary conditions}. Correspondingly,
for the sake of completeness,
by the Neumann operator associated to $P$ we mean $P$ itself.
\end{definition}

\begin{remark}
For the Lorentzian metric $\tilde{g}=dt^2-g$ on $M_0$ lifted to $M$,
and with $P=d^*d$, the equation
$$
Pu=f,\ f\in\dHes{-1,l-1-(f+1)/2}(M),\ u\in\Hes{1,l+1-(f+1)/2}(M)
$$
with the Neumann boundary condition means $\ang{du,dv}_{\tilde{g}}=\ang{f,v}_{\tilde{g}}$
for all $v\in\Hes{1,-l+1-(f+1)/2}(M)$, or equivalently for all $v\in\cC$.
Away from $\ef$, this is the standard
formulation of the Neumann problem on a manifold with corners (or
indeed on a Lipschitz domain): pairing with $v$ vanishing at the
boundary and integrating by
parts yields $Pu=f$ in the
interior; pairing with $v$ nonvanishing at boundary faces other than
$\ef$ then yields vanishing of normal derivatives at those faces.

Thus near $\ef$, we impose the Neumann condition in the sense
described above on all other
boundary hypersurfaces, uniformly up to $\ef$, but there is no
condition associated to $\ef$. In particular, a Neumann solution $u$ (just
like a Dirichlet solution) on $M$ need not solve the corresponding
problem on $M_0$, where a condition is enforced even at $\cornM$: $u$
may blow up arbitrarily fast at $\ef$.
\end{remark}

\begin{remark}
As noted in Lemma~\ref{lemma:Diffes-adjoints}, when considering
the action of $\Diffes{}(M)$ on $\dCI(M)$, $\Diffes{}(M)$ is closed
under adjoints (which thus map
to $\CmI(M)$, i.e.\ extendible distributions), so one can suppress the
subscript $\sharp$ on $\Diffess{}(M)$. Thus, the subscript's main role
is to keep the treatment of the Neumann problem clear---without
such care, one would need to use quadratic forms throughout, as was done
in \cite{Vasy5}.
\end{remark}

We now turn to the action of $\Psieb{m,l}(M)$ on the dual spaces.
Note that any $A\in\Psieb{m,l}(M)$ maps $\cC$ to itself, and that
$\Psieb{m,l}(M)$ is closed under formal adjoints, i.e.\ if
$A\in\Psieb{m,l}(M)$ then there is a unique $A^*\in\Psieb{m,l}(M)$ such
that $\langle Au,v\rangle=\langle u,A^*v\rangle$ for all $u,v\in\cC$
---cf.\ $\Diffeb{}(M)$ in Lemma~\ref{lemma:Diffes-adjoints}.
We thus define
$A:\cC'\to\cC'$ by $\langle Au,v\rangle=\langle u,A^*v\rangle$, $u\in\cC'$,
$v\in\cC$. Since $\cC$ is (even sequentially) dense in $\cC'$ endowed
with the weak-* topology, this definition is in fact
the only reasonable one, and if $u\in\cC,$ the element of $\cC'$
given by this is the linear functional induced by $Au$ on $\cC$.

Next, for subspaces of $\cC'$ we have improved statements. In particular,
most relevant here, dually to Theorem~\ref{thm:Psieb-0-bd}, any $A\in
\Psieb{0}(M)$ is bounded on $\dHes{-1,l}(M)$ and on $\Hes{-1,l}(M)$.

We now turn to an extension of $\Diffes{}\Psieb{}(M)$. First,
taking adjoints in Lemma~\ref{lemma:Psieb-Vfes-comm}, we deduce:

\begin{lemma}
If $A\in\Psieb{m}(M)$ and $Q\in\Diffesd{1}(M)$, then $[A,Q]=\sum Q_j A_j+B$,
$B\in\Psieb{m}(M)$, $A_j\in\Psieb{m-1}(M)$, $Q_j\in\Diffesd{1}(M)$.

Similarly, $[A,Q]=\sum A'_j Q'_j+B'_j$, $B'\in\Psieb{m}(M)$,
$A'_j\in\Psieb{m-1}(M)$, $Q'_j\in\Diffesd{1}(M)$.
\end{lemma}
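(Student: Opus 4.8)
The plan is to obtain this purely formally from Lemma~\ref{lemma:Psieb-Vfes-comm} by passing to adjoints, so the only real content is keeping track of which side the pseudodifferential factors sit on. One works throughout with operators acting between the dual spaces (equivalently, on $\cC'$), where $T\mapsto T^*$ is an anti-homomorphism, $(A^*)^*=A$ for $A\in\Psieb{m}(M)$, and $\Psieb{m}(M)$ is closed under adjoints, so $A^*\in\Psieb{m}(M)$. Writing $Q=P^*$ with $P\in\Diffes{1}(M)$, one has
\[
[A^*,P]^*=(A^*P)^*-(PA^*)^*=P^*A-AP^*=-[A,P^*]=-[A,Q],
\]
so it suffices to produce suitable decompositions of $[A^*,P]$ and take one more adjoint.

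First I would extend Lemma~\ref{lemma:Psieb-Vfes-comm} from $\Vfes(M)$ to all of $\Diffes{1}(M)$. Writing an element of $\Diffes{1}(M)$ as $a_0+\sum_k a_kV_k$ with $a_k\in\CI(M)$, $V_k\in\Vfes(M)$, and using that $[A^*,\cdot]$ is a derivation, that $[A^*,a]\in\Psieb{m-1}(M)$ for $a\in\CI(M)$, that $\Vfes(M)$ and $\Psieb{\cdot}(M)$ are $\CI(M)$-modules, together with one further application of Lemma~\ref{lemma:Psieb-Vfes-comm} at order $m-1$ to move the factors $[A^*,a_k]\in\Psieb{m-1}(M)$ past $V_k$, one obtains both normal forms
\[
[A^*,P]=\sum_j V_jB_j+C=\sum_j B'_jV'_j+C',
\]
with $V_j,V'_j\in\Vfes(M)$, $B_j,B'_j\in\Psieb{m-1}(M)$, $C,C'\in\Psieb{m}(M)$, the sums locally finite.

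Finally I would take adjoints. From $[A^*,P]=\sum_j B'_jV'_j+C'$ and the identity above,
\[
[A,Q]=-[A^*,P]^*=-\sum_j (V'_j)^*(B'_j)^*-(C')^*,
\]
and since $(V'_j)^*\in\Diffesd{1}(M)$, $(B'_j)^*\in\Psieb{m-1}(M)$, $(C')^*\in\Psieb{m}(M)$, this is exactly the first asserted form $[A,Q]=\sum Q_jA_j+B$. Symmetrically, taking adjoints of $[A^*,P]=\sum_j V_jB_j+C$ and using that $\Psieb{m-1}(M)$ and $\Psieb{m}(M)$ are closed under adjoints yields $[A,Q]=\sum A'_jQ'_j+B'$ with $A'_j\in\Psieb{m-1}(M)$, $Q'_j\in\Diffesd{1}(M)$, $B'\in\Psieb{m}(M)$ — the second form. (Note that flipping the order of products under $(\cdot)^*$ is exactly why the first form of the conclusion comes from the \emph{second} form of the extended Lemma~\ref{lemma:Psieb-Vfes-comm}, and vice versa.)

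The only point requiring a moment's care — and the closest thing to an obstacle — is the domain/range bookkeeping for the adjoints: one must observe that the adjoint of $V\in\Vfes(M)$ appearing here is precisely the Banach space adjoint defining $\Diffesd{1}(M)$ via \eqref{eq:Banach-adjoint}, so the boundary contributions at $\pa M\setminus\ef$ recorded in Lemma~\ref{lemma:Diffes-adjoints} are automatically absorbed into $\Diffesd{1}(M)$ and do not affect the bilinear identities above. Once this is in place the argument is routine, requiring no new analytic input beyond Lemma~\ref{lemma:Psieb-Vfes-comm} and closedness of $\Psieb{\cdot}(M)$ under adjoints.
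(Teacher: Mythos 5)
Your argument is correct and is essentially the paper's own: both pass to adjoints, use the identity $[A,Q]=-[A^*,Q^*]^*$ (the paper phrases it via the bilinear pairing $\langle[A,Q]u,v\rangle=\langle u,[Q^*,A^*]v\rangle$, you phrase it operator-theoretically), apply Lemma~\ref{lemma:Psieb-Vfes-comm} extended from $\Vfes(M)$ to $\Diffes{1}(M)$, and note that taking the adjoint reverses the order of the factors so that the first normal form of the conclusion comes from the second form of Lemma~\ref{lemma:Psieb-Vfes-comm} and vice versa. Your explicit spelling out of the extension to $\Diffes{1}(M)$ and of the bookkeeping around Lemma~\ref{lemma:Diffes-adjoints} is a bit more careful than the paper's one-line remark, but the route is the same.
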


\begin{proof}
The proof is an exercise in duality; we only spell it out to emphasize our
definitions.
We have for $u\in\cC'$, $v\in\cC$,
$$
\langle [A,Q]u,v\rangle=\langle (AQ-QA)u,v\rangle
=\langle u,(Q^*A^*-A^*Q^*)v\rangle=\langle u,[Q^*,A^*]v\rangle
$$
where $Q^*\in\Diffes{1}(M)$, $A^*\in\Psieb{M}$. Thus, by
Lemma~\ref{lemma:Psieb-Vfes-comm} (applied with $\Vfes(M)$ replaced
by $\Diffes{1}(M)$), there exist $\tilde A_j\in\Psieb{m-1}(M)$,
$\tilde B\in\Psieb{m}(M)$, $\tilde Q_j\in\Diffes{1}(M)$ such that
$[Q^*,A^*]=-[A^*,Q^*]=\sum \tilde Q_j\tilde A_j+\tilde B$.
Thus,
$$
\langle [A,Q]u,v\rangle=\Big\langle u,\Big(\sum_j \tilde Q_j\tilde A_j+\tilde B\Big)v\Big\rangle
=\Big\langle \Big(\sum_j\tilde A_j^* \tilde Q_j^*+\tilde B^*\Big)u,v\Big\rangle,
$$
with $\tilde A_j^*\in\Psieb{m-1}(M)$, $\tilde B^*\in\Psieb{m}(M)$,
$\tilde Q_j^*\in\Diffesd{1}(M)$. This proves the second half of the lemma.
The first half is proved similarly, using the second half of the
statement of Lemma~\ref{lemma:Psieb-Vfes-comm} rather than its first half.
\end{proof}

In fact, the analogue of Lemma~\ref{lemma:dzcomm} also holds with $D_{z'_j}$
replaced by $D_{z'_j}^*\in\Diffesd{1}(M)$:

\begin{lemma}\label{lemma:dzcomm-2}
Let $A \in \Psieb{m,l}(M),$ $a=\sigma_{\ebo,m}(A)$.  Then
$$
\imath[x^{-1}D_{z'_j}^*, A] = A_1 x^{-1} D_{z'_j}^* + x^{-1} A_0
$$
where $A_0\in\Psieb{m,l}(M)$, $A_1\in\Psieb{m-1,l}(M)$,
$$
\sigma_{\ebo,m}(A_0)=\frac{\pa a}{\pa z'_j},
\quad \sigma_{\ebo,m-1}(A_1)=\frac{\pa a}{\pa
  \zeta'_j} + \frac{\pa a}{\pa \xi}.
$$
\end{lemma}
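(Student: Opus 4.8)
The plan is to transcribe the proof of Lemma~\ref{lemma:dzcomm} with $D_{z'_j}$ replaced throughout by its Banach adjoint $D_{z'_j}^*\in\Diffesd{1}(M)$. The one genuinely new ingredient needed is a substitute for the radial vector field $\tilde Q=z'_jD_{z'_j}\in\Diffeb{1}(M)$ used there: an operator in $\Diffeb{1}(M)$ with \emph{scalar} normal operator at $z'_j=0$ that equals $z'_j$ times the operator being commuted. I claim that $z'_jD_{z'_j}^*$ does the job. Indeed, by Lemma~\ref{lemma:Diffes-adjoints} and its proof, acting on $\cC$ the operator $D_{z'_j}^*$ agrees with $Q=J^{-1}x^{-f}\,D_{z'_j}\,Jx^f\in\Diffes{1}(M)$, which is of the form $D_{z'_j}+g$ with $g\in\CI(M)$, up to the boundary contribution displayed at the end of that proof; since $D_{z'_j}$ has no $D_{z'_i}$-component for $i\neq j$, that contribution lives only at $H_j=\{z'_j=0\}$, with Schwartz kernel supported in $H_j\times H_j$, and is therefore annihilated upon multiplying on the left by the defining function $z'_j$. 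Hence $z'_jD_{z'_j}^*=z'_jQ=z'_jD_{z'_j}+z'_jg\in\Diffeb{1}(M)$, and since $N_j(z'_jg)=0$ its normal operator at $z'_j=0$ equals $N_j(z'_jD_{z'_j})$, the Mellin-type radial operator, which is scalar.

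With $\tilde Q:=z'_jD_{z'_j}^*$ the remaining steps mirror the proof of Lemma~\ref{lemma:dzcomm}. For $A\in\Psieb{m}(M)$ one has $N_j([\tilde Q,A])=[N_j(\tilde Q),N_j(A)]=0$, so $[\tilde Q,A]\in z'_j\Psieb{m}(M)$, and since $D_{z'_j}^*=(z'_j)^{-1}\tilde Q$ away from $z'_j=0$ the exact analogue of \eqref{eq:Dzp-edge-b-commutator} holds,
$$
[D_{z'_j}^*,A]=(z'_j)^{-1}[\tilde Q,A]+\big([(z'_j)^{-1},A]z'_j\big)D_{z'_j}^*,
$$
with first summand in $\Psieb{m}(M)$ and $[(z'_j)^{-1},A]z'_j\in\Psieb{m-1}(M)$. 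Writing $[x^{-1}D_{z'_j}^*,A]=[x^{-1},A]D_{z'_j}^*+x^{-1}[D_{z'_j}^*,A]$ and noting that $x$ is independent of the $z$-variables, so that $x^{\pm1}$ commutes with $D_{z'_j}$ and hence with $D_{z'_j}^*$, one moves the factors $x^{\pm1}$ through the $\Psieb{}$-factors and collects terms exactly as in Lemma~\ref{lemma:dzcomm} to obtain $\imath[x^{-1}D_{z'_j}^*,A]=A_1x^{-1}D_{z'_j}^*+x^{-1}A_0$ with $A_0\in\Psieb{m}(M)$ and $A_1\in\Psieb{m-1}(M)$. The weighted case $A\in\Psieb{m,l}(M)$ follows by writing $A=x^{-l}A'$ with $A'\in\Psieb{m}(M)$ and using that $x^{-1}D_{z'_j}^*$ commutes with $x^{-l}$.

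The principal-symbol formulas are identical to those in Lemma~\ref{lemma:dzcomm}: because $z'_jD_{z'_j}^*$ and $z'_jD_{z'_j}$ have the same $\ebo$-principal symbol $\zeta'_j$ (they differ by the order-zero operator $z'_jg$), the computations $\imath\sigma_{\ebo,m}((z'_j)^{-1}[\tilde Q,A])=\pa a/\pa z'_j$, $\imath\sigma_{\ebo,m-1}([(z'_j)^{-1},A]z'_j)=\pa a/\pa\zeta'_j$ and $\imath\sigma_{\ebo,m-1}([x^{-1},A])=x^{-1}\pa a/\pa\xi$ are unchanged, yielding $\sigma_{\ebo,m}(A_0)=\pa a/\pa z'_j$ and $\sigma_{\ebo,m-1}(A_1)=\pa a/\pa\zeta'_j+\pa a/\pa\xi$. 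The only point requiring any care --- and the main obstacle --- is the claim $z'_jD_{z'_j}^*\in\Diffeb{1}(M)$ with scalar normal operator, i.e.\ the bookkeeping showing that the boundary distribution in the Banach adjoint of $D_{z'_j}$ is killed by the factor $z'_j$; everything else is a transcription of the earlier argument. (Alternatively one could deduce the lemma from Lemma~\ref{lemma:dzcomm} by taking adjoints, using $(x^{-1}D_{z'_j})^*=x^{-1}D_{z'_j}^*$, $\imath[x^{-1}D_{z'_j}^*,A]=(\imath[x^{-1}D_{z'_j},A^*])^*$ and $\sigma_{\ebo,m}(A^*)=\overline a$, but that places the differential factor on the wrong side and needs one further lower-order commutation to move it back.)
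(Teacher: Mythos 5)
Your proof is correct, and it takes a genuinely different (and arguably cleaner) route than what the paper implicitly intends. The paper states this lemma without proof, immediately after Lemma~\ref{lemma:Diffes-adjoints} and the duality-based proof of the preceding lemma about $\Diffesd{1}(M)$; the expected argument is therefore by Banach-space adjoints: apply Lemma~\ref{lemma:dzcomm} to $A^*$ (which has $\ebo$-symbol $\overline a$), take adjoints, and then perform one further lower-order commutation to move the factor $x^{-1}D_{z'_j}^*$ from the left of $(A'_1)^*$ to the right (by an induction on order); you correctly sketch this as the alternative at the end and identify exactly where the extra commutation is needed. Your main argument instead reduces directly to the hypotheses of the original Lemma~\ref{lemma:dzcomm} by observing that $\tilde Q := z'_j D_{z'_j}^*$ is an honest element of $\Diffeb{1}(M)$: the boundary contribution to $D_{z'_j}^*$ (the layer on $H_j$ displayed at the end of the proof of Lemma~\ref{lemma:Diffes-adjoints}) is a measure supported on $\{z'_j=0\}$ and hence killed by the factor $z'_j$, so $z'_j D_{z'_j}^*$ agrees with $z'_j Q = z'_j D_{z'_j}+z'_j g\in\Diffeb{1}(M)$ (with $g\in\CI(M)$), and $N_j(z'_j g)=0$ shows that the normal operator remains scalar. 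After that the proof is a verbatim transcription of the one for Lemma~\ref{lemma:dzcomm}, since the displayed commutator identity is purely algebraic in $\tilde Q=z'_jD_{z'_j}^*$ and does not literally require inverting $z'_j$. Each route has its price: yours requires the (correctly executed) bookkeeping that $z'_j$ annihilates the boundary distribution; the duality route avoids that bookkeeping but must pay the extra commutation to put the composite in the stated left-normal form. Your one-line dismissal of the duality route slightly understates its viability --- the extra commutation is a routine induction in $m$ --- but the observation that $z'_j D_{z'_j}^*\in\Diffeb{1}(M)$ is a nice structural fact that makes the lemma look less mysterious, and your computation of the symbols, including the weight manipulations via $[x^l\Psieb{k}(M),x^{l'}\Psieb{k'}(M)]\subset x^{l+l'}\Psieb{k+k'-1}(M)$, is right.
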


We thus make the following definition:

\begin{definition}
Let
$$\Diffess{k} \Psieb{m} (M) = \Big\{\sum A_\alpha B_\beta: A_\alpha \in
\Diffess{k}(M),\ B_\beta \in \Psieb{m}(M)\Big\}.$$
\end{definition}

Using Proposition~\ref{prop:DiffPsi-alg} and duality, as in the previous
lemma, we deduce
the following:

\begin{proposition}
$\Diffess{k}\Psieb{m,l}(M)$ is a $\Psieb{0}(M)$-bimodule, and
\begin{equation}\begin{split}
&P\in\Diffess{k}\Psieb{m,l}(M),\ A\in\Psieb{s,r}(M)\Rightarrow\\
&PA,AP\in\Diffess{k}\Psieb{m+s,l+r}(M),\ [P,A]\in\Diffess{k}\Psieb{m+s-1,l+r}(M).
\end{split}\end{equation}
\end{proposition}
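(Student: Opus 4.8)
The plan is to run the argument in the proof of Proposition~\ref{prop:DiffPsi-alg} again, but now allowing the differential factors to lie in $\Diffesd{}(M)$; the passage between the plain and the daggered versions is made by the same duality device as in the $\Diffesd{1}(M)$-analogue of Lemma~\ref{lemma:Psieb-Vfes-comm}. First reduce to $l=r=0$: conjugation by powers of $x$ preserves $\Diffes{k}(M)$ (Remark~\ref{remark:Diffes-def}), hence also $\Diffesd{k}(M)$ and $\Diffess{k}(M)$, and it preserves $\Psieb{m}(M)$, so it carries the weighted statement to the unweighted one. Write a general $P\in\Diffess{k}\Psieb{m}(M)$ as $P=\sum_\alpha D_\alpha B_\alpha$ with $D_\alpha\in\Diffess{k}(M)$, $B_\alpha\in\Psieb{m}(M)$. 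Since the bracket is a derivation in each slot, $B_\alpha A\in\Psieb{m+s}(M)$, and $[B_\alpha,A]\in\Psieb{m+s-1}(M)$ by the symbol formula for commutators in $\Psieb{*}(M)$, all three assertions follow from the single statement: for $A\in\Psieb{s}(M)$ and $D\in\Diffess{k}(M)$ \emph{purely differential-sharp},
\[
[A,D]\in\Diffess{k}\Psieb{s-1}(M).
\]
Indeed $DA\in\Diffess{k}\Psieb{s}(M)$ is immediate from the definition, whence $AD=DA+[A,D]\in\Diffess{k}\Psieb{s}(M)$; then $[P,A]=\sum_\alpha D_\alpha[B_\alpha,A]+\sum_\alpha[D_\alpha,A]B_\alpha$ and $AP=\sum_\alpha(D_\alpha A+[A,D_\alpha])B_\alpha$, together with the trivial inclusion $\Diffess{k}\Psieb{s}(M)\cdot\Psieb{m}(M)\subseteq\Diffess{k}\Psieb{m+s}(M)$, give the module, composition, and commutator claims.

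\textbf{Reduction to generators.} Write $D=\sum_j Q_jP_j$ with $Q_j\in\Diffesd{k}(M)$, $P_j\in\Diffes{k}(M)$, and expand $[A,Q_jP_j]=[A,Q_j]P_j+Q_j[A,P_j]$. The term $[A,P_j]$ lies in $\Diffes{k}\Psieb{s-1}(M)$ by Proposition~\ref{prop:DiffPsi-alg}, applied with $P_j\in\Diffes{k}\Psieb{0}(M)$ and $A\in\Diffes{0}\Psieb{s}(M)$. So it remains only to prove the claim $[A,Q]\in\Diffesd{k}\Psieb{s-1}(M)$ for $Q\in\Diffesd{k}(M)$; granting this, and using that $\Diffes{*}\Psieb{*}(M)$ is an algebra (so that $\Psieb{s-1}(M)\,\Diffes{k}(M)\subseteq\Diffes{k}\Psieb{s-1}(M)$), both $[A,Q_j]P_j$ and $Q_j[A,P_j]$ land in $\Diffesd{k}(M)\,\Diffes{k}(M)\,\Psieb{s-1}(M)\subseteq\Diffess{2k}\Psieb{s-1}(M)$, which has the same differential-sharp order as $D$ (I leave the even-order bookkeeping of $\Diffess{}(M)$ implicit).

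\textbf{The daggered commutator via duality.} By definition $Q=R^*$ for some $R\in\Diffes{k}(M)$, the adjoint being taken in the sense of \eqref{eq:Banach-adjoint}. Since $\Psieb{s}(M)$ is closed under formal adjoints, testing against $\cC$ and extending by density gives $[A,R^*]=(RA^*-A^*R)^*=([R,A^*])^*$. By Proposition~\ref{prop:DiffPsi-alg}, $[R,A^*]\in\Diffes{k}\Psieb{s-1}(M)$, and since $(PB)^*=B^*P^*$ with $B^*\in\Psieb{s-1}(M)$ and $P^*\in\Diffesd{k}(M)$, we get $[A,Q]\in\Psieb{s-1}(M)\,\Diffesd{k}(M)$ --- a pseudodifferential factor on the wrong side. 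It is moved across by the $\Diffesd{1}(M)$-analogue of Lemma~\ref{lemma:Psieb-Vfes-comm}: for $Q'\in\Diffesd{1}(M)$ and $C\in\Psieb{s-1}(M)$ one has $CQ'=Q'C+[C,Q']$ with $[C,Q']=\sum_j Q'_jC_j+B'$, every summand lying in $\Diffesd{1}\Psieb{s-1}(M)$, so $\Psieb{s-1}(M)\,\Diffesd{1}(M)\subseteq\Diffesd{1}\Psieb{s-1}(M)$; iterating, and using $(\Diffesd{1}(M))^k\subseteq\Diffesd{k}(M)$, we obtain $\Psieb{s-1}(M)\,\Diffesd{k}(M)\subseteq\Diffesd{k}\Psieb{s-1}(M)$. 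Note that here we need only \emph{relocation}, not an order gain, so the full-order leftover term $B'$ is harmless. Thus $[A,Q]\in\Diffesd{k}\Psieb{s-1}(M)$, completing the claim and the proposition.

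\textbf{Main obstacle.} The substance is entirely in the third step: one must check the identity $[A,R^*]=([R,A^*])^*$ at the level of the Banach-space adjoints of \eqref{eq:Banach-adjoint} rather than merely formally --- done by testing against $\cC$, where all adjoints are the naive ones and $\Psieb{*}(M)$ is closed under formal adjoint, and then invoking density --- and one must track which side the pseudodifferential factors occupy, since $\Diffess{k}\Psieb{m}(M)$ by definition carries the differential-sharp factor on the left, which is exactly what forces the relocation lemma. Getting the one-order gain in the commutator for a $\Diffesd{}(M)$-generator requires routing through Proposition~\ref{prop:DiffPsi-alg} via the adjoint, since the crude commutation lemma applied directly to a single $\Diffesd{1}(M)$-generator leaves a term of full $\ebo$-order that only becomes tractable once the whole commutator is recognized as an adjoint. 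Everything else is the routine filtered-module algebra already present in the proof of Proposition~\ref{prop:DiffPsi-alg}.
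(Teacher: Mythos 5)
Your proof is correct and follows the same route the paper indicates: handle the $\Diffes{}$-factors with Proposition~\ref{prop:DiffPsi-alg}, handle the $\Diffesd{}$-factors by dualizing to the $\Diffes{}$-case (exactly the device of the $\Diffesd{1}$-lemma preceding the proposition), and relocate pseudodifferential factors with Lemma~\ref{lemma:Psieb-Vfes-comm}. The only blemish is the half-order slip you flag yourself (taking $Q_j\in\Diffesd{k}$, $P_j\in\Diffes{k}$ puts $D$ in $\Diffess{2k}(M)$, not $\Diffess{k}(M)$); with $k/2$ in each factor the bookkeeping closes and the argument stands.
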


\subsection{Domains}

In this section, we discuss the relationship between Dirichlet and
Neumann form domains of $\Lap$ and the scales of weighted Sobolev
spaces that
we have introduced.  First, we identify the Dirichlet quadratic form
domain in terms of the edge-smooth Sobolev spaces.

The Friedrichs form domain of $\Lap$ with Dirichlet
boundary conditions on $X_0$ is
$$
H^1_0(X_0),
$$
also denoted by $\dot H^1(X_0)$ (see \cite[Appendix B.2]{Hormander3});
we may also view this space as the completion of $\CI_c(X_0)$ in the $H^1(X_0)$-norm,
$$
\|u\|_{H^1(X_0)}=\|u\|_{L^2_{g_0}(X_0)}+\|du\|_{L^2_{g_0}(X_0;T^* X_0)}.
$$
Equivalently in terms of ``doubling'' $X_0$ across all boundary
hypersurfaces, $H^1_0(X_0)$ consists
of $H^1$-functions on the ``double''
supported in $X_0$.

\begin{lemma}
On $\dCI(X)=\beta^*\dCI(X_0)$, the norms
$$
\|u\|_{H^1(X_0)}=\big(\|u\|_{L^2_{g_0}}^2+\|du\|_{L^2_{g_0}}^2\big)^{1/2}
$$
and 
$$
\|u\|_{\Hes{1,1-(f+1)/2}(X)}
$$
are equivalent.
\end{lemma}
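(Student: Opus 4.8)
The plan is to transport everything to the blown-up space via $\beta$ and reduce the equivalence to a Hardy-type inequality near the front face, the one genuinely non-formal point being the behaviour in low codimension. First I would use that $g=\beta^{*}g_{0}$ and that $\beta$ is a diffeomorphism off the measure-zero set $\efspace$: pullback is then an isometry $L^{2}_{g_{0}}(X_{0})\to L^{2}_{g}(X)$, and $|d(\beta^{*}u_{0})|_{g}^{2}=\beta^{*}\!\bigl(|du_{0}|_{g_{0}}^{2}\bigr)$, so that with $u=\beta^{*}u_{0}$ one has $\|u_{0}\|_{H^{1}(X_{0})}^{2}=\|u\|_{L^{2}_{g}}^{2}+\int_{X}|du|_{g}^{2}\,dg$. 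Next I would unwind the metric: in the coframe $dx,\ dy_{k},\ x\,dz_{j}$ the form \eqref{metric} becomes $\operatorname{diag}(1,h(y),k(x,y,z))+O(x)$, a smooth, uniformly positive-definite symmetric matrix on compact sets, while $dg=J\,x^{f}\,dx\,dy\,dz$ with $0<c\le J\le C$. Consequently, on any compact chart,
\[
|du|_{g}^{2}\asymp|\pa_{x}u|^{2}+\sum_{k}|\pa_{y_{k}}u|^{2}+x^{-2}\sum_{j}|\pa_{z_{j}}u|^{2},
\]
and since $1,\ xD_{x},\ xD_{y_{k}},\ D_{z_{j}}$ generate $\Diffes{1}(X)$ over $\CI(X)$ (and $\Diffes{1}(X)$ is a finitely generated $\CI(X)$-module, so a partition of unity reduces to such charts), this gives $\|u\|_{\Hes{1,1-(f+1)/2}(X)}^{2}\asymp\|x^{-1}u\|_{L^{2}_{g}}^{2}+\int_{X}|du|_{g}^{2}\,dg$. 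Comparing the two expressions, and using that $x$ is bounded so $\|u\|_{L^{2}_{g}}\le C\|x^{-1}u\|_{L^{2}_{g}}$, the inequality $\|u_{0}\|_{H^{1}(X_{0})}\le C\|u\|_{\Hes{1,1-(f+1)/2}(X)}$ is immediate, and the whole lemma is reduced to the Hardy estimate $\|x^{-1}u\|_{L^{2}_{g}}^{2}\le C\bigl(\|u\|_{L^{2}_{g}}^{2}+\int_{X}|du|_{g}^{2}\,dg\bigr)$ for $u\in\dCI(X)$.

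For this estimate I would take $\chi\in\CI(X)$ with $\chi\equiv1$ near $\efspace$ and $\supp\chi\subset\{x<x_{0}\}$; off $\supp(1-\chi)$ we have $x\gtrsim1$, so $\|x^{-1}(1-\chi)u\|_{L^{2}_{g}}\lesssim\|u\|_{L^{2}_{g}}$. On $\supp\chi$ I would work in the global product neighbourhood $[0,\infty)_{x}\times\cornX\times Z$ of $\efspace$ and use that $u=\beta^{*}u_{0}$ with $u_{0}\in\dCI(X_{0})$: because $u_{0}$ vanishes to infinite order at all boundary hypersurfaces of $X_{0}$ (in particular the ones meeting $\cornX$), $u$ vanishes to infinite order at the lifts $\{z_{i}=0\}$, so for each fixed $(x,y)$ with $x>0$ the slice $z\mapsto(\chi u)(x,y,z)$ lies in $\dCI(Z)\subset H^{1}_{0}(Z)$. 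The Poincaré (first Dirichlet eigenvalue) inequality then gives
\[
\int_{Z}|(\chi u)(x,y,\cdot)|^{2}\,dV_{k(x,y,\cdot)}\le\lambda_{*}^{-1}\int_{Z}|\nabla_{Z}(\chi u)(x,y,\cdot)|_{k(x,y,\cdot)}^{2}\,dV_{k(x,y,\cdot)},
\]
with $\lambda_{*}>0$ uniform over the compact range of $(x,y)$, since the first Dirichlet eigenvalue of a smoothly varying uniformly elliptic metric on the fixed compact manifold-with-corners $Z$ is positive and depends continuously on the metric. Multiplying by $x^{-2}$ and the remaining $(x,y)$-density, integrating, and using $|du|_{g}^{2}\gtrsim x^{-2}|\nabla_{Z}u|_{k}^{2}$ near $\efspace$ (from the coframe computation), I obtain $\|x^{-1}\chi u\|_{L^{2}_{g}}^{2}\lesssim\int_{X}|d(\chi u)|_{g}^{2}\,dg\lesssim\|u\|_{L^{2}_{g}}^{2}+\int_{X}|du|_{g}^{2}\,dg$, the last step from $|d(\chi u)|_{g}\le|\chi\,du|_{g}+|(d\chi)u|_{g}$ and boundedness of $d\chi$. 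Adding the two pieces completes the proof.

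The main obstacle is precisely this Hardy step, and the point to get right is that the elementary integration-by-parts argument in $x$, which would already suffice when $f\ge2$ (the boundary term at $x=0$ then vanishing automatically from $x^{f-1}\to0$), is exactly critical when $f=1$ and must instead be fed by the positivity of the Dirichlet Laplacian on the cross-section $Z$; that positivity is available only because elements of $\dCI(X)=\beta^{*}\dCI(X_{0})$ vanish on $\partial Z$, i.e.\ on the lifts of the boundary hypersurfaces of $X_{0}$ meeting $\cornX$. Everything else — the isometry properties of $\beta^{*}$, the reduction of \eqref{metric} to block-diagonal-plus-$O(x)$ form in the rescaled coframe, and the bookkeeping of the density $J\,x^{f}\,dx\,dy\,dz$ — is routine.
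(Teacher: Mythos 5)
Your proof is correct and follows essentially the same route as the paper's: both reduce the only non-formal direction to the estimate $\|x^{-1}u\|_{L^2_g}\lesssim\|u\|_{H^1(X_0)}$ near the front face, and both establish this by applying the Poincar\'e (Friedrichs) inequality on the cross-section $Z$ slice by slice in $(x,y)$, weighted by $x^{f-2}\,dx\,dy$, the Dirichlet condition on $\pa Z$ coming from $u=\beta^*u_0$ with $u_0\in\dCI(X_0)$. One small imprecision in your closing parenthetical remark: since $u\in\dCI(X)$ already vanishes to infinite order at $x=0$, the boundary term in the $x$-integration-by-parts vanishes for every $f$, and the true reason the elementary Hardy argument fails at $f=1$ is that the constant $(f-1)^{-1}$ it produces degenerates, not that a boundary contribution survives; the Poincar\'e-in-$Z$ argument you (and the paper) use avoids this and works uniformly in $f$.
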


\begin{proof}
  Multiplication by elements of $\CI(X_0)$ is bounded with respect to
  both norms (with respect to $\Hes{1,1-(f+1)/2}(X)$ even $\CI(X)$ is
  bounded), so one can localize in $X_0$, or equivalently in $X$ near
  a fiber $\beta^{-1}(p)$, $p\in W$, of $\ef$, and assume that $u$ is
  supported in such a region.

Elements of $\Vf(X_0)$ lift under $\beta$ to span $x^{-1}\Vfes(X)$ as
a $\CI(X)$-module by \eqref{eq:Vf-lift-to-M}. In particular, merely
since $\beta^* \Vf (X_0) \subset x^{-1}\Vfes(X),$ we
obtain\footnote{We use the notation that $a \lesssim b$ if there
exists $C>0$ such that $a\leq Cb$. Usually $a$ and $b$ depend on various
quantities, e.g.\ on $u$ here, and $C$ is understood to be independent
of these quantities.}
\begin{equation}\label{eq:H10-Hes}
\|u\|_{H^1_0(X_0)}\lesssim \|u\|_{\Hes{1,1-(f+1)/2}(X)},\ u\in\dCI(X).
\end{equation}
We now prove the reverse inequality. By the spanning property, we have
\begin{equation}\label{eq:Vfes-H10}
\|x^{-1}A u\|_{L^2_g}\lesssim \|u\|_{H^1(X_0)}
\end{equation}
for any $A\in\Vfes(X)$ as $\CI(X)$ is bounded acting by multiplication
on $L^2_g(X)=L^2_{g_0}(X_0)$.
As $\Vfes(X)$ together with the identity operator generates
$\Diffes{1}(X)$, we only need to prove
$$
\|x^{-1}u\|_{L^2_g(X)}\lesssim \|u\|_{H^1(X_0)},
$$
for $u\in\dCI(X)$ supported near a fiber $\beta^{-1}(p)$, $p\in Y$, of
$\tilde Y$.
However, this follows easily from identifying a neighborhood of
$\beta^{-1}(p)$ with $[0,\ep)_x\times O_y\times Z_z$,
where $O\subset\RR^{n-f-1},$
and using the Poincar\'e inequality in $Z$, namely that
$$
\|u(x,y,.)\|_{L^2(Z)}\leq C \|(d_Z u)(x,y,.)\|_{L^2(Z)},\quad u \in \dCI(X_0).
$$
Multiplying the square of both sides by $x^{-2+f}$ and integrating
in $x,y$, yields
$$
\|x^{-1}u(x,y,.)\|_{L^2_g(X)}\leq C \|(x^{-1}d_Z u)(x,y,.)\|_{L^2_g(X)}
\leq C'\|u\|_{H^1_0(X)}
$$
by \eqref{eq:Vfes-H10}.
\end{proof}

In view of the definition of $\Hesz{1,1-(f+1)/2}(X)$ as the closure
of $\dCI(X)$ in $\Hes{1,1-(f+1)/2}(X)$, we immediately deduce:

\begin{proposition}\label{proposition:dirichletdomain}
The Dirichlet form domain of $\Lap$ is given by
\begin{equation}\label{eq:domain-identification}
H^1_0(X_0)=\Hesz{1,1-(f+1)/2}(X)
\end{equation}
in the strong sense that the natural (up to equivalence)
Hilbert space norms on the two
sides are equivalent. In particular, for $u\in H^1_0(X_0)$, we have
\begin{equation*}
\|x^{-1}Q u\|_{L^2_g}\leq C\|u\|_{H^1(X_0)}
\end{equation*}
for all $Q\in\Diffes{1}(X)$.
\end{proposition}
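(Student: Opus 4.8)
The plan is to obtain the identification \eqref{eq:domain-identification} as an essentially immediate consequence of the preceding lemma, via a completion argument, and then to read off the quantitative bound from it.

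First I would recall the two definitions involved. The Friedrichs (Dirichlet) form domain of $\Lap$ is $H^1_0(X_0)$, which by definition is the completion of $\CIc(X_0^\circ)$ in the $H^1(X_0)$-norm; since every element of $\dCI(X_0)$ has vanishing trace on $\pa X_0$, we have $\CIc(X_0^\circ)\subset\dCI(X_0)\subset H^1_0(X_0)$, so $\dCI(X_0)$ is in fact dense in $H^1_0(X_0)$ for the $H^1(X_0)$-norm. On the other hand, $\Hesz{1,1-(f+1)/2}(X)$ is by definition the closure of $\dCI(X)$ in $\Hes{1,1-(f+1)/2}(X)$. The pullback $\beta^*$ identifies $\dCI(X_0)$ with $\dCI(X)$ (as recorded in the statement of the preceding lemma), so the two Hilbert spaces in question are, respectively, the completions of one and the same vector space $\dCI(X_0)\cong\dCI(X)$ with respect to the two norms $\|\cdot\|_{H^1(X_0)}$ and $\|\cdot\|_{\Hes{1,1-(f+1)/2}(X)}$.

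Next I would invoke the preceding lemma, which asserts precisely that these two norms are equivalent on this common space. Since the completion of a normed vector space is determined, up to isomorphism fixing the dense subspace, by the equivalence class of the norm, the two completions coincide: $H^1_0(X_0)=\Hesz{1,1-(f+1)/2}(X)$ with uniformly equivalent Hilbert-space norms, the inclusions into $L^2_g(X)=L^2_{g_0}(X_0)$ matching on the dense subspace and hence everywhere. Combined with the standard identification of the Dirichlet form domain of $\Lap$ with $H^1_0(X_0)$, this is the proposition.

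Finally, for the quantitative statement: for $Q\in\Diffes{1}(X)$ the operator $x^{-1}Q$ is bounded from $\Hes{1,1-(f+1)/2}(X)$ to $L^2_g(X)$ — immediate from the definition of $\Hes{1,1-(f+1)/2}(X)$ together with the fact that $\Diffes{1}(X)$ is a finitely generated $\CI(X)$-module and $\CI(X)$ acts boundedly on $L^2_g(X)$ — so for $u\in H^1_0(X_0)$ one gets $\|x^{-1}Qu\|_{L^2_g}\lesssim\|u\|_{\Hes{1,1-(f+1)/2}(X)}\lesssim\|u\|_{H^1(X_0)}$, the last step using the norm equivalence just established. Since everything reduces to the preceding lemma plus routine facts about completions and the Friedrichs extension, I do not expect a real obstacle here; the only mild point requiring care is the density of $\dCI(X_0)$ in $H^1_0(X_0)$ (equivalently, density of $\CIc(X_0^\circ)$ in the Friedrichs domain for the $H^1(X_0)$-norm), which is a standard mollification-and-cutoff argument.
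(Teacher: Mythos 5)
Your proof is correct and follows essentially the same route as the paper: the preceding lemma gives equivalence of the $H^1(X_0)$-norm and the $\Hes{1,1-(f+1)/2}(X)$-norm on $\dCI(X)=\beta^*\dCI(X_0)$, and both $H^1_0(X_0)$ and $\Hesz{1,1-(f+1)/2}(X)$ are then the completions of this common dense subspace, so they coincide with equivalent norms and the quantitative bound follows. The one point you flag — density of $\dCI(X_0)$ in $H^1_0(X_0)$ — is indeed the (routine) auxiliary fact the paper leaves implicit when it says the proposition is "immediately deduced" from the lemma and the definition of $\Hesz{1,1-(f+1)/2}(X)$ as a closure.
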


For Neumann boundary conditions the quadratic form
domain is $H^1(X_0)$, whose lift is
not quite so simple in terms of the edge-smooth spaces. However, we have
the following lemma, which suffices for the edge-b propagation results below
(with a slight loss).

\begin{lemma}\label{lemma:domaininclusion}
We have $\Hes{1,1-(f+1)/2}(X)\subset H^1(X_0)\subset \Hes{1,-(f+1)/2}(X)$,
with all inclusions being continuous.
\end{lemma}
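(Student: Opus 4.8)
The plan is to deduce both inclusions from three ingredients already in place: the lift identity \eqref{eq:Vf-lift-to-M}, in its version for $X$, namely $x^{-1}\Vfes(X)=\CI(X)\otimes_{\CI(X_0)}\beta^*\Vf(X_0)$; the isometric identification $L^2_g(X)=L^2_{g_0}(X_0)$ induced by the blow-down $\beta$ (the same identification used in the preceding lemma, since $dg=\beta^*dg_0$ on the common interior); and the standard description of the Neumann form domain $H^1(X_0)$ as the set of $u\in L^2_{g_0}(X_0)$ whose distributional differential $du$, computed in $X_0^\circ$, lies in $L^2_{g_0}(X_0;T^*X_0)$, with $\|u\|_{H^1(X_0)}=\|u\|_{L^2_{g_0}}+\|du\|_{L^2_{g_0}}$ (see \cite[Appendix~B.2]{Hormander3}). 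As in the proof of the preceding lemma I would localize near a fiber $\beta^{-1}(p)$, $p\in Y$, use that multiplication by $\CI(X)$ is bounded on $L^2_g(X)$, and use that, by non-degeneracy of $g_0$ up to the boundary, $\|du\|_{L^2_{g_0}}$ is equivalent to $\sum_j\|V_ju\|_{L^2_{g_0}}$ for a fixed finite list $V_1,\dots,V_N\in\Vf(X_0)$ spanning it over $\CI(X_0)$.

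For $\Hes{1,1-(f+1)/2}(X)\subset H^1(X_0)$: given $u$ in the left-hand space, first $u\in xL^2_g(X)\subset L^2_g(X)=L^2_{g_0}(X_0)$ with $\|u\|_{L^2_{g_0}}\lesssim\|u\|_{\Hes{1,1-(f+1)/2}(X)}$. For any $V\in\Vf(X_0)$, \eqref{eq:Vf-lift-to-M} gives $x\,\beta^*V\in\Vfes(X)\subset\Diffes{1}(X)$, so by the definition of the $l=1$ edge-smooth space $(x\,\beta^*V)u\in xL^2_g(X)$, whence $\beta^*(Vu)=x^{-1}(x\,\beta^*V)u\in L^2_g(X)=L^2_{g_0}(X_0)$ with norm $\lesssim\|u\|_{\Hes{1,1-(f+1)/2}(X)}$. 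Applying this to $V_1,\dots,V_N$ shows $du\in L^2_{g_0}(X_0;T^*X_0)$, i.e.\ $u\in H^1(X_0)$, with $\|u\|_{H^1(X_0)}\lesssim\|u\|_{\Hes{1,1-(f+1)/2}(X)}$; this is just the estimate \eqref{eq:H10-Hes} of the preceding lemma, now run on the full space rather than on $\dCI(X)$.

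For $H^1(X_0)\subset\Hes{1,-(f+1)/2}(X)$: given $u\in H^1(X_0)$ we have $u\in L^2_{g_0}(X_0)=L^2_g(X)$, and since $\Diffes{1}(X)$ is generated over $\CI(X)$ by the identity together with a local spanning set of $\Vfes(X)$, and $\CI(X)$ acts boundedly on $L^2_g(X)$, it suffices to bound $\|Au\|_{L^2_g}$ for $A\in\Vfes(X)$. By \eqref{eq:Vf-lift-to-M}, $x^{-1}A=\sum_i a_i\,\beta^*V_i$ with $a_i\in\CI(X)$ and $V_i\in\Vf(X_0)$, so $Au=\sum_i(xa_i)\,\beta^*(V_iu)$; as $V_iu\in L^2_{g_0}(X_0)=L^2_g(X)$ and $xa_i\in\CI(X)$ acts boundedly on $L^2_g(X)$, we get $\|Au\|_{L^2_g}\lesssim\sum_i\|V_iu\|_{L^2_{g_0}}\lesssim\|u\|_{H^1(X_0)}$. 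Tracking constants through both computations yields continuity of all the inclusions, finishing the proof.

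This is all essentially bookkeeping, and I do not expect any genuine obstacle; the two points requiring care are the placement of the weight $x$ (the left-hand space in the lemma is the $l=1$ edge-smooth space, and the inclusion into $H^1(X_0)$ would fail without this extra factor) and insisting on the correct — distributional, equivalently doubling — description of the Neumann form domain $H^1(X_0)$, so that membership can be tested against the finitely many vector fields $V_1,\dots,V_N$. Only the lift identity $x^{-1}\Vfes(X)=\CI(X)\otimes_{\CI(X_0)}\beta^*\Vf(X_0)$ from \eqref{eq:Vf-lift-to-M} and the identification $L^2_g(X)=L^2_{g_0}(X_0)$ are used; none of the $\Psieb{*}(M)$-calculus or the symbol machinery enters here.
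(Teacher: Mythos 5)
Your proof is correct and follows essentially the same route as the paper's: both inclusions come down to the lift identity $x^{-1}\Vfes(X)=\CI(X)\otimes_{\CI(X_0)}\beta^*\Vf(X_0)$ together with the identification $L^2_g(X)=L^2_{g_0}(X_0)$. The only cosmetic difference is that the paper phrases the argument via density of $\cC$ and the already-established estimates \eqref{eq:H10-Hes} and \eqref{eq:Vfes-H10}, whereas you argue directly on arbitrary elements using the distributional characterization of $H^1(X_0)$; the underlying content is identical.
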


\begin{proof}
The first inclusion is an immediate consequence of \eqref{eq:H10-Hes}
holding for $u\in\cC$, $\cC$ as in Remark~\ref{remark:Hes} (thus
dense in $\Hes{1,l-(f+1)/2}(X)$ for all $l$),
using again that elements of $\Vf(X_0)$ lift under $\beta$ to span
(and in particular lie in) $x^{-1}\Vfes(X)$ as
a $\CI(X)$-module by \eqref{eq:Vf-lift-to-M}.

For the second inclusion, we need to prove that
$\|Au\|_{L^2_g(X)}\leq C\|u\|_{H^1(X_0)}$ for $A\in\Diffes{1}(X)$. As
this is automatic for $A\in\CI(X)$, we are reduced to considering
$A\in\Vfes(X)$. But \eqref{eq:Vfes-H10} still holds for $u\in\cC$,
so $\|Au\|_{L^2_g(X)}\leq C'\|x^{-1}A u\|_{L^2_g}\leq C\|u\|_{H^1(X_0)}$
for $A\in\Vfes(X)$. This finishes the proof of the lemma.
\end{proof}

\subsection{The wave operator as an element of $x^{-2}\Diffess{2}(M)$}

For $f\in\cC$, in local coordinates,
$$
df=(x\pa_x f)\,\frac{dx}{x}+\sum_j (x\pa_{y_j}f)\,\frac{dy_j}{x}
+\sum_j (\pa_{z_j}f)
\,dz_j.
$$
Thus, the exterior derivative satisfies
$$
d\in \Diffes{1}(M;\underline{\CC},\Tesstar M),
$$
with $\underline{\CC}$
denoting the trivial bundle. As the dual Riemannian metric is
of the form $x^{-2}G$, where $G$ is a smooth fiber metric on
$\Tesstar X$, and $\Delta=d^* d$, we deduce that
$\Box\in x^{-2}\Diffess{2}(M)$. However,
we need a more precise description of $\Box$ for our commutator
calculations.

So suppose now that $U$ is a coordinate chart near a point $q$ at $\pa\ef$ with
coordinates $(x,y,z',z'')$ centered at $q$, and
recall from \eqref{metric}
that the Riemannian metric has the form
\begin{equation}
g = dx^2 + h(y,dy)+ x^2 k(x,y,z,dz)+x k'(x,y,z,dx,dy,x\,dz).
\end{equation}
By changing $z''$ if necessary
(while keeping $x,y,z'$ fixed---cf.\ the argument of \S\ref{section:geometry}
leading to \eqref{eq:X_0-metric-form},
we can arrange that the dual metric $K$ of $k$ have the form
\begin{equation}\label{metricform}\begin{split}
K(0,y,z)=\sum_{i,j=1}^k k_{1,ij}(0,y,z)\zetaes'_i\zetaes'_j&+\sum_{i=1}^k
\sum_{j=k+1}^f k_{3,ij}(0,y,z) \zetaes'_i\zetaes''_j\\
&+\sum_{i,j=k+1}^f k_{2,ij}(0,y,z)\zetaes''_i\zetaes''_j,\qquad k_3|_C=0,
\end{split}\end{equation}
where $$C=\{x=0,\ z'=0\}.$$

We deduce the following lemma:

\begin{lemma}\label{lemma:es-eb-wave-op}
Let $U$ be a coordinate chart near a point with $x=0$ and $z'=0$, and
suppose that we have arranged that at $$C=\{x=0,\ z'=0\},$$
the vector spaces
$$
\Span\{dz'_i,\ i=1,\ldots,k\}\Mand\Span\{dz''_j,\ j=k+1,\ldots,f\}
$$
are orthogonal with respect to $K.$
With $Q_i=x^{-1}D_{z'_i}$, the wave operator satisfies
\begin{equation}
\Box=\sum_{i,j}Q_i^* \kappa_{ij} Q_j+\sum_i (x^{-1}M_i Q_i
+Q_i^* x^{-1}M'_i)+x^{-2}\Htil\ \text{on } U
\end{equation}
with
\begin{equation}\begin{split}
&\kappa_{ij}\in\CI(M),\ M_i,M'_i\in \Diffeb{1}(M),\ \Htil\in\Diffeb{2}(M)\\
&\sigma_{\ebo,1}(M_i)=m_i=\sigma_{\ebo,1}(M'_i),
\ \htil=\sigma_{\ebo,2}(\Htil),\\
&\kappa_{ij}|_{\ef}=-k_{1,ij}(y,z),
\ m_i|_C=0,\ m_i|_{\ef}=-\frac{1}{2} \sum_{j=k+1}^f k_{3,ij}\zetaeb''_j,\\
&\htil|_{\ef}
=\taueb^2-\xi^2-h(y,\etaeb)
-\sum_{i,j=k+1}^{f}k_{2,ij}(y,z)\zetaeb''_i\zetaeb''_j.
\end{split}\end{equation}
\end{lemma}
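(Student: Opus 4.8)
The plan is to expand $\Box=D_t^2-\Delta$ with $\Delta=d^*d$, write the exterior derivative in the edge-smooth frame, and isolate the differentiations in the $z'_i$ directions, which are the only generators of $\Diffes{1}(M)$ that fail to lie in $\Diffeb{1}(M)$. Before that I would record two pieces of structure near $\ef$ and $C$. Writing $D_{z'_i}=xQ_i$ with $Q_i=x^{-1}D_{z'_i}$, one has
$$
df=(xD_xf)\,\frac{dx}{x}+\sum_l(xD_{y_l}f)\,\frac{dy_l}{x}+\sum_i(xQ_if)\,dz'_i+\sum_j(D_{z''_j}f)\,dz''_j,
$$
so in this frame every component of $d$ has coefficient in $\Diffeb{1}(M)$ except the $dz'_i$-components, whose coefficients are $xQ_i$. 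From \eqref{metric} the $z$--$z$ block of $g$ is $x^2$ times a form smooth and nondegenerate up to all faces while its $z$--$(x,y)$ block is $O(x^2)$; block inversion then shows that $x^2g^{z'_iz'_j}$ and $x^2g^{z'_iz''_j}$ extend smoothly up to all faces, with values at $\ef$ equal to the entries $K_{z'_iz'_j}$, $K_{z'_iz''_j}$ of the dual metric $K$ of $k$, whereas $x^2g^{z'_iA}$ with $A\in\{x,y_l\}$ is $O(x^2)$. In the normal form \eqref{metricform} these boundary values are $k_{1,ij}$ and $\tfrac12k_{3,ij}$, and the latter vanishes on $C$ because $k_3|_C=0$.

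Next I would organize $\Delta=-(Jx^f)^{-1}\sum_{a,b}\pa_a(Jx^fg^{ab}\pa_b)$ according to the number of $z'$-indices among $a,b$. The two--$z'$ terms collapse, using $\pa_{z'_j}=\imath xQ_j$, $\pa_{z'_i}^*=-\imath\,Q_i^*x$ (the adjoint with respect to the metric density, an element of $\Diffesd{1}(M)$), and the identity $x\cdot x^{-2}\tilde a_{ij}\cdot x=\tilde a_{ij}$ of multiplication operators, into $\sum_{i,j}Q_i^*\tilde a_{ij}Q_j$ with $\tilde a_{ij}=x^2g^{z'_iz'_j}\in\CI(M)$. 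The no--$z'$ terms, together with $D_t^2=x^{-2}(xD_t)^2$, lie in $x^{-2}\Diffeb{2}(M)$. Each one--$z'$ term, after moving the $x$-powers past the $z'_i$-derivative and commuting $Q_i$ through the $\Diffeb{1}(M)$ coefficients—which only produces further terms of the controlled forms $\CI(M)Q_i$ and $x^{-1}\Diffeb{1}(M)$—becomes a sum of terms $x^{-1}M_iQ_i$ and $Q_i^*x^{-1}M'_i$ with $M_i,M'_i\in\Diffeb{1}(M)$. Setting $\kappa_{ij}=-\tilde a_{ij}$, letting $x^{-2}\Htil$ be the no--$z'$ part of $\Box$ (so $\Htil\in\Diffeb{2}(M)$), and collecting the cross contributions into $M_i,M'_i$ yields the asserted identity on $U$; that $\sigma_{\ebo,1}(M_i)=\sigma_{\ebo,1}(M'_i)=:m_i$ follows because the group of one--$z'$ terms with $\pa_{z'_i}$ outermost and the group with $\pa_{z'_i}$ innermost are Banach-space adjoints of one another and the relevant leading symbols are real.

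Finally I would read off the $\ebo$-symbols at $\ef$ from the boundary values above: $\kappa_{ij}|_\ef=-x^2g^{z'_iz'_j}|_\ef=-k_{1,ij}$; the one--$z'$ cross terms carrying a $z''$-index give $m_i|_\ef=-\sum_{j>k}(x^2g^{z'_iz''_j})|_\ef\,\zetaeb''_j=-\tfrac12\sum_{j>k}k_{3,ij}\zetaeb''_j$, while the $z'x$- and $z'y$-cross terms give symbols that are $O(x)$, so in particular $m_i|_C=0$; and, using $g^{xx}|_\ef=1$, the $(y,y)$-block of $g^{-1}$ contributing $h(y,\etaeb)$, and $x^2g^{z''_iz''_j}|_\ef=k_{2,ij}$, one gets $\htil|_\ef=\taueb^2-\xi^2-h(y,\etaeb)-\sum_{i,j>k}k_{2,ij}\zetaeb''_i\zetaeb''_j$, consistent with $\sigma_{\eso,2}(x^2\Box)|_\ef=p|_\ef$.

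The main obstacle is bookkeeping rather than ideas. One must establish the block structure of $g^{-1}$ near $\ef$ and $C$ with the correct boundary values from \eqref{metric} and the orthogonality normal form \eqref{metricform}, and then track every one--$z'$ cross term—including the lower-order pieces generated when derivatives fall on the metric coefficients or on the density $Jx^f$, and those produced by commuting $Q_i$ past $\Diffeb{1}(M)$—carefully enough to confirm that each respects the claimed $\ebo$-orders and weights and contributes nothing spurious to $\sigma_{\ebo,2}(\Htil)$ or to $\sigma_{\ebo,1}(M_i)$ at $\ef$. Employing the Banach-space adjoints from $\Diffesd{1}(M)$ rather than formal adjoints is precisely what places $\Box$ in $x^{-2}\Diffess{2}(M)$; this has already been recorded before the statement, so no additional attention to boundary terms is required beyond invoking it.
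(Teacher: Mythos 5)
Your proposal is correct and matches the intended argument; the paper states this lemma without a written proof, since it is a direct computation from the metric normal form \eqref{metricform} and the fact that $\Box\in x^{-2}\Diffess{2}(M)$, which is exactly what you carry out. Your organization of $\Delta=d^*d$ by the number of $z'$-derivatives, the block-inversion of $g$ near $\ef$ yielding $x^2g^{z'_iz'_j}|_\ef=k_{1,ij}$, $x^2g^{z'_iz''_j}|_\ef=\tfrac12 k_{3,ij}$, and $x^2g^{z'_i x}=O(x^2)$, $x^2g^{z'_i y}=O(x^2)$, and the use of $\Diffesd{1}(M)$ adjoints for $Q_i^*$ are all the right ingredients, and the symbol identifications at $\ef$ and $C$ follow correctly. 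One small phrasing caution: in your final sentence, $m_i|_C=0$ does not follow from the $z'x$ and $z'y$ cross terms being $O(x)$ alone—you also need $k_3|_C=0$ from \eqref{metricform}, which you did note earlier; it would be cleaner to cite it again there. Likewise, the claim $\sigma_{\ebo,1}(M_i)=\sigma_{\ebo,1}(M'_i)$ deserves the one-line justification that the two families of one-$z'$ terms are metric adjoints, $M'_i$ is $xM_i^*x^{-1}$ up to lower order, and the symbol $m_i$ is real—you gesture at this, and it holds.
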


We next note microlocal elliptic regularity.

\begin{proposition}\label{prop:elliptic}
Let $u\in\hilbert\equiv\Hes{1,l}(M)$,
and suppose that
$$
\Box u\in\dHes{-1,l-2}(M)=\hilbertp
$$
with Dirichlet
or Neumann boundary conditions. Then
$$
\WFebX^{m,0}(u)\subset\ebSigma\cup\WFebY^{m,0}(\Box u).
$$

In particular, if $\Box u=0$, then $\WFebX^{\infty,0}(u)\subset\ebSigma.$
\end{proposition}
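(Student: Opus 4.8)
The plan is to reduce the proposition to the microlocal statement: if $q\in\Sebstar M$ with $q\notin\ebSigma$ and $q\notin\WFebY^{m,0}(\Box u)$, then $q\notin\WFebX^{m,0}(u)$. Granting this, the displayed inclusion is immediate, and the ``in particular'' follows because $\Box u=0$ gives $\WFebY^{m,0}(\Box u)=\emptyset$ for every $m\ge 0$, so $\WFebX^{m,0}(u)\subset\ebSigma$ for all $m$; feeding this back through Lemma~\ref{lemma:microlocal} (to absorb an arbitrary $P\in\Diffeb{*}(M)$ into the commutant) and Lemma~\ref{lemma:WF-to-reg} upgrades it to $\WFebX^{\infty,0}(u)\subset\ebSigma$. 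The microlocal statement is proved in the usual way, by constructing at each such $q$ a microlocal left parametrix for $x^2\Box$ adapted to the edge-b structure, so that $u$ is recovered microlocally at $q$ from $x^2\Box u$ (which is $\ebo$-regular of order $m$ there by hypothesis, after the standard weight shift) modulo a term that is regularizing in the relevant sense; one then patches over the compact space $\Sebstar M$ with a microlocal partition of unity. The wrinkle throughout is that $x^2\Box\in\Diffess{2}(M)$ rather than $\Psieb{2}(M)$, so near the boundary hypersurfaces other than $\ef$ the parametrix must be built in the mixed calculus of \S\ref{section:Diff-Psi}.

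It is convenient to distinguish three kinds of points $q\notin\ebSigma$. (i) If $q\notin\Sebstar[\pa M]M$, then $q$ lies off the characteristic variety of $\Box$, so classical interior microlocal elliptic regularity for $\Box$ gives the conclusion. (ii) If $q$ lies over the interior of $\ef$, then near $q$ the edge-b calculus coincides with Mazzeo's edge calculus, $\piesebh$ is a local diffeomorphism onto a neighborhood of $q$ in $\Sesstar M$, and $p=\sigma_{\eso,2}(x^2\Box)$ is nonvanishing on $\piesebh^{-1}(q)$ by definition of $\ebSigma$; the microlocal elliptic-parametrix construction of \cite{mvw1} then applies verbatim and shows $q\notin\WFebX^{m,0}(u)$, using the hypothesis on $\Box u$ to control the right-hand side and the boundedness of $\Psieb{0}(M)$ on $\hilbert$ (Theorem~\ref{thm:Psieb-0-bd}) to handle the error terms. (iii) The remaining case, $q$ over $\pa\ef$, is where the genuinely new work lies.

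For case (iii) I would invoke Lemma~\ref{lemma:es-eb-wave-op}: multiplying its formula by $x^2$ and using that $x$ commutes with the $D_{z'_j}$, one obtains in the adapted coordinates $x^2\Box=\sum_{i,j}(xQ_i)^*\kappa_{ij}(xQ_j)+(\text{first order in }xQ_i)+\Htil$, with $xQ_j=D_{z'_j}$ and $\Htil\in\Diffeb{2}(M)\subset\Psieb{2}(M)$; the hypothesis $q\notin\ebSigma$, read through the local description \eqref{egh-coordinates} of $\ecal$, says precisely that this operator is elliptic at $q$ in the mixed calculus (either the $\zeta'$-block is nondegenerate because some $\zetaebh'_j\ne 0$, or the $\Htil$-block is elliptic on the branch where $1<\xiebh^2+h(y,\etaebh)+k(\dots)$). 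One then builds a microlocal left parametrix in $\bigcup_{k,m}\Diffess{k}\Psieb{m}(M)$: the leading-order commutativity of that calculus (Proposition~\ref{prop:DiffPsi-alg}) permits the usual symbolic inversion off the characteristic set, and its module structure over $\Psieb{0}(M)$ lets the lower-order and remainder terms be summed in a Neumann series. The one boundary-sensitive point is that applying this parametrix against the pairing $\langle x^2\Box u,\cdot\rangle$ produces contributions at the faces $z'_j=0$ coming from the adjoints $(x^{-1}D_{z'_j})^*$; here the Dirichlet condition ($u\in\Hesz{1,l}(M)$) makes these vanish, while with Neumann boundary conditions ($\Box u$ taken in the dual $\dHes{-1,l-2}(M)$ without the quotient $\rho$) they are exactly the terms belonging to the Neumann operator — this is the device that in \cite{Vasy5} replaced the use of quadratic forms.

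The main obstacle is this last region: making the parametrix construction close inside the mixed differential-pseudodifferential calculus, uniformly up to $\pa\ef$, while keeping track of the $x$-weights and correctly accounting for the boundary terms forced by the Dirichlet or Neumann condition. By contrast, the interior region is routine Hörmander-style elliptic regularity, and the region over $\ef^\circ$ is a direct transcription of the edge-calculus argument of \cite{mvw1}.
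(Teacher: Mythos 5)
Your proposal takes a fundamentally different route from the paper, and the difference is exactly where the gap lies. The paper does \emph{not} construct a parametrix for $x^2\Box$. It proceeds via the quadratic form (Dirichlet form) argument of Lemma~\ref{lemma:Dirichlet-form}: one pairs $\Box u$ with $A^*Au$, rewrites $\langle\Box u,A^*Au\rangle$ (modulo commutators controlled by Proposition~\ref{prop:DiffPsi-alg}) as $\langle\pa_tAu,\pa_tAu\rangle-\langle d_XAu,d_XAu\rangle$, and then invokes microlocal ellipticity of the Dirichlet form off $\ebSigma$ to absorb the small-$\ep$ error and extract the $\Hes{1,1-(f+1)/2}$ estimate on $A_\gamma u$. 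This only requires a lower bound on the form, not an inverse of the operator.

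The step that would fail in your proposal is the parametrix construction at $\pa\ef$, case (iii). You invoke ``the usual symbolic inversion'' in $\bigcup_{k,m}\Diffess{k}\Psieb{m}(M)$, but this calculus has no symbol short exact sequence and no way to invert. Proposition~\ref{prop:DiffPsi-alg} gives top-order commutativity and a $\Psieb{0}(M)$-bimodule structure, which is exactly what is needed to treat commutators as error terms in the energy argument — but it does not permit division by symbols. Concretely, $x^2\Box\in\Diffess{2}(M)$ is a genuine differential operator in $x^{-1}D_{z'_i}$, so its ``symbol'' is a polynomial in $\zetaes'$; inverting that polynomial, even microlocally off $\esSigma$, does not produce a polynomial, hence does not produce an element of $\Diffess{*}\Psieb{*}(M)$. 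The $D_{z'_j}$ are not tangent to $z'_j=0$ and do not microlocalize in $\Psieb{}(M)$, which is precisely why the hybrid calculus keeps them differential: they must be handled by integration by parts against the quadratic form and by the positivity of the metric coefficients (as in Lemma~\ref{lemma:glancingestimate} and its use later in the paper), not by a Neumann series in a calculus that cannot invert them. Moreover, the condition $q\in\ecal$ is that the entire (unbounded) $\pieseb$-fiber over $q$ misses $\esSigma$, not that some edge-b symbol is elliptic at $q$, so there is no symbol of a $\Psieb{}$-operator to invert there either. Cases (i) and (ii) of your argument are fine; case (iii) is where you must switch to the energy method, and your proposal does not supply a workable substitute for it.
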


\begin{proof}
The proof goes along the same lines as Proposition~4.6 of \cite{Vasy5} and
Theorem~8.11 of \cite{mvw1}; we thus provide a
sketch.  An essential ingredient is the top-order commutativity of
$x^l\Diffess{2}\Psieb{m}(M),$ which allows us to treat all commutators
as error terms. The key estimate is stated in Lemma~\ref{lemma:Dirichlet-form}
below.

Given the lemma, one proceeds by an inductive argument, showing
that if $\WFebX^{s-1/2,0}(u)\subset\ebSigma\cup\WFebY^{s-1/2,0}(\Box u)$
(which is a priori known for $s=1/2$, starting our inductive argument)
then $\WFebX^{s,0}(u)\subset\ebSigma\cup\WFebY^{s,0}(\Box u)$.
In order to show this, one
takes $A\in\Psieb{s,l+(f-1)/2}(M)$, with
$\WF'(A)\cap\ebSigma=\emptyset$, $\WF'(A)\cap\WFebY^{s,0}(\Box
u)=\emptyset$.
Let
$\Lambda_\gamma$ be uniformly bounded in $\Psieb{0,0}(M)$, $\gamma\in(0,1]$,
with $\Lambda_\gamma\in\Psieb{-1,0}(M)$ for all $\gamma$,
$$
\sigma_{\ebo,0,0}(\Lambda_\gamma)=\big(1+\gamma(|\xieb|^2+|\taueb|^2+
|\etaeb|^2+|\zetaeb|^2)\big)^{-1},
$$
so $A_\gamma=\Lambda_\gamma A$
is uniformly bounded in $\Psieb{m,l+(f-1)/2}(M)$ and $A_\gamma\to A$
in $\Psieb{m+\delta,l+(f-1)/2}(M)$ ($\delta>0$ fixed) as $\gamma\to 0$.
One then concludes by  Lemma~\ref{lemma:Dirichlet-form}
that for all $\ep\in(0,1]$,
\begin{equation}\label{4.5.2011A}
|\langle \pa_t A_\gamma u,\pa_t A_\gamma u\rangle
-\langle d_X A_\gamma u,d_X A_\gamma u\rangle|
\leq
\ep \|A_\gamma u\|_{\Hes{1,1-(f+1)/2}(M)}^2+C\ep^{-1},
\end{equation}
with $C$ uniformly bounded, independent of $\gamma$. 

We now note that the
Dirichlet form is microlocally
elliptic for Dirichlet boundary conditions, i.e.
\begin{equation*}\begin{split}
&\|A_\gamma u\|_{\Hes{1,1-(f+1)/2}(M)}^2\\
&\leq C'|\langle \pa_t A_\gamma u,\pa_t A_\gamma u\rangle
-\langle d_X A_\gamma u,d_X A_\gamma u\rangle|+C'\|u\|_{\Hes{1,1-(f+1)/2}(M)}^2.
\end{split}\end{equation*}
For details, see
the proof of Proposition~4.6 of \cite{Vasy5}, which can be followed
essentially verbatim\footnote{To give a rough idea, one distinguishes
  between the two components of the elliptic set in terms of
  \eqref{egh-coordinates} and uses a square root construction in the
  edge-b algebra;
in the first component noting in addition that the Dirichlet form involves
$D_{z'_j}u$, so in $z'_j<\delta$, one can estimate $\delta^{-1}z'_j
D_{z'_j}u$ using this.}   since the non-trivial aspect is the b-behavior in
the fibers of the edge; the $(x,y)$ variables here, as well as the
$z''$ variables, play the role of the $y$ variables in \cite{Vasy5},
the $z'$ variables here play the role of the $x$ variables in
\cite{Vasy5}, and $\Psieb{}(M)$ plays the role of $\Psib(X)$ in \cite{Vasy5}
(where $X$ is spacetime).   (Likewise is simple to modify
the inductive arguments for the Neumann condition as the $\Hes{0,1-(f+1)/2}(M)$
norm of an additional eb-derivative, which one would need to bound, can be
bounded in terms of the $\Hes{1,1-(f+1)/2}(M)$ norm; this is the same process
as in \cite{Vasy5}.)

Thus, for sufficently small $\ep\in(0,1]$,
$\ep \|A_\gamma u\|_{\Hes{1,1-(f+1)/2}(M)}^2$ in \eqref{4.5.2011A} can be absorbed in
$(C')^{-1}\|A_\gamma u\|_{\Hes{1,1-(f+1)/2}(M)}^2$, and then one concludes that
$\|A_\gamma u\|_{\Hes{1,1-(f+1)/2}(M)}$
is uniformly bounded independent of $\gamma$.
As $A_\gamma\to A$ strongly, one concludes by a standard argument
$Au\in\Hes{1,1-(f+1)/2}(M)$. Thus, $x^{l+(f-1)/2}Au\in\Hes{1,l}(M)$, hence
(as $\hilbert=\Hes{1,l}(M)$)
$$
\liptic(A)\cap \WFebX^{s,0}(u)=\emptyset,
$$
completing the iterative step.
\end{proof}

As mentioned above, the key ingredient in proving microlocal elliptic
regularity is the following lemma.

\begin{lemma}\label{lemma:Dirichlet-form}
For Neumann boundary conditions, let $\hilbert=\Hes{1,l+1}(M)$,
$\hilbertp=\dHes{-1,l-1}(M)$; for Dirichlet boundary conditions let
$\hilbert=\Hesz{1,l+1}(M)$,
$\hilbertp=\Hes{-1,l-1}(M)$.
Let $K\subset \Sebstar M$ be compact, $U\subset\Sebstar M$ open,
$K\subset U$. Suppose that $\cA$ is a bounded
family of ps.d.o's in $\Psieb{m,l+(f+1)/2}(M)$ with $\WF'(\cA)\subset K$, such that
for $A\in\cA$, $A\in\Psieb{m-1,l+(f+1)/2}(M)$ (but the bounds for $A$ in
$\Psieb{m-1,l+(f+1)/2}(M)$ are not necessarily uniform in $A$!).
Then there exist
$G \in \Psieb{m-1/2,0}(M)$, $\tilde G\in\Psieb{m,0}(M)$
with $\WF' G,\WF'\tilde G \subset U$ and $C_0>0$ such that for $\ep>0$,
$A\in\cA$,
\begin{equation}\begin{split}\label{eq:Hes-Dirichlet-est}
u\in \hilbert,&\ \WFebX^{m-1/2,0}(u)\cap U=\emptyset,
\ \WFebY^{m,0}(\Box u)\cap U=\emptyset
\Rightarrow\\
|\langle \pa_t A u,&\pa_t A u\rangle
-\langle d_X A u,d_X A u\rangle|\\
&\leq \ep \|Au\|_{\Hes{1,1-(f+1)/2}(M)}^2+C_0\big(\|u\|_{\hilbert}^2+\|Gu\|_{\hilbert}^2\\
&\qquad\qquad\qquad\qquad\qquad\qquad
+\ep^{-1}\|\Box u\|_{\hilbertp}^2+\ep^{-1}\|\tilde G\Box u\|^2_{\hilbertp}
\big).
\end{split}\end{equation}
\end{lemma}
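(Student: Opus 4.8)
Write $Q(v)=\langle\pa_t v,\pa_t v\rangle-\langle d_X v,d_X v\rangle$ for the Dirichlet form appearing on the left of \eqref{eq:Hes-Dirichlet-est}. By the very construction of $\Box$ as an element of $x^{-2}\Diffess{2}(M)$ (the $\sharp$-structure encoding integration by parts with Neumann boundary conditions) together with the domain identifications of Proposition~\ref{proposition:dirichletdomain} and Lemma~\ref{lemma:domaininclusion}, one has $Q(v)=\langle\Box v,v\rangle$ whenever $v$ lies in the relevant form domain. The plan is first to check that $Au$ does lie in that form domain: this follows from microlocality (Lemma~\ref{lemma:microlocal}), the hypotheses $\WF'(A)\subset K\subset U$ and $\WFebX^{m-1/2,0}(u)\cap U=\emptyset$ (so that $Au\in\hilbert$, and indeed in the form domain by the cited domain inclusions), and the observation that the $x$-weight built into $A\in\Psieb{m,l+(f+1)/2}(M)$ is exactly what makes $\|Au\|_{\Hes{1,1-(f+1)/2}(M)}$ the natural form-domain norm of $Au$. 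Then one expands
\[
Q(Au)=\langle\Box Au,Au\rangle=\langle A\,\Box u,Au\rangle+\langle[\Box,A]\,u,Au\rangle
\]
and estimates the two terms. Throughout, all constants must be kept uniform over $A\in\cA$ by invoking only the $\Psieb{m,l+(f+1)/2}(M)$-bounds on $\cA$ together with fixed data, via Lemma~\ref{lemma:uniformly-microlocal}.

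For the first term, since $\WFebY^{m,0}(\Box u)\cap U=\emptyset$, I would fix $\tilde G\in\Psieb{m,0}(M)$ elliptic on $\WF'(\cA)$ with $\WF'(\tilde G)\subset U$; then $\tilde G\,\Box u\in\hilbertp$, and microlocality together with Lemma~\ref{lemma:uniformly-microlocal} gives $\|A\,\Box u\|_{(\Hes{1,1-(f+1)/2}(M))^*}\le C(\|\Box u\|_{\hilbertp}+\|\tilde G\,\Box u\|_{\hilbertp})$ uniformly over $\cA$ (the weight in $A$ matching the shift between $\hilbertp$ and $(\Hes{1,1-(f+1)/2}(M))^*$). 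Cauchy--Schwarz against $Au\in\Hes{1,1-(f+1)/2}(M)$ then yields
\[
|\langle A\,\Box u,Au\rangle|\le\tfrac{\ep}{2}\,\|Au\|_{\Hes{1,1-(f+1)/2}(M)}^2+C\ep^{-1}\bigl(\|\Box u\|_{\hilbertp}^2+\|\tilde G\,\Box u\|_{\hilbertp}^2\bigr).
\]

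The heart of the argument is the commutator term. By Proposition~\ref{prop:DiffPsi-alg} and the bimodule/top-order-commutativity property of $\Diffess{}\Psieb{}(M)$ stated after Lemma~\ref{lemma:dzcomm-2}, $[\Box,A]\in x^{-2}\Diffess{2}\Psieb{m-1,l+(f+1)/2}(M)$. Using the explicit form of $\Box$ from Lemma~\ref{lemma:es-eb-wave-op} near $\pa\ef$ (away from $\pa\ef$ one may simply use $\Box\in x^{-2}\Diffes{2}(M)$), one distributes the commutator over the summands $Q_i^*\kappa_{ij}Q_j$, $x^{-1}M_iQ_i$, $Q_i^*x^{-1}M_i'$, $x^{-2}\Htil$, with $Q_i=x^{-1}D_{z'_i}$; the commutators $[Q_i,A]$ and $[Q_i^*,A]$ are governed by Lemma~\ref{lemma:dzcomm} and Lemma~\ref{lemma:dzcomm-2}, which produce $A_1x^{-1}D_{z'_i}^{(*)}+x^{-1}A_0$ with $A_1$ of $\ebo$-order $m-1$. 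Appealing to the definition of $\Diffess{2}(M)$ as $\sum Q_jP_j$ with $Q_j$ the Banach adjoint of $P_j\in x^{-1}\Diffes{1}(M)$, and to top-order commutativity to move the remaining $\Psieb{}$-factors past the differential ones, one rewrites $\langle[\Box,A]u,Au\rangle$ as a finite sum of terms $\langle(P\,B)u,\ R\,Au\rangle$ (up to strictly lower $\ebo$-order remainders, handled by the same device and a downward induction on $\ebo$-order), with $R\in x^{-1}\Diffes{1}(M)$ (or a $\Diffeb{1}(M)$-generator for the piece coming from $x^{-2}\Htil$) and $B$ bounded in $\Psieb{m-1,l+(f+1)/2}(M)$ with $\WF'(B)\subset U$, uniformly over $\cA$. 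Then $\|R\,Au\|_{L^2_g}\lesssim\|Au\|_{\Hes{1,1-(f+1)/2}(M)}$ by the form-domain characterisation in Proposition~\ref{proposition:dirichletdomain}, while $\|(P\,B)u\|_{L^2_g}\lesssim\|Bu\|_{\Hes{1,1-(f+1)/2}(M)}\lesssim\|u\|_{\hilbert}+\|Gu\|_{\hilbert}$ by Lemma~\ref{lemma:uniformly-microlocal} with $G\in\Psieb{m-1/2,0}(M)$ elliptic on $\WF'(\cA)$, $\WF'(G)\subset U$ --- it is here that the half-order gap $m-1<m-1/2$ furnished by the hypothesis $\WFebX^{m-1/2,0}(u)\cap U=\emptyset$ is used, ensuring $Bu$ lands (uniformly) in the form domain. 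A further Cauchy--Schwarz absorbs these into $\tfrac{\ep}{2}\|Au\|_{\Hes{1,1-(f+1)/2}(M)}^2+C\ep^{-1}(\|u\|_{\hilbert}^2+\|Gu\|_{\hilbert}^2)$; combining with the bound for the first term gives \eqref{eq:Hes-Dirichlet-est}.

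I expect the main obstacle to be precisely the bookkeeping in this last step near $\pa\ef$: one must split the $\Diffess{2}$-part of $[\Box,A]$ symmetrically across the pairing so that each half is an honest element of $x^{-1}\Diffes{1}(M)$ acting on a single factor, while (i) in the Neumann case producing \emph{no} spurious boundary terms --- this is exactly why $\Box$ is cast in the $\sharp$-calculus and why one needs $[\Diffess{}(M),\Psieb{}(M)]\subset\Diffess{}\Psieb{}(M)$ together with the fact that $\Diffeb{}(M)$-adjoints are formal (Lemma~\ref{lemma:Diffes-adjoints}), rather than resorting to quadratic forms as in \cite{Vasy5} --- and (ii) arranging that every $\Psieb{}$-factor landing on $u$ rather than on $Au$ has $\ebo$-order at most $m-1/2$, so that it is controlled by $\|Gu\|_{\hilbert}$ and not by an un-absorbable quantity. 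The downward induction on $\ebo$-order that disposes of the lower-order remainders from non-commutativity must be organised so that at each stage the constants depend only on fixed data and the $\Psieb{m,l+(f+1)/2}(M)$-seminorms bounding $\cA$, which is what yields the uniformity asserted in the lemma.
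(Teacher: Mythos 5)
Your argument is correct and is, after a small reorganisation, the same computation as the paper's (which itself is only sketched by appeal to \cite{Vasy5} and \cite{mvw1}). The paper starts from the manifestly well-defined pairing $\langle\Box u,A^*Au\rangle$ (which only requires $\Box u\in\hilbertp$ and $A^*Au$ in the form domain), identifies it with the Dirichlet form $\langle\pa_t u,\pa_t A^*Au\rangle-\langle d_Xu,d_XA^*Au\rangle$, and then commutes $A$ past the \emph{first-order} factors $\pa_t,d_X$ to reach $\langle\pa_tAu,\pa_tAu\rangle-\langle d_XAu,d_XAu\rangle$ plus commutator errors. You instead start from $Q(Au)=\langle\Box Au,Au\rangle$ and expand $\Box A=A\Box+[\Box,A]$, producing the \emph{second-order} commutator $[\Box,A]\in\Diffess{2}\Psieb{m-1,\,2+l+(f+1)/2}(M)$ which you then re-split across the pairing. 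After integration by parts these two routes produce literally the same error terms, and your treatment of the Cauchy--Schwarz step for $\langle A\Box u,Au\rangle$ via Lemma~\ref{lemma:uniformly-microlocal} matches the paper's estimate for $\langle\Box u,A^*Au\rangle$. The one place your version demands a little more care --- and you flag this explicitly --- is that it requires $Au$ itself to lie in the form domain (the paper's starting point only asks this of $A^*Au$), and it requires you to make sense of $[\Box,A]$ acting on $u$ in the $\sharp$-calculus; the paper avoids writing $\Box$ applied to $Au$ at all, so it only ever needs commutators of $A$ with the first-order operators $\pa_t$, $d_X$, $Q_i$, $Q_i^*$ appearing in the quadratic form. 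Both approaches use the half-order gap in $\WFebX^{m-1/2,0}(u)$ in the same way, to put the commutator terms one factor of $G\in\Psieb{m-1/2,0}(M)$ above $\hilbert$.
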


\begin{remark}
Recall that $u\in\Hes{1,1-(f+1)/2}(M)$ is equivalent to $d_X u\in L^2_g(M)$,
$\pa_t u\in L^2_g(M)$
and $x^{-1}u\in L^2_g(M)$,
so $\ep \|Au\|_{\Hes{1,1-(f+1)/2}(M)}^2$
on the right hand side of \eqref{eq:Hes-Dirichlet-est} is comparable to
the terms $\langle \pa_t A u,\pa_t A u\rangle$ and
$\langle d_X A u,d_X A u\rangle$. However, if $A$ is supported
away from $\ebSigma$, the Dirichlet form is microlocally elliptic, by
the same arguments
as those in the proof of Proposition~4.6 of \cite{Vasy5} and
Theorem~8.11 of \cite{mvw1},
so this term can be absorbed into the left hand side, as was done
in Proposition~\ref{prop:elliptic}.

The hypotheses in \eqref{eq:Hes-Dirichlet-est} assure that the
other terms on the right hand side are finite, independent of $A\in\cA$.
\end{remark}

\begin{proof}
Again, this follows the argument as Lemma~4.2 and 4.4 of \cite{Vasy5} and
Lemma~8.8 and 8.9 of \cite{mvw1}, so we only sketch the proof.
We sketch the Neumann argument; the Dirichlet case needs only simple changes.
We have
$$
\langle \pa_t u,\pa_t A^*Au\rangle-\langle d_X u, d_X A^*Au\rangle
=\langle\Box u,A^*A u\rangle
$$
for all $u\in\hilbert$ and $A\in\Psieb{m-1,l+(f-1)/2}(M)$
since
$A^*Au\in\Hes{1,-l-(f-1)}(M)$, which is mapped by $\Box$ into
$\dHes{-1,-l-(f+1)}(M)=(\Hes{1,l}(M))^*$.
Modulo commutator terms,
one can rewrite the left hand side as
$$
\langle \pa_t Au,\pa_t Au\rangle-\langle d_X Au, d_X Au\rangle,
$$
which is the left hand side of
\eqref{eq:Hes-Dirichlet-est}.
The commutator terms can be estimated by the second and third terms
(which do not depend on $\ep$) on the right hand side of
\eqref{eq:Hes-Dirichlet-est}.
The other terms on the right hand side arise by estimating (using that the
dual of $\Hes{1,l}(M)$ is $\dHes{-1,-l-(f+1)}(M)$)
\begin{equation*}\begin{split}
&|\langle \Box u,A^*A u\rangle|
\leq\|A\Box u\|_{\dHes{-1,-1-(f+1)/2}(M)}
\,\|Au\|_{\Hes{1,1-(f+1)/2}(M)}\\
&\qquad\leq \ep^{-1}\|A\Box u\|^2_{\dHes{-1,-1-(f+1)/2}(M)}
+\ep\|A u\|^2_{\dHes{-1,-1-(f+1)/2}(M)}\\
&\qquad= \ep^{-1}\|x^{-l-(f+1)/2}A\Box u\|^2_{\dHes{-1,l-1}(M)}
+\ep\|A u\|^2_{\dHes{-1,-1-(f+1)/2}(M)},
\end{split}\end{equation*}
and as $x^{-l-(f+1)/2}A$ is uniformly bounded in $\Psieb{m,0}(M)$, with
wave front set in $K$, $\|x^{-l-(f+1)/2} A\Box u\|^2_{\hilbertp}$
can be estimated by a multiple of
$\|\Box u\|_{\hilbertp}^2+\|\tilde G\Box u\|^2_{\hilbertp}$ in view
of Lemma~\ref{lemma:uniformly-microlocal}. This completes the proof.
\end{proof}

The following is analogous to Lemma 7.1 of \cite{Vasy5} and Lemma~9.8
of \cite{mvw1} and states that near $\gcal$ the fiber derivatives
$x^{-1}D_{z_i'}$ of microlocalized solutions $Au$ to the wave equation
can be controlled by a small multiple of the time derivative, modulo
error terms (note that $G$ is lower order than $A$ by $1/2$).  The
theorem mentions a $\delta$-neighborhood of a compact set
$K\subset\gcal$ (for $\delta<1$); by this we mean the set of points of
distance $<\delta$ from $K$ with respect to the distance induced by
some Riemannian metric on $\Sebstar M$.  Note that the choice of the
Riemannian metric is not important, and in particular, $\gcal$ is
defined by $x=0$, $z'=0$,
$1-h(y,\etaebh)-\xiebh^2-k(y,z,\zetaebh'=0,\zetaebh'')=0$, so the set
given by
$$
x<C'\delta,\ |z'|<C'\delta,
\ \Big|1-h(y,\etaebh)-\xiebh^2-k(y,z,\zetaebh'=0,\zetaebh'')\Big|<C'\delta,
$$
is contained in
a $C''\delta$-neighborhood of $\cG$ for some $C''>0$, with $C''$
independent of $\delta$ (as long as $C'$ is bounded).

\begin{lemma}\label{lemma:glancingestimate}
For Dirichlet or Neumann boundary conditions let $\hilbert$ and
$\hilbertp$ be as in Lemma~\ref{lemma:Dirichlet-form}.

Let $K\Subset \gcal.$ There exists $\delta_0\in (0,1)$ and $C_0>0$ with the
following property.

Let $0<\delta<\delta_0$, and $\delta>0,$ and let
$U$ be a $\delta$-neighborhood of $K$ in $\Sebstar M$.
Suppose $\cA$ is a bounded
family of ps.d.o's in $\Psieb{m,l+(f+1)/2}(M)$ with $\WF'(\cA)\subset U$, such that
for $A\in\cA$, $A\in\Psieb{m-1,l+(f+1)/2}(M)$.
Then there exist
$G \in \Psieb{m-1/2,0}(M)$, $\tilde G\in\Psieb{m,0}(M)$
with $\WF' G,\WF'\tilde G \subset U$ and $\tilde C=\tilde C(\delta)>0$
such that for
$A\in\cA,$
$$
u\in \hilbert,\ \WFebX^{m-1/2,0}(u)\cap U=\emptyset,
\ \WFebY^{m,0}(\Box u)\cap U=\emptyset
$$
implies
\begin{equation*}\begin{split}
&\sum \norm{ x^{-1}D_{z_i'} A u}^2\\
&\qquad \leq C_0\delta\norm{D_t A u}^2 +
\tilde C
\big(\|u\|_{\hilbert}^2+\|Gu\|_{\hilbert}^2
+\|\Box u\|_{\hilbertp}^2+\|\tilde G\Box u\|^2_{\hilbertp}
\big).
\end{split}\end{equation*}
\end{lemma}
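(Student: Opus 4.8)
The plan is to run a positive commutator argument with a commutant adapted to the glancing set, following the template of Lemma~8.8--8.9 of \cite{mvw1} and Lemma~7.1 of \cite{Vasy5}, but tracking the $\delta$-dependence carefully. The starting point is the energy identity
\begin{equation*}
\langle \pa_t u,\pa_t A^*Au\rangle-\langle d_X u, d_X A^*Au\rangle=\langle\Box u,A^*Au\rangle,
\end{equation*}
valid for $u\in\hilbert$ and $A\in\Psieb{m-1,l+(f-1)/2}(M)$ exactly as in the proof of Lemma~\ref{lemma:Dirichlet-form}. The key move is to use the structural form of $\Box$ from Lemma~\ref{lemma:es-eb-wave-op} on the coordinate chart $U$: writing $Q_i=x^{-1}D_{z'_i}$, one has $\Box=\sum_{i,j}Q_i^*\kappa_{ij}Q_j+\sum_i(x^{-1}M_iQ_i+Q_i^*x^{-1}M_i')+x^{-2}\Htil$ with $\kappa_{ij}|_{\ef}=-k_{1,ij}$. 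Substituting this into the left-hand side of the energy identity and commuting $A$ past the $Q_i$ using Lemma~\ref{lemma:dzcomm} (and its adjoint version Lemma~\ref{lemma:dzcomm-2}), the leading term becomes $-\sum_{i,j}\langle \kappa_{ij}Q_j Au, Q_i Au\rangle$, i.e.\ (since $(\kappa_{ij})$ is negative definite near $C$, as $-k_1$ is minus a positive definite block) a term comparable to $+\sum_i\|Q_i Au\|^2$, which is what we want to bound.

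The second ingredient is the localization: on $U$, a $\delta$-neighborhood of $K\Subset\gcal$, we have $x<C'\delta$, $|z'|<C'\delta$, and $|1-h(y,\etaebh)-\xiebh^2-k(y,z,0,\zetaebh'')|<C'\delta$. The point is that the characteristic set condition, combined with $h\geq 0$ and the positivity of the $k_{2}$-block, forces $\xiebh^2$ close to $1-h(y,\etaebh)$ on $\esSigma$; hence $\taueb^2=\xi^2+h(y,\etaeb)+k(y,z,\zetaeb)\geq \xi^2$ with near-equality, so $|\zetaeb|^2\leq C\delta|\taueb|^2$ microlocally on $\WF'(A)\cap\esSigma$. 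Using the elliptic estimate for the Dirichlet form (microlocal ellipticity of $x^{-2}\Htil$, cf.\ Lemma~\ref{lemma:Dirichlet-form}) together with the fact that the symbol of $\sum Q_i^*\kappa_{ij}Q_j$ is $x^{-2}k_1(\zetaeb',\zetaeb')$, one sees that on $U$ the quantity $\sum\|Q_iAu\|^2$ is controlled by $\delta\,\|D_tAu\|^2$ plus the left-hand side of the energy identity plus commutator errors. The right-hand side $\langle\Box u,A^*Au\rangle$ is handled exactly as in Lemma~\ref{lemma:Dirichlet-form}: estimate it by $\ep^{-1}\|A\Box u\|^2_{\hilbertp}+\ep\|Au\|^2_{\Hes{1,1-(f+1)/2}}$, absorb the $\ep$-term for $\ep$ small, and dominate $\|A\Box u\|_{\hilbertp}$ by $\|\Box u\|_{\hilbertp}^2+\|\tilde G\Box u\|^2_{\hilbertp}$ using Lemma~\ref{lemma:uniformly-microlocal}. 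All commutator terms, which by Proposition~\ref{prop:DiffPsi-alg} and the top-order commutativity of $\Diffess{2}\Psieb{m}(M)$ drop the edge-b order by one, are absorbed into $\|u\|_{\hilbert}^2+\|Gu\|_{\hilbert}^2$ with $G$ of order $m-1/2$, using the a priori hypothesis $\WFebX^{m-1/2,0}(u)\cap U=\emptyset$ and a regularization argument ($\Lambda_\gamma$-type mollifiers) to justify the manipulations for $A\in\cA$ that is only nonuniformly of order $m-1$.

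The main obstacle is bookkeeping the $\delta$-dependence honestly: one must show that the constant $C_0$ multiplying $\|D_tAu\|^2$ can be taken \emph{independent} of $\delta$ (only $\tilde C$ is allowed to blow up as $\delta\to0$), which requires separating the terms that come with an intrinsic factor of $\delta$ (namely $x$, $z'$, and the characteristic-set defect, all $O(\delta)$ on $U$) from the genuine error terms. The subtlety is that commuting $A$ past $Q_i=x^{-1}D_{z'_i}$ via Lemma~\ref{lemma:dzcomm} produces a term $A_1 x^{-1}D_{z'_i}$ with $\sigma(A_1)=\pa_{\zeta'_j}a+\pa_\xi a$, i.e.\ another $Q$-type factor, so one must iterate or arrange the commutant $A$ to have symbol independent of $\zeta'$ and $\xi$ on $U$ up to acceptable error; choosing $A$ this way (as in \cite{mvw1}) makes the leading commutator terms themselves small of order $\delta$, which is precisely what forces $C_0$ to be $\delta$-independent. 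The Dirichlet case differs only in that $A^*Au$ lies in $\Hesz{}$ rather than $\Hes{}$, changing the boundary terms in the integration by parts of Lemma~\ref{lemma:Diffes-adjoints} in a harmless way, as noted there.
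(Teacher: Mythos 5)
The high-level plan is sound and matches the paper's: expand $\Box$ via Lemma~\ref{lemma:es-eb-wave-op}, use the negative-definiteness of $\kappa_{ij}|_{\ef}=-k_{1,ij}$ to produce $+\sum\|Q_iAu\|^2$, and control the remaining pieces by the Dirichlet form (Lemma~\ref{lemma:Dirichlet-form}) and by the $\tilde H$-term. However, your account of \emph{where} the factor of $\delta$ comes from is wrong in a way that would not survive being written out. You claim that on $U\cap\esSigma$ one has $\xiebh^2$ close to $1-h(y,\etaebh)$ and hence $|\zetaeb|^2\leq C\delta|\taueb|^2$. This is false: $\gcal$ contains points $q\notin\rcal_{\ebo}$, and at such points (which are the generic ones for this lemma) one has $\xiebh_0^2+h(y_0,\etaebh_0)<1$ and $\zetaebh''_0\neq0$, so $\zetaeb''$ is comparable to $|\taueb|$, not $O(\sqrt\delta)|\taueb|$. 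Only the $\zeta'$-block is small on $U$, and even that smallness does not give a direct bound on $\|Q_iAu\|$: the edge-smooth symbol of $Q_i=x^{-1}D_{z'_i}$ is $\zetaes'_i/x=\zetaeb'_i/(xz'_i)$, and on $U$ both $|\hat\zetaeb'_i|$ and $|z'_i|$ are $O(\delta)$, so the ratio against the symbol of $D_t$ is $O(1)$, not $O(\delta)$. A naive microlocal support argument therefore cannot produce the $\delta$-gain.

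The actual mechanism, which your invocation of ``microlocal ellipticity of $x^{-2}\tilde H$'' inverts, is that $\tilde H$ is \emph{characteristic} on $\gcal$: its degree-zero symbol $1-\xiebh^2-h(y,\etaebh)-k_2(y,z,\zetaebh'')$ vanishes identically on $\gcal$, hence is $O(\delta)$ on the $\delta$-neighborhood $U$, with the constant governed by the Lipschitz norm of this defining function and therefore $\delta$-independent. This, together with $\kappa_{ij}|_{\ef}=-k_{1,ij}$ positive definite (in $-\kappa$) and ellipticity of $D_t$ on $\ebSigma$, gives $\sum\|Q_iAu\|^2\lesssim|\langle\Box Au,Au\rangle|+|\langle x^{-2}\tilde HAu,Au\rangle|+\text{errors}\lesssim(\text{Dirichlet form terms})+C_0\delta\|D_tAu\|^2+\tilde C(\delta)(\cdots)$, which is exactly the claim. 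Relatedly, your remark that $C_0$'s $\delta$-independence ``requires arranging the commutant $A$ to have symbol independent of $\zeta'$ and $\xi$'' misreads the statement: $\cA$ is a \emph{given} bounded family (it is later supplied by the proofs of Theorems~\ref{thm:bdy-radial} and~\ref{theorem:edgeglancing}), and one has no freedom to redesign it here. The $\delta$-independence of $C_0$ comes solely from the Lipschitz estimate on the symbol of $\tilde H$ near $K\Subset\gcal$; the commutator terms produced by Lemma~\ref{lemma:dzcomm} are genuinely lower order (an extra factor in $\Psieb{-1}$) and are absorbed harmlessly into $\tilde C(\delta)(\|u\|_{\hilbert}^2+\|Gu\|_{\hilbert}^2)$ using the a priori wavefront hypothesis.
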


\begin{proof}
This is an analogue of Lemma~7.1 of \cite{Vasy5} and Lemma~9.8 of \cite{mvw1},
so we only indicate the main idea. By Lemma~\ref{lemma:Dirichlet-form}
one has control of the Dirichlet form in terms of the second through fifth
terms on the right hand side, so it suffices to check that
$\sum \norm{ x^{-1}D_{z_i'} A u}^2$ can be controlled by the Dirichlet
form and $\delta\norm{D_t A u}^2$. This uses that $K\Subset\gcal$, $D_t$ is
elliptic on $\ebSigma$, and $\langle \tilde HAu,Au\rangle$ is small as
$\WF'(A)\subset U$; see
the aforementioned Lemma~7.1 of \cite{Vasy5} and Lemma~9.8 of \cite{mvw1}
for details.
\end{proof}

\begin{corollary}\label{cor:glancingestimate}
For Dirichlet or Neumann boundary conditions let $\hilbert$ and
$\hilbertp$ be as in Lemma~\ref{lemma:Dirichlet-form}.
Let $K\Subset \gcal,$ $\delta>0$.

Then there exists a neighborhood $U$ of $K$ in $\Sebstar M$ with the following
property.
Suppose that $\cA$ is a bounded
family of ps.d.o's in $\Psieb{m,l+(f+1)/2}(M)$ with
$\WF'(\cA)\subset U$, such that
for $A\in\cA$, $A\in\Psieb{m-1,l+(f+1)/2}(M)$.
Then there exist
$G \in \Psieb{m-1/2,0}(M)$, $\tilde G\in\Psieb{m,0}(M)$
with $\WF' G,\WF'\tilde G \subset U$ and $\tilde C=\tilde C(\delta)>0$
such that for
$A\in\cA$,
$$
u\in \hilbert,\ \WFebX^{m-1/2,0}(u)\cap U=\emptyset,
\ \WFebY^{m,0}(\Box u)\cap U=\emptyset
$$
implies
\begin{equation*}\begin{split}
&\sum \norm{ x^{-1}D_{z_i'} A u}^2\\
&\qquad \leq \delta\norm{D_t A u}^2 +
C
\big(\|u\|_{\hilbert}^2+\|Gu\|_{\hilbert}^2
+\|\Box u\|_{\hilbertp}^2+\|\tilde G\Box u\|^2_{\hilbertp}
\big).
\end{split}\end{equation*}
\end{corollary}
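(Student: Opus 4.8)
The plan is to obtain this corollary as an immediate consequence of Lemma~\ref{lemma:glancingestimate} by rescaling the parameter $\delta$. First I would apply Lemma~\ref{lemma:glancingestimate} to the given compact set $K \Subset \gcal$ to extract the constants $\delta_0 \in (0,1)$ and $C_0 > 0$ that it provides. Given the $\delta > 0$ in the statement of the corollary, I would then set
$$
\delta' = \min\Big\{\frac{\delta}{C_0},\ \frac{\delta_0}{2}\Big\},
$$
so that $0 < \delta' < \delta_0$ and $C_0\delta' \le \delta$.

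Next I would invoke Lemma~\ref{lemma:glancingestimate} a second time, now with $\delta'$ in place of $\delta$: this produces a $\delta'$-neighborhood $U$ of $K$ in $\Sebstar M$ — which serves as the neighborhood required by the corollary — together with operators $G \in \Psieb{m-1/2,0}(M)$ and $\tilde G \in \Psieb{m,0}(M)$ satisfying $\WF' G, \WF' \tilde G \subset U$, and a constant $\tilde C = \tilde C(\delta') > 0$, such that for every bounded family $\cA$ in $\Psieb{m,l+(f+1)/2}(M)$ with $\WF'(\cA) \subset U$ whose members also lie in $\Psieb{m-1,l+(f+1)/2}(M)$, and for every $u \in \hilbert$ with $\WFebX^{m-1/2,0}(u) \cap U = \emptyset$ and $\WFebY^{m,0}(\Box u) \cap U = \emptyset$, one has
$$
\sum \norm{x^{-1}D_{z_i'} A u}^2 \le C_0\delta' \norm{D_t A u}^2 + \tilde C\big(\|u\|_{\hilbert}^2 + \|Gu\|_{\hilbert}^2 + \|\Box u\|_{\hilbertp}^2 + \|\tilde G \Box u\|_{\hilbertp}^2\big).
$$
Finally, using $C_0\delta' \le \delta$ I would bound $C_0\delta'\norm{D_t A u}^2$ by $\delta\norm{D_t A u}^2$ and set $C = \tilde C(\delta')$, which yields exactly the inequality asserted in the corollary. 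The neighborhood $U$ produced this way depends only on $K$ and $\delta$, as required, and all the remaining hypotheses (bounds on $\cA$, emptiness of the relevant wavefront sets over $U$) only become easier to satisfy when $U$ is shrunk, so nothing is lost in passing from a $\delta$-neighborhood to a $\delta'$-neighborhood.

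I do not expect a genuine obstacle here: the corollary is simply the soft reformulation of Lemma~\ref{lemma:glancingestimate} in which the explicit constant $C_0$ multiplying $\norm{D_t A u}^2$ is absorbed into a shrinking of the support of the microlocal cutoff, and it is this version of the estimate that will be convenient to feed into the positive-commutator propagation arguments later in the paper. The only points worth a sentence of verification are that $U$ may legitimately be taken to be a neighborhood strictly smaller than a $\delta$-neighborhood, and that the ranges of $\delta$ in the two lemmas are matched by the choice of $\delta'$ above; both are routine.
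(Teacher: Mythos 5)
Your proof is correct and coincides with the paper's own argument: both set $\delta'=\min(\delta_0/2,\delta/C_0)$, invoke Lemma~\ref{lemma:glancingestimate} with $\delta'$ in place of $\delta$, and take $U$ to be the resulting $\delta'$-neighborhood so that $C_0\delta'\le\delta$ absorbs the explicit constant.
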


\begin{proof}
Fix a Riemannian metric on $\Sebstar M$.
Let $\delta_0,C_0$ be as in Lemma~\ref{lemma:glancingestimate}, and let
$\delta'=\min(\delta_0/2,\delta/C_0)$. Applying
Lemma~\ref{lemma:glancingestimate} with $\delta'$ in place of $\delta$
gives the desired conclusion, if we let $U$ be a $\delta'$-neighborhood
of $K$.
\end{proof}

Recall now that $C=\{x=0,\ z'=0\}$ denotes one
boundary face of $\ef$ in local coordinates, and that
as a vector field on $\Tesstar M$ tangent to $\ef$ (but not necessarily the
other boundary faces),
restricted to $\Tesstar_{\ef}M$,
$\sH_{\eso}$ is given by
\begin{equation*}
-\frac12\sH_{\eso}=\xiesh x\pa_x-\xiesh\sigmaes\pa_{\sigmaes}
-\xiesh\zetaesh\pa_{\zetaesh}
+K^{ij}\zetaesh_i\pa_{z_j}+K^{ij}\zetaesh_i\zetaesh_j\pa_{\xiesh}
-\frac{1}{2}\,\frac{\pa K^{ij}}{\pa z_k}\zetaesh_i\zetaesh_j\pa_{\zetaesh_k};
\end{equation*}
see \eqref{eq:sH-at-ef-mod-bvf}-\eqref{eq:sH-at-ef}.
We can expand the $K^{ij}$ terms by breaking
them up into $z'$ and $z''$ components at $C$, using \eqref{metricform}.
This becomes particularly interesting at a point $q\in\esSigma$ which is
the unique point in the preimage of $p\in\Sebstar[C]M\cap\gcal$ under
$\piesebh$.
At such points $\zetaesh'=0$, so many terms vanish. One thus obtains
\begin{equation*}
-\frac12\sH_{\eso}(q)=\xiesh x\pa_x-\xiesh\sigmaes\pa_{\sigmaes}
-\xiesh\zetaesh''\pa_{\zetaesh''}
+k_{2,ij}\zetaesh''_i\pa_{z''_j}+k_{2,ij}\zetaesh''_i\zetaesh''_j\pa_{\xiesh}
-\frac{1}{2}\,\frac{\pa k_{2,ij}}{\pa z''_k}
\zetaesh''_i\zetaesh''_j\pa_{\zetaesh''_k}.
\end{equation*}
Pushing forward under $\piesebh$, we obtain
\begin{equation*}\begin{split}
(\piesebh_* \sH_{\eso})(p)=&-2\xiebh x\pa_x+2\xiebh\sigmaeb\pa_{\sigmaeb}
+2\xiebh\zetaebh''\pa_{\zetaebh''}\\
&\qquad-2k_{2,ij}\zetaebh''_i\pa_{z''_j}-2k_{2,ij}\zetaebh''_i\zetaesh''_j\pa_{\xiebh}
+\frac{\pa k_{2,ij}}{\pa z''_k}
\zetaebh''_i\zetaebh''_j\pa_{\zetaebh''_k}.
\end{split}\end{equation*}
Below, this appears as the vector field $|\tau| V_0$, and will give the
direction of propagation at glancing points in
Theorem~\ref{theorem:edgeglancing}.

\begin{lemma}\label{lemma:Box-eb-comm}
Let $Q_i=x^{-1}D_{z'_i},$ $\kappa_{ij}$, $m_i$, $h$ be as in
Lemma~\ref{lemma:es-eb-wave-op}.
For $A\in\Psieb{m,l}(M)$,
\begin{equation}
\imath[\Box,A^*A]=\sum Q_i^* L_{ij} Q_j+\sum (x^{-1}
L_i Q_i+Q_i^* x^{-1}L'_i)
+x^{-2}L_0,
\end{equation}
with
\begin{equation}\label{whatahorror}\begin{split}
&L_{ij}\in\Psieb{2m-1,2l}(M),\ L_i,L'_i\in\Psieb{2m,2l}(M),\ L_0\in\Psieb{2m+1,2l}(M),\\
&\sigma_{\ebo,2m-1}(L_{ij})=2aV_{ij}a,\ \text{where } V_{ij}
=\kappa_{ij}(\pa_{\zetaeb'_i}+\pa_{\zetaeb'_j}+2\pa_{\xieb})
+\sH_{\ebo,\kappa_{ij}},\\
&\sigma_{\ebo,2m}(L_i)=\sigma_{\ebo,2m}(L'_i)=2a V_i a,\ \text{where}\\
&V_i=\sum_j\kappa_{ij}\pa_{z'_j}
+\frac{1}{2}(m_i\pa_\xi+\sH_{\ebo,m_i})+\frac{1}{2}m_i
(\pa_{\xi}+\pa_{\zeta'_i}),\\
&\sigma_{\ebo,2m+1}(L_0)=2a V_0a,\ V_0=2\htil\pa_\xi+\sH_{\ebo,\htil}
+\sum_i m_i\pa_{z'_i},\\
&\ebWF'(L_{ij}),\ebWF'(L_i),\ebWF'(L'_i),\ebWF'(L_0)\subset\ebWF'(A).
\end{split}\end{equation}
In particular, for $f\in\CI(\Sebstar M)$ with $f|_{\ef}=\dbetaeb^* \phi$
for some $\phi\in \CI(S^* W)$,
\begin{equation}\label{eq:basic-comm}
V_{ij}f|_{\ef}=0,\ V_i f|_{\ef}=0,\ V_0 f|_{\ef}=0.
\end{equation}
Moreover, as smooth vector fields tangent to $\Tebstar_{\ef}M$ (but
not necessarily tangent to the other boundaries),
\begin{equation}\begin{split}\label{eq:sH-eb-h}
&V_0|_{C}=-2\xi\,x\pa_x-2\Big(\xi^2+\sum_{ij}k_{2,ij}\zeta''_i\zeta''_j\Big)\pa_\xi
-2\xi\big(\tau\,\pa_\tau+\eta\,\pa_\eta\big)\\
&\qquad\qquad\qquad-2\sum_{i,j} k_{2,ij}\zeta''_i\pa_{z''_j}
+\sum_{\ell,i,j}(\pa_{z''_\ell}k_{2,ij})\zeta''_i\zeta''_j\pa_{\zeta''_\ell} ,\\
&V_{ij}|_C
=-k_{1,ij}\big(\pa_{\zeta'_i}+\pa_{\zeta'_j}+2\pa_\xi\big)
+\sum_\ell (\pa_{z''_\ell}k_{1,ij})\pa_{\zeta''_\ell},
\ V_i|_C=-\sum_j k_{1,ij}\pa_{z'_j},
\end{split}\end{equation}
and
\begin{equation}\begin{split}\label{eq:xiebh-weights}
&\big(|\tau|V_0\xiebh\big)|_{\ef}=-2\sum_{ij}k_{2,ij}(0,y,z)\zeta''_i\zeta''_j\\
&\big(|\tau|V_i\xiebh\big)|_{\ef}=-\sum_j k_{3,ij}(0,y,z)\zeta''_j,
\ \big(|\tau|V_{ij}\xiebh\big)|_{\ef}=-2 k_{1,ij}(0,y,z),\\
&\big(|\tau|^{-s-1}x^{-r}V_0(|\tau|^s x^r)\big)|_{\ef}=-2(r+s)\xiebh,
\ \big(|\tau|^{-s}x^{-r}V_i(|\tau|^s x^r)\big)|_{\ef}=0,\\
&\big(|\tau|^{-s+1}x^{-r}V_{ij}(|\tau|^s x^r)\big)|_{\ef}=0,
\end{split}\end{equation}
\end{lemma}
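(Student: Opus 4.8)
The plan is to compute $\imath[\Box,A^*A]$ directly from the factorization of $\Box$ given in Lemma~\ref{lemma:es-eb-wave-op}, treating every term by the module structure of $\Diffess{k}\Psieb{m,l}(M)$ established in the preceding sections. Write $\Box=\sum_{i,j}Q_i^*\kappa_{ij}Q_j+\sum_i(x^{-1}M_iQ_i+Q_i^*x^{-1}M_i')+x^{-2}\Htil$ with $Q_i=x^{-1}D_{z_i'}$, and expand the commutator bilinearly. The model computation is at the level of edge-b symbols: by \eqref{eq:sH-ebo} and \eqref{eq:weighted-formula} the Hamilton vector field of $\kappa_{ij}\zeta_i'\zeta_j'/(x^2 z_i' z_j')$, $m_i\zeta_i'/(x^2 z_i')$, and $\htil/x^2$ produce exactly the vector fields $V_{ij}$, $V_i$, $V_0$ respectively once one strips off the $\zeta'/(xz')$ factors that are ``absorbed'' into the surviving copies of $Q_i$, $Q_i^*$ on either side. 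So the first step is to carry out this bookkeeping carefully: commute $A^*A$ past each of $Q_i^*\kappa_{ij}Q_j$, $x^{-1}M_iQ_i$, $Q_i^*x^{-1}M_i'$, $x^{-2}\Htil$, using Lemma~\ref{lemma:Psieb-Vfes-comm} and Lemma~\ref{lemma:dzcomm} (together with its adjoint Lemma~\ref{lemma:dzcomm-2}) to move the differential factors $Q_i,Q_i^*$ to the outside, collecting everything that is genuinely lower order into $L_{ij},L_i,L_i',L_0$ of the stated orders. The wave-front containments $\ebWF'(L_\bullet)\subset\ebWF'(A)$ are immediate since every operation is microlocal and $A^*A$ has operator wave front set contained in $\ebWF'(A)$.

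The identification of principal symbols is then a matter of applying the commutator formula $\sigma(\imath[A,B])=\sH_{\sigma(A)}\sigma(B)$ for $\Psieb{*,*}(M)$ to $\imath[\,\cdot\,,A^*A]$, whose symbol is $2a\,\sH_{\ebo,\bullet}a$ on the factorized pieces (the factor of $2$ from $|a|^2=a\bar a$ being real on the characteristic set, up to subprincipal terms absorbed in $L_\bullet$). Thus $\sigma_{\ebo,2m-1}(L_{ij})=2a V_{ij}a$ with $V_{ij}=\kappa_{ij}(\pa_{\zeta_i'}+\pa_{\zeta_j'}+2\pa_\xi)+\sH_{\ebo,\kappa_{ij}}$, the first group of terms arising from differentiating the explicit $\zeta',\xi$-monomial weights and the last from differentiating the coefficient $\kappa_{ij}$ itself; similarly for $V_i$ and $V_0$. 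Formula \eqref{eq:basic-comm} then follows because, by \eqref{eq:sH-ebo-hom} and the remark immediately after it, $\sH_{\ebo,f}$ restricted to $\ef$ is tangent to the fibers of $\dbetaeb$, and each of the extra ``weight-derivative'' terms $\pa_{\zeta_i'},\pa_\xi,\pa_{z_j'}$ in $V_{ij},V_i,V_0$ either annihilates $f=\dbetaeb^*\phi$ (which depends only on $y,t,\etaebh$ at $\ef$) or is multiplied by a coefficient ($\kappa_{ij}|_{\ef}$, $m_i$, etc.) that does not spoil this — one checks each of the three cases directly against \eqref{eq:dbetaeb-loc-coords}.

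For the restrictions to $C=\{x=0,z'=0\}$ in \eqref{eq:sH-eb-h} I would substitute the boundary values from Lemma~\ref{lemma:es-eb-wave-op}: $\kappa_{ij}|_\ef=-k_{1,ij}$, $m_i|_C=0$ with $m_i|_\ef=-\tfrac12\sum_{j}k_{3,ij}\zeta_j''$, and $\htil|_\ef=\tau^2-\xi^2-h(y,\eta)-\sum_{i,j=k+1}^f k_{2,ij}\zeta_i''\zeta_j''$, into the expressions for $V_{ij},V_i,V_0$, keeping only terms surviving at $x=0,z'=0$ and using \eqref{eq:sH-ebo} (or \eqref{eq:sH-ebo-hom}) to expand $\sH_{\ebo,\htil}$; the vanishing of $m_i$ and $\zeta_i'$ at $C$ kills most terms, leaving precisely the listed vector fields. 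Finally \eqref{eq:xiebh-weights} is obtained by literally applying the $C$- or $\ef$-restricted vector fields to the functions $\xiebh=\xieb/|\tau|$ and to $|\tau|^s x^r$, a short direct computation (e.g.\ $|\tau|V_0\xiebh|_\ef$ picks up only the $\pa_\xi$-term $-2(\xi^2+\sum k_{2,ij}\zeta_i''\zeta_j'')\pa_\xi$ acting on $\xi/|\tau|$, giving $-2\sum k_{2,ij}\zeta_i''\zeta_j''$ after using $\xi^2|_\ef$ contributes to $\xiebh$ which is homogeneous degree zero). The main obstacle I anticipate is purely organizational: keeping track, through the repeated use of Lemmas~\ref{lemma:Psieb-Vfes-comm}, \ref{lemma:dzcomm}, \ref{lemma:dzcomm-2}, of exactly which commutator terms land in which of the four buckets $L_{ij},L_i,L_i',L_0$ with the claimed orders — in particular verifying that commuting the $x^{-1}$ and $x^{-2}$ weights past $A^*A$ does not shift orders incorrectly (it does not, by $[x^l\Psieb{k},x^{l'}\Psieb{k'}]\subset x^{l+l'}\Psieb{k+k'-1}$ and \eqref{eq:weighted-formula}), and that the apparent half-integer drop in the weight $2l$ versus $2l+(f+1)/2$ is handled consistently. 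None of the individual steps is deep; the content is entirely in the clean separation of the Hamilton-vector-field symbol from the lower-order remainder.
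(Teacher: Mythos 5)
Your proposal follows the same route as the paper's proof: expand $[\Box,A^*A]$ bilinearly using the factorization from Lemma~\ref{lemma:es-eb-wave-op}, organize each commutator via Lemma~\ref{lemma:dzcomm} (and its dual Lemma~\ref{lemma:dzcomm-2}), with $A_1$-terms feeding $V_{ij}$ and $A_0$-terms feeding $V_i,V_i',V_0$, use \eqref{eq:weighted-formula} to strip the $x^{-1},x^{-2}$ weights, and finally restrict to $C$ using the boundary data of Lemma~\ref{lemma:es-eb-wave-op} and \eqref{eq:sH-ebo}. You fill in more of the routine verification of \eqref{eq:basic-comm} and \eqref{eq:xiebh-weights}, which the paper leaves implicit, but the skeleton and the ingredients coincide.
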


\begin{remark}
This is the main commutator computation that we use in the next
section. We stated explicitly the results we need. First, equation
\eqref{eq:basic-comm} shows that functions of the ``slow variables''
do not affect the commutator to leading order at $\ef$, hence they
are negligible for all of our subsequent calculations.

Next, \eqref{eq:sH-eb-h} gives the form of the commutator explicitly
at $C$; this is what we need for hyperbolic or glancing propagation
within $\ef$, i.e.\ at points of $\cH$, resp.\ $\cG$ away
from radial points. These are sufficiently local that we only need
the explicit calculation {\em at} $C$, rather than at all of $\ef$.

Finally,
\eqref{eq:xiebh-weights} contains the results we need at radial points
in $\cG$: there the construction is rather global in $\ef$, so it
would be insufficient to state these results at $C$ only. On the other
hand, localization in $\zetaebh$ is accomplished by localizing in $\xiebh$,
the ``slow variables'' and the characteristic set, so fewer features
of $V_{ij}$, etc., are relevant.
\end{remark}

\begin{proof}
By Lemma~\ref{lemma:es-eb-wave-op},
\begin{equation*}\begin{split}
[\Box,A^*A]=&\sum_{i,j}\big([Q_i^*,A^*A] \kappa_{ij} Q_j+Q_i^*\kappa_{ij}
[Q_j,A^*A]+Q_i^*[\kappa_{ij},A^*A]Q_j\big)\\
&+\sum_i \Big(x^{-1}M_i [Q_i,A^*A]+[x^{-1}M_i,A^*A]Q_i\\
&\qquad\qquad+[Q_i^*,A^*A]x^{-1}M'_i+Q_i^*[x^{-1}M'_i,A^*A]\Big)\\
&+[x^{-2}\Htil,A^*A]\quad  \text{on } U.
\end{split}\end{equation*}
The three terms on the first line of the right hand side are the only ones
contributing to $L_{ij}$; in the case of the third term, via
$$
\imath\sigma_{\ebo,2m-1}([\kappa_{ij},A^*A])=\sH_{\ebo,\kappa_{ij}}a^2
=2a\sH_{\ebo,\kappa_{ij}}a,
$$
while in the case of the first two terms by evaluating the commutators
using Lemma~\ref{lemma:dzcomm} and taking only the $A_1$-terms, with the
notation of the lemma.
The $A_0$-terms of the first two commutators on the first line of the
right hand side (with the notation of Lemma~\ref{lemma:dzcomm})
contribute to $L_i$ or $L'_i$, as do the second
and fourth terms on the second line
and the $A_1$-term of the first and third terms on the second line. Finally,
the expression on the third line, as well as the $A_0$-term of the first
and third terms on the second line contribute to $L_0$. We also use
\eqref{eq:weighted-formula} to remove the weight from $\sH_{\ebo,x^{-1}m_i}$
and $\sH_{\ebo,x^{-2}\htil}$, e.g.\ $x^2\sH_{\ebo,x^{-2}\htil}=-2\htil\pa_\xi
+\sH_{\ebo,\htil}$.

The computation of the Hamilton vector fields at $C$ then follows from
Lemma~\ref{lemma:es-eb-wave-op} and
\eqref{eq:sH-ebo} (recalling that $t$ is one of the $y$-variables).
\end{proof}

\section{Coisotropic regularity and non-focusing}\label{section:coisodef}

In this section we recall from \cite{mvw1} the notion of coisotropic
regularity and, dually, that of nonfocusing.  We will be working
microlocally near $\fcal_{\reg}$ and in particular, away from the
difficulties of the glancing rays in $\fcal_{\sing}.$  Consequently
all the results in this section have proofs identical to those in
\cite[Section 4]{mvw1}, where the fiber $Z$ is without boundary.

Let $\mathcal{K}$ be a compact set in $\rcal^\circ_{\ebo}.$ By
Lemma~\ref{lemma:singularflow-out}, there exists an open set $U\subset
\Sebstar M$ such that $\mathcal{K} \subset U$ and $\overline{U} \cap \fcal \subset \fcal_{\reg}.$
Recall from Corollary~\ref{cor:coisotropic} that in this case $\fcal\cap U$
is a \emph{coisotropic} submanifold of $\Sebstar M$---recall from
Footnote~\ref{footnote:coisotropic} that a submanifold of $\Sebstar M$
is defined to be coisotropic if the corresponding conic submanifold of
$\Tebstar M\setminus o$ is coisotropic.

In what follows, we let $U$ be an arbitrary open subset of
$\Sebstar M$ satisfying $\overline{U} \cap \fcal \subset \fcal_{\reg},$ thus
$U\cap\fcal$ is a $\CI$ embedded coisotropic submanifold of $\Sebstar M$;
the foregoing remarks establish that such subsets are plentiful.

\begin{definition}
Given $U$ as above, let $\module$ denote the module (over $\Psieb{0,0}(M))$ of operators
$A \in \Psieb{1,0}(M)$ such that
\begin{itemize}
\item $\displaystyle \WF' A \subset U,$
\item $\displaystyle \sigma_{\ebo, 1}(A) |_{\fcal_{\reg}}=0.$
\end{itemize}
Let $\coiso$ be the algebra generated by $\module$ with $\coiso^k=
\coiso \cap \Psieb{k,0}(M).$
\end{definition}
\nomenclature[M]{$\module$}{Module of first-order operators
  characteristic on $\fcal_{\reg}$}
\nomenclature[A]{$\mathcal{A}$}{Algebra generated by operators characteristic on $\fcal_{\reg}$}

As a consequence of coisotropy of $\fcal_{\reg},$ we have:
\begin{lemma}\label{lemma:moduleclosed}
The module $\module$ is closed under commutators, and is finitely
generated, i.e., there exist finitely many $A_i \in \Psieb{1}$ with
$\sigma_{\ebo, 1}(A_i) |_{\fcal_{\reg}}=0$ such that
$$
\module=\big\{A\in\Psieb{1}(U):\ \exists Q_i\in\ePs{0}(U),
\ A=\sum_{i=0}^N Q_iA_i\big\}.
$$
Moreover we may take $A_N$ to have symbol
$\abs{\tau}^{-1}\sigma_{\ebo, 2,0} (x^2\Box)$ and $A_0=\Id$.

We thus also obtain
\begin{equation}
\coiso^{k}=\left\{\sum\limits_{|\alpha|\le k} Q_\alpha
\prod\limits_{i=1}^{N}A_i^{\alpha _i},\ Q_\alpha \in\Psieb{0}(U)\right\}
\label{HMV.102}\end{equation}
where $\alpha$ runs over multiindices $\alpha
:\{1,\dots,N\}\to\bbN_0$ and $|\alpha |=\alpha _1+\dots+\alpha _N.$ 
\end{lemma}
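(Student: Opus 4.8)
The plan is to transcribe the argument of \cite[Section~4]{mvw1} essentially verbatim, the only point needing verification being that the corner structure of the fiber $Z$ plays no role. This is guaranteed by working microlocally near $\fcal_{\reg}$: by Corollary~\ref{cor:coisotropic} (together with the remark in its proof that the relevant flowout, after extending the edge metric smoothly across the boundary hypersurfaces other than $\ef$, is a piece of the edge flowout studied in \cite{mvw1}), the set $U\cap\fcal$ is an embedded $\CI$ coisotropic submanifold of $\Sebstar M$ transversal to $\Sebstar[\ef]M$, and a full neighborhood of it lies in the region where the edge-b calculus coincides with the edge calculus. Thus every symbolic manipulation below takes place in a boundaryless-fiber setting.

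First I would show closedness under commutators. If $A,B\in\module$ then $A,B\in\Psieb{1,0}(M)$, so $[A,B]\in\Psieb{1,0}(M)$ with $\WF'([A,B])\subset\WF'(A)\cap\WF'(B)\subset U$, and by the commutator-symbol formula $\sigma_{\ebo,1}(\imath[A,B])=\sH_{\ebo,\sigma_{\ebo,1}(A)}\sigma_{\ebo,1}(B)$. Since $\fcal_{\reg}$ is coisotropic and $\sigma_{\ebo,1}(A)$ vanishes on it, the Hamilton vector field $\sH_{\ebo,\sigma_{\ebo,1}(A)}$ is tangent to $\fcal_{\reg}$ (this being the very definition of coisotropy, transported to $\Sebstar M$ via homogeneity as in the footnote); differentiating $\sigma_{\ebo,1}(B)$, which also vanishes on $\fcal_{\reg}$, along a vector field tangent to $\fcal_{\reg}$ again yields a function vanishing on $\fcal_{\reg}$. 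Hence $\sigma_{\ebo,1}([A,B])|_{\fcal_{\reg}}=0$, so $[A,B]\in\module$.

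Next, finite generation. Shrinking $U$ so that $\fcal_{\reg}\cap U$ is cut out in $\Tebstar M\setminus o$, over the cone on $U$, by finitely many homogeneous degree-one functions $p_1,\dots,p_N$ — one of which may be taken to be $\abs{\tau}^{-1}\sigma_{\ebo,2,0}(x^2\Box)$, since $x^2\Box$ is characteristic on $\ebSigma\supseteq\fcal_{\reg}$ and contributes to a defining set for the flowout away from the radial set — I would quantize, choosing $A_i\in\Psieb{1,0}(M)$ with $\WF'(A_i)\subset U$ and $\sigma_{\ebo,1}(A_i)=p_i$, and set $A_0=\Id$. For $A\in\module$, the symbol $\sigma_{\ebo,1}(A)$ vanishes on $\fcal_{\reg}\cap U$, hence lies in the ideal generated by $p_1,\dots,p_N$, so $\sigma_{\ebo,1}(A)=\sum_{i=1}^N q_i p_i$ with $q_i$ homogeneous of degree $0$; then $A-\sum_{i=1}^N \Op(q_i)A_i$ has vanishing principal symbol and wave front set in $U$, hence lies in $\Psieb{0,0}(M)$ and can be absorbed into $Q_0 A_0$. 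Patching finitely many such local representations with a partition of unity subordinate to a cover of a neighborhood of $\WF'(A)$ yields $\module=\{\sum_{i=0}^N Q_i A_i:\ Q_i\in\Psieb{0}(U)\}$, with $A_N$ of symbol $\abs{\tau}^{-1}\sigma_{\ebo,2,0}(x^2\Box)$ and $A_0=\Id$.

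Finally, for the description \eqref{HMV.102} of $\coiso^k$ I would argue by induction on $k$. An element of $\coiso^k$ is a finite sum of products of at most $k$ of the $A_i$ interspersed with factors in $\Psieb{0,0}(M)$. Using $[A_i,Q]\in\Psieb{0,0}(M)$ for $Q\in\Psieb{0,0}(M)$ one moves all order-zero factors to the left; using $[A_i,A_j]\in\module=\sum_l Q_l A_l$, which replaces two order-one factors by one, each transposition that reorders the $A_i$ into the canonical order $\prod_{i=1}^N A_i^{\alpha_i}$ costs only terms that are lower order in $\coiso$, to which the inductive hypothesis applies. The step I expect to be the main obstacle — or rather the only thing requiring genuine care — is the finite generation: one must confirm both that $\fcal_{\reg}$ really is a $\CI$ coisotropic submanifold (supplied by Corollary~\ref{cor:coisotropic}) and that the symbol-level ideal membership and the subsequent local-to-global patching go through uniformly in the presence of the weighted edge-b calculus, exactly as in \cite{mvw1}; the corner structure of $Z$ is genuinely invisible here because $\fcal_{\reg}$ and a neighborhood avoid $\Sebstar[\pa\ef]M$.
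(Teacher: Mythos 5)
Your proposal is correct and follows exactly the route the paper intends: the paper's own "proof" is simply the remark at the opening of \S\ref{section:coisodef} that, since we work microlocally on $\fcal_{\reg}$ away from $\Sebstar[\pa\ef]M$, the arguments of \cite[Section 4]{mvw1} carry over verbatim, and you have spelled out exactly those arguments (coisotropy implies tangency of Hamilton vector fields for the commutator-closure, local defining functions plus symbolic division plus a partition of unity for finite generation, and commutator reordering by induction for the description of $\coiso^k$) together with the key geometric input, Corollary~\ref{cor:coisotropic}, that makes the transfer licit.
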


\begin{definition}\label{def:coiso-nonfocus}
Let $\hilbert$ be a Hilbert space on
which $\Psieb{0,0}(M)$ acts, and let $K \subset U.$ 
We say that\footnote{Note that our choice
  of $U$ containing $K$ does not matter in the definition.}
\emph{$u$ has coisotropic regularity of order $k$ relative to $\hilbert$
  in $K$ if there exists $Q \in \Psieb{0,0}(M),$ elliptic on $K,$ such
  that $$\coiso^k Q u \in \hilbert.$$}

We say that \emph{u satisfies the nonfocusing condition of order $k$
  relative to $\hilbert$ on $K$ if there exists $Q \in
  \Psieb{0,0}(M),$ elliptic on $K,$ such that $$Q u \in \coiso^k(\hilbert).$$}

We say that $u$ is nonfocusing resp.\ coisotropic of order $k$
relative to $\hilbert$ on an arbitrary open subset $S$ of $\fcal$ if
for every open $O \subset S$ with closure disjoint from
$\fcal_{\sing},$ it is nonfocusing resp.\ coisotropic on $O$ of order
$k$ with respect to $\hilbert.$

We say that \emph{u satisfies the nonfocusing condition
  relative to $\hilbert$ on $K$} (without specifying an order) if $u$
satisfies the nonfocusing condition of \emph{some} order $k \in \NN.$
\end{definition}

\begin{remark}\begin{enumerate}\
\item
$u$ is coisotropic on $K$ if and only if $u$ is coisotropic at every $p\in K$,
i.e.\ on $\{p\}$ for every $p\in K$. This can be seen by a partition
of unity and a microlocal elliptic parametrix construction, as usual.
\item The conditions of coisotropic regularity and nonfocusing should be,
loosely speaking, considered to be dual to one another; a precise
statement to this effect appears in the proof of
Theorem~\ref{theorem:geometric} below.
\item Coisotropy and nonfocusing are only of interest on
  $\fcal_{\reg}$ itself: away from this set, to be coisotropic of order
  $k$ with respect to $\hilbert$ means merely to be microlocally in
  $H^k_{\ebo, \hilbert}$ while to be nonfocusing means to be
  microlocally in $H^{-k}_{\ebo, \hilbert}.$
\item
Certainly, away from $\ef,$ $\sigma(\Box)$ vanishes on $\fcal_{\reg},$
as the latter lies in the characteristic set $\Sigma$ by definition.
Splitting $\Sigma$ into component according to the sign of $\tau,$ and
letting $\Pi_\pm$ be pseudodifferential operators over $M^\circ$
microlocalizing near each of these components, we
thus have
$$
\Box \Pi_{\pm} = Q_{\pm} A_{\pm}+R
$$
with $A_\pm$ in $\module,$ $Q_{\pm}$ elliptic of order $1,$ and $R$
smoothing.
\end{enumerate}
\end{remark}

>From Lemma~\ref{lemma:moduleclosed}, we obtain the following.
\begin{corollary}
If $u$ is coisotropic of order $k$ on $K$ relative to $\hilbert$ then
there exists $U$ open, $K\subset U$ such that for $\tilde Q\in\Psieb{0,0}(M)$,
$\WF'(\tilde Q)\subset
U$ implies $\tilde Q A^\alpha u\in\hilbert$ for $|\alpha|\leq k$.

Conversely, suppose $U$ is open and
for $\tilde Q\in\Psieb{0,0}(M)$,
$\WF'(\tilde Q)\subset
U$ implies $\tilde Q A^\alpha u\in\hilbert$ for $|\alpha|\leq k$.
Then for $K\subset U$, $u$ is coisotropic of order $k$ on $K$ relative
to $\hilbert$.
\end{corollary}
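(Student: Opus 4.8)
The plan is to prove both implications by a standard microlocalization argument, using only Lemma~\ref{lemma:moduleclosed} (which presents $\coiso^k$ through the generators $A^\alpha=\prod_i A_i^{\alpha_i}$, $|\alpha|\le k$) together with the microlocal elliptic parametrix construction for $\Psieb{*}(M)$. Throughout I will write $U_0$ for the fixed open set defining $\module$ and $\coiso$ (so $\overline{U_0}\cap\fcal\subset\fcal_{\reg}$), the corollary's $U$ always being taken inside $U_0$. The structural fact to keep in front of us is that any $B\in\Psieb{0,0}(M)$ with $\WF'(B)\subset U_0$ already lies in $\module$ — its order-one symbol vanishes identically, hence vanishes on $\fcal_{\reg}$ — so $\Psieb{0,0}$-operators microsupported in $U_0$ lie in $\coiso^0$, and consequently any product $\tilde Q\,A^\alpha\,G$ with $\tilde Q,G$ microsupported in $U_0$ and $|\alpha|\le k$ lies in $\coiso^k$. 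Note also that, since $\Id\in\coiso^0\subset\coiso^k$, the hypothesis $\coiso^k Q u\in\hilbert$ in particular forces $Qu\in\hilbert$.

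For the forward implication, I would fix $Q\in\Psieb{0,0}(M)$ elliptic on $K$ with $\coiso^k Q u\in\hilbert$, and choose nested open neighbourhoods $K\subset U\Subset U_1\Subset U_0$ with $U_1$ small enough that $Q$ is elliptic on $\overline{U_1}$. The microlocal parametrix construction (as in the proof of Lemma~\ref{lemma:WF-to-reg}) produces $G\in\Psieb{0,0}(M)$ with $\WF'(G)\subset U_1$ and $\WF'(GQ-\Id)\cap U=\emptyset$. Then for any $\tilde Q\in\Psieb{0,0}(M)$ with $\WF'(\tilde Q)\subset U$ and any $|\alpha|\le k$ I would write
$$
\tilde Q\,A^\alpha u=(\tilde Q\,A^\alpha G)(Qu)-\tilde Q\,A^\alpha(GQ-\Id)u .
$$
By the structural remark $\tilde Q\,A^\alpha G\in\coiso^k$, so the first term lies in $\hilbert$ by hypothesis; and since $\WF'(\tilde Q\,A^\alpha)\subset\WF'(\tilde Q)\subset U$ is disjoint from $\WF'(GQ-\Id)$, the operator $\tilde Q\,A^\alpha(GQ-\Id)$ lies in $\Psieb{-\infty,0}(M)\subset\Psieb{0,0}(M)$, so the second term lies in $\hilbert$ as well. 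This completes the forward direction.

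For the converse, given the microlocal hypothesis on $u$ over $U$, I would pick $Q\in\Psieb{0,0}(M)$ elliptic on $K$ with $\overline{\WF'(Q)}$ a compact subset of $U$, and show $BQu\in\hilbert$ for every $B\in\coiso^k$. Fix such a $B$; then $BQ\in\coiso^k$ and $\WF'(BQ)\subset\WF'(Q)\Subset U$, so a microlocal cutoff (insert $\Psi\in\Psieb{0,0}(M)$ with $\WF'(\Psi)\subset U$ and $\Psi$ microlocally the identity near $\overline{\WF'(Q)}$) together with the expansion~\eqref{HMV.102} of $\Psi BQ\in\coiso^k$ lets one write
$$
BQ=\sum_{|\gamma|\le k}V_\gamma A^\gamma+R,\qquad V_\gamma\in\Psieb{0,0}(M),\ \WF'(V_\gamma)\subset U,\ R\in\Psieb{-\infty,0}(M).
$$
Applying this to $u$, each $V_\gamma A^\gamma u\in\hilbert$ by the hypothesis (with $\tilde Q=V_\gamma$), and $Ru\in\hilbert$ since $R$ is smoothing; hence $BQu\in\hilbert$. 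As $B\in\coiso^k$ was arbitrary and $Q$ is elliptic on $K$, this is exactly coisotropic regularity of order $k$ for $u$ on $K$.

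I do not expect a substantive obstacle: the argument is purely formal once one has Lemma~\ref{lemma:moduleclosed} and the elliptic parametrix. The two places needing a little care — which I would write out in detail — are (i) arranging the parametrix $G$ (forward) and the cutoff $\Psi$ (converse) to be microsupported in the nested neighbourhoods $U_1\subset U_0$, resp.\ $U$, so that all relevant products genuinely land in $\coiso^k$ and so that the leftover operators are smoothing; and (ii) the tacit admissibility of $u$, namely that $u$ lies in a space on which operators in $\Psieb{-\infty,0}(M)$ map into $\hilbert$ (e.g.\ $u\in\hilbert$), which holds in all the settings where this corollary is applied and should be recorded as a standing assumption.
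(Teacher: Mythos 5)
Your argument is correct, and its overall skeleton (choose $Q$ elliptic on $K$ from the definition of coisotropic regularity, build a microlocal parametrix $G$, handle the parametrix error by disjointness of wavefront sets) is the same as the paper's. The genuine difference is in how you handle the main term in the forward direction. The paper works by induction on $k$: starting from $\tilde Q A^\alpha u = \tilde Q S Q A^\alpha u + \tilde Q(SQ-\Id)A^\alpha u$ it invokes Lemma~\ref{lemma:Psi-rewrite} to commute $A^\alpha$ past $Q$, creating a main term $(\tilde Q S A^\alpha)(Qu)\in\hilbert$ and a family of lower-order pieces $\tilde Q S C_\beta A^\beta u$ that are dispatched by the inductive hypothesis at order $k-1$. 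You instead place the parametrix $G$ to the \emph{right} of $A^\alpha$, so that the main operator $\tilde Q\,A^\alpha\,G$ is manifestly a product of elements of $\module$ (your observation that order-zero operators microsupported in $U_0$ lie in $\module$ is correct, since their order-one principal symbol vanishes identically), hence lies in $\coiso^k=\coiso\cap\Psieb{k,0}(M)$ by the abstract definition alone, and the hypothesis $\coiso^k Qu\subset\hilbert$ applies directly; no induction and no explicit appeal to Lemma~\ref{lemma:Psi-rewrite} are needed in this direction. This is a clean simplification. Your converse direction uses the expansion~\eqref{HMV.102} together with an inserted cutoff $\Psi$ to localize the coefficients $W_\gamma$ into $U$; the paper only says ``the proof of the converse is similar,'' and one natural reading of ``similar'' is the mirror of their inductive commutation argument, so here too your version is a slightly different and arguably tidier organization. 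The one thing to carry over into the final write-up is the admissibility point you already flag: both $\tilde Q A^\alpha(GQ-\Id)u\in\hilbert$ (forward) and $Ru\in\hilbert$ (converse) require that smoothing operators land $u$ in $\hilbert$, which is indeed the standing hypothesis in every context where the paper invokes this corollary.
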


\begin{proof}
Suppose first that
$u$ is coisotropic of order $k$ on $K$ relative to $\hilbert$.
By definition, there exists $Q$ elliptic on $K$ such that $\coiso^k Qu
\subset\hilbert$.
Let $U$ be such that $Q$ is elliptic on $\overline{U}$, $K\subset U$,
and let $S\in\Psieb{0,0}(M)$
be a microlocal parametrix for $Q$, so $\WF'(R)\cap\overline{U}
=\emptyset$ where $R=SQ-\Id$.

We prove the corollary by induction, with the case $k=0$ being immediate
as one can write $\tilde Qu=\tilde QSQu+\tilde Q Ru$,
$\tilde QS\in\Psieb{0,0}(M)$ is bounded on $\hilbert$, $Qu\in\hilbert$,
$\tilde Q R\in\Psieb{-\infty,0}(M)$ (for they have disjoint $\WF'$),
so $\tilde QR u\in\hilbert$.

Suppose now that $k\geq 1$, and the claim has been proved for $k-1$. By
Lemma~\ref{lemma:Psi-rewrite}, applied with $Q_n=Q$ (i.e.\ there is no
need for the subscript $n$, or for uniformity),
$$
A^\alpha Q=Q A^\alpha +\sum_{|\beta|\leq|\alpha|-1} C_\beta A^\beta.
$$
Thus, for $|\alpha|=k$,
$$
\tilde Q A^\alpha u=\tilde Q SQ A^\alpha+\tilde QR A^\alpha u
$$
and
$$
\tilde Q SQ A^\alpha=\tilde Q S A^\alpha Q
-\sum_{|\beta|\leq|\alpha|-1} \tilde Q S C_\beta A^\beta
$$
together with the induction hypothesis (due to which and to
$\tilde Q S C_\beta\in\Psieb{0,0}(M)$ with $\WF'(\tilde Q S C_\beta)
\subset U$, $\tilde Q S C_\beta A^\beta u\in\hilbert$) and
$\tilde Q R\in\Psieb{-\infty,0}(M)$ imply
$\tilde Q A^\alpha u\in\hilbert$, providing the inductive step.

The proof of the converse statement is similar.
\end{proof}

We now set
$$
\hilberth=L^2_g(I\times X_0)
$$
where $I$ is a compact interval.
We additionally introduce another Hilbert space $\hilbertx\subset
\hilberth,$ given by $H^1_0(I\times X_0)$ or $H^1(I \times X_0)$ with $I$ an interval
and the $0$ denoting vanishing at $I\times \pa X_0.$  Note that
$\Id +\Lap: \hilbertx \to \hilbertx^*$ is an isometry.

Suppose $K$ is compact. For $N\geq k+r$ we let $\hilbertp_K$ denote the
subspace of $\hilberth$
\begin{equation*}\begin{split}
\hilbertp_K=\{u\in\hilbertx^*:\ \WFbHp^N(u)\subset K,
\ u\ \text{is coisotropic of order}&\ k \text{ w.r.t. }\\
&H^r_{\bo,\hilbertx^*}
\text{ on } K\}.
\end{split}\end{equation*}
Let
\begin{equation*}\begin{split}
\hilbertz_K=\{\phi\in \hilbertx:
u\ \text{is coisotropic of order}&\ k \text{ w.r.t.}\ H^r_{\bo,\hilbertx}
\text{ on } K\}.
\end{split}\end{equation*}
Also, for $\nu\in\RR$, we choose a family of operators
for adjusting orders; we let
$$
T_{\nu}\in\Psieb{\nu,0}(M)
$$
be (globally) elliptic of order
$\nu$. Thus, $T_\nu$ are simply weights. Later, in \eqref{eq:refine-weight},
we make a slightly more specific choice.

\begin{lemma}\label{lemma:coisoinclusion}
Suppose that $K\subset O$, $K$ compact, $O$ open with compact closure, and $Q\in\Psi^0(M)$
such that $\WF'(\Id-Q)\cap K=\emptyset$, $\WF'(Q)\subset O$.
Let
\begin{equation}\label{eq:K-nbhd-coiso}
\hilbertp=\{u\in\hilbertx^*:\ T_N(\Id-Q)u\in\hilbertx^*,\ |\alpha|\leq k\Rightarrow
T_r A^\alpha Qu\in
\hilbertx^*\}
\end{equation}
and
\begin{equation}\label{second:eq:K-nbhd-coiso}
\hilbertz=\{u\in\hilbertx:\ |\alpha|\leq k\Rightarrow
T_r A^\alpha Qu\in
\hilbertx\}
\end{equation}
Then
$$
\hilbertp_K\subset\hilbertp\subset\hilbertp_{\overline{O}}.
$$
and
$$
\hilbertz_K\subset\hilbertz\subset\hilbertz_{\overline{O}}.
$$
\end{lemma}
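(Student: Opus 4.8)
Proof proposal. The plan is to prove all four inclusions by the same piece of microlocal bookkeeping. The tools are: the characterization of coisotropic regularity on a compact set furnished by the corollary following Lemma~\ref{lemma:moduleclosed} (coisotropy of order $k$ on a compact $K'$ relative to a space $\hilbert$ is equivalent to the existence of an open $U'\supset K'$ with $\tilde Q A^\alpha u\in\hilbert$ for all $\tilde Q$ with $\WF'(\tilde Q)\subset U'$ and $|\alpha|\le k$); microlocality of the calculus; the commutation identity $A^\alpha\tilde Q=\tilde Q A^\alpha+\sum_{|\beta|<|\alpha|}C_\beta A^\beta$ with $\WF'(C_\beta)\subset\WF'(\tilde Q)$ of Lemma~\ref{lemma:Psi-rewrite}; and the numerical hypothesis $N\ge k+r$, which lets us treat any piece of $u$ microsupported away from $K$ as already possessing the regularity demanded of the target space, so that the only region needing real work is a neighbourhood of $K$.

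For $\hilbertp_K\subset\hilbertp$, take $u\in\hilbertp_K$. Since $\WFbHp^N(u)\subset K$ and $\WF'(\Id-Q)\cap K=\emptyset$, microlocality gives $(\Id-Q)u\in H^N_{\bo,\hilbertx^*}$, hence $T_N(\Id-Q)u\in\hilbertx^*$. Since $u$ is coisotropic of order $k$ relative to $H^r_{\bo,\hilbertx^*}$ on $K$, the quoted corollary produces an open $U_0$ with $K\subset U_0\subset O$ such that $\WF'(\tilde Q)\subset U_0$ forces $\tilde Q A^\beta u\in H^r_{\bo,\hilbertx^*}$ for $|\beta|\le k$. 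Split $Q=Q_1+Q_2+R$ with $R$ smoothing, $\WF'(Q_1)\subset U_0$ and $\WF'(Q_2)\cap K=\emptyset$ (possible because $K\subset U_0$). For the $Q_1$-term, commute $A^\alpha$ to the right by Lemma~\ref{lemma:Psi-rewrite} and apply the corollary; for the $Q_2$-term, $\WF'(A^\alpha Q_2)$ is disjoint from $\WFbHp^N(u)$, so by microlocality and $N-|\alpha|\ge N-k\ge r$ one has $A^\alpha Q_2 u\in H^r_{\bo,\hilbertx^*}$; the $R$-term is immediate. Hence $T_rA^\alpha Qu\in\hilbertx^*$ for $|\alpha|\le k$, so $u\in\hilbertp$.

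For $\hilbertp\subset\hilbertp_{\overline O}$, take $u\in\hilbertp$. From $T_N(\Id-Q)u\in\hilbertx^*$ we get $(\Id-Q)u\in H^N_{\bo,\hilbertx^*}$, whence $\WFbHp^N(u)=\WFbHp^N(Qu)\subset\WF'(Q)\subset O\subset\overline O$. Writing $A^\alpha u=A^\alpha Qu+A^\alpha(\Id-Q)u$ for $|\alpha|\le k$, the first term lies in $H^r_{\bo,\hilbertx^*}$ by the definition of $\hilbertp$ and the second lies there because $(\Id-Q)u\in H^N_{\bo,\hilbertx^*}$ with $N-|\alpha|\ge N-k\ge r$; feeding this through the generators of $\coiso$ from Lemma~\ref{lemma:moduleclosed} gives $\coiso^k u\subset H^r_{\bo,\hilbertx^*}$, so $u$ is coisotropic of order $k$ relative to $H^r_{\bo,\hilbertx^*}$ on every compact set, in particular on $\overline O$. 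The two $\hilbertz$-inclusions are proved by the identical argument with $\hilbertx^*$ replaced by $\hilbertx$ throughout and with all of the wavefront-set bookkeeping (and the term $T_N(\Id-Q)u$, which does not occur in the definition of $\hilbertz$) deleted.

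The one genuinely delicate point is the orderkeeping: one must verify that each auxiliary operator produced along the way --- the cutoffs $Q_1,Q_2$, the commutators furnished by Lemma~\ref{lemma:Psi-rewrite}, and the fixed elliptic operators $T_N,T_r$ --- carries exactly the order and microsupport claimed for it, so that the membership assertions $A^\alpha Q_ju\in H^r_{\bo,\hilbertx^*}$ genuinely follow; this is precisely where $N\ge k+r$ and the two conditions on $\WF'(Q)$ and $\WF'(\Id-Q)$ enter, and, as everywhere in this section, it rests on working microsupported away from $\fcal_{\sing}$, so that $\fcal$ is a bona fide coisotropic submanifold near the relevant sets and $\coiso$ enjoys the finite-generation and closed-under-commutator properties of Lemma~\ref{lemma:moduleclosed}.
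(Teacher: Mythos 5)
Your argument is correct and is essentially the paper's: both directions split $Qu$ (or $Q$ itself) into a piece microsupported near $K$, handled by coisotropy via the corollary to Lemma~\ref{lemma:moduleclosed} after commuting with Lemma~\ref{lemma:Psi-rewrite}, and a piece microsupported away from $K$, handled by the $H^N$ wavefront condition together with $N\geq k+r$. Your treatment of the second inclusion via the global decomposition $A^\alpha u = A^\alpha Q u + A^\alpha(\Id-Q)u$ is a touch more explicit than the paper's pointwise case split over $p\in\overline{O}$ (and incidentally dodges a sign typo there), but the idea is the same.
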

\begin{proof}
If $u\in \hilbertp_K,$ then $\WFbHp^N(u)\subset K$ implies that $T_N
(\Id-Q) u \in \hilbertx^*.$ Moreover, since $u$ is coisotropic on $K,$ it
is coisotropic on a neighborhood $O'$ of $K;$ we construct $Q' \in
\Psi^0(M)$ with $\WF Q' \subset O',$ $\WF (\Id-Q') \cap K=\emptyset.$
Then
$$
T_r A^\alpha Q u = T_r A^\alpha Q'Qu + T_r A^\alpha (\Id-Q') Qu,
$$
and the first term is in $\hilbertx^*$ by coisotropy of $u$ on $O'$ while
the latter is in $\hilbertx^*$ by the wavefront condition on $u.$

On the other hand, if $u \in \hilbertp,$ we have $T_N
(\Id-Q)u\in\hilbertx^*$ hence $\WFbHp^N (u) \cap \liptic
(\Id-Q)=\emptyset,$ so in particular, $\WFbHp^N(u) \subset O^c,$ since
$\Id-Q$ must be elliptic on $O^c.$ It remains, given $p \in
\overline{O},$ to check coisotropic regularity at $p.$ If $p \in
\liptic (\Id-Q),$ it again follows from the wavefront set condition,
hence it suffices to consider $p \in \liptic Q\supset [\liptic
(\Id-Q)]^c;$ at such points coisotropic regularity follows from
$T_r A^\alpha Q u \in \hilbertx^*.$

The proof for $\hilbertz$ works analogously.
 \end{proof}

\begin{corollary}
Suppose $K=\cap_j O_j$, $O_j$ open with compact closure, $\overline{O_{j+1}}
\subset O_j$. Let $\hilbertp_j,$ $\hilbertz_j$
be given by \eqref{eq:K-nbhd-coiso}, \eqref{second:eq:K-nbhd-coiso} where $Q_j$ satisfies
$\WF'(\Id-Q_j)\cap K=\emptyset$, $\WF'(Q_j)\subset O$. Then
$\hilbertp_K=\cap_j\hilbertp_j,$ $\hilbertz_K =\cap_j \hilbertz_K.$

In particular, $\hilbertp_K$ and $\hilbertz_K$ become Fr\'echet spaces when equipped
with the $\hilbertp_j,$ $\hilbertz_j$ norms.
\end{corollary}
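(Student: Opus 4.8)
The plan is to reduce everything to Lemma~\ref{lemma:coisoinclusion} together with the elementary fact that both conditions cutting out $\hilbertp_K$ and $\hilbertz_K$ — a wavefront containment and coisotropic regularity — behave monotonically under shrinkage of the localizing set.

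First I would apply Lemma~\ref{lemma:coisoinclusion} once for each $j$, with the open set ``$O$'' there taken to be $O_j$ (legitimate, since $K=\bigcap_i O_i\subset O_j$ and $O_j$ has compact closure, while $Q_j$ satisfies the required wavefront conditions). This gives
\[
\hilbertp_K\subset\hilbertp_j\subset\hilbertp_{\overline{O_j}},\qquad
\hilbertz_K\subset\hilbertz_j\subset\hilbertz_{\overline{O_j}}
\]
for every $j$. The left inclusions, intersected over $j$, give $\hilbertp_K\subset\bigcap_j\hilbertp_j$ and $\hilbertz_K\subset\bigcap_j\hilbertz_j$. For the reverse inclusions, take $u\in\bigcap_j\hilbertp_j$. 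Then $u\in\hilbertp_{\overline{O_j}}$ for every $j$, so $\WFbHp^N(u)\subset\overline{O_j}$ for every $j$; since the hypothesis $\overline{O_{j+1}}\subset O_j$ forces $K\subset\overline{O_{j+1}}\subset O_j$ and hence $\bigcap_j\overline{O_j}=\bigcap_j O_j=K$, we conclude $\WFbHp^N(u)\subset K$. Moreover $u\in\hilbertp_{\overline{O_1}}$ shows that $u$ is coisotropic of order $k$ on $\overline{O_1}\supset K$, hence coisotropic of order $k$ on $K$: by the remark following Lemma~\ref{lemma:moduleclosed} coisotropy on a set is equivalent to coisotropy at each of its points, so passing to a subset only weakens the requirement (the corollary following Lemma~\ref{lemma:moduleclosed} provides the same monotonicity in the microlocal formulation). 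Thus $u\in\hilbertp_K$, and $\hilbertp_K=\bigcap_j\hilbertp_j$. The argument for $\hilbertz$ is identical except that there is no wavefront condition, so only the monotonicity of coisotropic regularity in the localizing set is invoked; this gives $\hilbertz_K=\bigcap_j\hilbertz_j$.

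For the Fréchet statement I would equip $\hilbertp_K$ with the countable family of norms $\|\cdot\|_{\hilbertp_j}$ attached to the spaces $\hilbertp_j$ of \eqref{eq:K-nbhd-coiso}, and $\hilbertz_K$ with the $\|\cdot\|_{\hilbertz_j}$. Each $\hilbertp_j$ is complete: its norm is the graph-type norm $\|u\|_{\hilbertx^*}+\|T_N(\Id-Q_j)u\|_{\hilbertx^*}+\sum_{|\alpha|\le k}\|T_r A^\alpha Q_j u\|_{\hilbertx^*}$, assembled from $\|\cdot\|_{\hilbertx^*}$ and finitely many continuous operators on the relevant scales, so completeness is standard. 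Since each inclusion $\hilbertp_j\hookrightarrow\hilbertx^*$ is continuous, limits are unique and consistent across $j$: a sequence Cauchy in every $\|\cdot\|_{\hilbertp_j}$ converges in each $\hilbertp_j$ to one and the same element, which then lies in $\bigcap_j\hilbertp_j=\hilbertp_K$ and is the limit in every seminorm. The family separates points — already $\|\cdot\|_{\hilbertp_1}$ is a norm dominating $\|\cdot\|_{\hilbertx^*}$ — and it is countable, so $\hilbertp_K$ is metrizable and complete, i.e.\ Fr\'echet; the same reasoning applies verbatim to $\hilbertz_K$.

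The only step demanding real care, and the one I would dwell on, is the inclusion $\bigcap_j\hilbertp_j\subset\hilbertp_K$ — specifically, checking that the coisotropic-regularity part of membership survives the shrinkage $\overline{O_j}\downarrow K$. This rests entirely on the pointwise characterization of coisotropy and on the way the module $\coiso$ and its localizations are set up in Lemma~\ref{lemma:moduleclosed}, so that coisotropic regularity relative to a neighborhood $U'\supset U$ entails coisotropic regularity relative to $U$. Everything else is routine manipulation of nested open sets and the standard construction of a projective limit of Banach spaces.
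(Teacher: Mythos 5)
Your proof is correct and follows essentially the same route as the paper: Lemma~\ref{lemma:coisoinclusion} gives $\hilbertp_K\subset\hilbertp_j\subset\hilbertp_{\overline{O_j}}$, intersecting the wavefront containments forces $\WFbHp^N(u)\subset K$, and membership in a single $\hilbertp_j$ recovers coisotropy on $K$. The only cosmetic difference is in the last step: the paper reads off coisotropy on $K$ directly from $u\in\hilbertp_1$ (since $Q_1$ is elliptic on $K$ and $T_rA^\alpha Q_1u\in\hilbertx^*$ is literally the defining condition), whereas you detour through coisotropy on $\overline{O_1}$ and then monotonicity under shrinkage of the localizing set — equivalent, since the same $Q_1$ elliptic on the larger set remains elliptic on $K$, so the ``monotonicity'' is nothing more than the definition itself rather than something that needs the pointwise characterization from the remark after Lemma~\ref{lemma:moduleclosed}.
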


\begin{remark}
It is easy to see that the Fr\'echet topology is independent of the choice
of the particular $O_j$.
\end{remark}

\begin{proof}
The fact that $\hilbertp_K\subset\cap_j\hilbertp_j$ follows from
Lemma~\ref{lemma:coisoinclusion}. For the reverse inequality,
note that $u\in\cap\hilbertp_j\subset\cap\hilbertp_{\overline{O_j}}$
has $\WFbHp(u)\subset\cap\overline{O_j}=K$. On the other hand,
as $u\in\hilbertp_1$, $|\alpha|\leq k\Rightarrow
A^\alpha Q_1u\in
\hilbert$ and $Q_1$ is elliptic on $K$. Thus, $u\in\hilbertp_K$.

The same holds for $\hilbertz_K.$
\end{proof}

We now note the following functional-analytic facts:
\begin{lemma}\label{lemma:duals}
Let $Q$ be as above, and again let
\begin{equation}\label{eq:dual-coiso}
\hilbertp=\{u\in\hilbertx^*:\ T_N(\Id-Q)u\in\hilbertx^*,\ |\alpha|\leq k\Rightarrow
T_r A^\alpha Qu\in
\hilbertx^*\},
\end{equation}
and
\begin{equation*}
\hilbertz=\{u\in\hilbertx:\ |\alpha|\leq k\Rightarrow
T_r A^\alpha Qu\in
\hilbertx\}
\end{equation*}
Then the dual of $\hilbertp$ with respect to the space $\hilberth$
(see Appendix~\ref{appendix:functional}) is
$$
\hilbertp^*=\{u:\ u=v_0+ T_N(\Id-Q)v_1 +
\sum_{\abs{\alpha}\leq k} T_r A^\alpha Qv_\alpha,\ v_0,v_1, v_\alpha \in
\hilbertx\},
$$
and the dual of $\hilbertz$ with respect to $\hilberth$ is
$$
\hilbertz^*=\{u:\ u=v_0+
\sum_{\abs{\alpha}\leq k} T_r A^\alpha Qv_\alpha,\ v_0, v_\alpha \in
\hilbertx^*\},
$$
\end{lemma}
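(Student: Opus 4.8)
The plan is to deduce both identities from the elementary duality of a Hilbert space realized as a closed subspace of a finite orthogonal direct sum; as noted at the start of this section, this is the argument of \cite[Section~4]{mvw1}, which carries over here because we work microlocally away from $\fcal_{\sing}$. I will describe the case of $\hilbertp$; the case of $\hilbertz$ is identical after interchanging the roles of $\hilbertx$ and $\hilbertx^*$ and deleting the $T_N(\Id-Q)$ slot, which is absent since $\hilbertz\subset\hilbertx$ and no wavefront condition off $K$ is imposed.

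First I would fix formally self-adjoint elliptic representatives $T_N,T_r$ in the edge-b calculus (for instance powers of a fixed positive elliptic operator) and take $Q=Q^*$; this costs nothing, as only $\WF'(Q)$ and the ellipticity of $Q$ on $K$ enter the definition of $\hilbertp$. Writing the defining conditions of $\hilbertp$ as $S_0u,S_1u,(S_\alpha u)_{|\alpha|\le k}\in\hilbertx^*$ with $S_0=\Id$, $S_1=T_N(\Id-Q)$, $S_\alpha=T_rA^\alpha Q$, I would equip $\hilbertp$ with the norm $\|u\|_{\hilbertp}^2=\sum_j\|S_ju\|_{\hilbertx^*}^2$ and check that $\Phi(u)=(S_ju)_j$ identifies $\hilbertp$ isometrically with a \emph{closed} subspace $V$ of the Hilbert space $H=\bigoplus_j\hilbertx^*$: each $S_j$ is continuous on distributions, so if $\Phi(u_n)$ converges in $H$ then the $j=0$ component forces $u_n\to u$ in $\hilbertx^*$, whence $S_ju\in\hilbertx^*$ equals the $j$-th limit and the limit lies in $V$. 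In particular $\hilbertp$ is complete.

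Next I would dualize. Every bounded functional on $\hilbertp\cong V$ extends to $H$, and since the dual of $\hilbertx^*$ with respect to $\hilberth$ is $\hilbertx$ (Appendix~\ref{appendix:functional}), functionals on $H$ are $(\xi_j)_j\mapsto\sum_j\langle\xi_j,v_j\rangle$ with $v_j\in\hilbertx$. Pulling back along $\Phi$, every element of $\hilbertp^*$ is represented as
$$
u\longmapsto\langle u,v_0\rangle+\langle T_N(\Id-Q)u,v_1\rangle+\sum_{|\alpha|\le k}\langle T_rA^\alpha Qu,v_\alpha\rangle,\qquad v_0,v_1,v_\alpha\in\hilbertx,
$$
and conversely each such expression is a bounded functional on $\hilbertp$. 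Finally I would move the operators onto the other side of the $\hilberth$-pairing: with $T_N,T_r,Q$ self-adjoint and using that $\module$, hence $\coiso$, is closed under adjoints — so that $(A^\alpha)^*$ is again of the form $\sum_{|\beta|\le|\alpha|}Q_\beta A^\beta$ by \eqref{HMV.102}, the commutators $[A^\alpha,Q]$ and $[A^\alpha,T_r]$ being lower order and absorbed in the same way — this functional becomes pairing against
$$
w=v_0+T_N(\Id-Q)v_1+\sum_{|\alpha|\le k}T_rA^\alpha Qv_\alpha,
$$
with $v_1,v_\alpha$ still ranging over all of $\hilbertx$; this is the asserted description of $\hilbertp^*$, and the same computation with $\hilbertx\leftrightarrow\hilbertx^*$ gives $\hilbertz^*$.

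The step I expect to be the real work is the last one: verifying that the distributional pairing $\langle u,w\rangle$ for $u\in\hilbertp$ and $w$ of the stated form is genuinely well defined and continuous, i.e.\ that transferring $T_N(\Id-Q)$ and $T_rA^\alpha Q$ onto $u$ lands in $\hilbertx^*$ with norm controlled by $\|u\|_{\hilbertp}$. This is precisely where the self-adjoint choices together with the adjoint- and commutator-calculus of $\coiso$ and the differential-pseudodifferential operators from \S\ref{section:Diff-Psi} are used; it is carried out in detail in \cite[Section~4]{mvw1} (in the fiber-without-boundary setting, which presents no new difficulty here since everything is microlocalized away from $\fcal_{\sing}$).
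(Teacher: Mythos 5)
Your argument is essentially the paper's: the paper obtains both identities by invoking the abstract Hilbert-space duality result of Appendix~\ref{appendix:functional} (equations \eqref{eq:hilbertp-dual} and \eqref{eq:A-formal-adjoint}, themselves just Riesz representation for the norm $\sum\|A_ju\|^2$), with $A_j$ taken to be $T_N(\Id-Q)$ and $T_rA^\alpha Q$, and then notes that the formal adjoints have principal symbols with the same vanishing on $\fcal_{\reg}$, so the ranges are of the asserted form --- precisely the two steps you carry out directly via the isometric embedding into $\bigoplus_j\hilbertx^*$ and the self-adjoint/commutator rewriting. The only thing worth making explicit in your version is the density of $\cD$ in $\hilbertp$ (needed to pass the $\hilberth$-adjoint identity from $\cD$ to all of $\hilbertp$), which in the paper is a standing hypothesis of the appendix setup and is supplied by Lemma~\ref{lemma:density}.
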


\begin{proof}
First consider the dual of $\hilbertz$ with respect to $\hilberth$.
We apply the discussion of Appendix~\ref{appendix:functional} leading
to \eqref{eq:hilbertp-dual}. More precisely, with the notation of
the Appendix, we take
$\hilberth=L^2_g(I\times X_0)$, and $\hilbertx=H^1(I\times X_0)$, resp.
\ $\hilbertx=H^1_0(I\times X_0)$, as set out earlier. We also let
$\cD=\CI(I\times X_0)$, resp.\ $\cD=\dCI(I\times X_0)$ (with the dot
indicating infinite order vanishing at $I\times\pa X_0$).
We define the
operators $B_k$ in \eqref{eq:hilbert-hilberth-ip} as follows:
we take $B_k$, $k=1,\ldots,N$,
to be a collection of $\CI$ vector fields on $X_0$ which span $\Vf(X)$
over $\CI(X)$, $B_0=\Id$, and define the
$\hilbertx$-norm on $\cD$ by
\begin{equation}\label{eq:hilbertx-norm}
\|u\|_{\hilbertx}^2=\|B_0 u\|^2_{\hilberth}+\sum_{k=1}^N\|B_k u\|^2_{\hilberth},
\end{equation}
cf.\ \eqref{eq:hilbertpp-norm}; then $\hilbertx$ is the completion of
$\cD$.
Then we take the collection of $A_j$ in defining the space $\hilbertp$
in Appendix~\ref{appendix:functional}, with the norm \eqref{eq:hilbertp-norm},
to be $T_r A_\alpha Q$, $|\alpha|\leq k$. Then our claim about
$\hilbertz^*$ follows from \eqref{eq:hilbertp-dual} and
\eqref{eq:A-formal-adjoint}, taking into account that the principal
symbol of the conjugate of a pseudo-differential operator
by complex conjugation is the complex conjugate of the principal symbol
of the original operator, so its vanishing on $\fcal_{\reg}$ is unaffected.

We now consider the dual of $\hilbertp$, with $\hilbertp$ given by
\eqref{eq:dual-coiso}.
As $\Id+\Delta:\hilbertx\to\hilbertx^*$ is an isomorphism,
the norm on $\hilbertx^*$ is given by
$$
\|u\|_{\hilbertx^*}=\|(\Id+\Delta)^{-1}u\|_{\hilbertx}=\sum_{k=0}^N
\|B_k(\Id+\Delta)^{-1}u\|^2_{\hilberth},
$$
with $B_k$ as in \eqref{eq:hilbertx-norm},
we are again in the setting leading to \eqref{eq:hilbertp-dual} with
$\hilbertx$ in the Appendix given by our $\hilbertx^*$, the $B_k$ in
the Appendix given by $B_k(\Id+\Delta)^{-1}$, the space $\hilbertp$
in the Appendix being our space $\hilbertp$ in \eqref{eq:dual-coiso},
and the $A_j$ in the appendix given by $T_N(\Id-Q)$ and $T_r A^\alpha Q$,
$|\alpha|\leq k$. Then our claim about $\hilbertp^*$ follows from
\eqref{eq:hilbertp-dual} and \eqref{eq:A-formal-adjoint}.
\end{proof}

Now let
$$
\widetilde\hilbertp_K=\{u\in T_N(\hilbertx):\ u\ \text{is nonfocusing
  of order}\ k\ \text{ w.r.t. } H^{-r}_{\bo, \hilbertx} \text{ on } K\},
$$
$$
\widetilde\hilbertz_K=\{u\in \hilbertx^*:\ u\ \text{is nonfocusing
  of order}\ k\ \text{ w.r.t. } H^{-r}_{\bo,\hilbertx^*} \text{ on } K\}.
$$
\begin{lemma}\label{lemma:nfinclusions}
Define $\hilbertp,$ $\hilbertz$ as above.  Then
$$
\widetilde\hilbertp_K\subset\hilbertp^*\subset\widetilde\hilbertp_{\overline{O}}.
$$
and
$$
\widetilde\hilbertz_K\subset\hilbertz^*\subset\widetilde\hilbertz_{\overline{O}}.
$$
\end{lemma}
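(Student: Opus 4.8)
The plan is to read off the explicit descriptions of $\hilbertp^*$ and $\hilbertz^*$ from Lemma~\ref{lemma:duals}, rewrite the nonfocusing condition in a matching normal form using the finite generating set of $\module$ from Lemma~\ref{lemma:moduleclosed}, and then pass between the two descriptions by a microlocal parametrix argument dual to the proof of Lemma~\ref{lemma:coisoinclusion}. It suffices to treat $\hilbertz^*$ in detail: the argument for $\hilbertp^*$ is the same one, carrying along the extra summand $T_N(\Id-Q)v_1$, whose effect is entirely off $K$ and is controlled by the membership $u\in T_N(\hilbertx)$ exactly as the $\WFbHp^N$-hypothesis is used in Lemma~\ref{lemma:coisoinclusion}.

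First I would put the nonfocusing condition into a form compatible with Lemma~\ref{lemma:duals}. By definition, $u\in\hilbertx^*$ is nonfocusing of order $k$ relative to $H^{-r}_{\bo,\hilbertx^*}$ on $K$ iff there is $Q_0\in\Psieb{0,0}(M)$ elliptic on $K$ with $Q_0u\in\coiso^k\!\big(H^{-r}_{\bo,\hilbertx^*}\big)$, and $\WF'(Q_0)$ may be taken in any prescribed neighbourhood of $K$, hence in $O$. Writing a general element of $H^{-r}_{\bo,\hilbertx^*}$ as $T_r$ applied to an element of $\hilbertx^*$ modulo a smoothing remainder, using the generators $\{A_i\}$ of $\module$, and repeatedly applying Lemma~\ref{lemma:Psi-rewrite} to move the order-zero coefficients and the weight $T_r$ through the monomials $A^\alpha$ — each such move lowering the module order, since $\module$ is a $\Psieb{0,0}(M)$-module closed under commutators — one brings $Q_0u$ to the form
\[
Q_0u \;=\; \sum_{|\alpha|\le k} T_r A^\alpha w_\alpha,\qquad w_\alpha\in\hilbertx^*.
\]
Now for $\widetilde\hilbertz_K\subset\hilbertz^*$: given such a $u$, let $S_0\in\Psieb{0,0}(M)$ be a microlocal parametrix for $Q_0$ with $\WF'(S_0)$ inside the open set on which $\Id-Q$ is microlocally trivial (this set contains $K$), and decompose
\[
u \;=\; (\Id-Q)u \;+\; Q(\Id-S_0Q_0)u \;+\; QS_0\Big(\sum_{|\alpha|\le k}T_rA^\alpha w_\alpha\Big) \;+\; (\text{smoothing})u .
\]
The first, second and fourth summands lie in $\hilbertx^*$ (compositions of $\Psieb{0,0}(M)$-operators applied to $u\in\hilbertx^*$) and feed the term $v_0$. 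In the third, $\WF'(QS_0)\subset\WF'(S_0)$ lies where $Q\equiv\Id$ microlocally, so one may insert $Q$ immediately to the right of each $A^\alpha$ up to a smoothing error, and then commute the order-zero factor $QS_0$ and the weight $T_r$ outward through the $A^\alpha$'s (again via Lemma~\ref{lemma:Psi-rewrite}, the commutators being of strictly lower module order); this produces exactly the shape $\sum_{|\alpha|\le k}T_rA^\alpha Q v_\alpha + (\hilbertx^*\text{-term})$ with $v_\alpha\in\hilbertx^*$, so $u\in\hilbertz^*$.

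For the reverse inclusion $\hilbertz^*\subset\widetilde\hilbertz_{\overline O}$: an element $u=v_0+\sum_{|\alpha|\le k}T_rA^\alpha Q v_\alpha$ with $v_0,v_\alpha\in\hilbertx^*$ lies in $\hilbertx^*$, and by the analogue for nonfocusing of the remark following the definitions it is enough to verify the condition at each $p\in\overline O$. If $p\notin\WF'(Q)$, a microlocalizer $Q_p$ at $p$ with $\WF'(Q_p)\cap\WF'(Q)=\emptyset$ gives $Q_pu\in\hilbertx^*\subset\coiso^k\!\big(H^{-r}_{\bo,\hilbertx^*}\big)$; if $p\in\WF'(Q)\subset O$, any $Q_p$ elliptic at $p$ with sufficiently small wavefront set gives, after moving $Q_p$ inward past the $A^\alpha$'s and $T_r$, $Q_pu\in\coiso^k\!\big(H^{-r}_{\bo,\hilbertx^*}\big)$. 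Hence $u$ is nonfocusing of order $k$ at every point of $\overline O$, i.e.\ $u\in\widetilde\hilbertz_{\overline O}$; this completes the chain for $\hilbertz^*$, and the identical argument with $\hilbertx$ in place of $\hilbertx^*$ and the extra $T_N(\Id-Q)v_1$ term handles $\hilbertp^*$.

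The main obstacle is not conceptual but the bookkeeping in the commutation steps: verifying that moving $T_r$ (and, for $\hilbertp^*$, $T_N$) and the order-zero cutoff factors past the noncommutative algebra $\coiso$ preserves the relevant normal forms \emph{together with} the claimed function-space memberships, and in particular that every remainder lands in $\hilbertx^*$ (resp.\ $\hilbertx$) rather than in a worse space. This is exactly the mechanism already used in Lemma~\ref{lemma:coisoinclusion} and in \cite[Section~4]{mvw1}, so I would organize the proof so as to reuse those computations; the one genuinely new feature is the interplay of the weight $T_r$ with the fact that the a priori space $\hilbertx^*$ is not preserved by $T_r$ when $r\neq0$, which is precisely why the two-sided containment $K\subset\WF'(Q)\subset O$ — and hence the loss from $K$ to $\overline O$ in the statement — cannot be avoided.
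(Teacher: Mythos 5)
Your plan — read off $\hilbertp^*$ and $\hilbertz^*$ from Lemma~\ref{lemma:duals}, then rerun the microlocal cutoff/parametrix decomposition of Lemma~\ref{lemma:coisoinclusion} in the dual picture — is exactly what the paper does; its proof consists only of the sentence that it "follows that of Lemma~\ref{lemma:coisoinclusion} closely, using the characterization of $\hilbertp^*$ and $\hilbertz^*$ from Lemma~\ref{lemma:duals}," and your write-out (including the use of Lemma~\ref{lemma:Psi-rewrite} to move $T_r$ and the order-zero cutoffs past the generators of $\module$) is a faithful expansion of that recipe.
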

The proof follows that of Lemma~\ref{lemma:coisoinclusion} closely,
using the characterization of $\hilbertp^*$ and $\hilbertz^*$ from
Lemma~\ref{lemma:duals}.

\

We remark that away from $\ef,$ we may always (locally) conjugate by
an FIO to a convenient normal form: being coisotropic, locally $\fcal$
can be put in a model form $\zeta=0$ by a symplectomorphism $\Phi$ in
some canonical coordinates $(y,z,\eta,\zeta)$, see
\cite[Theorem~21.2.4]{Hormander3} (for coisotropic submanifolds one
has $k=n-l$, $\dim S=2n$, in the theorem).  We may moreover arrange
the $(z,\zeta)$ coordinates (i.e.\ apply a further symplectomorphism) so that 
$\sigma(\Box) \circ \Phi=q\zeta_1$ for some symbol $q$ elliptic in a
small open set.  We now quantize $\Phi$ to
a FIO $T,$ elliptic on some small neighborhood of a
$w\in \fcal_{\reg},$ which can be arranged to have the following
properties:
\begin{itemize}
\item
$\displaystyle T\Box = Q D_{z_1}T+R$ where $Q \in \Psi^1(M^\circ)$
is elliptic near $\Phi(w)$ and $R$ is a smoothing operator.
\item
$u$ has coisotropic regularity of order $k$ (near $w$) with respect
to $H^s$ if and only if $D_z^\alpha Tu \in H^s$ whenever $\abs{\alpha}<k.$
\item
$u$ is nonfocusing of order $k$ (near $w$) with respect
to $H^s$ if and only if $Tu \in \sum_{\abs{\alpha}\leq k} D_z^\alpha H^s.$
\end{itemize}
Let $G\in\Psi^{-1}(M^\circ)$ be a parametrix for $Q$.
As a consequence of the above observation,
$\Box u =f$ implies that $D_{z_1} Tu -GTf\in \CI$
microlocally near $\Phi(w),$ and if $f$ is coisotropic of order $k$
relative to $H^{s-1}$, then
$D_z^\alpha GTf\in H^{s}$ for $\abs{\alpha}\leq k$ (with an analogous
statement for non-focusing)
hence we have now sketched the proof of
the following:

\begin{proposition}\label{prop:coisotropic-prop}
Suppose $u$ is a distribution on $M^\circ$, $\Box u=f$.
If $f$ is coisotropic of
order $k$, resp.\ nonfocusing of order $k$,
with respect to $H^{s-1}$ then the coisotropic regularity of order
$k$, resp.\ non-focusing regularity of order $k$, with respect to $H^s$,
is invariant under the Hamilton flow over $M^\circ$.

In particular, for a solution to the wave equation, coisotropic regularity
of order $k$ with respect to $H^s$ and nonfocusing of order $k$ with respect
to $H^s$ are invariant under the Hamilton flow over $M^\circ.$
\end{proposition}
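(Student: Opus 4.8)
The plan is to argue entirely microlocally over $M^\circ,$ where $\Box$ is a classical second-order operator, $\Char(\Box)$ is a smooth conic hypersurface, and there are no radial points, so $d\sigma(\Box)$ is nonvanishing along $\fcal_{\reg}.$ Fix a point $w\in\fcal_{\reg}$ lying over $M^\circ.$ Since $\fcal_{\reg}$ is there a smooth conic coisotropic submanifold contained in $\Char(\Box),$ I would invoke the normal form \cite[Theorem~21.2.4]{Hormander3} to obtain a homogeneous symplectomorphism $\Phi,$ defined near $w,$ carrying $\fcal$ to the model coisotropic $\{\zeta=0\}$ in canonical coordinates $(y,z,\eta,\zeta);$ composing with a further symplectomorphism in the $(z,\zeta)$ variables arranges $\sigma(\Box)\circ\Phi=q\zeta_1$ with $q$ elliptic on a small set (possible because $\sigma(\Box)$ vanishes on $\fcal$ and is of principal type transverse to it). Quantizing $\Phi$ to an FIO $T$ elliptic near $w,$ one gets $T\Box=QD_{z_1}T+R$ with $Q\in\Psi^1(M^\circ)$ elliptic near $\Phi(w)$ and $R$ smoothing, so that with a parametrix $G\in\Psi^{-1}(M^\circ)$ for $Q$ the equation $\Box u=f$ becomes, microlocally near $\Phi(w),$
$$
D_{z_1}Tu=GTf\mod\CI.
$$

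Next I would use the standard dictionary (as in \cite[Section~4]{mvw1}) by which, microlocally near $w,$ $u$ is coisotropic of order $k$ with respect to $H^s$ iff $D_z^\alpha Tu\in H^s_{\loc}$ for $|\alpha|<k,$ and $u$ is nonfocusing of order $k$ with respect to $H^s$ iff $Tu\in\sum_{|\alpha|\le k}D_z^\alpha H^s_{\loc};$ one checks here that conjugating a ps.d.o.\ by complex conjugation does not disturb the vanishing of its symbol on $\fcal_{\reg},$ so $T$ intertwines the two filtrations cleanly. Because $G$ has order $-1,$ coisotropy of $f$ of order $k$ relative to $H^{s-1}$ gives $D_z^\alpha GTf\in H^s_{\loc}$ for $|\alpha|\le k$ (modulo commutators, which are of lower order), and the nonfocusing hypothesis on $f$ gives $GTf\in\sum_{|\alpha|\le k}D_z^\alpha H^s_{\loc}.$ Since $D_z^\alpha$ commutes with $D_{z_1},$ applying $D_z^\alpha$ to the displayed transport equation yields $D_{z_1}(D_z^\alpha Tu)=D_z^\alpha GTf\mod\CI.$ The bicharacteristics of $q\zeta_1$ inside $\{\zeta=0\}$ are exactly the $z_1$-lines (with $y,z',\eta,\zeta$ frozen), so invariance of coisotropic regularity, resp.\ of nonfocusing regularity, under the Hamilton flow translates into the assertion that $D_z^\alpha Tu\in H^s_{\loc},$ resp.\ membership of $Tu$ in the class $\sum_{|\alpha|\le k}D_z^\alpha H^s_{\loc},$ propagates in $z_1.$ That last step is the inhomogeneous real-principal-type propagation statement for the model operator $D_{z_1}$: if $D_{z_1}v=g$ microlocally with $g\in H^s_{\loc}$ (resp.\ $g\in\sum_{|\alpha|\le k}D_z^\alpha H^s_{\loc}$) and $v$ has the corresponding property in a conic neighborhood of a point over $z_1=c,$ then $v$ has it over all nearby $z_1,$ in both directions; summing over $|\alpha|$ and unwinding the dictionary completes the proof, the final clause being the case $f=0.$

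The genuinely delicate part is the bookkeeping of the second paragraph rather than any new analytic input: one must verify that $T$ intertwines the coisotropic and nonfocusing filtrations precisely and that integrating the $z_1$-transport equation preserves the class $\sum_{|\alpha|\le k}D_z^\alpha H^s_{\loc}$ --- this last being where the one-derivative gain from $G$ is matched against the loss $s\mapsto s-1$ in the hypothesis on $f.$ All of this is carried out in \cite[Section~4]{mvw1}, and the only reason it applies verbatim here is that we are over $M^\circ,$ away from both $\ef$ and the radial set $\rcal_{\eso},$ so that none of the corner- or edge-related complications of the rest of the paper intervene; I do not expect any obstacle beyond this transcription.
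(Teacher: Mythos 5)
Your proposal reproduces the paper's own argument essentially verbatim: conjugation by an FIO $T$ to the normal form $\sigma(\Box)\circ\Phi=q\zeta_1$ via \cite[Theorem~21.2.4]{Hormander3}, the dictionary translating coisotropic regularity and nonfocusing into $D_z^\alpha Tu$-estimates, the reduction of $\Box u=f$ to the transport equation $D_{z_1}Tu\equiv GTf$ with $G$ a parametrix for $Q,$ and propagation in $z_1.$ This is the same route the paper sketches, so there is nothing to reconcile.
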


(We remark that one could certainly give an alternative proof of this
proposition by positive commutator arguments similar to, but much
easier than, those used for propagation of edge regularity in the
following section.)

\begin{corollary}\label{cor:inhomog-coiso}
Suppose that $f$ is coisotropic, resp.\ non-focusing, of order $k$ relative
to $H^{m-1}$, supported in $t>T$. Let $u$ be the unique solution of $\Box u=f$
with Dirichlet or Neumann boundary conditions, supported in $t>T$.
Then $u$ is coisotropic, resp.\ non-focusing, of order $k$ relative
to $H^m$ at $p\in S^*M^\circ$ provided every\footnote{The restriction of
this \GBB to $[0,s_0]$, if $s_0\geq 0$, or $[s_0,0]$ if $s_0<0$,
is unique under
these assumptions.} \GBB $\gamma$
with $\gamma(0)=p$ has the property that there exists $s_0$ such that
$t(\gamma(s_0))<T$, and for $s\in [0,s_0]$ (or $s\in[s_0,0]$, if $s_0<0$),
$\gamma(s)\in S^*M^\circ$.

The analogous statements hold if $f$ is supported in $t<T$, and $u$
is the unique solution of $\Box u=f$ supported in $t<T$, provided
we replace $t(\gamma(s_0))<T$ by $t(\gamma(s_0))>T$.
\end{corollary}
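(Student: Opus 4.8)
The plan is to deduce the corollary from Proposition~\ref{prop:coisotropic-prop}, using the region $\{t<T\}$, on which $u$ vanishes, as the locus where the coisotropic, resp.\ nonfocusing, regularity is a priori known (trivially). Since $f$ is supported in $t>T$ and $u$ is by hypothesis the solution supported in $t>T$, we have $u\equiv 0$ on $\{t<T\}$. Hence at any point $q\in S^*M^\circ$ with $t(q)<T$ the distribution $u$ is smooth---indeed identically zero---near $q$, so it is coisotropic of order $k$, and nonfocusing of order $k$, relative to $H^m$ microlocally near $q$; in particular this holds at $q=\gamma(s_0)$.

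First I would record that the assumption $\gamma(s)\in S^*M^\circ$ for $s$ between $0$ and $s_0$ says precisely that the segment of $\gamma$ joining $p=\gamma(0)$ to $\gamma(s_0)$ meets no boundary face of $M$. Over the interior there is no glancing or reflection, so, restricted there, a \GBB is simply a reparametrized integral curve of the rescaled Hamilton vector field $\sH_{\so}$ of $\Box$; thus $\gamma|_{[0,s_0]}$ (or $\gamma|_{[s_0,0]}$ when $s_0<0$) is an honest null-bicharacteristic lying in $S^*M^\circ$. (In the interior this bicharacteristic through $p$ is moreover unique, so ``every \GBB with $\gamma(0)=p$'' in the hypothesis amounts to choosing the time direction along which $\{t<T\}$ is reached.)

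Then I would apply Proposition~\ref{prop:coisotropic-prop} with $s=m$: since $f$ is coisotropic, resp.\ nonfocusing, of order $k$ with respect to $H^{m-1}$, the coisotropic, resp.\ nonfocusing, regularity of order $k$ of $u$ with respect to $H^m$ is invariant under the Hamilton flow over $M^\circ$. Transporting this regularity along $\gamma$ from $\gamma(s_0)$---where it holds by the first paragraph---back to $p=\gamma(0)$, a transport taking place entirely within $S^*M^\circ$, gives the claimed regularity of $u$ at $p$. The variant in which $f$ is supported in $t<T$ follows by running the same argument after the time reversal $t\mapsto -t$, which preserves $\Box$, maps bicharacteristics to bicharacteristics, and preserves coisotropic regularity and nonfocusing while interchanging the roles of $\{t>T\}$ and $\{t<T\}$.

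I do not expect a genuine obstacle: the substantive analytic input---the positive-commutator/FIO normal-form argument behind the flow invariance---is already packaged in Proposition~\ref{prop:coisotropic-prop}. The points needing care are the bookkeeping, namely identifying $\{t<T\}$ as the region of a priori regularity, verifying that the \GBB hypothesis confines the propagation to interior null-bicharacteristics where that proposition is applicable, and noting that $u$ is indeed a distribution on $M^\circ$ (it lies in $\hilbertx^*$ locally), so that the interior propagation result applies to it.
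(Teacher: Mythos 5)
Your proof is correct and takes essentially the same approach as the paper's: both proofs simply observe that $u$ vanishes (hence is trivially coisotropic/nonfocusing) on $\{t<T\}$, and then transport that regularity to $p$ along the interior bicharacteristic segment via Proposition~\ref{prop:coisotropic-prop}. You have fleshed out a few details that the paper leaves implicit, such as why the \GBB hypothesis reduces the curve to an honest interior null-bicharacteristic and how time reversal gives the second part, but the underlying argument is identical.
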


\begin{proof}
This is an immediate consequence of Proposition~\ref{prop:coisotropic-prop},
taking into account that $u$ is coisotropic, resp. non-focusing, in $t<T$,
by virtue of vanishing there.
\end{proof}

If $K\subset M^\circ$ is compact, then there is $\delta>0$ such that
if $p\in S^*_K M^\circ$ and $\gamma$ is a \GBB with $\gamma(0)=p$,
then for $s\in (-\delta,\delta)$, $\gamma(s)\in M^\circ$. As $s$ is equivalent
to $t$ as a parameter along \GBB, we deduce the following result.

\begin{corollary}\label{corollary:coisocauchy}
Suppose $K\subset M^\circ$ is compact.
Suppose that $f$ is co\-isotropic, resp.\ non-focusing, of order $k$ relative
to $H^{m-1}$, supported in $t>T$. Let $u$ be the unique solution of $\Box u=f$
with Dirichlet or Neumann boundary conditions, supported in $t>T$.
Then there exists $\delta_0>0$ such that
$u$ is coisotropic, resp.\ non-focusing, of order $k$ relative
to $H^m$ at $p\in S^*_K M^\circ$ if $t(p)<T+\delta_0$.
\end{corollary}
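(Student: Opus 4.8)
The plan is to deduce this as a uniform, Cauchy‑type consequence of Corollary~\ref{cor:inhomog-coiso} together with the observation made just before the statement: the compact set $K\subset M^\circ$ admits $\delta>0$ such that any \GBB $\gamma$ with $\gamma(0)\in S^*_K M^\circ$ satisfies $\gamma(s)\in S^*M^\circ$ for all $s\in(-\delta,\delta)$. The one extra ingredient needed is the elementary fact that along a \GBB lying in the interior $S^*M^\circ$ — where a \GBB is simply a null‑bicharacteristic of $\Box$, since $\pismooth$ is an isomorphism there — the time function is affine in the parameter: because $\sH_{\so}t=2\,\sgn\taus$ on $\sSigma$ and $\sgn\taus$ is locally constant, one has $t(\gamma(s))=t(\gamma(0))\pm 2s$ along such a segment. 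In particular $s$ and $t$ are equivalent as parameters, with a comparison constant independent of the bicharacteristic.

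Given this, I would take $\delta_0>0$ small enough, e.g.\ $\delta_0=\delta$, and verify the hypothesis of Corollary~\ref{cor:inhomog-coiso} at every $p\in S^*_K M^\circ$ with $t(p)<T+\delta_0$. Let $\gamma$ be the \GBB with $\gamma(0)=p$, which is unique since $p$ lies in the interior. By the quoted remark $\gamma(s)\in S^*M^\circ$ for $s\in(-\delta,\delta)$, so on this interval $\gamma$ is a null‑bicharacteristic and $t(\gamma(s))=t(p)\mp 2s$. Choosing the sign of $s$ so that $t$ decreases along $\gamma$, and then choosing $s_0$ with $0\le|s_0|<\delta$ and $2|s_0|>(t(p)-T)_+$ — which is possible precisely because $t(p)-T<\delta_0\le\delta$ — we obtain $t(\gamma(s_0))<T$ while $\gamma(s)\in S^*M^\circ$ for every $s$ in the closed interval between $0$ and $s_0$. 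This is exactly the hypothesis of Corollary~\ref{cor:inhomog-coiso} (which already allows $s_0$ of either sign, i.e.\ uses $[0,s_0]$ or $[s_0,0]$ accordingly), applicable here since $f$ and $u$ are supported in $t>T$; hence $u$ is coisotropic, resp.\ non‑focusing, of order $k$ relative to $H^m$ at $p$, which is the assertion.

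There is no genuine obstacle: the analytic content has already been packaged into Corollary~\ref{cor:inhomog-coiso} (and, underneath it, into Proposition~\ref{prop:coisotropic-prop}), and the present statement only upgrades ``at a point whose backward \GBB escapes $\{t\ge T\}$ within $S^*M^\circ$'' to ``uniformly for interior points with $t<T+\delta_0$.'' The only items requiring any care are the sign bookkeeping (the direction of decreasing $t$ depends on which component $\taus\gtrless 0$ one is on) and the verification that for $p$ in the interior the backward \GBB stays in the interior up to the time it reaches $\{t<T\}$; the latter is exactly what the remark guarantees once $\delta_0\le\delta$, so that no boundary interaction ever enters the propagation argument.
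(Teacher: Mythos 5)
Your proposal is correct and matches the paper's argument: the paper simply notes (in the sentence preceding the corollary) that compactness of $K\subset M^\circ$ gives a uniform $\delta>0$ for which backward \GBBs stay in $S^*M^\circ$, that $s$ and $t$ are comparable parameters along such segments, and then invokes Corollary~\ref{cor:inhomog-coiso}. You have merely spelled out the comparability via $\sH_{\so}t=2\sgn\taus$ and the sign bookkeeping, which is exactly the content the paper leaves implicit.
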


Of course, what happens to coisotropic regularity and nonfocusing when
bicharacteristics reach $\ef$ is of considerable interest, and will be
discussed below.

\section{Edge propagation}\label{section:edge-propagation}

This section contains a series of theorems that will enable us to
track propagation of regularity into and back out of the edge.  They
are as follows:
\begin{itemize}
\item Theorem~\ref{theorem:edge}, which governs propagation of
  regularity into and out of the interior or $\ef$ as well as the
  microlocal propagation of coisotropic regularity there (i.e.\
  iterated regularity under application of operators in $\coiso$).
\item Theorem~\ref{thm:bdy-radial}, which governs propagation of
  regularity into $\ef$ along glancing rays, tangent to one or more of
  the boundary faces meeting at $\RR\times Y$ (in the blown-down
  picture).
\item Theorem~\ref{theorem:edgehyperbolic}, which governs the
  propagation of edge regularity at non-radial
  hyperbolic points at the boundary of the edge $\ef.$
\item Theorem~\ref{theorem:edgeglancing}, which governs the
  propagation of edge regularity at glancing points at the boundary of the edge $\ef.$
\end{itemize}
These theorems will then be assembled (together with the propagation
over the interior of the edge, which we may simply quote from \cite{mvw1})
to yield the propagation of
coisotropic regularity into and out of the edge in
Theorem~\ref{theorem:microlocal-coiso}, and this result is the key
ingredient in proving the ``geometric'' improvement in regularity on
the diffracted wave.

\subsection{Radial points in the interior of the edge}
The following theorem enables us to track edge wavefront set entering
and leaving the edge at radial points over its interior. Since we are
working locally (even within the fibers!)
over the interior of the edge, i.e.\ over $\ef^\circ$, we can use
edge, edge-b and edge-smooth objects interchangably, for the only
boundary in this region is the edge itself.

\begin{theorem}[Propagation at radial points in the interior of the
  edge](See \cite[Theorem~11.1]{mvw1})\label{theorem:edge}
Let $u \in \Hes{1,l}(M)$ solve
$\Box u=0$ with Dirichlet or Neumann boundary conditions.

\begin{enumerate}
\item
Let $m>l+f/2.$ Given $\alpha\in\cH_{W,\bo}$, and
$p \in \cR_{\ebo,\alpha,I}^\circ,$ 
if $(\fcal_{I,p}\backslash{\pa M})
\cap \WF^m Au=\emptyset,$ for all $A \in \coiso^k$ then $p \notin
\eWF^{m,l'} B u$ for all $l'<l$ and all $B \in \coiso^k.$
\item
Let $m<l+f/2.$ Given $\alpha\in\cH_{W,\bo}$, $p \in \cR_{\ebo,\alpha,O}^\circ,$
if a neighborhood $U$ of $p$ in
$\Sestar|_{\pa M}M$ is such that $\eWF^{m,l}(Au)\cap U\subset\pa\fcal_O$
for all $A \in \coiso^k$ then
$p \notin \eWF^{m,l}(Bu)$ for all $B \in \coiso^k.$
\end{enumerate}
\end{theorem}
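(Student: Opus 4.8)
The plan is a positive commutator argument. Since $p\in\cR_{\ebo,\alpha,O}^\circ$ lies over the interior of $\ef$, hence away from every boundary hypersurface of $M$ other than $\ef$ itself, the edge-b calculus $\Psieb{*}(M)$ and the module $\coiso$ localize near $p$ to the weighted edge calculus and coisotropic module of \cite{mvw1}; the corner structure does not enter, and the argument is the one of \cite[Theorem~11.1]{mvw1}. I record the mechanism, since the bookkeeping is the heart of the matter.

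First I would set up the commutant. At $p$ one has $\sH_{\eso}=0$ (as $\zetaesh=0$ on $\cR_{\eso}$, cf.\ \eqref{eq:sH-at-ef}), so the leading part of $\imath[\Box,A^*A]$ at $\ef$ is fed by the interaction of the flow with the weight, not with the spatial symbol: since $\sH_{\eso}x=-2\xiesh x$, differentiating the weight of $A^*A$ produces a term proportional to $\xiesh$, which together with the contribution of the homogenization and density factors evaluates, \emph{at radial points}, to a definite multiple of $|Au|^2$ whose sign is governed by the relation between $m$ and $l+f/2$; the threshold hypothesis $m<l+f/2$ is precisely what makes this multiple \emph{positive}, i.e.\ puts the estimate in the configuration in which it closes. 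Concretely I would take $A\in\Psieb{m,l+(f-1)/2}(M)$ (the weight convention of Proposition~\ref{prop:elliptic}) with symbol $x^{-l-(f-1)/2}\chi$, where $\chi\ge0$ is supported in a small neighborhood of $p$ in $\Sebstar[\ef]M$ and is a suitable monotone function of $\xiesh$ times cutoffs to $\esSigma$, to $\cR_{\eso}$, and in the base variables $(y,t,\etaesh)$. The symbol of $\imath[\Box,A^*A]$ along $\esSigma$ then splits into the main (positive) term just described, plus error terms in which $\sH_{\eso}$ falls on a cutoff. These latter are supported either off $\esSigma$, where microlocal ellipticity of the Dirichlet form (Lemma~\ref{lemma:Dirichlet-form}) absorbs them; or along the fibre of $\dbetaes$ over $\alpha$ flowing into $\cR_{\eso,\alpha,O}$, i.e.\ along $\fcal_O$ and $\pa\fcal_O$, where the hypothesis $\eWF^{m,l}(Au)\cap U\subset\pa\fcal_O$ supplies the bound (shrinking $U$ and using that the only \EGBB through $p$ is constant, Lemma~\ref{lemma:EGBB-GBB}(3)); the outgoing $x$-direction into $M^\circ$ requires no a priori input, since there too the sign below threshold is favorable. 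Pairing $\imath[\Box,A^*A]u$ with $u$, using $\Box u=0$ so that $\langle\Box u,A^*Au\rangle=0$, absorbing the Dirichlet-form term and regularizing with Friedrichs mollifiers $\Lambda_\gamma$ exactly as in the proof of Proposition~\ref{prop:elliptic}, I would obtain a uniform bound giving $p\notin\eWF^{m,l}(u)$.

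To pass from $B=\Id$ to arbitrary $B\in\coiso^k$ I would induct on $k$, the base case being the previous paragraph. By Lemma~\ref{lemma:moduleclosed} the module $\module$ is finitely generated and closed under commutators, with $A_0=\Id$ and the remaining generators of order one, one of them with symbol $\abs{\tau}^{-1}\sigma_{\ebo,2,0}(x^2\Box)$; hence it suffices to bound $\eWF^{m,l}$ of $A_{i_1}\cdots A_{i_k}u$. Commuting the commutant $A$ of the positive commutator argument through $A_{i_1}\cdots A_{i_k}$ produces the leading term in which $A$ acts on $A_{i_1}\cdots A_{i_k}u$; commutator terms of strictly lower module order, controlled by the inductive hypothesis together with the hypothesis of the theorem applied with fewer module factors; and, whenever a generator with symbol $\propto\sigma_{\ebo,2,0}(x^2\Box)$ is carried onto $u$, a term involving $x^2\Box u=0$ modulo lower module order, which again collapses. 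This closes the induction and is the step that consumes the universal quantifier over $A\in\coiso^k$ in the hypothesis.

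The hard part is the balancing in the second paragraph: the monotone factor in $\xiesh$ and the microsupport of $\chi$ must be chosen so that the positive main term dominates every ``$\sH_{\eso}$-hits-a-cutoff'' error after each such error has been routed either to microlocal ellipticity or, through a nested refinement in the size of $U$, to the coisotropic hypothesis on $\pa\fcal_O$. This is precisely the delicate construction of \cite[\S11]{mvw1}; the only genuinely new point to verify here is that it goes through verbatim, which it does because $p$ lies over $\ef^\circ$ and therefore no face of $M$ other than $\ef$ is nearby.
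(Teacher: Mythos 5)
Your proposal is correct and takes essentially the same approach as the paper: the paper itself does not reprove Theorem~\ref{theorem:edge} but simply observes that $p$ lies over $\ef^\circ$, so the fiber boundary is invisible and the result is literally \cite[Theorem~11.1]{mvw1}, whose positive-commutator-with-coisotropic-module argument you have accurately summarized. The key reduction you identify — that over $\ef^\circ$ the edge-b calculus and the module $\coiso$ localize to the weighted edge calculus and module of \cite{mvw1} — is exactly the paper's justification for citing \cite{mvw1} without change.
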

This theorem is literally the same theorem as
\cite[Theorem~11.1]{mvw1} as we are restricting our attention to the
interior of the fibers, hence the presence of a fiber boundary in our
setting is irrelevant.  We thus refer the reader to \cite{mvw1} for
the proof.
\begin{remark}
In fact, we could take $u \in \Hes{-\infty, l}(M)$ here, but the
restriction on regularity will be necessary in later theorems to
maintain the boundary condition at the side faces $z'_i=0,$ and we
prefer to keep a uniform hypothesis. The boundary conditions
are irrelevant here; again, they are stated for the sake of uniformization.
\end{remark}

\subsection{Propagation into radial points over the boundary of the edge}\label{subsection:radialboundary}

We now turn to the question of propagation into the edge at
glancing points, i.e.\ at points over the boundary of the fibers of
$M.$  Note that the hypotheses of this theorem are \emph{global} in
the boundary of the fiber: we do not attempt to distinguish different
points in the fiber boundary.

\begin{theorem}[Propagation into radial points over the boundary of the
  edge]\label{thm:bdy-radial}
Let $u \in \hilbert \equiv \Hes{1, l}(M)$ solve
$\Box u=0$ with Dirichlet or Neumann boundary conditions
(see Definition~\ref{def:Dirichlet-Neumann}).

Let $m>l+f/2-1$
with $m\geq 0.$
Suppose that $q \in \cH_{W,\bo}$ and
there exists a neighborhood $U$ of
$$
\rcal_{\eb,q,I}\cap\Sebstar[\pa\ef] M=\rcal_{\eb,q,I}\cap\cG
$$
in $\Sebstar M$
such that $\{x>0\} \cap U \cap \WFebX^{m,*} u
=\emptyset.$ Then
$$
\rcal_{\eb,q,I}\cap\Sebstar[\pa \ef] M\cap \WFebX^{m,l'} u =
\emptyset
$$
for all $l'<0.$
\end{theorem}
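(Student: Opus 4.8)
\textbf{Proof plan for Theorem~\ref{thm:bdy-radial}.}

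The plan is to run a positive commutator argument using the hybrid differential-pseudodifferential calculus $\Diffess{}\Psieb{}(M)$, with a commutant microlocalized near the radial set $\rcal_{\eb,q,I}\cap\cG$ over the boundary of the edge, and with the hypothesis controlling the ``incoming'' rays in $\{x>0\}$ supplying the a priori regularity term. First I would fix the homogeneous degree-zero function $\xiebh=\xieb/|\taueb|$ as the key monotone quantity along $\sH_{\ebo}$: by \eqref{eq:xiebh-weights} one has $(|\tau|V_0\xiebh)|_{\ef}=-2\sum k_{2,ij}\zeta''_i\zeta''_j\leq 0$ with the sign we want near incoming radial points, and by Lemma~\ref{lemma:GBB-eb-lift} (via Lebeau) $\xiebh\to+\sqrt{1-h(y_0,\etaebh_0)}$ as one approaches $\rcal_{\eb,q,I}$. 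So I would build a commutant $A=\Op(a)$ with symbol of the form $a=\chi_0(\xiebh)\,\chi_1(\text{localizer in }\etaebh, z, \text{``slow variables''})\,\phi(x)\,|\tau|^m x^{-l'-(f+1)/2}$, where $\chi_0$ is supported near the radial value of $\xiebh$ and $\chi_0'\leq 0$ there, so that the principal symbol of $\imath[\Box,A^*A]$ — computed by Lemma~\ref{lemma:Box-eb-comm} — has a good sign modulo three kinds of error: (i) terms where the commutator falls on the cutoffs in the ``slow variables'' $y,t,\etaebh$, which by \eqref{eq:basic-comm} vanish to leading order at $\ef$ and hence gain a factor of $x$; (ii) terms where the commutator falls on $\phi(x)$, supported where $x$ is bounded away from $0$, i.e.\ in the region controlled by the hypothesis $\{x>0\}\cap U\cap\WFebX^{m,*}u=\emptyset$; (iii) terms of the form $\sum Q_i^* L_{ij} Q_j$ and $\sum(x^{-1}L_iQ_i+Q_i^*x^{-1}L'_i)$ involving the fiber-normal derivatives $Q_i=x^{-1}D_{z'_i}$, which are exactly the boundary-face terms the $\Diffess{}\Psieb{}$ formalism is designed to absorb.

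The heart of the estimate is then pairing $\imath[\Box,A^*A]u$ against $u$ using $\Box u=0$, so that $\langle\imath[\Box,A^*A]u,u\rangle=0$, and reading off that the ``main'' term $\|$(something elliptic near $\rcal_{\eb,q,I}\cap\cG$)$\,u\|^2$ is bounded by the errors. The glancing/fiber-boundary error terms (iii) are handled by Corollary~\ref{cor:glancingestimate}: since $\rcal_{\eb,q,I}\cap\cG\Subset\gcal$, choosing the neighborhood $U$ small enough forces $\sum\|x^{-1}D_{z'_i}Au\|^2\leq\delta\|D_tAu\|^2+C(\text{lower order}+\text{a priori})$, and $\delta\|D_tAu\|^2$ is in turn controlled because $D_t$ is elliptic on $\ebSigma$ and microlocal elliptic regularity (Proposition~\ref{prop:elliptic}) reduces everything to $\ebSigma$; for $\delta$ small this $\delta$-term is absorbed into the main positive term on the left. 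The error terms (i) gain a power of $x$, hence can be absorbed using the weight in $a$ (raising the $x^{-l'}$ power slightly, which is why $l'<0$ strictly is available — one exploits that a negative weight makes the radial-point commutator have the favorable sign, as in the last two lines of \eqref{eq:xiebh-weights} with $r<0$, $(|\tau|^{-s-1}x^{-r}V_0(|\tau|^sx^r))|_{\ef}=-2(r+s)\xiebh$). The error terms (ii) are finite by the hypothesis, after a regularization argument (inserting $\Lambda_\gamma\in\Psieb{-1,0}(M)$ uniformly bounded in $\Psieb{0,0}(M)$, running the estimate with $A_\gamma=\Lambda_\gamma A$, obtaining bounds uniform in $\gamma$, and letting $\gamma\to0$) as in the proof of Proposition~\ref{prop:elliptic}.

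I expect the main obstacle to be the bookkeeping at the \emph{corner} of $\ef$ — i.e.\ where several of the side faces $z'_i=0$ meet simultaneously — since there one must simultaneously run the radial-point estimate in $\xiebh$ \emph{and} the glancing estimate in each $z'_i$ direction, and the commutant must be chosen tangent to all those faces (living in $\Vfeb(M)$ to leading order) while the wave operator's normal pieces $Q_i^*\kappa_{ij}Q_j$ from Lemma~\ref{lemma:es-eb-wave-op} genuinely mix the faces through $k_{1,ij}$. The key structural input making this work is that $\rcal_{\eb,q,I}\cap\Sebstar[\pa\ef]M=\rcal_{\eb,q,I}\cap\cG$ lies entirely in $\gcal$ (glancing, never hyperbolic, with respect to every boundary face — because at a radial point $\zetaebh=0$ and there is no momentum into the side faces), so Corollary~\ref{cor:glancingestimate} applies uniformly on a whole neighborhood, and the sign of the $V_0\xiebh$ term is controlled by $-2k_{2,ij}\zeta''_i\zeta''_j$, which is $\leq0$ by the positivity of the fiber metric $k$. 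Once the threshold condition $m>l+f/2-1$, $m\geq0$ is used exactly to ensure the weight-juggling in (i) and the pairing against $u\in\Hes{1,l}(M)$ close (this is the analogue of the Melrose–Sjöstrand threshold in the b-setting, one lower than the elliptic threshold $m>l+f/2$ of Theorem~\ref{theorem:edge} because we only propagate \emph{into} the boundary radial set), the conclusion $\Sebstar[\pa\ef]M\cap\WFebX^{m,l'}u=\emptyset$ for all $l'<0$ follows from Lemma~\ref{lemma:WF-to-reg}.
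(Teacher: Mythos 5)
Your proposal captures the overall architecture of the argument — positive commutator in the hybrid $\Diffess{}\Psieb{}$ calculus with $\xiebh$ as the monotone quantity, negative weight $x^{-r}$ supplying positivity via the last display in \eqref{eq:xiebh-weights}, Corollary~\ref{cor:glancingestimate} absorbing the fiber-normal $Q_i=x^{-1}D_{z_i'}$ terms, and $\Lambda_\gamma$-regularization — and this is indeed the route the paper takes. But there are two substantive gaps.

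First, and most seriously, you skip the iteration and interpolation needed to close the argument under the stated threshold. The commutator estimate you sketch yields an implication of the form
\begin{equation*}
\WFebX^{s,r}u\cap\WF'A=\emptyset,\quad \WFebX^{s+\sigma,r}u\cap\WF'A\cap\{x>0\}=\emptyset,\quad s>r+l+f/2-1\ \Longrightarrow\ \WFebX^{s+\sigma,r}u\cap\liptic A=\emptyset,
\end{equation*}
and the a priori information is just $u\in\hilbert$, i.e.\ $\WFebX^{0,0}u=\emptyset$. If $l+f/2-1\geq0$ (which the hypotheses allow), the inequality $0>0+l+f/2-1$ fails, so you cannot even take the first step at $r=0$. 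The paper handles this by first dropping the weight to some $r_0\ll 0$ with $0>r_0+l+f/2-1$, iterating up to order $m$ there, and then interpolating between $(m,r_0)$ and $(0,0)$ to find a point $(m\delta,r'\delta)$ just below the threshold line from which one can iterate again to improve the weight; repeating this produces $(m,l')$ for every $l'<0$. This is illustrated in Figure~\ref{interpolationfigure} and occupies the last third of the paper's proof. Your assertion that ``the conclusion follows from Lemma~\ref{lemma:WF-to-reg}'' after the commutator argument elides exactly this step, and without it your argument proves the theorem only when $l+f/2-1<0$.

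Second, your commutant's $z$-localizer is not constructed, and a naive one does not work. You list the error terms ``where the commutator falls on the cutoffs in the slow variables $y,t,\etaebh$'' and correctly note they gain a factor of $x$ by \eqref{eq:basic-comm}; but $z$ is not a slow variable. From \eqref{eq:sH-at-ef} the rescaled Hamilton vector field at $\ef$ has a genuine $K^{ij}\zetaesh_i\pa_{z_j}$ component, so if $\chi$ is any cutoff in $z$ alone, $V_0\chi$ is of size $|\zetaebh|$, with no $x$ factor and no sign. Since on $\supp a\cap\ebSigma$ near the radial set one only has $|\zetaebh|\lesssim\sqrt\ep$, this term dominates after rescaling the cutoff. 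The paper instead defines a function $\rho$ by pulling back a cutoff on $Z$ along the forward $\sH_{\eso}$-flow (the map $\Upsilon$), achieving $\sH_{\eso}\rho=O(x)$; including $\rho^2$ in $\omega$ and working with $\psi(\omega-\beta x)$ is what lets the $z$-derivative error be absorbed into the $-\beta V_0 x$ term. Your proposal as written would leave these $z$-cutoff commutator terms uncontrolled over $\ef^\circ$. These two omissions are the places where the real work of this theorem lives; the rest of your outline matches the paper's proof.
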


\begin{proof}
Choose local coordinates on $W$, and let $q=(y_0,t_0,\tauh_0 \in \{\pm
1\}, \etah_0)\in\cH_{W,\bo}.$
Choose $\xi_0$ such that $\xiebh_0^2=1-h(y,\etah_0)$ with
$\sgn\xiebh_0=\sgn\taueb_0$ (this is the incoming point).

One of the central issues in proving the theorem is to construct a
symbol that is localized in the hypothesis region that is sufficiently
close to being flow-invariant.  To begin, we will need a localizer
in the fiber variables.  Fix any $K \Subset \ef^\circ$ and fix a small
number $\ep_K>0.$
Let
$$
\Upsilon: \Sestar[K] (M)\cap \{\abs{\zetaes}/\abs{\xies}<\ep_K\} \to Z
$$
be locally defined by
$$
\Upsilon (q')=z(\exp_{z,\zetaesh} s_\infty \sH_{\eso}), \quad s_\infty =
\frac{\sgn \xiesh}{\abs{\zetaesh}_{K_q}} \arctan\frac{\abs{\zetaesh}}{\xiesh}.
$$
where $q' \in \Sesstar_{K} (M)$ has ``edge-smooth'' coordinates
$(t,y,z,\tau,\xi,\eta,\zeta)$ (we are using the canonical
identification of $\Sestar M$ with $\Sesstar M$ away from $\pa M\backslash
\ef$). This map is well-defined provided $\ep_K$ is chosen sufficiently
small (so that the flow stays away from $\pa \ef$).  The map simply takes a
point over the boundary to its limit point in the fiber variables along the
forward bicharacteristic flow, hence on $\ef$, we certainly have
$\Upsilon_* (\sH_{\eso}) = 0.$

We now employ $\Upsilon$ to create a localizer away from $\pa \ef.$ Fix
$$
K''\subset U'
\subset K' \subset U
\subset K \subset \ef^\circ$$
with $K'',K', K$ compact and $U',U$ open such
that
\begin{enumerate}
\item
$\alpha\in\Sesstar_{K\setminus U}M$ and
$\abs{\zetaesh(\alpha)/\xiesh(\alpha)}<\ep_K$ imply $\Upsilon_K(\alpha)\in \ef
\setminus K'$,
\item
$\alpha\in\Sesstar_{K''}M$ and
$\abs{\zetaesh(\alpha)/\xiesh(\alpha)}<\ep_K$ imply $\Upsilon_K(\alpha)\in U'$.
\end{enumerate}
Now let $\chi\in\CI(\ef)$ be equal to $0$ on
$\ef\backslash K'$ and $1$ on $U'$. For $\ep_K$ sufficiently small,
$\chi\circ\Upsilon_K$ vanishes on $\Sesstar_{K\setminus U}M$, hence can
be extended as $0$ to $\Sesstar_{\ef\setminus U}M$ to define a $\CI$
function. Thus, this extension of $\chi\circ\Upsilon_K$ 
is well defined and smooth on
$$
\Big\{\abs{\zetaesh/\xiesh}<\ep_K,\ x<\ep_K\Big\}\cup\Sesstar_{\ef\setminus U}M
\subset \Sesstar M;
$$
it equals $0$
on the fibers over $\ef\backslash U$ and $1$ on those over $K''.$
But
$$
\Big\{\abs{\zetaebh/\xiebh}<\ep'_K,\ x<\ep'_K\Big\}\cup\Sesstar_{\ef\setminus U}M
\subset
\Big\{\abs{\zetaesh/\xiesh}<\ep_K,\ x<\ep_K\Big\}\cup\Sesstar_{\ef\setminus U}M
$$
for $\ep'_K>0$ sufficiently small as on $U$, $\abs{\zetaesh/\xiesh}
\lesssim \abs{\zetaebh/\xiebh},$ since $\abs{z'_i}$ are all bounded
away from $0$ there.  Due to the vanishing near $\pa\ef$, we can
equivalently regard this extension of $\chi\circ\Upsilon_K$ as a $\CI$
function $\rho$ on the following subset of the edge-b cosphere bundle:
$$
\Big\{\abs{\zetaebh/\xiebh}<\ep'_K,\ x<\ep'_K\Big\}\cup\Sebstar[\ef\setminus U]M.
$$
The resulting
function satisfies
\begin{equation}\label{flow-invariance}
\sH_{\eso} (\rho) = O(x)\quad \text{ on }
\Big\{\abs{\zetaebh/\xiebh}<\ep'_K,\ x<\ep'_K\Big\}\cup\Sebstar[\ef\setminus U]M.
\end{equation}
and
\begin{equation}\label{support}
\rho = 0 \text{ on } \Sebstar[\ef\setminus U]M.
\end{equation}
It is convenient to extend $\rho$ to all of $\Sebstar M$ by defining it to
be an arbitrary fixed positive constant, say $1$, where it is not previously
defined. Note that by \eqref{support}, when we need to calculate derivatives
of $\rho$ in a commutator calculation, we may always assume that we are
away from $\pa\ef$, hence use the edge-calculus Hamilton vector field result.

Now consider the function
$$ \omega = 
\abs{\etaebh-
\etaebh_0}^2+ \abs{y-y_0}^2+ \rho^2+ \abs{t-t_0}^2.
$$
(Note that keeping $\omega,$ $|\xi-\xi_0|$ and $x$ sufficiently small on
$\Sigma_{\ebo}$ automatically means that $\hat\zeta$ is small as well.)

We now identify some appropriate neighborhoods in which to localize.
First, choose $\ep_0,\ep_1<1$ such that
$$
\lvert \hat\zeta \rvert^2 <\ep_0,\ x<\ep_1,\ \omega <\ep_1,
\ |\xi-\xi_0|^2<\ep_1 \Longrightarrow 
\lvert \zetaebh/\xiebh \rvert<\ep'_K/2.
$$
Second, choose $\ep_2<\ep_1$ such that
$$
\Sigma_{\ebo} \cap\{x<\ep_2\}\cap \{\omega<\ep_2\}\cap
\big\{|\xiebh-\xiebh_0|^2<\ep_2\big\}
\subset \Big\{ \lvert \hat\zeta \rvert^2<\frac{\ep_0}2\Big\}.
$$
Let
$$
\tilde K=\{x\leq \ep_2\}\cap \{\omega\leq \ep_2\}\cap
\big\{|\xiebh-\xiebh_0|^2\leq \ep_2\big\}\cap
\Big\{ \lvert \hat\zeta \rvert^2\leq \frac{\ep_0}2\Big\}.
$$
Next, given $\delta>0$, which will depend on $\tilde K$, let
$U=U_\delta$ be as in Corollary~\ref{cor:glancingestimate}.
Finally, given any $\beta>0$ (to be specified below) we will choose
$\ep=\ep(\beta,\delta)$ so that
\begin{equation}\label{makesure}
\ep, \ep(1+\beta) <\ep_2,
\end{equation}
and so that
\begin{equation*}
K_\ep= \big\{x\leq\ep,\ \omega\leq \ep(1+\beta),\ |\xiebh-\xiebh_0|^2\leq\ep,
\ |\zetaebh|^2\leq\ep_0\big\}\subset U=U_\delta.
\end{equation*}
(Note that $K_\ep\subset\tilde K$ by \eqref{makesure}.)

Let $\phi\in\CI_c([0,\ep)),$ $\psi_0\in\CI_c([0,\ep_0))$,
identically $1$ on $[0, \ep_0/2],$ $\psi_1\in\CI_c([0,\ep))$,
identically $1$ on $[0,\ep/2]$, $\psi\in\CI_c(
(-\infty,\ep))$, all non-increasing,
$$
a=a_\ep=
\abs{\taueb}^{s} x^{-r} \psi(\omega-\beta x)
\psi_1(|\xiebh-\xiebh_0|^2)
\phi(x)\psi_0(|\zetaebh|^2)
$$
Thus,
$$
x\leq\ep,\ \omega\leq \ep(1+\beta),\ |\xiebh-\xiebh_0|^2\leq\ep,
\ |\zetaebh|^2\leq\ep_0\ \text{on}\ \supp a_\ep.
$$
We usually suppress the $\ep$-dependence of $a$ below in our notation.
Equation~\eqref{makesure} ensures that $\ep(1+\beta)<1$ on $\supp a,$ so
$\rho<1$, and thus \eqref{flow-invariance} holds.  We have also
arranged that
$$
\abs{\zetaebh/\xiebh}<\ep'_K/2 \text{ on } \supp a
$$
and that $\psi_0(\lvert \hat\zeta \rvert^2)=1$ on $\supp (\psi(\omega-\beta x)
\phi(x)\psi_1(|\xiebh-\xiebh_0|^2))\cap \Sigma_{\ebo}.$  This latter observation means that we
need never consider derivatives falling on the $\psi_0$ term when
computing the action of the Hamilton vector field on $a.$
(The cutoff $\psi_0(\hat\zeta)$ is therefore not necessary for correct
localization of $a,$ as that is achieved by the cutoffs in $\omega,$ $\xiebh$
and $x$ if we restrict our attention to $\Sigma_{\ebo};$ rather this
is necessary to make $a$ a symbol, which it would not be if
independent of $\zeta.$)

We quantize $a$ to $A \in\Psieb{s,r}(M),$ i.e.\ take any $A$ with
$\sigma_{\ebo,s}(A)=a$.
By Lemma~\ref{lemma:Box-eb-comm},
\begin{equation}\label{eq:Box-comm-rad}
\imath[\Box,A^*A]=\sum Q_i^* L_{ij} Q_j+\sum (x^{-1}
L_i Q_i+Q_i^* x^{-1}L'_i)
+x^{-2}L_0,
\end{equation}
with
\begin{equation}\begin{split}\label{eq:Box-comm-rad-terms}
&L_{ij}\in\Psieb{2s-1,2r}(M),\ L_i\in\Psieb{2s,2r}(M),
\ L_0\in\Psieb{2s+1,2r}(M),\\
&\sigma_{\ebo,2s-1}(L_{ij})=2aV_{ij}a,
\ \sigma_{\ebo,2s}(L_i)=2a V_i a,
\ \sigma_{\ebo,2s+1}(L_0)=2a V_0a,\\
&\ebWF'(L_{ij}),\ebWF'(L_i),\ebWF'(L'_i),\ebWF'(L_0)\subset\ebWF'(A),
\end{split}\end{equation}
with $V_{ij}$, $V_i$ and $V_0$ smooth vector fields tangent on $\Tebstar M$
tangent to $\ef$ and such that
for $f\in\CI(\Sebstar M)$ with $f|_{\ef}=\dbetaeb^* \phi$
for some $\phi\in S^* W$,
\begin{equation}\label{eq:basic-comm-2}
V_{ij}f|_{\ef}=0,\ V_i f|_{\ef}=0,\ V_0 f|_{\ef}=0.
\end{equation}
In view of Corollary~\ref{cor:glancingestimate},
we are led to regard the $L_{ij}$ and
$L_i$ terms as negligible, {\em provided that} their principal
symbol is bounded by a constant multiple of $\sigma_{\ebo,s+1}(L_0)$
times the appropriate power of $|\tau|$ (to arrange homogeneity of
the same degree).
Also by Lemma~\ref{lemma:Box-eb-comm},
\begin{equation}\begin{split}\label{eq:xiebh-weights2}
&\big(|\tau|V_0\xiebh\big)|_{\ef}=-2\sum_{ij}k_{2,ij}(0,y,z)\zeta''_i\zeta''_j\\
&\big(|\tau|V_i\xiebh\big)|_{\ef}=-\sum_j k_{3,ij}(0,y,z)\zeta''_j,
\ \big(|\tau|V_{ij}\xiebh\big)|_{\ef}=-2 k_{1,ij}(0,y,z),\\
&\big(|\tau|^{-s'-1}x^{-r'}V_0(|\tau|^{s'} x^{r'})\big)|_{\ef}=-2(r'+s')\xiebh,
\ \big(|\tau|^{-s'}x^{-r'}V_i(|\tau|^{s'} x^{r'})\big)|_{\ef}=0,\\
&\big(|\tau|^{-s'+1}x^{-r'}V_{ij}(|\tau|^{s'} x^{r'})\big)|_{\ef}=0,
\end{split}\end{equation}
In particular, with $s'=0$, $r'=1$, $|\tau|^{-1}V_0 x=-2\xiebh x$, while
$V_i x, V_{ij}x$ are $O(x^2)$.

In computing $Va$ for various homogeneous degree $\mu-1$
vector fields $V$ on $\Tebstar M$,
we will employ the following arrangement of terms:
\begin{equation*}\begin{split}
Va=&\psi(\omega-\beta x)\phi(x)\psi_0(|\zetaebh|^2)
\psi_1(|\xiebh-\xiebh_0|^2)
V(\abs{\taueb}^{s} x^{-r})\\
&+\abs{\taueb}^{s} x^{-r}
\psi_0(|\zetaebh|^2)\phi(x)\psi_1(|\xiebh-\xiebh_0|^2)\psi'(\omega-\beta x)
(V\omega-\beta Vx)\\
&+\abs{\taueb}^{s} x^{-r}\psi(\omega-\beta x)\psi_0(|\zetaebh|^2)
\psi_1(|\xiebh-\xiebh_0|^2)V(\phi(x))\\
&+\abs{\taueb}^{s} x^{-r}\psi(\omega-\beta x)\psi_0(|\zetaebh|^2)
\phi(x)V(\psi_1(|\xiebh-\xiebh_0|^2))\\
&+\abs{\taueb}^{s} x^{-r}\psi(\omega-\beta x)\phi(x)\psi_1(|\xiebh-\xiebh_0|^2)
V(\psi_0(|\zetaebh|^2)).
\end{split}\end{equation*}
As $|\tau|^{-1} V_0 x=-2\xiebh x$ while
$|\tau|^{-1} V_0\omega=xf$ for
some $f\in\CI(\Sebstar X)$, and $|\xiebh|$ is bounded below on
$\tilde K$ (which is a compact subset of $\Sebstar M$), it follows
that
there exists $\beta>0$ such that $-(\sgn\tau_0)|\tau|^{-1}
(V_0\omega-\beta V_0 x)\geq x$
on $\tilde K$, and thus
\begin{equation}\label{eq:slow-var-comm}
-(\sgn\tau_0)|\tau|^{-1}(V_0\omega-\beta V_0 x)=xc_2^2
\end{equation}
for some smooth
positive function $c_2$ defined on $\tilde K$, hence on
a neighborhood of $\supp a$
in $\Sebstar M$.
Moreover,
$$
V_0\psi_1(|\xiebh-\xiebh_0|^2)=-4(\xiebh-\xiebh_0)
\psi_1'(|\xiebh-\xiebh_0|^2)\sum_{ij}k_{2,ij}(0,y,z)\zeta''_i\zeta''_j.
$$
Similar computations hold
for the $V_i$ and $V_{ij}$ terms, with result shown below in
\eqref{eq:xiebh-comm}.

We start by discussing the terms in
\eqref{eq:Box-comm-rad}-\eqref{eq:Box-comm-rad-terms} in which the vector
fields $V_0$, $V_i$, $V_{ij}$ differentiate $\psi_1(|\xiebh-\xiebh_0|^2)$.
These terms altogether have the form
\begin{multline}\label{eq:xiebh-comm}
\sum Q_i^* L_{ij,1} Q_j+\sum (x^{-1}L_{i,1} Q_i+Q_i^*x^{-1}L'_{i,1})
+x^{-2}L_{0,1},\ \text{where}\\
\sigma_{\ebo,2s-1}(L_{ij,1})|_{\ef}=-(\sgn\tau_0)a_1 k_{1,ij}(0,y,z),\\
\sigma_{\ebo,2s}(L_{i,1})|_{\ef}=\sigma_{\ebo,2s}(L'_{i,1})|_{\ef}
=-\frac{1}{2}(\sgn\tau_0)a_1\sum_j k_{3,ij}(0,y,z)\zeta''_j,\\
\sigma_{\ebo,2s+1}(L_{0,1})|_{\ef}=-(\sgn\tau_0)a_1 \sum_{ij}k_{2,ij}(0,y,z)\zeta''_i\zeta''_j,\\
\text{with } a_1=8a(\sgn\tau_0)(\xiebh-\xiebh_0)
\abs{\taueb}^{s} x^{-r}\psi(\omega-\beta x)\psi_0(|\zetaebh|^2)
\phi(x)\psi_1'(|\xiebh-\xiebh_0|^2).
\end{multline}
On $\tilde K\cap\supp \psi_1'(|\xiebh-\xiebh_0|^2)\cap\ebSigma$,
$\xiebh-\xiebh_0$ has sign $-\sgn\xiebh_0=-\sgn\tau_0$,
so $(\sgn\tau_0)(\xiebh-\xiebh_0)<0$ there.
Thus,
noting that the right hand side on the last line
is a square for $x$ sufficiently small in view of $\psi_1'\leq 0$
when $(\sgn\tau_0)(\xiebh_0-\xiebh)>0$, it has
the form
\begin{multline}
(-\sgn\tau_0)
d_Z^* x^{-1}C_0^*C_0 x^{-1} d_Z+E_0+E_0'+F_0,\\
C_0\in\Psieb{s-1/2,r}(M),\ E_0,E'_0\in\Diffess{2}\Psieb{2s-1,2r+2}(M)\\
\sigma_{\ebo,s-1/2}(C_0)=\left(H((\sgn\tau_0)(\xi_0-\xi))\tilde\phi(x)
a_1\right)^{1/2},\\
\WFeb'(E'_0)\cap\ebSigma=\emptyset,\ \WFeb'(E_0)\subset\{x>0\}\cap\supp a,\\
F_0\in \Diffess{2}\Psieb{2s-2,2r+2}(M),\ \WFeb'(E'_0),
\ \WFeb'(F_0)\subset\supp a,
\end{multline}
where $H$ is the Heaviside step function (recall that $\psi'_1=0$ near the
origin, and $\psi'\leq 0$) and $\tilde\phi\in\CI_c([0,\ep_2))$ is identically
$1$ near $0$ and has sufficiently small support.

Next, the terms in \eqref{eq:Box-comm-rad}-\eqref{eq:Box-comm-rad-terms}
in which the vector
fields $V_0$, $V_i$, $V_{ij}$ differentiate $\psi(\omega-\beta x)$ have the
form
\begin{equation*}\begin{split}
&\sum Q_i^* L_{ij,2} Q_j+\sum (x^{-1}L_{i,2} Q_i+Q_i^* x^{-1}L'_{i,2})
+x^{-2}L_{0,2},\ \text{where}\\
&\sigma_{\ebo,s-1}(L_{ij,2})=a_2^2 f_{ij,2},
\ \sigma_{\ebo,s}(L_{i,2})=\sigma_{\ebo,s}(L'_{i,2})= a_2^2 f_{i,2},\\
&\sigma_{\ebo,s+1}(L_{0,2})= -(\sgn\tau_0)a_2^2 c_2^2,\ c_2
\ \text{as in \eqref{eq:slow-var-comm}}\\
&a_2^2=-2xa
\abs{\taueb}^{s+1} x^{-r}\psi'(\omega-\beta x)\psi_0(|\zetaebh|^2)
\phi(x)\psi_1(|\xiebh-\xiebh_0|^2),
\end{split}\end{equation*}
with $f_{ij,2}$, $f_{i,2}$ smooth.
Moreover, terms in \eqref{eq:Box-comm-rad}--\eqref{eq:Box-comm-rad-terms}
in which the vector
fields $V_0$, $V_i$, $V_{ij}$ differentiate $\psi_0(|\zetaebh|^2)$ have
wave front set disjoint from $\ebSigma$ as already discussed, while the
terms in which these vector fields differentiate $\phi(x)$ are
supported in $\supp a\cap \{x>0\}$, where we will assume the absence of
$\WFebX^{s-1,*} u$ (the weight is indicated by an asterisk as we are
away from $x=0$, so it is irrelevant).

Finally, the terms in \eqref{eq:Box-comm-rad}--\eqref{eq:Box-comm-rad-terms}
in which the vector
fields $V_0$, $V_i$, $V_{ij}$ differentiate $\abs{\taueb}^{s} x^{-r}$
have the form
\begin{equation*}\begin{split}
&\sum Q_i^* L_{ij,3} Q_j+\sum (x^{-1}L_{i,3} Q_i+Q_i^* x^{-1}L'_{i,3})
+x^{-2}L_{0,2},\ \text{where}\\
&\sigma_{\ebo,s-1}(L_{ij,3})=a^2 xf_{ij,3},
\ \sigma_{\ebo,s}(L_{i,3})=\sigma_{\ebo,s}(L'_{i,3})=a^2 xf_{i,3},\\
&\sigma_{\ebo,s+1}(L_{0,3})=-a^2 (\sgn\tau_0)|\tau|2(s-r)c_3^2,
\end{split}\end{equation*}
where $c_3^2|_{\ef}=4(\sgn\tau_0)\xiebh >0$.

Finally, recall that terms with $\psi_0$ derivatives are supported in
the elliptic set of $\Box.$

We are now ready to piece together the above information to compute
the commutator $[\Box, A^* A].$  First we choose a family of operators
convenient for adjusting orders: pick
\begin{equation}\label{eq:refine-weight}
T_{\nu}\in\Psieb{\nu,0}(M),\ \sigma_{\ebo,\nu}(T_{\nu})
=|\tau|^{\nu}\ \text{near}\ \tilde K.
\end{equation}
\nomenclature[T]{$T_{\nu}$}{Family of elliptic operators of order $\nu$}
Thus, $T_\nu$ are simply weights, for $|\tau|^\nu$ is elliptic of order
$\nu$ on a neighborhood of $\tilde K$.

Adding all the terms computed above, and rearranging them as needed, noting the top
order commutativity in $\ebo$-order of $\Diffess{}\Psieb{}(M)$,
we finally deduce that
\begin{equation}\begin{split}\label{eq:Box-comm-rad-long}
-\imath &(\sgn\tau_0)[\Box,A^*A]\\
=&A_2^*\Big(C_2^* x^{-2}C_2+\sum_i (x^{-1} F_{i,2}Q_i+Q_i^* x^{-1}F'_{i,2})
+\sum_{ij}Q_i^* F_{ij,2}Q_j \Big) A_2\\
&+A^*T_{1/2}^*
\Big(C_3^*2(s-r)x^{-2}C_3+\sum_i (x^{-1}F_{i,3} Q_i+Q_i^* x^{-1}F'_{i,3})\\
&\qquad\qquad\qquad+\sum_{ij} Q_i^* F_{ij,3}Q_j \Big)T_{1/2} A\\
&+d_Z^* x^{-1}C_0^*C_0 x^{-1} d_Z+E+E'+R''
\end{split}\end{equation}
with
\begin{enumerate}
\item
$A_2\in\Psieb{s+1/2,r-1/2}(M)$, $\sigma_{\ebo,s+1/2}(A_2)=a_2$,
$\WFeb'(A_2)\subset\supp a$
\item $C_2,C_3 \in \Psieb{0,0}(M);$ $F_{i,2},F'_{i,2},F_{i,3},F'_{i,3}
\in \Psieb{-1,0}(M)$ and $F_{ij,2},F_{ij,3} \in
  \Psieb{-2,0}(M);$
\item  On
$\tilde K$, $\sigma_{\ebo,0}(C_2)\neq 0$ and
$\sigma_{\ebo,0}(C_3)=(\sgn\tau_0)\xiebh\neq 0,$
\item $C_0\in\Psieb{s-1/2,r}(M)$, $\WFeb'(C_0)\subset\supp a$,
\item $E,E' \in \Diffess{2}\Psieb{2s-1,2r+2}(M)$,
\item $R'' \in \Diffess{2}\Psieb{2s-2,2r+2}(M)$ (i.e.\ is lower order),
$\WFeb'(R'')\subset\supp a$,
\item $\ebWF' E\subset \{x>0\} \cap \supp a$ (our hypothesis region),
\item $\ebWF' E' \cap \ebSigma=\emptyset,$ $\WFeb'(E')\subset\supp a$.
\end{enumerate}
When we pair both sides of this equation (suitably regularized) with a
solution to the wave equation the terms $E$, $E'$ and $R''$ will be
controlled respectively by the hypothesis on $u$ in $x>0$, microlocal
elliptic regularity, and an inductive hypothesis in the iterative
argument in which we improve the order by $1/2$ (or less) in each step.  The
remaining terms on the right hand side are either positive, or involve
$Q_i$, and the latter terms are controlled by the former, by
Corollary~\ref{cor:glancingestimate}.  Thus, save for the need to
mollify to make sure that we can actually apply this commutator to $u$
and pair it with $u$, and also be able to rewrite the commutator as
the difference of products, this would give our positive commutator
result, controlling $\|x^{-1}T_{1/2}C_3A u\|.$

We do, however, need to mollify.
Let
$\sigma>0$ (typically we take $\sigma=1/2$, always $\sigma\in(0,1/2]$)
$\Lambda_\gamma\in\Psieb{-\sigma}(M)$ for $\gamma>0$, such that
$\{\Lambda_\gamma: \
\gamma\in(0,1]\}$ is a bounded family in $\Psieb{0}(M)$, and
$\Lambda_\gamma\to\Id$ as
$\gamma\downarrow 0$ in $\Psieb{\tilde\ep}(M)$, for all $\tilde\ep>0.$
Let the principal symbol of
$\Lambda_\gamma,$ considered a
bounded family in $\Psieb{0}(M),$ be
$(1+\gamma|\tau|^2)^{-\sigma/2}$ on a neighborhood of $\tilde K$. Let
$A_\gamma=\Lambda_\gamma A$.
We now have
$A_\gamma\in \Psieb{s-\sigma,r}(M)$ for $\gamma>0$, and $A_\gamma$ is uniformly bounded in
$\Psieb{s,r}(M)$, $A_\gamma\to A$ in $\Psieb{s+\tilde\ep,r}(M)$.
Moreover,
\begin{equation}
\imath[\Box,A_\gamma^*A_\gamma]=\Lambda_\gamma^* \imath[\Box,A^*A]\Lambda_\gamma
+A^*\imath[\Box,\Lambda_\gamma^*\Lambda_\gamma] A+\tilde R,
\end{equation}
with $\tilde R$ uniformly bounded in $\Diffess{2}\Psieb{2s-2,2r+2}(M)$
(hence lower order).
Now, for a vector field $V$ on $\Tebstar M$,
$$
V(1+\gamma|\tau|^2)^{-\sigma/2}
=-(\sigma/2)\gamma (1+\gamma|\tau|^2)^{-\sigma/2-1}V|\tau|^2.
$$
Applying this, the general formula
\eqref{eq:Box-comm-rad}--\eqref{eq:Box-comm-rad-terms} with $\Lambda_\gamma$
in place of $A$ and
\eqref{eq:xiebh-weights2} with $r'=0$, $s'=2$, we deduce that
\begin{equation}\begin{split}\label{eq:Box-regularizer-comm}
-&(\sgn\tau_0)A^*\imath[\Box,\Lambda_\gamma^*\Lambda_\gamma] A\\
&=A^*\Lambda_\gamma^* T_{1/2}^*\tilde\Lambda_{\gamma}^*
\bigg(-2\sigma C_3^* x^{-2}C_3+\sum_i (x^{-1}F_{i,4} Q_i+Q_i^* x^{-1}F'_{i,4})\\
&\qquad\qquad\qquad+\sum_{ij} Q_i^* F_{ij,4}Q_j \bigg)T_{1/2}\tilde\Lambda_{\gamma}\Lambda_\gamma A+R'_\gamma,
\end{split}\end{equation}
with $F_{ij,4}\in\Psieb{-2,0}(M)$, $F_{i,4},F'_{i,4}\in\Psieb{-1,0}(M)$,
$\tilde\Lambda_\gamma$ uniformly bounded in $\Psieb{0,0}(M)$ with
principal symbol
$$
\sigma_{\ebo}(\tilde\Lambda_\gamma)=
\Big(\gamma|\tau|^2(1+\gamma|\tau|^2)^{-1}\Big)^{1/2}\leq 1,
$$
$C_3 \in \Psieb{0,0}(M)$ and $T_{1/2}\in\Psieb{1/2,0}(M)$
as in \eqref{eq:Box-comm-rad-long} and $R'_\gamma$ uniformly bounded
in $\Diffess{2}\Psieb{2s-2,2r+2}(M)$, hence lower order.
Note that this commutator has the {\em opposite} sign from
\eqref{eq:Box-comm-rad-long}, which limits our ability to regularize.
However, as long as $\sigma'-\sigma>0$, we can write
$$
2\sigma'\Id-2\sigma\tilde\Lambda_{\gamma}^*\tilde\Lambda_\gamma
=B_\gamma^* B_\gamma
$$
with $B_\gamma$ uniformly bounded in $\Psieb{0,0}(M)$. Thus, if $s-r>\sigma$,
taking $\sigma'$ such that $\sigma<\sigma'<s-r$, we deduce that
\begin{equation}\begin{split}\label{regularizedcommutator}
-\imath &(\sgn\tau_0)[\Box,A_\gamma^*A_\gamma]\\
=&A_{2,\gamma}^*\bigg(C_2^* x^{-2}C_2+\sum_i (x^{-1} F_{i,2}T_1 Q_i T_{-1}
+T_{-1}Q_i^* T_1 x^{-1}F'_{i,2})\\
&\qquad\qquad\qquad+\sum_{ij}T_{-1}Q_i^* Q_j F_{ij,2}T_1 \bigg) A_{2,\gamma}\\
&+A_\gamma^*T_{1/2}^*
\bigg(C_3^*2(s-r-\sigma') x^{-2}C_3\\
&\qquad\qquad\qquad+\sum_i (x^{-1}F_{i,5}T_1 Q_i T_{-1}
+T^{-1}Q_i^* T_1 x^{-1}F'_{i,5})\\
&\qquad\qquad\qquad+\sum_{ij} T_{-1} Q_i^* Q_j F_{ij,5} T_1 \bigg)T_{1/2} A_\gamma\\
&+A_\gamma^* T_{1/2}^* C_3^* B_\gamma^* B_\gamma C_3 T_{1/2}A_\gamma\\
&+d_Z^* x^{-1}\Lambda_\gamma^*C_0^*C_0 \Lambda_\gamma x^{-1} d_Z
+E_\gamma+E'_\gamma+R''_\gamma,
\end{split}\end{equation}
with the terms as in \eqref{eq:Box-comm-rad-long}, in particular $F_{ij,5},
F'_{ij,5}$
as $F_{ij,3}$, etc., there, and $A_{2,\gamma}=A_2\Lambda_\gamma$, etc.
Here we rewrote the terms in \eqref{eq:Box-comm-rad-long} somewhat, inserting
$T_1$ and $T_{-1}$ in places (recall that $T_1 T_{-1}$ differs from $\Id$
by an element of $\Psieb{-1,0}(M)$ on $\tilde K$, and this difference
can be absorbed in $R''_\gamma$) in order to be able to use
Corollary~\ref{cor:glancingestimate} directly below.
Applying both sides of \eqref{regularizedcommutator} to $u$ and pairing
with $u,$ we claim we may
integrate by parts for any $\gamma>0$ on the right hand side of the
resulting expression to obtain
\begin{equation}\begin{split}\label{eq:regularized-commutator-pairing}
-\imath &(\sgn\tau_0)\langle [\Box,A_\gamma^*A_\gamma]u,u\rangle\\
=&\|x^{-1}C_2A_{2,\gamma} u\|^2+2(s-r-\sigma')\| x^{-1}C_3T_{1/2} A_\gamma u\|^2\\
&+\sum_{ij}\langle Q_j F_{ij,2} T_1 A_{2,\gamma}u,Q_i T_{-1}^* A_{2,\gamma}u
\rangle\\
&+\sum_i \bigg(
\langle Q_i T_{-1}A_{2,\gamma}u, x^{-1} T_1^* F_{i,2}^*A_{2,\gamma} u
\rangle
+\langle x^{-1}T_1F'_{i,2}A_{2,\gamma}u,Q_i T_{-1}A_{2,\gamma}u\rangle\bigg)
\\
&+\sum_i \bigg(\langle  Q_i T_{-1}T_{1/2} A_\gamma u,
x^{-1}T_1^*F_{i,5}^* T_{1/2} A_\gamma u
\rangle\\
&\qquad\qquad+\langle x^{-1}T_1F'_{i,5}T_{1/2}A_{\gamma}u,Q_i T_{-1}T_{1/2}A_{\gamma}u\rangle\bigg)
\\
&+\sum_{ij} \langle Q_j T_1 F_{ij,5}T_{1/2} A_\gamma u,
Q_i T_{-1}^*T_{1/2} A_\gamma u
\rangle \\
&+\| B_\gamma C_3 T_{1/2}A u\|^2
+\|C_0 \Lambda_\gamma x^{-1} d_Z u\|^2
+\langle (E_\gamma+E'_\gamma+R''_\gamma)u,u\rangle,
\end{split}\end{equation}
and that we may similarly expand the left side
by using
\begin{equation}\label{eq:comm-exp}
\langle[\Box,A_\gamma^*A_\gamma]u,u\rangle
=\langle A_\gamma^*A_\gamma u,\Box u\rangle
-\langle \Box u, A_\gamma^*A_\gamma u\rangle,
\end{equation}
so that pairing with a solution to the wave equation yields
identically zero.

We begin by justifying these two integrations by parts, after which we
will read off the consequences. We start with the Dirichlet case.
Note that the $L^2_g$-dual of $\hilbertp=\Hesz{1,l}(M)$ is
$\Hes{-1,-l-(f+1)}(M)$ (where as usual the $x^{f+1}$ factor derives
from the difference between the metric density used in the pairing and the ``edge-density''
used to define the norm on $\Hesz{\cdot,\cdot}(M)$). We have
$$
x^{-2} \Diffess{2}(M)\ni\Box: \hilbertp \to \Hes{-1,l-2}(M) =
x^{2l+(f+1)-2} (\hilbertp)^*.
$$
Here we suppressed the quotient map $\rho:\dHes{-1,l-2}(M)\to\Hes{-1,l-2}(M)$,
i.e.\ the stated mapping property is, strictly speaking, for $\rho\circ\Box$.
Furthermore, the dual of 
$H_{\ebo,\hilbertp}^{s',r'}(M)$ is
$$
\big(H_{\ebo,\hilbertp}^{s',r'}(M)\big)^* 
=H_{\ebo,(\hilbertp)^*}^{-s',-r'}(M).
$$
Equation \eqref{eq:comm-exp}
makes sense directly and naively for $\gamma>0$ if the products of $\Box$ with
$A_\gamma^* A_\gamma\in \Psieb{2s-2\sigma, 2r}(M)$ map $H_{\ebo,\hilbertp}^{s',r'}(M)$ to 
its dual, $H_{\ebo,(\hilbertp)^*}^{-s',-r'}(M).$  We thus require
$$
A_\gamma^*
A_\gamma:
H_{\ebo,\hilbertp}^{s',r'}(M)\to H_{\ebo,\hilbertp}^{-s',-r'-2l-(f+1)+2}(M)
$$
which holds if
\begin{equation}\label{crudenumerology}
\begin{aligned}
s-\sigma &\leq s',\\
r&\leq r'+l+(f+1)/2-1.
\end{aligned}
\end{equation}
Following the same line of reasoning shows that if we are willing to settle for just
\eqref{eq:regularized-commutator-pairing}, by contrast, we only
require the milder hypotheses
\begin{equation}\label{betternumerology}
\begin{aligned}
s-\sigma&\leq s'+1/2,\\ r&\leq r'+l+(f+1)/2-1.
\end{aligned}
\end{equation}
In fact, we claim that \eqref{betternumerology} suffices for
\emph{both} \eqref{eq:regularized-commutator-pairing} and
\eqref{eq:comm-exp}, with the latter being obtained via the following
subtler regularization.

This is best done by replacing
$u$ in the second slot of the pairing by a separate factor
$\tLambda_{\gamma'} u,$  where $\tLambda_\gamma$ is constructed just
as $\Lambda_\gamma,$ but with the greater degree of regularization
$\sigma=1.$  Thus we have a replaced the lost half of an edge derivative (on each
factor) which obtains from assuming \eqref{betternumerology} instead
of \eqref{crudenumerology} and may again
integrate by parts to obtain, for $\gamma,\gamma'>0,$
\begin{equation}\begin{split}\label{eq:extra-reg}
&\langle[\Box,A_\gamma^*A_\gamma]u,\Lambda_{\gamma'} u\rangle
=\langle A_\gamma^*A_\gamma u,\Box \Lambda_{\gamma'} u\rangle
-\langle \Box u, A_\gamma^*A_\gamma \Lambda_{\gamma'} u\rangle\\
&\qquad=\langle A_\gamma^*A_\gamma u,\Lambda_{\gamma'}\Box u\rangle
+\langle A_\gamma^*A_\gamma u,[\Box,\Lambda_{\gamma'}] u\rangle
-\langle \Box u, A_\gamma^*A_\gamma \Lambda_{\gamma'} u\rangle
\end{split}\end{equation}
Now, $\Lambda_{\gamma'}\to \Id$ {\em strongly} (but not in norm)
on $H_{\ebo,\hilbertp}^{s',r'}(M)$ and on $H_{\ebo,\hilbertp^*}^{s',r'}$
for all $s',r';$ this takes care of the first and third terms.  Furthermore,
$[\Box,\Lambda_{\gamma'}]\to 0$ {\em strongly} (but not in norm)
as a map from $H_{\ebo,\hilbertp}^{s',r'}(M)$ to
$H_{\ebo,\Hes{-1,l}(M)}^{s'+1,r'}(M)$. Thus, letting $\gamma'\to 0$
shows \eqref{eq:comm-exp} just under the assumption
$s-\sigma\leq s'+1/2$, $r\leq r'-1+l+(f+1)/2$.

The Neumann case is completely analogous, except that then
$L^2_g$-dual of $\hilbert=\Hes{1,l}(M)$ is
$\dHes{-1,-l-(f+1)}(M).$ We have
$$
x^{-2} \Diffess{2}(M)\ni\Box: \hilbert \to \dHes{-1,l-2}(M) =
x^{2l+(f+1)-2} \hilbert^*.
$$
Furthermore, the dual of 
$H_{\ebo,\hilbert}^{s',r'}(M)$ is
$$
\big(H_{\ebo,\hilbert}^{s',r'}(M)\big)^* 
=H_{\ebo,\hilbert^*}^{-s',-r'}(M).
$$
The rest of the argument proceeds unchanged.

Having justified our integrations by parts, we now
show that we can absorb the $Q_i$-terms in
\eqref{eq:regularized-commutator-pairing} in the positive terms
(uniformly as $\gamma\downarrow 0$) by using
Corollary~\ref{cor:glancingestimate}. Thus, given $\delta>0$, let $U$ be
as in Corollary~\ref{cor:glancingestimate}; for sufficiently small
$\ep>0$, $\supp a\subset U$. For instance, by Cauchy-Schwarz,
\begin{equation*}\begin{split}
&|\langle Q_j F_{ij,2} T_1 A_{2,\gamma}u,Q_i T_{-1}^* A_{2,\gamma}u
\rangle|
\leq \|Q_i T_{-1}^* A_{2,\gamma}u\|^2+\|Q_j F_{ij,2} T_1 A_{2,\gamma} u\|^2\\
&\qquad\leq \delta \big(\|D_t T_{-1}^* A_{2,\gamma}u\|^2
+\|D_t F_{ij,2} T_1 A_{2,\gamma} u\|^2\big)\\
&\qquad\qquad+\cte(\|u\|^2_{H^{1,r+1/2-(f+1)/2}_{\eso}(M)}
+\|Gu\|^2_{H^{1,r+1/2-(f+1)/2}_{\eso}(M)}),
\end{split}\end{equation*}
where $G\in\Psieb{s-1,0}(M)$. The
the $\|Gu\|^2_{H^{1,r+1/2-(f+1)/2}_{\eso}(M)}$
term can be estimated as $\langle R''_\gamma u,u\rangle$ since
\begin{equation}\label{eq:G-weighted-est}
\|Gu\|^2_{H^{1,r+1/2-(f+1)/2}_{\eso}(M)}=\|x^{-r+1/2}d_M Gu\|^2_{L^2_g(M;\Lambda M)},
\end{equation}
and $(x^{-r+1/2}d_M G)^*(x^{-r+1/2}d_M G)\in\Diffes{2}\Psieb{2s-2,2r+1}(M)$,
hence in fact a little better than $R''_\gamma$, which has weight $2r+2$.
Now, for $\cte_0>0$ sufficiently large, depending on $\tilde K$ but {\em not}
on $\ep>0$ (as long as $\ep$ satisfies \eqref{makesure}
and $\supp a\subset U$, i.e.\ $\ep>0$ is sufficiently small),
we have
$$
\sigma_{\ebo,0,1}(D_t F_{ij,2}T_1)\leq \cte_0\sigma_{\ebo,0,1}(x^{-1}C_2)
$$
on a neighborhood of $\tilde K$. Thus,
\begin{equation*}\begin{split}
&\|D_t F_{ij,2} T_1 A_{2,\gamma} u\|^2\\
&\leq 2\cte_0 \|x^{-1}C_2A_{2,\gamma} u\|^2\\
&\qquad+\cte'(\|x^{-r-1}u\|^2_{H^{1,r+1/2-(f+1)/2}_{\eso}(M)}
+\|G' u\|^2_{H^{1,r+1/2-(f+1)/2}_{\eso}(M)}),
\end{split}\end{equation*}
with $G'\in\Psieb{s-1,0}(M)$ (so the last term behaves like
\eqref{eq:G-weighted-est}).
Thus, if we choose $\delta>0$ such that
$8\cte_0n^2\delta<1$, the first term (for all $i,j$) can be absorbed in 
$\|x^{-1}C_2A_{2,\gamma} u\|^2$, while the last two terms are estimated
as $\langle R''_\gamma u,u\rangle$. Essentially identical arguments deal
with all the other terms with $Q_i$ and $Q_j$. In the case where $Q_i$
is present on one side of the pairing only, we write, for instance,
\begin{equation*}\begin{split}
&|\langle Q_i T_{-1}A_{2,\gamma}u, x^{-1} T_1^* F_{i,2}^*A_{2,\gamma} u
\rangle|\\
&
\leq \delta^{-1/2}\| Q_i T_{-1}A_{2,\gamma}u\|^2
+\delta^{1/2}\|x^{-1} T_1^* F_{i,2}^*A_{2,\gamma} u\|^2.
\end{split}\end{equation*}
Using Corollary~\ref{cor:glancingestimate} on the first term, we have
an estimate as above after possibly reducing $\delta>0$.

Recall that uniform finiteness of
$\norm{x^{-1}C_3T_{1/2}A_\gamma u}$ as $\gamma \downarrow 0$ will give
absence of $\WFebX^{s-1/2,r-l-(f+1)/2} u\cap \liptic A$ (as always
the contribution to the weight
of $(f+1)/2$ comes from the metric weight while $l$ comes from the
weight in the definition of the base space, $\hilbert$).  Similarly evaluating
the other terms in the pairing, we take the extreme values of $s',r'$
allowed by \eqref{betternumerology} to
obtain
\begin{equation}
\begin{aligned}
&\WFebX^{s-1/2-\sigma,r+1-l-(f+1)/2} u\cap \WF' A=\emptyset,\\
&\Mand\WFebX^{s-1/2,
  r+1-l-(f+1)/2} u \cap  \WF' A \cap \{x>0\}=\emptyset,\\ 
&\Mand s>r+\sigma,\ \sigma\in(0,1/2]\\
&\Longrightarrow \WFebX^{s-1/2,r+1-l-(f+1)/2} u\cap \liptic A=\emptyset,
\end{aligned}
\end{equation}
or, relabeling,
\begin{equation}\label{commutatorimplication}
\begin{aligned}
  \WFebX^{s,r} u\cap \WF' A=\emptyset,\ &\WFebX^{s+\sigma,r} u
  \cap  \WF' A \cap \{x>0\}=\emptyset,\\ &s>r+l+f/2-1,\\ 
  &\Longrightarrow \WFebX^{s+\sigma,r} u\cap \liptic A=\emptyset.
\end{aligned}
\end{equation}
Recall here that $a=a_\ep$, and
\begin{equation}\label{eq:WF-elliptic}
0<\ep<\ep'\Rightarrow
\WF'(A_\ep)\cap\ebSigma\subset\liptic A_{\ep'}\cap\ebSigma.
\end{equation}

Finally, we show how to use \eqref{commutatorimplication} iteratively,
together with an interpolation argument, to finish the proof of the theorem.
A priori we have $u \in \hilbert,$ i.e.\
$$
\WFebX^{0,0} u =\emptyset,
$$ If $0> l+f/2-1,$ we may iteratively apply
\eqref{commutatorimplication} (shrinking $\ep>0$ by an arbitrarily small
amount, using \eqref{eq:WF-elliptic} to estimate the lower order
error terms $R''_\gamma$) starting with $s=0$ and always keeping
$r=0,$ to obtain the conclusion of the theorem.  (We choose
$\sigma=1/2$ at every stage in this process, until we are applying
\eqref{commutatorimplication} with $s$ such that $s+1/2>m,$ at which
point we finish the iteration by choosing $\sigma=m-s$ so as to retain our estimates on
the wavefront set in the hypothesis region.)

However if $0\leq l+f/2-1,$ we may not apply
\eqref{commutatorimplication} directly owing to the lack of positivity
of the commutator, and we must employ an interpolation argument as
follows.  Applying \eqref{commutatorimplication} iteratively,
this time with $r=r_0<0$ chosen sufficiently negative that we recover $0>r_0+l+f/2-1,$
shows that we obtain
\begin{equation}\label{eq:rough-estimate}
\WFebX^{m,r_0} u \cap S=\emptyset,
\end{equation}
with $S=\WF' A_\ep$ for some $\ep>0$, $A_\ep$ constructed as above.
Let
$$
L = \sup\{r': \WFebX^{m,r'} u \cap S=\emptyset,\ r'\leq 0\}.
$$
Note that the set on the right hand side is non-empty by
\eqref{eq:rough-estimate}.
We aim to show
that $L=0.$
To this end, note that if
$L<0,$ then
for any $r'<L$
$$
\WFebX^{m,r'} u \cap S= \WFebX^{0,0}\cap S=\emptyset.
$$
An interpolation then yields, for $\delta\in (0,1)$,
$$
\WFebX^{m\delta,r'\delta} u \cap S= \emptyset.
$$
Note that for any $\delta\in (0,1)$ fixed,
the compactness of $S$ implies that for some $\ep'>\ep$,
\begin{equation*}
\WFebX^{m\delta,r'\delta}
u \cap \WF' A_{\ep'}=\emptyset
\end{equation*}
still holds.
If $\delta\in(0,1)$ in addition satisfies
\begin{equation}\label{eq:delta-upper}
m\delta>r'\delta+l+f/2-1
\end{equation}
then by iterating \eqref{commutatorimplication}, shrinking $\ep'$ in
each step (but keeping it larger than $\ep$),
we conclude that
$$
\WFebX^{m,r'\delta} u \cap S=\emptyset,
$$
providing a contradiction with the definition of $L$ if
\begin{equation}\label{eq:delta-lower}
r'\delta>L.
\end{equation}
It remains to check whether $\delta\in (0,1)$ satisfying both
\eqref{eq:delta-upper} and \eqref{eq:delta-lower} exists.  This
is
evident from Figure~\ref{interpolationfigure},
\begin{figure}[ht]
\includegraphics[bb = 281 500 433 677,scale=.8]{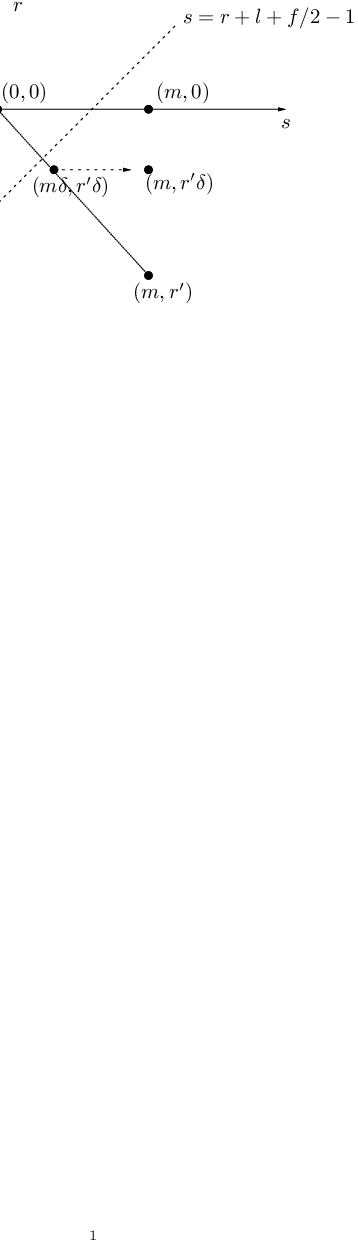}
\caption{The interpolation argument.  The figure shows the $(s,r)$
  plane, where we plot
the values for which there is no $\WFebX^{m,r}(u)$ (i.e.\
microlocal regularity of this order holds).
We have a priori regularity of order
$(0,0)$ and wish to conclude regularity of order $(m,l')$ with $l'<0.$
By \eqref{commutatorimplication} we may take a step to the right of
length $\sigma$ for any $\sigma \in (0, 1/2]$ provided that our
starting regularity is below the line $s=r+l+f/2-1$ and that our
endpoint is on $s\leq m.$   If we know
$(m,r')$ regularity, we know regularity by interpolation on the whole
line connecting this point to the origin; then starting on the
interpolation line just below its intersection with $s=r+l+f/2-1$
allows us to achieve $(m,r'\delta)$ regularity by moving to the right,
thus improving the optimal weight for which we have our estimate.}
\label{interpolationfigure}
\end{figure}
but a proof is as
follows: we have $l+f/2-1\geq 0$ by assumption (otherwise we are in the
preceding case); moreover $m>0$ (so that the theorem is
not vacuous), and $l+f/2-1<m$ by hypothesis.  Thus, for any $r'<0,$
$$
0\leq \frac{l+f/2-1}{m-r'}<\frac{m}{m-r'}<1.
$$
Setting
$$
\delta_0(r')=\frac{l+f/2-1}{m-r'}\in [0,1)
$$
we see
that \eqref{eq:delta-upper} is an equality with $\delta =\delta_0(r)$
and that taking $\delta\in
(\delta_0(r'),1)$ yields \eqref{eq:delta-upper}. In particular, \eqref{eq:delta-upper}
is satisfied by $\delta=\delta(r',\lambda)=\delta_0(r')(1+\lambda)$ for any
$\lambda>0$ sufficiently small.  If $L<0,$ we have $\delta_0(L)<1,$ hence the
function of $r'$ and $\lambda$ given by
$$
r'\delta(r',\lambda) = r'\delta_0(r')(1+\lambda)
$$
is strictly greater than $L$ for $r'=L,\lambda=0.$  Hence increasing 
$\lambda$ slightly and decreasing $r'$ slightly preserves this
relationship by continuity, and these choice of $r'$ and $\delta$
yield $r'\delta>L$ as desired.
\end{proof}

In order to verify the hypotheses of Theorem~\ref{thm:bdy-radial}, which are stated at
points over the edge, we will employ the following geometric result.

First note that if $q \in \cH_{W,\bo}$ then $\rcal_{\ebo,q,I}$ has a
neighborhood $O_1$ in $\Sebstar M$ and there is a $\delta_0>0$
such that any \GBB $\gamma$
with $\gamma(0)\in O_1\cap\{x>0\}$
satisfies $\gamma|_{[-\delta_0,0]}\cap\Sbstar_W M_0=\emptyset$. Indeed,
we simply need to take a coordinate neighborhood
\begin{equation*}\begin{split}
O_1=\{p\in\Sebstar M:
&\ \xiebh(p)<-\sqrt{1-h(q)}/2,\ x(p)<\ep_1,\ |y(p)-y(q)|<\ep_1,\\
&\ |t(p)-t(q)|<\ep_1,
\ |\etaebh(p)-\etabh(q)|<\ep_1\},
\end{split}\end{equation*}
$\ep_1>0$ sufficiently small, since
on its intersection with $\{x>0\}$ (where $\Sebstar M$
is naturally identified with $\Sbstar M_0$), $\sum\xibh_j<0$, hence
Lemma~\ref{lemma:OG-IC-uniform-est-1} gives the desired $\delta_0$
(cf.\ the argument of Remark~\ref{remark:unif-nbhd}). Thus,
such \GBB $\gamma$ can be uniquely lifted to curves $\gammat$ in $\Sebstar M$.

\begin{lemma}\label{lemma:hypothesis-region}
Suppose that $q \in \cH_{W,\bo}.$ There exists $\ep_0>0$ with the following
property.

Suppose that $0<\ep_1\leq \ep_0$, and $U$ is a neighborhood
of $\fcalW_{I,q,\sing}
\cap \{t=t(q)-\ep_1\}.$
Then there is a neighborhood $O$ of $\rcal_{\ebo,q,I}\cap\cG$
in $\Sebstar M$ such
that for every maximally backward extended \GBB $\gamma$ with
$\gamma(0)\in O\cap\{x>0\}$ there is an $s_0<0$ such that $\gamma(s_0)\in U$,
$\gamma(s)\notin \Sbstar_W M_0$ for $s\in [s_0,0]$.
\end{lemma}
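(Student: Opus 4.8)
The plan is to argue by contradiction, using compactness together with the convergence of lifted \GBBs supplied by Corollary~\ref{cor:GBB-lifted-conv} (in its evident backward form, valid by the backward analogues of Lemmas~\ref{lemma:GBB-eb-lift} and \ref{lemma:equicontinuty-lifted-GBB}) and with the uniform non-return estimate encoded in the neighborhood $O_1$ constructed just before the lemma. As a preliminary I would record that $t$ is a strictly monotone reparameterization of every \GBB: since $\{\taus=0\}\cap\sSigma=\emptyset$, the one-sided derivatives of $t\circ\gamma$ are a fixed nonzero constant (Lemma~\ref{lemma:Lebeau-IC-OG}), of a single sign on each component of $\bSigma$. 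Orienting so that tracing a \GBB backward from near $\rcal_{\ebo,q,I}$ strictly decreases $t$ from a value near $t(q)$, there is along any such curve a unique parameter $s<0$ at which $t=t(q)-\ep_1$, and this parameter depends Lipschitz-continuously on the curve in the uniform topology.

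First I would fix a compact neighborhood $\kcal$ of $q$ in $\hcal_{W,\bo}$, take $O_1$, $\delta_0$ as before the lemma, and (using the remark after Lemma~\ref{lemma:GBB-eb-lift} in backward form) shrink $\delta_0>0$ so that, in addition, every \GBB $\gamma'$ with $\gamma'(0)\in\kcal$ has $\gamma'|_{[-\delta_0,0)}$ disjoint from $\Sbstar_W M_0$, and so that $\delta_0<T$; then put $\ep_0=\delta_0$. Note that $\rcal_{\ebo,q,I}\cap\cG=\rcal_{\ebo,q,I}\cap\Sebstar[\pa\ef]M$ is compact, being closed in the compact set $\rcal_{\ebo,q,I}$ (a copy of $Z$); if it is empty there is nothing to prove. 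Otherwise, for $0<\ep_1\le\ep_0$ the set $\fcalW_{I,q,\sing}\cap\{t=t(q)-\ep_1\}$ is nonempty, and since the parameter realizing $t=t(q)-\ep_1$ along any \GBB from $q$ has absolute value $\ep_1/2<\delta_0$, this slice lies in $\Sebstar[M\setminus\ef]M$ — which is how it is being read inside $\Sebstar M$.

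Now suppose, given $\ep_1\in(0,\ep_0]$ and a neighborhood $U$ of that slice, that no neighborhood $O$ has the stated property. Choosing neighborhoods $O_n\subset O_1$ of $\rcal_{\ebo,q,I}\cap\cG$ with $\bigcap_n\overline{O_n}=\rcal_{\ebo,q,I}\cap\cG$, for each $n$ there is a maximally extended backward \GBB $\gamma_n$ with $p_n:=\gamma_n(0)\in O_n\cap\{x>0\}$ failing the conclusion. Passing to a subsequence, $p_n\to p\in\rcal_{\ebo,q,I}\cap\cG$; since $\xiebh(p)<0$ (cf.\ Remark~\ref{remark:unif-nbhd}), Corollary~\ref{cor:GBB-lifted-conv} (backward form) applies, so after a further subsequence $\gamma_n|_{[-\delta_0,0]}\to\gamma$ uniformly in $\Sbstar M_0$, the lifts $\tilde\gamma_n$ converge uniformly in $\Sebstar M$ to the lift $\tilde\gamma$ of the \GBB $\gamma$, $\gamma(0)=\dbetaeb(p)=q$, and $\tilde\gamma(0)=p$. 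By Lemma~\ref{lemma:GBB-eb-lift}, $\gamma$ is then \emph{not} normally incident, so extending it to a \GBB of length $T$ (which agrees with $\gamma$ on $[-\delta_0,0]$, hence has the same lift there) gives $\gamma|_{[-\delta_0,0)}\subset\fcalW_{I,q,\sing}$, while the choice of $\kcal,\delta_0$ gives $\gamma|_{[-\delta_0,0)}\cap\Sbstar_W M_0=\emptyset$. Let $s_0^*\in(-\delta_0,0)$ be the parameter with $t(\gamma(s_0^*))=t(q)-\ep_1$; then $\gamma(s_0^*)\in\fcalW_{I,q,\sing}\cap\{t=t(q)-\ep_1\}$ and $x(\gamma(s_0^*))>0$. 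By uniform convergence $t\circ\gamma_n\to t\circ\gamma$ and strict monotonicity of the latter, the parameters $s_{0,n}\in(-\delta_0,0)$ with $t(\gamma_n(s_{0,n}))=t(q)-\ep_1$ (well-defined for $n$ large, since $t(p_n)\to t(q)$) satisfy $s_{0,n}\to s_0^*$, hence $\gamma_n(s_{0,n})\to\gamma(s_0^*)$; as $\gamma(s_0^*)$ and, for $n$ large, $\gamma_n(s_{0,n})$ lie in $\{x>0\}$, where $\Sbstar_{M_0\setminus W}M_0$ is identified with $\Sebstar[M\setminus\ef]M$, this convergence takes place in $\Sebstar M$, so $\gamma_n(s_{0,n})\in U$ for $n$ large (as $U$ is a neighborhood of $\fcalW_{I,q,\sing}\cap\{t=t(q)-\ep_1\}\ni\gamma(s_0^*)$). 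But $\gamma_n|_{[s_{0,n},0]}$ lies in $\gamma_n|_{[-\delta_0,0]}$, disjoint from $\Sbstar_W M_0$ (since $p_n\in O_1\cap\{x>0\}$), so $s_0:=s_{0,n}<0$ meets both requirements of the conclusion for $\gamma_n$, contradicting the choice of $\gamma_n$.

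The hard part will be the invocation of the backward form of Corollary~\ref{cor:GBB-lifted-conv}: its proof, via Lemma~\ref{lemma:equicontinuty-lifted-GBB}, turns on the delicate uniform control of the lifted fiber variables $z$, $\xiebh$, $\zetaebh$ as the parameter tends to $0$ for \GBBs originating just off the edge; granting that, the rest is routine bookkeeping. The other point that needs care is the consistent choice of constants — making sure the single $\ep_0$ fixed at the outset is small enough that, for every admissible $\ep_1$, the $t$-slice at $t(q)-\ep_1$ is reached within the parameter window $(-\delta_0,0)$ on which all the uniform statements (non-return to $\Sbstar_W M_0$, equicontinuity of lifts, containment of $\fcalW_{I,q,\sing}$ in $\Sebstar[M\setminus\ef]M$) are available.
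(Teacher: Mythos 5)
Your proposal reproduces the paper's argument: you fix $O_1$ and $\delta_0$ as in the preamble, take $\ep_0$ determined by the linear relation $t(\gamma(s))-t(\gamma(0))=2\taubh(\gamma(0))s$, argue by contradiction via compactness of $\rcal_{\ebo,q,I}\cap\cG$, apply the incoming analogue of Corollary~\ref{cor:GBB-lifted-conv} to pass to a uniform limit $\gamma$ with lift ending at $p\in\cG$, invoke Lemma~\ref{lemma:GBB-eb-lift} to place $\gamma$ in $\fcalW_{I,q,\sing}$, and then pull the nearby $\gamma_n$ into $U$ by uniform convergence. The only cosmetic difference is that you handle the two requirements of the conclusion (hitting $U$ and avoiding $\Sbstar_W M_0$) simultaneously, where the paper separates them into a two-step finish, and you spell out the nested-neighborhoods and $t$-monotonicity bookkeeping that the paper leaves implicit.
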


\begin{proof}
It follows from the discussion preceeding the statement of the lemma
that there is a neighborhood $O_1$ of $\rcal_{\ebo,q,I}$ and $\delta_0>0$
such that every \GBB $\gamma$ defined on $[-\delta_0,0]$, with
$\gamma(0)\in O_1\cap\{x>0\}$ satisfies
$\gamma(s)\notin \Sbstar_W M_0$ for $s\in [-\delta_0,0]$.
As $t(\gamma(s))-t(\gamma(0))=2\taubh(\gamma(0))s$, this implies that there
is an $\ep_0>0$ such that for $t(\gamma(s))\in [t(q)-\ep_0,t(q)]$,
$\gamma(s)\notin \Sbstar_W M_0$.

Suppose now for the sake of contradiction that there is no neighborhood $O$
of $\rcal_{\ebo,q,I}\cap\cG$
in $\Sebstar M$ such
that for every (maximally extended) backward \GBB $\gamma$ with
$\gamma(0)\in O\cap\{x>0\}$, there exists $s_0<0$ with
$\gamma(s_0)\in U$.
As $\rcal_{\ebo,q,I}\cap\cG$ is compact, we conclude that
there is a sequence of points $p_n\in O_1\subset\Sebstar M$ with $x(p_n)>0$
(so $p_n$ can be regarded as a point in $\Sbstar M_0$)
and \GBB $\gamma_n$ such that
\begin{itemize}
\item $\gamma_n(0)=p_n$,
\item the image of $\gamma_n$
disjoint from $U$,
\item $p_n\to p\in
\rcal_{\ebo,q,I}\cap\cG$.
\end{itemize}
By Corollary~\ref{cor:GBB-lifted-conv}, $\{\gamma_n\}$ has a subsequence
$\{\gamma_{n_k}\}$ converging
uniformly to a \GBB $\gamma$ such that the lift $\tilde\gamma$ of
$\gamma$ to $\Sebstar M$ satisfies $\tilde\gamma(0)=p$. Thus,
by Lemma~\ref{lemma:GBB-eb-lift}, $\gamma$ is not normally incident, so
the image of $\gamma$ is in $\fcalW_{I,q,\sing}$, and
thus intersects $\fcalW_{I,q,\sing}
\cap \{t=\ep_1\}.$
As $\gamma_{n_k}\to\gamma$ uniformly, for large enough $k$,
$\gamma_{n_k}$ intersects $U$, providing a contradiction. Thus, there
exists $O$ such that for every (maximally extended) backward \GBB $\gamma$ with
$\gamma(0)\in O\cap\{x>0\}$, there exists $s_0<0$ with
$\gamma(s_0)\in U$. We may assume that $O\subset O_1$ by replacing
$O$ by $O\cap O_1$ if needed.

To finish the proof, we note that, provided $\ep_0>0$ is sufficiently small,
if $\gamma(0)\in O\subset O_1$, $t(\gamma(s))\in [t(q)-\ep_0,t(q)]$
implies $\gamma(s)\notin \Sbstar_W M_0$.
\end{proof}

Theorem~\ref{thm:bdy-radial} and this lemma immediately give the
following Corollary.

\begin{corollary}
Let $u \in \hilbert \equiv \Hes{1, l}(M)$ solve
$\Box u=0$ with Dirichlet or Neumann boundary conditions.

Let $m>l+f/2-1$
with $m\geq 0.$  Suppose that $q \in \cH_{W,\bo}$ and
$\fcalW_{I,q,\sing}\cap\WFebX^{m,*} u
=\emptyset.$
Then $\Sebstar[\pa \ef] M\cap \WFebX^{m,l'} u =
\emptyset$ for all $l'<0.$
\end{corollary}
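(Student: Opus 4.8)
The plan is to use Lemma~\ref{lemma:hypothesis-region} to convert the global hypothesis on $\fcalW_{I,q,\sing}$ into the localized hypothesis required by Theorem~\ref{thm:bdy-radial}, bridging the two by propagation of singularities over $M_0\setminus W$. Fix $\ep_0>0$ as supplied by Lemma~\ref{lemma:hypothesis-region} for the given $q\in\cH_{W,\bo}$, and pick any $\ep_1\in(0,\ep_0]$. Since $\WFebX^{m,*}(u)$ is closed and, by assumption, disjoint from $\fcalW_{I,q,\sing}$, the open set $U:=\Sebstar M\setminus\WFebX^{m,*}(u)$ is a neighborhood of $\fcalW_{I,q,\sing}\cap\{t=t(q)-\ep_1\}$. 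Feeding this $U$ into Lemma~\ref{lemma:hypothesis-region} yields a neighborhood $O$ of $\rcal_{\ebo,q,I}\cap\cG$ in $\Sebstar M$ such that every maximally extended backward \GBB $\gamma$ with $\gamma(0)\in O\cap\{x>0\}$ has some $s_0<0$ with $\gamma(s_0)\in U$ and with $\gamma|_{[s_0,0]}$ disjoint from $\Sbstar_W M_0$.

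Next I would show that $\{x>0\}\cap O\cap\WFebX^{m,*}(u)=\emptyset$. Recall that away from $\ef$ the edge-b calculus coincides with the b-calculus on $M_0\setminus W$ and the $\ef$-weight plays no role, so that $\{x>0\}\cap\Sebstar M=\Sebstar[M\setminus\ef]M$ is canonically identified with $\Sbstar_{M_0\setminus W}M_0$, and on it $\WFebX^{m,*}(u)$ is just the b-wavefront set of $u$ relative to the form domain. Let $p\in O\cap\{x>0\}$. If $p\notin\ebSigma$, then $p\notin\WFebX^{m,*}(u)$ by the microlocal elliptic regularity of Proposition~\ref{prop:elliptic} applied to $\Box u=0$. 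If $p\in\ebSigma$, choose a maximally extended backward \GBB $\gamma$ with $\gamma(0)=p$; by the defining property of $O$ there is $s_0<0$ with $\gamma(s_0)\in U$, hence $\gamma(s_0)\notin\WFebX^{m,*}(u)$, while $\gamma|_{[s_0,0]}$ lies in $\Sbstar_{M_0\setminus W}M_0$, where $\Box u=0$. Transporting regularity along $\gamma$ by the propagation of singularities theorem of \cite{Vasy5} then gives $p=\gamma(0)\notin\WFebX^{m,*}(u)$. In either case $p\notin\WFebX^{m,*}(u)$.

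Finally, observing that $\rcal_{\ebo,q,I}\cap\cG=\rcal_{\eb,q,I}\cap\Sebstar[\pa\ef]M$, the set $O$ is now a neighborhood of $\rcal_{\eb,q,I}\cap\Sebstar[\pa\ef]M$ with $\{x>0\}\cap O\cap\WFebX^{m,*}(u)=\emptyset$; since the numerical conditions $m>l+f/2-1$, $m\geq0$, $l'\leq0$ are exactly those hypothesized in the Corollary, this is precisely the hypothesis of Theorem~\ref{thm:bdy-radial} with its neighborhood taken to be $O$. Invoking that theorem gives $\Sebstar[\pa\ef]M\cap\WFebX^{m,l'}(u)=\emptyset$ for all $l'<0$, which is the claim. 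The only mildly technical ingredients are the identification of the edge-b structures with the b-category away from $\ef$ and the application of propagation of singularities on the corner-removed manifold $M_0\setminus W$, neither of which poses a real obstacle; hence, as the surrounding text asserts, the deduction is immediate.
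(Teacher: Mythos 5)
Your proof is correct and follows essentially the same route as the paper's: apply Lemma~\ref{lemma:hypothesis-region} to obtain the neighborhood $O$, propagate b-regularity backward along the \GBBs staying in $\Sbstar_{M_0\setminus W}M_0$ to establish the hypothesis $\{x>0\}\cap O\cap\WFebX^{m,*}u=\emptyset$, and conclude with Theorem~\ref{thm:bdy-radial}. The only superficial differences are that you take $U$ to be the full complement of $\WFebX^{m,*}u$ rather than some sufficiently small neighborhood, and you explicitly spell out the elliptic case via Proposition~\ref{prop:elliptic}, neither of which changes the argument.
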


\begin{proof}
Let $\ep_0>0$ be as in Lemma~\ref{lemma:hypothesis-region}.
As $\WFebX^{m,*} u$ is closed, $\fcalW_{I,q,\sing}\cap\{t=t(q)-\ep_0/2\}$
has a neighborhood
$U$ disjoint from $\WFebX^{m,*} u$. By Lemma~\ref{lemma:hypothesis-region},
$\rcal_{\ebo,q,I}\cap\cG$ has a neighborhood $O$ such that every
backward \GBB $\gamma$ with $\gamma(0)\in O\cap\{x>0\}$ intersects $U$
and is disjoint from $\Sbstar_W M_0$.
By the propagation of singularities, \cite{Vasy5}, $\WFebX^{m,*}(u)\cap
O\cap\{x>0\}=\WF_{\bo,\hilbert}^{m,*}(u)\cap O\cap \{x>0\}=\emptyset$.
Note that this uses the fact that every
backward \GBB $\gamma$ with $\gamma(0)\in O\cap\{x>0\}$ intersects $U$
and is disjoint from $\Sbstar_W M_0$, for we do not assume that $u$
lies in a b-derivative of $\hilbert$ as we allow arbitrary weights at $\ef$.
Thus, by Theorem~\ref{thm:bdy-radial},
$\Sebstar[\pa \ef] M\cap \WFebX^{m,l'} u =
\emptyset$ for all $l'<0.$
\end{proof}

\subsection{Propagation at hyperbolic points within the edge}

Now we consider propagation within $\Sebstar[\ef] M,$ away from the radial
points.  The propagation away from $\pa \ef$ is given by the results in
\cite{mvw1}: on the edge cosphere bundle over $\ef^\circ,$ we find that
$\ebWF u=\WFebX u$ (with, say, $\hilbert=\Hes{1,l}(M)$,
for Dirichlet boundary conditions,
$\hilbert=\Hesz{1,l}(M)$ for Neumann boundary conditions---though this
is irrelevant since we are working away from $\pa\ef$)
given by is a union of integral curves of
$\sH_{\eso}|_{\Sesstar_{\ef}M}$, given by \eqref{eq:sH-at-ef}, i.e.
\begin{equation*}
\frac12\sH_{\eso}=-\xiesh\zetaesh\pa_{\zetaesh}
+K^{ij}\zetaesh_i\pa_{z_j}+K^{ij}\zetaesh_i\zetaesh_j\pa_{\xiesh}
-\frac{1}{2}\,\frac{\pa K^{ij}}{\pa z_k}\zetaesh_i\zetaesh_j\pa_{\zetaesh_k},
\end{equation*}
where, as before,
hats denote variables divided by $\abs{\taues},$ hence coordinates
in the edge-smooth cosphere bundle (which over $\ef^\circ$ is canonically
identified with the edge cotangent bundle).  This leaves open only the
question of how bicharacteristics reaching $z'=0$ interact with those
leaving $z'=0,$ i.e.\ the problem of reflection/diffraction from the
boundary faces and corners of $Z.$
Since the propagation over the interior of $\ef$ can be considered as
a special case of propagation at $\cG\setminus\rcal_{\ebo}$ (see
Theorem~\ref{theorem:edgeglancing}, with no $z'$ variables, i.e.\ with
$k=0$ in the notation of the theorem), we do not state the interior
propagation result of \cite{mvw1} here explicitly.

Let us thus begin by considering a
\emph{hyperbolic} point $q \in
\cH$ given by
\begin{equation}\label{hyperbolicincoordinates}
x=0,\ t=t_0,\ y=y_0,\ z'=0,\ z''=z''_0,\ \xiebh=\xiebh_0,\ \etaebh=\etaebh_0,
\ \zetaebh'=0,\ \zetaebh_0'',
\end{equation} in edge-b canonical coordinates.  Thus, in
addition to $\zeta'=0,$ we have $$1 >\xiebh^2 +h(y_0,\etaebh_0)+
k(y_0,z'=0,z''_0, \zetaebh'=0,\zetaebh_0'').$$

In the special case that $z'$ is a variable in $\RR^1,$
i.e.\ if $q$ lies on a codimension-one boundary face of $\eb,$ then
two points in $\Sebstar[\ef](M)$ lie above $q$ and two edge
bicharacteristics in $\Sebstar[\ef^\circ](M)$ contain $q$ in their
closures; we denote them $\gamma_{\pm}$ with the $\pm$ given by $\sgn
({\zetas}'\cdot z');$ we will take $\gamma_{\pm}$ to be only the
segments of these bicharacteristics in $\abs{z'}<\ep\ll 1$ in order
not to enter into global considerations.  Our sign convention is such
that $\gamma_\pm$ tends toward $q$ under the forward resp.\ backward
bicharacteristic flow.  What we will show in this case is that if $u
\in \hilbert\equiv \Hes{1,l}(M)$ and $\Box u=0$ with Dirichlet
or Neumann boundary conditions then
$$
\gamma_- \cap\WFebX^{m,0}  u =\emptyset \Longrightarrow \gamma_+\cap \WFebX^{m,0} u
=\emptyset\quad \text{for any } m.
$$
More generally, we have the following result, which via standard
geometric arguments (see \cite{Melrose-Sjostrand2}) implies the propagation
along \EGBBs through $p:$

\begin{theorem}\label{theorem:edgehyperbolic}
For Neumann boundary conditions, let $\hilbert=\Hes{1,l}(M)$,
$\hilbertp=\dHes{-1,l-2}(M)$; for Dirichlet boundary conditions let
$\hilbert=\Hesz{1,l}(M)$,
$\hilbertp=\Hes{-1,l-2}(M)$.

Let $u \in \hilbert$ solve
$\Box u=f$, $f\in\hilbertp$.
Let $p \in \hcal$ be given by \eqref{hyperbolicincoordinates}.
Let $U$ be an open neighborhood of $p$ in $\Sebstar[\ef](M)$,
let $m\in\RR$, $l'\leq 0$, and suppose that
$\WFebY^{m+1,l'}(f)\cap U=\emptyset$.
Then
$$
U \cap \Big\{\sum \zetaebh'_i>0\Big\}
\cap \WFebX^{m,l'}(u) =\emptyset \Longrightarrow p \notin\WFebX^{m,l'}(u).
$$
\end{theorem}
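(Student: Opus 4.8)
The statement is a positive-commutator propagation estimate at a non-radial hyperbolic point on the boundary of the edge, and the plan is to adapt the hyperbolic-propagation argument of \cite[Section~9]{mvw1} (and its boundary analogue in \cite[\S6]{Vasy5}) to the present differential-pseudodifferential setting, using the commutator computation of Lemma~\ref{lemma:Box-eb-comm} together with the glancing bound of Corollary~\ref{cor:glancingestimate} to dispose of the $Q_i=x^{-1}D_{z'_i}$ error terms. The key point that makes hyperbolicity work in our favor: at the point $p$ we have $1-\xiebh_0^2-h(y_0,\etaebh_0)-k(\dots)>0$, so $V_0|_C$ from \eqref{eq:sH-eb-h} is, after re-homogenization, a nonvanishing vector field on $\Sebstar[\ef]M$ transverse to $C$ in the $z'$ directions; concretely $\sH_{\eso}$ (pushed forward under $\piesebh$) has $z'$-component controlled by $\zetaesh'$, and the propagation is along $\sum\zetaebh_i'$ increasing. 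Thus one constructs a commutant localized near $p$ whose symbol is monotone along the flow in the direction picked out by the hypothesis region $\{\sum\zetaebh_i'>0\}$.

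\textbf{Key steps, in order.} First, choose edge-b coordinates centered at $p$ as in \eqref{hyperbolicincoordinates} and, following the reductions of Lemma~\ref{lemma:es-eb-wave-op}, arrange the dual metric in the form \eqref{metricform} so that Lemma~\ref{lemma:Box-eb-comm} applies. Second, build the commutant: take
$$
a=a_\ep=|\taueb|^s x^{-r}\,\chi(\tfrac12-\textstyle\sum\zetaebh_i'/\ep)\,\phi_0(\omega/\ep)\,\phi_1(|\zetaebh''|^2+|\xiebh-\xiebh_0|^2),
$$
with $\omega$ measuring squared distance to $p$ in the ``slow'' variables $(y,t,\etaebh)$ and $x$, $\chi$ a cutoff that is constant where $\sum\zetaebh_i'$ is large and decreasing, $\phi_0,\phi_1$ cutoffs localizing near $p$; the crucial sign comes from $-(\sgn\tau_0)V_0(\chi\circ(\cdot))$ being a negative square on the relevant region, while $V_0$ applied to $\phi_0$ gives a lower-order term supported near $p$ that is absorbed by an inductive hypothesis, and $V_0$ applied to $\phi_1$ is where the forward-flow hypothesis region $\{\sum\zetaebh_i'>0\}$ enters: that term has a definite sign or is supported in the hypothesis region. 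Third, quantize $a$ to $A\in\Psieb{s,r}(M)$, form $\imath[\Box,A^*A]$ using \eqref{eq:Box-comm-rad}, and separate the output into (i) the main positive square $C^*x^{-2}C$ with $C$ elliptic near $p$, (ii) the $Q_i$-bearing terms $\sum Q_i^*L_{ij}Q_j+\sum(x^{-1}L_iQ_i+\dots)$, (iii) an error $E$ supported in the hypothesis region $\{\sum\zetaebh_i'>0\}\cap\WF'A$, (iv) an error $E'$ microsupported off $\ebSigma$ handled by microlocal elliptic regularity (Proposition~\ref{prop:elliptic}), and (v) a genuinely lower-order remainder $R''$ handled by induction. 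Fourth, use Corollary~\ref{cor:glancingestimate} — applicable since $\WF'A$ can be taken in a small neighborhood of a compact subset of $\cG$ (here $p\in\cH$, but $p$ lies in the closure of glancing directions and the same estimate applies, or one uses the non-radial-hyperbolic analogue directly, cf.\ \cite[Lemma~9.x]{mvw1}) — to bound the $Q_i$ terms by $\delta\|D_tAu\|^2+$ (controlled terms), and absorb $\delta\|D_tAu\|^2$ into the main square since $D_t$ is elliptic on $\ebSigma$. Fifth, mollify exactly as in the proof of Theorem~\ref{thm:bdy-radial}: insert $\Lambda_\gamma\in\Psieb{-\sigma}(M)$, regularize, justify the integration by parts via the numerology $s-\sigma\le s'+1/2$, $r\le r'+l+(f+1)/2-1$ (using the second-factor regularizer $\tLambda_{\gamma'}$ trick to obtain \eqref{eq:comm-exp} for solutions), pair with $u$, and let $\gamma\downarrow0$ to conclude uniform boundedness of $\|x^{-1}CT_{1/2}A_\gamma u\|$, hence $p\notin\WFebX^{m+\sigma,r}(u)$. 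Sixth, iterate in steps of $\sigma=1/2$ from the a priori regularity $u\in\hilbert$ up to order $m$ (finishing with $\sigma=m-s$), keeping $r=l'\le 0$ throughout — here, unlike Theorem~\ref{thm:bdy-radial}, there is no loss of positivity forcing an interpolation argument, because the hyperbolic commutator is positive for all weights $r$, so the conclusion holds at the stated weight $l'$ directly.

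\textbf{Main obstacle.} The technically delicate part is the same as in the radial-boundary theorem: controlling the $Q_i$-terms arising from the normal-differential part of $\Box$ (the $z'$-boundary directions) uniformly in the mollification parameter $\gamma$, and ensuring the commutator $\imath[\Box,A^*A]$ can legitimately be paired with the pair $(u,u)$ for $u\in\Hes{1,l}(M)$ satisfying only a one-sided (Dirichlet or Neumann) boundary condition. This requires the careful bookkeeping of which pieces land in $\Diffess{2}\Psieb{*}(M)$ versus $\Psieb{*}(M)$, the weighted mapping properties $\Box:\hilbert\to\hilbertp$, and the two-stage regularization $(\Lambda_\gamma,\tLambda_{\gamma'})$; all of this is parallel to, but somewhat simpler than, the corresponding steps in the proof of Theorem~\ref{thm:bdy-radial}, since we are at an \emph{isolated} non-radial point and the symbol $a$ need only be monotone, not globally flow-invariant on the edge. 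A secondary point to verify is that $V_0|_C$ and the $V_i|_C$, $V_{ij}|_C$ from \eqref{eq:sH-eb-h} indeed combine so that $-(\sgn\tau_0)V_0$ applied to the $\sum\zetaebh_i'$-cutoff is a nonnegative square on $\tilde K\cap\ebSigma$ — this uses $k_{1,ij}|_C$ being (minus) the positive-definite restricted fiber metric, exactly as in the model sector computation.
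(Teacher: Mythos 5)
The proposal contains a genuine gap in the treatment of the normal-derivative terms $Q_i = x^{-1}D_{z'_i}$, and the proposed fix does not work.

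You propose to handle the $Q_i$-bearing terms in the commutator by invoking Corollary~\ref{cor:glancingestimate} (the glancing estimate $\sum\norm{Q_i Au}^2 \lesssim \delta\norm{D_t Au}^2 + \dots$). This is wrong for two reasons. First, Lemma~\ref{lemma:glancingestimate}/Corollary~\ref{cor:glancingestimate} require $K\Subset\gcal$, but here $p\in\hcal$; $\hcal$ is the \emph{open} condition $1>\xiebh^2+h+k$ and $\gcal$ is its boundary, so $p$ is not in (or near) $\gcal$ — your parenthetical ``$p$ lies in the closure of glancing directions'' is false. Second, and more fundamentally, the estimate cannot hold with small constant near $\cH$: near glancing the $Q_i$ are the nearly-degenerate directions of the Dirichlet form so their size can be squeezed by shrinking the localizer, but near a hyperbolic point the $Q_i$ contributions to the Dirichlet form are \emph{elliptic} and comparable to $\norm{D_t Au}^2$, not small. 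There is no ``non-radial-hyperbolic analogue'' that replaces them by a small multiple.

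The paper's proof does something different in kind, not just in detail. Via Lemma~\ref{lemma:Box-eb-comm}, the term $\sum D_{z'_i}^* C_{ij} D_{z'_j}$ in \eqref{hyperboliccommutator} is \emph{itself} a positive term (not an error to be controlled) because $C_{ij}$ is strictly positive definite on a neighborhood of $\ebWF'B$ — this uses that $k_{1,ij}$ is a positive-definite metric, and that the main derivative of the commutant falls on $\zetaebh'$ through $\phi$. The key ellipticity then enters via the rewriting $\Lap_{z'}/x^2=(\Lap_{z'}/x^2+\Box)-\Box$: the first bracket is in $x^{-2}\Diffeb{2}(M)$ and \emph{elliptic on $\cH$} (this is exactly what hyperbolicity gives), producing the main square $C^*C$, while the $-\Box$ piece pairs with $u$ to produce $f$-contributions. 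Only the genuinely small terms $R_0,R_i,R'_i,R_{ij}$ (whose symbols are $O(\sqrt{\delta}(\sqrt\beta+1/\sqrt\beta))$ because of the $\beta$-combination in $\phi$) are absorbed, via the H\"ormander square-root argument as in \cite{Vasy5}. The glancing estimate is reserved for Theorems~\ref{thm:bdy-radial} and \ref{theorem:edgeglancing}, where $\ebWF'A$ is genuinely near $\gcal$.

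A secondary issue: your commutant separates the cutoffs in $\sum\zetaebh'_i$, $\omega$, and $(|\xiebh-\xiebh_0|,|\zetaebh''|)$ into independent factors. The paper instead combines $\phi = \sum\hat{\zeta'_i} + \beta\omega$ inside a single cutoff $\chi_0(\ssM(1-\phi/\delta))$, so that the positivity of $\sH_p(\sum\hat\zeta'_i)$ dominates $\beta\sH_p\omega$ once $\beta$ is small. With independent factors, the derivative of the $\omega$-localizer is an uncontrolled, non-lower-order term (e.g.\ $V_0|\xiebh-\xiebh_0|^2$ involves $V_0\xiebh|_{\ef}=-2\sum k_{2,ij}\zeta''_i\zeta''_j\neq 0$ in general), and you have no mechanism to give it a sign or to push it into the hypothesis region. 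Your claim that it is ``a lower-order term supported near $p$ that is absorbed by an inductive hypothesis'' is not justified.

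The remaining elements of the plan — the mollification/regularization scheme, the numerology for justifying integration by parts, the iterative improvement with steps $\sigma\leq 1/2$, and the observation that no interpolation argument is needed because hyperbolic positivity is weight-independent — do agree with the paper's argument. But as written the proposal would stall at the very first pairing, because the central positivity mechanism (positive definiteness of $C_{ij}$ plus ellipticity of $\Box+\Lap_{z'}/x^2$ on $\cH$) is replaced by an estimate that is both inapplicable and the wrong shape.
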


Thus, the hypothesis region of the theorem, in which we make a
wavefront assumption lies within the points with at least one $z_i'$ non-zero,
i.e.\ away from $\Sebstar[C]M$, where $C=\{x=0,\ z'=0\}$,
and with momenta directed
\emph{toward} the boundary $z'=0.$

\begin{proof}
As usual, one needs to prove that if in addition to the hypotheses above
$p \notin\WFebX^{m-1/2,l'}(u)$ then $p \notin\WFebX^{m,l'}(u)$, with
a slightly more controlled (but standard) version if $m=\infty$.
So we assume $p \notin\WFebX^{m-1/2,l'}(u)$ from now on.

For a constant $\beta$ to be determined later, let
\begin{equation}\label{phidef}
\phi = \sum \hat{\zeta'_i} + \beta\omega
\end{equation}
where
$$
\omega = 
\abs{z'}^2 + \abs{z''-z''_0}^2+
\abs{\hat{\zeta''}-\hat{\zeta''_0}}^2 + \abs{y-y_0}^2+ \abs{t-t_0}^2 + x^2 +
  \abs{\hat{\eta}-\hat{\eta_0}}+ \abs{\hat{\xi}-\hat{\xi_0}}^2
$$
Then for $\beta$ sufficiently small, we have
$$
\abs{\tau}^{-1} \sH_p\phi >0.
$$ Now let $\chi_0\in\CI(\RR)$ with support in $[0,\infty)$ and
$\chi_0(s)=\exp(-1/s)$ for $s>0.$ Thus, $\chi_0'(s)=s^{-2}\chi_0(s).$ Take
$\chi_1\in\CI(\RR)$ to have support in $[0,\infty),$ to be equal to $1$ on
$[1,\infty)$ and to have $\chi_1'\geq0$ with $\chi_1'\in\CIc((0,1)).$
Finally, let $\chi_2\in\CIc(\RR)$ be supported in $[-2c_1,2c_1]$ and be
identically equal to $1$ on $[-c_1,c_1].$ Pick $\delta<1.$  Set
$$
a = \abs{\tau}^s x^{-r}\chi_0(\ssM(1-\phi/\delta)) \chi_1\Big(\sum\hat\zeta'_j/\delta+1\Big) \chi_2
\big(\lvert\hat\zeta'\rvert^2\big).
$$
Note that on the support of $a,$ we have
\begin{equation}\label{supportest1}\sum\zeta'_j>-\delta,\end{equation}
hence we also obtain
\begin{equation}\label{supportest2}
0\leq \omega < 2 \frac{\delta}{\beta}.
\end{equation}
Thus, by keeping $\delta$ and $\delta/\beta$ both small, we can keep the
support of $a$ within any desired neighborhood of $\zeta'=0,$ $\omega=0.$

We now quantize $a$ to $A \in \Psieb{s,r}(M).$ 
We claim that
\begin{multline}\label{hyperboliccommutator}
\imath[\Box, A^*A] \\ =  B^*\Big(\sum
D_{z'_i}^* C_{ij} D_{z'_j}+R_0+\sum
 (R_iD_{z'_i}+D_{z'_i}^*R'_i) + \sum D_{z'_i} R_{ij}D_{z'_j} \Big)B\\ +
A^* W A+
R''+ E+ E'
\end{multline}
where
\begin{enumerate}
\item\label{zeroitem2} $B \in \Psieb{s+1/2,r+1}(M)$ has symbol
$$
\ssM^{1/2}\tau^{s+1/2} x^{-r-1} \delta^{-1/2}\chi_1\chi_2\sqrt{\chi_0\chi_0'}
$$
\item\label{firstitem2} $C_{ij} \in \Psieb{-2,0}(M),$ and the
  symbol-valued quadratic form $\sigma(C_{ij})$ is strictly positive
  definite on a neighborhood of $\ebWF'B,$ \item $R,R_i,R'_i,R_{ij}$ are in
  $\Psieb{0,0}(M),$ $\Psieb{-1,0}(M),$
$\Psieb{-1,0}(M),$ and $\Psieb{-2,0}(M)$
  respectively and have (unweighted) symbols bounded by multiples of
  $\sqrt\delta(\sqrt\beta+1/\sqrt{\beta}).$
\item $R'' \in \Diffess{2}\Psieb{-2,0}(M),$ $E,E' \in
\Diffess{2}\Psieb{-1,0}(M),$
\item $E$ is microsupported where we have assumed regularity,
\item  $W \in \Diffess{2}\Psieb{-1,2}(M),$
\item\label{lastitem2} $E'$ is supported off the characteristic set.
\end{enumerate}
These terms arise as follows.  Applying Lemma~\ref{lemma:Box-eb-comm}, we
have (with $Q_i = x^{-1} D_{z'_i}$)
\begin{equation}
\imath [\Box,A^*A]=\sum Q_i^* L_{ij} Q_j+\sum (x^{-1}
L_i Q_i+Q_i^* x^{-1}L'_i)
+x^{-2}L_0,
\end{equation}
with
\begin{equation}\begin{split}
&L_{ij}\in\Psieb{2s-1,2r}(M),\ L_i,L'_i\in\Psieb{2s,2r}(M),\ L_0\in\Psieb{2s+1,2r}(M),\\
&\sigma_{\ebo,2m-1}(L_{ij})=2aV_{ij}a,\ V_{ij}
=\kappa_{ij}(\pa_{\zetaeb'_i}+\pa_{\zetaeb'_j}+2\pa_{\xieb})
+\sH_{\ebo,\kappa_{ij}},\\
&\sigma_{\ebo,2m}(L_i)=\sigma_{\ebo,2m}(L'_i)=2a V_i a,\\
&V_i=\sum_j\kappa_{ij}\pa_{z'_j}
+\frac{1}{2}(m_i\pa_\xi+\sH_{\ebo,m_i})+\frac{1}{2}m_i
(\pa_{\xi}+\pa_{\zeta'_i}),\\
&\sigma_{\ebo,2m+1}(L_0)=2a V_0a,\ V_0=2\htil\pa_\xi+\sH_{\ebo,\htil}
+\sum_i m_i\pa_{z'_i},\\
&\ebWF'(L_{ij}),\ebWF'(L_i),\ebWF'(L'_i),\ebWF'(L_0)\subset\ebWF'(A).
\end{split}\end{equation}
with
\begin{equation}\begin{split}\label{eq:sH-eb-h-2}
&V_0|_{C}=-2\xi\,x\pa_x-2\Big(\xi^2+\sum_{ij}k_{2,ij}\zeta''_i\zeta''_j\Big)\pa_\xi
-2\xi(\tau\,\pa_\tau+\eta\,\pa_\eta)\\
&\qquad\qquad\qquad-2\sum_{i,j} k_{2,ij}\zeta''_i\pa_{z''_j}
+\sum_{\ell,i,j}(\pa_{z''_\ell}k_{2,ij})\zeta''_i\zeta''_j\pa_{\zeta''_\ell} ,\\
&V_{ij}|_C
=-k_{1,ij}(\pa_{\zeta'_i}+\pa_{\zeta'_j}+2\pa_\xi)
+\sum_\ell (\pa_{z''_\ell}k_{1,ij})\pa_{\zeta''_\ell},
\ V_i|_C=-\sum_j k_{1,ij}\pa_{z'_j},
\end{split}\end{equation}
First we evaluate the terms in $L_{ij}$ coming from terms in which
$V_{ij}$ hits $\chi_0(\ssM(1-\phi/\delta)).$  The main contribution will be
from the derivatives falling on $\zetaebh'$, with the rest controlled
by shrinking $\beta;$ in particular,
$$
V_{ij} (\phi/\delta) = -2 k_{1,ij} + r_{ij}
$$
with 
$$
\abs{r_{ij}}\leq \text{const} (\beta \sqrt\omega+ \sqrt{\omega});
$$
on the support of $a,$ this is in turn controlled by a multiple of
$$
\beta\sqrt{\delta/\beta} + \sqrt{\delta/\beta}.
$$
Thus, from these two terms, we obtain corresponding terms in $\imath[\Box, A^*A]$ of the
forms
$$
B^*\big(\sum
D_{z'_i}^*C_{ij} D_{z'_j}\big)B
$$
and
$$
B^*\big(\sum
D_{z'_i}^* R_{ij} D_{z'_j}\big)B
$$
respectively.

Similarly, terms with $V_i$ and $V_0$ hitting $\chi_0$ go into the
$R_i$ and $R_0$ terms in \eqref{hyperboliccommutator} respectively.

The terms arising from
$$
V_{\bullet}\Big(\chi_1\big(\sum\hat\zeta'_j/\delta+1\big)\Big)
$$
are supported on the hypothesis region, $\{\sum \hat\zeta'_j<0\},$
hence give commutator terms of the form $x^{-2} E$ above.

The terms arising from
$$
V_{\bullet}\Big(\chi_2 \big(\lvert\hat\zeta'\rvert^2\big)\Big)
$$
lie off of the characteristic set, hence give commutator terms of the
form $x^{-2}E'$ above.

The term arising from differentiating $\abs{\tau}^s x^{-r}$ gives the
commutator term $A^*W A.$

As we are interested in edge-b wavefront set, the term $D_{z'_i}^*C_{ij}
D_{z'_j}$ is slightly inconvenient, but we note that owing to strict
positivity of $C_{ij}$ we may replace it by a multiple of
$\Lap_{z'}/x^2$ plus another positive term.  Rewriting $\Lap_{z'}/x^2=
(\Lap_{z'}/x^2+\Box)-\Box,$ and noting that the first of these terms
is in $x^{-2}\Diffeb{2}(M)$ and elliptic on the hyperbolic set, we see
that we in fact have
\begin{multline}\label{commfinalform}
\imath [A^*A,\Box] =  R'''\Box + B^*\Big(C^*C + \sum
D_{z'_i}^* \tilde{C}_{ij}D_{z'_j}+R_0\\
+\sum
D_{z'_i}^* R'_i+R_i D_{z'_i} +A^*W A+ \sum D_{z'_i}^* R_{ij}D_{z'_j}\Big)B +
R''+E+E'
\end{multline}
where $\tilde{C}_{ij} \in \Psieb{0,2}(M)$ is a positive matrix of operators
(this is a priori true only at the symbolic level, but we may absorb
lower-order terms in $R_{ij}$).

Following \cite{Vasy5}, we find that for any $\parameter>0,$
\begin{equation}\label{rest} \abs{\ang{R_0 w,w}} \leq
C(\sqrt\delta)\big(\sqrt\beta+1/\sqrt\beta\big)\norm{w}^2+ \parameter^{-1}
\norm{ R_0' w}^2 + \parameter\norm{w}^2,
\end{equation} where $R_0'\in \Psieb{-1}$ has the same microsupport as
$R_0,$ and is \emph{one order lower}.  Here we have employed $L^2$
boundedness of $\Psieb{0,0}(M),$ or more specifically, the square-root
argument used to prove it (cf.\ \cite{Vasy5} for details, specifically the
treatment following (6.18)).  By the same token, we can estimate
\begin{multline}\label{rqest} \ang{R_i D_{z'_i} w,w} \leq C(\sqrt\delta)
\big(\sqrt\beta+1/\sqrt\beta\big)\bigg(\norm{T_{-1}D_{z'_i} w}^2+\norm{w}^2\bigg)\\
+ 2\parameter \norm{w}^2 + \parameter^{-1} \norm{\tilde{R}_i D_{z'_i}
w}^2,
\end{multline}
where $\tilde{R}_i\in \Psieb{-2,0}(M)$ has the same microsupport as $R_i,$
and is one order lower. We also compute
\begin{multline}\label{rqqest} \abs{\ang{R_{ij} D_{z'_i} w,D_{z'_j} w}} \leq
C(\sqrt\delta) \big(\sqrt\beta+1/\sqrt\beta\big)\bigg(\norm{T_{-1}D_{z'_i}
w}^2+\norm{T_{-1}D_{z'_j} w}^2\bigg)\\+ 2 \parameter \bigg(\norm{T_{-1} D_{z'_j}
w}^2 + \norm{ T_{-1}D_{z'_j} w}^2\bigg) + 2\parameter \norm{ w}^2\\
+\parameter^{-1} \norm{ \tilde{R}_{ij} D_{z'_i} w}^2+ \parameter^{-1}
\norm{ \tilde{R}'_{ij} D_{z'_j} w}^2,
\end{multline} where $\tilde{R}_{ij},\tilde{R}'_{ij}\in \Psieb{-3}(M)$ have the same microsupport as
$R_{ij},$ and are one order lower.  Although the argument is identical to
that in \cite{Vasy5}, \cite{mvw1}, we reproduce the derivation of
\eqref{rqqest} for the convenience of the reader; \eqref{rest} and
\eqref{rqest} follow by similar (easier) arguments.  To begin, we note that
$T_{1}^* R_{ij} \in \Psieb{-1}(M)$ has symbol bounded by $C(\sqrt\delta)
(\sqrt\beta+1/\sqrt\beta),$ hence by the H\"ormander square-root argument
\begin{equation}\label{foo}
\norm{T_{1}^* R_{ij} u}^2 \leq  C(\sqrt\delta)
\big(\sqrt\beta+1/\sqrt\beta\big) \norm{u}^2 + \norm{\tilde{R}_{ij} u}^2
\end{equation}
with $\tilde{R}_{ij}$ as described above.  Now write $D_{z'_j} w =
T_{1} T_{-1} D_{z'_j} w - F D_{z'_j} w;$ this permits us to expand 
$$ \ang{ R_{ij} D_{z'_i} w, D_{z'_j} w} = \ang{T_{1}^* R_{ij}
 D_{z'_i} w, T_{-1} D_{z'_j} w} - \ang{ R_{ij} D_{z'_i} w,  F
D_{z'_j}w}.
$$ The first term on the right may be controlled, using \eqref{foo} and
Cauchy-Schwarz, by the RHS of \eqref{rqqest}; the second term may also be
so estimated by again applying Cauchy-Schwarz and absorbing $\norm{F 
D_{z'_j} w}^2$ into a term $\norm{ \tilde{R}'_{ij} D_{z'_j} w}^2$
with appropriately enlarged $\tilde{R}'_{ij}.$

Now we turn to making our commutator argument.  Let $u$ be a solution to
$$
\Box u =f
$$
with Dirichlet or Neumann boundary conditions.
Choose $\Lambda_{\gamma}\in \Psieb{-1}(M)$ converging to the identity
as $\gamma\downarrow 0$ as in \S\ref{subsection:radialboundary}. Note
that by making $\Lambda_\gamma\in\Psieb{-1}(M)$, we are combining
the roles of the regularizer $\Lambda_\gamma\in\Psieb{-\sigma}(M)$ in
\S\ref{subsection:radialboundary}, required for obtaining an improvement
over the a priori assumptions, and the regularizer $\tLambda_{\gamma'}$
used to justify the pairing argument, see \eqref{eq:extra-reg}.
Let $A_\gamma = \Lambda_\gamma A$ with $A$ constructed as above.
As before, we have
\begin{equation}
\imath [\Box,A_\gamma^*A_\gamma]=\Lambda_\gamma^* \imath[\Box,A^*A]\Lambda_\gamma
+A^*\imath [\Box,\Lambda_\gamma^*\Lambda_\gamma] A+\tilde R,
\end{equation}
with $\tilde R$ uniformly bounded in $\Diffess{2}\Psieb{2s-2,2r+2}(M)$
(hence lower order), and where
$$
[\Box, \Lambda_\gamma^*\Lambda_\gamma]\in \Diffess{2}\Psieb{-1,2}(M)
$$
is uniformly bounded, and in fact
$$
[\Box, \Lambda_\gamma^*\Lambda_\gamma]
=\Lambda_\gamma^* \tilde W_\gamma\Lambda_\gamma,
$$
with $\tilde W_\gamma$ uniformly bounded in $\Diffess{2}\Psieb{-1,2}(M)$,
cf.\ \eqref{eq:Box-regularizer-comm}.

Now we pair $A_\gamma^*A_\gamma$ with $u.$  Letting $B_\gamma= B
\Lambda_\gamma,$ provided integrations by parts can be justified, we have
\begin{multline}\label{commutatorargument}
\langle A_\gamma\Box u,A_\gamma u\rangle
-\langle A_\gamma,A_\gamma\Box u\rangle\\
=\ang{\imath [A_\gamma^* A_\gamma, \Box] u, u} = \norm{C B_\gamma u}^2 + \sum \ang{ \tilde{C}_{ij}
 D_{z'_j}B_\gamma u, D_{z'_i} B_\gamma u}\\  +\ang{R_0 B_\gamma u,
B_\gamma u}+\sum \ang{D_{z'_i}B_\gamma u, R_i B_\gamma u}
+\ang{R'_i B_\gamma u, D_{z'_i} B_\gamma u} \\+ \sum \ang{D_{z'_j}
R_{ij} B_\gamma u, D_{z'_i} B_\gamma u}+ \ang{W_\gamma  A_\gamma u, A_\gamma
u}  + \ang{(R_\gamma''+E_\gamma+E_\gamma')
  u, u}
\end{multline}
where $W_\gamma$ is uniformly bounded in $\Diffess{2}\Psieb{-1,2}(M)$
and comprises both the $W$ term from above and the term containing
$[\Box, \Lambda_\gamma^*\Lambda_\gamma].$  The integrations by parts 
may by justified, for any $\gamma>0,$ if
\begin{equation}\label{eq:A-u-pair}
\WFebX^{s',l'} u \cap \WF'A=\emptyset
\end{equation}
whenever
$$
s-1\leq s',\quad r\leq l'+l+\frac{(f+1)}2 -1
$$
since then the products of $\Box$ with $A_\gamma^*A_\gamma$ map
$H_{\ebo,\hilbert}^{s',l'}(M)$ to its dual (as required
in the Neumann setting),
as well as mapping $H_{\ebo,\hilbertp}^{s',l'}(M)$,
$\hilbertp
=\Hesz{1,l}(M)$,  to its dual (as required in the Dirichlet setting).
We take $s'=m-1/2$, hence $s=m+1/2$, and $r=l'+l+\frac{(f+1)}2 -1$ here,
and note that it suffices to have the microlocal assumptions
\eqref{eq:A-u-pair} rather than global assumptions in view
the microlocality of $\Psieb(M)$, see
Lemma~\ref{lemma:microlocal} and Lemma~\ref{lemma:uniformly-microlocal}.

We now examine the terms on the RHS.  The first two are positive.
To the third, we apply \eqref{rest}, with $w=B_\gamma u$: if $\delta,$
$\delta/\beta,$ and $\parameter$ are sufficiently small, we may absorb the
first and third terms on the RHS of \eqref{rest} in $\norm{C B_\gamma
u},$ while the lower-order second is uniformly bounded by our wavefront
assumptions.  Likewise, applying \eqref{rqest} and \eqref{rqqest}, we may
choose $\parameter,\delta,\beta$ so as to absorb terms involving $\parameter$ and
$(\sqrt\delta) (\sqrt\beta+1/\sqrt\beta)$ in $\norm{x^{-1} C B_\gamma u}^2;$ the
$\parameter^{-1}$ terms, as they are lower order, remain bounded. Moreover,
as $\chi_0'(s)=s^{-2}e^{-1/s}$ for $s>0$ and vanishes for $s\leq 0$,
$$
\ssM^2(1-\phi/\delta)^2\chi_0'(\ssM(1-\phi/\delta))
=\chi_0(\ssM(1-\phi/\delta)).
$$
Thus,
\begin{equation*}\begin{split}
&\abs{\tau}^{1/2} x^{-1}a\\
&= \abs{\tau}^{s+1/2} x^{-r-1}\ssM(1-\phi/\delta)
\sqrt{\chi_0(\ssM(1-\phi/\delta))\chi_0'(\ssM(1-\phi/\delta))}\\
&\qquad\qquad\qquad\qquad\qquad
\chi_1\Big(\sum\hat\zeta'_j/\delta+1\Big) \chi_2
\big(\lvert\hat\zeta'\rvert^2\big)\\
&= \ssM^{1/2}\delta^{1/2}(1-\phi/\delta) b
\leq 2\ssM^{1/2}\delta^{1/2} b
\end{split}\end{equation*}
as $\phi\geq\sum\zetaebh'_i\geq -\delta$ on $\supp a$.
We deduce that
$\|T_{1/2}x^{-1}A_\gamma u\|$ can be estimated by
$4\ssM^{1/2}\delta^{1/2} \|CB_\gamma u\|$ plus lower order terms,
and hence, for
$\ssM$ chosen sufficently small,
we may absorb the $W_\gamma$ term in
$\norm{C B_\gamma u}^2.$

Finally we consider the LHS of \eqref{commutatorargument}.
We have
\begin{multline}
|\langle A_\gamma\Box u,A_\gamma u\rangle|
\leq \|(T_{-1/2})^*x A_\gamma\Box u\|
\,\|T_{1/2} x^{-1}A_\gamma u\|+|\langle xA_\gamma\Box u,x^{-1}FA_\gamma u\|\\
\leq \cte_1^{-1}\|(T_{-1/2})^*x A_\gamma\Box u\|^2
+\cte_1\|T_{1/2} x^{-1}A_\gamma u\|^2
+|\langle xA_\gamma\Box u,x^{-1}FA_\gamma u\|
\end{multline}
with $F\in\Psieb{-1,0}(X)$, hence $x^{-1}FA_\gamma$ uniformly bounded
in $\Psieb{s-1,r+1}(M)$, $xA_\gamma$ uniformly bounded in $\Psieb{s,r-1}(M)$,
so as $s=m+1/2$, and $r=l'+l+\frac{(f+1)}2 -1$, the last term is uniformly
bounded by the a priori assumptions. Similarly,
$\|(T_{-1/2})^*x A_\gamma\Box u\|$ is uniformly bounded, as
$(T^{-1/2})^* xA_\gamma$ is uniformly bounded in $\Psieb{s-1/2,r-1}(M)$,
while $\|T_{1/2} x^{-1}A_\gamma u\|^2$ can be absorbed in
$\norm{C B_\gamma u}^2$ (for $\cte_1$ sufficiently small) as discussed above.

The net result is that
$$
\norm{x^{-1} C Bu_\gamma}^2
$$
remains uniformly bounded as $\gamma\downarrow 0.$
Noting
that $CB_\gamma\to CB$ strongly (cf.\ the proof of Lemma~\ref{lemma:density}),
$CB\in\Psieb{s+1/2,r+1}(M)$ is elliptic at $q$,
$s=m+1/2$, and $r=l'+l+\frac{(f+1)}2 -1$, $\hilbert =\Hes{1, l}(M)$, we
can complete the proof
in the standard manner.
\end{proof}

\subsection{Propagation at glancing points within the edge}

Let $q \in \gcal\backslash \rcal_{\ebo}$
be given by
$$
x=0,\ t=t_0,\ y=y_0,\ z'=0,\ z''=z''_0,\ \xiebh=\xiebh_0,
\ \etaebh=\etaebh_0, \ \zetaebh'=0,\ \zetaebh''=\zetaebh_0''.
$$
As $q\in\gcal$,
$$
\xiebh^2_0+h(y_0,\etaebh_0)+k(y,z,\zetaebh'=0,\zetaebh_0'')=1.
$$
As $q\notin\rcal_{\ebo}$,
$$
\xiebh_0^2+h(y_0,\etaebh_0)<1,
$$
so $\zetaebh_0''\neq 0$, and $h(y_0,\etaebh_0)<1$, so $\varpi_{\ebo}(q)\in
\hcal_{W,\bo}$.
We will let $\Pi$ locally denote the coordinate projection onto the
variables
$$
(\xiebh, z'',\zetaebh'');
$$
Let $\ssW$ be a homogeneous vector field equal to $V_0$ (from
\eqref{eq:sH-eb-h}) at $q,$ and extended in local coordinates to a
constant vector field in $(\xiebh,z'',\zetaebh''),$ i.e., to be a
vector field in these variables only, with constant
coefficients.

\begin{theorem}\label{theorem:edgeglancing}
For Neumann boundary conditions, let $\hilbert=\Hes{1,l}(M)$,
$\hilbertp=\dHes{-1,l-2}(M)$; for Dirichlet boundary conditions let
$\hilbert=\Hesz{1,l}(M)$,
$\hilbertp=\Hes{-1,l-2}(M)$.

Let $u \in \hilbert$ solve
$\Box u=f$, $f\in\hilbertp$.
Let $q \in \gcal \backslash \rcal_{\ebo}$ be as above, and suppose
that $m\in\RR$, $l'\leq 0$, and
$q\notin\WFebY^{m+1,l'}(f)$.

There exists $\delta_0>0$ and $C_0>0$ such that for all $\delta\in (0,
\delta_0)$ and $\beta \in (C_0 \delta, 1)$,
\begin{multline}\label{eq:edgeglancing-implication}
\Sigma_{\ebo} \cap\big\{q': |\Pi(q')-\Pi(q)-\delta
\ssW|<\delta\beta ,\ 
 |z'(q')|<\delta\beta \big\}\cap
 \WFebX^{m,l'} u=\emptyset\\
\Longrightarrow  q \notin \WFebX^{m,l'}u
\end{multline}
\end{theorem}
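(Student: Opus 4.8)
The plan is to run a positive-commutator argument, very closely patterned on the proof of Theorem~\ref{thm:bdy-radial} above and on the corresponding glancing-point result in \cite{mvw1}, using Lemma~\ref{lemma:Box-eb-comm} for the commutator computation and Corollary~\ref{cor:glancingestimate} to dispose of the $Q_i = x^{-1}D_{z'_i}$ terms. As always, the argument is iterative: assuming $q\notin\WFebX^{m-1/2,l'}(u)$ we will show $q\notin\WFebX^{m,l'}(u)$, with the usual mild modification for $m=\infty$; since we are away from $\rcal_{\ebo}$ the vector field $\ssW$ (which agrees with $V_0$ at $q$, cf.\ \eqref{eq:sH-eb-h}) is non-radial, so there is genuine propagation and no interpolation step is needed. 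The first step is to build the commutant. Following \eqref{phidef}, I would take a function $\phi$ built from $\ssM$-rescaled distances in the ``fast'' variables $\Pi = (\xiebh, z'', \zetaebh'')$, arranged so that $\abs{\tau}^{-1}V_0\phi$ (equivalently, the pushforward $\piesebh_*\sH_{\eso}$ computed above) is positive near $q$, together with a ``slow-variable'' weight $\omega$ in $(y,t,\etaebh,x,z')$; the commutant $a$ is then of the form $\abs{\tau}^s x^{-r}\chi_0(\ssM(1-\phi/\delta))\chi_1(\cdots/\delta)\chi_2(\abs{\zetaebh'}^2)$, with $\chi_0(s)=e^{-1/s}$ so that $\chi_0\chi_0' = \ssM^{-2}(1-\phi/\delta)^2\chi_0'^2$ produces a perfect square, $\chi_1$ localizing in a region where the hypothesis holds (momenta pointing the appropriate way relative to $\ssW$ and $\abs{z'}$ small), and $\chi_2$ making $a$ a genuine symbol by cutting off in $\zetaebh'$.

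The second step is to compute $\imath[\Box, A^*A]$ via Lemma~\ref{lemma:Box-eb-comm}, sorting the terms exactly as in Theorem~\ref{thm:bdy-radial}: the main term comes from $V_0$ hitting $\chi_0$ and, since $\ssW$ is non-radial at $q$ and the $(z',\zetaebh')$-directions are handled separately, yields a positive multiple of $\norm{x^{-1}CB_\gamma u}^2$ (with $B$ carrying symbol $\ssM^{1/2}\abs\tau^{s+1/2}x^{-r-1}\delta^{-1/2}\chi_1\chi_2\sqrt{\chi_0\chi_0'}$); the terms in which $V_i, V_{ij}$ differentiate $\chi_0$ give $Q_i$- and $Q_iQ_j$-flanked error terms whose symbols are $O(\sqrt\delta(\sqrt\beta + 1/\sqrt\beta))$ plus lower order, to be absorbed using Corollary~\ref{cor:glancingestimate} together with H\"ormander's square-root trick (the estimates \eqref{rest}, \eqref{rqest}, \eqref{rqqest} go over verbatim); the terms where $V_\bullet$ hits $\chi_1$ are microsupported in the hypothesis region ($E$-terms); the terms where $V_\bullet$ hits $\chi_2$ land off $\ebSigma$ ($E'$-terms, controlled by microlocal elliptic regularity, Proposition~\ref{prop:elliptic}); and the term from $V_0$ hitting $\abs\tau^s x^{-r}$ gives an $A^*WA$-type term controlled, after choosing $\ssM$ small, by the main positive term, exactly as in the $W_\gamma$ discussion at the end of the proof of Theorem~\ref{theorem:edgehyperbolic}. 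One should also, as there, rewrite $\Lap_{z'}/x^2 = (\Lap_{z'}/x^2 + \Box) - \Box$ so that the $D_{z'}^*C_{ij}D_{z'}$ block contributes an $R'''\Box$ term (harmless since $\Box u = f$ is controlled by hypothesis) plus a genuinely positive $\Psieb{0,2}$ block.

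The third step is the regularization and pairing. Introduce $\Lambda_\gamma\in\Psieb{-1}(M)$ tending to $\Id$, set $A_\gamma = \Lambda_\gamma A$, expand $\imath[\Box, A_\gamma^*A_\gamma] = \Lambda_\gamma^*\imath[\Box,A^*A]\Lambda_\gamma + A^*\imath[\Box,\Lambda_\gamma^*\Lambda_\gamma]A + \tilde R$ with $[\Box,\Lambda_\gamma^*\Lambda_\gamma]\in\Diffess{2}\Psieb{-1,2}(M)$ uniformly bounded; because $\Lambda_\gamma$ gains a full order, it simultaneously plays the role of the regularizer needed to improve regularity and the role of $\tLambda_{\gamma'}$ needed to justify pairing with a solution, so the numerology reduces to $s-1\le s'$, $r\le l'+l+(f+1)/2-1$, and one takes $s' = m-1/2$, $s = m+1/2$, $r = l'+l+(f+1)/2-1$. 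Pairing $\imath[\Box,A_\gamma^*A_\gamma]u$ with $u$ and using $\Box u = f$ together with $q\notin\WFebY^{m+1,l'}(f)$ to bound the left side, one reads off that $\norm{x^{-1}CB_\gamma u}$ stays bounded as $\gamma\downarrow 0$; since $CB_\gamma\to CB$ strongly and $CB\in\Psieb{s+1/2,r+1}(M)$ is elliptic at $q$, this gives $q\notin\WFebX^{m,l'}(u)$, completing the inductive step (and hence, iterating in steps of $1/2$ and using \eqref{eq:WF-elliptic} to shrink supports, the theorem). \textbf{The main obstacle} I expect is the same one flagged throughout this section: verifying that the $Q_i$-flanked error terms (those from $V_i, V_{ij}$, and from commuting $\Lambda_\gamma$ past $Q_i$) really can be absorbed uniformly in $\gamma$. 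This rests on Corollary~\ref{cor:glancingestimate}, which requires $\WF'(A)$ to lie in a small enough neighborhood of a compact subset of $\gcal$ and $\delta$ to be small relative to that neighborhood's radius — so the bookkeeping tying $\delta, \beta, \parameter, \ssM$ and the size of $\supp a$ together (as in \eqref{makesure}) must be done carefully, and one must check that the constant $C_0$ controlling $\beta > C_0\delta$ in the theorem statement is exactly what is forced by requiring the slow-variable term $-(\sgn\tau_0)\abs\tau^{-1}(V_0\omega - \beta V_0 x)$ to be a positive square on $\supp a$, just as in \eqref{eq:slow-var-comm}.
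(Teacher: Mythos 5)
Your overall architecture (build a localized commutant, apply Lemma~\ref{lemma:Box-eb-comm}, absorb the $x^{-1}D_{z'_i}$-flanked errors with the glancing estimate, regularize with $\Lambda_\gamma\in\Psieb{-1}$) does match the paper, but two ingredients are imported from the wrong place and leave a genuine gap. First, the commutant: the paper takes $\phi = \xiebh - \xiebh_0 + \frac{1}{\beta^2\delta}\omega$, where $\omega = \omega_0 + |z'|^2$ and $\omega_0$ is a sum of squares of carefully chosen degree-zero functions $\rho_1=x,\ \rho_2,\dots,\rho_{2n-2k-1}$ that have linearly independent differentials on $\cG\cap\Sebstar[C]M$ at $q$ \emph{and are annihilated by $\ssW$ at $q$}. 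This is not just a ``slow-variable weight in $(y,t,\etaebh,x,z')$'': it includes fast variables (notably $\rho_{2n-2k}$ built from $\tilde h$, which sees $\zetaebh''$), and the $\ssW$-annihilation condition is the mechanism that makes $V_0\omega$ small near $q$. Without building $\omega$ this way, one cannot conclude $V_0\phi \leq -c_0/4 < 0$ on $\supp a$, which is the source of positivity.

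Second, and more seriously, you estimate the $R_i, R'_i, R_{ij}$ symbols by $O\bigl(\sqrt\delta(\sqrt\beta + 1/\sqrt\beta)\bigr)$ (the bound from the \emph{hyperbolic} proof) and then propose to absorb them with Corollary~\ref{cor:glancingestimate}. In the glancing commutant the correct bound is $|\tau|\,|r_i|,\ |\tau|^2\,|r_{ij}| \lesssim 1/\beta$: since $\phi$ carries a $\frac{1}{\beta^2\delta}$ prefactor in front of $\omega$ and $\omega^{1/2}\lesssim\beta\delta$ on $\supp a$, the $V_i$-, $V_{ij}$-derivatives of $\phi$ are $O(1/\beta)$, which does not tend to $0$. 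Corollary~\ref{cor:glancingestimate} is then too crude: it gives an arbitrarily small $\delta_{\rm cor}\|D_tAu\|^2$ only by shrinking the neighborhood $U$, but shrinking $\supp a$ forces $\beta\to 0$, blowing up the $1/\beta$ error — there is no way to close the loop. What the paper actually uses is Lemma~\ref{lemma:glancingestimate}, which keeps the quantitative link between the neighborhood size and the constant: with $\supp a$ in a $C_1\beta\delta$-neighborhood of $\gcal$ one gets $\sum\|D_{z'_i}T_{-1}B_\gamma u\|^2\leq C_0C_1\beta\delta\|B_\gamma u\|^2 + \cdots$, and then $\sqrt{C_0C_1\beta\delta}\cdot \beta^{-1} = \sqrt{C_0C_1\,\delta/\beta}$ is small precisely under $\delta/\beta$ small, which is the regime $\beta>C_0\delta$ in the statement. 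This distinction is not mere bookkeeping; the proof does not go through with the Corollary. (A smaller slip: the $\Lap_{z'}/x^2=(\Lap_{z'}/x^2+\Box)-\Box$ rewriting is a device of the hyperbolic proof, where $C_{ij}$ is strictly positive; in the glancing case $C_{ij}$ is only positive semidefinite and that step is neither needed nor available.)
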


\begin{remark}
Here the interesting case is taking $\beta$ as small as possible,
i.e.\ $\cO(\delta)$, to localize in a $\cO(\delta^2)$-ball around
$\Pi(q)+\delta \ssW$, which is what makes the proof of propagation of
singularities
result possible (by eventually letting $\delta\to 0$).
The statement of the theorem may be vacuous for $\beta$ large.
\end{remark}

\begin{proof}
Below we will choose $\delta_0>0$ sufficiently small so that
$\WFebY^{m+1,l'}(f)$ is disjoint from a $\delta_0$-neighborhood of $q$
(see the discussion before Lemma~\ref{lemma:glancingestimate}).

Let  $k$ be the codimension of
the face over which $q$ lies.
Let $\rho_{2n-2k}$ be the degree-zero homogeneous function with
$$
\rho_{2n-2k}|_{\ef}=\taueb^{-2}\tilde h|_{\ef},
$$
with $\tilde h$ as in Lemma~\ref{lemma:es-eb-wave-op};
note that $\rho_{2n-2k}(q)=0$ by \eqref{egh-coordinates} and
$d\rho_{2n-2k}(q)\neq 0$ since at least one of the $d\zetaebh''_{k+1}(q),
\ldots,d\zetaebh''_f(q)$ components of $d\rho_{2n-2k}(q)$ is non-zero,
in view of the quadratic nature of $\taueb^2(1-\rho_{2n-2k})$
in the fibers of the cotangent
bundle and \eqref{egh-coordinates} and $\zetaebh''_0\neq 0$ as observed above.
We remark that, with $V_0$ as
in \eqref{eq:sH-eb-h},
$$
V_0\rho_{2n-2k}|_{\ef}=0.
$$
Note that $\Sebstar M$ has dimension $2(n+1)-1=2n+1$, thus, with $C=\{x=0,
z'=0\}$,
$\cG\cap\Sebstar[C] M$
has dimension $2n+1-2k-2=2n-2k-1$ in view of \eqref{egh-coordinates}.
We proceed by remarking that
$$
(\ssW x)|_{\ef}=0,\ (\ssW y_j)|_{\ef}=0,\ (\ssW t)|_{\ef}=0,
\ (\ssW\etaebh_j)|_{\ef}=0,
$$
so $t$, $y_j$, $\etaebh_j$ give $1+2(n-f-1)=2(n-f)-1$ homogeneous degree
zero functions on $\Tebstar M$ (or equivalently
$\CI$ functions on $\Sebstar M$) whose restrictions to
$\cG\cap\Sebstar[C] M$ have linearly independent differentials at $q$.
We let $\rho_2,\ldots,\rho_{2n-2f}$ be given by these functions, and let
$\rho_1=x$.
We next remark that, in the notation of \eqref{metricform},
$$
\ssW \xiebh(q)=-2k_2(x=0,y_0,z'=0,z_0''),\zetaebh_0'')<0
$$
as $\zetaebh_0\neq 0$, hence $\ssW(q)\neq 0$.
Further, we let $\rho_j,$ $j=2n-2f+1,\ldots,2n-2k-1$,
be degree-zero homogeneous functions on $\Tebstar M$ (or equivalently
$\CI$ functions on $\Sebstar M$)
such that
$d(\rho_2|_{\cG}),\ldots,d(\rho_{2n-2k-1}|_{\cG})$ have linearly
independent differentials at $q,$ and such that
$$
\ssW \rho_j(q)=0.
$$
Such functions $\rho_j$ exist as
$\cG\cap\Sebstar[C] M$ has dimension $2n+1-2k-2=2n-2k-1$, so the $2n-2f-1$
functions $\rho_2,\ldots,\rho_{2n-2f}$ can be complemented by
some functions $\rho_{2n-2f+1},\ldots,\rho_{2n-2k-1}$ to obtain $2n-2k-1$
functions whose pullbacks to $\cG\cap\Sebstar[C] M$
have linearly independent differentials and which are annihilated by $\ssW$
at $q$, for the space of such one-forms is $2n-2k-2$ dimensional.
Thus,
by dimensional considerations (using $\ssW(q)\neq 0$),
$\{d\rho_j|_{\cG}(q):\ j=2,\ldots,2n-2k-1\}$ spans the space of
one-forms on $\cG$ annihilated by $\ssW(q)$, and $d\rho_2,\ldots,d\rho_{2n-2k-1}$
together with $d(\xiebh|_{\cG}-\xiebh_0)$ span $T^*\cG$.
Let
$$
\omega_0 = \sum_{j=1}^{2n-2k-1} \rho_j^2;
$$
then keeping in mind
that $|x|\leq\omega_0^{1/2}$, and with $V_0,V_i,V_{ij}$ as in
\eqref{eq:sH-eb-h},
\begin{equation*}\begin{split}
&|\tau|^{-1} |V_0 \omega_0| \lesssim \sqrt{\omega_0}(\sqrt{\omega_0}+
|\xiebh-\xiebh_0|+|z'|),\\
&|V_i\omega_0|\lesssim \sqrt{\omega_0}\,x\leq\omega_0,
\qquad |\tau|\,|V_{ij}\omega_0|\lesssim \sqrt{\omega_0},
\end{split}\end{equation*}
by \eqref{eq:sH-eb-h} and \eqref{whatahorror}.
Note also that
\begin{equation*}\begin{split}
&\big\lvert\tau|^{-1}|V_0 |z'|^2\big\rvert\lesssim |z'|(|z'|+x)\\
&\big\lvert V_i |z'|^2\big\rvert \lesssim |z'|,\qquad \abs{\tau}\big\lvert V_{ij} |z'|^2\big\rvert\lesssim |z'|^2.
\end{split}\end{equation*}
Let $\omega=\omega_0+|z'|^2.$  Then
\begin{equation}\begin{split}\label{eq:omega-tgt-est}
&|\tau|^{-1} |V_0 \omega| \lesssim \sqrt{\omega}
(\sqrt{\omega}+ |\xiebh-\xiebh_0|),
\qquad |V_i\omega|\lesssim \sqrt{\omega},
\qquad |\tau|\,|V_{ij}\omega|\lesssim \sqrt{\omega}.
\end{split}\end{equation}
Let
$$
\phi = \xiebh-\xiebh_0+ \frac{1}{\beta^2\delta} \omega
$$
By \eqref{eq:xiebh-weights},
\begin{equation}\begin{split}\label{eq:xiebh-tgt-est}
&\big\lvert|\tau|^{-1}\,V_0\xiebh+2\sum_{ij}
k_{2,ij}\zetaebh''_i\zetaebh''_j\big\rvert \lesssim x
\leq\omega^{1/2},\\
&|V_i\xiebh|\lesssim (|z'|+x)\lesssim\omega^{1/2},
\qquad |\tau|\,|V_{ij}\xiebh|\lesssim 1.
\end{split}\end{equation}
In particular, as $\zetaebh''_0\neq 0$,
$$
|\tau|^{-1}\,V_0\xiebh\leq -c_0+C_1'\omega^{1/2},
$$
for some $c_0>0$, $C_1'>0$.

Set
$$
a= |\tau|^s x^{-r}\chi_0(\ssM(2-\phi/\delta))
\chi_1\big((\xiebh-\xiebh_0+\delta)/\beta\delta +1\big) \chi_2\big(|\zetaebh'|^2\big).
$$
We always assume for this argument that
$\beta<1$, so on $\supp a$ we have
\begin{equation*}
\phi\leq 2\delta\Mand \xiebh-\xiebh_0\geq-\beta\delta-\delta\geq-2\delta.
\end{equation*}
Since $\omega\geq 0$, the first of these inequalities implies that
$\xiebh-\xiebh_0\leq 2\delta$, so on $\supp a$
\begin{equation}\label{xilimits}
|\xiebh-\xiebh_0|\leq 2\delta.
\end{equation}
Hence,
\begin{equation}\label{eq:omega-delta-est-t}
\omega\leq \beta^2\delta(2\delta-(\xiebh-\xiebh_0))\leq4\delta^2\beta^2.
\end{equation}
Moreover, on $\supp d\chi_1$,
\begin{equation}\label{eq:dchi_1-supp}
\xiebh-\xiebh_0\in[-\delta-\beta\delta,-\delta],\ \omega^{1/2}\leq 2\beta\delta,
\end{equation}
so this region lies in the hypothesis region of \eqref{eq:edgeglancing-implication}
after $\beta$ and $\delta$ are both replaced by appropriate constant
multiples.

We now quantize $a$ to $A \in \Psieb{s,r}(M).$ 
By Lemma~\ref{lemma:Box-eb-comm},
\begin{equation}\label{eq:Box-comm-rad-3}
\imath[\Box,A^*A]=\sum Q_i^* L_{ij} Q_j+\sum (x^{-1}
L_i Q_i+Q_i^* x^{-1}L'_i)
+x^{-2}L_0,
\end{equation}
with
\begin{equation}\begin{split}\label{eq:Box-comm-rad-terms-3}
&L_{ij}\in\Psieb{2s-1,2r}(M),\ L_i,L'_i\in\Psieb{2s,2r}(M),
\ L_0\in\Psieb{2s+1,2r}(M),\\
&\sigma_{\ebo,2s-1}(L_{ij})=2aV_{ij}a,\\
&\ \sigma_{\ebo,2s}(L_i)=\sigma_{\ebo,2s}(L'_i)=2a V_i a,
\ \sigma_{\ebo,2s+1}(L_0)=2a V_0a,\\
&\ebWF'(L_{ij}),\ebWF'(L_i),\ebWF'(L'_i),\ebWF'(L_0)\subset\ebWF'(A),
\end{split}\end{equation}
with $V_{ij}$, $V_i$ and $V_0$ as above,
given by \eqref{eq:sH-eb-h}.
Thus, we obtain
\begin{multline}\label{glancingcommutator}
\imath[\Box, A^*A] \\ =  B^*\Big( C^* C +\sum D_{z'_i}^*C_{ij} D_{z'_j}
+ \sum
 \big(R_i D_{z'_i}+D_{z'_i}^* R'_i\big) + \sum D_{z'_i}^* R_{ij}D_{z'_j} \Big)B\\ +
A^* WA+R''+ E+ E'
\end{multline}
where
\begin{enumerate}
\item\label{zeroitem3} $B \in \Psieb{s+1/2,r+1}(M)$ has symbol
$$
\ssM^{1/2}|\tau|^{s+1/2} x^{-r-1} \delta^{-1/2}\chi_1\chi_2\sqrt{\chi_0\chi_0'}
$$
\item\label{firstitem3} $C \in \Psieb{0,0}(M),$ has strictly positive
symbol on a neighborhood of $\ebWF'B,$ given by $(-V_0\phi)^{1/2}$ near
$\ebWF'B$,
\item $C_{ij}\in\Psieb{-2,0}(M)$, $(C_{ij})$ positive semidefinite,
\item $R_i,R'_i,R_{ij}$ are in
  $\Psieb{-1,0}(M),$ $\Psieb{-1,0}(M),$ and $\Psieb{-2,0}(M)$
  respectively and have (unweighted) symbols $r_i$, $r'_i$, $r_{ij}$
with
\begin{equation}\label{eq:r-tgt-est}
|\taueb|\, |r_i|,|\taueb|\, |r'_i|,|\taueb|^2\, |r_{ij}|\lesssim
1/\beta,
\end{equation}
\item $W \in \Diffess{2}\Psieb{-1,-2}(M),$
\item $R'' \in \Diffess{2}\Psieb{-2,0}(M),$ $E,E' \in
\Diffess{2}\Psieb{-1,0}(M),$
\item $E$ is microsupported where we have assumed regularity,
\item\label{lastitem3} $E'$ is supported off the characteristic set.
\end{enumerate}

These terms arise as follows.  By \eqref{eq:omega-tgt-est},
\eqref{eq:xiebh-tgt-est}, \eqref{xilimits}, and \eqref{eq:dchi_1-supp},
\begin{equation*}\begin{split}
V_0\phi&=V_0\big(\xiebh-\xiebh_0\big)+\frac{1}{\beta^2\delta}V_0\omega\\
&\leq -c_0+C_1'\omega^{1/2}
+\frac{1}{\beta^2\delta}C_1''\omega^{1/2}\big(\omega^{1/2}+|\xiebh-\xiebh_0|\big)\\
&\leq -c_0+2(C_1'+C_1'')\Big(\delta+\frac{\delta}{\beta}\Big)\leq
-c_0/4<0.
\end{split}\end{equation*}
provided that $\delta<\frac{2}{16 (C_1'+C_1'')}$,
$\frac{\beta}{\delta}>\frac{16(C_1'+C_1'')}{2}$, i.e.\ that $\delta$
is small, but $\beta/\delta$ is not too small---roughly, $\beta$ can
go to $0$ at most as a multiple of $\delta$ (with an appropriate
constant) as $\delta\to 0.$ Recall also that $\beta<1$, so there is an
upper bound as well for $\beta$, but this is of no significance as we
let $\delta\to 0$.  Thus, we define $C$ to have principal symbol equal
to the product of $(-V_0\phi)^{1/2}$ times a cutoff function
identically $1$ in a neighborhood of $\supp a$, but with sufficiently
small support so that $-V_0\phi>0$ on it. Thus, the $L_0$-term of
\eqref{eq:Box-comm-rad-3} gives rise to the $C^*C$ term of
\eqref{glancingcommutator}, as well as contributing to the $E$ and
$E'$ terms (where $\chi_1$ and $\chi_2$ are differentiated), $W$
(where the weight $|\tau|^s x^{-r}$ is differentiated) and the lower
order term $R''$.

Similarly, the $L_i$, $L'_i$ and $L_{ij}$ terms in which $V_i$
or $V_{ij}$ differentiates $\chi_1$ or $\chi_2$ contribute to the $E$ and
$E'$ terms, while those in which they differentiate the weight contributes
to the $W$ term,
so it remains to consider when $V_i$ and $V_{ij}$ differentiate
$\chi_0$. As we keep $\beta<1$,
$$
|V_i\phi|\leq |V_i\xiebh|+|V_i\omega|\lesssim
1+(\beta^2\delta)^{-1}\omega^{1/2}
\lesssim 1+\beta^{-1}\lesssim\beta^{-1},\ |V_{ij}\phi|\lesssim\beta^{-1},
$$
which thus proves the estimates on the terms arise this way,
namely $R_i$, $R'_i$, $R_{ij}$, above.

We now employ Lemma~\ref{lemma:glancingestimate} to estimate the
$D_{z'_i}$ terms as in the proof of Theorem~\ref{thm:bdy-radial}.
Note that we are using the finer result, Lemma~\ref{lemma:glancingestimate},
rather than its corollary here (unlike in Theorem~\ref{thm:bdy-radial}),
to obtain better control over the constant in front of the $D_{z'_i}$ terms
as we shrink $\delta$ and $\beta$.
The important fact is that $\cG\cap\Sebstar[C]M$
is defined by $\rho_{2n-2k}=0$, $x=0$, $z'=0$, and
$$
\rho_{2n-2k},x,|z'|\lesssim\omega^{1/2}\leq 2\delta\beta
$$
on the wave front set of $C_{ij},R_i,R'_i,R_{ij}$. Thus, we can apply
Lemma~\ref{lemma:glancingestimate} for a $C_1\beta\delta$-neighborhood of
a compact subset of $\gcal$. Noting that $xD_t T_{-1}\in\Psieb{0,0}(M)$,
we conclude that,
with $B_\gamma=B\Lambda_\gamma$, and for Neumann boundary conditions,
\begin{equation}\begin{split}\label{eq:glacing-normal-derivs-1}
&\sum \norm{D_{z_i'} T_{-1}B_\gamma u}^2
\leq C_0C_1\beta\delta\norm{B_\gamma u}^2 +\tilde C \cQ(u,Gu,\tilde Gu)\\
&\cQ(u,Gu,\tilde Gu)=
\|u\|_{\Hes{1,r-(f-1)/2}(M)}^2+\|Gu\|_{\Hes{1,r-(f-1)/2}(M)}^2\\
&\qquad\qquad\qquad\qquad\qquad
+\|\Box u\|_{\dHes{-1,r-(f+3)/2}}^2+\|\tilde G\Box u\|^2_{\dHes{-1,r-(f+3)/2}},
\end{split}\end{equation}
where $G \in \Psieb{s,0}(M)$, $\tilde G\in\Psieb{s+1/2,0}(M)$ (independent
of $\gamma$)
with wave front set in a neighborhood of $\supp a$. For Dirichlet conditions
we simply replace
$$
\Hes{1,r-(f-1)/2}(M)\ \text{by}\ \Hesz{1,r-(f-1)/2}(M)
$$
and
$$
\dHes{-1,r-(f+3)/2}(M)\ \text{by}\ \Hes{-1,r-(f+3)/2}(M).
$$
Note that by \eqref{eq:r-tgt-est} we have for all $w\in L^2$,
\begin{equation*}
\|T_1^* R_i^* w\|\leq C_2 \beta^{-1}\|w\|+\|\tilde R_i w\|,
\ \tilde R_i\in\Psieb{-1,0}(M),
\end{equation*}
with $\tilde R_i$ having the same microsupport as $R_i$.
But
\begin{equation*}\begin{split}
&|\langle R_i D_{z'_i}B_\gamma u,B_\gamma u\rangle|\\
&\leq
|\langle R_i T_1 D_{z'_i} T_{-1}B_\gamma u,B_\gamma u\rangle|
+|\langle R_i [T_1, D_{z'_i}] T_{-1}B_\gamma u,B_\gamma u\rangle|\\
&\leq \|D_{z'_i} T_{-1}B_\gamma u\|\,\|T_1^*R_i^*B_\gamma u\|
+|\langle \hat R_i B_\gamma u, B_\gamma u\rangle|,
\ \hat R_i\in\Diffes{1}\Psieb{-2,0}(M),
\end{split}\end{equation*}
and $|\langle \hat R_i B_\gamma u, B_\gamma u\rangle|$ can be
estimated by the inductive hypothesis, while
\begin{equation*}\begin{split}
&\|D_{z'_i} T_{-1}B_\gamma u\|\,\|T_1^*R_i^*B_\gamma u\|\\
&\leq (C_0C_1\beta\delta)^{1/2}C_2 \beta^{-1}\|B_\gamma u\|^2
+\tilde C^{1/2}\cQ(u,Gu,\tilde Gu)^{1/2}
\|B_\gamma u\|\\
&\leq C_2(C_0C_1\delta/\beta)^{1/2}\|B_\gamma u\|^2
+\parameter^{-1} \tilde C\cQ(u,Gu,\tilde Gu)
+\parameter\|B_\gamma u\|^2.
\end{split}\end{equation*}
As $\parameter>0$ is freely chosen, the main point is that
if $\delta/\beta$ is sufficiently small, the first term can be absorbed into $\|CB_\gamma u\|^2$, for the
principal symbol of $C$ is bounded below by
$(c_0/4)^{1/2}$ on $\supp a$. Since the $R'_i$ term is analogous,
and the $R_{ij}$ term satisfies better estimates (for one
uses \eqref{eq:glacing-normal-derivs-1} directly, rather than its
square root, as for $R_i$), the proof can be finished as
in Theorem~\ref{theorem:edgehyperbolic}.
\end{proof}

Finally, applying arguments that go back to \cite[Section~3 and Proof
of Theorem~5.10]{Melrose-Sjostrand2}, see \cite[Proof of
Proposition~VII.1]{Lebeau5}
and \cite[Proof of Theorem~8.1]{Vasy5} for the
setting of manifolds with corners, we may
put together Theorems~\ref{theorem:edgehyperbolic} and
\ref{theorem:edgeglancing} to obtain propagation of edge-b wavefront
set along \EGBBs over the edge face:

\begin{theorem}\label{thm:prop-sing}
For Neumann boundary conditions, let $\hilbert=\Hes{1,l}(M)$,
$\hilbertp=\dHes{-1,l-2}(M)$; for Dirichlet boundary conditions let
$\hilbert=\Hesz{1,l}(M)$,
$\hilbertp=\Hes{-1,l-2}(M)$.

Let $u \in \hilbert$ solve
$\Box u=f$, $f\in\hilbertp$.
Then for all $s\in\RR\cup\{\infty\},$ $l'\leq 0$,
\begin{equation*}
\big((\WFebX^{s,l'}(u)\setminus\WFebY^{s+1,l'}(f))\cap \Sebstar[\ef]M\big)
\subset\ebSigma
\end{equation*}
is a union of maximally extended \EGBBs in
$\ebSigma\setminus\WFebY^{s+1,l'}(f).$
\end{theorem}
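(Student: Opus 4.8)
The plan is to deduce Theorem~\ref{thm:prop-sing} from the three microlocal propagation estimates already established — Theorem~\ref{theorem:edge} (radial points over $\ef^\circ$), Theorem~\ref{theorem:edgehyperbolic} (hyperbolic points at $\pa\ef$), and Theorem~\ref{theorem:edgeglancing} (glancing points at $\pa\ef$) — together with the propagation result of \cite{mvw1} over the interior of $\ef$ (which, as noted in the text, is subsumed in Theorem~\ref{theorem:edgeglancing} with $k=0$), by the standard ``relation-is-closed'' argument of Melrose--Sj\"ostrand \cite{Melrose-Sjostrand2}. First I would fix $s$ and $l'\le 0$ and set $\Sigma_0 = \ebSigma\cap\Sebstar[\ef]M$ and $W_0 = (\WFebX^{s,l'}(u)\setminus\WFebY^{s+1,l'}(f))\cap\Sebstar[\ef]M$. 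By microlocal elliptic regularity (Proposition~\ref{prop:elliptic}, applied microlocally and restricted to $\ef$), $W_0\subset\Sigma_0$, so it remains to show that through each $q\in W_0$ there passes an \EGBB contained in $\ebSigma\setminus\WFebY^{s+1,l'}(f)$ all of whose points lie in $W_0$, and that such a curve may be extended to be maximal. The key technical input is that $W_0$ is a closed subset of $\Sigma_0$ (here one uses that $\WFebX^{s,l'}(u)$ is closed and $\WFebY^{s+1,l'}(f)$ is open in its complement, restricted to a region where the weight is harmless).

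The heart of the argument is the infinitesimal step. At a point $q\in W_0\cap\Sebstar[\ef^\circ]M$ the relevant vector field is $\sH_{\eso}|_{\Sesstar_{\ef}M}$ as in \eqref{eq:sH-at-ef}, and the \cite{mvw1} propagation theorem (or Theorem~\ref{theorem:edgeglancing} with no $z'$ variables) says that if a backward (resp.\ forward) neighborhood of $q$ along this flow misses $\WFebX^{s,l'}(u)$ while $f$ is regular, then $q\notin\WFebX^{s,l'}(u)$; contrapositively, $q\in W_0$ forces nearby points on at least one side of the integral curve through $q$ to lie in $\WFebX^{s,l'}(u)$, hence in $W_0$. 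At a hyperbolic point $q\in\cH$ over $\pa\ef$, Theorem~\ref{theorem:edgehyperbolic} plays this role: the hypothesis region there is ``$\sum\zetaebh'_i>0$'', i.e.\ momenta directed toward $z'=0$, so if $q\in W_0$ then points of $W_0$ accumulate at $q$ from the incoming side, and by the geometric combinatorics of \cite{Melrose-Sjostrand2} (matching incoming and outgoing branches of \EGBBs through the corner of $Z$) one produces a genuine \EGBB segment through $q$ inside $W_0$. At a glancing point $q\in\cG\setminus\rcal_{\ebo}$, Theorem~\ref{theorem:edgeglancing} provides the step: its hypothesis region is a cone of size $\delta\beta$ around the point $\Pi(q)+\delta\ssW$, where $\ssW$ agrees with $V_0$ at $q$; so if $q\in W_0$, then for every small $\delta$ there is a point of $W_0$ within $\delta\beta$ of $\Pi(q)+\delta\ssW$ with $|z'|<\delta\beta$, which is exactly the statement that $W_0$ contains points arbitrarily close to the forward $\ssW$-direction. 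Radial points over $\ef^\circ$ are handled by Theorem~\ref{theorem:edge}, and the radial points over $\pa\ef$ (where $\ssW$ degenerates) either have $q\notin\ebSigma$, or lie in $\rcal_{\ebo}$ and are excluded from $\cG\setminus\rcal_{\ebo}$ — but since through a point of $\rcal_{\ebo,\alpha,I/O}$ the only \EGBB is the constant curve (Lemma~\ref{lemma:EGBB-GBB}(3)), the claim is automatic there.

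Having the infinitesimal step at every point, I would then run the standard Melrose--Sj\"ostrand machine: fix $q_0\in W_0$ and use a continuity/Zorn argument to build a maximally extended \EGBB $\gamma$ through $q_0$ with image in $W_0$. Concretely, consider the set of \EGBB segments through $q_0$ contained in $W_0$, partially ordered by extension; equicontinuity of \EGBBs (the \EGBB analogue of the \GBB bounds in Lemma~\ref{lemma:OG-IC-uniform-est-1}, or Lemma~\ref{lemma:equicontinuty-lifted-GBB} transported to $\ef$) together with closedness of $W_0$ and compactness of $\Sigma_0$ in bounded sets shows that a limit of such segments is again an \EGBB in $W_0$, so a maximal one exists; the infinitesimal step at each endpoint shows the maximal segment cannot fail to be defined on all of $\RR$ (or to exit every compact set), i.e.\ it is maximally extended. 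The only subtlety, and the main obstacle I anticipate, is the \emph{bookkeeping at $\pa\ef$}: one must check that the incoming/outgoing matching at hyperbolic and glancing points of $\pa Z$ produces a continuous curve that genuinely satisfies the \EGBB inequality \eqref{egbb} — this is where the geometric arguments of \cite{Melrose-Sjostrand2} are invoked, and one needs the facts (from Lemma~\ref{lemma:EGBB-GBB} and \S\ref{subsection:bichars}) that \EGBBs over $\ef$ lie in a single fiber of $\dbetaeb$ and project to broken geodesics of length $\pi$ on $Z$, so the matching is forced. I would therefore organize the write-up as: (i) reduce to showing $W_0$ is closed and contained in $\ebSigma$; (ii) state the infinitesimal step as a lemma with three cases quoting the three theorems above (plus the interior case); (iii) invoke the Melrose--Sj\"ostrand construction verbatim, citing \cite{Melrose-Sjostrand2}, to pass from the infinitesimal step to existence of maximally extended \EGBBs through every point of $W_0$, exactly as the analogous assembly is done in \cite{mvw1} and \cite{Vasy5}.
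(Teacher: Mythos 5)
Your proposal matches the paper's approach exactly: the paper gives no detailed proof here, only the one-line remark that one should ``put together Theorems~\ref{theorem:edgehyperbolic} and \ref{theorem:edgeglancing}'' via the Melrose--Sj\"ostrand machinery, and your sketch fills in precisely the expected details (interior of $\ef$ as the $k=0$ case of Theorem~\ref{theorem:edgeglancing}, radial points automatic via Lemma~\ref{lemma:EGBB-GBB}(3), the infinitesimal step plus the Zorn/equicontinuity argument for maximal extension). One small simplification worth noting: Theorem~\ref{theorem:edge} is not actually needed here, since the constancy of \EGBBs through $\rcal_{\eso}$ already disposes of the radial points — consistent with the paper citing only the hyperbolic and glancing theorems.
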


\section{Propagation of fiber-global coisotropic regularity}\label{section:coisotropic}

We now state a microlocal result on the propagation of
coisotropy.  The result says that coisotropic regularity propagates
along \EGBBs provided that we also have infinite order regularity
along all rays arriving at radial points in $\gcal.$

\renewcommand{\theenumi}{\roman{enumi}}
\begin{theorem}[Microlocal propagation of coisotropy]
\label{theorem:microlocal-coiso}
Suppose that $u\in\Hes{1,l}(M)$, $\Box u=0$, with Dirichlet or Neumann
boundary conditions (see
Definition~\ref{def:Dirichlet-Neumann}), $p\in\cH_{W,\bo}.$
Suppose also that
\begin{enumerate}
\item
$q\in (\cH_{\eb\to p,\bo}\cap\cR_{\ebo,O})\setminus\Sebstar[\pa\ef]M$,
\item \label{item:coiso} $u$ has coisotropic regularity of order $k\in \NN$ relative to
  $H^{m}$ on the coisotropic $\fcalW_{I,\reg}$ in an open set
  containing all points in $\fcalW_{I,p,\reg}\cap \{0<x<\delta\}$ that
  are geometrically related to $\fcal_{O,q}.$
\item
$\bWF(u)\cap \fcalW_{p,I,\sing}=\emptyset.$
\end{enumerate}
Then $u$ has coisotropic regularity of order $k$ relative to $H^{m'}$ for all
$$
m'<\min(m,l+f/2)
$$
on $\fcalW_{O,\reg}$, in a neighborhood of $\fcal_{O,q,\reg}$.
\end{theorem}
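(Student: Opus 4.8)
The strategy is to assemble the propagation theorems of \S\ref{section:edge-propagation} into a statement tracking coisotropic regularity, i.e.\ iterated regularity under the module $\coiso$, into, across, and out of the edge face $\ef$. The key input from the interior of $\ef$ is the propagation result of \cite{mvw1} together with Theorem~\ref{theorem:edge} at radial points over $\ef^\circ$; at $\pa\ef$ we use Theorem~\ref{thm:bdy-radial}, Theorem~\ref{theorem:edgehyperbolic}, and Theorem~\ref{theorem:edgeglancing}. The whole argument is run with the \emph{same commutant construction applied to elements of} $\coiso^k$: since the module $\coiso$ is closed under commutators and finitely generated (Lemma~\ref{lemma:moduleclosed}), and since the principal symbols of its generators vanish on $\fcal_{\reg}$ which is coisotropic, the commutator of a generator with the commutant used in each propagation theorem stays in the calculus and introduces only terms that are either of the expected form or absorbable; this is exactly the mechanism by which those theorems are stated ``for all $A\in\coiso^k$.''

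\textbf{Key steps in order.} First I would set up the geometry: by hypothesis (i), $q\in(\cH_{\eb\to p,\bo}\cap\cR_{\ebo,O})\setminus\Sebstar[\pa\ef]M$, and by Remark~\ref{remark:normal-GBB-lift} there is a unique \GBB lifting to a curve $\gammat$ with $\gammat(0)=q$ leaving into $\ef^\circ$; its continuation as a \GBB in $M^\circ$ is $\fcalW_{O,q,\reg}$. Second, I would propagate coisotropic regularity \emph{into} the edge: the incident part is encoded by $\fcalW_{I,p,\reg}$ together with the singular part $\fcalW_{p,I,\sing}$, on which hypothesis (iii) gives $\bWF(u)=\emptyset$, so that by the corollary following Theorem~\ref{thm:bdy-radial} (the one deduced from Lemma~\ref{lemma:hypothesis-region}) we get $\Sebstar[\pa\ef]M\cap\WFebX^{m_0,l'}(u)=\emptyset$ for all $l'<0$ and some $m_0>l+f/2-1$ — and more to the point, running that argument microlocally on each $A^\alpha u$ with $A^\alpha\in\coiso^k$ (using the hypothesis (ii) that $u$ is coisotropic of order $k$ relative to $H^m$ along all incident rays geometrically related to $\fcal_{O,q}$), we obtain coisotropic regularity of order $k$ relative to $H^{m'}$, $m'<\min(m,l+f/2)$, at the radial incoming point over $\pa\ef$. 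Third, I would propagate \emph{within} $\Sebstar[\ef]M$ along the \EGBB connecting the incoming radial set to the outgoing one: Theorem~\ref{thm:prop-sing} (built from Theorems~\ref{theorem:edgehyperbolic} and \ref{theorem:edgeglancing}) shows $\WFebX^{m',l'}(u)\cap\Sebstar[\ef]M$ is a union of maximally extended \EGBBs, and since $\sH_{\eso}$ is tangent to the fibers of $\dbetaeb$ (so $y,t,\etaebh$ are constant), the whole fiber over $\varpi_{\ebo}(p)\in\hcal_{W,\bo}$ is swept out — but I only need the single \EGBB through $q$. Again this is applied to $A^\alpha u$, $A^\alpha\in\coiso^k$. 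Fourth, I would propagate \emph{out} of the edge at the outgoing radial point $q$ using part (2) of Theorem~\ref{theorem:edge} (with $m<l+f/2$, which forces the bound $m'<l+f/2$ in the conclusion); coisotropic regularity of order $k$ then propagates along $\fcalW_{O,q,\reg}$ in $M^\circ$ by Proposition~\ref{prop:coisotropic-prop}. Finally, I would patch these microlocal statements together with a partition of unity and the parametrix construction of the corollary to Lemma~\ref{lemma:moduleclosed} to get coisotropic regularity of order $k$ relative to $H^{m'}$ on $\fcalW_{O,\reg}$ in a neighborhood of $\fcal_{O,q,\reg}$.

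\textbf{Main obstacle.} The hard part is bookkeeping at the transition across $\pa\ef$: the commutant in Theorem~\ref{thm:bdy-radial} is constructed globally in the fiber $Z$ (via the map $\Upsilon$ and the flow-invariance estimate $\sH_{\eso}\rho=O(x)$), and one must check that inserting the module generators $A^\alpha$ — whose symbols vanish on $\fcal_{\reg}$ but which are otherwise genuinely pseudodifferential of order $|\alpha|$ — does not spoil the positivity of the commutator. The point is that $\coiso$ is generated over $\Psieb{0,0}(M)$ by operators $A_i$ with $\sigma_{\ebo,1}(A_i)|_{\fcal_{\reg}}=0$, so each commutator $[A_i,\text{(commutant)}]$ has symbol proportional to a Hamilton derivative of something vanishing on $\fcal_{\reg}$, hence is again a module element times a lower-order factor — this is precisely what makes the $\coiso^k$-microlocalized versions of Theorems~\ref{theorem:edge}, \ref{thm:bdy-radial}, \ref{theorem:edgehyperbolic}, \ref{theorem:edgeglancing} go through with \emph{identical} proofs (as already noted, e.g.\ the proof of Theorem~\ref{theorem:edge} is literally that of \cite[Theorem~11.1]{mvw1}). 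A secondary subtlety is the glancing hypothesis: where incident rays are tangent to boundary faces one cannot speak of coisotropic regularity, so one uses hypothesis (iii) — $\CI$ regularity, i.e.\ $\bWF(u)=\emptyset$, along $\fcalW_{p,I,\sing}$ — and the geometric Lemma~\ref{lemma:hypothesis-region} to translate this into a hypothesis region usable in Theorem~\ref{thm:bdy-radial}. The interpolation-type loss of derivatives that caps the conclusion at $m'<\min(m,l+f/2)$ (rather than $m'=m$) comes directly from the numerology in the proof of Theorem~\ref{thm:bdy-radial} and in part (2) of Theorem~\ref{theorem:edge}; no further loss is incurred in the within-$\ef$ and out-of-$\ef$ steps.
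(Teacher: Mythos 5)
Your plan has the right skeleton — propagate coisotropic regularity in at the incoming radial set, across $\Sebstar[\ef]M$, and out at $q$, with the numerology coming from Theorem~\ref{theorem:edge} and Theorem~\ref{thm:bdy-radial} — but the middle step as you describe it has a genuine gap, and it is the step that contains the one real idea of the paper's proof.

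You propose to propagate coisotropic regularity \emph{within} $\Sebstar[\ef]M$ by applying Theorem~\ref{thm:prop-sing} (i.e.\ Theorems~\ref{theorem:edgehyperbolic} and~\ref{theorem:edgeglancing}) ``to each $A^\alpha u$, $A^\alpha\in\coiso^k$,'' and you assert that the ``$\coiso^k$-microlocalized versions of Theorems~\ref{thm:bdy-radial}, \ref{theorem:edgehyperbolic}, \ref{theorem:edgeglancing} go through with identical proofs.'' That claim is not substantiated and is not what the paper does. Those three theorems are stated and proved for $\Box u=f$ with $f$ controlled microlocally; if you feed in $v=A^\alpha u$ you inherit the source term $f=[\Box,A^\alpha]u$, which is not obviously in the hypothesis class (near $\ef$ it is a hybrid $\Diffess{}\Psieb{}$ operator applied to $u$, and the module factorization $\Box\Pi_\pm=Q_\pm A_\pm+R$ is only asserted away from $\ef$). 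Your reasoning about $[A_i,\text{commutant}]$ staying in the module addresses a different commutator; the issue is $[\Box,A^\alpha]$, and controlling it in the presence of the boundary hypersurfaces $z'_i=0$ is exactly the kind of thing that required the elaborate $\Diffess{}\Psieb{}$ machinery of \S\ref{section:Diff-Psi} even in the $k=0$ case. Only Theorem~\ref{theorem:edge} is stated ``for all $A\in\coiso^k$,'' and its proof is inherited from \cite[Theorem~11.1]{mvw1} precisely because over $\ef^\circ$ there are no $z'$-boundaries to worry about.

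The paper sidesteps this entirely with an ellipticity observation that your proposal is missing: the module $\coiso$ is characteristic \emph{only} at the radial set $\rcal_{\ebo}$ over $\ef$. Hence coisotropic regularity of order $k$ relative to $H^{m,\tilde l}_{\ebo}$ at an incoming radial point $q'$ (obtained from hypothesis (ii) plus Theorem~\ref{theorem:edge} part (1) at interior radial points, and from hypothesis (iii) plus Theorem~\ref{thm:bdy-radial} at glancing radial points, the latter giving infinite-order regularity) automatically upgrades, in a \emph{punctured} neighborhood of $q'$ in $\Sebstar[\ef]M$, to absence of ordinary $\ebWF^{m+k,\tilde l}$. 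Since the \EGBB joining the incoming radial set to $q$ stays away from $\rcal_{\ebo}$ except at its endpoints, one then only needs the ordinary (non-coisotropic) Theorem~\ref{thm:prop-sing} to carry $\ebWF^{m+k,\tilde l}$-regularity across $\Sebstar[\ef]M$, and finally Theorem~\ref{theorem:edge} part (2), which does take the module as input, converts this back to coisotropic regularity of order $k$ relative to $H^{m',\tilde l}$ at $q$. This conversion — coisotropic at radial points $\leftrightarrow$ plain $m+k$ regularity off them — is what lets the paper avoid ever having to prove a coisotropic version of the hyperbolic or glancing propagation theorems at $\pa\ef$, and it is the step you need to supply before the rest of your outline closes.
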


\begin{proof}
The second numbered assumption and propagation of $\eWF$ through incoming
radial points, Theorem~\ref{theorem:edge} part (1),
implies that along \EGBBs in the backward flow of $q$ which
pass through $\cR_{\ebo,I}\backslash \gcal$ there is no $\ebWF^{m,\tilde l}$,
with
$$
\tilde l=\min(l, m-f/2-0).
$$
In view of Theorem~\ref{thm:bdy-radial},
the third assumption gives the same
along \EGBBs in the backward flow of $q$ which pass through
$\cR_{\ebo,I}\cap \gcal.$ Thus, near $q$, but on the \EGBBs in the backward flow of $q$,
there is no $\ebWF^{m,\tilde l}$ at all. Propagation of singularities
through $q$ (Theorem~\ref{theorem:edge} part (2))
then gives no $\eWF^{\tilde m,\tilde l}$,
$\tilde m=\min(m,\tilde l+f/2-0)$, on the flow-out. Substituting in
$\tilde l$, we see that $\tilde m=\min(m,l+f/2-0)$, giving no
$\ebWF^{m',\tilde l}.$ Thus in $x>0$, near the flow out, there is no $\WF^{m'}$, which
gives the case $k=0$.

We now turn to the general case, $k\neq 0.$ To begin, note that
assumption~\ref{item:coiso} and Theorem~\ref{theorem:edge} imply that
in fact we have coisotropic regularity of order $k$ relative to
$H_{\ebo}^{m,\tilde l}$ at all $q' \in
\rcal_{\ebo,p,I}$ that are connected to $q$ by an \EGBB.  This in turn
yields absence of $\ebWF^{m+k,\tilde l}$ in a \emph{neighborhood} of each
such $q$ in $\Sebstar[\ef] M,$ as the operators in $\coiso$ are all
characteristic only at the radial points over $\ef.$ By
Theorem~\ref{thm:prop-sing} followed by the second part of
Theorem~\ref{theorem:edge}, we then achieve coisotropic regularity of
order $k$ relative to $\eH{m',\tilde l}$ at $q,$ hence in a
neighborhood as well.
\end{proof}

\begin{corollary}\label{corollary:coisotropic}
Suppose that $u\in\Hes{1,l}(M)$, $\Box u=0$, with Dirichlet or Neumann
boundary conditions, $p\in\cH_{W,\bo}$, $k\in\NN$.
Suppose also that
\begin{enumerate}
\item
$u$ has coisotropic regularity of order $k$ relative to $H^{m}$ on
the co\-isotropic $\fcalW_{I,\reg}$
in a neighborhood of $\fcal_{I,p,\reg}$,
\item
$\bWF(u)\cap \fcalW_{p,I,\sing}=\emptyset.$
\end{enumerate}
Then $u$ has coisotropic regularity of order $k$ relative to $H^{m'}$ for all
$$
m'<\min(m,l+f/2)
$$
on $\fcalW_{O,\reg}$ in a neighborhood of $\fcalW_{O,p,\reg}.$
\end{corollary}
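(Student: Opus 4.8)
The plan is to deduce Corollary~\ref{corollary:coisotropic} from Theorem~\ref{theorem:microlocal-coiso} by integrating the microlocal statement over a fiber of the edge. The key observation is that the hypotheses of the Corollary are ``fiber-global'': we assume coisotropic regularity of order $k$ relative to $H^m$ on a neighborhood of \emph{all} of $\fcalW_{I,p,\reg}$ (not just the part of a single fiber), together with the absence of b-wavefront set on the entire singular incoming flowout $\fcalW_{p,I,\sing}$. I would first fix a point $q \in \fcalW_{O,p,\reg}$ at which I wish to establish coisotropic regularity of order $k$ relative to $H^{m'}$; by part (i) of the Remark following the definition of coisotropic regularity (coisotropy on $K$ is equivalent to coisotropy at each point of $K$), it suffices to work one point at a time.

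First I would check that $q$ meets the hypotheses of Theorem~\ref{theorem:microlocal-coiso}. By Lemma~\ref{lemma:GBB-eb-lift}, the outgoing \GBB through $p$ lifting to $q$ determines a point $\tilde q \in (\cH_{\ebo\to p,\bo}\cap\cR_{\ebo,O})\setminus\Sebstar[\pa\ef]M$ (we are on $\fcalW_{O,p,\reg}$, i.e.\ the \emph{regular}, normally-approaching part, so by Lemma~\ref{lemma:GBB-eb-lift} the lift lands away from $\cG$). This verifies hypothesis (i) of the Theorem. Hypothesis (iii) is exactly our assumption (ii). For hypothesis (ii) of the Theorem, I need coisotropic regularity of order $k$ relative to $H^m$ on an open set containing all points of $\fcalW_{I,p,\reg}\cap\{0<x<\delta\}$ that are geometrically related to $\fcal_{O,\tilde q}$: but these points all lie on incoming \GBBs through $p$, hence in $\fcalW_{I,p,\reg}$, so our assumption (i) — which is a statement about a neighborhood of \emph{all} of $\fcalW_{I,p,\reg}$ — supplies precisely this. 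Then Theorem~\ref{theorem:microlocal-coiso} gives coisotropic regularity of order $k$ relative to $H^{m'}$, $m'<\min(m,l+f/2)$, on $\fcalW_{O,\reg}$ in a neighborhood of $\fcal_{O,\tilde q,\reg}$, which contains $q$.

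Since $q \in \fcalW_{O,p,\reg}$ was arbitrary, and $\fcalW_{O,p,\reg}$ is covered by such neighborhoods, a partition-of-unity/microlocal-parametrix argument (again the Remark after the definition, plus the Corollary to Lemma~\ref{lemma:moduleclosed} relating coisotropic regularity at points to applying $\coiso^k$ microlocally) assembles these into coisotropic regularity of order $k$ relative to $H^{m'}$ on a full neighborhood of $\fcalW_{O,p,\reg}$. The one point requiring a little care is uniformity: Theorem~\ref{theorem:microlocal-coiso} is stated for a fixed $q$, and one must ensure the neighborhoods obtained do not shrink to nothing as $q$ ranges over $\fcal_{O,p,\reg}$. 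This is handled by compactness — one restricts to $t$ in a compact interval and to a compact piece of $\fcalW_{O,p,\reg}$ (the flowout is parametrized up to the finite time $T$), extracts a finite subcover, and takes the minimum of the finitely many $\delta$'s and the intersection of the finitely many neighborhoods.

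The main obstacle, such as it is, is bookkeeping rather than a genuine difficulty: one must match up the ``geometrically related'' language of Theorem~\ref{theorem:microlocal-coiso} (phrased via geometric \GBBs and their \EGBB connectors over $\ef$) with the ``neighborhood of $\fcalW_{I,p,\reg}$'' language of the Corollary, and verify that the set of incoming rays relevant to a given outgoing $q$ is indeed contained in $\fcalW_{I,p,\reg}$ — this is where one uses that the fiber of $\dbetaeb$ over $p\in\hcal_{W,\bo}$ is traversed by \EGBBs projecting to geodesics of length $\pi$ in $\ef$ (Lemma~\ref{lemma:EGBB-GBB}(4)), so every geometric connector originates from a genuine incoming \GBB through $p$. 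Granting this identification, the Corollary is an immediate globalization of the microlocal theorem, exactly as stated in the text preceding it (``Theorem~\ref{thm:bdy-radial} and this lemma immediately give\ldots''-style deduction).
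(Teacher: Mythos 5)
Your proof is correct and takes essentially the same route the paper implicitly intends: the Corollary is stated without separate proof precisely because it is the globalization of Theorem~\ref{theorem:microlocal-coiso} over the radial points $\tilde q \in \rcal_{\ebo,p,O}\setminus\Sebstar[\pa\ef]M$, exactly as you describe. One small slip: in the patching step you should take the \emph{union}, not the intersection, of the finitely many neighborhoods of the $\fcal_{O,\tilde q_i,\reg}$ to obtain a neighborhood of $\fcalW_{O,p,\reg}$; and the ``main obstacle'' paragraph is over-cautious, since Theorem~\ref{theorem:microlocal-coiso}'s hypothesis (ii) already restricts explicitly to a subset of $\fcalW_{I,p,\reg}$, so no identification is required.
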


Finally, we prove that the regularity with respect to which coisotropic
regularity is gained in the above results is not, in fact, dependent
on the weight $l:$

\begin{corollary}\label{corollary:coisotropic-2}
Suppose that $u\in H^1(M_0)$, $\Box u=0$, with Dirichlet or Neumann
boundary conditions, $p\in\cH_{W,\bo}$, $k\in\NN$, $\ep>0$.
There exists $k'$ (depending on
$k$ and $\ep$) such that if
\begin{enumerate}
\item
$u$ has coisotropic regularity of order $k'$ relative to $H^{s}$ on
the co\-isotropic $\fcalW_{I,\reg}$ in a neighborhood of
$\fcalW_{I,p,\reg}$, and
\item
$\bWF(u)\cap \fcalW_{p,I,\sing}=\emptyset$,
\end{enumerate}
then $u$ has coisotropic regularity of order $k$ relative to $H^{s-\ep}$
on $\fcalW_{O,\reg}$ in a neighborhood of $\fcalW_{O,p,\reg}.$
\end{corollary}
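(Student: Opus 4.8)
The plan is to bootstrap Corollary~\ref{corollary:coisotropic} from a finite weight $l$ (determined by membership of $u$ in $H^1(M_0)$, equivalently $u\in\Hes{1,l}(M)$ for a specific $l$) up to arbitrarily high Sobolev order at the cost of only an $\ep$-loss, by choosing the starting coisotropic order $k'$ large enough. The key point is that Corollary~\ref{corollary:coisotropic} gives coisotropic regularity of the same order $k$ relative to $H^{m'}$ for any $m'<\min(m,l+f/2)$; the loss in Sobolev order that occurs when a ray passes through the edge is governed by the cap at $l+f/2$, and the weight $l$ here is fixed. So the real content is: one application of the corollary gains coisotropic regularity leaving the edge relative to $H^{l+f/2-0}$ provided we have enough coisotropic regularity entering; iterating does not directly help since $l$ is fixed, so instead we trade coisotropic order for Sobolev order via an interpolation argument.

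First I would fix the weight: since $u\in H^1(M_0)=\Hesz{1,1-(f+1)/2}(M)$ (Proposition~\ref{proposition:dirichletdomain}), resp.\ $u\in\Hes{1,1-(f+1)/2}(M)$ for Neumann by Lemma~\ref{lemma:domaininclusion} with a slight weight loss, we may take $l$ to be a fixed explicit number, so $\min(m,l+f/2)=l+f/2$ once $m$ is large. Next, given the target pair $(k,s-\ep)$ on the outgoing flowout, I would run Corollary~\ref{corollary:coisotropic} once with coisotropic order $k$ but note that the outgoing Sobolev order is capped at $l+f/2-0$, which is generally far below $s-\ep$. To recover the lost Sobolev order, I would invoke the interpolation mechanism already used in the proof of Theorem~\ref{thm:bdy-radial} and encoded in Proposition~\ref{prop:coisotropic-prop}: coisotropic regularity of order $k'$ relative to $H^{s}$ interpolates against the a priori bound (coisotropic regularity of order $0$ relative to $H^{m_0}$, $m_0$ the overall regularity $\approx 1$ here, or relative to $\hilbert$ itself) to give coisotropic regularity of order $k$ relative to $H^{s'}$ for a range of $s'$ depending on $k/k'$. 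Concretely, from $A^\alpha u\in H^{s}$ for $|\alpha|\le k'$ and $A^\alpha u\in H^{m_0}$ for $|\alpha|=0$, one gets $A^\alpha u\in H^{\theta s+(1-\theta)m_0}$ for $|\alpha|\le\theta k'$; choosing $\theta=k/k'$ and then $k'$ large makes $\theta s+(1-\theta)m_0>s-\ep$. Thus the strategy is: (1) from the hypothesis of coisotropic order $k'$ rel.\ $H^s$ incoming, use Corollary~\ref{corollary:coisotropic} to propagate coisotropic order $k'$ rel.\ $H^{l+f/2-0}$ outgoing; (2) separately, the a priori membership $u\in\Hes{1,l}(M)$ gives coisotropic regularity of any order rel.\ $H^{m_0}$ (with $m_0$ below the edge cap) for free off $\fcal_{\reg}$ and, crucially, \emph{on} $\fcal_{\reg}$ the propagation Theorem~\ref{theorem:edge} together with Theorem~\ref{thm:prop-sing} gives coisotropic order $k'$ rel.\ $H^{m_0}$ outgoing without any assumption beyond the a priori one; (3) interpolate these two to get coisotropic order $k$ rel.\ $H^{s-\ep}$ outgoing, choosing $k'=k'(k,\ep)$ appropriately.

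I expect the main obstacle to be handling the interpolation cleanly at the level of the edge-b calculus and the module $\coiso$, rather than in a flat $H^s$ on $M^\circ$: one needs that iterated regularity under $A\in\coiso$ interpolates against a lower-order bound, which is a statement about the Fr\'echet structure of the spaces $\hilbertz_K$, $\hilbertp_K$ built in Section~\ref{section:coisodef} and the duality Lemma~\ref{lemma:duals}. The cleanest route is probably to transfer to the model form $\zeta=0$ via an FIO away from $\ef$ as in the discussion preceding Proposition~\ref{prop:coisotropic-prop}, where coisotropic regularity of order $k$ rel.\ $H^s$ becomes $D_z^\alpha Tu\in H^s$ for $|\alpha|<k$, and then the interpolation is the standard complex/real interpolation between $H^s$ with $k'$ tangential derivatives and $H^{m_0}$ with none; this gives $H^{\theta s+(1-\theta)m_0}$ with $\theta k'$ tangential derivatives, as needed. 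A secondary technical point is bookkeeping the weight: since the edge cap $l+f/2$ depends on $l$ and in the Neumann case Lemma~\ref{lemma:domaininclusion} costs us a unit of weight, one must check $l+f/2>m_0$ so that the a priori outgoing coisotropic regularity in step (2) sits strictly above the regularity one interpolates down to — but this is automatic since $m_0$ is comparable to $1$ and $f\ge1$, with $l$ chosen as close to $1-(f+1)/2$ as the boundary condition allows. With these ingredients the corollary follows by combining Corollary~\ref{corollary:coisotropic} (twice, once for each of the two Sobolev levels) with the interpolation estimate.
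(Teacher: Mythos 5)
Your overall strategy — apply Corollary~\ref{corollary:coisotropic} to propagate coisotropic regularity through the edge at the cost of a Sobolev cap $l+f/2$, then use an interpolation between coisotropic order and Sobolev order to recover most of the lost Sobolev regularity, choosing $k'$ large — is exactly the shape of the paper's argument, and your identification of the cap $l+f/2=1/2$ (Dirichlet), resp.\ $-1/2$ (Neumann), is correct.  The interpolation is indeed carried out after conjugating by an FIO to the model $\zeta=0$, as you anticipate.

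However, you are missing the second interpolation input, and the one you substitute for it is both incorrect and structurally the wrong shape.  The interpolation in Lemma~\ref{lemma:coisotropic-interpolate} needs two inputs on the outgoing flowout: (a) coisotropic regularity of high order $k'$ relative to a \emph{low} Sobolev level $m$, and (b) \emph{plain} membership in the \emph{high} Sobolev space $H^s$ (coisotropic order zero).  Interpolating these with parameter $\theta=\ep/(s-m)$ yields coisotropic order $\theta k'$ at level $s-\ep$, hence $k'\gtrsim k(s-m)/\ep$ suffices.  Input (a) comes from Corollary~\ref{corollary:coisotropic}; input (b) comes from the b-propagation of singularities for the wave equation on manifolds with corners, \cite[Corollary~8.4]{Vasy5}, which propagates the $H^s$ regularity (implicit in the hypothesis of coisotropic regularity relative to $H^s$ on the incoming flowout, combined with the assumption $\bWF(u)\cap\fcalW_{p,I,\sing}=\emptyset$) across the corner \emph{without Sobolev loss}.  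You never invoke this; your proposed substitute — ``the a priori membership $u\in\Hes{1,l}(M)$ gives coisotropic regularity of any order rel.\ $H^{m_0}$'' — is false as stated (off the flowout coisotropic regularity of order $j$ relative to $\hilbert$ is just $H^j_{\ebo,\hilbert}$ regularity, which mere membership in $\hilbert$ does not provide, and on $\fcalW_{O,\reg}$ one would need to run a propagation theorem), and even were it true it gives another \emph{low}-Sobolev-level input, which cannot interpolate up to $H^{s-\ep}$.  Your ``concretely'' computation further reveals the confusion: you assume $A^\alpha u\in H^s$ for $|\alpha|\le k'$ \emph{on the outgoing side}, which is precisely what is to be proven, not what is available; the correct high-$H^s$ input carries coisotropic order $0$, and the correct high-coisotropic-order input is at the low level $\min(s,l+f/2)-0$.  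With this corrected — in particular, invoking \cite[Corollary~8.4]{Vasy5} for $u\in H^s$ along $\fcalW_{O,p}$ — the argument closes.
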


\begin{proof}
Consider Dirichlet boundary conditions first. Then
$$
u\in\Hesz{1,1-(f+1)/2}(M).
$$
Thus, by Corollary~\ref{corollary:coisotropic}, $u$ has coisotropic
regularity of order $k'$ relative
to\footnote{An improved version of the argument, using powers of $\pa_t$
to shift among Sobolev spaces, gives coisotropy of order $k'$
relative to $H^{s-1/2-\ep}$.} $H^{m-\ep}$, $m<\min(s,1/2)$
on $\fcal_{O,\reg}$ near $\fcal_{O,p,\reg}$, strictly away from $\pa M$.

On the other hand, by the propagation of singularities,
\cite[Corollary~8.4]{Vasy5},
$u$ is in $H^s$ along $\fcalW_{O,p}$.
Hence the theorem follows by the
interpolation result of the following lemma,
Lemma~\ref{lemma:coisotropic-interpolate}.

Consider Neumann boundary conditions next. Then $u\in\Hes{1,-(f+1)/2}(M)$,
so by Corollary~\ref{corollary:coisotropic}, $u$ has coisotropic
regularity of order $k'$ relative to $H^{m-\ep}$, $m<\min(s,-1/2)$
on $\fcalW_{O,\reg}$ near $\fcalW_{O,p,\reg}$, strictly away from $\pa M$.

Proceeding now as in the Dirichlet case, using \cite[Corollary~8.4]{Vasy5},
we complete the proof.
\end{proof}

\begin{lemma}\label{lemma:coisotropic-interpolate}
Suppose that $u$ is in $H^s$ microlocally near some point $q$ away from
$\pa M$, and it is coisotropic of order $N$ relative to $H^m$ near $q$
with $s>m$. Then for $\ep>0$ and $k<(\ep N)/(s-m)$,
$u$ is coisotropic of order $k$ relative to
$H^{s-\ep}$ near $q$.

In particular, if $u$ is in $H^s$ microlocally near some point $q$ away from
$\pa M$ and $u$ is coisotropic (of order $\infty$)
relative to $H^m$ near $q$ with $s>m$, then $u$ is coisotropic
relative to $H^{s-\ep}$ for all $\ep>0$.
\end{lemma}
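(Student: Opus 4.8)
\textbf{Proof proposal for Lemma~\ref{lemma:coisotropic-interpolate}.}

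The plan is to run a standard interpolation argument after conjugating to a microlocal model in which coisotropic regularity becomes iterated regularity under a fixed family of derivatives. First I would use the normal form discussed just before Proposition~\ref{prop:coisotropic-prop}: away from $\ef$ there is an elliptic FIO $T$, microlocalized near $q$, such that $u$ is coisotropic of order $N$ relative to $H^m$ near $q$ if and only if $D_z^\alpha Tu\in H^m$ (microlocally) for all $\abs{\alpha}\le N$, and $u\in H^s$ microlocally near $q$ is equivalent to $Tu\in H^s$ microlocally near the image point. So, working with $v=\chi T u$ for a suitable microlocal cutoff $\chi$, the hypotheses become $v\in H^s$ and $D_z^\alpha v\in H^m$ for $\abs\alpha\le N$, and the desired conclusion becomes $D_z^\beta v\in H^{s-\ep}$ for $\abs\beta\le k$, which translates back to coisotropic regularity of order $k$ relative to $H^{s-\ep}$ near $q$.

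The interpolation step itself is the heart of the matter. Having $v\in H^s$ and $D_z^N v\in H^m$ (taking one $z$-derivative $N$ times; the multi-index case is identical), I would interpolate the pair of operators $\Id$ on $H^s$ and $D_z^N$ on $H^m$. Concretely, for $\theta\in[0,1]$ one has $D_z^{\theta N}v\in H^{(1-\theta)s+\theta m}$, obtained either by complex interpolation of the relevant $H^a$ spaces, or more elementarily by a dyadic Littlewood--Paley decomposition $v=\sum_j v_j$: the low-frequency control comes from $v\in H^s$ and the high-frequency gain in $z$-derivatives from $D_z^N v\in H^m$, and one optimizes the split frequency. Setting $k=\theta N$, the resulting Sobolev order is $s-\theta(s-m) = s - k(s-m)/N$; requiring $k < \ep N/(s-m)$ makes this order exceed $s-\ep$, so $D_z^k v\in H^{s-\ep}$ (and similarly for all lower-order $z$-derivatives, which are only better). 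This gives coisotropic regularity of order $k$ relative to $H^{s-\ep}$ near $q$, as claimed. The final ``in particular'' statement is immediate: if $u$ is coisotropic of order $\infty$ relative to $H^m$, then for any fixed $k$ and $\ep>0$ one may choose $N$ with $k<\ep N/(s-m)$ and apply the first part.

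The main obstacle, such as it is, is bookkeeping rather than conceptual: one must make sure the microlocal cutoffs are handled consistently (i.e.\ that $\chi T$ conjugates coisotropic regularity to iterated $D_z$-regularity on the nose, with error terms smoothing), and that the interpolation is applied to the \emph{localized} object $v$ so that it genuinely lives in global Sobolev spaces. Since we work strictly away from $\pa M$ here, there are no weight or boundary-face subtleties, and the operator $D_z$ plays the role of a genuine fixed differential operator; this is exactly the situation where the normal form of \S\ref{section:coisodef} applies verbatim. I would therefore expect the proof to be short, citing \cite[Theorem~21.2.4]{Hormander3} and the FIO normal form for the conjugation, and a standard interpolation lemma (e.g.\ \cite[Appendix B]{Hormander3}) for the key estimate.
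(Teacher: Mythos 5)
Your proposal is correct and follows essentially the same route as the paper: localize microlocally, conjugate by the FIO normal form from \S\ref{section:coisodef} so coisotropic regularity becomes iterated $D_z$-regularity, and interpolate. The only cosmetic difference is the name you give the interpolation step — you invoke complex interpolation or Littlewood--Paley, while the paper works directly on the Fourier transform and observes that H\"older's inequality on the weights $\langle\eta,\zeta\rangle^s|\hat v|$ and $\langle\eta,\zeta\rangle^m\langle\zeta\rangle^N|\hat v|$ already gives $\langle\eta,\zeta\rangle^{s(1-\theta)+m\theta}\langle\zeta\rangle^{N\theta}\hat v\in L^2$; these are the same estimate.
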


\begin{proof}
If $Q\in\Psi^0(M)$ and $\WF'(Q)$ lies sufficiently close to $q$, then the
hypotheses are globally satisfied by $u'=Qu$. Moreover, being coisotropic,
locally $\fcal$ can be put in a model form $\zeta=0$ by a symplectomorphism
$\Phi$ in some canonical coordinates $(y,z,\eta,\zeta)$, by
\cite[Theorem~21.2.4]{Hormander3} (for coisotropic submanifolds one has
$k=n-l$, $\dim S=2n$, in the theorem).\footnote{Roughly
speaking, $y$ would correspond to
the coordinates $(x,y,t)$ on $M$, while $z$ correspond to the fiber
variables $z$ on $M$; this is literally
true in a model setting.} Further reducing $\WF'(Q)$ if
needed, and using an elliptic $0$th order Fourier integral operator $F$
with canonical relation given by $\Phi$ to consider the induced problem for
$v=Fu'=FQu$, we may thus assume that $v\in H^s$, and $D_z^\alpha v\in H^m$
for all $\alpha,$ i.e.\ $\langle D_z\rangle^N v\in H^m.$ Considering the
Fourier transform $\hat v$ of $v$, we then have $\langle\eta,\zeta\rangle^s
\hat v\in L^2$, $\langle\eta,\zeta\rangle^m\langle\zeta\rangle^N \hat v\in
L^2.$ But this implies
$\langle\eta,\zeta\rangle^{m\theta+s(1-\theta)}\langle\zeta\rangle^{N\theta}
\hat v\in L^2$ for all $\theta\in[0,1]$ by interpolation (indeed, in this
case by H\"older's inequality).  In particular, taking
$\theta=(\ep)/(s-m)$,
$\langle\eta,\zeta\rangle^{s-\ep}\langle\zeta\rangle^k \hat v\in L^2$ if
$k<(N\ep)/(s-m)$, and the lemma follows.
\end{proof}

\section{Geometric theorem}\label{section:geometric}

The final essential ingredient in the proof of the geometric theorem is
the dualization of the coisotropic propagation result,
Corollary~\ref{corollary:coisotropic-2}.
Before proving such a result,
we first make a definition analogous to
Definition~\ref{def:rel-eb-Sob}, but for the b-wave front set. This
relative b-wave front set was used in \cite{Vasy5} to describe the
propagation of singularities on $M_0$.

\begin{definition}
Let $\hilbert\subset\CmI(M_0)$ denote a Hilbert space on which, for
each $K\subset M_0$ compact,
operators in $\Psib^{0}(M_0)$ with Schwartz kernel supported in
$K\times K$ are bounded, with the operator norm of
$\Op (a)$ depending on $K$ and a fixed seminorm of $a.$

For $m\geq 0,$ let
$$
H_{\bo,\hilbert,\loc}^{m}(M_0)=\big\{u \in \hilbert_\loc: Au \in \hilbert_\loc \text{ for
  all } A \in \Psib^{m}(M_0)\big\}.
$$

Let $q\in\Sbstar M_0$, $u \in \hilbert_\loc .$
For $m\geq 0$, we say that $q\notin\WFbX^{m}(u)$ if there exists
$A\in\Psib^{m}(M)$ elliptic at $q$ such that $Au\in\hilbert_\loc .$  We
also define
$q \notin \WFbX^{\infty}(u)$ if there exists
$A\in\Psib^{0}(M_0)$ elliptic at $q$ such that $Au \in H_{\bo,\hilbert,\loc}^\infty(M_0).$
\end{definition}
\nomenclature[H]{$H_{\bo,\hilbert}^{m}$}{b-Sobolev space relative to a
  Hilbert space $\hilbert$}

\begin{theorem}\label{theorem:non-focusing}
Let $u\in H^{-\infty}_{\bo,H^1_{\loc}}(M_0)$
satisfy the wave equation with Dirichlet
or Neumann boundary conditions.  Let $p \in \hcal_{W,\bo},$ and $w \in
\fcalW_{O,p,\reg}.$

Suppose $k\in\NN$ and $\ep>0$. Then there is $k'\in\NN$ (depending
on $k$ and $\ep$) such that
if $\WFb(u)\cap\fcalW_{I,p,\sing}=\emptyset$ and
$u$ is non-focusing of order $k$ relative to $H^s$ on a neighborhood
of $\fcalW_{I,p,\reg}$ in $\fcalW_{I,\reg}$
then $u$ is non-focusing of order $k'$ relative to $H^{s-\ep}$ at $w$.
\end{theorem}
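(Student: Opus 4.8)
The strategy is the standard duality pairing between coisotropic regularity and nonfocusing, exactly as in \cite[Section~9]{mvw1}, now using Corollary~\ref{corollary:coisotropic-2} in place of its analogue there. The setup is as follows. Fix compact sets $K_I \subset \fcalW_{I,p,\reg}$ and $K_O \ni w$ a neighborhood of $w$ in $\fcalW_{O,p,\reg}$, chosen small enough that their closures are disjoint from $\fcalW_{\sing}$, so that in neighborhoods $U_I \supset K_I$, $U_O \supset K_O$ the relevant flowouts are embedded coisotropic submanifolds (Corollary~\ref{cor:coisotropic}). Let $\hilbertx = H^1(M_0)$ (Neumann) or $H^1_0(M_0)$ (Dirichlet), and $\hilberth = L^2_g$, with $\Id + \Lap : \hilbertx \to \hilbertx^*$ the isometry noted in \S\ref{section:coisodef}. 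Using the Hilbert spaces $\hilbertp, \hilbertz$ and their duals from Lemma~\ref{lemma:duals}, together with the inclusions of Lemma~\ref{lemma:coisoinclusion} and Lemma~\ref{lemma:nfinclusions}, the condition ``$u$ is non-focusing of order $k$ relative to $H^s$ near $K_O$'' is (up to shrinking neighborhoods) the statement $u \in \hilbertz^*$ for an appropriate space $\hilbertz$ built from $K_O$ and $H^{-r}_{\bo,\hilbertx^*}$ with $s$-dependent weight; dually, ``$v$ coisotropic of order $k$ relative to $H^{r}$ near the corresponding incoming set'' means $v \in \hilbertz$.

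\textbf{Key steps, in order.} First I would reduce to a functional-analytic statement: by Lemma~\ref{lemma:nfinclusions} it suffices to show that the Banach-space adjoint of the solution operator $S_+ : f \mapsto u$ (the forward solution of $\Box u = f$ with the given boundary conditions, supported in $t > T$) maps the coisotropic space near $K_O$ into the coisotropic space near the incoming set $K_I$. Second, I would identify this adjoint: since $\Box$ is formally self-adjoint with respect to $L^2_g$, the adjoint of the forward propagator is the backward propagator $S_-$, mapping data supported in $t > T'$ to the solution supported in $t < T'$ — this is the time-reversal symmetry of the wave equation, and it interchanges incoming and outgoing flowouts. Third — the crux — I would invoke Corollary~\ref{corollary:coisotropic-2}: if the outgoing data is coisotropic of order $k'$ relative to $H^{s'}$ near $K_O$ and $\bWF$ misses the singular outgoing flowout, then the backward solution is coisotropic of order $k$ relative to $H^{s'-\ep}$ near $K_I$, with the loss $s' \to s'-\ep$ and the order drop $k' \to k$ controlled exactly as in that corollary. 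Dualizing this mapping property via Lemma~\ref{lemma:duals} (taking care that conjugation of a pseudodifferential operator by complex conjugation preserves vanishing of the principal symbol on $\fcal_{\reg}$, as noted in the proof of Lemma~\ref{lemma:duals}) yields: $u$ non-focusing of order $k$ relative to $H^s$ near $K_I$ implies $u$ non-focusing of order $k'$ relative to $H^{s-\ep}$ near $K_O$. Relabeling the roles of incoming/outgoing (the flow is reversible) gives precisely the asserted statement at $w$. The hypothesis $u \in H^{-\infty}_{\bo,H^1_{\loc}}(M_0)$ is needed exactly so that the a priori regularity is good enough for the pairing to be defined and for the $\bWF$ hypothesis at $\fcalW_{I,p,\sing}$ to interact correctly with the propagation-of-singularities input \cite[Corollary~8.4]{Vasy5}, which ensures no stray wavefront set obstructs the duality argument off the coisotropic set.

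\textbf{Main obstacle.} The genuinely delicate point is bookkeeping the order $k'$ and the weight loss $\ep$ through the dualization, because the spaces $\hilbertp^*, \hilbertz^*$ of Lemma~\ref{lemma:duals} are \emph{sums} of ranges of the module operators $T_r A^\alpha Q$ rather than intersections, so a nonfocusing statement of order $k$ ``in the dual'' does not literally correspond to coisotropic order $k$ of the adjoint — one must run Corollary~\ref{corollary:coisotropic-2} with $k$ and $k'$ swapped and absorb the quantitative interpolation loss (Lemma~\ref{lemma:coisotropic-interpolate}) into the choice of $k'$ as a function of $k$ and $\ep$. A secondary technical nuisance, entirely parallel to \cite{mvw1}, is that one must localize away from $\pa M$ throughout (since coisotropy and nonfocusing are only meaningful on $\fcalW_{\reg} \subset S^*M^\circ$), and check that the microlocalizers $Q$ appearing in the definitions can be chosen compatibly on the incoming and outgoing sides under the backward flow; this is where the geometric lemmas of \S\ref{subsection:bichars} (in particular Corollary~\ref{cor:GBB-lifted-conv} and Lemma~\ref{lemma:uniform-normal}) guarantee that a neighborhood of $\fcalW_{O,p,\reg}$ pulls back under time reversal to a neighborhood of $\fcalW_{I,p,\reg}$ within the regular flowout. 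Apart from this bookkeeping, the proof is a direct transcription of the argument in \cite[Section~9]{mvw1}, the only new input being that Corollary~\ref{corollary:coisotropic-2} is available in the present corners setting.
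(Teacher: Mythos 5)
Your overall strategy — dualizing the backward propagation of coisotropic regularity (Corollary~\ref{corollary:coisotropic-2}) to obtain forward propagation of nonfocusing, using the spaces of Lemma~\ref{lemma:duals} — is indeed the paper's approach. But there is a substantive gap in the step you compress into the phrase ``the adjoint of the forward propagator is the backward propagator.'' That identity is automatic only in the $L^2$/tempered-distribution setting; between the specific Banach spaces $\hilbertp$ and $\hilbertz$ built from the coisotropic module, $S_+$ is not a priori defined on $\hilbertz^*$ at all, so ``$S_+=(S_-)^*$'' is not a fact you can cite — it is precisely what must be constructed. The paper does this by (i) proving a forward energy estimate $\|\phi\|_{\hilbertz}\leq C\|\Box\phi\|_{\hilbertp}$ on $\dot\cD$ (Lemma~\ref{lemma:geom1}, whose proof is itself nontrivial because Corollary~\ref{corollary:coisotropic-2} only addresses the homogeneous equation, forcing a split of $\Box\phi$ into a piece supported off $\pa M$ and a conormal remainder handled by the propagation theorem of~\cite{Vasy5}); (ii) defining the functional $\psi\mapsto\langle f_0,\Box^{-1}\psi\rangle$ on $\Ran_{\dot\cD}\Box$; and (iii) extending via Hahn–Banach. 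Crucially, a generic Hahn–Banach extension produces \emph{some} weak solution of $\Box v_0=f_0$, not the forward one; pinning down the forward solution requires showing the functional vanishes on the intersection of $\overline{\Ran_{\dot\cD}\Box}$ with $\overline{\dot\cE_{(T_0,t'_0)}}$ and that the sum of these closed subspaces is again closed with quantitative control on the splitting — this is Lemma~\ref{lemma:geom2}, which has no analogue in your outline. Without this, you cannot conclude that the $v_0\in\hilbertp^*$ you produce agrees with $u$ (or with $\Box_+^{-1}f_0$) on the outgoing side.

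Two smaller omissions worth noting: first, the reduction to a time-truncated inhomogeneous problem ($v=\chi u$, $f=[\Box,\chi]u$, further microlocalized to $f_0=Q_0f$ concentrated on $\fcalW_{I,p}$) is needed before any of the functional analysis starts, since your hypotheses are microlocal statements on a flowout, not global statements about $u$; second, the space $\hilbertp$ in the paper is not purely ``coisotropic near a neighborhood of $w$'' but carries a three-part structure — coisotropic on $\liptic B_1$, high-order conormal on $\liptic(\Id-B_1-\chi_0)$, and only $H^1_{\bo}$-level regularity on $\supp\chi_0$ near $t=T_0$ — and all three pieces are used in the energy estimate and the closedness argument. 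Your reduction to $s\le 0$ via $(1+D_t^2)^N$ conjugation matches the paper and is fine.
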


\begin{remark}
  The essential idea of the proof is as follows.  Our results on
  propagation of coisotropic regularity show that coisotropic
  regularity entering the corner along $\fcalW_{I,p,\reg},$ together
  with smoothness at $\fcalW_{I,\sing},$ imply coisotropic regularity
  along $\fcalW_{O,p,\reg}.$ In other words, regularity under
  application of $A^\alpha$ (in the notation of
  \S\ref{section:coisodef}) along $\fcalW_{I,p,\reg}$ together with
  smoothness along singular incoming rays, yields regularity under
  $A^\alpha$ along $\fcalW_{O,p,\reg}.$ Heuristically speaking, the
  dual condition to our incoming regularity hypothesis is that of
  lying in the sum of the ranges of the operators $A^\alpha, R,$ where
  $R$ is an operator of high order microsupported near
  $\fcalW_{I,p,\sing}.$ By time reversal and duality, we thus find that
  the condition of nonfocusing along $\fcalW_{I,p,\reg},$ i.e.\ lying
  in the sum of the ranges of the $A^\alpha$'s microlocalized there,
  plus arbitrary bad regularity near $\fcalW_{I,p,\sing},$ leads to
  nonfocusing along $\fcalW_{O,p,\reg}.$

The difficulty in implementing this plan is primarily in rigorously
making the duality arguments on spaces of coisotropic wave equation
solutions.  The reader familiar with \cite{mvw1} will note that the
arguments used here are considerably more intricate than those in
Section~13 of \cite{mvw1}.  The reason for this is two-fold.  First,
the identification of the dual spaces, denoted $\mathsf{H_i}^*$ in
\cite{mvw1}, was not fully explained and indeed somewhat flawed.  (In
particular, we note that the identification of the dual space of
coisotropic distributions that are also wave equation solutions
requires some effort.)  These defects are remedied in the current
treatment, in which we use the results on duality from
Appendix~\ref{appendix:functional} to identify the duals of
coisotropic distributions, together with results on the inhomogeneous
wave equation.  Second, difficulties are present in the corners
setting that did not arise in \cite{mvw1}; in particular, we must
identify adjoints with respect to the elliptic Dirichlet form of
operators microsupported on $\fcal_{I,p,\sing}.$  This requires
some functional analytic care.
\end{remark}

\begin{proof}
We assume $s\leq 0$ to simplify notation; we return to the general case at the
end of the argument.

Let $T=t(p)$, and choose $T_0<T<T_1$ sufficiently close to $T$.
Let $\chi$ be smooth step function such that $\chi\equiv 1$ on a
neighborhood of $[T,\infty]$ and $\chi\equiv 0$ on a neighborhood of
$(-\infty,T_0].$  We find that $$v\equiv \chi u$$ satisfies
$\Box v=f$ with $f=[\Box,\chi]u,$ and $v$ vanishes on a neighborhood
of $(-\infty,T_0]\times X.$ Thus, we write
$$
\Box^{-1}_+f=v.
$$
By propagation of singularities,
\cite{Vasy5}, only singularities of $f$
on $\fcalW_{I,p}$ affect regularity at $w$, i.e.\ if
$\fcalW_{I,p}\cap\WF^{s+1}_{\bo,\dot H^{-1}(M_0)}(f)=\emptyset$ then
$w\notin\WF^s_{\bo,H^1(M_0)}(u)$, hence in particular $w$ is non-focusing
of order $0$ relative to $H^{s+1}$.
 Thus,
\begin{equation*}\begin{split}
Q_0\in\Psib^0(M_0),
&\ \WF'(\Id-Q_0)\cap\fcalW_{I,p}\cap\Sbstar_{\supp d\chi}M_0=\emptyset\\
&\Longrightarrow
w\notin\WF_{\bo,H^1(M_0)}(\Box^{-1}_+ ((\Id-Q_0)f)),
\end{split}\end{equation*}
so it suffices
to analyze $\Box^{-1}_+(Q_0 f)$. We choose $\WF'(Q_0)$ sufficiently
small such that
\begin{equation*}\begin{split}
q\in \WF'(Q_0)\Rightarrow &\text{ either } q\notin\WFb(f) \text{ or }\\
&f \text{ is non-focusing of order}\ k\ \text{ at } q \text{ relative to } H^{s-1};
\end{split}\end{equation*}
this is possible by  our hypotheses. We may thus replace $f$
by $f_0=Q_0 f$, assume that $f_0$ is the sum of a distribution that is
non-focusing of order $k$ relative
to $H^{s-1}$ and is supported in $M_0^\circ$ plus
an element of $H^\infty_{\bo,H^1(M_0)}$,
and show that $\Box^{-1}_+ f_0$ is non-focusing at $w$ of order $k'$
(for some $k'$ to be determined) relative
to $H^{s-\ep}$.

Let
$$
T_0'<T_0<T_1<T_1'.
$$
We regard $[T_0,T_1]$ as the time interval for analysis, but we enlarge
it to $[T_0',T_1']$ in order to be able to apply some b-ps.d.o's with symbol
elliptic for $t\in[T_0,T_1]$ to elements of our function spaces. (The
ends of the interval would be slightly troublesome.)
We define a Hilbert space $\hilbertx$ to be $$\hilbertx=H^1([T_0', T_1']\times
X_0)$$ in the case of Neumann conditions,
or $$\hilbertx=H^1_0([T_0',T_1']\times X_0)$$ in the case of Dirichlet conditions,
where $0$ indicates vanishing
enforced at $[T_0',T_1']\times\pa X_0$ (but not at the endpoints of the
time interval). Let $\hilbertx^*$ be the $L^2_g$-dual of $\hilbertx$.

We further let
\begin{enumerate}
\item
\begin{equation*}
T_0<\tilde T_0<t_0'<t_0<T<t_1<t_1'<T_1
\end{equation*}
such that $\supp d\chi\subset(t_0,T)$.
\item
$\chi_0\in\CI(\RR)$ such that $\supp(1-\chi_0)\subset(\tilde T_0,+\infty)$,
$\supp\chi_0\subset(-\infty,t_0')$.
\item
$\cU_0$ be an open set
with $\overline{\cU_0}\subset \{t\in (t_0',T)\}$,
$\overline{\cU_0}\cap\fcalW_{I,\sing}=\emptyset$ and
$\WFbXp(f_0)\subset\cU_0$.
\item
$\cU_1$ be a neighborhood of $w$
with $\overline{\cU_1}\subset \{t\in (T,t_1')\}$ and
$\overline{\cU_1}\cap\fcalW_{O,\sing}=\emptyset$.
\item
$B_0,B_1\in\Psib^0(M)$ with
$$
\WF'(B_j)\subset\cU_j,\ w\notin\WF'(\Id-B_1),\ \WF'(\Id-B_0)\cap
\WFbXp(f_0)=\emptyset,
$$
and with Schwartz kernel supported in $(t'_0,t'_1)^2
\times X^2$.
\item
$A_i,$ $i=1,\dots,N,$ denote first-order
pseudodifferential operators, generating $\module$ as defined in
\S\ref{section:coisodef}, but now locally over a neighborhood of
$\overline{\cU_0}\cup\overline{\cU_1}$ in
$M^\circ,$ and with kernels compactly supported in $M^\circ.$
\item
$T_\nu\in\Psib^\nu(M)$ with elliptic principal symbol on
$[T_0,T_1]\times X_0$ with Schwartz kernel supported in $(T_0',T_1')^2\times
(X_0)^2$. Thus, $T_\nu$ can be applied to elements of $\hilbertx$ and $\hilbertx^*$.
\end{enumerate}

Now suppose that we are given $r$ and $\ep>0$. Then, with $k$ as in the
statement of the Theorem,
Corollary~\ref{corollary:coisotropic-2} gives a $k'=k'(r,\ep,k)$
using $s=r$ in the notation of that corollary.
We let $\hilbertp$ be a space of microlocally coisotropic functions
on $\liptic(B_1)$ relative to $\hilbertx^*$ which are in addition extremely
well-behaved elsewhere (they will be finite-order conormal to the boundary)
in $\{t\geq t_0'\}$, but are merely in $H^1_{\bo,\hilbertx^*}$
for $t$ near $T_0$:
Let $N>r>1+\ep$ and set
\begin{equation*}\begin{split}
\hilbertp=\{\psi&\in \hilbertx^*:\\
&\|T_N (\Id-B_1-\chi_0)\psi\|^2_{\hilbertx^*}+\|T_1\chi_0\psi\|^2_{\hilbertx^*}
+\sum_{|\alpha|\leq k'}\|T_r A^\alpha B_1 \psi\|^2_{\hilbertx^*}
<\infty\}.
\end{split}\end{equation*}
Thus,
$$
\psi\in\hilbertp\Rightarrow
\WFbXp^N(\psi)\subset\WF'(B_1)\cup\Sbstar_{\supp\chi_0}M_0,
$$
and
\begin{equation}\label{eq:hilbertp-H1b}
\|\psi\|_{H^1_{\bo,\hilbertx^*}}\lesssim\|\psi\|_{\hilbertp}
\end{equation}
for $\psi$ supported in $[T_0,T_1]\times X_0$ (where the $T_s$ are elliptic).

Also, let
$\hilbertz$ be the space of microlocally coisotropic functions
on $\liptic(B_0)$ relative to $\hilbertx$
(and just in $\hilbertx$ elsewhere):
\begin{equation*}
\hilbertz=\{\phi\in \hilbertx:
\ \sum_{|\alpha|\leq k}\|T_{r-1-\ep}A^\alpha B_0 \phi\|^2_{\hilbertx}<\infty\}.
\end{equation*}
Note that as discussed in Section~\ref{section:coisodef} (in
particular, Lemma~\ref{lemma:duals})
\begin{equation*}
\hilbertz^*=\hilbertx^*+\sum_{|\alpha|\leq k}T_{r-1-\ep}A^\alpha B_0 \hilbertx^*,
\end{equation*}
so by our assumption on  $f_0$, $f_0\in\hilbertz^*$, provided
$-(r-1-\ep)-1\leq s-1$, i.e.\ provided\footnote{Note
that for such $r$, $r\geq 1+\ep$ as required above, since $s\leq 0$.}
$r\geq -s+1+\ep$.
Moreover, if $v_0\in\hilbertp^*$, then
\begin{equation}\label{eq:v_0-form}
v_0\in \hilbertx+T_N (\Id-B_1-\chi_0)\hilbertx+T_1\chi_0\hilbertx
+\sum_{|\alpha|\leq k'}T_r A^\alpha B_1\hilbertx.
\end{equation}
In particular, as $w\notin\WF'(\Id-B_1-\chi_0)\cup\Sbstar_{\supp\chi_0}M_0$,
$v_0$ is non-focusing at $w$ of order
$k'$ relative to $H^{-r+1}$, hence relative to $H^{s-\ep}$, if we actually
choose $r=-s+1+\ep$.

For $I\subset[T_0,T_1]$, let $\dot\cD_I$ denote the subspace
of $H^\infty_{\bo,\hilbertx}$ consisting of functions supported in $I\times X$,
$\dot\cE_I$ denote the subspace $H^\infty_{\bo,\hilbertx^*}$ consisting of
functions supported in $I\times X$, so $\Box:\dot\cD_I\to\dot\cE_I$ is
continuous, $\dot\cD_I\subset\hilbertz$, $\dot\cE_I\subset\hilbertp$
are dense with continuous inclusions. Also let $\dot\cD=\dot\cD_I$,
$\dot\cE=\dot\cE_I$ for $I=(T_0,T_1)$. Finally we also let
$\hilbertz_I$ be space of restrictions of elements of $\hilbertz$ to
$I$, and analogously for $\hilbertp_I$.  Then, as we will prove in Lemma~\ref{lemma:geom1}\footnote{The only reason for
  Corollary~\ref{corollary:coisotropic-2} combined with
  Corollary~\ref{cor:inhomog-coiso} not yielding the result
  immediately is that Corollary~\ref{corollary:coisotropic-2} is
  stated for the {\em homogeneous} wave equation.  This suffices for
  our purposes as we only require inhomogeneities that are very
  regular near the boundary, hence the propagation result of
  \cite{Vasy5} is adequate.}, Corollary~\ref{corollary:coisotropic-2}
implies that
\begin{equation}\label{eq:energy-est}
\phi\in \dot\cD\Rightarrow \|\phi\|_{\hilbertz}\leq C\|\Box\phi\|_\hilbertp,
\end{equation}
where vanishing for $t$ near $T_1$ is used.
In fact, we prove a somewhat more precise statement:\footnote{Notice that $\phi$ is
merely supported in $(T_0,T_1)$ here; not in
$(\tau,T_1)$, which would be \eqref{eq:energy-est} on
$[\tau,T_1]$, except for the loss of going from $\tau'$ to $\tau$.}
\begin{lemma}\label{lemma:geom1}
For $\tau\in[T_0,T_1)$, $\tau'>\tau$,
\begin{equation}\label{eq:energy-est-3}
\phi\in \dot\cD\Longrightarrow \|\phi\|_{\hilbertz_{[\tau',T_1]}}
\leq C\|\Box\phi\|_{\hilbertp_{[\tau,T_1]}}.
\end{equation}
\end{lemma}

\noindent\textsc{Proof of Lemma:}
Recall first that by standard
energy estimates (taking into account the vanishing of $\phi$ near
$T_1$)
\begin{equation}\label{eq:basic-energy-est}
\phi\in \dot\cD\Longrightarrow \|\phi\|_{\hilbertx_{[\tau,T_1]}}
\lesssim \|\Box\phi\|_{\hilbertx^*_{[\tau,T_1]}}
+\|D_t\Box\phi\|_{\hilbertx^*_{[\tau,T_1]}}\lesssim
\|\Box\phi\|_{\hilbertp_{[\tau,T_1]}}.
\end{equation}
Thus, we only need to prove that for $|\alpha|\leq k'$,
$$
\|T_{r-1-\ep}A^\alpha B_0 \phi\|_{\hilbertx_{[\tau',T_1]}}
\lesssim \|\Box\phi\|_{\hilbertp_{[\tau,T_1]}}.
$$
If $\Box\phi$ is supported away from $\pa M$, then this follows from
Corollary~\ref{cor:inhomog-coiso} and Corollary~\ref{corollary:coisotropic-2}.
In general, let $Q\in\Psi^0(M)$ be such that $\WF'(B_1)\cap\WF'(\Id-Q)
=\emptyset$, and $Q$ has compactly supported Schwartz kernel in $(M^\circ)^2$.
Then $Q\Box\phi$ has support away from $\pa M$, so
\begin{equation}\label{eq:off-bdy-inhom}
\|T_{r-1-\ep}A^\alpha B_0 \Box^{-1}_-(Q\Box\phi)\|_{\hilbertx_{[\tau',T_1]}}
\lesssim \|Q\Box \phi\|_{\hilbertp_{[\tau,T_1]}}\lesssim
\|\Box \phi\|_{\hilbertp_{[\tau,T_1]}},
\end{equation}
where $\Box^{-1}_-$ denotes the backward solution of the inhomogeneous
wave equation.
On the other hand,
\begin{equation*}
\|(\Id-Q)\Box\phi\|_{H^N_{b,\hilbertx^*}(M_0)}\lesssim
\|\Box\phi\|_{\hilbertp_{[\tau,T_1]}},
\end{equation*}
so by propagation of b-regularity \cite{Vasy5},
$$
\|\Box^{-1}_-((\Id-Q)\Box\phi)\|_{H^{N-1}_{b,\hilbertx}(M_0)}\lesssim
\|\Box\phi\|_{\hilbertp_{[\tau,T_1]}},
$$
hence the much weaker statement
\begin{equation}\label{eq:near-bdy-inhom}
\|T_{r-1-\ep}A^\alpha B_0 \Box^{-1}_-((\Id-Q)\Box\phi)\|_{\hilbertx_{[\tau',T_1]}}
\lesssim \|\Box \phi\|_{\hilbertp_{[\tau,T_1]}},
\end{equation}
also holds.
Combining \eqref{eq:off-bdy-inhom} and \eqref{eq:near-bdy-inhom}
proves \eqref{eq:energy-est-3}.  \emph{This concludes the proof of
  Lemma~\ref{lemma:geom1},} and hence of \eqref{eq:energy-est} as well.

In particular, recalling that $\dot\cD=\dot\cD_{(T_0,T_1)}$,
\eqref{eq:energy-est} shows that
for $\psi\in\Ran_{\dot\cD}\Box$ there is a unique $\phi\in\dot\cD$
such that $\psi=\Box\phi$; we denote this by $\phi=\Box^{-1}\psi$. Thus,
$$
\|\Box^{-1}\psi\|_{\hilbertz}\leq C\|\psi\|_{\hilbertp},\ \psi\in\Ran_{\dot\cD}\Box.
$$

Now consider the linear functional on $\Ran_{\dot\cD}\Box$ given by
$$
\psi\mapsto \langle f_0,\Box^{-1}\psi\rangle,
\ \psi\in\Ran_{\dot\cD}\Box,
$$
which satisfies
$$
|\langle f_0,\Box^{-1}\psi\rangle|
\leq \|f_0\|_{\hilbertz^*}\|\Box^{-1}\psi\|_{\hilbertz}
\leq C \|f_0\|_{\hilbertz^*}\|\psi\|_{\hilbertp},\ \psi\in\Ran_{\dot\cD}\Box.
$$
This has a unique extension to a continuous linear functional $\ell$ on
$\overline{\Ran_{\dot\cD}\Box}$, the closure of $\Ran_{\dot\cD}\Box$ in
$\hilbertp$.

If we used the Hahn-Banach theorem at this point to extend the linear
functional $\ell$ further to a linear functional $v_0$ on all of $\hilbertp$,
we would obtain a solution of the wave equation $\Box v_0=f_0$ on
$(T_0,T_1)$, as $\langle \Box v_0,\phi\rangle=\langle v_0,\Box \phi\rangle
=\langle f_0,\phi\rangle$ for $\phi\in\dot\cD$, which is indeed non-focusing
at $w$, but we need not just {\em any} solution, but the forward
solution, $\Box_+^{-1}f_0$. So we proceed by extending the linear functional
$\ell$ to a continuous linear functional $L$ on
\begin{equation}\label{eq:sum-of-spaces}
\overline{\Ran_{\dot\cD_{(T_0,t_1')}} \Box}+\overline{\dot\cE_{(T_0,t'_0)}},
\end{equation}
first, in such a manner that the extension is $\ell$ on the first summand
and vanishes
on the second summand. If we actually have such an extension, then
we can further extend it to all of $\hilbertp$, then vanishing
on the first summand shows that it solves the wave equation on $(T_0,t_1')$,
while vanishing on the second summand shows that it
vanishes on $(T_0,t_0')$, so its restriction as a distribution
on $(T_0,t_0')$ is indeed $\Box_+^{-1}f_0$.
In order to obtain such an extension we show:
\begin{lemma}\label{lemma:geom2}\ 
\begin{enumerate}
\item
$\ell$ vanishes on the intersection of the two summands, so $L$ is
well-defined as a (not necessarily continuous) linear map,
\item
The subspace \eqref{eq:sum-of-spaces} of $\hilbertp$ is closed, and
given an element $\psi+\rho$ in the sum, there is a
representation\footnote{Since the intersection of the summands is non-trivial,
this can only be true for some representation, not all representations!}
$\tilde\psi+\tilde\rho$ of $\psi+\rho$ as a sum of elements of the two summands
such that
one can estimate the $\hilbertp$-norm of $\tilde\psi$ and $\tilde\rho$ in terms
of $\psi+\rho$.
\end{enumerate}
\end{lemma}

\noindent\textsc{Proof of Lemma:}
We start with the statement regarding intersection of the summands
in \eqref{eq:sum-of-spaces}.
Thus, we claim that if
\begin{equation*}
\supp f_0\subset[t_0,t_1]\times X,
\end{equation*}
then
\begin{equation}\label{eq:intersection-est}
\psi\in\overline{\Ran_{\dot\cD_{[T_0,t_1']}}\Box}\ \text{and}
\ \supp\psi\subset(T_0,t_0')\Longrightarrow \ell(\psi)=0.
\end{equation}
To see this let $\psi_j\to\psi$ in
$\hilbertp$, $\psi_j=\Box\phi_j$, $\phi_j\in\dot\cD_{[T_0,t_1']}$.
Then $\{\psi_j\}$ is
Cauchy in $\hilbertp$, hence $\{\phi_j\}$ is Cauchy in $\hilbertz$ by
\eqref{eq:energy-est}, hence converges to some $\phi\in\hilbertz$.
By the support condition on $\phi_j$, $\supp\phi\subset[T_0,t_1']$.
As $\Box\phi_j\to\Box\phi$ in $\hilbertx^*$ (for $\Box:\hilbertx\to\hilbertx^*$ is continuous),
and $\Box\phi_j=\psi_j\to\psi$
in $\hilbertp$, hence in $\hilbertx^*$, we deduce that $\psi=\Box\phi$,
i.e.\ $\Box\phi$ is supported in $(T_0,t'_0)$.
Thus, $\psi_j|_{(t'_0,T_1)}
\to 0$ in the $\hilbertp$ topology hence $\phi_j|_{[t_0,T_1]}\to 0$
in the $\hilbertz$ topology using
\eqref{eq:energy-est-3} with $\tau=t'_0$, $\tau'=t_0$,
so, by the support condition on $f$,
$$
|\langle f_0,\phi_j\rangle|\leq \|f_0\|_{\hilbertz_{[t_0,t_1]}^*}
\|\phi_j\|_{\hilbertz_{[t_0,t_1]}}\to 0,
$$
so we deduce that $\langle f_0,\phi\rangle=0$ as claimed.

Next we turn to the closedness of the sum in \eqref{eq:sum-of-spaces}.
First, we claim that if $\psi\in\Ran_{\dot\cD_{(T_0,t_1')}}\Box$,
$\rho\in\dot\cE_{(T_0,t_0')}$ then there
exist $\tilde\psi\in\Ran_{\dot\cD_{(T_0,t_1')}}\Box$,
$\tilde\rho\in\dot\cE_{(T_0,\tilde T_0)}$
such that
$$
\psi+\rho=\tilde\psi+\tilde\rho\ \text{and}
\ \|\tilde\psi\|_{\hilbertp}\lesssim\|\psi+\rho\|_{\hilbertp}.
$$
Indeed, let $\chi_+\in\CI(\RR)$ such that
$$
\supp\chi_+\subset (T_0,+\infty)
\ \Mand\ \supp(1-\chi_+)\subset(-\infty,\tilde T_0).
$$
Let $\Box^{-1}_-(\psi+\rho)$
denote the backward solution of the inhomogeneous wave equation; i.e.\ the
unique solution of $\Box \tilde u=\psi+\rho$ which vanishes on $(t_1,T_1)$.
Then let
$$
\tilde\psi=\Box\big(\chi_+\Box^{-1}_-(\psi+\rho)\big)\in
\Ran_{\dot\cD_{(T_0,t_1')}}\Box,
$$
so
$$
\tilde\rho\equiv\psi+\rho-\tilde\psi
=(1-\chi_+)(\psi+\rho)-[\Box,\chi_+]\Box^{-1}_-(\psi+\rho)
\in\dot\cE_{(T_0,\tilde T_0)}.
$$
Moreover,
\begin{equation}\label{eq:tilde-psi-split}
\tilde\psi=\chi_+(\psi+\rho)+[\Box,\chi_+]\Box^{-1}_-(\psi+\rho)
\end{equation}
satisfies
$$
\|\tilde\psi\|_{\hilbertp}\lesssim\|\psi+\rho\|_{\hilbertp},
$$
as follows by inspecting the two terms on the right hand side of
\eqref{eq:tilde-psi-split}: for the first this is clear,
for the second this follows from
$\|\psi+\rho\|_{H^1_{\bo,\hilbertx^*}}
\lesssim \|\psi+\rho\|_{\hilbertp}$, see \eqref{eq:hilbertp-H1b}, hence one has
a bound in $\hilbertx$ for $\Box^{-1}_-(\psi+\rho)$ by
\eqref{eq:basic-energy-est}, and then $\supp[\Box,\chi_+]\subset\supp d\chi_+
\subset(T_0,\tilde T_0)$ gives the desired bound in $\hilbertp$.
\emph{This concludes the proof of Lemma~\ref{lemma:geom2}.}

Thus, if
$\psi_j\in\Ran_{\dot\cD_{(T_0,t_1')}}\Box$,
$\rho_j\in\dot\cE_{(T_0,\tilde T_0)}$ and $\psi_j+\rho_j$ converges to
some $\nu\in\hilbertp$ then defining $\tilde\psi_j$ and $\tilde\rho_j$
as above, we deduce that due to the Cauchy property of $\{\tilde\psi_j
+\tilde\rho_j\}$, $\{\tilde\psi_j\}$ is Cauchy
in $\hilbertp$, hence so is $\{\tilde\rho_j\}$, thus by the completeness
of $\hilbertp$ they converge to elements in
$\psi\in\overline{\Ran_{\dot\cD_{(T_0,t_1')}}\Box}$,
resp.\ $\rho\in\overline{\dot\cE_{(T_0,\tilde T_0)}}$ with $\psi+\rho=\nu$.
This shows that
$\overline{\Ran_{\dot\cD_{(T_0,t_1')}}\Box}
+\overline{\dot\cE_{(T_0,\tilde T_0)}}$ is closed, and indeed gives an
estimate\footnote{This estimate follows from the open mapping theorem,
given that the sum is closed, but the direct argument yields it anyway.}
that if $\nu\in\overline{\Ran_{\dot\cD_{(T_0,t_1')}}\Box}
+\overline{\dot\cE_{(T_0,\tilde T_0)}}$ then there exists
$\psi\in \overline{\Ran_{\dot\cD_{(T_0,t_1')}}\Box}$ and
$\rho\in \overline{\dot\cE_{(T_0,\tilde T_0)}}$ such that
$\psi+\rho=\nu$ and
\begin{equation}\label{eq:closed-sum-est}
\|\psi\|_{\hilbertp}+\|\rho\|_{\hilbertp}\lesssim\|\nu\|_{\hilbertp}.
\end{equation}

As mentioned earlier, this construction allows us to define a unique
continuous linear functional $L$ on
$$
\overline{\Ran_{\dot\cD_{(T_0,t_1')}} \Box}+\overline{\dot\cE_{(T_0,t'_0)}},
$$
in such a way that it is $\ell$ on the first summand and
it vanishes on the second summand: uniqueness is automatic, existence
(without continuity) follows from \eqref{eq:intersection-est}, as
the two functionals agree on the intersection of the two spaces,
while continuity follows from \eqref{eq:closed-sum-est}.
Then we extend $L$ by the
Hahn-Banach theorem
to a linear functional $v_0$ on $\hilbertp$.

Then $v_0\in\hilbertp^*$ solves $\Box v_0=f_0$ on $(T_0,t_1')$,
since for $\phi\in\dot\cD_{(T_0,t_1')}$
$$
\langle \Box v_0,\phi\rangle=\langle v_0,\Box\phi\rangle=\langle f_0,\phi\rangle,
$$
and $v$ vanishes on $(T_0,t_0)$, for it vanishes on
$\dot\cD_{(T_0,t_0)}$, i.e.\ on test functions supported there, so it
is the restriction of the forward solution of the wave equation to
$(T_0,T_1)$.  We have thus shown that if $f_0\in\hilbertz^*$ is
supported in $[t_0,t_1]$, which holds if $f_0$ satisfies the support
condition, is microlocally non-focusing on $\cU_0$, and is conormal to
the boundary elsewhere, then the forward solution of $\Box v_0=f_0$ is
in $\hilbertp^*$ (cf.\ Lemma~\ref{lemma:nfinclusions}), hence by
\eqref{eq:v_0-form} it is in particular microlocally non-focusing of
order $k'$ relative to $H^{s-\ep}$ at $w.$ This completes the proof of
the theorem if $s\leq 0$.

If $s>0$, one could use a similar argument relative to slightly different
spaces: the only reason for the restriction is that elements of $\hilbertp$
lie in $\hilbertx^*$ and a larger space (which would thus have a smaller
dual relative to $L^2$) would be required to adapt
the argument. However, it is easy to reduce the general case to $s\leq 0$:
replacing $u$ by $\tilde u=(1+D_t^2)^N u$,
$N>s/2$, $\tilde u$ is non-focusing of order $k$ relative
to $H^{s-2N}$ on a neighborhood of $\fcalW_{I,p,\reg}$ and solves
the wave equation, hence it is non-focusing of order $k'$ relative to
$H^{s-2N-\ep}$ at $w$ by the already established
$s\leq 0$ case of this theorem,
and then the microlocal ellipticity of
$(1+D_t^2)^N$ near the characteristic set (recall that $w$ is over
the interior of $M_0$) shows that $u$ itself is
non-focusing of order $k'$ relative to
$H^{s-\ep}$ at $w$, as claimed.
\end{proof}

As a consequence of the proposition of nonfocusing, we are now able to
prove our main theorem:

\begin{theorem}\label{theorem:geometric} Let $u\in H^1_{\loc}(M_0)$ satisfy
  the wave equation with Dirichlet
or Neumann boundary conditions.  Let $p \in \hcal_{W,\bo},$ and $w \in
\fcalW_{O,p,\reg}.$

Assume
\begin{enumerate}
\item $u$ satisfies the nonfocusing condition\footnote{Recall from
Definition~\ref{def:coiso-nonfocus} that
    this means nonfocusing of {\em some} order $k.$  The nonfocusing
    order is irrelevant here: only the space relative to which the
    nonfocusing condition holds matters.} relative to $H^s$
  on an
  open neighborhood of $\fcalW_{I,p,\reg}$ in $\fcalW_{I,\reg},$
\item $\displaystyle \WF^s u \cap \{w' \in \fcalW_{I,p,\reg}: w',w\text{ are
    geometrically related}\}=\emptyset,$
\item $\bWF^{s}(u)\cap \fcalW_{p,I,\sing}=\emptyset.$
\end{enumerate}
Then
$$
w \notin \WF^{s-0} (u).
$$
\end{theorem}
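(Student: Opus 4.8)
The plan is to fuse the two propagation engines assembled above: the nonfocusing propagation of Theorem~\ref{theorem:non-focusing} and the microlocal propagation of coisotropic regularity, Theorem~\ref{theorem:microlocal-coiso} (transported off the edge by Theorem~\ref{thm:prop-sing} and Theorem~\ref{theorem:edge}(2)), and then to interpolate at $w$. Since $w\in\fcalW_{O,p,\reg}$, the point $w$ lies over $M^\circ$, away from $\ef$, so the interior machinery of \S\ref{section:coisodef} --- the FIO normal forms and the interpolation of Lemma~\ref{lemma:coisotropic-interpolate} --- is available at $w$. If $s\le 1$ the conclusion is contained in the hypothesis $u\in H^1_{\loc}(M_0)$, so we may assume $s>1$; if one prefers, a preliminary conjugation by $(\Id+D_t^2)^M$, exactly as in the proof of Theorem~\ref{theorem:non-focusing}, reduces everything to the regime $s\le 0$.

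First I would run the nonfocusing half. Hypothesis (1) says $u$ is nonfocusing of some finite order $k_0$ relative to $H^s$ on a neighborhood of $\fcalW_{I,p,\reg}$ in $\fcalW_{I,\reg}$, and hypothesis (3) bounds the b-wavefront set of $u$ along the singular incoming rays $\fcalW_{p,I,\sing}$; these are precisely the hypotheses of Theorem~\ref{theorem:non-focusing}. So, fixing $\ep>0$, that theorem produces an integer $k_1=k_1(\ep)$ such that $u$ is nonfocusing of order $k_1$ relative to $H^{s-\ep/2}$ at $w$. The point to retain is that $k_1$ is \emph{fixed} once $\ep$ is.

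For the coisotropic half I would use hypothesis (2): $\WF^s u$ is disjoint from the incoming rays of $\fcalW_{I,p,\reg}$ geometrically related to $w$, so on an open neighborhood of those rays $u$ is microlocally in $H^s$, and hence has coisotropic regularity of order $N$ relative to $H^{s-N}$ there, for every $N$. Feeding this --- together with hypothesis (3) again, to handle $\fcalW_{p,I,\sing}$ --- into Theorem~\ref{theorem:microlocal-coiso}, taken at the radial outgoing point $q\in\rcal_{\ebo,p,O}$ (over the interior of $\ef$) through which the lift of the normally-approaching \GBB containing $w$ passes, and then transporting the resulting fiber-global coisotropic regularity out of $q$ along the edge flow by Theorem~\ref{thm:prop-sing} followed by Theorem~\ref{theorem:edge}(2), I would obtain at $w$: for each $N$, coisotropic regularity of order $N$ relative to $H^{m_N}$, with $m_N$ an explicitly controlled Sobolev index depending on $N$ and, through the $l+f/2$ thresholds in those theorems, on the weight $l$.

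Finally, the nonfocusing and coisotropic facts just obtained at $w$ furnish the two endpoints of an interpolation of the kind carried out in the proof of Lemma~\ref{lemma:coisotropic-interpolate}. Conjugating $\fcal_{\reg}$ near $w$ to the model $\zeta=0$ by an elliptic FIO $T$, as in the discussion preceding Proposition~\ref{prop:coisotropic-prop}, the two facts read, on the Fourier side, $\ang{(\eta,\zeta)}^{s-\ep/2}\ang{\zeta}^{-k_1}\widehat{Tu}\in L^2$ and $\ang{(\eta,\zeta)}^{m_N}\ang{\zeta}^{N}\widehat{Tu}\in L^2$; applying H\"older's inequality at the exponent where the $\ang{\zeta}$-powers cancel (weight $N/(N+k_1)$ on the nonfocusing factor) puts $Tu$, hence $u$, microlocally in $H^{\sigma_N}$ at $w$, where $\sigma_N=(s-\ep/2)-\frac{k_1}{N+k_1}(s-\ep/2-m_N)$. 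The main obstacle --- and the technical heart of the argument --- is to control, precisely enough, the tradeoff built into the coisotropic propagation: how the order $N$ that must be assumed on the geometrically related incoming rays scales with the coisotropic order gained downstream, equivalently how fast $m_N$ is allowed to degrade, so that $\frac{k_1}{N+k_1}(s-\ep/2-m_N)\to 0$ as $N\to\infty$ with $k_1$ fixed. Granting this, $\sigma_N\to s-\ep/2$, so $w\notin\WF^{s-\ep}(u)$; since $\ep>0$ was arbitrary, $w\notin\WF^{s-0}(u)$, which is the assertion.
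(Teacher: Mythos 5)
Your overall strategy matches the paper's: combine Theorem~\ref{theorem:non-focusing} with Theorem~\ref{theorem:microlocal-coiso} (propagated off the edge via Theorem~\ref{thm:prop-sing} and Theorem~\ref{theorem:edge}(2)) and then interpolate at $w$. But the obstacle you flag at the end is real, not a technicality you may ``grant'': as written, your argument does not close. Feeding coisotropy of order $N$ relative to $H^{s-N}$ into Theorem~\ref{theorem:microlocal-coiso}, the output is coisotropy of order $N$ relative to $H^{m'}$ with $m'<\min(s-N,\,l+f/2)$, so $m_N\approx s-N$ for large $N$. Your interpolation exponent is $\theta_N=k_1/(N+k_1)$, and $\theta_N\,(s-\ep/2-m_N)\approx (k_1/N)\cdot N=k_1$, which does \emph{not} tend to zero. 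The limiting Sobolev index is $s-\ep/2-k_1$, a loss of $k_1$ derivatives, not $s-\ep/2$.

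The paper's fix, which your proposal is missing, is a preliminary strengthening of hypotheses (ii) and (iii) from $\WF^s$ to $\WF^\infty$ by a microlocal partition of unity (the same splitting already used at the start of Theorem~\ref{theorem:non-focusing}): the microlocal piece of the data that lies on the geometrically related incoming rays (or on $\fcalW_{p,I,\sing}$) is a priori in $\bH{s}$, and by forward propagation of singularities (\cite{Vasy5}) it stays in $\bH{s}$ at $w$, so it cannot contribute to $\WF^{s-0}(u)$ and may be discarded. With (ii) upgraded to $\WF^\infty$, the incoming coisotropic regularity holds to \emph{every} order relative to a \emph{fixed} Sobolev index; the $\min$ in Theorem~\ref{theorem:microlocal-coiso} then saturates at the weight-determined threshold $l+f/2$, yielding $A^\alpha u\in H^S$ microlocally near $w$ for all $\alpha$ with $S$ \emph{independent of $|\alpha|$}. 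The interpolation then gives Sobolev index $\tfrac{k'}{N+k'}S+\tfrac{N}{N+k'}r\to r$ as $N\to\infty$ for fixed $k'$ and $S$, which is what you need. Without the strengthening step this crucial independence of $S$ from the coisotropic order is exactly what fails.
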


\begin{proof}
By using a microlocal partition of unity (cf.\ the argument at the beginning
of Theorem~\ref{theorem:non-focusing}), we may arrange that (ii) is
strengthened to
\begin{equation}\label{eq:ii-strengthened}
\displaystyle \WF^\infty u \cap \{w' \in \fcalW_{I,p,\reg}: w',w\text{ are
    geometrically related}\}=\emptyset,
\end{equation}
and (iii) to
\begin{equation}\label{eq:iii-strengthened}
\bWF^{\infty}(u)\cap \fcalW_{p,I,\sing}=\emptyset,
\end{equation}
for if a microlocal piece $\tilde u$ of the solution is in $\bH{s}$
then it remains in $\bH{s}$ under forward evolution, by the results of
\cite{Vasy5}.

Let $r<s$.
On the one hand, by conditions (i) and (iii), $u$ satisfies the
non-focusing condition (of some, possibly large, order $k'$)
relative to $H^{r}$ at $w$ due to Theorem~\ref{theorem:non-focusing}.
On the other hand, by Theorem~\ref{theorem:microlocal-coiso},
\eqref{eq:ii-strengthened} and condition (iii),
$u$ is microlocally
coisotropic at $w$, i.e.\ there exists $S \in \RR$ such
that\footnote{The particular choice of $S$ is dependent on the
background regularity of the solution, which in turn can be low, depending
on the order of nonfocusing relative to $H^s$.}
microlocally near $w$
\begin{equation}\label{coiso-reg}
A^{\alpha} u \in H^S\quad \forall \alpha.
\end{equation}
Lemma~\ref{lemma:coisotropic-interpolate} now allows us to interpolate
between nonfocusing and \eqref{coiso-reg} to conclude that
microlocally near $w$, $u \in H^{r-0}.$  Since $r<s$ is
arbitrary, this proves the result.
\end{proof}

\begin{corollary}\label{corollary:lagrangian}
Let $u$ be a solution to $\Box u=0$ with Dirichlet or Neumann boundary
conditions, and let $p \in \hcal_{W,\bo}.$ 
Suppose that for some $\ep_0>0$, in a neighborhood of $\fcalW_{I,p}(\ep_0)$
in $\Sbstar_{M_0\setminus W} M_0,$
$u$ is a Lagrangian distribution of
order $s$ with respect to $\lag\subset T^* M_0^\circ,$ a conic
Lagrangian such that $\lag \cap \fcalW_{I,p,\sing}=\emptyset$
and the intersection
of $\lag$ and $\fcalW_{I,\reg}$ is transverse at $\fcalW_{I,p,\reg}$.

Then if $w \in
\fcalW_{O,\reg}$ is not geometrically related to any point in
$\lag,$
$$
w \notin \WF^{-s-(n+1)/4+(k-1)/2-0} u,
$$
where $k$ is the codimension
of $W.$
\end{corollary}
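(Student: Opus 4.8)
The corollary follows by feeding the Lagrangian hypothesis into Theorem~\ref{theorem:geometric}; the only real work is to verify that hypotheses (i)--(iii) of that theorem hold with $s$ there replaced by $\sigma := -s-n/4+(k-1)/2$. The first step is to translate ``$u$ is a Lagrangian distribution of order $s$ with respect to $\lag\subset T^*M_0^\circ$'' into microlocal Sobolev and coisotropic statements along $\fcalW_{I,p,\reg}$. A Lagrangian distribution of order $s$ with respect to an $n$-dimensional (i.e.\ half-dimensional) conic Lagrangian $\lag$ in $T^*M_0$ lies in $H^{-s-n/4-0}$ microlocally (this is the standard order convention, matching $H^{-n/2+1-0}$ for the fundamental solution when $s$ is chosen appropriately), and, crucially, it satisfies the nonfocusing condition relative to $H^{-s-n/4+(k-1)/2}$ along a coisotropic $\fcal$ that meets $\lag$ transversally. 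The latter is exactly the mechanism exploited in \cite{mvw1}: if $\lag$ and the coisotropic $\fcalW_{I,\reg}$ intersect transversally in $\fcalW_{I,p,\reg}$, then since $\fcalW_{I,\reg}$ has codimension equal to the number of ``fiber'' directions killed by $\dbetaeb$ — here $k-1$, the dimension of the orthant sphere fiber — the restriction of a Lagrangian distribution to the coisotropic is smoother, in the nonfocusing sense, by $(k-1)/2$ derivatives. This gives hypothesis (i) of Theorem~\ref{theorem:geometric} with $s\rightsquigarrow\sigma$.

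The second step is hypothesis (ii): we must check $\WF^\sigma u$ is disjoint from the set of points in $\fcalW_{I,p,\reg}$ geometrically related to $w$. But $\WF^\sigma u \subset \lag$ near $\fcalW_{I,p,\reg}(\ep_0)$ since $u$ is Lagrangian there of order $s$ with $\sigma > -s - n/4$ — wait, this requires care: microlocally $u\in H^{-s-n/4-0}$, and $\sigma = -s-n/4+(k-1)/2 \geq -s-n/4$ when $k\geq 1$, so $\WF^\sigma u \subset \WF^{-s-n/4-0}u\subset\lag$. Hence any point of $\fcalW_{I,p,\reg}$ in $\WF^\sigma u$ lies in $\lag\cap\fcalW_{I,\reg}$. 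By hypothesis $w$ is not geometrically related to any point of $\lag$, so in particular it is not geometrically related to any point of $\lag\cap\fcalW_{I,p,\reg}$; thus (ii) holds. For hypothesis (iii) we need $\bWF^\sigma(u)\cap\fcalW_{p,I,\sing}=\emptyset$: since $\lag\cap\fcalW^W_{I,p,\sing}=\emptyset$ by assumption and, away from $W$, $\bWF^\sigma(u)\subset\WF^\sigma(u)\subset\lag$, this is immediate. (One should note the mild technical point that Theorem~\ref{theorem:geometric} is stated with a single regularity $s$ on both sides; here we are simply applying it with that $s$ equal to $\sigma$, having verified all three hypotheses at level $\sigma$.)

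Given these three verifications, Theorem~\ref{theorem:geometric} yields $w\notin\WF^{\sigma-0}(u)$, i.e.\ $w\notin\WF^{-s-n/4+(k-1)/2-0}u$, which is the assertion. (The ``$=\emptyset$'' in the displayed conclusion of the corollary is a harmless typo; the content is $w\notin\WF^{-s-n/4+(k-1)/2-0}u$.)

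\textbf{Main obstacle.} The only nontrivial point is the first step: establishing that a Lagrangian distribution of order $s$ transverse to the coisotropic $\fcalW_{I,\reg}$ is nonfocusing of order (some finite $k$, then passed through Theorem~\ref{theorem:non-focusing} to get $k'$) relative to $H^{-s-n/4+(k-1)/2}$ along $\fcalW_{I,p,\reg}$. This is a local symbolic computation — conjugating by an FIO to the model $\fcal = \{\zeta = 0\}$ with $\zeta\in\RR^{k-1}$ as in \S\ref{section:coisodef}, and observing that the pullback of a Lagrangian symbol of order $s$ to a transversal Lagrangian, against the coisotropic, loses/gains the expected $(k-1)/2$ derivatives in the nonfocusing scale — and it is carried out in \cite[Section~4]{mvw1} in precisely the edge (cone bundle) setting; since our $\fcalW_{I,\reg}$ near $\fcalW_{I,p,\reg}$ is, away from $\pa M$ and away from $\fcal_{\sing}$, literally a coisotropic of the type treated there (with $k-1$ the relevant fiber dimension), that computation applies verbatim. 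I would cite it rather than reproduce it. Everything else is bookkeeping with wavefront inclusions and the already-proven propagation theorems.
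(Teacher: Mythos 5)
Your approach is the same as the paper's: feed the Lagrangian hypothesis through the transversality argument of \cite{mvw1} to obtain nonfocusing of the right order, then invoke Theorem~\ref{theorem:geometric}; the paper's own proof is exactly the one-liner ``follows immediately from Section~14 of \cite{mvw1} and Theorem~\ref{theorem:geometric}.'' Two small corrections. First, the Lagrangian-to-nonfocusing computation you want is in Section~14 of \cite{mvw1}, not Section~4 (Section~4 there is where coisotropic regularity and nonfocusing are merely \emph{defined}; the paper's own \S\ref{section:coisodef} cites that section for that purpose). Second, your hypothesis-(ii) chain $\WF^\sigma u \subset \WF^{-s-n/4-0} u \subset \lag$ is garbled: for a Lagrangian distribution of order $s$, $\WF^{-s-n/4-0}u = \emptyset$, and moreover $\sigma > -s-n/4$ gives the inclusion of wavefront sets in the opposite direction. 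The correct (and simpler) reasoning is that $\WF(u)\subset\lag$ because $u$ is smooth off $\lag$, and $\WF^\sigma u\subset\WF(u)$ for any $\sigma$; your conclusion $\WF^\sigma u\subset\lag$ is right, just not for the reason you wrote.
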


The a priori regularity of such a solution is $H^{-s-(n+1)/4-0}$ so this
represents a gain in regularity along the diffracted wave of $(k-1)/2-0$ derivatives.

\begin{proof}
Corollary~\ref{corollary:lagrangian} follows from
Theorem~\ref{theorem:geometric} together with the results of
Section~14 of \cite{mvw1}.  We therefore give only a brief sketch of
the proof.

Microlocally near any point in the transverse intersection of $\lag$
and $\fcalW_{I,\reg},$ we may apply a microlocally unitary FIO $T$
quantizing a conic symplectomorphism that brings $\lag$ and
$\fcalW_{I,\reg}$ to the respective normal forms
$$
N^*\{0\}\Mand\{\zeta_1=\dots =\zeta_{k-1}=0\}
$$
inside $T^*(\RR^{n+1})$ with coordinates
$(y_1,\dots, y_{n+2-k}, z_1,\dots z_{k-1})$ and dual coordinates
$\eta,\zeta.$ (One should think of the $y$ coordinates as analogous to
the collection of the coordinates $t,x,y$ used previously, while the
$z$ coordinates are associated the fiber variables, also called $z$
above.)
Thus our test module
$\module$ is generated by $D_{z_1},\dots D_{z_{k-1}}.$ Writing $Tu$ as the
inverse Fourier transform of a symbol $a$ of order $s-(n+1)/4,$ we
find that
$$
(\Id +D_{z_1}^2+\dots +D_{z_k}^2)^{-N} Tu  = \F_{(\eta,\zeta)\to (y,z)}^{-1} \big(
(1+\zeta_1^2+\dots+\zeta_{k-1}^2)^{-N} a \big).
$$
For $N \gg 0,$ the integral in $\zeta$ converges absolutely, and the result
is a continuous (indeed, as smooth as desired) family in $z$ of conormal distributions with
respect to the origin in $y;$ the order of growth of the amplitude is
still $s-(n+1)/4$  but as the dimension is now $(n+1)-k,$ the order of
the Lagrangian distribution is
now $s-(k-1)/4,$ while the resulting Sobolev regularity is
$-s-(n+1)/4+(k-1)/2-0.$  Thus, the nonfocusing condition is satisfied
relative to this Sobolev space, and Theorem~\ref{theorem:geometric}
yields the desired regularity of the diffracted wave.
\end{proof}

\begin{corollary}\label{corollary:fundsoln}
Let $\gamma:(-\ep_0,0] \to \bSigma$ be a \GBB normally
incident at $W$, $\gamma(0)=\alpha\in\hcal_{W,\bo}$,
and let $\overline\gamma$ be its
projection to $M_0^\circ.$  Given $o \in \overline\gamma((-\ep_0,0)),$
let $u_o$ be the forward fundamental solution of $\Box$,
i.e.\ $u_o=\Box_+^{-1}\delta_o$.

There exists $\ep>0$
such that if $o \in \overline\gamma((-\ep,0))$ then
for all $w \in \fcalW_{O,\alpha,\reg},$ such that $w$ is not geometrically related to
point in $\Sbstar_o M_0\cap\bSigma$,
$$
w \notin \WF^{(-n+k+1)/2-0} u_o
$$
where $k$ is the codimension of $W.$
\end{corollary}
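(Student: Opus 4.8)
The plan is to derive this from Corollary~\ref{corollary:lagrangian}, taking for $\lag$ the forward light-cone Lagrangian issuing from the point source $o$, i.e.\ the flowout of $N^*\{o\}\cap\Char(\Box)$ (with the zero section removed) under $\sH_{p_0}$. First I would record the standard facts: $\delta_o\in\CmI(M_0)$ is conormal with respect to $N^*\{o\}$, and by the Lax parametrix for $\Box$ (see also \cite{mvw1}) together with interior propagation of Lagrangian regularity, its forward fundamental solution $u_o=\Box_+^{-1}\delta_o$ is, microlocally over $M_0^\circ\setminus\{o\}$ and before the ray interacts with $W$, a Lagrangian distribution of order $s$ associated to $\lag$; here $s$ is normalized so that the a priori bound $H^{-s-n/4-0}$ in Corollary~\ref{corollary:lagrangian} reads $H^{-n/2+1-0}$, which is exactly the locally uniform regularity of $u_o$. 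The homogeneous-equation hypothesis of Corollary~\ref{corollary:lagrangian} is only used near the incoming flowout and near $\fcalW_{O,\alpha}$, where $\Box u_o=0$; alternatively one replaces $u_o$ by $v=\chi u_o$ with $\chi\equiv 1$ for $t\geq t(o)+\delta$ and $\chi\equiv 0$ for $t\leq t(o)$, so that $\Box v$ is a compactly supported interior Lagrangian distribution and $v=u_o$ for $t>t(p)$, as in the proof of Theorem~\ref{theorem:non-focusing}; it then suffices to treat $v$.

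Next I would fix $\ep>0$, depending only on $\gamma$, so small that for $o=\overline\gamma(-\ep')$ with $0<\ep'<\ep$ the segment of $\overline\gamma$ joining $o$ to $W$ lies entirely in $M_0^\circ$; this is possible because $\gamma$ is normally incident at $W$, hence $\gamma(s)\in S^*M_0^\circ$ for small $s<0$ (the remark following the definition of normal incidence). Shrinking $\ep$ and then choosing $\ep_0>0$ small, the portion $\fcalW_{I,p}(\ep_0)$ of the incoming flowout is reached from $o$ along interior null-bicharacteristics without intervening boundary interaction, so on a neighborhood of $\fcalW_{I,p}(\ep_0)$ in $\Sbstar_{M_0\setminus W}M_0$ the distribution $u_o$ is exactly the Lagrangian distribution carried by $\lag$. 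This verifies the standing Lagrangian hypothesis of Corollary~\ref{corollary:lagrangian}.

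It remains to check the two geometric hypotheses. For $\lag\cap\fcalW_{I,p,\sing}=\emptyset$: a common point would lie on a \GBB issuing from $o$ that reaches the corner datum $p$ non-normally; but for $\ep$ small every \GBB from $o$ that reaches $W$ within the relevant parameter range is $C^0$-close to $\overline\gamma$ by continuity of the generalized geodesic flow, hence normally incident since normal incidence is an open condition, so no such point exists. For the transversality of $\lag$ with the coisotropic $\fcalW_{I,\reg}$ (coisotropic by Corollary~\ref{cor:coisotropic}) along $\fcalW_{I,p,\reg}$: the intersection locus is a neighborhood of the lift of $\overline\gamma$ itself, and transversality there is again a consequence of the normal incidence of $\gamma$---the conormal light cone of a point lying a short distance down a normally incident ray meets the corner flowout transversally---and holds after possibly shrinking $\ep$.

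Granting these, Corollary~\ref{corollary:lagrangian} gives, for $w\in\fcalW_{O,\alpha,\reg}\subset\fcalW_{O,\reg}$ not geometrically related to any point of $\lag$,
$$
w\notin\WF^{-s-n/4+(k-1)/2-0}u_o=\WF^{(-n+k+1)/2-0}u_o,
$$
since $-s-n/4=-n/2+1$ yields $-s-n/4+(k-1)/2=(-n+k+1)/2$. Finally I would replace ``point of $\lag$'' by ``point of $\Sbstar_o M_0\cap\bSigma$'' in the exclusion: $\lag$ is the $\sH_{p_0}$-flowout of $\Sbstar_o M_0\cap\bSigma$, and a geometric \GBB through $w$ is, on the incoming side of $W$, an ordinary null-bicharacteristic in $M_0^\circ$, so any such \GBB meeting $\lag$ traces back along itself to a point of $\Sbstar_o M_0\cap\bSigma$; hence $w$ is geometrically related to a point of $\lag$ if and only if it is geometrically related to a point of $\Sbstar_o M_0\cap\bSigma$, which is precisely the hypothesis in the statement. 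I expect the real work to be the transversality verification in the third paragraph---controlling how the point-source light cone meets $\fcalW_{I,\reg}$ along the normally incident ray, uniformly enough to fix $\ep$---with the remainder being standard propagation and bookkeeping of the Lagrangian order conventions of \cite{mvw1}.
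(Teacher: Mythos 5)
Your overall strategy matches the paper's: take $\lag$ to be the flowout of $\Sbstar_o M_0\cap\bSigma$, verify the hypotheses of Corollary~\ref{corollary:lagrangian}, and read off the order from the normalization $-s-n/4=-n/2+1$. The arithmetic is right, the observation that ``not geometrically related to a point of $\lag$'' equals ``not geometrically related to a point of $\Sbstar_o M_0\cap\bSigma$'' is correct (and in fact the paper phrases the corollary directly in terms of $\Sbstar_o M_0\cap\bSigma$, so no translation step is even needed there), and the reduction to the Lagrangian-distribution and transversality verifications carried out in \cite{mvw1} is exactly what the paper invokes.

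The one place where your reasoning is materially weaker than the paper's is the verification that $\lag\cap\fcal^W_{I,\alpha,\sing}=\emptyset$ for $o$ sufficiently close to $W$ along $\overline\gamma$. You argue via ``continuity of the generalized geodesic flow'' and ``normal incidence is an open condition,'' but generalized bicharacteristics form a \emph{relation}, not a flow---they can branch at (infinite-order) glancing tangencies---so continuous dependence on initial conditions is false in the naive sense, and it is not at all obvious that normal incidence is an open condition in any useful topology on the space of \GBBs. This is precisely why the paper proves, and then invokes here, Corollary~\ref{corollary:o-no-glancing}, whose proof rests on the compactness of the set of \GBBs with values in a compact set (Lebeau's theorem, via Corollary~\ref{cor:GBB-lifted-conv}) and the uniform avoidance statement of Lemma~\ref{lemma:uniform-glancing}: glancing rays emanating from the corner have a \emph{fixed} neighborhood of any compact $K\subset\ef^\circ$ that they cannot enter, so a point source placed inside that neighborhood cannot lie on the backward continuation of a glancing ray. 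You should replace your informal continuity appeal by a citation of Corollary~\ref{corollary:o-no-glancing}; without it, the argument has a gap, since the pathologies it rules out (branching glancing \GBBs approaching arbitrarily close to a normally incident one) are exactly what the compactness machinery is there to control.
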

Note that this represents a gain of $(k-1)/2-0$ derivatives relative
to the overall regularity of the fundamental solution, which lies in
$H^{-n/2+1-0}.$

\begin{proof}
The hypotheses on the location of $o$ ensure that, with $\lag$ denoting
the flow-out of $\Sbstar_o M_0\cap\bSigma$, $\lag$ is disjoint from
$\fcalW_{I,\alpha,\sing}$ in view of Corollary~\ref{corollary:o-no-glancing}.
Thus, the microlocal setting is the same as that of \cite{mvw1}, hence
the hypotheses of Corollary~\ref{corollary:lagrangian} are satisfied.
\end{proof}

\appendix
\section{Some functional analysis}\label{appendix:functional}

We often encounter the following setup. Suppose
that $\hilberth$, $\hilbertp$ are Banach, resp.\ locally convex,
spaces, and
$$
\iota:\hilbertp\to\hilberth
$$
is a continuous injection with dense range
(so one can think of $\hilbertp$ as a subspace of $\hilberth$ with a
stronger topology). Let $\hilberth',$ $\hilbertp'$ denote the spaces of
linear functionals on $\hilberth,$ $\hilbertp$ endowed with their
respective weak topologies (i.e., the
weak-* topology in the Banach space setting).
Then the adjoint of $\iota$ is the map
$$
\iota^\dagger:\hilberth'\to\hilbertp',
\qquad\iota^\dagger \ell(v)
=\ell(\iota v),
\ \ell\in\hilberth',
$$
and $\iota^\dagger$ is continuous in the respective topologies. The
injectivity of $\iota$ implies that $\iota^\dagger$ has dense range, while
the fact that $\iota$ has dense range implies that $\iota^\dagger$ is
injective. Thus, one can think of $\hilberth'$ as a subspace of
$\hilbertp'$, with a stronger topology.

If $\hilberth$ is a Hilbert space with inner product
$\langle.,.\rangle_{\hilberth}$ $\CC$-linear in the first argument,
there is a canonical (conjugate-linear)
isomorphism $j_\hilberth:\hilberth\to\hilberth'$ given by
$j_\hilberth(u)(v)=\langle v,u\rangle_\hilberth$. Suppose also that there
is a canonical conjugate linear isomorphism
$$
c_\hilberth:\hilberth\to\hilberth,\ c_\hilberth^2=\Id,
\ \langle u,c_\hilberth v\rangle=\langle v,c_\hilberth u\rangle;
$$
if
$\hilberth$ is a function space, this is usually given by pointwise
complex conjugation. Thus,
$$
T_\hilberth=j_\hilberth\circ c_\hilberth:\hilberth\to\hilberth'
$$
is a linear
isomorphism.
Thus, if $A:\hilbertp\to\hilberth$ is continuous linear, then
$A^\dagger:\hilberth'\to\hilbertp'$ continuous linear,
and
$$
A^\flat=A^\dagger\circ j_\hilberth\circ c_\hilberth:
\hilberth\to\hilbertp'
$$
is continuous and linear. In particular, letting $A$ be our continuous injection,
$$
\iota^\flat
=\iota^\dagger\circ j_{\hilberth}\circ c_{\hilberth}:
\hilberth\to\hilbertp'
$$
is linear, injective with dense range, so $\hilberth$
can be considered a subspace of $\hilbertp'$ (with a stronger topology).
In particular,
$$
\iota^\flat\circ\iota:
\hilbertp\to\hilbertp'
$$
is also injective
with dense range. One considers the triple
$(\hilbertp',\hilberth,\iota)$
the $\hilberth$-dual of $\hilbertp;$ we will denote this either simply by
$\hilbertp'$,or by $\hilbertp^*$ if we want to emphasize the inclusion
of $\hilbertp$ into $\hilbertp'$ via $\hilberth$,
in what follows.  Note that if $\hilbertp$ is also a
Hilbert space with a canonical conjugate linear isomorphism\footnote{Again,
pointwise complex conjugation on function spaces
is a good example.} $c_{\hilbertp}$,
$$
\iota\circ c_{\hilbertp}
=c_{\hilberth}\circ\iota,\ c_{\hilbertp}^2=\Id,
$$
then we have the canonical linear isomorphism
$T_{\hilbertp}=j_\hilbertp\circ c_{\hilbertp}:\hilbertp\to\hilbertp'$,
and it is important to keep in mind
that $T_\hilbertp$ is (usually) different from
$\iota^\flat\iota
=\iota^\dagger
\circ T_{\hilberth}\circ\iota$:
\begin{equation*}\begin{split}
&T_{\hilbertp}(u)(v)=\langle v,c_{\hilbertp}u\rangle_{\hilbertp},\\
&\iota^\flat\iota(u)(v)
=\langle  \iota v,(c_{\hilberth}\circ\iota) u\rangle_{\hilberth}
=\langle  \iota v,(\iota \circ c_{\hilbertp})u\rangle_{\hilberth},
\end{split}\end{equation*}
for $u,v\in\hilbertp$.
A simple example, when $X$ a compact manifold with a smooth non-vanishing
density $\nu$
is obtained by $\hilbertp=\dCI(X)$ (a Fr\'echet space)
and $\hilberth=L^2_\nu(X)$ with respect to the density $\nu$, with
$\iota:\hilbertp\to\hilbert$ the inclusion.
Then $\iota^\flat\iota:\dCI(X)\to
\CmI(X)$ is the standard inclusion of Schwartz functions in tempered
distributions: $\iota^\flat\iota f(\phi)=\int f\phi \,\nu$.

In fact, we shall always consider a setting with $\cD$ a dense
subspace of $\hilberth$, with a locally convex topology, with respect to
which the inclusion map is continuous (i.e.\ which is stronger than
the subspace topology), so using
the linear isomorphism $j_{\hilberth}\circ c_{\hilberth}:
\hilberth\to\hilberth'$, we have
continuous inclusions, with dense ranges,
$$
\cD\subset\hilberth\equiv\hilberth'\to\cD'.
$$

Suppose now that $A:\cD\to\cD$, hence $A^\dagger:\cD'\to\cD'$, and suppose
that $A^\dagger$ maps $\cD$, i.e.\ more precisely the range of
$\iota^\flat\iota$ (with $\iota:\cD\to\hilberth$ the inclusion),
to itself, and let
$$
\tilde A=(\iota^\flat\iota)^{-1}A^\dagger(\iota^\flat\iota):\cD\to\cD.
$$
Then for $f,\phi\in\cD$
\begin{equation}\begin{split}\label{eq:A-formal-adjoint}
\langle\iota\phi,\iota\tilde Af\rangle_{\hilberth}
&=(\iota^\flat\iota c_{\cD} \tilde Af)(\phi)
=(c_{\cD'}\iota^\flat\iota \tilde Af)(\phi)=(c_{\cD'} A^\dagger\iota^\flat
\iota f)(\phi)\\
&=\overline{ (A^\dagger\iota^\flat\iota f)(c_{\cD}\phi)}
=\overline{ (\iota^\flat\iota f)(A c_{\cD}\phi)}
=\overline{ \langle \iota c_{\cD}f, \iota A c_{\cD}\phi\rangle}\\
&=\langle  \iota c_{\cD} A c_{\cD}\phi, \iota f\rangle,
\end{split}\end{equation}
so $\tilde A$ is the formal adjoint of $c_{\cD}A c_{\cD}$ with respect
to the $\hilberth$ inner product.

Given a Hilbert space $\hilberth$ as above, hence an inclusion of
$\cD$ into $\cD'$,
we shall also have to consider subspaces $\hilbert$
of $\cD'$ with a locally convex
topology, which contain the image of $\cD$ in $\cD'$ (under the
$\hilberth$-induced inclusion map), and such that the inclusion
maps
$$
\cD\hookrightarrow\hilbert\hookrightarrow\cD'
$$
are continuous, with dense range, hence one has the corresponding sequence
of adjoint maps, which are continuous, with dense range, when all the
duals are equipped with the weak topologies. As $(\cD')'=\cD$,
one obtains
$$
\cD\hookrightarrow\hilbert'\hookrightarrow\cD'.
$$
If further
$$
\cD\hookrightarrow\hilbert\hookrightarrow\hilberth\hookrightarrow\cD'
$$
continuous, with dense ranges, then
$$
\cD\hookrightarrow\hilberth'\equiv\hilberth
\hookrightarrow\hilbert'\hookrightarrow\cD',
$$
and similarly if one had the reverse inclusion between $\hilbert$
and $\hilberth$.

One way that subspaces such as $\hilbertp$ arise is by considering a
a finite number of continuous
linear maps $A_j:\cD\to\cD$, such that there exist continuous
extensions $A_j:\cD'\to\cD'$ (which are then unique by the density of
$\cD$ in $\cD'$), hence
$A_j:\hilbert\to\cD'$, $j=1,\ldots,k$. Then, in what
essentially amounts to constructing a ``joint maximal domain'' for the $A_j$,
and writing $\iota_{\hilbert\cD'}:\hilbert\to\cD'$ for the inclusion,
let
\begin{equation}\label{eq:ops-define-space}
\hilbertp=\{u\in\hilbert:\ \forall j, A_j u\in\Ran\iota_{\hilbert\cD'}\}
\end{equation}
with
\begin{equation}\label{eq:hilbertp-norm}
\|u\|_{\hilbertp}^2=\|u\|^2_{\hilbert}
+\sum \|\iota_{\hilbert\cD'}^{-1}A_j u\|^2_{\hilbert},
\end{equation}
where the injectivity of $\iota_{\hilbert\cD'}$ was used. If $\{u_n\}$ is
Cauchy in $\hilbertp$, then it is such in $\hilbert$,
so converges to some $u\in\hilbert$, and thus
$A_j u_n\to A_j u\in\cD'$. Moreover, if $\{u_n\}$ is Cauchy in
$\hilbertp$ then $\iota_{\hilbert\cD'}^{-1}A_j u_n$ is Cauchy in $\hilbert$
so converges to some $v_j\in\hilbert$, hence $A_j u_n\to
\iota_{\hilbert\cD'} v_j$ in
$\hilbert$. Thus,
$A_j u=\iota_{\hilbert\cD'}v_j$, so $A_j u\in\Ran\iota_{\hilbert\cD'}$, and
$A_j u_n\to A_j u$ in $\hilbert$, proving that $\hilbertp$ is a Hilbert
space. We will simply write $A_j$ for $A_j|_{\hilbertp}:\hilbertp\to\hilbert$.
Note that $\cD\subset \hilbertp$, so $\hilbertp$ is dense in $\hilbert$.
However, $\cD$ is {\em not} necessarily dense in $\hilbertp$.

An example is given by $\hilbert=L^2_g(X)$ on a compact manifold with
or without boundary, $g$ a Riemannian metric, $A_j$ be a finite set of
$\CI$ vector fields which span all vector fields over $\CI(X)$, $\cD$
either $\CI(X)$ or $\dCI(X)$; then $\hilbertp=H^1_g(X)$. If
$\cD=\CI(X)$, then $\cD$ is dense in $\hilbertp$, but if
$\cD=\dCI(X)$, then this is not in general the case: it fails if the
boundary of $X$ is non-empty.  Other examples are given coisotropic
distributions, where the $A_j$ are products of first order ps.d.o's
characteristic on a coisotropic manifold; see a general discussion below for
spaces given by such ps.d.o's.

Another way a subspace like $\hilbertp$ might arise from
continuous linear maps $A_j:\cD\to\cD$ is the following.
In a ``joint minimal
domain'' construction, one can define
\begin{equation}\label{eq:hilbertpp-norm}
\|u\|_{\hilbertpp}^2=\|u\|^2_{\hilbert}
+\sum \|A_j u\|^2_{\hilbert},
\end{equation}
as above, and let $\hilbertpp$ be the completion of $\cD$ with respect
to this norm, so $\hilbertpp$ is a Hilbert space. Moreover, the inclusion
map $\iota_{\cD\hilbert}:\cD\to\hilbert$ as well as $A_j$
extend to continuous linear maps
$$
\widetilde{\iota_{\cD\hilbert}}=\iota_{\hilbertpp\hilbert},
\widetilde {A_j}:\hilbertpp\to\hilbert,
$$
and $\iota_{\hilbertpp\hilbert}$ has
dense range (for $\cD$ canonically
injects into the completion).
In addition, with $\hilbertp$ as above,
the inclusion map $\iota_{\cD\hilbertp}:\cD\to\hilbertp$ extends
continuously to a map
$$
\widetilde{\iota_{\hilbertpp\hilbertp}}:\hilbertpp\to\hilbertp
$$
which is an isometry, and
is in particular injective. This in particular shows that the inclusion
map from $\hilbertpp$ to $\hilbert$ is also injective, with a dense range.
For $X$ a manifold with boundary and $\cD=\dCI(X)$, $A_j$ vector fields as above, one has
$\hilbertp=H^1_0(X)$; with $\cD=\CI(X)$, one has $\hilbertp=H^1(X)$.

Note that the closure of $\cD$ in $\hilbertp$ is $\hilbertpp$, so $\cD$ is
dense in $\hilbertp$ if and only if $\hilbertp=\hilbertpp$
(i.e.\ $\widetilde{\iota_{\hilbertpp\hilbertp}}$ is surjective).
\emph{From this point
on we {\em assume} that $\hilbertp=\hilbertpp$.} This is true, for instance,
if one is given $B_1,\ldots,B_r\in\Psieb{1,0}(M)$, and $A_1,\ldots,A_k$ are
up to $s$-fold products of these, as shown below in Lemma~\ref{lemma:density}.
Thus, $\hilbertp'\subset
\cD'$ (i.e.\ the inclusion map is injective).

Using the inclusion map $\iota_{\hilbertp\hilbert}$ we can now identify
the dual of $\hilbertp$ with respect to $\hilberth$.
We start with the case $\hilberth=\hilbert$.
By the Riesz lemma, $\hilbertp'=T_\hilbertp \hilbertp$ and
$T_\hilbertp$ is unitary, where
$$
T_\hilbertp v(u)=\langle u,c_{\hilbertp}v\rangle_{\hilbertp}.
$$
But
\begin{equation*}\begin{split}
T_\hilbertp (v)(u)&=\langle u, c_{\hilbertp}v\rangle_{\hilbertp}
=\langle \iota_{\hilbertp\hilberth} u, \iota_{\hilbertp\hilberth} c_{\hilbertp} v\rangle_{\hilberth}
+\sum_j\langle A_j u, A_j c_{\hilbertp}v\rangle_{\hilberth}\\
&=j_{\hilberth}\iota_{\hilbertp\hilberth} c_{\hilbertp}
v(\iota_{\hilbertp\hilberth} u)
+\sum_j j_{\hilberth}A_j c_{\hilbertp}v(A_ju).
\end{split}\end{equation*}
Thus,
$$
T_\hilbertp (v)(u)
=\big((\iota_{\hilbertp\hilberth}^\flat\iota_{\hilbertp\hilberth}
+\sum A_j^\flat c_{\hilberth}A_j c_{\hilbertp})v\big)(u).
$$
We conclude that
$$
T_\hilbertp v
=(\iota^\flat_{\hilbertp\hilberth}\iota_{\hilbertp\hilberth}
+\sum A_j^\flat c_{\hilberth}A_j c_{\hilbertp})v,
$$
and
\begin{equation}\label{eq:hilbertp-to-dual}
\hilbertp^*=(\iota^\flat_{\hilbertp\hilberth}\iota_{\hilbertp\hilberth}+\sum A_j^\flat c_{\hilberth}A_j c_{\hilbertp})\hilbertp.
\end{equation}
This also shows that
$$
\hilbertp^*=\Ran\iota_{\hilbertp\hilberth}^\dagger+\sum\Ran A_j^\dagger,
$$
for $\supset$ follows from the definition of $\iota^\dagger$, etc., while
$\subset$ follows from \eqref{eq:hilbertp-to-dual}.
We recall here that by \eqref{eq:A-formal-adjoint}, $A_j^\dagger$ is
the formal adjoint of $c_{\cD} A_j c_{\cD}$.

More generally, we do not need to assume $\hilbert=\hilberth$; rather
assume that
\begin{equation}\label{eq:hilbert-hilberth-ip}
\langle u,v\rangle_{\hilbert}=\sum_k\langle B_k u,B_k v\rangle_{\hilberth},
\end{equation}
where $B_k:\hilbert\to\hilberth$ are continuous linear maps (and there
is no assumption on the relationship in the sense of inclusions
between $\hilbert$ and $\hilberth$). Then
\begin{equation*}\begin{split}
T_\hilbertp (v)(u)&=\langle u, c_{\hilbertp}v\rangle_{\hilbertp}
=\langle \iota_{\hilbertp\hilbert} u, \iota_{\hilbertp\hilbert}
c_{\hilbertp} v\rangle_{\hilbert}
+\sum_j\langle A_j u, A_j c_{\hilbertp}v\rangle_{\hilbert}\\
&=\sum_k\left(\langle B_k\iota_{\hilbertp\hilbert} u, B_k\iota_{\hilbertp\hilbert}  c_{\hilbertp} v\rangle_{\hilberth}
+\sum_j\langle B_kA_j u, B_kA_j c_{\hilbertp}v\rangle_{\hilberth}\right)\\
&=\sum_k\left(j_{\hilberth}B_k\iota_{\hilbertp\hilbert}  c_{\hilbertp} v
(B_k\iota_{\hilbertp\hilbert} u)
+\sum_j j_{\hilberth}B_kA_j c_{\hilbertp}v(B_k A_j u)\right)\\
&=\left(\sum_k\big((B_k\iota_{\hilbertp\hilbert})^\flat c_{\hilberth}B_k\iota_{\hilbertp\hilbert}+\sum_j(B_kA_j)^\flat c_{\hilberth} B_kA_j\big)
c_{\hilbertp}v\right)(u),
\end{split}\end{equation*}
so we conclude as above, using $(B_k A_j)^\flat=A^\dagger B_k^\flat$, etc.,
that
\begin{equation}\label{eq:hilbertp-dual}
\hilbertp^*=\Ran \iota_{\hilbertp\hilbert}^\dagger
+\sum_{j}A_j^\dagger;
\end{equation}
note that the same computation as in \eqref{eq:A-formal-adjoint} with
factors of $\iota$ omitted shows that $A_j^\dagger$ is the formal
adjoint of $c_{\cD} A_j c_{\cD}$.

We now prove the density lemma mentioned above. We start by commuting
bounded families of operators through products of first order ps.d.o's.

\begin{lemma}\label{lemma:Psi-rewrite}
Let $r\geq 1$.
For $s\in\NN$, let
$J_{s}$ be the set of maps $\{j:\{1,\ldots,s\}\to\{1,\ldots,r\}$.

Suppose that $A_1,\ldots,A_r\in\Psieb{1,0}(M)$, and for $s\in\NN$ and
$j\in J_{s}$, let
$$
A_j=A_{j_1}\ldots A_{j_s}.
$$
Then for $k\in\NN$,
$j\in J_{k}$ and $\{Q_n\}$ a uniformly bounded family in
$\Psieb{0,0}(M)$,
$$
A_j Q_n=Q_n A_j+\sum_{s\leq k-1} \sum_{i\in J_s} C_{in} A_i,
$$
with $\{C_{in}:\ n\in\NN\}$ uniformly bounded in $\Psieb{0,0}(M)$, and
the uniform bounds are microlocal (so in particular
$\WF'(\{C_{in}:\ n\in\NN\})\subset\WF'(\{Q_n:\ n\in\NN\})$).

Moreover, for $\ep>0$,
if $Q_n\to \Id$ in $\Psieb{\ep,0}(M)$ then
$C_{in}\to 0$ in $\Psieb{\ep,0}(M)$ as $n\to\infty$.
\end{lemma}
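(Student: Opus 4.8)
The statement is a standard ``iterated commutator'' lemma for products of first-order pseudodifferential operators, and I would prove it by induction on $k$, the number of factors in $A_j$. The base case $k=1$ is immediate: for $j\in J_1$ the operator $A_j=A_{j_1}\in\Psieb{1,0}(M)$, and $[A_{j_1},Q_n]\in\Psieb{0,0}(M)$ with uniform bounds that are microlocal, since $\{Q_n\}$ is a uniformly bounded family in $\Psieb{0,0}(M)$ and the commutator of operators of orders $1$ and $0$ drops the order by one (this is the symbol calculus property (III)--(IV) of the edge-b calculus quoted after Theorem~\ref{BEPseud}, together with the microlocality property (A)--(B) of $\ebWF'$). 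So $A_jQ_n=Q_nA_j+C_{j n}\cdot\Id$ with $C_{jn}=[A_{j_1},Q_n]$, and the sum on the right-hand side is just the $s=0$ term ($J_0$ consisting of the empty map, $A_{\emptyset}=\Id$).

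\textbf{Inductive step.} Suppose the claim holds for all products of at most $k-1$ factors. Given $j\in J_k$, write $A_j=A_{j_1}A_{j'}$ where $j'\in J_{k-1}$ is the restriction of $j$ to $\{2,\dots,k\}$. Then
\begin{equation*}
A_jQ_n=A_{j_1}A_{j'}Q_n=A_{j_1}\Big(Q_nA_{j'}+\sum_{s\leq k-2}\sum_{i\in J_s}C_{in}'A_i\Big),
\end{equation*}
using the inductive hypothesis on $A_{j'}$, where $\{C'_{in}\}$ is uniformly bounded in $\Psieb{0,0}(M)$ with microsupport inside $\WF'(\{Q_n\})$. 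Now move $A_{j_1}$ past $Q_n$ using the $k=1$ case, $A_{j_1}Q_n=Q_nA_{j_1}+\tilde C_n$ with $\tilde C_n$ uniformly bounded in $\Psieb{0,0}(M)$; this gives $A_{j_1}Q_nA_{j'}=Q_nA_{j_1}A_{j'}+\tilde C_nA_{j'}=Q_nA_j+\tilde C_nA_{j'}$, where $\tilde C_nA_{j'}$ is a uniformly bounded family in $\Psieb{0,0}(M)$ times $A_{j'}$, i.e.\ of the desired form with $s=k-1$. For the remaining terms $A_{j_1}C'_{in}A_i$ with $i\in J_s$, $s\leq k-2$, note that $A_{j_1}C'_{in}\in\Psieb{1,0}(M)$, which is not order $0$; to fix this I commute once more, writing $A_{j_1}C'_{in}A_i=C'_{in}A_{j_1}A_i+[A_{j_1},C'_{in}]A_i$. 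The first term is of the desired form with $A_{j_1}A_i$ a product of $s+1\leq k-1$ factors and $C'_{in}$ uniformly bounded in $\Psieb{0,0}(M)$; the second has $[A_{j_1},C'_{in}]\in\Psieb{0,0}(M)$ uniformly bounded, multiplying $A_i$, a product of $s\leq k-2\leq k-1$ factors. Collecting all terms and relabeling the coefficient families as $C_{in}$ (reindexed over $i\in J_s$, $s\leq k-1$) completes the induction; microlocality of all the bounds is preserved at each step because commutators and compositions are microlocal and $\WF'$ of a product/commutator is contained in the intersection, hence in $\WF'(\{Q_n\})$.

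\textbf{The convergence statement.} For the final assertion, suppose $Q_n\to\Id$ in $\Psieb{\ep,0}(M)$. Then $[A_{j_1},Q_n]=[A_{j_1},Q_n-\Id]\to 0$ in $\Psieb{\ep,0}(M)$, since the commutator of an order-$1$ operator with something going to $0$ in $\Psieb{\ep,0}(M)$ goes to $0$ in $\Psieb{\ep,0}(M)$ (the commutator would a priori be order $\ep$; the gain of one order from the symbol calculus is what makes it $\ep$ rather than $1+\ep$, but we only claim convergence in $\Psieb{\ep,0}$, so this is exactly right). Tracking this through the inductive construction, every coefficient $C_{in}$ is built from iterated commutators $[A_{\bullet},\cdot]$ applied starting from $Q_n-\Id$ (the ``$Q_nA_j$'' piece being subtracted off is what leaves only commutators), so each $C_{in}\to 0$ in $\Psieb{\ep,0}(M)$ as $n\to\infty$. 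I would phrase this cleanly by carrying a slightly strengthened inductive hypothesis: not merely that $\{C_{in}\}$ is bounded, but that each $C_{in}$ lies in the two-sided ideal generated by $Q_n-\Id$ in a suitable sense, so that $Q_n\to\Id$ forces $C_{in}\to 0$.

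\textbf{Main obstacle.} There is no deep obstruction here; the only thing requiring care is bookkeeping in the inductive step, specifically the step where $A_{j_1}C'_{in}$ has the wrong order and must be recommuted, generating a product $A_{j_1}A_i$ of one higher length. One must check that the length stays $\leq k-1$ throughout (it does: $s\leq k-2$ so $s+1\leq k-1$) and that no infinite regress occurs. The other mild subtlety is making the ``uniform bounds are microlocal'' clause precise --- this just means that for any $B\in\Psieb{0,0}(M)$ with $\WF'(B)\cap\WF'(\{Q_n\})=\emptyset$ one has $BC_{in}\in\Psieb{-\infty,0}(M)$ uniformly --- and it follows at each stage from the corresponding property for $[A_{j_1},Q_n]$ and the fact that composition and commutator are microlocal for the edge-b calculus (properties (A)--(B) cited after Theorem~\ref{BEPseud}).
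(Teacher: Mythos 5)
Your proof is correct, and like the paper's it proceeds by induction on the number of factors $k$, but the decomposition in the inductive step is organized differently. The paper's argument first expands the full commutator
$$
A_j Q_n - Q_n A_j = \sum_{l=0}^{k-1} A_{j_1}\cdots A_{j_l}\,[A_{j_{l+1}},Q_n]\,A_{j_{l+2}}\cdots A_{j_k},
$$
then applies the inductive hypothesis (at order $l$) to each prefix $A_{j_1}\cdots A_{j_l}\,[A_{j_{l+1}},Q_n]$, treating the commutator $[A_{j_{l+1}},Q_n]$ as the new uniformly bounded family; multiplying back by the suffix $A_{j_{l+2}}\cdots A_{j_k}$ yields products of total length at most $s+(k-l-1)\leq k-1$, as required. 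You instead peel off the leading factor $A_{j_1}$, apply the inductive hypothesis once to $A_{j'}Q_n$ (with $j'$ the length-$(k-1)$ tail), then commute $A_{j_1}$ past $Q_n$ and past each resulting coefficient $C'_{in}$, producing both a relabeled $C'_{in}A_{j_1}A_i$ term and a commutator term $[A_{j_1},C'_{in}]A_i$. The two arrangements produce the same conclusion; the paper's invokes the inductive hypothesis $k$ times but needs no extra single-step commutation, while yours invokes it only once per level at the cost of the additional commutation $[A_{j_1},C'_{in}]$. Your treatment of the convergence statement (each $C_{in}$ is built by iterated commutation starting from $Q_n-\Id$, hence $\to 0$ in $\Psieb{\ep,0}(M)$) matches the paper's in substance, though as you note it is cleanest to carry it as a strengthened inductive clause; the paper handles it inline by observing that $[A_{j_m},Q_n]\to 0$ and that the property is preserved by the inductive rewriting. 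One small mismatch: the paper takes $k=0$ as the base case (where $A_j=\Id$ and there is nothing to prove), which is marginally cleaner than starting at $k=1$, though both work.
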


\begin{remark}
We do not need the microlocality of the uniform bounds below, but it is
useful elsewhere.
\end{remark}

\begin{proof}
We proceed by induction, with $k=0$ being clear.

Suppose $k\geq 1$, and the statement has been proved with $k$ replaced by
$k-1$. Then for $j\in J_k$,
$$
A_j Q_n=Q_n A_j+[A_{j_1},Q_n]A_{j_2}\ldots A_{j_k}+\ldots
+A_{j_1}\ldots A_{j_{k-1}}[A_{j_k},Q_n].
$$
Note that $[A_{j_m},Q_n]\in\Psieb{0,0}(M)$ uniformly, and in a microlocal
sense (and $[A_{j_m},Q_n]\to 0$
in $\Psieb{\ep,0}(M)$ if $Q_n\to \Id$ in $\Psieb{0,0}(M)$).
Thus, the first two terms are
of the stated form. For the others, there are $l\leq k-1$ factors
in front of the commutator, which is bounded in $\Psieb{0,0}(M)$ (and
converges to $0$ in $\Psieb{\ep,0}(M)$ if $Q_n\to \Id$ in $\Psieb{\ep,0}(M)$),
so by the inductive hypothesis
$$
A_{j_1}\ldots A_{j_l}[A_{j_{l+1}},Q_n]
$$
can
be rewritten as $\sum_{s\leq l} \sum_{i\in J_s} C_{s,in} A_i$, hence
$$
A_{j_1}\ldots A_{j_l}[A_{j_{l+1}},Q_n] A_{j_{l+2}}\ldots A_{j_k}
$$
is rewritten
as $C_{s,in} A_{i_1}\ldots A_{i_s}A_{j_{l+2}}\ldots A_{j_k}$ with
$s+(k-(l+1))\leq l+k-(l+1)=k-1$ factors of the $A$'s, hence is of
the stated form.
\end{proof}

\begin{lemma}\label{lemma:density}
Suppose that $B_1,\ldots,B_r\in\Psieb{1,0}(M)$, and let $\hilbert$
be a Hilbert space on which $\Psieb{0,0}(M)$ acts, with operator norm
on $\hilbert$ bounded by a fixed $\Psieb{0,0}(M)$-seminorm.
Let $\cD\subset\hilbert$ be a dense subspace with a locally convex topology, and
with all $Q\in\Psieb{-\infty,0}(M)$, $Q:\hilbert\to\cD$
continuous, while for all $Q\in\Psieb{m,0}(M)$, $Q:\cD\to\hilbert$ is
continuous, with bound given by a fixed $\Psieb{m,0}(M)$ seminorm
and a fixed seminorm on $\cD$.

For $k\in\NN$, let
\begin{equation*}
\hilbertp=\{u\in\hilbert:\ \forall s\leq k,\ \forall j\in J_s,
A_j u\in\hilbert\}
\end{equation*}
with
$$
\|u\|_{\hilbertp}^2=\|u\|^2_{\hilbert}
+\sum_{s\leq k}\sum_{j\in J_s} \|A_j u\|^2_{\hilbert},
$$
Then $\cD$ is dense in $\hilbertp$.
\end{lemma}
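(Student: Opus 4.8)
The plan is to prove density of $\cD$ in $\hilbertp$ by a standard mollifier argument: approximate $u\in\hilbertp$ by $u_n=Q_n u$, where $\{Q_n\}$ is a bounded family in $\Psieb{0,0}(M)$ converging to $\Id$ in $\Psieb{\ep,0}(M)$ for every $\ep>0$, with $Q_n\in\Psieb{-\infty,0}(M)$ for each fixed $n$. Such a family exists by the standard construction of regularizers in the $\ebo$-calculus (this is the family already used repeatedly in Sections~5 and later, e.g.\ the $\Lambda_\gamma$ of \S\ref{subsection:radialboundary}, with $\sigma$-parameter chosen so that $\Lambda_\gamma\in\Psieb{-\infty,0}(M)$; alternatively one quantizes $(1+\gamma|\tau|^2)^{-N}$ and lets $N\to\infty$ fast). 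For each fixed $n$, since $Q_n\in\Psieb{-\infty,0}(M)$ maps $\hilbert$ continuously into $\cD$ by hypothesis, we have $u_n\in\cD$. So it remains only to show $u_n\to u$ in the $\hilbertp$-norm.

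The key computational step is to control $A_j u_n - A_j u$ in $\hilbert$ for each $j\in J_s$, $s\le k$. Here I would invoke Lemma~\ref{lemma:Psi-rewrite} directly: it gives
$$
A_j Q_n = Q_n A_j + \sum_{s'\le s-1}\sum_{i\in J_{s'}} C_{in} A_i,
$$
with $\{C_{in}:n\in\NN\}$ uniformly bounded in $\Psieb{0,0}(M)$ and, crucially, $C_{in}\to 0$ in $\Psieb{\ep,0}(M)$ as $n\to\infty$. Therefore
$$
A_j u_n - A_j u = Q_n A_j u - A_j u + \sum_{s'\le s-1}\sum_{i\in J_{s'}} C_{in} A_i u.
$$
For the first difference: $A_j u\in\hilbert$ since $u\in\hilbertp$ and $s\le k$, and $Q_n\to\Id$ strongly on $\hilbert$ (this follows from uniform boundedness of $\{Q_n\}$ on $\hilbert$ together with convergence $Q_n\to\Id$ in $\Psieb{\ep,0}(M)$ and density of $\cD$ in $\hilbert$, on which convergence is immediate by the continuity hypothesis $Q:\cD\to\hilbert$), so $Q_n A_j u\to A_j u$ in $\hilbert$. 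For the sum: each $A_i u\in\hilbert$ since $s'\le s-1<k$, and $C_{in}\to 0$ in $\Psieb{\ep,0}(M)$ for some $\ep>0$, hence $C_{in}\to 0$ in operator norm on $\hilbert$ (using that the operator norm is bounded by a fixed seminorm), so $C_{in}A_i u\to 0$ in $\hilbert$. Combining, $A_j u_n\to A_j u$ in $\hilbert$ for every $j\in J_s$, $s\le k$; the case $s=0$ (i.e.\ $\|u_n-u\|_{\hilbert}\to 0$) is the strong convergence $Q_n\to\Id$ just noted. Summing the finitely many terms defining $\|\cdot\|_{\hilbertp}^2$ gives $\|u_n-u\|_{\hilbertp}\to 0$.

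The only point requiring a little care — and the mild obstacle — is the justification of the strong convergence $Q_n\to\Id$ on $\hilbert$: one needs uniform boundedness of $\{Q_n\}$ in $\Psieb{0,0}(M)$ (hence on $\hilbert$) plus a dense set on which convergence holds, and the hypotheses on $\cD$ furnish exactly such a set, since for $\phi\in\cD$ we have $Q_n\phi-\phi=(Q_n-\Id)\phi$ with $Q_n-\Id\to 0$ in $\Psieb{\ep,0}(M)$ and $Q:\cD\to\hilbert$ continuous with bound via a fixed $\Psieb{\ep,0}(M)$-seminorm. A standard $3\epsilon$ argument then upgrades this to all of $\hilbert$. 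Everything else is bookkeeping over the finite index sets $J_s$, $s\le k$, and an appeal to Lemma~\ref{lemma:Psi-rewrite}, which is already proved.
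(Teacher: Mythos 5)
Your proposal follows the same strategy as the paper: mollify with $\Lambda_n\in\Psieb{-\infty,0}(M)$ (uniformly bounded in $\Psieb{0,0}(M)$, tending to $\Id$ in $\Psieb{\ep,0}(M)$), use the hypothesis $\Psieb{-\infty,0}(M):\hilbert\to\cD$ to get $u_n\in\cD$, commute through the $A_j$ via Lemma~\ref{lemma:Psi-rewrite}, and conclude by strong convergence. One small but real slip: you assert that $C_{in}\to 0$ \emph{in operator norm} on $\hilbert$, with the justification that the operator norm is bounded by a fixed $\Psieb{0,0}(M)$-seminorm. That does not follow — $C_{in}$ converges to $0$ only in $\Psieb{\ep,0}(M)$ for $\ep>0$, while remaining merely bounded in $\Psieb{0,0}(M)$, so the $\Psieb{0,0}(M)$-seminorm bound yields uniform boundedness, not norm convergence (indeed, $\Lambda_n-\Id$ itself is of this type and does not converge to $0$ in operator norm). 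What is true, and what the paper uses, is that $C_{in}\to 0$ \emph{strongly} on $\hilbert$, by exactly the $3\epsilon$ argument you already give for $Q_n\to\Id$: uniform boundedness on $\hilbert$ plus convergence to $0$ on the dense subspace $\cD$ (which follows from $C_{in}\to 0$ in $\Psieb{\ep,0}(M)$ and continuity of $\Psieb{\ep,0}(M):\cD\to\hilbert$). Since each $A_i u$ is a fixed element of $\hilbert$, strong convergence suffices to give $C_{in}A_i u\to 0$, and the rest of your argument is sound.
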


\begin{proof}
We start by observing that if $Q_n$ is a uniformly bounded family
in $\Psieb{0,0}(M)$, $Q\in\Psieb{0,0}(M)$ and $Q_n\to Q$
in $\Psieb{\ep,0}(M)$ for $\ep>0$,
then $Q_n\to Q$ strongly on $\hilbert$. Indeed, $Q_n$ is uniformly
bounded on $\hilbert$ by the assumptions of the lemma, so it suffices
to prove that for a dense subset of $\hilbert$, which we take to
be $\cD$, $u\in\cD$ implies $Q_n u\to Qu$ in $\hilbert$. But this
is immediate, for $Q_n\to Q$ in $\Psieb{\ep,0}(M)$, hence as a map
$\cD\to\hilbert$, by the assumptions of the lemma.

Now
let $\Lambda_n\in\Psieb{-\infty,0}(M)$ uniformly bounded in $\Psieb{0,0}(M)$
and $\Lambda_n\to\Id$ in $\Psieb{\ep,0}(M)$ for $\ep>0$, so $\Lambda_n\to\Id$
strongly on $\hilbert$.
We claim that
for $s\leq k$,
\begin{equation}\label{eq:A_j-approx}
u\in\hilbertp,\ j\in J_s\Longrightarrow A_j \Lambda_n u\to
A_j u\ \text{as}\ n\to\infty\ \text{in}\ \hilbert.
\end{equation}
Since $\Lambda_n u\in\cD$, this
will prove the lemma. Note that $\Lambda_n u\to u$ in $\hilbert$.

By Lemma~\ref{lemma:Psi-rewrite}, for $j\in J_s$,
$$
A_j \Lambda_n=\Lambda_n A_j+\sum_{l\leq s-1} \sum_{i\in J_l} C_{in} A_i,
$$
with $\{C_{in}:\ n\in\NN\}$ uniformly bounded in $\Psieb{0,0}(M)$,
$C_{in}\to 0$ in $\Psieb{\ep,0}(M)$ for $\ep>0$.
Correspondingly, $\{C_{in}:\ n\in\NN\}$ is uniformly bounded as operators
on $\hilbert$, and $C_{in}\to 0$ strongly on $\hilbert$. Since
$A_j u\in\hilbert$ and $A_i u\in\hilbert$ for $i\in J_l$,
$l\leq s$, we deduce that $A_j\Lambda_n u\to A_j u$ in $\hilbert$,
completing the proof.
\end{proof}

\section{The edge-b calculus}\label{BEPseuda}

Let $M$ be, for this section, a general compact manifold with corners and
let $W$ be one of its boundary hypersurfaces. At the end of the
section we comment on non-compact $M$, which is setting of the main
body of the paper; this is essentially a notational issue as our
problem is indeed local in a relevant sense.  In the body of the paper
above, $M$ is obtained by the blow up of a boundary face $Y$ of a manifold
with corners $M_0$ and $W$ is the front face of the blow up, i\@.e\@.~the
preimage of $Y$ under the blow-down map. In fact our discussion is mostly
local in the interior of $Y$ and hence \emph{we could assume that $Y$ has locally
maximal codimension, so that it has no boundary}. We shall
not, however, make this assumption here, and we include the setting obtained by blow-up
without actually restricting the discussion to it. Instead, we shall suppose that
$W$ is equipped with a fibration
\begin{equation}
\xymatrix{
Z\ar@{-}[r]&W\ar[d]^{\phi}\\
&Y.
}
\label{16.2.2009.1a}\end{equation}
Since the manifolds here may have corners, this is to be a fibration in that
sense, so the typical fiber, $Z,$ is required to be a compact manifold with
corners, the base, $Y,$ is a manifold with corners and $\phi$ is supposed to
be locally trivial in the sense that each $p\in Y$ has a neighborhood $U$
over which there is a diffeomorphism giving a commutative diagram
\begin{equation}
\xymatrix{
\phi^{-1}(U)\ar[rr]^{\simeq}\ar[dr]_{\phi}&& U\times Z\ar[dl]^{\pi_U}\\
&U.
}
\label{16.2.2009.2a}\end{equation}
Mainly for notational reasons we will also assume that $Y$ is connected.

Let $\mathcal{V}_{\eb}(M)\subset\bV(M)$ be the Lie subalgebra of all those
smooth vector fields on $M$ which are tangent to all boundary
(hypersur-)faces and in addition are tangent to the fibers of $\phi$ on
$W.$ The calculus of edge-b pseudodifferential operators will be
constructed in this setting, it is determined by $M$ and $\phi$ and
microlocalizes $\mathcal{V}_{\eb}(M).$

In case the $\phi$ has a single fiber, i\@.e\@.~$Z=W,$ corresponding to the
case that $W$ is not blown up at all, the Lie algebra
$\mathcal{V}_{\eb}(M)$ reduces to $\bV(M)$ and the desired
microlocalization is just the algebra of b-pseudodifferential operators on
$M$ as a manifold with corners. The construction of this algebra is
discussed in \cite{MR83f:58073}, \cite{Melrose-Piazza:Analytic} and of
course $W$ is in no way singled out amongst the boundary hypersurfaces. The
pseudodifferential operators are described in terms of their Schwartz
kernels, which are the conormal distributions with respect to the resolved
diagonal in a blown-up version of $M^2,$ with the additional constraint of
vanishing rapidly at boundary faces which do not meet the lifted
diagonal. The resolved double space in this case is
\begin{equation}
M^2_{\bo}=[M^2;\cB],\ \cB=\{B\times B;B\in\cM_1(M)\}.
\label{30.8.2006.40}\end{equation}
Here, in generality, $\cM_p(M)$ is the collection of all \emph{connected}
boundary faces of codimension $p$ of the manifold with corners $M.$ It is
of crucial importance that the lift to $M^2_{\bo}$ of the diagonal, is a
p-submanifold---the lift in this case is the closure (in $M^2_{\bo})$ of
the inverse image of the interior of the diagonal:
\begin{equation}
\Diag_{\bo}=\clos(\Diag(M)\cap \inside(M^2)).
\label{30.8.2006.41}\end{equation}
Then the operators on functions correspond to the kernels
\begin{equation}
\Psi^m(M)=\left\{A\in I^m(M^2_{\bo};\beta^*\Omega_R);
A\equiv0\Mat \pa M^2_{\bo}\setminus\ff(\beta)\right\}
\label{30.8.2006.47}\end{equation}
where $\ff(\beta)$ is the collection of boundary faces produced by the
blow-ups defining the combined blow-down map $\beta
:M^2_{\bo}\longrightarrow M^2.$

The composition properties of these operators, including the fact that the
``small calculus'' is an algebra, can be obtained geometrically from the
corresponding triple space 
\begin{multline}
M^3_{\bo}=[M^3;\cB^3,\cB^2],\ \cB^3=\{B^3;B\in\cM_1(M)\},\\
\cB^2=\{M\times
B\times B,\ B\times M\times B,\ B\times B\times M;B\in\cM_1(M)\}.
\label{30.8.2006.42}\end{multline}
There is considerable freedom in the order of blow-ups here and this is
sufficient to show that the three projections, $\pi_O,$ from $M^3$ to $M^2$
lift to ``stretched projections'' $\pi_{O,\bo}:M^3_{\bo}\longrightarrow M^2_{\bo},$
$O=S,C,F,$ corresponding to the left, outer and right two factors
respectively; these maps are b-fibrations and factor through a product of
$M^2_{\bo}$ and $M$ in each case.

As already noted, it is crucial for the definition \eqref{30.8.2006.47}
that the lifted diagonal $\Diag_{\bo}$ be a p-submanifold, meaning that it
meets the boundary locally as a product. This also turns out to be
essential in the \emph{construction} of $M^2_{\eb}$ below.

There is another extreme case in which the microlocalization of the Lie
algebra $\mathcal{V}_{\eb}$ is well-established, namely when $W$ is the
only boundary hypersurface, so $M$ is a manifold with boundary; this is the
case of an ``edge'' alone, with no other boundaries. The construction of a
geometric resolution in this case can be found in \cite{MR93d:58152} and
\cite{MR2000m:58046}. It is quite parallel to, and of course includes as a
special case, the b-algebra on a manifold with boundary. In the general
edge case when the fibration $\phi$ is non-trivial (but $W$ itself has no
boundary) the center blown up in \eqref{30.8.2006.40}, which would be
$W^2,$ is replaced by the fiber diagonal
\begin{equation}
\begin{gathered}
\Diag_{\phi}=\{(p,p')\in W^2;\phi(p)=\phi(p')\}=(\phi\times\phi)^{-1}(\Diag(F))\\
M^2_{\phi}=[M^2;\Diag_{\phi}].
\end{gathered}
\label{30.8.2006.43}\end{equation}
Similarly, the triple space is obtained by blow-up of the triple fiber product 
\begin{equation}
\Diag^3_{\phi}=\{(p,p',p'')\in W^3;\phi(p)=\phi(p')=\phi(p'')\}=
(\phi\times\phi\times\phi)^{-1}(\Diag^3(F))
\label{30.8.2006.44}\end{equation}
and then the three partial fiber diagonals, the inverse images,
$\Diag^O_{\phi},$ $O=S,C,F$ of $\Diag_{\phi}$ under the three projections
$\pi_O:M^3\longrightarrow M^2:$ 
\begin{equation}
M^3_{\phi}=[M^3;\Diag^3_{\phi};\Diag^S_{\phi},\Diag^C_{\phi},\Diag^F_{\phi}].
\label{30.8.2006.45}\end{equation}
Again the three projections lift to b-fibrations 
\begin{equation}
\xymatrix@1{
M^3_{\phi}\ar@<2.5ex>[r]^{\pi_{F,\phi}}\ar[r]^{\pi_{S,\phi}}
\ar@<-2.5ex>[r]^{\pi_{C,\phi}}&M^2_{\phi}.
}\label{30.8.2006.46}\end{equation}

The microlocalization of $\mathcal{V}_{\eb}$ is accomplished here by the
combination of these two constructions. The diagonal, even for a manifold
with boundary, is not a p-submanifold---does not meet the boundary faces
in a product manner---and as already noted this is remedied by the
b-resolution. Since the fiber diagonal in $W^2$ is the inverse image of the
diagonal in $Y$ it too is not a p-submanifold in case $Y$ has boundary, but
then the partial b-resolution of $M$ resolves it to a p-submanifold which
can then be blown up.

More explicitly, the boundary hypersurfaces of $M,$ other than $W$ itself,
fall into three classes according to their behavior relative to $\phi.$
Namely there may be some disjoint from $W;$ these are relatively
unimportant in the discussion below. Otherwise the intersection of $W$ and
such a boundary hypersurface, $B,$ is a boundary hypersurface $B\cap W$ of
$W.$ The remaining two cases correspond to this being the preimage under
$\phi$ of a boundary hypersurface of $Y$ or, if this is not the case, then
$B\cap W$ is the union of boundary hypersurfaces of the fibers of $\phi,$
corresponding to a fixed boundary hypersurface of $Z.$ In brief, the
boundary hypersurfaces $B\in\cM_1(M)\setminus\{W\}$ which meet $W$
correspond either to the boundary hypersurfaces of $Y$ or of $Z.$ Let
$\cB',$ $\cB(Y)$ and $\cB(Z)\subset\cM_1(M)$ denote the three disjoint
subsets into which $\cM_1(M)\setminus\{W\}$ is so divided.

To define the double space on which the kernels are conormal distributions
with respect to the lifted diagonal, just as in both special cases
discussed above, we make one blow up for each of the boundary
hypersurfaces. For those other than $W,$ this is the same as for the
b-double space for $M,$ which is to say the corners, $B\times B,$ are to be
blown up for all $B\in\cM_1(M)\setminus\{W\}.$ Since these submanifolds are
mutually transversal boundary faces within $M^2$ they may be blown up in
any order with the same final result. For $W$ we wish to blow up the fiber
diagonal, $\Diag_\phi,$ in \eqref{30.8.2006.43}. This is certainly a
manifold with corners, since it is the fiber product of $W$ with itself as
a bundle over $Y,$ given by $\phi.$ However, as noted, it is not embedded
as a p-submanifold if $Y$ has non-trivial boundary. If $x_i$ and $y_j$ are
respectively boundary defining functions and interior coordinates near some
boundary point of $Y,$ and $x',$ $y',$ $x'',$ $y''$ are their local lifts
to $W^2$ under the two copies of $\phi,$ then $\Diag_\phi\subset W^2$ is
the ``diagonal'' $x'=x'',$ $y'=y''.$ Near a boundary point of $Y$ this is not
a p-submanifold.

Note that in the simplest case, when $\cB(Y)=\emptyset,$ the following
lemma merely says that $\Diag_\phi$ is a p-submanifold of $M^2.$
\begin{lemma}\label{16.2.2009.7a} The fiber diagonal $\Diag_\phi$ lifts to
  a p-submanifold of $$[M^2;\cB(Y)].$$
\end{lemma}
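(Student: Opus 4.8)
The statement is local over $Y$, so the first move is to reduce to a model situation and then carry out an explicit coordinate computation. The issue is purely about the boundary hypersurfaces of $M$ which lift from boundary hypersurfaces of $Y$, i.e.\ those in $\cB(Y)$; all other faces are transversal to $\Diag_\phi$ and contribute nothing to the obstruction. So I would first localize near a point $(p,p)\in\Diag_\phi\subset W^2\subset M^2$ and choose adapted coordinates: boundary defining functions $x_1,\dots,x_a$ and interior coordinates $y_1,\dots,y_b$ for $Y$ near $\phi(p)$ (the $x_i$ corresponding exactly to the hypersurfaces in $\cB(Y)$ that pass through $\phi(p)$), together with fiber coordinates $z$ (which may include fiber-boundary defining functions, irrelevant here), and finally defining functions for the hypersurfaces in $\cB'\cup\cB(Z)$ and for $W$ itself; pull these back under $\phi\times\phi$ to coordinates $(x',y',z',x'',y'',z'')$ on $M^2$ near $(p,p)$. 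In these coordinates $\Diag_\phi=\{x'=x'',\ y'=y''\}$, with $z',z''$ free, and the hypersurfaces of $\cB(Y)$ appear (doubled) as $\{x_i'=0\}$ and $\{x_i''=0\}$.

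Next I would describe the partial blow-up $[M^2;\cB(Y)]$ near this point. Blowing up $B\times B$ for $B=\{x_i=0\}\in\cB(Y)$ amounts, in the relevant chart, to introducing $s_i=x_i'/x_i''$ (with $x_i''$ as the new boundary defining function for the front face, say), and doing this for each $i$; these centers are mutually transversal boundary p-submanifolds so the order does not matter and one obtains a single coordinate chart on $[M^2;\cB(Y)]$ with coordinates $(s_1,\dots,s_a, x_1'',\dots,x_a'', y', y'', z', z'')$. In this chart the lift of $\Diag_\phi$ becomes $\{s_1=\dots=s_a=1,\ y'=y''\}$: the bad equations $x_i'=x_i''$ have been replaced by $s_i=1$, which cut out a p-submanifold transversally meeting each boundary face $\{x_i''=0\}$ (indeed $s_i=1$ is a condition on an interior variable of the blown-up space). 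One then checks that $\{s_i=1\}$, $\{y'=y''\}$ together define an embedded submanifold meeting all boundary hypersurfaces of $[M^2;\cB(Y)]$ in the product way — the $\cB'$ and $\cB(Z)$ faces were already transversal to $\Diag_\phi$ and remain so, while $W$-related faces play no role since we are working in the interior of $W$ relative to those directions — so the lift is a p-submanifold, as claimed.

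The main obstacle, and the only genuinely substantive point, is verifying that the lifts of the centers in $\cB(Y)$ to the successively blown-up spaces remain transversal p-submanifolds at every stage, so that the iterated blow-up $[M^2;\cB(Y)]$ is well-defined and independent of the order, and that after all of them the diagonal really is a p-submanifold and not merely an embedded submanifold with some residual non-product intersection coming from interaction between different $x_i$ directions or between the $\cB(Y)$-blow-ups and the $\cB'\cup\cB(Z)$ faces. This is handled by the explicit chart above: since each blow-up in $\cB(Y)$ only affects one pair $(x_i',x_i'')$ and introduces the ratio coordinate $s_i$, the various blow-ups decouple, and the transversality of the remaining boundary faces is visible directly. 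I would phrase the verification as: in the displayed coordinate chart, $\Diag_\phi$ lifts to a graph $\{s=\mathbf 1,\ y'=y''\}$ over the boundary coordinates, hence is a p-submanifold; and since such charts cover a neighborhood of $\Diag_\phi$ in $[M^2;\cB(Y)]$, this suffices. (Away from $\Diag_\phi$ there is nothing to prove, as a p-submanifold condition is local along the submanifold.)
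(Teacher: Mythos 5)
Your proof is correct and takes essentially the same route as the paper's: both localize near $\Diag_\phi$ and observe that blowing up the $\cB(Y)^2$ centers performs, in the base, exactly the b-resolution of $Y^2$ that straightens out the diagonal in the $Y$-variables, while the $\cB'$, $\cB(Z)$ and $W$-directions are already transversal. The paper phrases this invariantly through the fibration $\phi$ and the known fact that $\Diag(Y)$ lifts to a p-submanifold of $Y^2_{\bo}$; you unwind the same argument explicitly in coordinates adapted to $\phi$, exhibiting the lift as the graph $\{s_i=1,\ y'=y''\}$.
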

We will still denote the lifted submanifold as $\Diag_\phi.$

\begin{proof} Since $\Diag_\phi\subset W^2$ and this is the smallest
  boundary face of $M^2$ with this property, under the blow up of other
  boundary faces of $M^2,$ $\Diag_\phi$ lifts to the subset (always a
  submanifold in fact) of the lift of $W^2$ under the blow up of the
  intersection of $W^2$ with the boundary face which is the center of the
  blow up. That is, to track the behavior of $\Diag_\phi$ we need simply
  blow up the intersections of the elements of $\cB(Y)$ with $W^2,$ inside
  $W^2.$ This corresponds to exactly the ``boundary resolution'' of $Y^2$ to
  $Y^2_{\bo}$ as discussed briefly above. So the diagonal in $Y$ lifts to
  be a p-submanifold. Since $\phi$ is a fibration over $Y,$ it follows
  easily from the local description that $\Diag_\phi$ lifts to a
  p-submanifold of the blow up, $[W^2;\cB(Y)\cap W^2]$ and hence to a
  p-submanifold of $[M^2;\cB(Y)]$ as claimed.
\end{proof}

Thus blowing up the elements of $\cB(Y)$ in $M^2$ resolves $\Diag_\phi$ to
a p-submanifold, by resolving the diagonal in $Y^2.$ The defining functions
of the elements of $\cB(Z)$ restrict to defining functions of boundary
faces of the lift of $\Diag_\phi$ so all the remaining boundary faces, in
$\cB'\cup\cB(Z)$ are transversal to this lift. Such transversality is
preserved under blow up of boundary faces, so we may define the
$\eb$-double space in several equivalent ways as regards the order of the
blow-ups and in particular:
\begin{equation}
\begin{aligned}
M^2_{\eb}=&[M^2;\cB^2,\Diag_\phi]\\
\equiv&[M^2;\cB(Y)^2,\Diag_\phi,\cB(Z)^2,(\cB')^2],
\end{aligned}
\label{1.5.2008.1a}\end{equation}
where the ``squares'' mean the set of self-products of the elements and the
ordering within the boundary faces is immaterial.

The fibration $\phi$ restricts to a fibration, $\phi_B,$ of $B\cap W$ for
each $B\in\cB(Z),$ over the same base $Y.$ For each $B\in\cB(Y)$ instead
$\phi$ restricts to $B\cap W$ to a fibration, again denoted $\phi_B,$ over
$Y(B),$ the corresponding boundary hypersurface of $Y.$ Thus considering
$B\in\cB(Z)$ or $B\in\cB(Y)$ as manifolds with corners on their own, each
inherits a fibration structure as initially given on $W\subset M$ on the
intersection $B\cap W\in\cM_1(B).$ For the elements of $\cB'$ there is a
corresponding trivial structure with no $W.$

\begin{lemma}\label{16.2.2009.8a} The diagonal in $M^2$ lifts to a
  p-submanifold of $M^2_{\eb}.$  The ``front faces'' of $M^2_{\eb},$ those
boundary hypersurfaces  produced by blow up, are of the form
$B^2_{\eb}\times[0,1],$ corresponding to each $B\in\tcB=\cM_1(M)\setminus\{W\}$
with its induced fibration structure.  That corresponding to
$\Diag_\phi$ is the pull-back of the bundle $[W^2;(\cB(Z)\cap(W))^2],$
defined by blowing up the diagonal corners of the fibers, to a (closed)
quarter ball bundle over $Y.$
\end{lemma}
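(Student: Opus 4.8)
The plan is to establish Lemma~\ref{16.2.2009.8a} in three parts, mirroring the structure of the construction of $M^2_{\eb}$ in \eqref{1.5.2008.1a} and building on Lemma~\ref{16.2.2009.7a}. First I would handle the p-submanifold claim for the lifted diagonal $\Diag$. By Lemma~\ref{16.2.2009.7a}, after blowing up the elements of $\cB(Y)^2$ the fiber diagonal $\Diag_\phi$ has become a p-submanifold, and simultaneously the ordinary diagonal $\Diag\subset M^2$ lifts to a manifold which is \emph{transversal} to all the $B^2$, $B\in\cB(Y)$, exactly as in the b-calculus case (cf.\ \eqref{30.8.2006.41}): this is the statement that $[M^2;\cB(Y)^2]$ resolves the diagonal in $Y^2$ to a p-submanifold and $\phi$ is a fibration. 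Next one blows up $\Diag_\phi$; since $\Diag\cap W^2\subset\Diag_\phi$ (two points in $W$ over the same base point, being equal, certainly have the same image), after this blow-up $\Diag$ lifts to the subset of the lift of $W^2$ cut out inside the front face, and in local coordinates $x',y',x'',y''$ (lifts of boundary defining functions and interior coordinates on $Y$) together with fiber coordinates $z',z''$ on $Z$, the diagonal $\{x'=x'',\,y'=y'',\,z'=z''\}$ meets the new front face $\{x'=x''=0\}$-blown-up locus in a product fashion, because the fiber directions $z$ are untouched by the blow-up of $\Diag_\phi$. Then the remaining blow-ups are of $\cB(Z)^2$ and $(\cB')^2$, which are transversal to the (now resolved) lift of $\Diag$ — transversality of a p-submanifold to a boundary face being preserved under blow-up of that face — so the diagonal remains a p-submanifold after all the blow-ups in \eqref{1.5.2008.1a}.

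Second, I would identify the front faces. Each front face arises from blowing up a center of the form $B\times B$ (for $B\in\tcB$) or $\Diag_\phi$ (for $W$). For a center $B\times B$ with $B\in\tcB=\cM_1(M)\setminus\{W\}$, the general principle is that blowing up a product $B\times B\subset M^2$ produces a front face which is the inward-pointing spherical normal bundle, i.e.\ a bundle over $B\times B$ with fiber $[0,1]$ (the radial variable $r$ with $r=0$ and $r=1$ recording the two boundary faces $B\times M$, $M\times B$ meeting the corner); but then the \emph{other} blow-ups in the construction, restricted to this front face, induce exactly the $\eb$-double space construction \emph{for $B$ itself} with the fibration structure $\phi_B$ that $B$ inherits on $B\cap W\in\cM_1(B)$ (this is the content of the paragraph just before the lemma, where $\cB(Y), \cB(Z), \cB'$ are defined and restrict to the analogous subsets of $\cM_1(B)$). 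One checks this compatibility by examining how $\cB(Y)^2$, $\Diag_\phi$, $\cB(Z)^2$, $(\cB')^2$ intersect the corner $B\times B$: for $B\in\cB(Z)$, the fiber diagonal $\Diag_\phi$ meets $B^2$ in $\Diag_{\phi_B}$, and the remaining $\cB(\cdot)$'s restrict to $\cB(Y(B))$, $\cB(Z(B))\setminus(\text{that face})$, etc., so the induced resolution of $B^2$ is precisely $B^2_{\eb}$ in the sense of \eqref{1.5.2008.1a} applied to $B$; for $B\in\cB(Y)$ the argument is the same with $Y$ replaced by $Y(B)$; for $B\in\cB'$ there is no fibration and one gets $B^2_{\bo}\times[0,1]$. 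Hence each such front face is $B^2_{\eb}\times[0,1]$.

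Third, for the front face produced by $\Diag_\phi$: $\Diag_\phi$ is the fiber product $W\times_Y W$, a ball-bundle-free manifold which, after the $\cB(Y)^2$ blow-ups, fibers over $Y$ with fiber $\Diag(Z)\cong Z$ (corrected at the corners of $Z$). The normal bundle of $\Diag_\phi$ inside $[M^2;\cB(Y)^2]$ is, fiberwise over a point of $Y$, the normal bundle of the diagonal in $Z^2$ together with the one normal direction coming from $x'/x''$-type radial blow-up in the boundary-of-$Y$ directions; blowing up gives a spherical bundle, and intersecting with the constraints that the $x$'s and the non-diagonal part stay $\geq 0$ yields a \emph{quarter-ball} (closed) bundle over $Y$ (quarter because there are two boundary-defining constraints retained near a corner of $Y$), whose total space is moreover fibered by $[W^2;(\cB(Z)\cap W)^2]$ — the blow-up of the diagonal corners of the fibers $Z$. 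I would make this precise in the local coordinates $x',y',z',x'',y'',z''$: set $\rho=(|x'-x''|^2+|y'-y''|^2+|z'-z''|^2)^{1/2}$ (with respect to a metric adapted to the corner structure of $Z$) and introduce the corresponding polar/projective coordinates; the front face is $\rho=0$, and the retained inequalities $x'\geq 0$, $x''\geq 0$ cut the sphere down to a quarter-ball, while the further blow-ups of $\cB(Z)^2$ restricted here produce exactly $[Z^2;\text{corner diagonals}]$ as the fiber.

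The main obstacle I anticipate is the bookkeeping in the third part: verifying carefully that the order in which $\Diag_\phi$ and the $\cB(Z)^2$ are blown up does not matter (so that the phrase ``several equivalent ways'' in \eqref{1.5.2008.1a} is justified) and that the resulting front face genuinely has the claimed quarter-ball-bundle-with-resolved-$Z^2$-fiber structure rather than something more degenerate at the corners of $Y$ where $\cB(Y)$ meets $\cB(Z)$. The commutativity of blow-ups relies on the transversality statements (a p-submanifold stays a p-submanifold, transversal centers commute), which were set up in Lemma~\ref{16.2.2009.7a} and the discussion of $\tcB$, so the real work is a coordinate computation near a point of $W^2$ lying over a corner of $Y$ and a corner of $Z$ simultaneously; I would do this computation in one representative chart and invoke naturality of the blow-up construction for the rest. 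The first two parts (diagonal is a p-submanifold; front faces for $B\in\tcB$) are essentially formal consequences of the b-calculus and edge-calculus pictures recalled in the excerpt, so I expect them to go through with only routine checking.
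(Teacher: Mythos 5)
Your proposal is correct and follows the same approach as the paper, whose proof of this lemma is the single sentence ``These statements are all local and follow by elementary computations in local coordinates'' --- exactly the strategy you flesh out in detail. One small slip: after the $\cB(Y)^2$ blow-ups, $\Diag_\phi=W\times_Y W$ fibers over $Y$ with fiber $Z^2$ (not $\Diag(Z)\cong Z$), but you implicitly correct this later when you identify the fiber of the corresponding front face as $[Z^2;\text{corner diagonals}]$.
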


\begin{proof} These statements are all local and follow by elementary
  computations in local coordinates.
\end{proof}

Thus, the definition of the ``small'' calculus of edge-b pseudodifferential
operators is directly analogous to (and of course extends in generality)
\eqref{30.8.2006.47}: 
\begin{equation}
\Psi^m_{\eb}(M)=\left\{A\in I^m(M^2_{\eb};\beta^*\Omega_R);
A\equiv0\Mat \pa M^2_{\bo}\setminus\ff(\beta)\right\}
\label{30.8.2006.48}\end{equation}
where the particular fibration $\phi$ is not made explicit in the
notation. The fact that these kernels define operators on $\dCI(M)$ and
$\CI(M)$ reduces to the fact that push-forward off the right factor of $M,$
which is to say under the left projection, gives a continuous map 
\begin{equation}
(\pi_{L,\phi})_*:\Psi^m_{\eb}(M)\longrightarrow \CI(M).
\label{30.8.2006.49}\end{equation}
The principal symbol map is well-defined at the level of conormal
distributions, taking values in the smooth homogeneous fiber-densities of
the non-zero part of the conormal bundle to the submanifold in question. In
this case $N^*\Diag_{\eb}={}^{\eb}T^*M$ is a natural identification and the
density factors cancel as in the standard case so 
\begin{equation}
\sigma_m:\Psi^m_{\eb}(M)\longrightarrow \CI({}^{\eb}S^*M;N_m)
\label{30.8.2006.50}\end{equation}
where $N_m$ is the bundle of functions which are homogeneous of degree $-m.$ 

The structure of the front faces leads directly to the ``symbolic''
structure of the (small) algebra of pseudodifferential
operators. Namely, there are homomorphisms to model operator algebras
corresponding to each boundary face of $M,$ known as normal
operators.  For faces other than $W$ the model is a parametrized
(``suspended'') family of edge-b (or for those boundary faces not
meeting $W$ simply b-) operators corresponding to the fibrations of
boundary hypersurfaces of $W.$ Note that if $z_j$ is a defining
function for such a face, the operator $z_j D_{z_j}$ maps in this
correspondence to the operation of multiplication by the corresponding
suspension parameter.  For $W$ the model is a family of b-operators on
the fiber times a half-line, parametrized by the cosphere bundle of
the base of the fibration.  (We do not employ the normal operator
homomorphism for the face $W$ in this paper.)

The corresponding triple space can be defined by essentially the same
modifications to the construction of $M^3_{\bo}$ as correspond to obtaining
$M^2_{\eb}$ in place of $M^2_{\bo}.$

\begin{lemma}\label{30.8.2006.51} Under the blow-down map for the partial
  triple b-product 
\begin{equation}
\beta ^3_{\tcB}:M^3_{\tcB}=
[M^3;\cB(Y)^3;\cB(Y)^2;\cB(Z)^3;(\cB')^3;\cB(Z)^2,(\cB')^2]
\longrightarrow M^3
\label{30.8.2006.52}\end{equation}
the triple fiber diagonal and the three partial fiber diagonals 
\begin{equation}
\begin{gathered}
\Diag_{\phi}^3=\{(p,p',p'')\in W^3;\phi(p)=\phi(p')=\phi(p'')\},\\
\Diag^O_{\phi}=(\pi_O)^{-1}(\Diag_{\phi}),\ O=S,C,F,
\end{gathered}
\label{30.8.2006.53}\end{equation}
all lift to p-submanifolds.
\end{lemma}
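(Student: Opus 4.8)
The plan is to reduce everything to local computations in coordinates, exactly as in the proof of Lemma~\ref{16.2.2009.7a} and Lemma~\ref{16.2.2009.8a}, since p-submanifold status is a local (in fact, local-in-the-base) property and blow-ups commute with restriction to neighborhoods. First I would observe that all four centers $\Diag^3_\phi$ and $\Diag^O_\phi$, $O=S,C,F$, are contained in $W^3$ (the only boundary face of $M^3$ all three factors of which lie in $W$), and that $W^3$ is the smallest boundary face containing them. Hence, just as in Lemma~\ref{16.2.2009.7a}, under the blow-ups of the various $\cB(Z)^3$, $(\cB')^3$, $\cB(Z)^2$, $(\cB')^2$ faces—each of which is transversal to $W^3$ and meets it in a face carrying a product structure compatible with $\phi$—these centers lift to the subsets of the lift of $W^3$ obtained by blowing up their intersections with the centers, inside the lift of $W^3$. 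So tracking their lifts reduces to understanding the analogous picture inside $W^3$ (or rather its iterated blow-up), which in turn is governed by the behavior of the diagonal and partial diagonals of $Y$ in $Y^3$ under the $b$-resolution, composed with the fibration $\phi$.

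Next I would carry out the reduction to the base. The blow-ups of $\cB(Y)^3$ and $\cB(Y)^2$ in \eqref{30.8.2006.52}, restricted to $W^3$, correspond precisely (via the three copies of $\phi$) to the boundary resolution of $Y^3$: first the triple corners $\tilde Y^3$ for $\tilde Y\in\cM_1(Y)$, then the partial corners $Y\times\tilde Y\times\tilde Y$ etc. This is the standard iterated $b$-blow-up that produces $Y^3_{\bo}$, in which the full diagonal $\Diag(Y)$ and the three partial diagonals $\Diag^O(Y)$ all lift to p-submanifolds (this is exactly the content of the triple-space construction recalled around \eqref{30.8.2006.42}). Since $\Diag^3_\phi=(\phi\times\phi\times\phi)^{-1}(\Diag^3(Y))$ and $\Diag^O_\phi=(\phi\times\phi\times\phi)^{-1}(\Diag^O(Y))$ and $\phi$ is a (corner) fibration, the local triviality diagram \eqref{16.2.2009.2a} lets me pull back the p-submanifold structure: over a trivializing neighborhood $U\subset Y$, $\phi^{-1}(U)\cong U\times Z$, and the fiber (partial) diagonals become $(\text{lifted (partial) diagonal of }U)\times Z^{\bullet}$, a product of a p-submanifold with a manifold with corners, hence a p-submanifold. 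Finally, the remaining blow-ups ($\cB(Z)^{2,3}$, $(\cB')^{2,3}$) are transversal to these lifts (defining functions of $\cB(Z)$ faces restrict to boundary defining functions of the $Z^\bullet$ factor, defining functions of $\cB'$ faces are bounded below near $W^3$), and transversality to a boundary face is preserved under its blow-up, so the p-submanifold property persists to the very end, giving the claim.

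The one subtlety—and I expect it to be the main obstacle—is verifying that the \emph{three} partial fiber diagonals $\Diag^S_\phi,\Diag^C_\phi,\Diag^F_\phi$ remain p-submanifolds \emph{simultaneously} and, more importantly, that after blowing up $\Diag^3_\phi$ they (and their mutual intersections with each other and with $\Diag^3_\phi$) still lift to p-submanifolds, so that the subsequent blow-ups in the stated order are legitimate. This is the analogue of the nontrivial ordering/commutation statements in the construction of $M^3_{\bo}$: one must check in local coordinates $(x',y',z')$, $(x'',y'',z'')$, $(x''',y''',z''')$ that, e.g., $\Diag^S_\phi=\{x''=x''', y''=y'''\}$ and $\Diag^3_\phi=\{x'=x''=x''', y'=y''=y'''\}$ meet cleanly, and that upon blowing up $\Diag^3_\phi$ the lift of $\Diag^S_\phi$ is cut out by the lifts of $x''-x'''$, $y''-y'''$ together with the new front-face defining function in a product fashion. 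Each such verification is a routine but slightly tedious coordinate computation of the type referenced in the proofs of Lemmas~\ref{16.2.2009.7a}--\ref{16.2.2009.8a}; I would organize them by treating the generic (interior of $Y$) case first—where it is literally the $\phi$-pullback of the known clean intersection picture for $\Diag^3(Y)$ and $\Diag^O(Y)$ in $Y^3$—and then noting that the boundary faces of $Y$ and of $Z$ have already been arranged (by the $\cB(Y)$ resolution and the transversality of $\cB(Z),\cB'$, respectively) to meet all these submanifolds in product form, so no new phenomena arise at the corners.
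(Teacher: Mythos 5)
Your overall strategy matches the paper's: reduce to the base $Y$ via the fibration $\phi$, appeal to the standard resolution of the full and partial diagonals of $Y^3$ under the $\cB(Y)$ blow-ups (which produce $Y^3_{\bo}$), and then use preservation of the p-submanifold property under the remaining transversal blow-ups of $\cB(Z)$ and $\cB'$ faces. That is exactly the paper's two-sentence proof, and most of what you write fills it in correctly.

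However, there is a genuine error in your opening reduction. You assert that ``all four centers $\Diag^3_\phi$ and $\Diag^O_\phi$, $O=S,C,F$, are contained in $W^3$'' and that $W^3$ is the smallest boundary face containing them. This is true only for $\Diag^3_\phi$. For the partial fiber diagonals, one of the three factors is unconstrained by the defining equations: for instance, with $\pi_F$ the projection onto the last two factors, $\Diag^F_\phi=(\pi_F)^{-1}(\Diag_\phi)=M\times\Diag_\phi$, which lies in the codimension-two boundary face $M\times W^2$ of $M^3$, not in $W^3$. The localization that then tracks the lift of $W^3$ therefore does not capture all of $\Diag^O_\phi$ (in particular the part where the remaining factor is far from $W$), so the argument as written has a hole. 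The repair is routine and consistent with your overall plan: for each $O$ the correct smallest boundary face is the corresponding $M\times W^2$, $W\times M\times W$, or $W^2\times M$, over which $\phi\times\phi$ (not $\phi\times\phi\times\phi$) governs the picture, and the relevant base blow-ups are among the $\cB(Y)^2$ family rather than all of $\cB(Y)^3,\cB(Y)^2$—then Lemma~\ref{16.2.2009.7a} applied to the two nontrivial factors, together with transversality of the remaining blow-ups, finishes the argument. Finally, your third paragraph's concern about what happens after blowing up $\Diag^3_\phi$ and about the ordering of the partial-diagonal blow-ups is not part of this lemma (which only asserts p-submanifold status in $M^3_{\tcB}$); that issue is addressed separately in the construction \eqref{30.8.2006.54} and Proposition~\ref{30.8.2006.55}.
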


\begin{proof} This reduces to the same argument as above, namely that the
  triple and three partial diagonals in $Y^3$ are resolved to
  p-submanifolds in $Y^3_{\bo}$ and the effect of the first two sets of
  blow-ups in \eqref{30.8.2006.52} on $Y^3$ is to replace it by
  $Y^3_{\bo}$ and hence to resolve the submanifolds in
  \eqref{30.8.2006.53}. Under the subsequent blow-ups of boundary faces any
  p-submanifold lifts to a p-submanifold.
\end{proof}

Thus we may define the edge-b triple space to be 
\begin{equation}
M^3_{\eb}=[M^3_{\tcB};\Diag_{\phi}^3;\Diag^S_{\phi};\Diag^C_{\phi};\Diag^F_{\phi}].
\label{30.8.2006.54}\end{equation}

\begin{proposition}\label{30.8.2006.55} The three partial diagonals lift to
  b-submanifolds intersecting in the lifted triple diagonal and the three
  projections lift to b-fibrations  
\begin{equation}
\xymatrix{
M^3_{\eb}\ar@<2.5ex>[r]^{\pi_{F,\eb}}\ar[r]^{\pi_{S,\eb}}
\ar@<-2.5ex>[r]^{\pi_{C,\eb}}&M^2_{\phi}.
}
\label{30.8.2006.56}\end{equation}
where $\pi_{O,\eb}$ is transversal to the other two lifted diagonals.
\end{proposition}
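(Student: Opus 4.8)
The strategy is to reduce the assertions about $M^3_{\eb}$ to the corresponding, already-established facts for the b-triple space $M^3_{\bo}$ (equivalently $M^3_{\tcB}$ after a partial resolution) together with the local model computations that underpin the edge calculus, exactly as in Proposition~\ref{30.8.2006.55}'s analogue for $M^2_{\eb}$. First I would observe, as in Lemma~\ref{30.8.2006.51}, that all four fiber diagonals $\Diag^3_\phi$, $\Diag^S_\phi$, $\Diag^C_\phi$, $\Diag^F_\phi$ have already been lifted to p-submanifolds of $M^3_{\tcB}$; since they are nested (the triple diagonal is contained in each partial one, and the three partial ones pairwise intersect precisely in the triple one) and each is a p-submanifold, the iterated blow-up \eqref{30.8.2006.54} is well-defined and the order of blowing up the three partial diagonals after the triple one is immaterial. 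The fact that each partial diagonal lifts to a \emph{b-submanifold} (not p-submanifold) of $M^3_{\eb}$ is a purely local statement: after blowing up $\Diag^3_\phi$ and then, say, $\Diag^S_\phi$, the remaining two partial diagonals meet the new front face in a way that is a product in the fiber directions but not in the ``angular'' direction created by the $\Diag^S_\phi$ blow-up, hence the b- rather than p- structure. This is verified in local coordinates $x_i, y_j$ on $Y$, $z$ on the fiber $Z$, with their three lifts to $M^3$, by a computation entirely parallel to the one behind \cite{MR2000m:58046} and the $M^2_{\eb}$ case.

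Second, I would establish that the three projections $\pi_O: M^3\to M^2$ ($O=S,C,F$) lift to b-fibrations $\pi_{O,\eb}: M^3_{\eb}\to M^2_\phi$. The template is the b-calculus statement recalled after \eqref{30.8.2006.42}: the stretched projections $\pi_{O,\bo}: M^3_{\bo}\to M^2_{\bo}$ are b-fibrations. One checks that under $\pi_O$ the centers blown up in forming $M^2_\phi$ from $M^2$ (namely the elements of $\cB(Y)^2$, then $\Diag_\phi$, then $\cB(Z)^2$ and $(\cB')^2$, cf.~\eqref{1.5.2008.1a}) pull back to (unions of) centers blown up in forming $M^3_{\eb}$ from $M^3$; e.g.\ $\Diag_\phi\subset M^2$ pulls back under $\pi_S$ to $\Diag^S_\phi$, under $\pi_C$ to $\Diag^C_\phi$, etc., while the $\cB(Y)$, $\cB(Z)$, $\cB'$ corners pull back to the corresponding boundary-face centers. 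By the commutation properties of blow-up and b-maps (a center upstairs lying over a center downstairs lifts a b-map to a b-map, provided the relevant transversality/nesting holds), each $\pi_O$ lifts through the full sequence of blow-ups to a b-map, and the b-fibration property is inherited from $\pi_{O,\bo}$ together with the local product structure of the edge front faces described in Lemma~\ref{16.2.2009.8a}. Finally, the transversality of $\pi_{O,\eb}$ to the other two lifted (partial-diagonal) submanifolds is, again, a local coordinate check: $\pi_{S,\eb}$ collapses the ``$S$-factor'' and so is automatically transversal to $\Diag^C_\phi$ and $\Diag^F_\phi$, which live in the complementary factors, and this transversality survives the blow-ups because those submanifolds were resolved to b-submanifolds meeting the $\Diag^S_\phi$ front face cleanly.

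\textbf{Main obstacle.} The genuinely delicate point is the interaction, over $\pa Y\neq\emptyset$, between the \emph{boundary} blow-ups $\cB(Y)$ (which are what resolve the fiber diagonals to p-submanifolds in the first place, by Lemma~\ref{16.2.2009.7a} and Lemma~\ref{30.8.2006.51}) and the subsequent \emph{diagonal} blow-ups: one must be sure that after the $\cB(Y)^3$ and $\cB(Y)^2$ blow-ups the three partial fiber diagonals and the triple fiber diagonal are still nested p-submanifolds with the correct mutual intersections, so that \eqref{30.8.2006.54} makes sense and the order of the last three blow-ups does not matter, and that the stretched projections continue to be b-fibrations after these diagonal blow-ups are inserted. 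Concretely, the hard work is to produce the local normal forms near a point of the lifted triple diagonal over a corner of $Y$ and to read off from them simultaneously (a) the b-submanifold property of the partial diagonals, (b) the product structure $B^3_{\eb}\times[0,1]$ (or its triple-space analogue) of the front faces, and (c) the b-fibration and transversality claims for $\pi_{O,\eb}$. All of this is ``the same argument as above'' in the sense of Lemma~\ref{16.2.2009.8a} and Lemma~\ref{30.8.2006.51}, i.e.\ elementary computations in local coordinates, but getting the bookkeeping of the blow-up orders right is where the real content lies; once the local models are in hand, the global statement follows by patching, since all the constructions are natural with respect to the fibration $\phi$ and its restrictions $\phi_B$ to the boundary faces.
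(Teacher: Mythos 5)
Your outline is conceptually aligned with the paper's strategy (commutation of blow-ups, reduction to the b- and pure-edge templates), and your treatment of the b-submanifold and transversality claims via local coordinates is fine, but the argument you sketch for the b-fibration property has a real gap. You argue that the centers used to build $M^2_\phi$ out of $M^2$ pull back under $\pi_O$ to (unions of) centers used to build $M^3_{\eb}$ out of $M^3$, and then invoke a general ``center upstairs over center downstairs'' principle. That principle is not enough here, because $M^3_{\eb}$ has strictly \emph{more} centers than those lying over centers of $M^2_\phi$: for $\pi_F$, say, every boundary-face product involving a defining function only on the first factor (the triple products $B^3$ and the two double products $B\times B\times M$, $B\times M\times B$, and also $\Diag^S_\phi$ and $\Diag^C_\phi$) is collapsed by $\pi_F$ to all of $M^2_\phi$ rather than to a center. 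One has to show that the blow-ups of exactly these extra centers can be peeled off without spoiling the b-fibration property, and your proposal does not do that.

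The device in the paper that accomplishes this is a specific factorization: first commute the $\Diag^O_\phi$ blow-ups so that $\Diag^F_\phi$ is blown up last, yielding a composite blow-down to $[M^3_{\tcB};\Diag^F_\phi]$; then use the alternative presentation $M^3_{\tcB}=[M\times M^2_{\tcB};\cF]$, where $\cF$ is precisely the collection of boundary faces involving a defining function on the first factor, and observe that every element of $\cF$ is \emph{transversal} to $\Diag^F_\phi$ (realized as a p-submanifold of $M\times M^2_{\tcB}$), so the $\cF$-blow-ups commute past $\Diag^F_\phi$. This exhibits $\pi_{F,\eb}$ as a composite of boundary-face blow-downs followed by the known edge b-fibration, from which the b-submersion property is immediate and the b-fibration property follows by checking that each boundary hypersurface maps onto a boundary hypersurface or the whole of $M^2_\phi$. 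Your ``main obstacle'' paragraph, while reasonable as a concern, points at the $\cB(Y)$ vs.\ diagonal interaction — but that is the content of Lemma~\ref{30.8.2006.51}, which you may take as given; the real crux of the present proposition is the factorization and transversality argument just described, and that is what your proposal is missing.
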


\begin{proof} The existence of the stretched projections as smooth maps
  follows from the possibility of commutation of blow-ups. For the sake of
  definiteness, concentrate on $\pi_F,$ the projection onto the right two factors.

After the blow up of the triple fiber diagonal in \eqref{30.8.2006.54}, the
three partial fiber diagonals are disjoint so the other two can be blown up
last. When it is to be blown up, the triple fiber diagonal is a submanifold
of $\Diag^F_{\phi}$ so the order can be exchanged, showing that there is a
composite blown-down map 
\begin{equation}
M^3_{\eb}\longrightarrow [M^3_{\tcB};\Diag^F_{\phi}].
\label{30.8.2006.57}\end{equation}

The manifold with corners $M^3_{\tcB}$ is the b-resolved triple product
where the boundary hypersurface $W$ is ignored. The commutation arguments
showing the existence of a composite blow-down map
$M^3_{\bo}\longrightarrow M\times M^2_{\bo}$ carry over directly to give an
alternative construction
\begin{equation}
M^3_{\tcB}=[M\times M^2_{\tcB};\cF]
\label{30.8.2006.58}\end{equation}
where $\cF$ consists of those boundary faces in \eqref{30.8.2006.52} which
involve a defining function on the first factor of $M^3$---so all the
triple products and the double products with boundary hypersurface in the
first factor. These are all transversal to $\Diag^F_{\phi},$ realized as a
p-submanifold of $M\times M^2_{\tcB}$ so can be commuted past it in the
blow up, giving the map $\pi_{F,\phi}$ in \eqref{30.8.2006.56}. That it is
a b-submersion follows from its definition as a composite of blow-downs of
boundary faces, together with the corresponding fact for the edge
case. That it is a b-fibration follows from the fact that the image of a
boundary hypsersurface is either a boundary hypersurface or the whole
manifold since this is true locally in the interior of boundary
hypersurfaces.
\end{proof}

These facts together show that the small calculus of edge-b
pseudodifferential operators, as defined in \eqref{30.8.2006.48}, is a
filtered algebra. It also follows directly that the symbol
\eqref{30.8.2006.50} is multiplicative as in the standard case. The
extension to operators on sections of bundles is essentially
notational.

If $M$ is non-compact but the fibers of $\phi$ are compact,
the same construction goes through, but we require
proper supports, i.e.\ that the projections $\pi_{L,\phi}$ and
$\pi_{R,\phi}$ are proper when restricted to the support of $A$:
\begin{equation}\begin{split}
\Psi^m_{\eb}(M)=\big\{&A\in I^m(M^2_{\eb};\beta^*\Omega_R);\\
&A\equiv0\Mat \pa M^2_{\bo}\setminus\ff(\beta)\Mand A\ \text{has proper
support}\big\}.
\label{eq:proper-eb}\end{split}\end{equation}
Then $\Psi^m_{\eb}(M)$ acts on $\CI(M)$, $\dCI(M)$ and $\CmI(M)$, as
well as on their compactly supported versions.

\newpage

\printnomenclature

\newpage

\def\cprime{$'$} \def\cprime{$'$} \def\cdprime{$''$} \def\cprime{$'$}
  \def\cprime{$'$} \def\ocirc#1{\ifmmode\setbox0=\hbox{$#1$}\dimen0=\ht0
  \advance\dimen0 by1pt\rlap{\hbox to\wd0{\hss\raise\dimen0
  \hbox{\hskip.2em$\scriptscriptstyle\circ$}\hss}}#1\else {\accent"17 #1}\fi}
  \def\cprime{$'$} \def\ocirc#1{\ifmmode\setbox0=\hbox{$#1$}\dimen0=\ht0
  \advance\dimen0 by1pt\rlap{\hbox to\wd0{\hss\raise\dimen0
  \hbox{\hskip.2em$\scriptscriptstyle\circ$}\hss}}#1\else {\accent"17 #1}\fi}
  \def\cprime{$'$} \def\bud{$''$} \def\cprime{$'$} \def\cprime{$'$}
  \def\cprime{$'$} \def\cprime{$'$} \def\cprime{$'$} \def\cprime{$'$}
  \def\cprime{$'$} \def\cprime{$'$} \def\cprime{$'$} \def\cprime{$'$}
  \def\polhk#1{\setbox0=\hbox{#1}{\ooalign{\hidewidth
  \lower1.5ex\hbox{`}\hidewidth\crcr\unhbox0}}} \def\cprime{$'$}
  \def\cprime{$'$} \def\cprime{$'$} \def\cprime{$'$} \def\cprime{$'$}
  \def\cprime{$'$} \def\cprime{$'$} \def\cprime{$'$} \def\cprime{$'$}
  \def\cprime{$'$} \def\cprime{$'$} \def\cprime{$'$} \def\cprime{$'$}
  \def\cprime{$'$} \def\cprime{$'$} \def\cprime{$'$} \def\cprime{$'$}
  \def\cprime{$'$} \def\cprime{$'$} \def\cprime{$'$} \def\cprime{$'$}
  \def\cprime{$'$} \def\cprime{$'$} \def\cprime{$'$} \def\cprime{$'$}
  \def\cprime{$'$} \def\cprime{$'$}
\providecommand{\bysame}{\leavevmode\hbox to3em{\hrulefill}\thinspace}
\providecommand{\MR}{\relax\ifhmode\unskip\space\fi MR }
\providecommand{\MRhref}[2]{%
  \href{http://www.ams.org/mathscinet-getitem?mr=#1}{#2}
}
\providecommand{\href}[2]{#2}


\end{document}